\documentclass[10pt]{amsart}
\usepackage{amsmath,amsfonts,amssymb,amsthm}
\usepackage{graphics,graphicx}
\usepackage[colorlinks,hyperindex,linkcolor=blue,
citecolor=purple]{hyperref}
\hypersetup{
pdfauthor = {Francis Bischoff, Alvaro del Pino Gomez, Aldo Witte},
 pdftitle= {bk-algebroids and the variety of foliation jets},
 pdfsubject = {Lie Theory, singular foliations, log-Geometry, Lie algebroids}}
\usepackage{tikz-cd}
\usepackage{tikz}
\usepackage[all]{xy}
\usepackage{subfiles}
\usetikzlibrary{matrix,arrows,decorations.pathmorphing}

\setlength\parindent{0pt}
\setlength{\oddsidemargin}{5pt}
\setlength{\evensidemargin}{5pt}
\setlength{\textwidth}{460pt}
\setlength{\topmargin}{-50pt}
\setlength{\textheight}{24cm}
\setlength{\parskip}{3mm plus0.4mm minus0.4mm}

\newcommand{\ff}{\mathcal{F}}

\newcommand{\A}{{\mathcal{A}}}

\newcommand{\F}{{\mathcal{F}}}
\newcommand{\Gcal}{{\mathcal{G}}}

\newcommand{\Kcal}{{\mathcal{K}}}

\newcommand{\Ncal}{{\mathcal{N}}}

\newcommand{\RR}{\mathbb{R}}

\newcommand{\TT}{\mathbb{T}}

\newcommand{\ZZ}{\mathbb{Z}}

\newcommand{\Gr}{\operatorname{Gr}}

\newcommand{\Hom}{\operatorname{Hom}}

\newcommand{\im}{\operatorname{im}}

\newcommand{\Diff}{{\operatorname{Diff}}}

\newcommand{\hol}{{\operatorname{hol}}}

\newcommand{\FolCat}{{\underline{\operatorname{Fol}}}}
\newcommand{\FolObj}{{\operatorname{Fol}}}
\newcommand{\FlatCat}{{\underline{\operatorname{Flat}}}}
\newcommand{\FlatObj}{{\operatorname{Flat}}}
\DeclareMathOperator{\Rep}{\underline{\operatorname{Rep}}}

\newcommand{\IsotopyCat}{{\underline{\operatorname{I-Rep}}}}
\newcommand{\IsotopyObj}{{\operatorname{I-Hom}}}

\newcommand{\IsoCat}{\underline{\operatorname{I-Fol}}}

\newcommand{\TNE}{{\operatorname{TNE}}}

\newcommand{\Conn}{{\operatorname{Conn}}}

\newcommand{\Tan}{{\operatorname{Tan}}}

\newcommand{\Fr}{{\operatorname{Fr}}}

\newcommand{\set}[1]{\lbrace #1\rbrace}
\newcommand{\inp}[1]{\langle #1\rangle}

\newcommand{\rr}{\mathbb{R}}

\newcommand{\X}{{\mathfrak{X}}}
\newcommand{\G}{\mathcal{G}}
\newcommand{\N}{\mathcal{N}}

\newcommand{\C}{\mathcal{C}}

\DeclareMathOperator{\Lie}{Lie}

\DeclareMathOperator{\Der}{Der}
\DeclareMathOperator{\Aut}{Aut}

\DeclareMathOperator{\Sym}{Sym}
\DeclareMathOperator{\Gau}{Gau}

\renewcommand{\hat}{\widehat}

\DeclareMathOperator{\At}{At}
\DeclareMathOperator{\Ad}{Ad}
\DeclareMathOperator{\at}{at}
\DeclareMathOperator{\ad}{ad}





\usepackage{thmtools}
\declaretheorem[style=definition,qed=$\diamondsuit$]{definition}
\declaretheorem[style=definition,qed=$\triangle$,sibling=definition]{example}
\declaretheorem[style=plain,sibling=definition]{theorem}
\declaretheorem[style=plain,sibling=definition]{lemma}
\declaretheorem[style=plain,sibling=definition]{proposition}
\declaretheorem[style=plain,sibling=definition]{corollary}
\declaretheorem[style=definition,qed=$\diamondsuit$,sibling=example]{claim}
\declaretheorem[style=definition,sibling=example]{question}
\declaretheorem[style=definition,qed=$\diamondsuit$,sibling=claim]{remark}

\declaretheorem[style=definition,sibling=example,qed=$\triangle$]{assumption}

\newtheorem*{claim*}{Claim}
\numberwithin{theoremalpha}{section}

\numberwithin{equation}{section}
\numberwithin{definition}{section}
\numberwithin{theorem}{section}
\numberwithin{proposition}{section}
\numberwithin{lemma}{section}
\numberwithin{example}{section}
\numberwithin{remark}{section}
\numberwithin{corollary}{section}
\numberwithin{question}{section}
\numberwithin{problem}{section}
\numberwithin{assumption}{section}

\newtheoremstyle{named}{}{}{\itshape}{}{\bfseries}{.}{.5em}{\thmnote{#3}#1}
\theoremstyle{named}
\newtheorem*{namedtheorem}{}

\numberwithin{equation}{section}
\hyphenation{mani-fold ge-ne-ra-li-zed diffeo-mor-phism con-ti-nua-tion re-para-metri-zation}


\address{University of Regina, 3737 Wascana Parkway, Regina, Saskatchewan, S4S 0A2, Canada}
\email{Francis.Bischoff@uregina.ca}

\address{Utrecht University, Department of Mathematics, Budapestlaan 6, 3584 CD Utrecht, The Netherlands}
\email{a.delpinogomez@uu.nl}

\address{University of Hamburg, Faculty of Mathematics, Informatics and Natural Sciences, Bundesstraße 55,20146 Hamburg}
\email{aldowitte@hotmail.nl}

\title{$b^k$-algebroids and the variety of foliation jets}
\author{Francis Bischoff, \'Alvaro del Pino and Aldo Witte}
\setcounter{tocdepth}{1}

\begin{document}
\begin{abstract}

We introduce and classify singular foliations of $b^{k+1}$-type, which formalize the properties of vector fields that are tangent to a submanifold $W \subset M$ to order $k$. When $W$ is a hypersurface, these structures are Lie algebroids generalizing the $b^{k+1}$-tangent bundles introduced by Scott.

We prove that singular foliations of $b^{k+1}$-type are encoded by \emph{$k$-th order foliations}--jets of distributions that are involutive up to order $k$, equivalently described as foliations on the $k$-th order neighborhood of $W$. Using this encoding, we construct topological groupoids of $k$-th order foliations and employ the holonomy invariant to show that these groupoids fiber over certain character stacks, yielding Riemann-Hilbert style classifications up to local isomorphism and isotopy.

We also study the problem of extending a $k$-th order foliation to a $(k+1)$-st order foliation. We prove that this is obstructed by a characteristic class that arises as a section of a vector bundle over the relevant character stack. 

\end{abstract}

\maketitle

\tableofcontents

\section{Introduction} \label{sec:introduction}

Given a manifold $M$ with a smooth hypersurface $W \subset M$, the \emph{b-tangent bundle} $TM(- \log W)$ is the Lie algebroid over $M$ whose sections are the $C^{\infty}(M)$-module $\mathfrak{X}(M,W)$ of vector fields on $M$ tangent to $W$. This algebroid was first introduced in algebraic geometry as the logarithmic tangent bundle. Its origins go at least as far back as Deligne's work on Hodge theory \cite{deligne1971hodge} and on flat connections with regular singularities \cite{MR0417174}. It was further generalized by Saito \cite{saito1980theory} in his study of free divisors. In the $C^{\infty}$-setting, the $b$-tangent bundle was rediscovered by Melrose \cite{MR1348401} during his investigation of index theory on non-compact manifolds. In this setting, the hypersurface $W$ is taken to be the boundary of $M$, thought of as an ``ideal boundary at infinity.''

The $b$-tangent bundle is a central example in the theory of Lie algebroids due both to its simplicity and to its utility in studying singular geometric structures in diverse settings, as demonstrated in \cite{GUILLEMIN2014864, Gualtieri-Li-2012, gualtieri2018stokes, MR3245143, cavalcanti2020self, MR4876292, del2022regularisation}. This success has motivated investigations of Lie algebroids admitting broader classes of singularities, including the work of \cite{MR3805052, MR4229238, klaasse2018poisson}. Prominent among these generalizations are the \emph{$b^{k+1}$-tangent bundles} introduced by Scott \cite{MR3523250} and extensively studied by Miranda and collaborators \cite{MR4523256, MR4257086, MR4236806, MR3952555,MN25}. Informally, these algebroids have sections consisting of vector fields on $M$ that are tangent to $W$ \emph{to order $k$}.\footnote{In the literature, the $b^{k+1}$-tangent bundle is defined as vector fields $(k+1)$-tangent to $D$. Unfortunately, this is inconsistent with the terminology coming from jets. Our indexing for $b^{k+1}$-tangent bundles is consistent with Scott's, but we have a shift by 1 in what we call $k$-tangent.} However, there is a subtlety here in that the notion of $k$-th order tangency is not intrinsically well-defined. Scott formalizes this notion by choosing additional data: the $k$-jet of a defining function for $W$. Consequently, for any given hypersurface $W \subset M$, there exist multiple distinct $b^{k+1}$-tangent bundles rather than a unique one. 

In this paper we generalize the $b^{k+1}$-tangent bundles by introducing the following class of Lie algebroids. 
\begin{definition} \label{hypersurfacealgebroiddef}
Let $M$ be a manifold of dimension $n$ and let $W \subset M$ be a hypersurface. A \emph{hypersurface algebroid} is a Lie algebroid $\A \Rightarrow M$ of rank $n$ whose orbits are precisely the connected components of $W$ and $M \backslash W$. 
\end{definition}
This definition is equivalent to imposing conditions on the anchor map $\rho: \A \to TM$ of the algebroid: $\rho$ must be an isomorphism on the complement $M \setminus W$ and have rank $n - 1$ along $W$. Consequently, the determinant $\det(\rho)$ is a section of the line bundle $\det(\A^*) \otimes \det(TM)$ that vanishes precisely along $W$. The order of this vanishing allows us to refine the above definition. 

\begin{definition}\label{hypersfalgbktype}
A hypersurface algebroid $\A$ is of \emph{$b^{k+1}$-type} if $\det(\rho)$ vanishes to order $k+1$ along $W$. 
\end{definition}
\begin{remark}
One could consider algebroids with different orders of vanishing along distinct components of $W$. However, we restrict our attention to the case where $W$ is connected, as all the interesting phenomena already appear in this simpler setting.
\end{remark}

As the name suggests, Scott's $b^{k+1}$-tangent bundles are examples of hypersurface algebroids of $b^{k+1}$-type. The question that initially motivated this project is the converse: \emph{Are all hypersurface algebroids of $b^{k+1}$-type isomorphic to one of Scott's $b^{k+1}$-tangent bundles?}

Locally, the answer is yes:
\begin{proposition}\label{prop:localformhypersurfacealgebroid}
Let $\A \Rightarrow M$ be a hypersurface algebroid of $b^{k+1}$-type. Then for any $x \in W$ there are local coordinates $(x_1,\ldots,x_n)$ with $W = \{ x_1 = 0 \}$ such that
\begin{equation}\label{eq:localform}
\Gamma(\A) \simeq \inp{x_1^{k+1}\partial_{x_1},\partial_{x_2},\ldots,\partial_{x_n}}.
\end{equation}
\end{proposition}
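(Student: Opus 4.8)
The plan is to use that the anchor $\rho$ is injective on sections -- it is an isomorphism over the dense open set $M\setminus W$ -- so that $\Gamma(\A)$ is identified, as a Lie--Rinehart algebra over $C^\infty(M)$, with its image $\mathcal{I}:=\rho(\Gamma(\A))\subseteq\mathfrak{X}(M)$; it then suffices to bring $\mathcal{I}$, as a submodule of $\mathfrak{X}(M)$ closed under the Lie bracket, into the stated form. Two facts orient the argument: since $W$ is a leaf, $\rho$ takes values in $TW$ along $W$, so $\mathcal{I}\subseteq\mathfrak{X}(M,W)$; and, writing $\rho$ in a local frame of $\A$ and the coordinate frame, Cramer's rule with $\det(\rho)=x_1^{k+1}\cdot(\text{unit})$ gives $x_1^{k+1}\mathfrak{X}(M)\subseteq\mathcal{I}$. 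The first inclusion -- tangency to $W$ -- will be used constantly.

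Next I would produce a local frame of $\A$ adapted to $W$. Fix coordinates $(x_1,\dots,x_n)$ with $W=\{x_1=0\}$; lift a frame of $TW$ to sections $e_2,\dots,e_n\in\Gamma(\A)$ so that $V_i:=\rho(e_i)$ restricts to a frame of $TW$ along $W$ (possible because $\rho|_W\colon\A|_W\to TM|_W$ is a surjection onto the subbundle $TW$ near $x$, hence split), and pick $e_1$ restricting to a frame of the line bundle $\ker(\rho|_W)$. Then $e_1,\dots,e_n$ is a frame of $\A$ near $x$ and $\rho(e_1)$ vanishes on $W$, say $\rho(e_1)=x_1^m Z$ with $m=1$. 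The key step is an induction on $m$: while $m\le k$, comparing $x_1^m\det(Z,V_2,\dots,V_n)=\det(\rho)=x_1^{k+1}\cdot(\text{unit})$ shows $\det(Z,V_2,\dots,V_n)\in(x_1)$, and since each $V_i$ is tangent to $W$ this forces the $\partial_{x_1}$-component of $Z$ to vanish on $W$, i.e.\ $Z$ is tangent to $W$; hence $Z|_W\in TW=\langle V_i|_W\rangle$, and subtracting a suitable combination $\sum_{i\ge2}x_1^m g_i\,e_i$ from $e_1$ replaces $Z$ by a vector field vanishing on $W$, raising $m$ by one. At $m=k+1$ we reach a frame (again written $e_1,\dots,e_n$) with $\rho(e_1)=x_1^{k+1}\tilde Z$ and $\det(\tilde Z,V_2,\dots,V_n)$ a unit, so $\{\tilde Z,V_2,\dots,V_n\}$ is a frame of $TM$ and $\tilde Z$ is transverse to $W$. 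Flowing $W$ along $\tilde Z$ gives coordinates $(y_1,\dots,y_n)$ with $W=\{y_1=0\}$ and $\tilde Z=\partial_{y_1}$; using $(x_1)=(y_1)$ and an invertible change of $e_2,\dots,e_n$ (legitimate since the $\partial_{y_j}$-components of the $V_i$, $i,j\ge2$, form an invertible matrix), we may write $\mathcal{I}=\langle y_1^{k+1}\partial_{y_1},\,V_2,\dots,V_n\rangle$ with $V_i=a_i\partial_{y_1}+\partial_{y_i}$ and $a_i\in(y_1)$ -- the membership $a_i\in(y_1)$ being again tangency to $W$.

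It remains to eliminate the $a_i$ modulo $(y_1^{k+1})$, and this is where closure under the bracket enters. Since $[V_i,V_j]$ is a multiple of $\partial_{y_1}$ and $\mathcal{I}$ contains $\lambda\,\partial_{y_1}$ only for $\lambda\in(y_1^{k+1})$, the relations $[V_i,V_j]\in\mathcal{I}$ say exactly that the hyperplane field $\ker\omega$, with $\omega:=dy_1-\sum_{i\ge2}a_i\,dy_i$, satisfies $\omega\wedge d\omega\in(y_1^{k+1})$ -- it is involutive to order $k+1$ along $W$, itself an integral leaf. An approximate Frobenius argument -- correcting $y_1$ by successive terms of orders $y_1^{2},\dots,y_1^{k+1}$ and using $\omega\wedge d\omega\in(y_1^{k+1})$ to solve the relevant linear equation at each step -- then yields a defining function $h$ for $W$ with $dh\equiv(\text{unit})\cdot\omega\pmod{(y_1^{k+1})\,\Omega^1}$. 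Since $\mathcal{I}=\{X:\omega(X)\in(y_1^{k+1})\}$ and this set is insensitive to rescaling $\omega$ by a unit or adding a form in $(y_1^{k+1})$, in the coordinates $(h,y_2,\dots,y_n)$ we get $\mathcal{I}=\{X:X(h)\in(h^{k+1})\}=\langle h^{k+1}\partial_h,\partial_{y_2},\dots,\partial_{y_n}\rangle$, which is \eqref{eq:localform}. (Alternatively, everything after the setup can be replaced by the splitting theorem for Lie algebroids applied at $x$ along the leaf $W$: it presents $\A$ near $x$ as a product $T\RR^{n-1}\times\A_N$ with $\A_N$ a rank-one Lie algebroid over a one-dimensional transversal, with anchor $f\,\partial_t$ where $f$ vanishes to order exactly $k+1$ by the same determinant count, hence $\A_N\cong\langle t^{k+1}\partial_t\rangle$ after rescaling the frame, and one reads off \eqref{eq:localform} from the product coordinates.) I expect the straightening in this last paragraph to be the main obstacle: normalizing $\rho(e_1)$ to $x_1^{k+1}\tilde Z$ is linear algebra plus the determinant count, but converting a distribution that is merely involutive to order $k+1$ into an honest hypersurface-jet requires the order-by-order Frobenius induction (or, equivalently, the bracket content packaged in the splitting theorem).
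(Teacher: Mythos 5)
Your argument is correct and takes a genuinely different route from the paper. The paper's proof is five lines: it invokes the Splitting Theorem for Lie algebroids along a curve $\gamma$ transverse to $W$, obtaining $\A|_U \cong \gamma^{!}\A \times TV$ with $\gamma^{!}\A$ a rank-one algebroid over an interval whose anchor vanishes to order $k+1$ at the origin, hence generated by $x_1^{k+1}\partial_{x_1}$; this is exactly your parenthetical alternative. Your main argument instead identifies $\Gamma(\A)$ with its anchor image (valid, since $\rho$ is an isomorphism over the dense open $M\setminus W$), uses the adjugate identity to get $x_1^{k+1}\mathfrak{X}(M)\subseteq\mathcal{I}$, runs a determinant-counting induction to normalize the kernel direction to $x_1^{k+1}\tilde Z$ with $\tilde Z$ transverse, and then straightens the residual corank-one distribution by a truncated Frobenius argument. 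What this buys is an elementary, self-contained proof that makes visible exactly where the two hypotheses enter: the order-$(k+1)$ vanishing of $\det\rho$ drives the normalization of $\rho(e_1)$, and closure under the bracket is used only in the final straightening. What it costs is length and one step that you only sketch: the order-by-order construction of the defining function $h$ with $dh\equiv u\,\omega \pmod{(y_1^{k+1})\,\Omega^1}$. That step is the local Frobenius theorem truncated at order $k$ and is standard (it is essentially the content of the paper's Corollary \ref{Frobenius}, which the paper derives from the splitting theorem for \emph{singular} foliations via Proposition \ref{Singfoliskorderfol}); it can be completed, but as written it is an assertion rather than a proof, and a careful reader would want the overdetermined system for the correction terms and its compatibility condition ($\omega\wedge d\omega\in(y_1^{k+1})$) spelled out. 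All the other steps — injectivity of $\rho$ on sections, tangency of the $V_i$ forcing $Z$ tangent to $W$ via expansion of the determinant along the first row, the identification $\mathcal{I}=\{X:\omega(X)\in(y_1^{k+1})\}$ and its invariance under rescaling $\omega$ by a unit — check out.
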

\begin{proof}
This follows from the Splitting Theorem for Lie algebroids \cite{Duf01, Fer02, Wei00, BLM19}. Namely, let $x \in W$ and let $\gamma: I \to M$ be a curve which passes through $x$ and is transverse to $W$. Then, there is a neighbourhood $U$ of $x$ and an isomorphism of Lie algebroids $\A|_U \cong \gamma^{!}\A \times TV$, where $V = U \cap W$. Here $\gamma^{!}\A$ is a rank-$1$ Lie algebroid over the interval $I$ with the property that its anchor vanishes precisely at the origin to order $k+1$. Hence, $i^{!}\A$ is generated by a vector field $x_{1}^{k+1} \partial_{x_{1}}$, for $x_{1}$ a coordinate on $I$. Let $x_{2}, ..., x_{n}$ be coordinates on $V$. Then $\A|_U$ has a basis given by the vector fields $x_{1}^{k+1} \partial_{x_{1}}, \partial_{x_{2}}, ..., \partial_{x_{n}}$.
\end{proof}

On the other hand, there exist hypersurface algebroids that are not \emph{globally} isomorphic to any Scott $b^{k+1}$-tangent bundle. Indeed, we present several such examples in Section \ref{sec:Examples1}, including a family with a positive-dimensional parameter space. 

The main purpose of this paper is to provide a classification of hypersurface algebroids. In Section \ref{sec:singfols}, we show that a hypersurface algebroid of $b^{k+1}$-type for $(W,M)$ is equivalent to the data of a foliation on the $k$-th order neighbourhood of $W$. In Section \ref{sec:Atiyah}, we introduce the \emph{$k$-th order Atiyah algebroid} and show that, upon choosing a tubular neighbourhood embedding for $W$, a hypersurface algebroid is encoded by a flat connection on a principal bundle of $k$-jets of frames for the normal bundle of $W$. This encoding enables us to give a Riemann-Hilbert style classification for hypersurface algebroids in Section \ref{sec:Riemann--Hilbert}. Here is a brief summary of this result. 

Let $G_{k,1} = J^{k}\mathrm{Diff}(\mathbb{R}, 0) = \mathrm{Aut}(\mathbb{R}[z]/(z^{k+1}))$ be the Lie group of $k$-jets of diffeomorphisms of $\mathbb{R}$ fixing the origin, and let $G_{k,1}^{0}$ be the connected component of the identity. By taking the derivative of a diffeomorphism at the origin, we obtain a homomorphism $\pi_{k}: G_{k,1} \to \mathbb{R}^* \cong \mathbb{Z}/2 \times \mathbb{R}$. Assume that $W$ is compact and consider the representation variety $\mathrm{Hom}(\pi_1(W), G_{k,1})$, where $\pi_1(W)$ is the fundamental group of $W$. By applying $\pi_{k}$ and projecting to $\mathbb{Z}/2$, we obtain a map 
\[
\mathrm{Hom}(\pi_1(W), G_{k,1}) \to \mathrm{Hom}(\pi_1(W), \mathbb{Z}/2) \cong H^{1}(W, \mathbb{Z}/2). 
\]
Recall that the codomain $H^{1}(W, \mathbb{Z}/2)$ classifies real line bundles over $W$. Let $\nu_W = TM|_{W}/TW$ denote the normal bundle of $W$ in $M$, and let $\mathrm{Hom}_{\nu_W}(\pi_1(W), G_{k,1})$ denote the subspace of representations which project to $\nu_W$ under the above map. This space is acted upon by $G_{k,1}^{0}$, and the quotient space $M^0_{k}(W,M) = \mathrm{Hom}_{\nu_W}(\pi_{1}(W), G_{k,1})/G_{k,1}^0$ is a \emph{character variety}\footnote{However, note that in the present context we are considering $M^0_{k}(W,M)$ to be the set of $G_{k,1}^{0}$ orbits equipped with the quotient topology, rather than an algebraic variety as is usually the case.}. The following theorem is one of the main results of this paper and is obtained from Theorem \ref{RHuptoisotopy} combined with Lemma \ref{lem:Dprincbundle}. 

\begin{namedtheorem}[Corollary \ref{cor:hypersurfclass}]
There is a homeomorphism between the space of isotopy classes of hypersurface algebroids of $b^{k+1}$-type for $(W, M)$ and the character variety 
\[
M^0_{k}(W,M) = \mathrm{Hom}_{\nu_W}(\pi_{1}(W), G_{k,1})/G_{k,1}^0.
\]
Furthermore, Scott's $b^{k+1}$-tangent bundles correspond to the trivial representation in $ \mathrm{Hom}_{\underline{\mathbb{R}}}(\pi_{1}(W), G_{k,1})$, where $\underline{\mathbb{R}}$ is the trivial line bundle over $W$. In particular, all $b^{k+1}$-tangent bundles are isotopic to each other. 
\end{namedtheorem}
Now composing $\pi_k : G_{k,1} \rightarrow \mathbb{Z}/2 \times \rr$ with the projection to the second factor we obtain a map
\begin{equation*}
\Hom_{\nu_W}(\pi_1(W),G_{k,1}) \rightarrow \Hom(\pi_1(W),\rr) \simeq H^1(W,\mathbb{R}).
\end{equation*}
The space $H^1(W,\mathbb{R})$ classifies flat connections on $\nu_W$ and we may fix this data to obtain a subspace $\Hom_{(\nu_W,\nabla)}(\pi_1(W),G_{k,1})$ of representations. The quotient space \[ M_{k}^0(W,M,\nabla) = \Hom_{(\nu_W,\nabla)}(\pi_1(W),G_{k,1})/G_{k,1}^0\] therefore classifies isotopy classes of hypersurface algebroids with a fixed induced flat connection on the normal bundle. We explicitly describe this space in the case $k = 3$. 
\begin{namedtheorem}[Proposition \ref{prop:g31character}]
The character variety $M_{3}^0(W,M,\nabla)$ is homeomorphic to the space of isotopy classes of hypersurface algebroids of $b^4$-type for $(W,M)$ with a fixed induced connection $\nabla$ on the normal bundle of $W$. This space contains a point $0$ which lies in the closure of every other point. Removing it, we have a homeomorphism  
\begin{equation*}
M_{3}^0(W,M,\nabla) \setminus \{ 0 \} \simeq S^{n_1 + n_2 - 1},
\end{equation*}
where $n_{1}$ and $n_{2}$ are the dimensions of the cohomology groups $H^{1}(W, \nabla^{-1})$ and $H^{1}(W, \nabla^{-2})$, respectively. 
\end{namedtheorem}

\subsection{Singular foliations of $b^{k+1}$-type}
It turns out that restricting to Lie algebroids is unnecessary, and the results of this paper are most naturally formulated in the broader framework of singular foliations. Recall that a singular foliation on a manifold $M$ is a $C^{\infty}(M)$ submodule of vector fields that is both closed under the Lie bracket and locally finitely generated. From a geometric perspective, a singular foliation provides a partition of $M$ into submanifolds of varying dimensions. While the study of singular foliations dates back to the 1960s, it has received renewed interest in differential geometry following the seminal work of Androulidakis and Skandalis \cite{AS09}. In this article, we study singular foliations that naturally generalize hypersurface algebroids by extending the notion of $k$-th order tangency to submanifolds $W \subset M$ of arbitrary codimension. 

\begin{definition}\label{def:typek}
Let $M$ be a manifold and let $W \subset M$ be a submanifold of codimension $l$. A singular foliation $\ff \subset \mathfrak{X}^1(M)$ is of \textbf{$b^{k+1}$-type} for $(W,M)$ if the following conditions are satisfied: 
\begin{enumerate}
\item the leaves of the foliation are the connected components of $W$ and $M \setminus W$, 
\item for every point $w \in W$, there are local coordinates $(x_{i})$ such that $W = \{ x_{1} = ... = x_{l} = 0 \}$ and such that $\ff$ is locally spanned by the following vector fields 
\[
\inp{x^I\partial_{x_1},\ldots,x^I\partial_{x_l},\partial_{x_{l+1}},\ldots,\partial_{x_n}},
\]
where $I = (i_1,\ldots,i_l)$ run over all multi-indices of weight $k + 1$ involving the first $l$ coordinates, and where $x^I = x_1^{i_1}\cdot \ldots \cdot x_l^{i_l}$. \hfill \qedhere
\end{enumerate}
\end{definition}

Note that a singular foliation of $b^{k+1}$-type for $(W, M)$ is locally free, and hence defines a Lie algebroid, only when $W$ has codimension $1$. Nevertheless, most results in this paper can be stated and proved at the level of generality of singular foliations. Indeed, the geometry of such foliations is entirely controlled by the $k$-th order Atiyah algebroid of the normal bundle to $W$, and this algebroid's local freeness does not depend on the codimension. 

In Section \ref{sec:singfols}, we show that singular foliations of $b^{k+1}$-type are equivalent to \emph{$k$-th order foliations}. These are foliations on the $k$-th order neighbourhood of the submanifold $W \subset M$. They may be explicitly described as germs of distributions $\xi \subset TM$ around $W$ that are ``involutive up to order $k$.'' In Section \ref{sec:Atiyah} we show that, after choosing a tubular neighbourhood embedding for $W$, a singular foliation of $b^{k+1}$-type is equivalent to a Lie algebroid splitting of the $k$-th order Atiyah algebroid. The resulting plethora of equivalent definitions is summarized in Theorem \ref{th:equivalence}. 

\subsection{Riemann-Hilbert correspondence}
The classification of hypersurface algebroids given in Corollary \ref{cor:hypersurfclass} follows from a detailed study of the topological groupoid of $k$-th order foliations. This is carried out in Section \ref{sec:Riemann--Hilbert}, the technical core of this paper. In this section, we prove three flavours of a Riemann-Hilbert correspondence for $k$-th order foliations: one set-theoretic (Theorem \ref{RHthm}), one topological (Theorem \ref{topologicalRHfoliation}), and one isotopical (Theorem \ref{RHuptoisotopy}). We briefly describe the topological version of the correspondence below. 

Let $J^k\FolObj(W,E)$ denote the set of $k$-th order foliations in a fixed vector bundle $E$ along the zero section $W$, and let $J^k\Diff(E,W)$ denote the group of $k$-jets of diffeomorphisms which fix $W$ pointwise. The action groupoid $J^{k}\FolCat(W,E) :=J^k\Diff(E,W) \ltimes J^k\FolObj(W,E)$ can be equipped with the Whitney $C^{\infty}$-topology, making it into a topological groupoid classifying $k$-th order foliations. 

Let $G_{k,l} = J^{k}\mathrm{Diff}(\mathbb{R}^l, 0)$ be the Lie group of $k$-jets of diffeomorphisms of $\mathbb{R}^l$ fixing the origin, and let $\Rep(\pi_{1}(W, x), G_{k,l}) = G_{k,l} \ltimes \Hom(\pi_{1}(W, x), G_{k,l})$ be the character groupoid. The vector bundle $E$ and the choice of a framing $\phi : \mathbb{R}^l \to E_{x}$ determine a full subgroupoid $\Rep_{(\Fr^k(E),\phi)}(\pi_{1}(W, x), G_{k,l})$, which is also a topological groupoid in a natural way. The topological Riemann-Hilbert correspondence for $k$-th order foliations can be stated as follows. 

\begin{namedtheorem}[Theorem \ref{topologicalRHfoliation}]
There is a fibration of topological groupoids
\begin{equation*}
RH_{(E,\phi)} : J^{k}\FolCat(W,E) \rightarrow \Rep_{(\Fr^k(E),\phi)}(\pi_{1}(W, x), G_{k,l}). 
\end{equation*}
In particular, the orbit spaces of $J^{k}\FolCat(W,E)$ and $\Rep_{(\Fr^k(E),\phi)}(\pi_{1}(W, x), G_{k,l})$ are homeomorphic.
\end{namedtheorem}

Note that the orbit space of $J^{k}\FolCat(W,E)$ classifies singular foliations of $b^{k+1}$-type up to isomorphisms that are defined in a neighbourhood of $W$. Theorem \ref{topologicalRHfoliation} therefore gives a classification of such foliations in terms of representations of the fundamental group of $W$. We also provide an explicit description of the kernel of $RH_{(E,\phi)}$ in terms of `inner automorphisms' in Theorem \ref{innercharacterization}. 

Section \ref{sec:Riemann--Hilbert} is supplemented by Appendix \ref{sec:topgroupoids}, which recalls the fundamentals of topological groupoids, Appendix \ref{sec:DiffTop}, which recalls some of the differential geometry of jets and tubular neighbourhoods, and Appendix \ref{sec:principal}, which gives a detailed summary of the Riemann-Hilbert correspondence for smooth flat connections.

\subsection{The extension problem}
In Corollary \ref{Frobenius}, we establish a Frobenius theorem for $k$-th order foliations, which ensures that these structures can always be \emph{locally} realized as the $k$-jets of regular foliations containing $W$ as a leaf. However, global realization is generally obstructed. We therefore study the following intermediate question: 

\begin{question}[Extension problem]
Let $F$ be a $k$-th order a foliation. When does there exist a $(k+1)$-st order foliation $\tilde{F}$ with the same $k$-jet as $F$?
\end{question}

We study this question from several different perspectives in Sections \ref{sec:extending}, \ref{sec:Atiyah}, and \ref{sec:extending2}. In Corollary \ref{cor:firstextensionclass}, we show that extensions of $F$ are obstructed by an \emph{extension class} $e(F) \in H^{2}(W, \mathrm{Sym}^{k+1}(\nu_W^*) \otimes \nu_W)$, where $\nu_W$ is the normal bundle of $W$ equipped with a flat connection induced by $F$. Upon choosing a tubular neighbourhood for $W$, this class is given an explicit de Rham representative in Corollary \ref{cor:explicitextensionclass}. Moreover, we show in Proposition \ref{ComparisonmapVE} that $e(F)$ arises from a cohomology class in the group cohomology of $\pi_{1}(W)$. This result uses the Riemann-Hilbert correspondence, a Morita equivalence, and the Van Est map. As a consequence, we demonstrate in Section \ref{sec:character} that the extension class may be viewed as a section of a vector bundle over the $G_{k,l}$-character stack (Proposition \ref{prop:extsection}). The zeroes of this section occur precisely along the locus of extendable $k$-th order foliations, indicating that the relationship between the character stacks for different values of $k$ are controlled by the extension classes. Finally, we provide a detailed description of the space of $(k+1)$-st order foliations extending a given $k$-th order foliation $F$ in Theorem \ref{thm:extensionspacegroupoid}. 

\subsection{Future work}

This paper is the first in a series devoted to the geometry of $k$-th order foliations. Here we focus on classification and moduli spaces. The second paper \cite{workinprogess} will study the geometry of hypersurface algebroids and their applications to Poisson geometry. In particular, we will construct new examples of generically symplectic Poisson structures by deforming along paths in the character varieties. The third paper will investigate confluence and unfolding for hypersurface algebroids. 

\subsection{Relation to other work}

This paper arose by splitting \cite{BPW23} into two expanded papers, the second being \cite{workinprogess}. Near the time of completion of \cite{BPW23}, we learned of the preprint \cite{francis2024singular} by Michael Francis, which is based on his 2021 thesis \cite{francis2021groupoids}. In this work, Francis independently introduces the notion of $k$-th order foliations for codimension-one submanifolds, under the name \emph{foliations of transverse order k}, and proves a set-theoretic version of the Riemann-Hilbert correspondence established in our Theorem \ref{RHthm}. However, the focus of our two works is complementary: while we emphasize the geometry of hypersurface algebroids, Francis instead describes their holonomy groupoids and the resulting groupoid $C^*$-algebras. 

After the appearance of \cite{BPW23}, the article \cite{fischer2024classification} appeared, which obtains a formal classification of singular foliations admitting a given leaf and a given transverse singular foliation. From their Corollary 3.12, a bijection between sets of isomorphism classes of singular foliations and representations--which forms part of our Theorem \ref{RHthm}--follows as a consequence. 

\subsection*{Acknowledgements}

We are grateful to Marius Crainic, Marco Zambon, Leonid Ryvkin, and Simon-Raphael Fischer for several insightful discussions. We are also grateful to the organizers of the `Higher Geometric Structures along the Lower Rhine XVI' conference, where a preliminary version of this article was presented. The first author is supported by an NSERC Discovery grant. The second author is supported by the NWO grant Vidi.223.118 ``Flexibility and rigidity of tangent distributions''. The third author was supported by FWO and FNRS under EoS projects G0H4518N and G012222N, FWO-EoS Project G083118N, directly by the University of Antwerp via BOF 49756 and is supported by the Deutsche Forschungsgemeinschaft (DFG, German Research Foundation) -- SFB 1624 -- ``Higher structures, moduli spaces and integrability'' -- 506632645. 


\section{Jets of distributions}\label{sec:jets}

In this section, we begin by recalling how to take $k$-jets of sections of a vector bundle, allowing us to define $k$-jets of vector fields and $k$-jets of distributions. Using these constructions, we introduce $k$-th order distributions and establish the appropriate integrability condition, leading to one of the central concepts of the paper: $k$-th order foliations. For additional details on jets, see Appendix \ref{ssec:jetsSections}.

\subsection{$k$-th order distributions}
Let $M$ be a manifold, let $W \subset M$ be an embedded submanifold and let $i_{W}: W \hookrightarrow M$ be the inclusion. In this section we define $k$-th order distributions. Intuitively, these are submodules of the $k$-jets of vector fields which can be realized by germs of distributions around $W$.

Let $\mathcal{O}$ denote the sheaf of smooth functions on $M$ and $\mathcal{I}_W$ the vanishing ideal of $W$. 

\begin{definition}
Let $E \to M$ be a vector bundle. The \emph{sheaf of $k$-jets of $E$ along $W$} is the sheaf of vector spaces over $W$ defined by $j_{W}^{k}(E) = i_{W}^{-1}(E/\mathcal{I}_W^{k+1}E). $
\end{definition}

Applying the $k$-jets to the algebra of smooth functions $\mathcal{O}$ (namely, the sections of the trivial bundle $M \times \rr $) gives rise to a sheaf of algebras over $W$
\[
\mathcal{N}_{W}^{k} := j_{W}^{k}(\mathcal{O}),
\]
which we think of as the sheaf of functions on the \emph{$k$-th order neighbourhood of $W$.} A choice of a tubular neighbourhood of $W$ induces an isomorphism 
\begin{equation} \label{nbhdnormalform}
\mathcal{N}_{W}^{k} \cong \bigoplus_{i = 0}^{k} \mathrm{Sym}_{\mathcal{O}_{W}}^{i}(\nu_{W}^{*}), 
\end{equation}
where $\nu_{W}$ is the normal bundle of $W$ and $\mathcal{O}_{W}$ is the sheaf of smooth functions on $W$. Given a vector bundle $E$, the sheaf of $k$-jets $j_{W}^{k}(E)$ is a locally free sheaf of $\mathcal{N}_{W}^{k}$-modules and hence may be thought of as a vector bundle over the $k$-th order neighbourhood.

There is a natural way of associating $\mathcal{N}^{k}_{W}$-submodules of $j_{W}^{k}(E)$ to $\mathcal{O}$-submodules of $E$ and vice versa. For this, we make use of the natural projection map 
\begin{equation*}
p_{k} : E \to (i_{W})_{*}(j_{W}^{k}E),
\end{equation*}
which extracts the $k$-jet of a section of $E$. Let $q_{k} : i_{W}^{-1}(E) \to j_{W}^{k}E$ be the corresponding map implied by the adjunction between $(i_W)_*$ and $i_{W}^{-1}$.  Then for an $\mathcal{O}$-submodule $\mathcal{F} \subset E$ we define its $k$-jet to be
\[
I_{k}(\mathcal{F}) := q_{k}(i_{W}^{-1}(\mathcal{F})) \subset j_{W}^{k}(E).
\]
Indeed, note that if $F \subset E$ is a subbundle, then $I_{k}(F) \cong j_{W}^{k}(F)$. Conversely, given $\mathcal{S} \subset j_{W}^{k}(E)$ a $\N^k_W$-submodule, define 
\[
P_{k}(\mathcal{S}) := p_{k}^{-1}( (i_{W})_{*} \mathcal{S}) \subset E, 
\] 
which consists of those sections of $E$ whose $k$-jet is contained in $\mathcal{S}$. Note that $I_{k}P_{k}(\mathcal{S}) = \mathcal{S}$, whereas $P_{k} I_{k}(\mathcal{F}) \supseteq \mathcal{F}$ is usually a larger subsheaf. Indeed, for a vector subbundle $F \subset E$, 
\begin{equation}\label{eq:tandescription}
P_{k}I_{k}(F) = F + \mathcal{I}_W^{k+1}E,
\end{equation}
the sheaf of all sections of $E$ which are contained in $F$ up to order $k$. 

Consider the morphism
\begin{equation} \label{eq:restrictionToW}
r_{k} : j_{W}^{k}(E) \to E|_{W}
\end{equation}
given by restricting sections to $W$ (alternatively, it is obtained by quotienting out the subsheaf $\mathcal{I}_WE$). Using this map, we can impose a constant rank condition on a submodule $\mathcal{S} \subset j^k_W(E)$ by requiring that $r_{k}(\mathcal{S})$ is a subbundle of $E|_W$. This allows us to make the following useful definition characterizing the subsheaves of $j^{k}_{W}E$ which arise from germs of subbundles of $E$. 

\begin{definition} 
A \emph{subbundle} of $j^{k}_{W}E$ is a locally free $\mathcal{N}_{W}^{k}$-submodule $\mathcal{S}$ such that $r_{k}(\mathcal{S}) \subseteq E|_{W}$ is a vector subbundle. 
\end{definition}
\begin{lemma} \label{realizabilitylemma}
A subsheaf $\mathcal{S} \subseteq j^{k}_{W}E$ is a subbundle if and only if $\mathcal{S} = j^{k}_{W}F$, for $F$ the germ of a subbundle of $E$ around $W$. 
\end{lemma}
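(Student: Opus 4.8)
The plan is to prove both implications, with the ``only if'' direction being the substantive one. First, the ``if'' direction: if $\mathcal{S} = j^k_W F$ for $F$ the germ of a subbundle of $E$ around $W$, then locally $F$ is spanned by sections $e_1, \ldots, e_m$ that can be completed to a local frame $e_1, \ldots, e_r$ of $E$; applying $j^k_W$ and $r_k$ sends these to a frame of $E|_W$ whose first $m$ elements span $r_k(\mathcal{S})$, so $r_k(\mathcal{S})$ is a subbundle, and $j^k_W F$ is a locally free $\mathcal{N}^k_W$-module since $j^k_W E$ is. Hence $\mathcal{S}$ is a subbundle in the sense of the definition.

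For the ``only if'' direction, suppose $\mathcal{S} \subseteq j^k_W E$ is locally free over $\mathcal{N}^k_W$ and $r_k(\mathcal{S}) \subseteq E|_W$ is a subbundle, say of rank $m$. Work locally near a point $w \in W$. The strategy is to produce an $\mathcal{N}^k_W$-frame of $\mathcal{S}$ that lifts to an honest local frame of a germ of subbundle $F \subset E$, and then check $j^k_W F = \mathcal{S}$. Concretely: choose sections $s_1, \ldots, s_m \in \mathcal{S}$ whose restrictions $r_k(s_1), \ldots, r_k(s_m)$ form a local frame of $r_k(\mathcal{S})$. I claim that after possibly shrinking the neighbourhood, $s_1, \ldots, s_m$ form an $\mathcal{N}^k_W$-basis of $\mathcal{S}$. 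This follows from a Nakayama-type argument: $\mathcal{N}^k_W$ is a sheaf of local rings with maximal ideal the nilpotent ideal $\mathfrak{m}$ generated by $\mathcal{I}_W/\mathcal{I}_W^{k+1}$ (or, in the tubular-neighbourhood normal form \eqref{nbhdnormalform}, $\bigoplus_{i \geq 1}\Sym^i(\nu_W^*)$), and $\mathcal{S}/\mathfrak{m}\mathcal{S} \cong r_k(\mathcal{S})$ is free of rank $m$ over $\mathcal{O}_W$ with basis the images of the $s_j$; since $\mathcal{S}$ is finitely generated and locally free over $\mathcal{N}^k_W$, the $s_j$ generate, and a rank count over the total ring forces them to be a basis. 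Now lift each $j^k$-section $s_j$ to an actual local section $\tilde{e}_j$ of $E$ defined near $w$ (using the projection $p_k$, which is surjective onto $j^k_W E$ since $E$ is a genuine vector bundle and jets of sections are unobstructed). Since $r_k(s_1), \ldots, r_k(s_m)$ are linearly independent in $E|_W$, the values $\tilde{e}_1(w), \ldots, \tilde{e}_m(w)$ are linearly independent in $E_w$, so after shrinking, $\tilde{e}_1, \ldots, \tilde{e}_m$ span a rank-$m$ subbundle $F \subset E$. It remains to check $j^k_W F = \mathcal{S}$ as $\mathcal{N}^k_W$-submodules of $j^k_W E$: the inclusion $\subseteq$ holds because $j^k_W F = I_k(F)$ is generated over $\mathcal{N}^k_W$ by $j^k_W(\tilde{e}_1), \ldots, j^k_W(\tilde{e}_m) = s_1, \ldots, s_m \in \mathcal{S}$ (using $I_k(F) \cong j^k_W(F)$ for subbundles, as noted before the lemma), and the reverse inclusion holds because the $s_j$ were an $\mathcal{N}^k_W$-basis of $\mathcal{S}$. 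Finally, these local subbundles glue to a germ of subbundle around all of $W$ because on overlaps the transition data for $F$ is forced by $\mathcal{S}$, which is globally defined; alternatively one observes that $F = P_k I_k(F) \cap (\text{rank condition})$ is determined by $\mathcal{S}$ via \eqref{eq:tandescription}, so the local choices are canonical up to the ambiguity that does not affect the germ.

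The main obstacle I anticipate is making the Nakayama argument fully rigorous in the sheaf-of-local-rings setting: one must be careful that ``locally free over $\mathcal{N}^k_W$'' together with the rank-$m$ condition on $r_k(\mathcal{S})$ really pins down the rank of $\mathcal{S}$ to be $m$, rather than allowing $\mathcal{S}$ to have a larger free rank with some generators restricting to zero on $W$. This is where the hypothesis that $\mathcal{S}$ is \emph{locally free} (as opposed to merely finitely generated) is essential, and the cleanest way to handle it is probably to pass to the normal form \eqref{nbhdnormalform}, where $\mathcal{N}^k_W$ becomes a graded $\mathcal{O}_W$-algebra finite over $\mathcal{O}_W$, reducing everything to ordinary commutative algebra over the field (or ring) $\mathcal{O}_{W,w}$ fibrewise. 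A secondary point requiring care is the gluing step: I would phrase the construction so that $F$ is manifestly independent of the chosen local basis $s_j$ — e.g. by characterizing $F$ intrinsically as the subsheaf of $E$ of sections whose $k$-jet along $W$ lies in $\mathcal{S}$ \emph{and} whose value at each point of $W$ lies in $r_k(\mathcal{S})$, i.e. $F = P_k(\mathcal{S})$ cut down appropriately — after which well-definedness of the germ is automatic.
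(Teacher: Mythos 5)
Your overall strategy (produce a local $\mathcal{N}^k_W$-frame of $\mathcal{S}$, lift it to honest sections of $E$, and let $F$ be their span) is essentially the paper's, but the step you yourself flag as the main obstacle is a genuine gap, and it is exactly where the paper's proof has content. Your Nakayama argument begins by asserting $\mathcal{S}/\mathfrak{m}\mathcal{S} \cong r_k(\mathcal{S})$, where $\mathfrak{m}\subset\mathcal{N}^k_W$ is the nilpotent ideal generated by $\nu_W^*$. But $r_k(\mathcal{S})$ is the image of $\mathcal{S}$ in $j^k_W(E)/\mathfrak{m}\,j^k_W(E)$, i.e.\ $\mathcal{S}/(\mathcal{S}\cap \mathfrak{m}\,j^k_W(E))$, which is a priori only a \emph{quotient} of $\mathcal{S}/\mathfrak{m}\mathcal{S}$; the asserted isomorphism is equivalent to the rank equality you admit you cannot justify, and passing to the normal form does not by itself resolve it. The missing idea is a torsion argument: if a free basis element $s$ of $\mathcal{S}$ had $r_k(s)=0$, then in the normal form $s$ would lie in $\bigoplus_{i\geq 1}\mathrm{Sym}^i(\nu_W^*)\otimes V$, hence $\mathrm{Sym}^k(\nu_W^*)\cdot s = 0$ in $j^k_W(E)$, contradicting freeness (a basis element cannot be annihilated by a nonzero element of $\mathcal{N}^k_W$). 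This forces the free rank of $\mathcal{S}$ to equal the rank of $r_k(\mathcal{S})$ and only then does your generation/basis step become legitimate.

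Second, the globalization step does not work as written. The germ of $F$ along $W$ is \emph{not} determined by $\mathcal{S}$: two subbundles with the same $k$-jet along $W$ generally have different germs, so your proposed intrinsic characterization ($F = P_k(\mathcal{S})$ ``cut down appropriately'') cannot single out a germ, and local choices of lifts $\tilde{e}_j$ on overlapping charts need only agree to order $k$, not as germs. The paper sidesteps this by working globally from the start: it fixes a global complement $C$ of $U=r_k(\mathcal{S})$ in $E|_W$, shows $j^k_W(E)=\mathcal{S}\oplus j^k_W(\pi^*C)$, builds a single $\mathcal{N}^k_W$-linear automorphism $\varphi$ of $j^k_W(E)$ carrying $j^k_W(\pi^*U)$ to $\mathcal{S}$, extends it via the polynomial splitting to a linear automorphism $\hat{\varphi}$ of $i_W^{-1}E$, and sets $F=\hat{\varphi}(i_W^{-1}\pi^*U)$. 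If you want to keep your local construction, you need an explicit gluing argument (e.g.\ a partition of unity in the affine space of subbundles with prescribed $k$-jet containing a fixed complement); it is not automatic.
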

\begin{proof}
It suffices to work using a tubular neighbourhood embedding $\pi: \nu_{W} \to W$ and hence we may use the isomorphism of Equation \ref{nbhdnormalform}. We can also assume that $E = \pi^*(V)$, for $V$ a vector bundle over $W$. Hence 
\[
j^{k}_{W}(E) \cong \bigoplus_{i = 0}^{k} \mathrm{Sym}_{\mathcal{O}_{W}}^{i}(\nu_{W}^{*}) \otimes V. 
\]
By viewing an element of $\mathrm{Sym}_{\mathcal{O}_{W}}^{i}(\nu_{W}^{*}) \otimes V$ as a polynomial section of $\pi^*(V)$, we obtain an $\mathcal{O}_{W}$-linear map $e: j^{k}_{W}(E) \to i_{W}^{-1}E$ which is a splitting of the projection $i_{W}^{-1}E \to j^{k}_{W}(E)$ arising in the definition of $k$-jets.

One direction of the lemma is clear: if $F$ is a subbundle of $E$, then $j^{k}_{W}F$ is a subbundle of $j^{k}_{W}E$. For the converse, let $\mathcal{S}$ be a subbundle of $j^{k}_{W}E$ and let $U = r_{k}(\mathcal{S}) $ be the corresponding subbundle of $E|_{W} \cong V$. Given a local basis $s_{1}, ..., s_{p}$ of $\mathcal{S}$, their image $\{ r_{k}(s_{i}) \}$ forms a local spanning set of $U$. By applying change of bases, we may assume that $r(s_{1}), ..., r(s_{l})$  forms a local basis of $U$, for some $0 \leq l \leq p$, and that the remaining basis elements get sent to zero: $r(s_{i}) = 0$. But such an $s_{i}$ is annihilated by $\mathrm{Sym}^{k}(\nu_{W}^*)$, which is not possible if it is part of a basis. Hence, $l = p$ and $e(s_{1}), ..., e(s_{p})$ are \emph{pointwise} linearly independent sections of $E$ near $W$. 

Now choose a subbundle $C \subset V$ which is complementary to $U$. This induces the decomposition 
\[
j^{k}_{W}(E) \cong j^{k}_{W}(\pi^*(U)) \oplus j^{k}_{W}(\pi^*(C)). 
\]
Let $pr_{U} : j^{k}_{W}(E) \to  j^{k}_{W}(\pi^*(U))$ be the induced projection map. The previous paragraph shows that we also have the decomposition $j_{W}^{k}(E) = \mathcal{S} \oplus j^{k}_{W}(\pi^*(C))$. By checking locally using a basis, we see that $p = pr_{U}|_{\mathcal{S}} : \mathcal{S} \to j^{k}_{W}(\pi^*(U))$ is an isomorphism. Hence $\varphi = p^{-1} \oplus id_{ j^{k}_{W}(\pi^*(C))}$ is an $\mathcal{N}_{W}^{k}$-linear automorphism of $j^{k}_{W}(E) $ sending $ j^{k}_{W}(\pi^*(U)) $ to $\mathcal{S}$. Let $\hat{\varphi}$ be an extension of $\varphi$ as a linear automorphism of $i_{W}^{-1}E$. Therefore, $F := \hat{\varphi}(i_{W}^{-1} \pi^*(U))$ is a subbundle of $i_{W}^{-1}E$ with the property that $j^{k}_{W}F = \mathcal{S}$. 
\end{proof}

The main objects of interest in this paper are subbundles of $j_W^k(TM)$:
\begin{definition}
A \emph{$k$-th order distribution} around $W$ is defined to be a subbundle of $j_{W}^{k}(TM)$. Given such a distribution $\mathcal{S}$, define 
\[ \mathrm{Tan}^{k}(W, \mathcal{S}) = P_{k}(\mathcal{S}) \subset \X(M),  \]
the $\mathcal{O}$-submodule of vector fields \emph{$k$-tangent} to $\mathcal{S}$.  
\end{definition}

Given a $k$-th order distribution $\mathcal{S}$, there exists a germ of distribution $\xi \subset TM$ around $W$ representing it by Lemma \ref{realizabilitylemma}. Equation \eqref{eq:tandescription} then provides the following explicit description of $\mathrm{Tan}^{k}(W, \mathcal{S})$:
\[ \mathrm{Tan}^{k}(W, \mathcal{S}) = \xi + \mathcal{I}_W^{k+1}TM.\hfill \]

\subsection{Involutivity and $k$-th order foliations} \label{ssec:LieRiehart}

The Lie bracket of vector fields does not descend to a bracket on the $k$-jets $j_{W}^{k}(TM)$. Instead, it descends to a map 
\[
j_{W}^{k}(TM) \times j_{W}^{k}(TM) \to j_{W}^{k-1}(TM). 
\]
This is because the ideal $\mathcal{I}_W$ is not preserved by differentiation. The solution to this problem is to restrict our attention to the subsheaf $\mathcal{A}_{W} \subset TM$ of vector fields which are tangent to $W$ since, by definition, they do preserve $\mathcal{I}_W$.

\begin{proposition} \label{derivationtheorem}
The Lie bracket of vector fields descends to a Lie bracket on $I_{k}(\mathcal{A}_{W}) \subset j^k_W(\mathcal{A}_W)$, endowing it with the structure of a Lie-Rinehart algebra over $\mathcal{N}_{W}^{k}$. That is: 
\begin{itemize}
\item[a)] It is a sheaf of $\mathcal{N}_{W}^{k}$-modules, 
\item[b)] It acts on $\mathcal{N}_{W}^{k}$ by derivations, 
\item[c)] Its Lie bracket satisfies the Leibniz rule with respect to $\mathcal{N}_{W}^{k}$. 
\end{itemize} 
Moreover, $I_{k}(\mathcal{A}_{W}) \cong \mathrm{Der}(\mathcal{N}_{W}^{k})$, the sheaf of derivations of $\mathcal{N}_{W}^{k}$. 
\end{proposition}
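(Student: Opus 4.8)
The plan is to verify each of the claimed structures by reducing them to statements about the honest sheaf of vector fields $\mathcal{A}_W$ tangent to $W$, for which the corresponding facts are standard, and then checking that passing to $k$-jets along $W$ preserves them. First I would note that $\mathcal{A}_W \subset \X(M)$ is a Lie subalgebra that preserves the vanishing ideal $\mathcal{I}_W$, hence preserves every power $\mathcal{I}_W^{j}$; this is precisely what makes the quotient operations below well-defined. Since $\mathcal{A}_W$ preserves $\mathcal{I}_W^{k+1}$, the Lie bracket $[\cdot,\cdot]$ on $\mathcal{A}_W$ descends to the quotient that computes $I_k(\mathcal{A}_W)$: concretely, $I_k(\mathcal{A}_W)$ is a quotient of $i_W^{-1}\mathcal{A}_W$ by those tangent vector fields whose $k$-jet along $W$ vanishes, and that subsheaf is a Lie ideal of $i_W^{-1}\mathcal{A}_W$ because bracketing with a $\mathcal{I}_W^{k+1}$-contribution (after restricting attention to tangent fields) again lands in $\mathcal{I}_W^{k+1}$. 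This gives the bracket and its Jacobi identity for free. For item (a), the module structure: $I_k(\mathcal{A}_W)$ is by construction the image of $i_W^{-1}\mathcal{A}_W$ in $j_W^k(TM)$, and $j_W^k(TM)$ is a sheaf of $\mathcal{N}_W^k$-modules; one checks that the $\mathcal{N}_W^k$-action preserves this image, which again follows because multiplying a tangent vector field by a function and then taking $k$-jets only depends on the $k$-jet of the function and the $k$-jet of the field.

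For item (b), the action on $\mathcal{N}_W^k$ by derivations: a vector field $X \in \mathcal{A}_W$ acts on $\mathcal{O}$ by the Lie derivative $X(f) = \Lie_X f$, and since $X$ preserves $\mathcal{I}_W^{k+1}$ this descends to a derivation of $\mathcal{N}_W^k = \mathcal{O}/\mathcal{I}_W^{k+1}$. I would then check that this descended action factors through the $k$-jet of $X$: if $X \in \mathcal{A}_W \cap \mathcal{I}_W^{k+1}TM$ then $X(f) \in \mathcal{I}_W^{k+1}$ for all $f$ (using $\mathcal{A}_W$-tangency to control one derivative loss — this is the only slightly delicate point, see below), so the action of $I_k(\mathcal{A}_W)$ on $\mathcal{N}_W^k$ is well-defined. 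Item (c), the Leibniz rule $[X, fY] = f[X,Y] + X(f)Y$ for the $\mathcal{N}_W^k$-action, is inherited verbatim from the Leibniz rule for vector fields on $M$ once (a) and (b) are in place, since both sides are computed by first working upstairs in $\X(M)$ and then taking $k$-jets.

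For the final isomorphism $I_k(\mathcal{A}_W) \cong \Der(\mathcal{N}_W^k)$, items (a)–(c) already produce the map $I_k(\mathcal{A}_W) \to \Der(\mathcal{N}_W^k)$; it remains to show it is an isomorphism of sheaves. This I would do locally, using a tubular neighbourhood so that $\mathcal{N}_W^k \cong \bigoplus_{i=0}^k \Sym_{\mathcal{O}_W}^i(\nu_W^*)$ as in \eqref{nbhdnormalform}, and choosing adapted coordinates $(x_1,\dots,x_l,x_{l+1},\dots,x_n)$ with $W = \{x_1 = \cdots = x_l = 0\}$. A derivation of $\mathcal{N}_W^k$ is determined by its values on the coordinate generators; an $\mathcal{N}_W^k$-linear combination of $\partial_{x_1},\dots,\partial_{x_n}$ realizing those values defines a tangent vector field (tangency to $W$ forces the coefficients of $\partial_{x_1},\dots,\partial_{x_l}$ to lie in the ideal generated by $x_1,\dots,x_l$, which is exactly the condition that the associated derivation preserve the grading appropriately), and its $k$-jet gives a preimage, proving surjectivity; injectivity is the well-definedness already checked in (b). The main obstacle I anticipate is the bookkeeping in item (b): verifying that a tangent vector field lying in $\mathcal{I}_W^{k+1}TM$ acts trivially on $\mathcal{N}_W^k$ uses the fact that although differentiation generally drops the ideal filtration by one, the tangency hypothesis compensates exactly one level — making $\mathcal{A}_W$, rather than all of $\X(M)$, the right object — and getting the multi-index/weight count right (especially in codimension $l > 1$, where $\mathcal{I}_W^{k+1}$ is generated by all weight-$(k+1)$ monomials in $x_1,\dots,x_l$) is where care is needed.
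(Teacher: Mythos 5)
Your proposal is correct and follows essentially the same route as the paper: the bracket, module structure, derivation action, and Leibniz rule all descend from the Lie--Rinehart structure of $\mathcal{A}_W$ over $\mathcal{O}$ because $\mathcal{A}_W$ preserves the powers of $\mathcal{I}_W$, and the isomorphism with $\Der(\mathcal{N}_W^k)$ is checked semi-locally. The only (cosmetic) difference is in the last step: you verify surjectivity in adapted coordinates by realizing a derivation through its values on generators, whereas the paper packages the same computation invariantly via the jet sequence of $\nu_W^*$ and a choice of connection, identifying both sides with $(\mathcal{N}_W^k \otimes TW) \oplus (\mathcal{I}_{(k)} \otimes \nu_W)$.
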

\begin{proof}
The bracket on $\mathcal{A}_{W}$ descends to $I_{k}(\mathcal{A}_{W})$ because $\mathcal{A}_{W}$ preserves $\mathcal{I}_W$. For the same reason, the differentiation action of $\mathcal{A}_{W}$ on $\mathcal{O}$ descends to an action of $I_{k}(\mathcal{A}_{W})$ on $\mathcal{N}_{W}^{k}$. That $I_{k}(\mathcal{A}_{W})$ is a Lie-Rinehart algebra over $\mathcal{N}_{W}^{k}$ then follows from the fact that $\mathcal{A}_{W}$ is a Lie-Rinehart algebra over $\mathcal{O}$. 

The action of $I_{k}(\mathcal{A}_{W})$ on $\mathcal{N}_{W}^{k}$ induces a natural map $I_{k}(\mathcal{A}_{W}) \to \mathrm{Der}(\mathcal{N}_{W}^{k})$, which we must show to be an isomorphism. We choose a tubular neighbourhood, allowing us to make use of Equation \ref{nbhdnormalform}. Let $\mathcal{I}_{(k)}$ be the ideal of $\mathcal{N}_{W}^{k}$ generated by $\nu_{W}^{*}$. Standard arguments show that an element of $\mathrm{Der}(\mathcal{N}_{W}^{k})$ is uniquely determined by a pair $(\omega, B)$, with 
\[
\omega \in \mathcal{N}_{W}^{k} \otimes TW, \qquad B \in \mathrm{Hom}(J^1(\nu_{W}^{*}), \mathcal{I}_{(k)}), 
\]
such that $B|_{T^*W \otimes \nu_{W}^{*}} = \omega \otimes id$. Here, $J^1(\nu_{W}^{*})$ is the jet bundle of $\nu_{W}^*$ and it lives in the sequence 
\[
0 \to T^*W \otimes \nu_{W}^* \to J^1(\nu_{W}^{*}) \to \nu_{W}^* \to 0. 
\]
Indeed, $\omega$ corresponds to the restriction of the derivation to $\mathcal{O}_{W}$ and $B$ corresponds to its restriction to $\nu_{W}^*$. The choice of a connection $\nabla$ on $\nu_{W}^*$ induces a splitting of the jet sequence and induces the isomorphism 
\[
\mathrm{Der}(\mathcal{N}_{W}^{k}) \cong  (\mathcal{N}_W^k \otimes TW) \oplus (\mathcal{I}_{(k)} \otimes \nu_{W}).
\]
The connection $\nabla$ also induces a decomposition $T\mathrm{tot}(\nu_{W}) = \pi^*(\nu_{W} \oplus TW)$, allowing us to write $\mathcal{A}_{W} = \pi^*(TW) \oplus \mathcal{I}_W  \pi^*(\nu_{W})$, and hence to conclude $I_{k}(\mathcal{A}_{W}) \cong  (\mathcal{N}_W^k \otimes TW) \oplus (\mathcal{I}_{(k)} \otimes \nu_{W})$, establishing the desired isomorphism. 
\end{proof}

For the following definition, recall the restriction map $r_k$ defined in Equation \ref{eq:restrictionToW}. 
\begin{definition}
A \emph{$k$-th order foliation} around $W$ is a subbundle $F \subset j_{W}^{k}(TM)$ such that $r_{k}(F) = TW$ and which is closed under the Lie bracket. 
\end{definition}
\begin{remark}
The condition $r_{k}(F) = TW$ implies that $F \subset  \mathrm{Der}(\mathcal{N}_{W}^{k})$. Therefore, it makes sense to ask for $F$ to be closed under the Lie bracket. 
\end{remark}
\begin{remark}
The reader may have noticed that it is natural to require more generally that $r_{k}(F) \subset TW$ be a regular foliation in the above definition. We do not consider this more general definition because we are ultimately interested in studying singular foliations of $b^{k+1}$-type. 
\end{remark}

\begin{remark}\label{rem:grass}
Assume that $M$ has dimension $n$ and that $W$ has codimension $l$. Let $\Gr(TM, n - l) \to M$ be the Grassmanian bundle of dimension $(n-l)$-subspaces of $TM$. Since it is a fibre bundle, it is possible to take its bundle of $k$-jets $J^k\Gr(TM,n-l)$. A subbundle $F \subset j^k_W(TM)$ may be viewed as a \emph{holonomic} section of the restricted jet-bundle $J^k\Gr(TM,n-l)|_W$. See Appendix \ref{sec:DiffTop} for more details.
\end{remark}

\subsubsection{The action of diffeomorphisms}

We conclude this section by discussing the action of diffeomorphisms on $k$-th order foliations. Given the germ of a diffeomorphism $f$ of $M$ fixing $W$ pointwise, we can consider its $k$-jet $j^k_Wf$ along $W$. This can be defined in the standard way, as in Appendix \ref{sec:DiffTop}, or through its action on the sheaf $\mathcal{N}_{W}^{k}$. Indeed, the pullback of functions descends to the $k$-th order neighbourhood to define a map of sheaves $(j^k_Wf)^* : \mathcal{N}_{W}^{k} \to \mathcal{N}_{W}^{k}$. The pushforward map $df$ on vector fields also descends to define an isomorphism of Lie algebras $df: I_{k}(\mathcal{A}_{W}) \to I_{k}(\mathcal{A}_{W})$ which satisfies the following equations 
\begin{equation} \label{pushforwardidentities}
df( g X) = (j^k_Wf^{-1})^*(g) df(X), \qquad (j^k_Wf)^*( df(X)(g) ) = X( (j^k_Wf)^*(g)), 
\end{equation}
where $X \in  I_{k}(\mathcal{A}_{W})$ and $g \in \mathcal{N}_{W}^k$. This implies that the map $df$ acts by pushing forward $k$-th order foliations. Furthermore, by the second identity of Equation \ref{pushforwardidentities}, the action of $df$ only depends on $j^k_Wf$. 

\begin{lemma}  \label{lem:factoringViakJets}
The group of $k$-jets of diffeomorphisms of $M$ that fix $W$ pointwise acts by pushforward on the set of $k$-th order foliations. 
\end{lemma}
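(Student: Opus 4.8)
The plan is to verify that the assignment $f \mapsto df$ descends to a well-defined group action of $J^k\Diff(M,W)$ on the set of $k$-th order foliations around $W$. The content has essentially been assembled in the discussion preceding the statement, so the proof is a matter of organizing those observations.

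First I would recall that, by Proposition~\ref{derivationtheorem}, for the germ of a diffeomorphism $f$ of $M$ fixing $W$ pointwise, the pushforward $df$ on the subsheaf $\mathcal{A}_W$ of vector fields tangent to $W$ descends to an isomorphism of sheaves of Lie algebras $df : I_k(\mathcal{A}_W) \to I_k(\mathcal{A}_W)$, since $f$ (fixing $W$) preserves the tangency condition and hence the ideal $\mathcal{I}_W$; moreover $I_k(\mathcal{A}_W) \cong \mathrm{Der}(\mathcal{N}_W^k)$ and $df$ is compatible with $(j^k_W f)^*$ via the identities in Equation~\eqref{pushforwardidentities}. Next I would check that $df$ sends $k$-th order foliations to $k$-th order foliations. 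If $F \subset j_W^k(TM)$ is a subbundle with $r_k(F) = TW$ closed under the bracket, then $df(F)$ is again an $\mathcal{N}_W^k$-submodule by the first identity in \eqref{pushforwardidentities} (using that $(j^k_W f^{-1})^*$ is a ring automorphism of $\mathcal{N}_W^k$), it is locally free of the same rank since $df$ is an isomorphism, it is closed under the bracket since $df$ is a Lie algebra morphism, and $r_k(df(F)) = TW$ because $f$ fixes $W$ pointwise so its differential along $W$ preserves $TW$ (concretely, $r_k$ intertwines $df$ with $df|_W$, which is the identity on $TW$ as $f|_W = \mathrm{id}$); hence $df(F)$ is again a subbundle satisfying the defining conditions, using Lemma~\ref{realizabilitylemma} to see that the subbundle condition is preserved.

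Then I would observe that the action factors through $j^k_W f$: by the second identity in \eqref{pushforwardidentities}, the derivation $df(X)$ of $\mathcal{N}_W^k$ depends only on $(j^k_W f)^*$, hence only on the $k$-jet of $f$, so $df$ (and therefore its action on $F$) depends only on $j^k_W f \in J^k\Diff(M,W)$. Finally I would confirm the group axioms: $d(\mathrm{id}) = \mathrm{id}$ trivially, and $d(f \circ g) = df \circ dg$ follows from functoriality of pushforward of vector fields (equivalently, from functoriality of the pullback on $\mathcal{N}_W^k$ together with the intertwining relations), all of which descend to the $k$-jet level. This gives the desired group action.

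I do not anticipate a serious obstacle here, since all the hard work is done in Proposition~\ref{derivationtheorem} and in Equation~\eqref{pushforwardidentities}; the only point requiring a little care is checking that $df(F)$ is again a \emph{subbundle} in the precise sense of the definition (locally free with $r_k$ of it a subbundle of $TM|_W$)--but since $df$ is an $\mathcal{N}_W^k$-semilinear isomorphism intertwining $r_k$ with the identity on $TW$, this is immediate. The statement could equally be packaged as: the functor sending $f$ to $df$ is a homomorphism from the group $J^k\Diff(M,W)$ into the group of automorphisms of the set of $k$-th order foliations.
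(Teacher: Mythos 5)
Your proposal is correct and follows essentially the same route as the paper, which gives no separate proof for this lemma but derives it directly from the preceding discussion: the descent of $df$ to $I_k(\mathcal{A}_W)$, the two identities in Equation~\eqref{pushforwardidentities} (the first giving $\mathcal{N}_W^k$-semilinearity, the second giving dependence only on $j^k_W f$), and functoriality of pushforward. The extra care you take in verifying that $df(F)$ remains a subbundle with $r_k(df(F)) = TW$ is a welcome explicit check of details the paper leaves implicit, but it is the same argument.
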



\section{Singular foliations of $b^{k+1}$-type}\label{sec:singfols}
In this section we will show that singular foliations of $b^{k+1}$-type correspond to the $k$-th order foliations introduced in the previous section.

%

Let $F \subset \mathrm{Der}(\mathcal{N}_{W}^{k})$ be a $k$-th order foliation and let $\xi \subset TM$ be a germ of a distribution around $W$ representing it. Given two sections $X, Y \in \Gamma(\xi)$, their bracket $[X,Y]$ may not be in $\Gamma(\xi)$, but the projection $q_{k}([X,Y])$ to $j_{W}^{k}(TM)$ does lie in $F = j_{W}^{k}(\xi)$. Therefore, 
\[
q_{k}[\xi, \xi] \subseteq j^{k}_{W}(\xi),
\]
meaning that $\xi$ is \emph{involutive up to degree $k$}. Moreover, this implies that 
\[
[\xi, \xi] \subseteq \Tan^{k}(W, F) = \xi + \mathcal{I}_W^{k+1} TM. 
\]
Since $\xi$ is tangent to $W$, it also preserves $\mathcal{I}_W$, and hence $\Tan^{k}(W, F)$ is closed under the Lie bracket. 

\begin{proposition} \label{Singfoliskorderfol}
Let $F \subset \mathrm{Der}(\mathcal{N}_{W}^{k})$ be a $k$-th order foliation. Then:
\begin{itemize}
\item[a)] $\Tan^{k}(W, F)$ is a singular foliation of $b^{k+1}$-type.
\item[b)] $\Tan^{k}(W, F)$ is locally free if and only if $W$ has codimension $1$, in which case it defines a Lie algebroid over $M$.
\item[c)] The assignment $F \mapsto \Tan^{k}(W, F)$ establishes a bijection between $k$-th order foliations and singular foliations of $b^{k+1}$-type for $(W,M)$. 
\end{itemize}
\end{proposition}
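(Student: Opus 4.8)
The plan is to treat the three parts in turn, with essentially all of the geometric content concentrated in the local normal form required for part (a).

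\textbf{Part (a).} Fix a germ of distribution $\xi\subset TM$ around $W$ with $j^k_W(\xi)=F$. The discussion preceding the proposition already gives $\Tan^k(W,F)=\xi+\mathcal{I}_W^{k+1}TM$, shows this $\mathcal{O}$-module is closed under the Lie bracket, and observes it consists of vector fields tangent to $W$; it is locally finitely generated because both summands are. I would then check the leaf condition by hand: away from $W$ the ideal $\mathcal{I}_W$ is the unit ideal and the $k$-jet constraint is vacuous, so there $\Tan^k(W,F)=\X(M)$ and its orbits are the components of $M\setminus W$; along $W$ the generators coming from $\mathcal{I}_W^{k+1}TM$ vanish, so the evaluation of $\Tan^k(W,F)$ at $w\in W$ equals $\xi|_w=T_wW$, which together with tangency to $W$ forces the orbit through $w$ to be the component of $W$. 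The remaining point is the explicit local model of Definition~\ref{def:typek}. Near $w\in W$ choose coordinates with $W=\{x_1=\dots=x_l=0\}$; since $\xi|_W=TW$ we may write $\xi=\langle Y_{l+1},\dots,Y_n\rangle$ with $Y_j=\partial_{x_j}+\sum_{i\le l}a_{ij}\partial_{x_i}$ and $a_{ij}\in\mathcal{I}_W$. I claim that involutivity of $\xi$ up to order $k$ lets one refine the coordinates so that in fact $a_{ij}\in\mathcal{I}_W^{k+1}$; granting this, $\Tan^k(W,F)=\langle\partial_{x_{l+1}},\dots,\partial_{x_n}\rangle+\mathcal{I}_W^{k+1}TM$, and since $x^J\partial_{x_m}$ with $m>l$ already lies in $\mathcal{I}_W^{k+1}\langle\partial_{x_{l+1}},\dots,\partial_{x_n}\rangle$, dropping those generators gives exactly the span in Definition~\ref{def:typek}.

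\textbf{The main obstacle} is this last claim, a Frobenius-type straightening which I would prove by induction on the order $1\le m\le k$. Assuming the $a_{ij}$ already lie in $\mathcal{I}_W^m$, extract the degree-$m$ homogeneous part $a^{(m)}_{ij}$ in the transverse variables $x_1,\dots,x_l$. Since the $\partial_{x_i}$-components ($i\le l$) of $[Y_j,Y_{j'}]$ cannot be matched by any nonzero element of $\xi$, the hypothesis $[\xi,\xi]\subseteq\xi+\mathcal{I}_W^{k+1}TM$ forces $[Y_j,Y_{j'}]\in\mathcal{I}_W^{k+1}TM$, and its degree-$m$ part (which survives because $m\le k$) is a closedness/zero-curvature equation on the tensor $(a^{(m)}_{ij})$ in the $W$-directions. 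A Poincaré-lemma argument in the $x_{>l}$-variables, with coefficients polynomial of degree $m$ in $x_{\le l}$, exhibits this data as the differential of a map $g=(g_i)$, and the coordinate change $x_i\mapsto x_i-g_i$ ($i\le l$), which fixes $W$, removes the degree-$m$ tilt without disturbing lower orders; after $k$ steps $a_{ij}\in\mathcal{I}_W^{k+1}$. (The lowest stage is essentially the classical Frobenius theorem, so one could alternatively package the claim as a local Frobenius normal form for $k$-th order foliations.)

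\textbf{Part (b).} Work in the normal form of (a) near $w\in W$. If $l=1$, the generators $x_1^{k+1}\partial_{x_1},\partial_{x_2},\dots,\partial_{x_n}$ satisfy no $\mathcal{O}$-linear relation (as $x_1^{k+1}$ is not a zero divisor among smooth germs), so $\Tan^k(W,F)$ is a free $\mathcal{O}$-module of rank $n$ near $W$ and equals $\X$ of rank $n$ away from $W$; being finitely generated projective of constant rank $n$, it is $\Gamma$ of a rank-$n$ bundle $A$, and the bracket together with the inclusion $A\hookrightarrow TM$ make $A$ a Lie algebroid. If $l\ge 2$, I would count minimal generators at $w$: the classes of $\partial_{x_j}$ ($j>l$) and of the $x^I\partial_{x_i}$ ($|I|=k+1$, $i\le l$) are linearly independent in $\Tan^k(W,F)_w/\mathfrak{m}_w\Tan^k(W,F)_w$ (the submodule $\mathfrak{m}_w\Tan^k(W,F)_w$ meets the relevant weight-and-direction subspace trivially, $\mathfrak{m}_w$ being the germs vanishing at $w$), so the minimal number of local generators is $(n-l)+l\binom{k+l}{l-1}>n$, strictly above the generic rank $n$; since a singular foliation is locally free exactly where these two numbers agree, $\Tan^k(W,F)$ fails to be locally free precisely when $l\ge 2$.

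\textbf{Part (c).} Injectivity is immediate from $I_kP_k=\mathrm{id}$ (recalled in Section~\ref{sec:jets}): $F=I_k(P_k(F))=I_k(\Tan^k(W,F))$. For surjectivity, given a singular foliation $\ff$ of $b^{k+1}$-type, set $F:=I_k(\ff)$ and argue locally in normal-form coordinates for $\ff$: each generator $x^I\partial_{x_i}$ has vanishing $k$-jet along $W$, so $F=j^k_W(H)$ with $H=\langle\partial_{x_{l+1}},\dots,\partial_{x_n}\rangle$; by Lemma~\ref{realizabilitylemma} this is a subbundle of $j^k_W(TM)$, visibly $r_k(F)=H|_W=TW$, and $F$ is bracket-closed because $H$ is involutive, so $F$ is a $k$-th order foliation. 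Finally $\Tan^k(W,F)=P_kI_k(H)=H+\mathcal{I}_W^{k+1}TM$ by \eqref{eq:tandescription}, which by the same computation as in (a) equals $\ff$. Since $I_k$, $P_k$, and the defining conditions of a $k$-th order foliation are all local, these computations suffice and $F\mapsto\Tan^k(W,F)$ is a bijection.
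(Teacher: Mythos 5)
Your parts (b) and (c) track the paper's own proof quite closely: (c) is the same $I_k$/$P_k$ argument reduced to the local normal form, and (b) differs only in presentation (the paper observes that in the decomposition $\Tan^{k}(W,F)=\xi\oplus\mathcal{I}_W^{k+1}C$ local freeness is equivalent to local freeness of $\mathcal{I}_W^{k+1}$, rather than counting minimal generators at a point of $W$, but the content is identical). The genuine divergence is in part (a). The paper does not normalize $\xi$ by hand: it chooses a local foliation $C$ complementary to $W$, writes $\Tan^{k}(W,F)=\xi\oplus\mathcal{I}_W^{k+1}C$, and invokes the splitting theorem for singular foliations along a transverse leaf $L$ of $C$ to get $\Tan^{k}(W,F)\cong i^{!}\Tan^{k}(W,F)\times TW$ with transverse factor $\mathcal{I}_W^{k+1}TL$; the local model of Definition \ref{def:typek} then drops out with no induction. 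Your route --- an order-by-order straightening of the graph coefficients $a_{ij}$ --- is more elementary and self-contained (it amounts to a direct proof of the Frobenius normal form, which the paper only deduces afterwards as Corollary \ref{Frobenius}), at the cost of the inductive bookkeeping.

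One caveat on that induction. At the stage $m=1$ the quadratic terms $a_{i'j}\,\partial_{x_{i'}}a_{ij'}$ in the bracket lie in $\mathcal{I}_W^{2m-1}=\mathcal{I}_W^{m}$, so they survive into the degree-$m$ equation, and likewise the error produced by the coordinate change $x_i\mapsto x_i-g_i$ is only of order $2m-1=m$. What you obtain at this stage is therefore not a closedness condition but the flatness of the induced connection on $\nu_W$ in the leaf directions (cf.\ Corollary \ref{existenceofflatconnection}), and the normalizing $g$ is a gauge transformation $x\mapsto g(x_{l+1},\dots,x_n)\,x$ obtained by locally trivializing that flat connection, not a primitive supplied by the Poincar\'e lemma. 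Your parenthetical remark shows you are aware of this, but the sentence ``exhibits this data as the differential of a map $g$'' is only literally correct for $m\ge 2$; the base case needs to be argued separately. With that spelled out, the argument is sound.
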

\begin{proof}
By the preceding discussion and the decomposition $\Tan^{k}(W, F) = \xi + \mathcal{I}_W^{k+1} TM$, it is clear that $\Tan^{k}(W, F) $ is locally defined, locally finitely generated, and closed under the Lie bracket. Hence, it is a singular foliation. Given a point $p \in W$, choose a local foliation $C \subset TM$ which is complementary to $W$. Then $C$ is also complementary to $\xi$ and we can write $\Tan^{k}(W, F) = \xi \oplus \mathcal{I}_W^{k+1}C$. Let $i: L \to M$ be a leaf of $C$ that intersects $W$ at the point $p$. Then by the splitting theorem for singular foliations \cite{MR4065067}, we have an isomorphism 
\[
\Tan^{k}(W, F) \cong i^{!}\Tan^{k}(W, F) \times TW. 
\] 
Here, $i^{!}\Tan^{k}(W,F)$ is simply the collection of all $X|_{L}$, where $X \in \Tan^{k}(W, F)$ is tangent to $L$, which we see is equal to $\mathcal{I}_W^{k+1}TL$. In local coordinates $(z_{1}, ..., z_{l})$ centred at $p$, this is generated by $z^{I} \partial_{z_{i}}$, where $z^{I} = z_{1}^{i_{1}} ... z_{l}^{i_{l}}$, with $I = (i_{1}, ..., i_{l})$ running over all multi-indices of weight $k+1$. This establishes the desired local model showing that $\Tan^{k}(W, F)$ is indeed of $b^{k+1}$-type. 

Using the decomposition $\Tan^{k}(W, F) = \xi \oplus \mathcal{I}_W^{k+1}C$, and noting that both $\xi$ and $C$ are vector bundles, we see that $\Tan^{k}(W, F)$ is locally free if and only if $\mathcal{I}_W^{k+1}$ is, which is the case precisely when $W$ has codimension $1$. 

Let $\mathcal{A}$ be a foliation of $b^{k+1}$-type. Using the local model, we see that near a point $p \in W$, there is a foliation $\xi$ tangent to $W$ such that 
\[
\mathcal{A} = \xi + \mathcal{I}_W^{k+1} TM = \Tan^{k}(W, j_{W}^{k}\xi) = P_{k}(j_{W}^{k}\xi). 
\]
Therefore, $I_{k}(\mathcal{A}) = j^{k}_{W}\xi$. Hence, $I_{k}(\mathcal{A})$ is a $k$-th order foliation, since it is a local definition. It then follows that $I_{k}$ and $P_{k}$ determine a bijection between $k$-th order foliations and singular foliations of $b^{k+1}$-type. 
\end{proof}
\begin{remark}
There is a unique $0$-th order foliation given by the tangent bundle $TW$. The associated singular foliation of $b^1$-type is $\mathcal{A}_{W} = \Tan^{0}(W, TW)$, the vector fields tangent to $W$. 
\end{remark}

\begin{corollary}[Frobenius theorem] \label{Frobenius} Let $F \subset \mathrm{Der}(\mathcal{N}_{W}^{k})$ be a $k$-th order foliation. Then around any point $p \in W$, there are local coordinates $(x_{1}, ..., x_{r}, z_{1}, ... , z_{l})$ centred at $p$ for which $W = \{ z_{1} = ... = z_{l} = 0 \}$ and such that $F = j_{W}^{k}\langle \partial_{x_{1}}, ..., \partial_{x_{r}} \rangle$. 
\end{corollary}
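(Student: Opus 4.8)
The plan is to reduce to the local normal form already available for singular foliations of $b^{k+1}$-type. First I would pass from $F$ to the associated singular foliation: by Proposition~\ref{Singfoliskorderfol}(a) the module $\Tan^{k}(W,F) = P_{k}(F)$ is a singular foliation of $b^{k+1}$-type for $(W,M)$. Condition (2) of Definition~\ref{def:typek} then supplies, around $p$, local coordinates which after relabelling I write as $(x_{1},\dots,x_{r},z_{1},\dots,z_{l})$ with $r = n-l$ and $W = \{z_{1} = \dots = z_{l} = 0\}$, in which
\[
\Tan^{k}(W,F) = \langle z^{I}\partial_{z_{1}},\dots,z^{I}\partial_{z_{l}},\partial_{x_{1}},\dots,\partial_{x_{r}}\rangle,
\]
with $I$ ranging over multi-indices of weight $k+1$ in the $z$-variables. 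I would then observe that this module equals $\langle \partial_{x_{1}},\dots,\partial_{x_{r}}\rangle + \mathcal{I}_{W}^{k+1}TM$: the ideal $\mathcal{I}_{W}^{k+1}$ is generated by the monomials $z^{I}$, and the extra generators $z^{I}\partial_{x_{j}}$ already lie in $\langle \partial_{x_{1}},\dots,\partial_{x_{r}}\rangle$.

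Next I would run the correspondence backwards. Since $\Tan^{k}(W,F) = P_{k}(F)$ and $I_{k}P_{k}(\mathcal{S}) = \mathcal{S}$ for every $\mathcal{N}^{k}_{W}$-submodule $\mathcal{S}$ (equivalently, by the bijection of Proposition~\ref{Singfoliskorderfol}(c)), we have $F = I_{k}(\Tan^{k}(W,F))$. The operation $I_{k}$ is additive, it kills $\mathcal{I}_{W}^{k+1}TM$ (sections in this submodule have vanishing $k$-jet), and $I_{k}(\langle \partial_{x_{1}},\dots,\partial_{x_{r}}\rangle) = j^{k}_{W}\langle \partial_{x_{1}},\dots,\partial_{x_{r}}\rangle$ because $\langle \partial_{x_{1}},\dots,\partial_{x_{r}}\rangle$ is a subbundle of $TM$. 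Combining,
\[
F = I_{k}\big(\langle \partial_{x_{1}},\dots,\partial_{x_{r}}\rangle + \mathcal{I}_{W}^{k+1}TM\big) = j^{k}_{W}\langle \partial_{x_{1}},\dots,\partial_{x_{r}}\rangle,
\]
which is the asserted normal form.

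There is essentially no obstacle here: all the real content is packaged in Definition~\ref{def:typek} and Proposition~\ref{Singfoliskorderfol}, and what remains is bookkeeping with $I_{k}$ and $P_{k}$. The only point deserving a moment's care is that the coordinates produced by the local model are genuinely adapted to $W$ --- i.e. that $W$ is cut out by $z_{1} = \dots = z_{l} = 0$ --- but this is part of the statement of Definition~\ref{def:typek}, so nothing extra is required. If one preferred a more self-contained argument one could instead write $\Tan^{k}(W,F) = \xi + \mathcal{I}_{W}^{k+1}TM$ for a germ of a regular distribution $\xi$ tangent to $W$ (which is involutive, as in the discussion preceding Proposition~\ref{Singfoliskorderfol}, and has $W$ as a leaf because $r_{k}(F) = TW$), apply the classical Frobenius theorem to $\xi$ to obtain the coordinates, and then compute $I_{k}$ exactly as above.
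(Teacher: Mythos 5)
Your proposal is correct and follows the same route as the paper: the paper's proof likewise invokes Proposition \ref{Singfoliskorderfol} to realize $F$ as $I_{k}(\mathcal{A})$ for a singular foliation $\mathcal{A}$ of $b^{k+1}$-type, puts $\mathcal{A}$ into its local model, and reads off the normal form for $F$. You have simply made the $I_{k}$/$P_{k}$ bookkeeping explicit, which the paper leaves implicit.
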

\begin{proof}
By Proposition \ref{Singfoliskorderfol}, there is a foliation of $b^{k+1}$-type $\mathcal{A}$ such that $F = I_{k}(\mathcal{A})$. Choosing local coordinates that put $\mathcal{A}$ into the $b^{k+1}$-local model, we obtain the desired normal form for $F$. 
\end{proof}


\section{Extensions: Part 1}\label{sec:extending}
Given a vector bundle $E$, there are forgetful maps $u_{k} : j^{k}_{W}(E) \to j^{k-1}_{W}(E)$ between the jet spaces. In particular, there are maps $\mathrm{Der}(\mathcal{N}_{W}^{k}) \to \mathrm{Der}(\mathcal{N}_{W}^{k-1})$ and therefore a $k$-th order foliation $F$ determines a $(k-1)$-st order foliation $u_{k}(F)$. The \textbf{Extension Problem} asks whether, for a given $k$-th order foliation $F$, there is a $(k+1)$-st order foliation $\tilde{F}$ such that $u_{k+1}(\tilde{F}) = F$. We will return to this problem from several different perspectives in this paper. In this section, we explain how this problem is controlled by the restriction $\Tan^{k}(W, F)|_{W}$, which is a Lie algebroid over $W$ even when $\Tan^{k}(W, F)$ is not locally free. 

\begin{proposition} \label{restrictedLiealgebroid}
Let $\mathcal{A}$ be a foliation of $b^{k+1}$-type and let $\mathcal{A}|_{W} = i_{W}^{-1}(\mathcal{A}/\mathcal{I}_{W} \mathcal{A})$. Then $\mathcal{A}|_{W} $ is a Lie algebroid over $W$ and the anchor map defines the following exact sequence 
\begin{equation} \label{restrictedanchorsequence}
0 \to \mathrm{Sym}^{k+1}(\nu_{W}^*) \otimes \nu_{W} \to \mathcal{A}|_{W}  \to TW \to 0. 
\end{equation}
\end{proposition}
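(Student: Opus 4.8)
The plan is to reduce everything to the explicit local model of Definition~\ref{def:typek} and glue, using the fact that the module $\mathcal{A}$ is already known to be a singular foliation (and hence a Lie--Rinehart algebra over $\mathcal{O}$). First I would observe that since $\mathcal{A}$ is closed under the Lie bracket and $\mathcal{I}_W \mathcal{A}$ is a Lie ideal in $\mathcal{A}$ — indeed $[\mathcal{A}, \mathcal{I}_W \mathcal{A}] \subseteq \mathcal{I}_W \mathcal{A}$ because vector fields in $\mathcal{A}$ are tangent to $W$ and therefore preserve $\mathcal{I}_W$, while $\mathcal{A}$ is a module over $\mathcal{O}$ — the quotient $\mathcal{A}/\mathcal{I}_W\mathcal{A}$ inherits a bracket that is $\mathcal{O}_W$-bilinear in the appropriate Leibniz sense, and the anchor $\rho: \mathcal{A} \to TM$ descends to a map $\mathcal{A}|_W \to TM|_W$. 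That $\mathcal{A}|_W$ is locally free of rank $n$ follows from Proposition~\ref{prop:localformhypersurfacealgebroid} in codimension one and, in general, from the local model: $\mathcal{A}/\mathcal{I}_W\mathcal{A}$ has the $z^I\partial_{z_i}, \partial_{x_j}$ as an $\mathcal{O}_W$-basis. So $\mathcal{A}|_W$ is a Lie algebroid over $W$, and it only remains to identify its anchor and kernel.

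Next I would compute the image of the descended anchor. In the local coordinates $(x_1,\dots,x_r,z_1,\dots,z_l)$ with $W = \{z=0\}$, the generators $\partial_{x_j}$ map to $\partial_{x_j}|_W$, which span $TW$, while each generator $z^I\partial_{z_i}$ with $|I| = k+1 \geq 1$ maps into $\mathcal{I}_W TM$ and therefore restricts to $0$ in $TM|_W$. Hence the image of the anchor is exactly $TW$, giving surjectivity onto $TW$ and showing the kernel is spanned over $\mathcal{O}_W$ by the classes of the $z^I\partial_{z_i}$. The remaining point is to identify this kernel invariantly with $\mathrm{Sym}^{k+1}(\nu_W^*)\otimes\nu_W$. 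For this I would use the description from the proof of Proposition~\ref{derivationtheorem}: choosing a tubular neighbourhood, $\mathcal{A}_W = \pi^*(TW)\oplus \mathcal{I}_W\,\pi^*(\nu_W)$, and the kernel of $\rho$ restricted to $W$ is the part of $\mathcal{I}_W^{k+1}TM/\mathcal{I}_W\cdot(\,\cdot\,)$ lying in the $\nu_W$-direction, which is $(\mathcal{I}_W^{k+1}/\mathcal{I}_W^{k+2})\otimes \nu_W \cong \mathrm{Sym}^{k+1}(\nu_W^*)\otimes\nu_W$ via the standard identification of the conormal powers. One must check this identification is independent of the tubular neighbourhood; this follows because the leading-order term of a section of $\mathcal{I}_W^{k+1}$ in $\mathcal{I}_W^{k+1}/\mathcal{I}_W^{k+2}$ is canonically defined, and the bracket with elements of $\mathcal{A}_W$ vanishes modulo $\mathcal{I}_W^{k+1}$ on such terms, so no connection choice enters the kernel.

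The one genuinely substantive step — and the main obstacle — is verifying that the bracket on $\mathcal{A}|_W$ makes the sequence~\eqref{restrictedanchorsequence} an honest short exact sequence of Lie algebroids, i.e.\ that $\mathrm{Sym}^{k+1}(\nu_W^*)\otimes\nu_W$ sits inside $\mathcal{A}|_W$ as the isotropy Lie algebra bundle (abelian, since $[\mathcal{I}_W^{k+1}TM,\mathcal{I}_W^{k+1}TM] \subseteq \mathcal{I}_W^{2k+1}TM \subseteq \mathcal{I}_W^{k+2}TM$ for $k\geq 1$, so it dies in the restriction) and that the anchor is a bracket homomorphism. The abelianness and the bracket-homomorphism property are both immediate from the corresponding facts for $\mathcal{A}$ as a Lie--Rinehart algebra over $\mathcal{O}$, combined with the containment $[\mathcal{A},\mathcal{I}_W\mathcal{A}]\subseteq \mathcal{I}_W\mathcal{A}$ noted above; the only care needed is to confirm these module/ideal manipulations are valid sheaf-theoretically, for which one works in the local model and invokes that all the assertions are local. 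Assembling these pieces yields the exact sequence as claimed.
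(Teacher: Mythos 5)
Your proof is correct and follows essentially the same route as the paper's: descend the bracket using the fact that sections of $\mathcal{A}$ preserve $\mathcal{I}_W$, obtain local freeness from the splitting $\mathcal{A}=\xi\oplus\mathcal{I}_W^{k+1}C$ (equivalently, the coordinate model), observe the anchor surjects onto $TW$, and identify the kernel with $(\mathcal{I}_W^{k+1}/\mathcal{I}_W^{k+2})\otimes\nu_W\cong\mathrm{Sym}^{k+1}(\nu_W^{*})\otimes\nu_W$. The only caveat is that your aside on abelianness of the isotropy is not needed for the statement as formulated (and is false for $k=0$, where the kernel is $\mathrm{End}(\nu_W)$), but since you restrict that remark to $k\geq 1$ it does not affect the correctness of the argument.
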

\begin{proof}
Because $\mathcal{A}$ is tangent to $W$, it preserves $\mathcal{I}_W$ and hence the Lie bracket on sections descends to $\mathcal{A}|_{W}$. Moreover, restricting sections of $\mathcal{A}$ to $W$ defines a bracket preserving map to $TW$. To see that $\mathcal{A}|_{W}$ is locally free, choose a distribution $\xi$ that is tangent to order $k$ to $\mathcal{A}$ and let $C$ be a complementary distribution. Then $\mathcal{A} = \xi \oplus \mathcal{I}_W^{k+1}C$, so that $\mathcal{A}|_{W} = \xi|_{W} \oplus (\mathcal{I}_W^{k+1}C/\mathcal{I}_W^{k+2}C) \cong TW \oplus (\mathrm{Sym}^{k+1}(\nu_{W}^*) \otimes \nu_{W})$. Hence, $\mathcal{A}|_{W}$ is a Lie algebroid. It is also clear that the image of the anchor $\rho$ is $TW$. To determine the kernel of the anchor map, note first that $\mathcal{I}_W^{k+1} TM$ is a subsheaf of $\mathcal{A}$ and this induces a map 
\[
\mathrm{Sym}^{k+1}(\nu_{W}^*) \otimes TM|_{W} \cong i^{-1}_{W}(\mathcal{I}_W^{k+1} TM/ \mathcal{I}_W^{k+2} TM) \to \mathcal{A}|_{W}
\]
whose image lies in the kernel of $\rho$. The subsheaf $\mathcal{I}_W^{k+1} \xi \subset \mathcal{I}_W^{k+1} TM$ gets sent to $0$, implying that the above map descends to $\mathrm{Sym}^{k+1}(\nu_{W}^*) \otimes \nu_{W} \to \mathrm{Ker}(\rho)$. Using the complementary distribution $C$ shows that this is an isomorphism.  
\end{proof}

Let $F$ be a $k$-th order foliation around $W$ and let $\mathcal{A} = \mathrm{Tan}^{k}(W,F)$ be the corresponding singular foliation of $b^{k+1}$-type. An extension of $F$ to a $(k+1)$-st order foliation $\tilde{F}$ is equivalent to a sub-singular foliation $\tilde{\mathcal{A}} \subset \mathcal{A}$ of $b^{k+2}$-type. Using this equivalence, we obtain the following solution to the extension problem. 

\begin{theorem} \label{extensionbijpart1}
Let $F$ be a $k$-th order foliation around $W$ and let $\mathcal{A} = \mathrm{Tan}^{k}(W,F)$ be the corresponding singular foliation of $b^{k+1}$-type. Then extensions of $F$ to order $k+1$ are in bijection with Lie algebroid splittings $\sigma : TW \to \mathcal{A}|_{W}$ of sequence \eqref{restrictedanchorsequence}. 
Explicitly, given a Lie algebroid splitting $\sigma$ the corresponding singular foliation of $b^{k+2}$-type is given by
\begin{equation}\label{eq:elem}
\tilde{\mathcal{A}} = \set{X \in \A : X|_W \in \im \sigma},
\end{equation}
where $X|_W$ denotes the section of $\mathcal{A}|_{W} = i_{W}^{-1}(\mathcal{A}/\mathcal{I}_{W} \mathcal{A})$ induced by $X$.
\end{theorem}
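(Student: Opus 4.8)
The plan is to set up a bijection directly, using the description of $\tilde{\mathcal{A}}$ given in \eqref{eq:elem} in one direction and a restriction map in the other. First I would show that any $(k+1)$-st order extension $\tilde{F}$ of $F$, equivalently a sub-singular foliation $\tilde{\mathcal{A}} \subset \mathcal{A}$ of $b^{k+2}$-type, determines a Lie algebroid splitting of \eqref{restrictedanchorsequence}. Indeed, $\tilde{\mathcal{A}}$ is tangent to $W$, so its restriction $\tilde{\mathcal{A}}|_W = i_W^{-1}(\tilde{\mathcal{A}}/\mathcal{I}_W\tilde{\mathcal{A}})$ is a Lie algebroid over $W$ by the same argument as in Proposition \ref{restrictedLiealgebroid}, and the inclusion $\tilde{\mathcal{A}} \subset \mathcal{A}$ induces a Lie algebroid morphism $\tilde{\mathcal{A}}|_W \to \mathcal{A}|_W$. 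Using a local distribution $\tilde{\xi}$ tangent to order $k+1$ to $\tilde{\mathcal{A}}$, with complement $C$, we have $\tilde{\mathcal{A}} = \tilde{\xi} \oplus \mathcal{I}_W^{k+2}C$, so $\tilde{\mathcal{A}}|_W \cong TW \oplus (\mathrm{Sym}^{k+2}(\nu_W^*) \otimes \nu_W)$; in particular $\tilde{\mathcal{A}}|_W \to \mathcal{A}|_W$ kills the $\mathrm{Sym}^{k+2}$-part and its image in $\mathcal{A}|_W = TW \oplus (\mathrm{Sym}^{k+1}(\nu_W^*)\otimes \nu_W)$ is a complement to $\mathrm{Sym}^{k+1}(\nu_W^*)\otimes\nu_W$ mapping isomorphically to $TW$ under the anchor $\rho$. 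Composing $TW \xrightarrow{\rho^{-1}} \tilde{\mathcal{A}}|_W \to \mathcal{A}|_W$ gives the desired splitting $\sigma$, which is a Lie algebroid morphism because both maps are.

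Conversely, given a Lie algebroid splitting $\sigma: TW \to \mathcal{A}|_W$, I would verify that $\tilde{\mathcal{A}}$ defined by \eqref{eq:elem} is a singular foliation of $b^{k+2}$-type. It is an $\mathcal{O}$-submodule because $X \mapsto X|_W$ is $\mathcal{O}$-linear modulo $\mathcal{I}_W$ and $\im\sigma$ is an $\mathcal{O}_W$-submodule; it is closed under the Lie bracket precisely because $\sigma$ is a Lie algebroid morphism, so $\im\sigma \subset \mathcal{A}|_W$ is a subalgebroid and $[X,Y]|_W = [X|_W, Y|_W] \in \im\sigma$ whenever $X|_W, Y|_W \in \im\sigma$. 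For the local model: working near $p \in W$ with $\mathcal{A}$ in its $b^{k+1}$-normal form and coordinates $(x_1,\dots,x_r,z_1,\dots,z_l)$, the subalgebroid $\im\sigma$ is a complement to $\mathrm{Sym}^{k+1}(\nu_W^*)\otimes\nu_W$ inside $\mathcal{A}|_W$, hence spanned by $\partial_{x_j} + (\text{$\mathrm{Sym}^{k+1}$ terms})$; lifting these and taking the span with $\mathcal{I}_W^{k+2}TM$ (which is automatically in $\tilde{\mathcal{A}}$) one checks $\tilde{\mathcal{A}}$ is, after a coordinate change absorbing the lower-order correction, of the form $\langle z^J\partial_{x_1},\dots,z^J\partial_{x_l},\partial_{x_{r+1}},\dots \rangle$ with $J$ of weight $k+2$, i.e.\ $b^{k+2}$-type. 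That $\tilde{\mathcal{A}} \subset \mathcal{A}$ is clear since $\im\sigma \subset \mathcal{A}|_W$. Local finite generation is then automatic, and since $\tilde{\mathcal{A}}$ has the $b^{k+2}$-local model it follows from Proposition \ref{Singfoliskorderfol} that $I_{k+1}(\tilde{\mathcal{A}})$ is a $(k+1)$-st order foliation $\tilde{F}$, and $u_{k+1}(\tilde F) = F$ because $\tilde{\mathcal{A}} + \mathcal{I}_W^{k+1}TM = \mathcal{A}$.

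Finally I would check that the two constructions are mutually inverse. Starting from $\sigma$, forming $\tilde{\mathcal{A}}$ and then restricting: $\tilde{\mathcal{A}}|_W$ maps onto $\im\sigma$ in $\mathcal{A}|_W$, and tracing through the identifications the recovered splitting is $\sigma$ itself. Starting from $\tilde{\mathcal{A}}$, extracting $\sigma$ and reconstructing: the set $\{X \in \mathcal{A} : X|_W \in \im\sigma\}$ contains $\tilde{\mathcal{A}}$ by construction of $\sigma$, and equals it because both are $b^{k+2}$-type subfoliations of $\mathcal{A}$ with the same restriction to $W$ — a local computation in the normal form shows a $b^{k+2}$-type subfoliation of $\mathcal{A}$ is determined by its restriction, since the ambiguity lies in $\mathcal{I}_W^{k+2}TM$ which both contain. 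The main obstacle I anticipate is the verification that $\tilde{\mathcal{A}}$ in \eqref{eq:elem} genuinely has the $b^{k+2}$-local model: one must produce the coordinate change that straightens the $\mathrm{Sym}^{k+1}$-valued correction term coming from $\sigma$ into the standard generators, and confirm that involutivity of $\im\sigma$ is exactly what makes this possible without destroying tangency to $W$.
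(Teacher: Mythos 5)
Your two directions are essentially the paper's: for an extension $\tilde{\mathcal{A}}\subset\mathcal{A}$ of $b^{k+2}$-type you restrict to $W$, use a complementary distribution to see that $\tilde{\mathcal{A}}|_W\to\mathcal{A}|_W$ kills the top-order summand and hence factors through a Lie algebroid splitting; conversely you take the preimage \eqref{eq:elem} and check bracket-closure via the subalgebroid property of $\im\sigma$. The one place you diverge is the verification of the $b^{k+2}$-local model, which you flag as the main anticipated obstacle and propose to settle by a coordinate change straightening the $\mathrm{Sym}^{k+1}$-valued correction. That coordinate change is not needed: once you lift a local frame of $\sigma(TW)$ to a distribution $\eta$ with $\mathcal{A}=\eta\oplus\mathcal{I}_W^{k+1}C$, your own bracket-closure observation gives $[\eta,\eta]\subset\eta+\mathcal{I}_W\mathcal{A}\subset\eta+\mathcal{I}_W^{k+2}TM$, so $j^{k+1}_W(\eta)$ is a $(k+1)$-st order foliation, and Proposition \ref{Singfoliskorderfol}(a) (whose proof already contains the splitting-theorem argument producing the local model) immediately identifies $\eta\oplus\mathcal{I}_W^{k+2}C=\tilde{\mathcal{A}}$ as a foliation of $b^{k+2}$-type. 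This is exactly how the paper closes the argument, and it also makes your "mutually inverse" check routine, since any $b^{k+2}$-type subfoliation of $\mathcal{A}$ contains $\mathcal{I}_W\mathcal{A}$ and is therefore the full preimage of its image in $\mathcal{A}|_W$. (Minor slip: in your local model the weight-$(k{+}2)$ monomials should multiply the normal derivatives $\partial_{z_i}$, not $\partial_{x_i}$.)
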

\begin{proof}
Let $\tilde{\mathcal{A}} \subset \mathcal{A}$ be a subsingular foliation of $b^{k+2}$-type. This induces a Lie algebroid morphism $f: \tilde{\mathcal{A}}|_{W} \to \mathcal{A}|_{W}$. Represent $\tilde{\mathcal{A}}$ by a germ of distribution $\xi$ and let $C$ be a complementary distribution. Then, as in the proof of Proposition \ref{restrictedLiealgebroid}, $\tilde{\mathcal{A}}|_{W} = TW \oplus (\mathcal{I}_W^{k+2}C/\mathcal{I}_W^{k+3}C)$ and $\mathcal{A}|_{W} = TW \oplus  (\mathcal{I}_W^{k+1}C/\mathcal{I}_W^{k+2}C)$. In this decomposition, $f$ is the identity on the first summand and vanishes on the second. Hence, it factors through a Lie algebroid splitting $\sigma: TW \to \mathcal{A}|_{W}$. 

In the other direction, suppose we are given a splitting $\sigma$. This determines a vector bundle map $\mathcal{A}|_{W} \to \mathrm{Sym}^{k+1}(\nu_{W}^*) \otimes \nu_{W}$ and hence a morphism of sheaves $g: \mathcal{A} \to i_{W,*}(\mathrm{Sym}^{k+1}(\nu_{W}^*) \otimes \nu_{W})$. Let $\tilde{\mathcal{A}} = \mathrm{Ker}(g)$, which is precisely the subsheaf of Equation \eqref{eq:elem}. To show that this subsheaf has the desired properties, choose a decomposition $\mathcal{A} = \xi \oplus \mathcal{I}_W^{k+1}C$ as above. By locally lifting a basis of the subalgebroid $\sigma(TW) \subset \mathcal{A}|_{W}$ to $\mathcal{A}$, we obtain a (locally defined) distribution $\eta \subset TM$ such that  $\mathcal{A} = \eta \oplus \mathcal{I}_W^{k+1}C$ and $\eta/\mathcal{I}_W\mathcal{A} = \sigma(TM)$. Then, since the image of $\eta$ in $\mathcal{A}|_{W}$ is closed under the bracket
\[
[\eta, \eta] \subset \eta + \mathcal{I}_W \mathcal{A} \subset \eta + \mathcal{I}_W^{k+2}TM. 
\]
Hence $j_{W}^{k+1}(\eta)$ is a $(k+1)$-st order foliation and $\eta \oplus \mathcal{I}_W^{k+2}C$ is a $b^{k+2}$-algebroid. But this is precisely the subsheaf $\tilde{\mathcal{A}}$. 
\end{proof}
\begin{remark}
In the case where $W$ has codimension one, the construction in Theorem \ref{extensionbijpart1} is known as the \emph{elementary modification} of the Lie algebroid $\A$ along the Lie subalgebroid $\sigma(TW)$. Hence, Theorem \ref{extensionbijpart1} allows us to construct any hypersurface algebroid of $b^{k+1}$-type $\A$ as a sequence of elementary modifications starting from $TM$.

In \cite{Gualtieri-Li-2012} it is shown that a blow-up procedure of a Lie groupoid along a closed Lie subgroupoid corresponds to an elementary modification of the Lie algebroids. For holomorphic hypersurface algebroids of $b^{k+1}$-type over curves this relationship is explained in \cite{gualtieri2018stokes} and used to construct integrations. However, in general this procedure for constructing integrations may fail because there is no guarantee that the Lie subalgebroid $\sigma(TW)$ integrates to a closed Lie subgroupoid. 

Groupoids associated to hypersurface algebroids were considered in \cite{francis2024singular}. 
\end{remark}

Recall that there is a unique $0$-th order foliation around $W$ and that the associated singular foliation of $b^{1}$-type is $\mathcal{A}_{W}$, the sheaf of vector fields tangent to $W$. Furthermore, recall the following result.
\begin{lemma}
The restriction of $\mathcal{A}_W$ to $W$ is isomorphic to the Atiyah algebroid of the normal bundle: 
\[ \mathcal{A}_{W}|_{W} \cong \at(\nu_{W}).\]
\end{lemma}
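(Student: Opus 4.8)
The plan is to exhibit an explicit isomorphism built from the Lie-derivative action of tangent vector fields on the conormal bundle of $W$. Recall that the Atiyah algebroid $\at(E)$ of a vector bundle $E \to W$ has as its sections the derivations of $E$, i.e.\ the first-order operators $D \colon \Gamma(E)\to\Gamma(E)$ carrying a (unique) symbol $\sigma(D)\in\mathfrak{X}(W)$ with $D(gs)=g\,D(s)+\sigma(D)(g)\,s$, the anchor being $\sigma$ and the bracket the commutator. Since the frame bundles $\Fr(E)$ and $\Fr(E^*)$ are canonically isomorphic as principal bundles, there is a canonical isomorphism of Lie algebroids $\at(E)\cong\at(E^*)$, so it suffices to identify $\mathcal{A}_W|_W$ with $\at(\nu_W^*)$, and we use the canonical identification $\nu_W^*\cong\mathcal{I}_W/\mathcal{I}_W^2$ of $\nu_W^*$ with the conormal bundle.

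For the construction: vector fields tangent to $W$ preserve $\mathcal{I}_W$, so the action of $\mathcal{A}_W$ on $\mathcal{O}$ by derivations descends to an action on $\mathcal{I}_W/\mathcal{I}_W^2$. For $X\in\mathcal{A}_W$, $g\in\mathcal{O}$ and $f\in\mathcal{I}_W$, the Leibniz rule gives $X(gf)=gX(f)+X(g)f$, and reducing modulo $\mathcal{I}_W^2$ (using $X(f)\in\mathcal{I}_W$ and $X(g)|_W=(X|_W)(g|_W)$) one obtains $X(gf)\equiv (g|_W)\,X(f)+(X|_W)(g|_W)\,f$. Thus $X$ acts on $\mathcal{I}_W/\mathcal{I}_W^2$ as a derivation with symbol $X|_W\in TW$, defining a morphism $\Phi\colon\mathcal{A}_W\to\at(\mathcal{I}_W/\mathcal{I}_W^2)$. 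The same computation shows $(gX)(f)\equiv (g|_W)\,X(f)\pmod{\mathcal{I}_W^2}$, so $\Phi$ is $\mathcal{O}$-linear with $\mathcal{O}$ acting on the target through $\mathcal{O}_W$; in particular $\Phi$ annihilates $\mathcal{I}_W\mathcal{A}_W$ and, upon restriction to $W$, descends to a morphism $\overline{\Phi}\colon\mathcal{A}_W|_W\to\at(\nu_W^*)$. This is a morphism of Lie algebroids: the symbol of $\Phi(X)$ is $X|_W$, which is the anchor of $\mathcal{A}_W|_W$, and the identity $[X,Y](f)=X(Y(f))-Y(X(f))$ shows $\overline{\Phi}$ intertwines the brackets with the commutator bracket on $\at(\nu_W^*)$.

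Finally, to see $\overline{\Phi}$ is an isomorphism I would compare the two transitive extensions of $TW$. Proposition \ref{restrictedLiealgebroid} applied with $k=0$ (legitimate since $\mathcal{A}_W=\Tan^0(W,TW)$ is of $b^1$-type) gives the exact sequence $0\to\mathrm{Sym}^1(\nu_W^*)\otimes\nu_W\to\mathcal{A}_W|_W\to TW\to 0$ with $\mathrm{Sym}^1(\nu_W^*)\otimes\nu_W=\End(\nu_W)\cong\End(\nu_W^*)$, while $\at(\nu_W^*)$ sits in its Atiyah sequence $0\to\End(\nu_W^*)\to\at(\nu_W^*)\to TW\to 0$; the map $\overline{\Phi}$ is a morphism between these sequences which is the identity on the quotient $TW$ and, on the kernels, the canonical identification $\End(\nu_W)\cong\End(\nu_W^*)$. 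One checks the latter in the $b^1$-local model $(x_1,\dots,x_l,y_1,\dots,y_r)$ with $W=\{x_1=\dots=x_l=0\}$: here $\mathcal{A}_W$ is spanned by the $x_i\partial_{x_j}$ and the $\partial_{y_a}$, and $\Phi(x_i\partial_{x_j})$ is the rank-one endomorphism $\overline{x_m}\mapsto\delta_{jm}\overline{x_i}$ of $\mathcal{I}_W/\mathcal{I}_W^2$, while $\Phi(\partial_{y_a})$ is the flat connection in this frame. The five lemma then completes the argument. The only genuine work is the modulo-$\mathcal{I}_W^2$ bookkeeping establishing the Leibniz/symbol identity and pinning down $\overline{\Phi}$ on the kernels; everything else is formal, and indeed the whole statement can alternatively be read off directly from the $b^1$-local model, bypassing the five lemma.
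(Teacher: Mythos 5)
Your proof is correct and follows the same route as the paper: the paper's (one-sentence) argument is precisely that the Lie-derivative action of $\mathcal{A}_W$ on $\mathcal{I}_W/\mathcal{I}_W^2 = \nu_W^*$ descends to $\mathcal{A}_W|_W$ and induces the isomorphism onto $\at(\nu_W^*)\cong\at(\nu_W)$. You have simply supplied the Leibniz/symbol bookkeeping and the five-lemma (or local-model) verification of bijectivity that the paper leaves implicit.
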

\begin{proof}
This can be seen by observing that the action of $\mathcal{A}_W$ on $\mathcal{O}$ restricts to an action on $\mathcal{I}_{W}$, which descends to a representation of $\mathcal{A}_{W}|_{W}$ on $\nu_{W}^{*} = i_{W}^{-1}(\mathcal{I}_{W}/\mathcal{I}_{W}^2)$. This induces an isomorphism $\mathcal{A}_{W}|_{W} \to \at(\nu_{W}^*) \cong \at(\nu_{W})$. 
\end{proof}
As a result, for $k = 0$ the sequence \eqref{restrictedanchorsequence} has the following form 
\begin{equation*}
0 \rightarrow {\rm End}(\nu_W) \rightarrow \at(\nu_{W}) \rightarrow TW \rightarrow 0,
\end{equation*}
and its algebroid splittings coincide with flat connections on $\nu_{W}$. Combined with Theorem \ref{extensionbijpart1} we obtain the following result. 

\begin{corollary} \label{existenceofflatconnection}
There is a bijection between $1$-st order foliations around $W$ and flat connections on the normal bundle $\nu_{W}$. In particular, if there exists a $k$-th order foliation around $W$, for some $k \geq 1$, then the normal bundle $\nu_{W}$ admits a flat connection. 
\end{corollary}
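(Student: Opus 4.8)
The plan is to obtain this as the $k=0$ instance of Theorem \ref{extensionbijpart1}. First I would recall, from the Remark following Proposition \ref{Singfoliskorderfol}, that there is a unique $0$-th order foliation around $W$, namely $TW$, whose associated singular foliation of $b^{1}$-type is $\mathcal{A}_{W}$. Since a $0$-th order foliation is nothing but a subbundle of $j^{0}_{W}(TM) = TM|_{W}$ restricting to $TW$, the forgetful map $u_{1}$ automatically sends any $1$-st order foliation to this unique $0$-th order foliation; hence the set of $1$-st order foliations around $W$ is exactly the set of extensions of $TW$ to order $1$ in the sense of Theorem \ref{extensionbijpart1}.

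Next I would apply Theorem \ref{extensionbijpart1} with $\mathcal{A} = \mathcal{A}_{W}$ and $k = 0$, together with the Lemma identifying $\mathcal{A}_{W}|_{W}$ with $\at(\nu_{W})$, to conclude that $1$-st order foliations correspond bijectively to Lie algebroid splittings $\sigma : TW \to \at(\nu_{W})$ of the Atiyah sequence
\begin{equation*}
0 \rightarrow \End(\nu_{W}) \rightarrow \at(\nu_{W}) \rightarrow TW \rightarrow 0 .
\end{equation*}
It then remains to identify such splittings with flat connections on $\nu_{W}$: a vector-bundle splitting of the Atiyah sequence is by definition a linear connection $\nabla$ on $\nu_{W}$, and the bracket on $\at(\nu_{W})$ is arranged so that the obstruction to $\sigma$ being a morphism of Lie algebroids is precisely the curvature of $\nabla$, whence Lie algebroid splittings are exactly the flat connections. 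This is a classical fact about the Atiyah algebroid, already asserted in the text preceding the statement, so I would simply invoke it rather than reprove it; it is also the only point in the argument that requires any real care.

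For the final clause, let $F$ be a $k$-th order foliation with $k \geq 1$. Applying the forgetful maps $u_{2}, \dots, u_{k}$ in succession — each of which, as noted at the beginning of Section \ref{sec:extending}, lowers the order by one — yields a $1$-st order foliation $u_{2} \circ \cdots \circ u_{k}(F)$, with the convention that for $k = 1$ the composition is empty and one takes $F$ itself. By the bijection just established, this $1$-st order foliation corresponds to a flat connection on $\nu_{W}$, so $\nu_{W}$ admits one. As every step is a citation of an already-established result, the corollary carries essentially no obstacle of its own.
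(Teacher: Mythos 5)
Your proposal is correct and follows essentially the same route as the paper: the corollary is obtained there by combining Theorem \ref{extensionbijpart1} at $k=0$ with the identification $\mathcal{A}_W|_W \cong \at(\nu_W)$ and the standard fact that Lie algebroid splittings of the Atiyah sequence are flat connections, with the final clause following from the forgetful maps $u_j$. Your added observation that every $1$-st order foliation necessarily extends the unique $0$-th order foliation is implicit in the paper and harmless to make explicit.
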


Now let $F$ be a $k$-th order foliation for $k \geq 1$. This induces a flat connection on the normal bundle $\nu_{W}$ and hence on the kernel $\mathrm{Sym}^{k+1}(\nu_{W}^*) \otimes \nu_{W}$ of sequence \eqref{restrictedanchorsequence}. In particular, $\Omega_{W}^{\bullet}(\mathrm{Sym}^{k+1}(\nu_{W}^*) \otimes \nu_{W})$ acquires the structure of a chain complex. 

\begin{definition}\label{def:extension}
Let $F$ be a $k$-th order foliation around $W$, for $k \geq 1$. The \emph{extension class} $e(F) \in H^{2}(W, \mathrm{Sym}^{k+1}(\nu_{W}^*) \otimes \nu_{W})$ of $F$ is defined to be the extension class of the Lie algebroid sequence \eqref{restrictedanchorsequence}. It can be represented by the following closed $2$-form 
\[
\Omega(X, Y) = [ \sigma(X), \sigma(Y)] - \sigma([X,Y]),
\]
where $\sigma : TW \to \mathcal{A}|_{W}$ is any vector bundle splitting (i.e. it need not respect the Lie bracket). 
\end{definition}

\begin{corollary}\label{cor:firstextensionclass}
Let $F$ be a $k$-th order foliation around $W$, for $k \geq 1$. Then $F$ admits an extension to a $(k+1)$-st order foliation if and only if its extension class vanishes: $e(F) = 0$. 
\end{corollary}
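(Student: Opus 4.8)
The plan is to derive Corollary~\ref{cor:firstextensionclass} directly from Theorem~\ref{extensionbijpart1} together with the standard theory of Lie algebroid extensions. By Theorem~\ref{extensionbijpart1}, extensions of $F$ to a $(k+1)$-st order foliation are in bijection with Lie algebroid splittings $\sigma : TW \to \mathcal{A}|_W$ of the exact sequence~\eqref{restrictedanchorsequence}. So the content of the corollary is exactly the assertion that this sequence of Lie algebroids admits a Lie algebroid splitting if and only if the class $e(F) \in H^2(W, \mathrm{Sym}^{k+1}(\nu_W^*)\otimes\nu_W)$ from Definition~\ref{def:extension} vanishes. The key point that makes this a classical statement is that the kernel $K := \mathrm{Sym}^{k+1}(\nu_W^*)\otimes\nu_W$ of the anchor in~\eqref{restrictedanchorsequence} is \emph{abelian} as a Lie algebroid (it sits inside $\mathrm{Ker}(\rho)$, on which the bracket restricts to zero because the bracket of two anchor-kernel elements has anchor zero and, by the local model, actually vanishes identically), so~\eqref{restrictedanchorsequence} is an abelian extension of $TW$ by the $TW$-representation $K$, and abelian extensions are classified by $H^2$ of the corresponding Lie algebroid (here: Lie algebroid / de Rham) cohomology.

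Concretely, I would proceed as follows. First, fix an arbitrary vector bundle splitting $\sigma : TW \to \mathcal{A}|_W$ of~\eqref{restrictedanchorsequence} — one exists since we are splitting a short exact sequence of vector bundles over $W$ — and form the $K$-valued $2$-form $\Omega(X,Y) = [\sigma(X),\sigma(Y)] - \sigma([X,Y])$, which lands in $K$ because both terms cover $[X,Y]$ under the anchor. Second, verify that $\Omega$ is closed for the differential on $\Omega^\bullet_W(K)$ induced by the flat connection on $\nu_W$ (hence on $K$) coming from $F$ via Corollary~\ref{existenceofflatconnection}: this is the Jacobi identity for the bracket on $\mathcal{A}|_W$, together with the observation that the $\mathcal{A}|_W$-action on $K$ factors through $TW$ and agrees with this flat connection (which one checks from the local model, or from functoriality of the restriction construction). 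This shows $e(F) := [\Omega]$ is well defined; changing $\sigma$ by a bundle map $TW \to K$ changes $\Omega$ by a coboundary. Third, observe that $\sigma$ can be chosen to be a \emph{Lie algebroid} splitting precisely when, after correcting $\sigma$ by some $\tau \in \Omega^1_W(K)$, the corrected curvature $\Omega - d\tau$ vanishes — i.e. precisely when $[\Omega] = 0$ in $H^2(W,K)$.

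The steps are almost entirely bookkeeping once the abelian-extension framework is in place, so there is no single "hard part" in the sense of a deep argument; the one genuinely substantive point to get right is the identification of the $\mathcal{A}|_W$-representation on $K = \mathrm{Sym}^{k+1}(\nu_W^*)\otimes\nu_W$ with the connection that appears in Definition~\ref{def:extension} and Corollary~\ref{existenceofflatconnection}, so that the differential used to form $H^2(W,K)$ is the correct one. I would handle this by unwinding the construction in the proof of Proposition~\ref{restrictedLiealgebroid}: the inclusion $\mathcal{I}_W^{k+1}TM \subset \mathcal{A}$ identifies $K$ with a subquotient on which $\mathcal{A}_W$ (and hence $\mathcal{A}|_W$, and hence $TW$ via the anchor) acts by the natural action of vector fields tangent to $W$ on $\mathrm{Sym}^{k+1}(\nu_W^*)\otimes\nu_W$, which is exactly the tensor-product connection built from the flat connection on $\nu_W$ determined by $u_k\cdots u_1 (F)$, the induced $1$-st order foliation. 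Granting this, the corollary is immediate: $e(F) = 0 \iff$ sequence~\eqref{restrictedanchorsequence} splits as Lie algebroids $\iff F$ extends, the last equivalence being Theorem~\ref{extensionbijpart1}.
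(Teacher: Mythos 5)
Your argument is correct and is exactly the route the paper intends: Corollary~\ref{cor:firstextensionclass} is stated without proof because it is meant to follow immediately from Theorem~\ref{extensionbijpart1} together with the standard classification of abelian Lie algebroid extensions by $H^2$, which is precisely what you spell out (including the one point worth checking, namely that the kernel is abelian and that the $TW$-module structure on $\mathrm{Sym}^{k+1}(\nu_W^*)\otimes\nu_W$ is the one induced by the flat connection from Corollary~\ref{existenceofflatconnection}). No gaps; your writeup is simply a more explicit version of the paper's implicit proof.
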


\section{Subsheaves of invariant functions}\label{sec:invfuns}
Much like actual foliations, $k$-th order foliations can be described locally as `level sets of submersions'. In this section we describe how this can be made precise, giving us an alternative characterisation of $k$-th order foliations. Note that the material in this section is not necessary for the remainder of the paper. 

 Assume now that $W$ has codimension $l$ and that $k \geq 1$. A function $g \in \mathcal{N}_{W}^{k}$ that vanishes along $W$ (i.e. for which $r_{k}(g) = 0$) has a well-defined normal derivative $dg \in \nu_{W}^{\ast}$. 

\begin{definition}
A tuple $f = (f_{1}, ..., f_{l})$ with $f_{i} \in \mathcal{N}_{W}^{k}(U)$ defined over an open neighbourhood $U \subset W$ is said to be a \textbf{local defining function} if each $f_{i}$ vanishes along $W$ and the tuple $(df_{1}, ..., df_{l})$ forms a basis of $\nu_{W}^{*}|_{U}$. 
\end{definition}

Let $\mathcal{O}_{\mathbb{R}^l}$ denote the smooth functions on $\mathbb{R}^l$ and let 
\[
N(k) = j_{0}^{k}(\mathcal{O}_{\mathbb{R}^l}) = \mathbb{R}[z_{1}, ..., z_{l}]/I^{k+1}
\]
be the algebra of $k$-jets of smooth functions. Here, $I = (z_{1}, ..., z_{l})$ is the vanishing ideal of the origin, which is generated by the coordinate functions. Given a local defining function $f$ over $U$, there is an injective morphism of $\mathbb{R}$-algebras 
\[
N(f) : N(k) \to \mathcal{N}_{W}^{k}(U), \qquad h(z_{1}, ..., z_{l}) \mapsto h(f_{1}, ..., f_{l}). 
\]

\begin{definition} 
A \textbf{sheaf of invariant functions} in $\mathcal{N}_{W}^{k}$ is a subsheaf of $\mathbb{R}$-algebras $\mathcal{C} \subset \mathcal{N}_{W}^{k}$ with the property that for every $p \in W$, there is an open neighbourhood $p \in U \subset W$ and a local defining function $(f_{1}, ..., f_{l}) \in \mathcal{C}(U)$, such that the induced homomorphism 
\[
N(f): N(k) \to \mathcal{C}|_{U}
\]
is an isomorphism. 
\end{definition}

We will show that a sheaf of invariant functions is equivalent to the data of a $k$-th order foliation. First, given a $k$-th order foliation $F$, we define its sheaf of invariant functions as follows: 
\[
\mathcal{C}_{F} := \{ f \in \mathcal{N}_{W}^{k} \ | \ V(f) = 0 \text{ for all } V \in F \} \subset \mathcal{N}_{W}^{k}. 
\]
Conversely, given a sheaf of invariant functions $\mathcal{C}$, we define its annihilator as follows: 
\[
F_{\mathcal{C}} := \{ V \in \mathrm{Der}(\mathcal{N}_{W}^{k}) \ | \ V(f) = 0 \text{ for all } f \in \mathcal{C} \} \subset \mathrm{Der}(\mathcal{N}_{W}^{k}). 
\] 
\begin{lemma}
The sheaf $\mathcal{C}_{F}$ is a sheaf of invariant functions whereas $F_{\mathcal{C}}$ is a $k$-th order foliation. 
\end{lemma}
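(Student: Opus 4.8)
The plan is to use the local normal form for $k$-th order foliations established in Corollary \ref{Frobenius} (the Frobenius theorem) together with the local model $N(f): N(k) \to \Ncal_W^k$ induced by a defining function, and to show that these two local pictures are compatible. The key point is that the problem is local on $W$, so it suffices to verify both claims on the neighbourhoods where $F$ is in Frobenius normal form, and then check that the constructions glue.

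First I would treat $\Ccal_F$. Fix $p \in W$ and, by Corollary \ref{Frobenius}, choose coordinates $(x_1,\ldots,x_r,z_1,\ldots,z_l)$ with $W = \{z_1 = \cdots = z_l = 0\}$ and $F = j^k_W\langle \partial_{x_1},\ldots,\partial_{x_r}\rangle$. Under the identification \eqref{nbhdnormalform}, $\Ncal_W^k$ is locally $\RR[z_1,\ldots,z_l]/I^{k+1}$ with coefficients that are smooth functions of $x$. A derivation $V \in F$ is an $\Ncal_W^k$-linear combination of $\partial_{x_1},\ldots,\partial_{x_r}$, so $V(f) = 0$ for all $V \in F$ precisely when $f$ has no dependence on $x$, i.e. $f \in \RR[z_1,\ldots,z_l]/I^{k+1}$. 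This shows $\Ccal_F|_U$ is exactly the image of $N(f)$ for $f = (z_1,\ldots,z_l)$, which is a local defining function since the $dz_i$ span $\nu_W^*$; hence $\Ccal_F$ is a sheaf of invariant functions. I would also note $\Ccal_F$ is manifestly a subsheaf of $\RR$-algebras, since the condition $V(f)=0$ is preserved under sums and products by the Leibniz rule for the derivations in $F$.

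Next I would treat $F_\Ccal$. Again working locally, let $(f_1,\ldots,f_l) \in \Ccal(U)$ be a local defining function with $N(f): N(k) \to \Ccal|_U$ an isomorphism. Complete $f_1,\ldots,f_l$ to a "coordinate system" on the $k$-th order neighbourhood: since the $df_i$ span $\nu_W^*$, the $f_i$ together with coordinates $x_1,\ldots,x_r$ on $W$ exhibit $\Ncal_W^k|_U$ as $\mathcal{O}_W[f_1,\ldots,f_l]/(f)^{k+1}$, and a derivation is determined by its values on the $x_j$ and $f_i$. The subsheaf $F_\Ccal$ consists of those derivations killing all of $\Ccal = \mathcal{O}_W\text{-span of monomials in } f$; since $\Ccal$ is an algebra containing $\mathcal{O}_W$ and the $f_i$, a derivation kills $\Ccal$ iff it kills each $f_i$ and each function pulled back from $W$, which pins it down to be an $\Ncal_W^k$-linear combination of the "$\partial_{x_j}$" dual to this coordinate system. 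This is a free $\Ncal_W^k$-module of rank $r$, it is visibly closed under the Lie bracket (the bracket of two derivations killing $\Ccal$ again kills $\Ccal$), and $r_k(F_\Ccal)$ is the span of the $\partial_{x_j}|_W$, which is $TW$. So $F_\Ccal$ is a subbundle of $j^k_W(TM)$ with $r_k(F_\Ccal) = TW$ closed under the bracket, i.e. a $k$-th order foliation.

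The main obstacle I anticipate is making the phrase "complete the $f_i$ to a coordinate system on the $k$-th order neighbourhood" precise and checking it is independent of choices well enough to glue: one needs that the isomorphism $N(f): N(k) \to \Ccal|_U$, combined with the algebra structure of $\Ncal_W^k$ over $\mathcal{O}_W$, really does produce a well-defined splitting-type decomposition and that the resulting rank-$r$ module $F_\Ccal$ does not depend on which defining function in $\Ccal$ one picks. I would handle this by arguing intrinsically: $F_\Ccal$ is defined without reference to any chart as the annihilator of $\Ccal$, so it is automatically a globally well-defined subsheaf of $\Der(\Ncal_W^k)$; only its local freeness, rank, and the identity $r_k(F_\Ccal) = TW$ need the local model, and those are checked in any single chart adapted to a local defining function. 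The verification that $F_\Ccal$ is closed under the bracket is immediate from the Leibniz identity and needs no coordinates at all.
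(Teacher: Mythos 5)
Your overall route is the same as the paper's: for $\Ccal_F$ a computation in the Frobenius normal form of Corollary \ref{Frobenius}, and for $F_\Ccal$ the fact that any local defining function $(f_1,\ldots,f_l)$ can be realized as the $k$-jets of genuine coordinates $z_1,\ldots,z_l$ vanishing on $W$ (this is exactly the "completion to a coordinate system" step you flag as the main obstacle; the paper simply asserts it).

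There is, however, one concrete error in your treatment of $F_\Ccal$. You write that "$\Ccal$ is an algebra containing $\mathcal{O}_W$ and the $f_i$", and conclude that a derivation annihilating $\Ccal$ must kill each $f_i$ and each function pulled back from $W$. By the definition in this section, $N(f): N(k) \to \Ccal|_U$ is an \emph{isomorphism}, so $\Ccal|_U \cong \RR[z_1,\ldots,z_l]/I^{k+1}$; its degree-zero part consists only of the real constants, and $\Ccal$ does \emph{not} contain the nonconstant functions pulled back from $W$. This matters: if $\Ccal$ did contain $\mathcal{O}_W$, then a derivation killing $\Ccal$ would also have to kill the coordinates $x_j$, which together with $V(f_i)=0$ forces $V=0$ — the opposite of the conclusion you draw. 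The correct (and easy) repair is that $\Ccal|_U$ is generated as an $\RR$-algebra by $1$ and the $f_i$, so by the Leibniz rule the annihilator condition reduces to $V(f_i)=0$ for all $i$; in coordinates adapted to $f$ (where $f_i = j^k_W z_i$) a derivation $V = \sum_j a_j\partial_{x_j} + \sum_i b_i\partial_{z_i}$ satisfies this precisely when all $b_i = 0$, i.e. $F_\Ccal|_U$ is the free $\Ncal^k_W$-module on $\partial_{x_1},\ldots,\partial_{x_r}$, from which local freeness, closure under the bracket, and $r_k(F_\Ccal) = TW$ follow as you state. The rest of your argument, including the intrinsic definition of $F_\Ccal$ obviating any gluing issue and the computation of $\Ccal_F$ in Frobenius coordinates, is correct.
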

\begin{proof}
That $\mathcal{C}_{F}$ is a sheaf of invariant functions follows from a calculation using the local model provided by Corollary \ref{Frobenius}. To see that $F_{\mathcal{C}}$ is a $k$-th order foliation, we use the fact that for any local defining function $f = (f_{1}, ..., f_{l})$, there are local coordinates $(x_{1}, ..., x_{r}, z_{1}, ..., z_{l})$ such that $W = \{ z_{1} = ... = z_{l} = 0 \}$ and such that $f_{i}$ is the $k$-jet of $z_{i}$. 
\end{proof}
\begin{proposition}
There is a bijection between $k$-th order foliations around $W$ and sheaves of invariant functions in $\mathcal{N}_{W}^{k}$. 
\end{proposition}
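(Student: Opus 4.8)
The plan is to show that the two assignments $F \mapsto \mathcal{C}_F$ and $\mathcal{C} \mapsto F_{\mathcal{C}}$ are mutually inverse, having already established in the preceding lemma that each produces an object of the correct type. Since both constructions are defined by annihilation conditions, the statement is essentially a Galois-correspondence bijectivity claim, and it suffices to verify the two composites are the identity. The hard part will be the inclusion that is not formal, namely that a derivation annihilating $\mathcal{C}_F$ already lies in $F$ (and dually, that a function annihilated by $F_{\mathcal{C}}$ already lies in $\mathcal{C}$); the reverse inclusions $F \subseteq F_{\mathcal{C}_F}$ and $\mathcal{C} \subseteq \mathcal{C}_{F_{\mathcal{C}}}$ are immediate from the definitions.

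First I would fix a point $p \in W$ and invoke the Frobenius theorem for $k$-th order foliations (Corollary \ref{Frobenius}): choose local coordinates $(x_1,\ldots,x_r,z_1,\ldots,z_l)$ centred at $p$ with $W = \{z_1 = \cdots = z_l = 0\}$ and $F = j_W^k\langle \partial_{x_1},\ldots,\partial_{x_r}\rangle$. In these coordinates $\mathcal{N}_W^k \cong \mathcal{O}_W[z_1,\ldots,z_l]/I^{k+1}$, and a direct computation shows $\mathcal{C}_F$ is exactly the subsheaf of $\mathcal{N}_W^k$ generated over $\mathbb{R}$ by the $k$-jets of $z_1,\ldots,z_l$ — i.e. the constant-coefficient polynomials in the $z_i$, which is precisely the image $N(f)(N(k))$ for $f = (z_1,\ldots,z_l)$. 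Then $F_{\mathcal{C}_F}$ consists of those derivations killing every $z_i^I$ of weight $\le k$; writing an arbitrary derivation in the basis $\{g\,\partial_{x_a}, g\,\partial_{z_b}\}$ with $g \in \mathcal{N}_W^k$, the condition of annihilating all the $z_i$ forces the $\partial_{z_b}$-components to vanish, leaving exactly $F$. This gives $F_{\mathcal{C}_F} = F$ locally, hence globally since both are sheaves.

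For the other composite $\mathcal{C}_{F_{\mathcal{C}}} = \mathcal{C}$, I would again work locally. Given a sheaf of invariant functions $\mathcal{C}$, by definition pick a local defining function $f = (f_1,\ldots,f_l) \in \mathcal{C}(U)$ with $N(f)\colon N(k) \xrightarrow{\sim} \mathcal{C}|_U$; by the lemma there are coordinates in which $f_i = j_W^k(z_i)$, so $\mathcal{C}|_U$ is the constant-coefficient polynomial subsheaf as above. A derivation annihilating all of $\mathcal{C}$ in particular annihilates each $z_i$, so by the computation of the previous paragraph $F_{\mathcal{C}} = j_W^k\langle \partial_{x_1},\ldots,\partial_{x_r}\rangle$, and then $\mathcal{C}_{F_{\mathcal{C}}}$ is again the constant-coefficient polynomial subsheaf, which equals $\mathcal{C}|_U$. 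The only genuine subtlety is checking that these pointwise identifications glue: both $F \mapsto \mathcal{C}_F$ and $\mathcal{C}\mapsto F_{\mathcal{C}}$ are defined by sheaf-theoretic annihilation and hence commute with restriction to opens, so agreement on a cover of $W$ by the Frobenius charts yields global agreement. I expect the bookkeeping in the local derivation computation — tracking which $\mathcal{N}_W^k$-linear combinations of the coordinate vector fields can annihilate all monomials $z^I$ of weight $\le k$ while $x$-derivatives are unconstrained — to be the only place where care is needed, but it is a routine finite-dimensional linear-algebra check once the Frobenius normal form is in hand.
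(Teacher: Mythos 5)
Your proof is correct, and it takes exactly the route the paper intends: the paper in fact omits the proof of this proposition, leaving it as a consequence of the preceding lemma, whose proof already sets up the two local normal forms (the Frobenius chart for $F$ and the coordinates in which a local defining function becomes $(j^k_W z_1,\ldots,j^k_W z_l)$) that you use to check the two composites are the identity. Your observation that the non-formal inclusions reduce, via these normal forms, to the statement that a derivation killing the $z_i$ has no $\partial_{z_b}$-component (and dually that the $\partial_{x_a}$-invariant functions are the constant-coefficient polynomials in the $z_i$) is precisely the intended argument, and the gluing step is handled correctly since both annihilator constructions commute with restriction.
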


\section{The $k$-th order Atiyah algebroid} \label{sec:Atiyah}

We now fix a tubular neighbourhood embedding for $W \subset M$. Hence, we view $W$ as the zero section in a rank $l$ vector bundle $\pi : E \to W$. This choice of tubular neighbourhood is significant and many of the upcoming constructions and statements will depend on it. The most important consequence of choosing a tubular neighbourhood is that it makes available the \emph{$k$-th order Atiyah algebroid}, a Lie algebroid over $W$ whose properties we study in this section. Ultimately, this algebroid will enable us to construct the \emph{holonomy} of a $k$-th order foliation. 

\begin{definition}
The \emph{$k$-th order Atiyah algebroid} $\at^{k}(E)$ is the subsheaf of $\mathrm{Der}(\mathcal{N}_{W}^{k})$ consisting of $k$-jets of $\pi$-projectable vector fields. 
\end{definition}

\begin{lemma} \label{Atiyahalgebroiddecompsition}
Let $\pi : E \rightarrow W$ be a vector bundle. The $k$-th order Atiyah algebroid $\at^{k}(E)$ is a Lie algebroid over $W$. It has the following canonical decomposition
\begin{equation} \label{Atiyahdecomposition}
\at^{k}(E) = \at^{1}(E) \oplus (\bigoplus_{i = 2}^{k}\mathrm{Sym}^{i}(E^*) \otimes E), 
\end{equation}
where $\at^{1}(E)$ is the Atiyah algebroid, consisting of linear vector fields on the total space of $E$, and where the second summand consists of $k$-jets of vertical vector fields vanishing to order at least $2$ along $W$. This second summand is an ideal and its bracket with $\at^{1}(E)$ is induced by the canonical $\at^{1}(E)$ representation on $E$.
\end{lemma}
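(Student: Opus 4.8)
The plan is to work throughout in local coordinates adapted to a fixed tubular neighbourhood, using the splitting \eqref{nbhdnormalform} which identifies $\mathcal{N}_W^k$ with $\bigoplus_{i=0}^k \Sym^i_{\mathcal{O}_W}(E^*)$ (here $\nu_W \cong E$). First I would pin down the linear-algebraic description of $\at^k(E)$: a $\pi$-projectable vector field on $E$ is of the form $X = X^h + X^v$, where $X^h$ covers a vector field on $W$ and $X^v$ is vertical and fibrewise polynomial after taking the $k$-jet. Decomposing $X^v$ by polynomial degree in the fibre coordinates gives a direct sum indexed by $i = 0, 1, \dots, k$: degree $0$ and $1$ together with the horizontal part assemble into the classical Atiyah algebroid $\at^1(E)$ (linear vector fields on $\tot(E)$), while for each $i \geq 2$ the degree-$i$ vertical part is exactly an element of $\Sym^i(E^*) \otimes E$. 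This yields the decomposition \eqref{Atiyahdecomposition} as sheaves of $\mathcal{O}_W$-modules; that $\at^k(E)$ is a locally free sheaf over $W$ — hence the anchored module of a Lie algebroid over $W$ — is immediate from this description, since each summand is the sheaf of sections of a vector bundle on $W$. One should also check that the anchor $\at^k(E) \to TW$ (projection to the base vector field) is well defined and that $\at^k(E)$ is closed under the Lie bracket; the latter follows because the Lie bracket of two $\pi$-projectable vector fields is $\pi$-projectable, and the bracket descends to $\Der(\mathcal{N}_W^k)$ by Proposition \ref{derivationtheorem} since projectable vector fields are tangent to $W$.

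Next I would verify the bracket claims. That $\bigoplus_{i=2}^k \Sym^i(E^*) \otimes E$ is an ideal: the bracket of a vertical vector field vanishing to order $\geq 2$ with any projectable vector field is again vertical (a vertical field's flow preserves fibres, so brackets with projectable fields stay vertical) and still vanishes to order $\geq 2$ along $W$ — one sees this in coordinates, where $[Y^v, X]$ for $Y^v = \sum_{|I|\geq 2} f_I(w) z^I \partial_{z}$ has each term either differentiating the polynomial coefficient (lowering degree by at most $1$, but then multiplied by a coefficient of $X$ which vanishes on $W$ in the relevant component, keeping total order $\geq 2$) or leaving it untouched. Finally, the bracket of $\at^1(E)$ with the $i$-th summand: a linear (degree-$\leq 1$) projectable vector field acts on a degree-$i$ homogeneous vertical field by preserving the polynomial degree $i$, and the resulting action on $\Sym^i(E^*)\otimes E$ is precisely the one induced by the defining $\at^1(E)$-representation on $E$ (equivalently on $E^*$), extended as a derivation to $\Sym^i(E^*)$ and tensored with the $E$ action. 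This is a standard computation: the Atiyah algebroid $\at^1(E)$ is by definition the algebroid whose representations-as-modules are the $E$-connections, and its tautological action on $E$ induces one on every Schur functor of $E$, in particular on $\Sym^i(E^*)\otimes E$; unwinding the bracket of vector fields on $\tot(E)$ in a local frame reproduces exactly this.

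The main obstacle I expect is purely bookkeeping: being careful about what "$k$-jet of a vertical vector field vanishing to order at least $2$" means as an element of $\Der(\mathcal{N}_W^k)$, and checking that the degree-filtration by fibre-polynomial degree is genuinely well defined on $\Der(\mathcal{N}_W^k)$ rather than only on $\X(\tot(E))$. This is where the choice of tubular neighbourhood enters essentially: the splitting \eqref{nbhdnormalform} (equivalently, a linear structure on $E$, or a choice of Euler vector field) is what grades $\mathcal{N}_W^k$ and hence $\Der(\mathcal{N}_W^k)$ by fibre degree; without it only the filtration survives. I would make this precise by invoking the analysis already carried out in the proof of Proposition \ref{derivationtheorem}, where a derivation of $\mathcal{N}_W^k$ was described by a pair $(\omega, B)$ and, after choosing a connection, identified with $(\mathcal{N}_W^k \otimes TW) \oplus (\mathcal{I}_{(k)} \otimes \nu_W)$; restricting to the projectable ones and regrading $\mathcal{I}_{(k)} \otimes \nu_W = \bigoplus_{i=1}^k \Sym^i(E^*)\otimes E$ gives the stated decomposition directly. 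Everything else — the Jacobi identity, Leibniz rule, ideal property — is then inherited from $\X(\tot(E))$ or checked term by term in the grading.
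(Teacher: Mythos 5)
Your proposal is correct and takes essentially the same route as the paper: the paper obtains the decomposition from the jet-truncation morphism $\at^{k}(E)\to\at^{1}(E)$ and its splitting by global linear vector fields, which is exactly the degree-$\le 1$ piece of your fibrewise polynomial grading, and both arguments hinge on the linear structure of $E$ upgrading the jet filtration to a grading. If anything you prove more than the paper's proof does, since it does not spell out the ideal property or the identification of the bracket with the induced $\at^{1}(E)$-representation, whereas your coordinate verification of those claims is sound.
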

\begin{proof}
Because of the map $\pi : E \to W$, $\mathrm{Der}(\mathcal{N}_{W}^{k})$ inherits the structure of a vector bundle over $W$. The subsheaf $\at^{k}(E)$ is easily seen to be a subbundle (cf. the proof of Proposition \ref{derivationtheorem}) which is also closed under the Lie bracket. In fact, because its sections are $\pi$-projectable, $\at^{k}(E)$ is a Lie algebroid over $W$.

For $k = 1$, we can identify $\at^{1}(E)$ with the usual Atiyah algebroid. There is a Lie algebroid morphism $ \at^{k}(E) \to \at^{1}(E)$ given by projecting a $k$-jet to a $1$-jet. This map has a splitting because a section of $\at^1(E)$ gives rise to a global linear vector field on $E$. This induces the decomposition of Equation \ref{Atiyahdecomposition}. 
\end{proof}

The anchor map for $\at^{k}(E)$ is given by the projection $\pi_{*}$ and the induced anchor sequence is given by 
\begin{equation} \label{anchorsequence}
0 \to \ad^{k}(E) \to \at^{k}(E) \to TW \to 0,
\end{equation}
where $\ad^{k}(E) = \bigoplus_{i = 1}^{k}\mathrm{Sym}^{i}(E^*) \otimes E$ is the bundle of $k$-jets of vertical vector fields. 

\begin{lemma} \label{Explicitbracket}
The Lie bracket on $\ad^{k}(E) = \bigoplus_{i = 1}^{k}\mathrm{Sym}^{i}(E^*) \otimes E$ is given by 
\[
[p_{1} \otimes v_{1}, p_{2} \otimes v_{2}] = (p_{1} \cdot \iota_{v_{1}} p_{2}) \otimes v_{2} - (p_{2} \cdot \iota_{v_{2}} p_{1}) \otimes v_{1},
\]
where $\iota_{v}$ is the derivation of $\mathcal{N}_{W}^{k}$ extending the pairing between $E$ and $E^*$.  
\end{lemma}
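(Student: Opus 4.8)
The plan is to compute the Lie bracket directly in a local frame. First I would fix a local trivialization of $E$ over an open set $U \subset W$, say with fibre coordinates $z_1,\ldots,z_l$ on the total space, so that $\mathcal{N}_W^k \cong \mathcal{O}_W[z_1,\ldots,z_l]/I^{k+1}$. A homogeneous element $p \otimes v$ of $\mathrm{Sym}^i(E^*)\otimes E$ corresponds, under the identification with $k$-jets of vertical vector fields, to the vector field $p(z)\,\partial_{v}$, where $p(z)$ is the degree-$i$ polynomial determined by $p \in \mathrm{Sym}^i(E^*)$ and $\partial_v$ is the constant vertical vector field in the direction $v \in E$. The key observation is that $\partial_v$ acts on polynomials exactly as the contraction derivation $\iota_v$ (extended as a derivation of $\mathcal{N}_W^k$ using the $E$--$E^*$ pairing): $\partial_v\big(p(z)\big) = (\iota_v p)(z)$.

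Next I would simply expand the bracket of vector fields:
\[
[p_1(z)\,\partial_{v_1},\, p_2(z)\,\partial_{v_2}] = p_1(z)\,\big(\partial_{v_1}p_2(z)\big)\,\partial_{v_2} - p_2(z)\,\big(\partial_{v_2}p_1(z)\big)\,\partial_{v_1}.
\]
Using $\partial_{v_1}p_2(z) = (\iota_{v_1}p_2)(z)$ and $\partial_{v_2}p_1(z) = (\iota_{v_2}p_1)(z)$, and translating polynomials back into symmetric tensors, the right-hand side becomes $(p_1 \cdot \iota_{v_1}p_2)\otimes v_2 - (p_2 \cdot \iota_{v_2}p_1)\otimes v_1$, which is the claimed formula. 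One should note that this bracket lands in $\ad^k(E)$ because the product $p_1 \cdot \iota_{v_1}p_2$ has degree $(\deg p_1) + (\deg p_2) - 1 \geq 1$, and any term of degree exceeding $k$ vanishes in $\mathcal{N}_W^k$; this is consistent with the bracket being the one inherited from $\mathrm{Der}(\mathcal{N}_W^k)$ as established in Lemma \ref{Atiyahalgebroiddecompsition}.

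Finally I would check that the formula is independent of the chosen trivialization — equivalently, that both sides transform the same way under a change of frame of $E$ — so that the local computation globalizes; alternatively one observes that the formula is manifestly $\mathcal{O}_W$-bilinear and natural in $E$, so it suffices to verify it on decomposable elements in one frame, which is exactly the computation above. I do not expect a genuine obstacle here: the only mild subtlety is bookkeeping the correspondence between the algebraic description (symmetric tensors, contraction $\iota_v$) and the geometric one ($k$-jets of vertical vector fields, directional derivatives), and making sure the truncation modulo $I^{k+1}$ is handled consistently — but this is routine given Lemma \ref{Atiyahalgebroiddecompsition}.
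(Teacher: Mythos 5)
Your computation is correct, and it is exactly the routine verification the paper has in mind: the paper states this lemma without proof, relying implicitly on the identification of $\ad^{k}(E)$ with $k$-jets of fibrewise-polynomial vertical vector fields from Lemma \ref{Atiyahalgebroiddecompsition}, under which $p\otimes v \leftrightarrow p(z)\,\partial_{v}$ and $\partial_{v}$ acts on polynomials as $\iota_{v}$. Your expansion of $[p_{1}(z)\partial_{v_{1}}, p_{2}(z)\partial_{v_{2}}]$ (with the commuting constant vertical fields contributing no extra term), together with the remarks on truncation modulo $I^{k+1}$ and frame-independence, supplies precisely the missing argument.
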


\subsection{Lie algebroid splittings} \label{Liealgsplittings}

Now let $F \subset \mathrm{Der}(\mathcal{N}_{W}^{k})$ be a $k$-th order foliation. The intersection $F \cap \at^{k}(E)$ consists of sections of $F$ that are $\pi$-projectable. If $\xi \subset TE$ is the germ of a distribution $k$-tangent to $F$, then $d\pi$ induces an isomorphism $\xi \cong \pi^*(TW)$. The intersection $F \cap \at^{k}(E)$ is then canonically identified with $TW$, the horizontal vector fields that are pulled-back from $W$. From this we conclude the following two statements: 
\begin{itemize}
\item $F \cap \at^{k}(E)$ is a Lie subalgebroid of $\at^{k}(E)$ which projects isomorphically to $TW$,
\item the $k$-th order foliation $F$ is generated over $\mathcal{N}_{W}^{k}$ by the intersection $F \cap \at^{k}(E)$. 
\end{itemize} 
Hence we conclude the following useful result. 
\begin{theorem} \label{splittingprop}
Given the choice of a tubular neighbourhood for $W$, there is a bijection between $k$-th order foliations $F \subset \mathrm{Der}(\mathcal{N}_{W}^{k})$ and Lie algebroid splittings $\sigma : TW \to \at^{k}(E)$. Moreover, this bijection only depends on the $k$-jet of the tubular neighbourhood embedding.
\end{theorem}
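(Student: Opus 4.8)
The statement to prove is Theorem \ref{splittingprop}: given a tubular neighbourhood embedding of $W$ (realizing $W$ as the zero section of $\pi : E \to W$), $k$-th order foliations $F \subset \mathrm{Der}(\mathcal{N}_W^k)$ are in bijection with Lie algebroid splittings $\sigma : TW \to \at^k(E)$ of the anchor sequence \eqref{anchorsequence}, and the bijection depends only on the $k$-jet of the tubular neighbourhood. The plan is to exhibit the two maps and check they are mutually inverse, using the structural observations already recorded in the paragraph preceding the statement.

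First I would construct the map $F \mapsto \sigma_F$. Given a $k$-th order foliation $F$, set $\sigma_F := F \cap \at^k(E)$. The two bulleted observations just above the theorem say exactly that $\sigma_F$ is a Lie subalgebroid of $\at^k(E)$ projecting isomorphically onto $TW$ under $\pi_*$; composing this isomorphism's inverse with the inclusion gives the required Lie algebroid splitting $\sigma_F : TW \to \at^k(E)$. I would spell out the argument behind those bullets a little: choose (via Lemma \ref{realizabilitylemma}) a germ of distribution $\xi \subset TM$ representing $F$; since $r_k(F) = TW$, the projection $d\pi : \xi \to \pi^*(TW)$ is an isomorphism of bundles near $W$, so the $\pi$-projectable vector fields lying in $\xi$ are precisely the horizontal lifts of vector fields on $W$, and these map isomorphically to $TW$. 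That $\sigma_F$ is bracket-closed is inherited from $F$ being bracket-closed.

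Next, the inverse map $\sigma \mapsto F_\sigma$. Given a Lie algebroid splitting $\sigma : TW \to \at^k(E)$, define $F_\sigma := \mathcal{N}_W^k \cdot \sigma(TW) \subset \mathrm{Der}(\mathcal{N}_W^k)$, the $\mathcal{N}_W^k$-submodule generated by the image of $\sigma$. I would check: (i) $F_\sigma$ is a subbundle of $j_W^k(TM)$ — locally $\sigma(TW)$ is spanned by projectable vector fields whose leading (order-zero) parts span $TW$, so $r_k(F_\sigma) = TW$ and the rank is $\mathrm{rk}(TW)$ over $\mathcal{N}_W^k$, giving local freeness; (ii) $F_\sigma$ is closed under the Lie bracket — since $\sigma$ is a Lie algebroid morphism, $[\sigma(X),\sigma(Y)] = \sigma([X,Y])$, and then the Leibniz rule (Proposition \ref{derivationtheorem}(c)) propagates closedness to the $\mathcal{N}_W^k$-span; (iii) $r_k(F_\sigma) = TW$, already noted. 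Hence $F_\sigma$ is a $k$-th order foliation.

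Finally I would verify the two compositions are the identity. For $F_{\sigma_F} = F$: the second bullet above the theorem says $F$ is generated over $\mathcal{N}_W^k$ by $F \cap \at^k(E) = \sigma_F(TW)$, which is precisely $F_{\sigma_F}$. For $\sigma_{F_\sigma} = \sigma$: one must see that $F_\sigma \cap \at^k(E) = \sigma(TW)$; the inclusion $\supseteq$ is clear, and for $\subseteq$ one writes a general element of $F_\sigma$ as $\sum g_i \sigma(X_i)$ with $g_i \in \mathcal{N}_W^k$ and imposes $\pi$-projectability, which forces the coefficients $g_i$ to be pulled back from $W$ (constant along the fibres), i.e. the element already lies in $\sigma(TW)$; this is a short local computation using the fibrewise grading of $\mathcal{N}_W^k$. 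I expect this last inclusion to be the main (if modest) obstacle: it is where one genuinely uses that $\at^k(E)$ consists of $\pi$-projectable jets rather than all of $\mathrm{Der}(\mathcal{N}_W^k)$. The dependence only on the $k$-jet of the tubular neighbourhood is immediate: every ingredient — $\mathcal{N}_W^k$, $\at^k(E)$, $j_W^k(TM)$, and the identifications above — is already a $k$-th order object along $W$, so two tubular embeddings with the same $k$-jet yield identical data, as also follows from the final remark in Section \ref{sec:jets} that the $\mathrm{Diff}$-action factors through $k$-jets.
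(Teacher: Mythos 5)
Your proposal is correct and follows essentially the same route as the paper: the paper's argument consists precisely of the two bulleted observations preceding the theorem (that $F \cap \at^{k}(E)$ is a Lie subalgebroid projecting isomorphically to $TW$, and that $F$ is generated over $\mathcal{N}_{W}^{k}$ by this intersection), and you have simply made the inverse map $\sigma \mapsto \mathcal{N}_{W}^{k}\cdot\sigma(TW)$ explicit and verified the two compositions, including the one genuinely nontrivial inclusion $F_{\sigma}\cap\at^{k}(E)\subseteq\sigma(TW)$. Your write-up is in fact more complete than the paper's, which leaves the surjectivity direction implicit.
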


In combination with the decomposition of Equation \ref{Atiyahdecomposition}, Theorem \ref{splittingprop} gives rise to an explicit description of $k$-th order foliations in terms of differential forms. To see this, recall that a flat connection $\nabla$ on $E$ endows $\Omega_{W}^{\bullet}(\ad^{k}(E))$ with the structure of a dgla. 

\begin{corollary}[Maurer-Cartan equation] \label{MCeq}
Lie algebroid splittings $\sigma : TW \to \at^{k}(E)$ can be decomposed as $\sigma = \nabla + \sum_{i = 1}^{k-1}\eta_{i}$, where $\nabla : TW \to \at^1(E)$ is a flat linear connection on $E$ and $\eta_{i} \in \Omega_{W}^{1}(\mathrm{Sym}^{i+1}(E^*) \otimes E)$ satisfy the Maurer-Cartan equation 
\[
d^{\nabla} \eta_{r} + \frac{1}{2} \sum_{i + j = r} [\eta_{i}, \eta_{j}] = 0,
\]
for all $ r = 1, ..., k-1$.
\end{corollary}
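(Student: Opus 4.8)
The plan is to translate the condition that a vector bundle section $\sigma : TW \to \at^{k}(E)$ be a Lie algebroid splitting directly into the Maurer--Cartan system, exploiting the graded bracket of $\at^{k}(E)$ recorded in Lemmas \ref{Atiyahalgebroiddecompsition} and \ref{Explicitbracket}. First I would produce the decomposition of $\sigma$: composing $\sigma$ with the Lie algebroid projection $\at^{k}(E) \to \at^{1}(E)$ of Lemma \ref{Atiyahalgebroiddecompsition} yields a Lie algebroid splitting $\nabla : TW \to \at^{1}(E)$ of the Atiyah sequence, i.e. a flat linear connection on $E$ (splittings of the Atiyah sequence being exactly the flat connections, as used after Corollary \ref{existenceofflatconnection}). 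Viewing $\nabla$ inside $\at^{k}(E)$ through the splitting in \eqref{Atiyahdecomposition}, the difference $\eta := \sigma - \nabla$ projects to zero in $\at^{1}(E)$, hence takes values in the ideal $\bigoplus_{i=2}^{k}\Sym^{i}(E^{*})\otimes E$; decomposing by symmetric degree writes $\eta = \sum_{i=1}^{k-1}\eta_{i}$ with $\eta_{i} \in \Omega^{1}_{W}(\Sym^{i+1}(E^{*})\otimes E)$. Conversely, any flat $\nabla$ together with any such forms $\eta_{i}$ reassembles into a bundle map $\sigma = \nabla + \sum_{i}\eta_{i}$ that is automatically anchor-compatible, so it remains only to characterise when such a $\sigma$ is bracket-preserving.

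The core step is to expand $[\sigma(X),\sigma(Y)] - \sigma([X,Y])$ for vector fields $X,Y$ on $W$. Substituting $\sigma = \nabla + \eta$ and using bilinearity of the bracket of $\at^{k}(E)$, the $\nabla$--$\nabla$ contribution is $[\nabla_{X},\nabla_{Y}] - \nabla_{[X,Y]}$, which vanishes precisely because $\nabla$ is flat; the two mixed contributions $[\nabla_{X},\eta(Y)]$ and $[\eta(X),\nabla_{Y}]$ equal $\nabla_{X}(\eta(Y))$ and $-\nabla_{Y}(\eta(X))$ by the statement of Lemma \ref{Atiyahalgebroiddecompsition} that the bracket of $\at^{1}(E)$ with the ideal is induced by the canonical representation on $E$ — which restricts along $\nabla$ to the induced covariant derivative on $\Sym^{\bullet}(E^{*})\otimes E$; and the $\eta$--$\eta$ contribution is $[\eta(X),\eta(Y)] = \tfrac{1}{2}[\eta,\eta](X,Y)$. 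Assembling these pieces gives $[\sigma(X),\sigma(Y)] - \sigma([X,Y]) = (d^{\nabla}\eta)(X,Y) + \tfrac{1}{2}[\eta,\eta](X,Y)$, so $\sigma$ is a Lie algebroid splitting if and only if $d^{\nabla}\eta + \tfrac{1}{2}[\eta,\eta] = 0$ in $\Omega^{2}_{W}(\ad^{k}(E))$.

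Finally I would split this single equation according to symmetric degree. By Lemma \ref{Explicitbracket} the bracket carries $\Sym^{a}(E^{*})\otimes E$ and $\Sym^{b}(E^{*})\otimes E$ into $\Sym^{a+b-1}(E^{*})\otimes E$, so $[\eta_{i},\eta_{j}]$ lands in $\Omega^{2}_{W}(\Sym^{i+j+1}(E^{*})\otimes E)$ while $d^{\nabla}\eta_{r}$ lands in $\Omega^{2}_{W}(\Sym^{r+1}(E^{*})\otimes E)$; since brackets of total symmetric degree exceeding $k$ vanish in $\at^{k}(E)$, comparing the $\Sym^{r+1}(E^{*})\otimes E$-components for $r = 1,\ldots,k-1$ produces exactly the asserted system $d^{\nabla}\eta_{r} + \tfrac{1}{2}\sum_{i+j=r}[\eta_{i},\eta_{j}] = 0$. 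This argument simultaneously gives both directions: every splitting decomposes as claimed with $\nabla$ flat and the $\eta_{i}$ solving Maurer--Cartan, and every such pair of data yields a splitting. The only delicate points — and the closest thing to an obstacle — are the identification of the mixed bracket terms with covariant derivatives and the verification that the graded pieces of the Maurer--Cartan equation decouple cleanly; both follow directly from the two bracket lemmas, so no substantial difficulty is anticipated.
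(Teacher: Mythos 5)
Your proposal is correct and follows exactly the route the paper intends: the paper states this corollary without proof, as an immediate consequence of Theorem \ref{splittingprop} together with the decomposition of Equation \ref{Atiyahdecomposition} and the dgla structure on $\Omega_{W}^{\bullet}(\ad^{k}(E))$, and your expansion of $[\sigma(X),\sigma(Y)]-\sigma([X,Y])$ into curvature, $d^{\nabla}\eta$, and $\tfrac{1}{2}[\eta,\eta]$ terms, followed by the bookkeeping of symmetric degrees via Lemma \ref{Explicitbracket}, is precisely the omitted computation. The only detail worth making explicit is the one you already note in passing: that the projection $\at^{k}(E)\to\at^{1}(E)$ is a Lie algebroid morphism, so that $\nabla$ is automatically a flat connection.
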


Another interesting consequence of Theorem \ref{splittingprop} is that it provides a canonical way of extending any $k$-th order foliation to a germ of distribution. Indeed, by viewing sections of $\at^{k}(E)$ as polynomial vector fields on $E$, the splitting $\sigma$ provides a distribution $\xi_{\sigma} \subset TE$. In terms of the Maurer-Cartan data of Corollary \ref{MCeq}, it is generated by the vector fields 
\[
\sigma(X) = \nabla_{X} + \sum_{i = 1}^{k-1}\eta_{i}(X),
\]
for $X \in \mathfrak{X}(W)$. 

Finally, the singular foliation of $b^{k+1}$-type $\mathcal{A}(\sigma)$ associated to a splitting $\sigma$ has the following description 
\begin{equation} \label{bkfoldecomp}
\mathcal{A}(\sigma) = \xi_{\sigma} \oplus \mathcal{I}_W^{k+1} \pi^*(E). 
\end{equation}
This immediately leads to a convenient description of the Lie algebroid $\mathcal{A}(\sigma)|_{W}$ of Proposition \ref{restrictedLiealgebroid}. 
\begin{corollary} \label{restrictedextendedatiyah}
Let $\mathcal{A}(\sigma)$ be a singular foliation of $b^{k+1}$-type associated to a splitting $\sigma: TW \to \at^{k}(E)$. Then the Lie algebroid $\mathcal{A}(\sigma)|_{W}$ is isomorphic to the fibre product of $\sigma$ and $\at^{k+1}(E) \to \at^{k}(E)$: 
\[
\mathcal{A}(\sigma)|_{W} \cong TW \oplus_{\at^{k}(E)} \at^{k+1}(E). 
\]
\end{corollary}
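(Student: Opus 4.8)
The plan is to produce an explicit isomorphism of Lie algebroids over $W$
\[
\Phi\colon TW \oplus_{\at^{k}(E)} \at^{k+1}(E) \longrightarrow \mathcal{A}(\sigma)|_{W},
\]
by realising both sides as extensions of $TW$ by $\Sym^{k+1}(E^{*})\otimes E$ and then invoking the five lemma. For the target, Proposition \ref{restrictedLiealgebroid}, together with the identification $\nu_{W}\cong E$ supplied by the tubular neighbourhood, gives
\[
0 \to \Sym^{k+1}(E^{*})\otimes E \to \mathcal{A}(\sigma)|_{W} \to TW \to 0 .
\]
For the source, the forgetful map $u_{k+1}\colon \at^{k+1}(E)\to\at^{k}(E)$ is a surjective Lie algebroid morphism over $\mathrm{id}_{W}$ whose kernel, by the canonical decomposition of Lemma \ref{Atiyahalgebroiddecompsition}, is exactly $\Sym^{k+1}(E^{*})\otimes E$. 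Pulling back along the Lie algebroid morphism $\sigma\colon TW\to\at^{k}(E)$ thus exhibits $P:=TW \oplus_{\at^{k}(E)} \at^{k+1}(E)$ as an extension of $TW$ by the same kernel, so it suffices to construct a Lie algebroid morphism $\Phi$ between these two extensions inducing the identity on the base $TW$ and on the kernel.

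Next I would define $\Phi$ on sections by a lift-and-restrict procedure. A local section of $P$ is a pair $(X_{0},\alpha)$ with $X_{0}\in\X(W)$ and $\alpha$ a section of $\at^{k+1}(E)$ satisfying $u_{k+1}(\alpha)=\sigma(X_{0})$. Choose a $\pi$-projectable vector field $\hat{\alpha}$ on $E$ near $W$, tangent to $W$, with $j^{k+1}_{W}(\hat{\alpha})=\alpha$ (possible since $\alpha\in\at^{k+1}(E)\subset\mathrm{Der}(\mathcal{N}_{W}^{k+1})$). Then $j^{k}_{W}(\hat{\alpha})=u_{k+1}(\alpha)=\sigma(X_{0})$ lies in $\sigma(TW)$, hence in the $k$-th order foliation $F$ determined by $\sigma$ via Theorem \ref{splittingprop}, so $\hat{\alpha}\in P_{k}(F)=\mathcal{A}(\sigma)$; set $\Phi(X_{0},\alpha):=\hat{\alpha}|_{W}\in\mathcal{A}(\sigma)|_{W}$. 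This is independent of the chosen lift, since two lifts differ by a vector field in $\mathcal{I}_{W}^{k+2}TM\subseteq\mathcal{I}_{W}\cdot\mathcal{I}_{W}^{k+1}TM\subseteq\mathcal{I}_{W}\mathcal{A}(\sigma)$, which vanishes in $\mathcal{A}(\sigma)|_{W}$.

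Finally I would check that $\Phi$ is a morphism of Lie algebroids restricting to the identity on the base and on the kernel. Compatibility with anchors is immediate: $\hat{\alpha}$ is $\pi$-projectable with projection $X_{0}$ and tangent to $W$, so $\hat{\alpha}|_{W}=X_{0}$, which is the anchor of $(X_{0},\alpha)$ in $P$. On the kernel, $\alpha\in\Sym^{k+1}(E^{*})\otimes E=\ker u_{k+1}$ is its own lift as a vertical vector field in $\mathcal{I}_{W}^{k+1}\pi^{*}(E)$, so by the decomposition \eqref{bkfoldecomp} the class $\Phi(0,\alpha)$ is the image of $\alpha$ in $\bigl(\mathcal{I}_{W}^{k+1}\pi^{*}(E)/\mathcal{I}_{W}^{k+2}\pi^{*}(E)\bigr)|_{W}\cong\Sym^{k+1}(E^{*})\otimes E$, namely $\alpha$ itself. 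The step I expect to require the most care is bracket-compatibility, since it forces one to reconcile three bracket structures — on vector fields on $E$, on $\at^{k+1}(E)$, and on $\mathcal{A}(\sigma)|_{W}$. The key observation is that if $\hat{\alpha},\hat{\beta}$ are $\pi$-projectable representatives of $\alpha,\beta$ tangent to $W$, then $[\hat{\alpha},\hat{\beta}]$ is again $\pi$-projectable, tangent to $W$, lies in $\mathcal{A}(\sigma)$, and satisfies $j^{k+1}_{W}[\hat{\alpha},\hat{\beta}]=[\alpha,\beta]_{\at^{k+1}(E)}$; it is therefore a legitimate lift of the $P$-bracket $([X_{0},Y_{0}],[\alpha,\beta])$, and since the bracket on $\mathcal{A}(\sigma)|_{W}$ descends from that of $\mathcal{A}(\sigma)$ one gets
\[
\Phi\bigl([X_{0},Y_{0}],[\alpha,\beta]\bigr)=[\hat{\alpha},\hat{\beta}]|_{W}=[\hat{\alpha}|_{W},\hat{\beta}|_{W}]=[\Phi(X_{0},\alpha),\Phi(Y_{0},\beta)].
\]
The five lemma then upgrades $\Phi$ to the desired isomorphism of Lie algebroids, completing the proof.
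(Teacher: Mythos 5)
Your proof is correct and follows essentially the same route as the paper: the paper identifies the fibre product with the subsheaf $\tilde{F}\subset\at^{k+1}(E)$ of $\pi$-projectable $(k+1)$-jets of sections of $\mathcal{A}(\sigma)$ and checks via the decomposition \eqref{bkfoldecomp} that restriction to $W$ gives an isomorphism onto $\mathcal{A}(\sigma)|_{W}$, which is exactly your lift-and-restrict map read in the other direction. The only difference is presentational — you package the final step as a five-lemma argument for extensions where the paper reads the isomorphism off the decomposition directly — and your added checks (independence of the lift, bracket compatibility) are accurate.
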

\begin{proof}
Let $\tilde{F} \subset \at^{k+1}(E)$ denote the subsheaf of $\pi$-projectible $(k+1)$-jets of sections of $\mathcal{A}(\sigma)$. Using the decomposition of Equation \ref{bkfoldecomp}, we see that it is isomorphic to $TW \oplus (\mathrm{Sym}^{k+1}(E^*) \otimes E) $, where $TW$ is embedded using $\sigma$. Hence, this is a Lie subalgebroid of the form claimed in the statement. The morphism $i^{-1}_{W}(\mathcal{A}(\sigma)) \to \mathcal{A}(\sigma)|_{W}$ descends to a morphism $\tilde{F} \to  \mathcal{A}(\sigma)|_{W}$ and the decomposition of Equation \ref{bkfoldecomp} shows that it is an isomorphism. 
\end{proof}

\begin{remark}
Recall Theorem \ref{extensionbijpart1} which states that splittings of the sequence \eqref{restrictedanchorsequence} correspond to extensions of a $k$-th order foliation to order $k+1$. Corollary \ref{restrictedextendedatiyah} provides an immediate proof of this statement, as the description of $\A(\sigma)|_W$ makes it clear that a splitting of sequence \eqref{restrictedanchorsequence} corresponds to a splitting of $\at^{k+1}(E)$, which is an extension by Theorem \ref{splittingprop}. However, do note that this proof comes at the expense of using a tubular neighbourhood embedding via the use of $\at^k(E)$.
\end{remark}
Since there is a non-bracket preserving splitting of sequence \eqref{restrictedanchorsequence} induced by $\xi_{\sigma}$, we obtain a concrete expression for the extension class. 
\begin{corollary}\label{cor:explicitextensionclass}
Let $\sigma = \nabla + \sum_{i = 1}^{k-1}\eta_{i} : TW \to \at^{k}(E)$ be a Lie algebroid splitting corresponding to a $k$-th order foliation $F(\sigma)$. Then the extension class is given by the following expression 
\[
e(F(\sigma)) = \frac{1}{2} \sum_{i + j = k} [\eta_{i}, \eta_{j}] \in H^2(W;\Sym^{k+1}(E^*)\otimes E). 
\]
\end{corollary}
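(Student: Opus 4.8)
The plan is to combine the Maurer–Cartan characterization of Lie algebroid splittings from Corollary \ref{MCeq} with the explicit formula for the extension class in Definition \ref{def:extension}, using the isomorphism of Corollary \ref{restrictedextendedatiyah} to identify $\mathcal{A}(\sigma)|_W$ with the fibre product $TW \oplus_{\at^k(E)} \at^{k+1}(E)$. The key observation is that this fibre product is precisely $\at^{k+1}(E)$ with the extra Maurer–Cartan datum $\eta_k \in \Omega^1_W(\Sym^{k+1}(E^*)\otimes E)$ treated as an unknown: a $(k+1)$-st order extension exists iff one can adjoin an $\eta_k$ solving the degree-$k$ Maurer–Cartan equation $d^\nabla \eta_k + \tfrac12\sum_{i+j=k}[\eta_i,\eta_j] = 0$. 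Since the lower-degree equations ($r = 1,\dots,k-1$) already hold for $\sigma$, the only obstruction to solving for $\eta_k$ is that the fixed $2$-form $\tfrac12\sum_{i+j=k}[\eta_i,\eta_j]$ must be $d^\nabla$-exact, i.e. its class in $H^2(W;\Sym^{k+1}(E^*)\otimes E)$ must vanish.

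First I would make the vector-bundle (non-bracket-preserving) splitting of sequence \eqref{restrictedanchorsequence} explicit. By Corollary \ref{restrictedextendedatiyah}, a point of $\mathcal{A}(\sigma)|_W$ over $x$ is a pair consisting of a tangent vector $X \in T_xW$ together with a lift of $\sigma(X)$ to $\at^{k+1}(E)$; the canonical choice of lift is $\sigma(X)$ itself, regarded as a polynomial vector field of degree $\le k$, which indeed lands in $\at^{k+1}(E)$. This gives a vector-bundle splitting $s: TW \to \mathcal{A}(\sigma)|_W$, $s(X) = (X, \sigma(X))$. Plugging this into the formula $\Omega(X,Y) = [s(X),s(Y)] - s([X,Y])$ from Definition \ref{def:extension} and computing the bracket inside $\at^{k+1}(E)$: the bracket $[\sigma(X),\sigma(Y)]$ computed in $\at^k(E)$ equals $\sigma([X,Y])$ by the Maurer–Cartan equations of Corollary \ref{MCeq} through degree $k-1$, so the only surviving term in $\at^{k+1}(E)$ is the degree-$(k+1)$ part of the bracket, which by the explicit bracket formula of Lemma \ref{Explicitbracket} together with the decomposition $\sigma = \nabla + \sum_{i=1}^{k-1}\eta_i$ is exactly $d^\nabla(\text{nothing}) + \tfrac12\sum_{i+j=k}[\eta_i,\eta_j]$ — here the $d^\nabla\eta_k$ term is absent because there is no $\eta_k$ in $\sigma$, and the cross terms $[\nabla, \eta_i]$-type contributions of total weight landing in $\Sym^{k+1}$ come only from pairs $i+j=k$ among the $\eta$'s (the connection contributes the $d^\nabla$ differential, which acts trivially here). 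Hence $\Omega = \tfrac12\sum_{i+j=k}[\eta_i,\eta_j]$, which is the claimed representative, and its cohomology class is $e(F(\sigma))$ by Definition \ref{def:extension}.

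The main obstacle I anticipate is bookkeeping the weights correctly in the bracket computation of Lemma \ref{Explicitbracket}: one must carefully separate the part of $[\sigma(X),\sigma(Y)]$ that lives in $\at^k(E)$ (the image under $\at^{k+1}(E)\to\at^k(E)$, which is controlled by the splitting being a Lie algebroid morphism into $\at^k(E)$) from the purely degree-$(k+1)$ part, and verify that the connection term $\nabla$ contributes only through $d^\nabla$ applied to things of weight $\le k-1$ — none of which produces a weight-$(k+1)$ piece except through the missing $\eta_k$. This is a direct but somewhat delicate expansion; once it is in place, identifying the residual $2$-form with $\tfrac12\sum_{i+j=k}[\eta_i,\eta_j]$ and concluding via Definition \ref{def:extension} is immediate. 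One should also remark that the class is independent of the chosen vector-bundle splitting, which is automatic since any two differ by an element of $\Omega^1_W(\Sym^{k+1}(E^*)\otimes E)$ and changing $s$ by such a $1$-form changes $\Omega$ by a $d^\nabla$-exact term.
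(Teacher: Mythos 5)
Your proposal is correct and follows essentially the same route as the paper, which derives the formula by using the non-bracket-preserving splitting of sequence \eqref{restrictedanchorsequence} induced by $\xi_{\sigma}$ (equivalently, the canonical lift $X \mapsto (X,\sigma(X))$ into the fibre product of Corollary \ref{restrictedextendedatiyah}) and observing that the lower-degree Maurer--Cartan equations kill everything except the degree-$(k+1)$ term $\tfrac12\sum_{i+j=k}[\eta_i,\eta_j]$. The paper leaves these details implicit; your write-up simply makes the bookkeeping explicit.
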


\subsection{Holonomy}

As a transitive Lie algebroid over $W$, $\at^{k}(E)$ is the Atiyah algebroid of a principal bundle. In this section we will explain this perspective and use it to construct the holonomy invariant of a $k$-th order foliation. 

\subsubsection{The structure group} \label{structuregroup}
Let $G_{k,l} := J^{k}\mathrm{Diff}(\mathbb{R}^l, 0)$ be the Lie group of $k$-jets of diffeomorphisms of $\mathbb{R}^l$ fixing the origin. Equivalently, $G_{k,l} \cong \mathrm{Aut}(N(k))$, the group of algebra automorphisms of $N(k) = \mathbb{R}[z_{1}, ..., z_{l}]/I^{k+1}$. By forgetting the algebra structure on $N(k)$, we get an embedding $G_{k, l} \to \mathrm{GL}(N(k))$ corresponding to the natural representation of $G_{k,l}$ on the finite dimensional vector space underlying $N(k)$. Hence, $G_{k,l}$ is a linear algebraic group. 

Setting $k = 1$, we find $G_{1,l} = \mathrm{GL}(\mathbb{R}^l)$. By sending a $k$-jet to a $(k-1)$-jet, we get a tower of surjective homomorphisms 
\begin{equation} \label{tower}
... \to G_{k,l} \to G_{k-1,l} \to ... \to G_{2,l} \to G_{1,l} = \mathrm{GL}(\mathbb{R}^l). 
\end{equation}
Furthermore, since an element of $G_{1,l}$ may be interpreted as a linear automorphism of $\mathbb{R}^l$, there is a natural inclusion $j_{k}: G_{1,l} \to G_{k,l}$ for all $k$, which splits the map $\pi_{k} : G_{k,l} \to G_{1,l}$. Hence, we have a semi-direct product decomposition 
\begin{equation}\label{Levy}
G_{k,l} \cong K_{k,l} \rtimes \mathrm{GL}(\mathbb{R}^l),
\end{equation}
where $K_{k,l} = \mathrm{Ker}(\pi_{k})$, the subgroup of automorphisms whose linear approximation is the identity. Given $\phi \in K_{k,l}$, we have $\phi^{*}(z_{i}) = z_{i} + \mathcal{O}(z^2)$. This implies that the image of $\phi$ in $\mathrm{GL}(N(k))$ is unipotent. Therefore, the subgroup $K_{k,l}$ is unipotent and Equation \ref{Levy} is the Levy decomposition of $G_{k,l}$. When $l = 1$, the group $G_{k,1}$ is solvable. Furthermore, since the exponential map is an algebraic isomorphism for unipotent groups, we have 
\[
K_{k,l} \cong {\rm Lie}(K_{k,l}) = \bigoplus_{i = 1}^{k-1} \mathrm{Sym}^{i+1}( (\mathbb{R}^l)^* ) \otimes \mathbb{R}^l,
\]
mirroring the decomposition of $\at^{k}(E)$ in Lemma \ref{Atiyahalgebroiddecompsition}. The conjugation action of $\mathrm{GL}(\mathbb{R}^l)$ on $K_{k,l}$ corresponds, through this isomorphism, to the symmetric and tensor powers of the defining representation of $\mathrm{GL}(\mathbb{R}^l)$ on $\mathbb{R}^l$. Hence, we obtain the following. 

\begin{lemma} \label{lem:Gkldecomp}
The group $G_{k,l}$ is a linear algebraic group with the following Levy decomposition 
\[
G_{k,l} \cong \big( \bigoplus_{i = 1}^{k-1} \mathrm{Sym}^{i+1}( (\mathbb{R}^l)^* ) \otimes \mathbb{R}^l \big) \rtimes \mathrm{GL}(\mathbb{R}^l),
\]
where $\mathrm{GL}(\mathbb{R}^l)$ acts on $ \mathrm{Sym}^{i+1}( (\mathbb{R}^l)^* ) \otimes \mathbb{R}^l $ via the symmetric and tensor products of the defining representation. The product of elements in the direct sum is defined via the Baker-Campbell-Hausdorff formula applied to the Lie bracket of Lemma \ref{Explicitbracket}. 
\end{lemma}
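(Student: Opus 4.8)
The statement is essentially a summary of structural facts about $G_{k,l}$ that have already been assembled in the surrounding text, so my plan is to organize the verification into three parts: (i) that $G_{k,l}$ is a linear algebraic group; (ii) the Levy decomposition $G_{k,l} \cong K_{k,l} \rtimes \GL(\RR^l)$ with $K_{k,l}$ the unipotent radical; (iii) the identification of $K_{k,l}$ with $\bigoplus_{i=1}^{k-1} \Sym^{i+1}((\RR^l)^*) \otimes \RR^l$ together with the description of the $\GL(\RR^l)$-action and the group law via BCH.

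\textbf{Part (i).} Here I would use the identification $G_{k,l} \cong \Aut(N(k))$ from Section \ref{structuregroup}: an algebra automorphism of $N(k) = \RR[z_1,\ldots,z_l]/I^{k+1}$ is in particular a linear automorphism of the finite-dimensional vector space underlying $N(k)$, giving an injective homomorphism $G_{k,l} \to \GL(N(k))$. The condition of preserving the (polynomial) multiplication on $N(k)$ and of fixing the maximal ideal $I/I^{k+1}$ is a polynomial (in fact linear–quadratic) condition on the matrix entries, so the image is a Zariski-closed subgroup; hence $G_{k,l}$ is linear algebraic. The splitting $j_k : G_{1,l} = \GL(\RR^l) \to G_{k,l}$ of $\pi_k : G_{k,l} \to G_{1,l}$, obtained by letting a linear automorphism of $\RR^l$ act by the induced automorphism of $N(k)$, is likewise algebraic.

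\textbf{Parts (ii) and (iii).} From the splitting $j_k$ we get the semidirect product decomposition $G_{k,l} \cong K_{k,l} \rtimes \GL(\RR^l)$ with $K_{k,l} = \ker(\pi_k)$. For $\phi \in K_{k,l}$ we have $\phi^*(z_i) = z_i + \mathcal{O}(z^2)$, so the induced element of $\GL(N(k))$ is upper-triangular unipotent with respect to the filtration by powers of $I$; hence $K_{k,l}$ is a unipotent algebraic group, so it is connected and $\GL(\RR^l)$ is a Levi factor. I would then identify $\Lie(K_{k,l})$ with the space of derivations of $N(k)$ that vanish on the linear part, which is exactly $\bigoplus_{i=1}^{k-1}\Sym^{i+1}((\RR^l)^*) \otimes \RR^l$ (this is the vertical, order-$\geq 2$ piece appearing in Lemma \ref{Atiyahalgebroiddecompsition}, with $W$ a point and $E = \RR^l$); the Lie bracket on it is the one computed in Lemma \ref{Explicitbracket}. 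Since the exponential map of a unipotent algebraic group is an algebraic isomorphism onto the group, $K_{k,l} \cong \Lie(K_{k,l})$ as a variety, and under this identification the group multiplication becomes the Baker-Campbell-Hausdorff product for the bracket of Lemma \ref{Explicitbracket} (the BCH series terminates because the Lie algebra is nilpotent, being graded in positive degrees). Finally, the conjugation action of $j_k(\GL(\RR^l))$ on $K_{k,l}$ corresponds, after taking $\exp$, to the adjoint action on $\Lie(K_{k,l})$, and one checks directly from the definition of the $\GL(\RR^l)$-action on $N(k)$ — namely $g$ acts on $z_i$ linearly — that this is precisely the action by symmetric and tensor powers of the defining representation on each summand $\Sym^{i+1}((\RR^l)^*) \otimes \RR^l$.

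\textbf{Main obstacle.} None of the steps is deep; the only point requiring a little care is matching conventions — verifying that the bracket on $\Lie(K_{k,l})$ coming from its realization as vertical derivations of $N(k)$ agrees on the nose with the formula in Lemma \ref{Explicitbracket}, and that the adjoint action is the claimed one. This is a direct computation with the derivation $\iota_v$ on $N(k)$, and the grading by polynomial degree makes nilpotency and hence termination of BCH automatic. The last sentence on solvability when $l=1$ is immediate, since then $\GL(\RR^1) = \RR^*$ is abelian and $K_{k,1}$ is nilpotent, so $G_{k,1}$ is solvable.
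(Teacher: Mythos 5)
Your proposal is correct and follows essentially the same route as the paper, which establishes this lemma in the prose of Section \ref{structuregroup}: the embedding $\Aut(N(k)) \hookrightarrow \GL(N(k))$ for algebraicity, the splitting $j_k$ of $\pi_k$ giving the semidirect product, unipotence of $K_{k,l}$ from $\phi^*(z_i) = z_i + \mathcal{O}(z^2)$, and the algebraic exponential identifying $K_{k,l}$ with its nilpotent graded Lie algebra $\bigoplus_{i=1}^{k-1}\Sym^{i+1}((\RR^l)^*)\otimes\RR^l$ equipped with the BCH product. The extra details you supply (Zariski-closedness of the image, and the identification of $\Lie(K_{k,l})$ with vertical derivations mirroring Lemma \ref{Atiyahalgebroiddecompsition}) only make explicit what the paper leaves implicit.
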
 

\begin{corollary} \label{cor:equivarianthomotopy}
There is a family of $G_{1,l}$-equivariant homomorphisms $h_{t} : G_{k,l} \to G_{k,l}$ interpolating between $h_{1} = id_{G_{k,l}}$ and $h_{0} = j_{k} \circ \pi_{k}$. For $t > 0$, $h_{t}$ is given by 
\begin{equation*}
g \mapsto j_{k}(t^{-1} id)^{-1} \cdot g \cdot j_{k}(t^{-1} id).
\end{equation*}
In particular, $G_{k,l}$ and $G_{1,l}$ are homotopy equivalent. 
\end{corollary}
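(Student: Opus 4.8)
The statement follows directly from the Levi decomposition in Lemma \ref{lem:Gkldecomp} together with the observation that the conjugation formula written down is exactly the effect of rescaling the coordinates on $\mathbb{R}^l$. Concretely, I would work with the identification $K_{k,l} \cong \bigoplus_{i=1}^{k-1} \mathrm{Sym}^{i+1}((\mathbb{R}^l)^*) \otimes \mathbb{R}^l$ and the semidirect product decomposition $G_{k,l} \cong K_{k,l} \rtimes \mathrm{GL}(\mathbb{R}^l)$. Writing $\lambda = j_k(t^{-1}\,\mathrm{id}) \in G_{1,l} \subset G_{k,l}$, the map $h_t(g) = \lambda^{-1} g \lambda$ is a group homomorphism for each fixed $t > 0$ since it is conjugation by a fixed element. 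The key computation is to see what this conjugation does on each graded piece: since $\mathrm{GL}(\mathbb{R}^l)$ acts on $\mathrm{Sym}^{i+1}((\mathbb{R}^l)^*) \otimes \mathbb{R}^l$ through the $(i+1)$-st symmetric power of the dual times the defining representation, conjugation by the scalar $t^{-1}\,\mathrm{id}$ multiplies the degree-$i$ component by $(t^{-1})^{-(i+1)} \cdot t^{-1} = t^{i}$. Hence on $K_{k,l}$, $h_t$ is the map sending the component in $\mathrm{Sym}^{i+1}((\mathbb{R}^l)^*)\otimes\mathbb{R}^l$ to $t^{i}$ times itself; on the $\mathrm{GL}(\mathbb{R}^l)$ factor it is the identity (scalars are central in $\mathrm{GL}(\mathbb{R}^l)$). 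This makes manifest that $h_t$ is a homomorphism of $G_{k,l}$ even when one views the direct sum with its BCH product, because rescaling each graded summand by $t^i$ is compatible with the bracket of Lemma \ref{Explicitbracket}: that bracket sends degrees $i,j$ to degree $i+j$, and $t^i \cdot t^j = t^{i+j}$.

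\textbf{Carrying it out.} First I would set up notation: fix $t \in (0,1]$, let $\lambda_t = j_k(t^{-1}\,\mathrm{id})$, and define $h_t = \Ad(\lambda_t^{-1}) = c_{\lambda_t^{-1}}$, the inner automorphism; being an inner automorphism, it is tautologically a group homomorphism $G_{k,l} \to G_{k,l}$. Second, I would verify $G_{1,l}$-equivariance: for $g_0 \in G_{1,l}$, we need $h_t(g_0 g g_0^{-1}) = g_0 h_t(g) g_0^{-1}$, i.e.\ $\lambda_t^{-1} g_0 g g_0^{-1} \lambda_t = g_0 \lambda_t^{-1} g g_0^{-1} \lambda_t^{-1}$ (after rearranging), which reduces to $g_0$ commuting with $\lambda_t$; but $\lambda_t$ is a central scalar in $G_{1,l}$, so this is immediate. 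Third, I would compute the limit $t \to 0^+$: using the graded description above, $h_t$ acts on the degree-$i$ summand of $K_{k,l}$ by $t^i \to 0$ (for $i \geq 1$) and as the identity on $\mathrm{GL}(\mathbb{R}^l)$, so $h_t$ converges uniformly on compacta to the homomorphism that kills $K_{k,l}$ and is the identity on $\mathrm{GL}(\mathbb{R}^l)$ — that is precisely $j_k \circ \pi_k$. Fourth, $h_1 = \Ad(\lambda_1^{-1}) = \Ad(\mathrm{id}) = \mathrm{id}_{G_{k,l}}$. Finally, the family $\{h_t\}_{t \in [0,1]}$ is continuous in $t$ (clear from the polynomial-in-$t$ description in coordinates), so it is a homotopy through homomorphisms from $\mathrm{id}_{G_{k,l}}$ to $j_k \circ \pi_k$; composing with the inclusion $j_k$ the other way gives that $j_k : G_{1,l} \to G_{k,l}$ and $\pi_k: G_{k,l} \to G_{1,l}$ are mutually inverse homotopy equivalences, since $\pi_k \circ j_k = \mathrm{id}_{G_{1,l}}$ on the nose and $j_k \circ \pi_k \simeq \mathrm{id}_{G_{k,l}}$ via $h_t$.

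\textbf{The main obstacle.} None of the steps is hard; the only point requiring a little care is the continuity and well-definedness of $h_t$ at $t = 0$, where the defining formula $g \mapsto j_k(t^{-1}\mathrm{id})^{-1} g\, j_k(t^{-1}\mathrm{id})$ degenerates. The clean way around this is exactly the graded-coordinate description: once one writes $g = (\sum_i v_i, A)$ with $v_i$ in the degree-$i$ summand and $A \in \mathrm{GL}(\mathbb{R}^l)$, the formula becomes $h_t(g) = (\sum_i t^i v_i, A)$, which visibly extends continuously (indeed polynomially) to $t = 0$ and is a homomorphism for all $t \in [0,1]$ because the BCH product and the bracket of Lemma \ref{Explicitbracket} are homogeneous of the right weights. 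So the "obstacle" is really just bookkeeping of weights, and Lemma \ref{lem:Gkldecomp} has already done that bookkeeping for us.
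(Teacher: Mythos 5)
Your proof is correct and follows essentially the same route as the paper's: both rest on the Levi decomposition of Lemma \ref{lem:Gkldecomp}, under which $h_t$ becomes $(v_1,\ldots,v_{k-1},A)\mapsto(tv_1,\ldots,t^{k-1}v_{k-1},A)$, visibly extending to $t=0$ as $j_k\circ\pi_k$, with equivariance coming from the centrality of scalars in $G_{1,l}$. The only difference is that you spell out the weight bookkeeping and the compatibility with the graded bracket of Lemma \ref{Explicitbracket} (which is what makes $h_0$ a homomorphism), details the paper leaves implicit.
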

\begin{proof}
Using the decomposition of Lemma \ref{lem:Gkldecomp}, an element of $G_{k,l}$ may be written as 
\[
g = (v_{1}, ..., v_{k-1}, A), 
\]
where $v_{i} \in \mathrm{Sym}^{i+1}( (\mathbb{R}^l)^* ) \otimes \mathbb{R}^l $ and $A \in \mathrm{GL}(\mathbb{R}^l)$. Therefore 
\[
h_{t}(g) = (t v_{1}, ..., t^{k-1} v_{k-1}, A). 
\]
This map extends to $t = 0$ to give the homomorphism $h_{0} = j_{k} \circ \pi_{k}$. Furthermore, these maps are $G_{1,l}$-equivariant since $t^{-1} id$ is central in $G_{1,l}$. 
\end{proof}

The direct sum decomposition of $K_{k,l}$ given above implies the following result. 
\begin{corollary} \label{canonicalsplitting}
For every $k,l$ there is a set-theoretic splitting $s_k: G_{k, l} \to G_{k+1,l}$ of the projection $G_{k+1, l} \to G_{k,l}$. The splitting $s_{k}$ depends on the linear structure on $\mathbb{R}^l$. 
\end{corollary}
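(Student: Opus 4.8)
The plan is to exploit the explicit Levi-type description from Lemma \ref{lem:Gkldecomp}. Recall that $G_{k,l} \cong K_{k,l} \rtimes \GL(\RR^l)$ with $K_{k,l}$ unipotent, and that the exponential map identifies $K_{k,l}$ with its Lie algebra $\bigoplus_{i=1}^{k-1}\Sym^{i+1}((\RR^l)^*)\otimes \RR^l$. The projection $G_{k+1,l}\to G_{k,l}$ is compatible with the semidirect-product structure: it is the identity on the $\GL(\RR^l)$ factor and, on the unipotent parts, it is the linear projection
\[
\bigoplus_{i=1}^{k}\Sym^{i+1}((\RR^l)^*)\otimes \RR^l \;\longrightarrow\; \bigoplus_{i=1}^{k-1}\Sym^{i+1}((\RR^l)^*)\otimes \RR^l
\]
that drops the top summand $\Sym^{k+1}((\RR^l)^*)\otimes \RR^l$. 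This last claim needs a short verification: one checks that truncating a $(k+1)$-jet of a diffeomorphism to a $k$-jet, when both are written in the coordinates supplied by the linear structure on $\RR^l$ via the BCH/exponential identification, amounts precisely to forgetting the degree-$(k+1)$ Taylor coefficients, because the $\GL$-part only sees the linear term and the group law on the unipotent part is built from the bracket of Lemma \ref{Explicitbracket}, which is graded and hence respects the truncation.

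Given this, I would define the splitting $s_k : G_{k,l}\to G_{k+1,l}$ as the identity on the common $\GL(\RR^l)$ factor and, on the unipotent part, the linear inclusion
\[
\bigoplus_{i=1}^{k-1}\Sym^{i+1}((\RR^l)^*)\otimes \RR^l \;\hookrightarrow\; \bigoplus_{i=1}^{k}\Sym^{i+1}((\RR^l)^*)\otimes \RR^l
\]
sending the top new summand to zero, transported through the two exponential maps. Concretely, in the coordinates of Lemma \ref{lem:Gkldecomp} this is
\[
(v_1,\ldots,v_{k-1},A)\;\longmapsto\;(v_1,\ldots,v_{k-1},0,A).
\]
Because the truncation map was identified above with dropping the last coordinate $v_k$, it is immediate that the composite $G_{k+1,l}\to G_{k,l}\xrightarrow{s_k} G_{k+1,l}$ followed by the projection is the identity of $G_{k,l}$, i.e. $s_k$ is a set-theoretic section. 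The dependence on the linear structure on $\RR^l$ is visible in that both the identification $K_{\bullet,l}\cong\mathrm{Lie}(K_{\bullet,l})$ and the choice of the zero element of the top summand use the vector-space structure of $\RR^l$ (equivalently, the chosen splitting $j_k:\GL(\RR^l)\to G_{k,l}$), so $s_k$ is not canonical.

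The only genuine point requiring care — the ``main obstacle,'' though it is minor — is the compatibility claim that the projection $G_{k+1,l}\to G_{k,l}$ corresponds, under the exponential coordinates, to the naive truncation of the graded vector space; this is where one must be slightly careful that the BCH group law does not mix degrees in a way that would obstruct the splitting. But since the bracket of Lemma \ref{Explicitbracket} is homogeneous of the expected degree (the bracket of a degree-$(i+1)$ and a degree-$(j+1)$ term lands in degree $i+j+1$), the BCH series is filtered-compatible and the truncation at degree $k$ is a group homomorphism whose kernel is exactly $\exp\big(\Sym^{k+1}((\RR^l)^*)\otimes\RR^l\big)$; inserting $0$ in that slot is then manifestly a section. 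I would also remark that $s_k$ is evidently $\GL(\RR^l)$-equivariant (it commutes with the conjugation action, since it is the identity on $\GL(\RR^l)$ and $\GL$-equivariant on the unipotent parts), although this stronger statement is not needed for the corollary as stated.
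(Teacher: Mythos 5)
Your proposal is correct and is essentially the argument the paper intends: the corollary is stated as an immediate consequence of the direct sum decomposition of $K_{k,l}$ from Lemma \ref{lem:Gkldecomp}, and the section is exactly the map $(v_1,\ldots,v_{k-1},A)\mapsto(v_1,\ldots,v_{k-1},0,A)$ you describe. Your extra verification that the truncation respects the graded BCH product is a reasonable elaboration of the same route, not a different one.
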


The description of $G_{k,l}$ simplifies considerably in the case $k = 2$ where $K_{k,l}$ becomes abelian, since the Baker-Campbell-Hausdorff formula reduces to addition in the vector space underlying the Lie algebra. 
\begin{corollary}\label{cor:g2lgroup}
There is an isomorphism 
\[
G_{2,l} \cong \big( \mathrm{Sym}^{2}( (\mathbb{R}^l)^* ) \otimes \mathbb{R}^l \big) \rtimes \mathrm{GL}(\mathbb{R}^l),
\]
with product  
\[
(k_{1}, A_{1}) \ast (k_{2}, A_{2}) = (k_{1} + A_{1}(k_{2}), A_{1} A_{2}). 
\]
\end{corollary}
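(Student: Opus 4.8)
The plan is to specialize the general Levy decomposition of $G_{k,l}$ from Lemma \ref{lem:Gkldecomp} to the case $k = 2$ and identify the group product explicitly. In this case the tower \eqref{tower} has only one nontrivial step, $G_{2,l} \to G_{1,l} = \GL(\RR^l)$, with kernel $K_{2,l}$; and the Lie algebra computed there is $\mathrm{Lie}(K_{2,l}) = \mathrm{Sym}^2((\RR^l)^*) \otimes \RR^l$, with no higher summands. First I would invoke Lemma \ref{Explicitbracket} to observe that the bracket on $\ad^1(E) \oplus \Sym^2(E^*)\otimes E$ restricted to the top piece $\Sym^2(E^*) \otimes E$ is zero: indeed for $p_1, p_2 \in \Sym^2((\RR^l)^*)$ the formula gives a term $(p_1 \cdot \iota_{v_1} p_2) \otimes v_2$, but $p_1 \cdot \iota_{v_1} p_2$ has polynomial degree $2 + 1 = 3 > 2$, hence vanishes in $N(2)$. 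So $K_{2,l}$ is abelian, and the Baker--Campbell--Hausdorff formula for the product on the unipotent radical collapses to vector-space addition, as asserted in the sentence preceding the corollary.

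Next I would pin down the semidirect product structure. By Corollary \ref{cor:equivarianthomotopy} (or directly from the splitting $j_k : G_{1,l} \to G_{k,l}$ discussed around \eqref{Levy}) the sequence $1 \to K_{2,l} \to G_{2,l} \to \GL(\RR^l) \to 1$ is split, so $G_{2,l} \cong K_{2,l} \rtimes \GL(\RR^l)$ as groups, with $\GL(\RR^l)$ acting on $K_{2,l} \cong \Sym^2((\RR^l)^*)\otimes \RR^l$ by the tensor product of the symmetric square of the dual defining representation with the defining representation. Writing an element of $G_{2,l}$ as a pair $(k, A)$ with $k \in \Sym^2((\RR^l)^*)\otimes\RR^l$ and $A \in \GL(\RR^l)$, the general semidirect-product multiplication reads $(k_1, A_1) \ast (k_2, A_2) = (k_1 + A_1(k_2), A_1 A_2)$, which is exactly the claimed formula; here $A_1(k_2)$ denotes the action of $A_1 \in \GL(\RR^l)$ on $k_2 \in \Sym^2((\RR^l)^*)\otimes \RR^l$.

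Alternatively, and perhaps more transparently, I would verify the formula by direct computation with $2$-jets of diffeomorphisms. A jet in $G_{2,l}$ is represented by $z \mapsto A z + Q(z,z) + O(z^3)$ where $A \in \GL(\RR^l)$ and $Q$ is $\RR^l$-valued symmetric bilinear; the correspondence with the pair $(k, A)$ sends $Q$ to (a suitable normalization of) $k$. Composing $z \mapsto A_1 w + Q_1(w,w)$ with $w = A_2 z + Q_2(z,z)$ and truncating at order $2$ gives $z \mapsto A_1 A_2 z + \big(A_1 Q_2(z,z) + Q_1(A_2 z, A_2 z)\big) + O(z^3)$, whose quadratic part is precisely $A_1 \cdot k_2 + k_1$ read in the coordinates where $\GL(\RR^l)$ acts on the top piece by $\Sym^2$ of the dual on the inputs and the defining representation on the output. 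This matches the asserted product. I do not expect any genuine obstacle here: the content is entirely a bookkeeping specialization of results already established (Lemmas \ref{lem:Gkldecomp} and \ref{Explicitbracket}), and the only mild subtlety — getting the normalization constants in the identification $Q \leftrightarrow k$ consistent so that the composition law comes out without spurious factors — is harmless since it only affects an overall rescaling of the isomorphism and not the shape of the product formula.
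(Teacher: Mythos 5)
Your proposal is correct and follows essentially the same route as the paper: the corollary is presented there as an immediate specialization of Lemma \ref{lem:Gkldecomp} to $k=2$, where the unipotent radical $K_{2,l}\cong \Sym^2((\RR^l)^*)\otimes\RR^l$ is abelian (the bracket of Lemma \ref{Explicitbracket} lands in $\Sym^3$ and is truncated away), so Baker--Campbell--Hausdorff degenerates to addition and the semidirect product formula is the standard one. Your supplementary direct computation with $2$-jets is a fine sanity check, provided one uses the identification $(k,A)\leftrightarrow k\circ j_2(A)$ so that the conjugation action comes out as the stated $\GL(\RR^l)$-representation.
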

When $l = 1$, the group $K_{3,1}$ is also abelian, giving us the following concrete description. 
\begin{corollary}\label{cor:g31group}
There is an isomorphism $G_{3,1} \cong \mathbb{R}^2 \rtimes \mathbb{R}^*$, with product 
\[
(a_{1}, a_{2}, a_{0}) \ast (b_{1}, b_{2}, b_{0}) = (a_{1} + a_{0}^{-1}b_{1}, a_{2} + a_{0}^{-2}b_{2}, a_{0}b_{0}). 
\]
\end{corollary}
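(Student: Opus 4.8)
The plan is to specialize the general description of $G_{k,l}$ from Lemma \ref{lem:Gkldecomp} (equivalently Corollary \ref{cor:g2lgroup} for the degree-two piece and its analogue for the degree-three piece) to the case $k = 3$, $l = 1$, and then simply transcribe the group law. First I would observe that when $l = 1$ the relevant vector spaces are all one-dimensional: $\mathrm{Sym}^{i+1}((\RR^1)^*) \otimes \RR^1 \cong \RR$ for each $i$, so that $\mathrm{Lie}(K_{3,1}) = \bigoplus_{i=1}^{2}\mathrm{Sym}^{i+1}((\RR)^*)\otimes\RR \cong \RR^2$, with the two summands corresponding to the coefficients of $z^2\partial_z$ and $z^3\partial_z$. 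Since $\mathrm{GL}(\RR^1) = \RR^*$, Lemma \ref{lem:Gkldecomp} already gives $G_{3,1} \cong \RR^2 \rtimes \RR^*$ as a set; what remains is to pin down the product.

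The key step is the claim, stated in the paragraph preceding the corollary, that $K_{3,1}$ is abelian. I would verify this directly from the bracket formula of Lemma \ref{Explicitbracket}: a general element of $\ad^3$ that is trivial to first order is $a z^2\partial_z + b z^3\partial_z$ (here I am writing $p \otimes v$ with $p$ a symmetric power of $z^* = dz$ and $v = \partial_z$); the bracket $[z^2\partial_z, z^2\partial_z]$ is antisymmetric hence zero, and $[z^2\partial_z, z^3\partial_z]$ lands in $\mathrm{Sym}^4 \otimes \RR$, which is already zero in $N(3)$ since $z^4 = 0$ there. Hence the Lie algebra of $K_{3,1}$ is abelian, the Baker--Campbell--Hausdorff formula reduces to vector addition, and $K_{3,1} \cong (\RR^2, +)$ via the exponential. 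Then the conjugation action of $A = a_0 \in \RR^*$ on $z^{i+1}\partial_z$ scales it by $a_0^{-i}$: indeed if $\psi(z) = a_0 z$ then $\psi^* \circ (z^{i+1}\partial_z) \circ (\psi^{-1})^*$ computes, on the coordinate $z$, to $a_0^{-i}\, z^{i+1}\partial_z$ (one power of $a_0^{-1}$ from $d\psi^{-1}$ and $i+1$ powers of $a_0^{-(i+1)}$… more carefully: $\psi_*(z^{i+1}\partial_z)$ evaluated via the chain rule gives the factor $a_0^{-i}$, matching the weights $\mathrm{Sym}^{i+1}$ of the defining representation). So $a_0$ acts on $(a_1, a_2)$ by $(a_1, a_2) \mapsto (a_0^{-1} a_1, a_0^{-2} a_2)$, which yields exactly the semidirect product law
\[
(a_1, a_2, a_0) \ast (b_1, b_2, b_0) = (a_1 + a_0^{-1} b_1,\ a_2 + a_0^{-2} b_2,\ a_0 b_0).
\]

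The main obstacle — really the only thing requiring care — is getting the direction and exponents of the $a_0$-scaling right, i.e.\ whether conjugation contributes $a_0^{i}$ or $a_0^{-i}$ and whether the cross term is $a_0^{-1}b_1$ or $a_0 b_1$. This is dictated by the convention already fixed in Corollary \ref{cor:g2lgroup}, whose law $(k_1,A_1)\ast(k_2,A_2) = (k_1 + A_1(k_2), A_1A_2)$ with $A_1$ acting on $\mathrm{Sym}^2 \otimes \RR$ specializes at $l=1$ to the first two coordinates of the claimed formula; so consistency with that corollary fixes the degree-two term, and the degree-three term is forced by the weight of $\mathrm{Sym}^3((\RR)^*)\otimes \RR$ under $\RR^*$, namely $z^3\partial_z \mapsto a_0^{-2} z^3\partial_z$. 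Once the conventions are aligned, the corollary is immediate.
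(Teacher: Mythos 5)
Your proof is correct and follows exactly the route the paper intends (the paper offers no separate proof, deriving the corollary from Lemma \ref{lem:Gkldecomp} together with the remark that $K_{3,1}$ is abelian because $[z^2\partial_z,z^3\partial_z]$ lands in the truncated $\mathrm{Sym}^4$ piece, so BCH reduces to addition). Your verification of the bracket vanishing and of the $a_0^{-i}$ weights on $\mathrm{Sym}^{i+1}((\RR)^*)\otimes\RR$ just makes explicit what the paper leaves implicit.
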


\subsubsection{The principal bundle}
\begin{definition}
Let $E \to W$ be a rank $l$ vector bundle. The \emph{$k$-th frame bundle} $\Fr^{k}(E)$ of $E$ is the principal $G_{k,l}$-bundle whose fibre over $x$ consists of $k$-jets at $0$ of diffeomorphisms $(\mathbb{R}^l,0) \to (E_{x}, 0)$. 
\end{definition}

Associated to the principal bundle $\Fr^{k}(E)$, there is an Atiyah algebroid $\at(\Fr^{k}(E))$ consisting of $G_{k,l}$-invariant vector fields on the total space of $\Fr^{k}(E)$. 
\begin{lemma} \label{kthorderAtiyahisAtiyah}
There is a Lie algebroid isomorphism $\at(\Fr^{k}(E)) \cong \at^{k}(E)$. 
\end{lemma}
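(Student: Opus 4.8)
The plan is to construct an explicit isomorphism between the two Lie algebroids by unwinding the definitions of both sides, and then to check that it respects the anchor and bracket. Both $\at(\Fr^{k}(E))$ and $\at^{k}(E)$ are transitive Lie algebroids over $W$, so by a standard fact it suffices to produce a vector bundle isomorphism covering the identity on $W$ that intertwines the anchors and the brackets; moreover, since a morphism of transitive algebroids covering $\id_W$ and inducing the identity on the base is automatically an isomorphism once it is so on one fibre, the real content is the pointwise identification together with functoriality of the construction.

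First I would recall that, after fixing the tubular neighbourhood, $E$ itself plays the role of the total space near $W$, and $\mathcal{N}^{k}_W = j^k_W(\mathcal{O})$ is exactly the sheaf of $k$-jets along the zero section $W$ of smooth functions on $E$. A $k$-jet of a $\pi$-projectable vector field on $E$ is the same thing as a derivation of $\mathcal{N}^{k}_W$ that projects to a vector field on $W$, which is the definition of $\at^k(E)$ via Proposition \ref{derivationtheorem}. On the other side, a $G_{k,l}$-invariant vector field on $\Fr^k(E)$ descends to a section of $\at(\Fr^k(E))$, and the tautological action of $\Fr^k(E)$ — whose points over $x$ are $k$-jets of diffeomorphisms $(\RR^l,0)\to(E_x,0)$ — lets one transport the standard coordinates $z_1,\dots,z_l$ on $\RR^l$ to fibrewise coordinates on $E$. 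Concretely, a point $\phi\in\Fr^k(E)_x$ together with a section of $\at(\Fr^k(E))$ produces a derivation of $\mathcal{N}^k_W$ at $x$; $G_{k,l}$-invariance is precisely what makes this independent of the choice of $\phi$ in the fibre. This gives the map $\at(\Fr^k(E))\to\at^k(E)$, and I would note it manifestly covers $\id_W$, is $C^\infty(W)$-linear, and intertwines the anchors (both are $\pi_*$, i.e.\ projection to $TW$).

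Next I would verify that this map is a bundle isomorphism by a fibre count and a bracket check. For the fibre over a point one uses the Levy decomposition of Lemma \ref{lem:Gkldecomp}: the vertical part $\ad(\Fr^k(E))_x = \Lie(G_{k,l})$ sits in the anchor sequence \eqref{anchorsequence} and matches $\ad^k(E)_x=\bigoplus_{i=1}^k\Sym^i(E_x^*)\otimes E_x$ on the nose, because $\Lie(G_{k,l})=\bigoplus_{i=1}^{k}\Sym^i((\RR^l)^*)\otimes\RR^l$ with the bracket of Lemma \ref{Explicitbracket}; the frame $\phi$ supplies the identification $\RR^l\cong E_x$. Since both anchor sequences have the same base $TW$ and now the same kernel, the five lemma gives the isomorphism on total spaces. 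The bracket compatibility is then forced on the kernel by Lemma \ref{Explicitbracket} (both are the BCH/commutator bracket of Lemma \ref{lem:Gkldecomp}), and on a complement by the fact that both brackets are the commutator of projectable vector fields on $E$; alternatively one invokes that a fibrewise-bijective morphism of transitive Lie algebroids over the same base that intertwines anchors and is a bracket map on sections is an isomorphism.

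The main obstacle I anticipate is bookkeeping rather than conceptual: one must be careful that the identification of $\at^k(E)$ with derivations of $\mathcal{N}^k_W$ and the identification of $\at(\Fr^k(E))$ with invariant vector fields on the frame bundle are set up so that the tautological frame action really does intertwine them, i.e.\ that the ``change of frame'' action of $G_{k,l}$ on $k$-jets of functions is dual to its action on $N(k)$ used to define $G_{k,l}=\Aut(N(k))$. In other words, the crux is checking that $G_{k,l}$-invariance of a vector field upstairs corresponds exactly to well-definedness of the induced derivation of $\mathcal{N}^k_W$ downstairs, with no sign or inverse discrepancy; once that is pinned down, transitivity and the kernel identification do the rest. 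I would also remark, as the lemma's statement implicitly uses, that this isomorphism depends only on the $k$-jet of the tubular neighbourhood embedding, consistent with Theorem \ref{splittingprop}.
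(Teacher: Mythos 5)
Your construction of the map $\at(\Fr^{k}(E)) \to \at^{k}(E)$ is the same as the paper's in essence: both exploit the fact that $G_{k,l}$-equivariant (infinitesimal) automorphisms of $\Fr^{k}(E)$ are exactly post-compositions with $k$-jets of bundle automorphisms of $E$, the paper phrasing this via flows of invariant vector fields and you phrasing it pointwise via the tautological frame action, with $G_{k,l}$-invariance guaranteeing independence of the chosen frame. Where you genuinely diverge is in the verification: the paper's proof stops once the map into $\at^{k}(E)$ is exhibited, leaving bijectivity and bracket-compatibility implicit, whereas you make the isomorphism explicit by comparing the two anchor sequences \eqref{anchorsequence}, identifying the kernels $\ad(\Fr^{k}(E))\cong\Lie(G_{k,l})=\bigoplus_{i=1}^{k}\Sym^{i}((\RR^{l})^{*})\otimes\RR^{l}$ and $\ad^{k}(E)=\bigoplus_{i=1}^{k}\Sym^{i}(E^{*})\otimes E$ via the Levy decomposition of Lemma \ref{lem:Gkldecomp} and the bracket formula of Lemma \ref{Explicitbracket}, and then invoking the five lemma. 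This buys a complete and checkable argument at the cost of more bookkeeping; the one place where your write-up is looser than it should be is the bracket check ``on a complement'': the bracket on $\at(\Fr^{k}(E))$ is the commutator of invariant vector fields on the total space of $\Fr^{k}(E)$, not on $E$, so the cleanest justification is that the flow of a commutator is the commutator of flows and the assignment (equivariant flow) $\mapsto$ ($k$-jet of bundle automorphism) is a group homomorphism — which is exactly the mechanism the paper's integration argument supplies. Your closing remark that everything depends only on the $k$-jet of the tubular neighbourhood embedding is consistent with Theorem \ref{splittingprop}.
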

\begin{proof}
The sections of $\at(\Fr^{k}(E))$ are $G_{k,l}$-invariant vector fields on $\Fr^{k}(E)$. The flow of such a vector field is a $1$-parameter family $\phi_{t}$ of $G_{k,l}$-equivariant automorphisms of $\Fr^{k}(E)$ covering a family $\gamma_{t}$ of diffeomorphisms of $W$. Equivariance implies that such an automorphism has the form $\phi_{t}(g) = f_{t} \circ g$, for $f_{t} \in J^k(E, \gamma_{t}^*E)$, the $k$-jet of an automorphism, and where $g \in J^{k}((\mathbb{R}^l,0), (E_{x}, 0)) = \Fr^{k}(E)|_{x}$. Differentiating $f_{t}$ gives the $k$-jet of a vector field $V$ on $E$. It is $\pi$-projectable since $f_{t}$ covers a diffeomorphism of $W$, and it is tangent to $W$ since $f_{t}$ preserves the zero section. Hence $V \in \at^{k}(E)$. 
\end{proof}

Lemma \ref{kthorderAtiyahisAtiyah} enables us to integrate $\at^{k}(E)$ to a Lie groupoid: this is the Atiyah groupoid 
\[
\At^{k}(E) = (\Fr^{k}(E) \times \Fr^{k}(E))/G_{k,l}
\]
of equivariant isomorphisms between the fibres of $\Fr^{k}(E)$. The set of morphisms in $\At^{k}(E)$ between points $x$ to $y$ can be identified with the set of $k$-jets of pointed diffeomorphisms from $(E_x,0)$ to $(E_{y},0)$. 

Let $\Ad^k(E) = (\Fr^k(E) \times G_{k,l})/G_{k,l}$ be the adjoint bundle. This is a bundle of groups over $W$ whose group of bisections is the gauge group of $\Fr^k(E)$: 
\[
\mathrm{Gau}(\Fr^k(E)) = \Gamma(W, \Ad^{k}(E)). 
\] 

\begin{lemma} \label{lem:gaugehomotopy}
There is a fibrewise homotopy equivalence between $\Ad^k(E)$ and $\Ad^1(E)$. This induces a homotopy equivalence between $\mathrm{Gau}(\Fr^k(E))$ and $\mathrm{Gau}(\Fr^1(E))$, both equipped with the Whitney $C^{\infty}$ topology. 
\end{lemma}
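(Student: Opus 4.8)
The plan is to deduce the statement from the homotopy-theoretic input already assembled, namely Corollary~\ref{cor:equivarianthomotopy}. That corollary supplies a family of $G_{1,l}$-equivariant homomorphisms $h_t : G_{k,l} \to G_{k,l}$ with $h_1 = \mathrm{id}$ and $h_0 = j_k \circ \pi_k$, so $\pi_k : G_{k,l} \to G_{1,l}$ and $j_k : G_{1,l} \to G_{k,l}$ are mutually inverse homotopy equivalences \emph{through group homomorphisms that are equivariant for the conjugation action of $G_{1,l}$ on itself and on $G_{k,l}$}. First I would observe that, because $\Fr^k(E)$ is obtained from $\Fr^1(E)$ by extension of structure group along $j_k$ (equivalently, $\Fr^1(E) = \Fr^k(E)/K_{k,l}$ via $\pi_k$), the associated bundles are compatible: $\Ad^k(E) = \Fr^1(E) \times_{G_{1,l}} G_{k,l}$, where $G_{1,l}$ acts on $G_{k,l}$ by conjugation through $j_k$. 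Similarly $\Ad^1(E) = \Fr^1(E) \times_{G_{1,l}} G_{1,l}$ with the conjugation action. The key point is then that a $G_{1,l}$-equivariant map $f : G_{k,l} \to G_{1,l}$ (resp. its section) induces a fibrewise map $\mathrm{id} \times_{G_{1,l}} f : \Ad^k(E) \to \Ad^1(E)$ of bundles of groups over $W$.

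Applying this to the $h_t$ yields a fibrewise homotopy $H_t : \Ad^k(E) \to \Ad^k(E)$ with $H_1 = \mathrm{id}$ and $H_0$ factoring through $\Ad^1(E)$ via the maps induced by $\pi_k$ and $j_k$; composing with the analogous maps in the other direction (the induced $\Ad^1(E) \to \Ad^k(E) \to \Ad^1(E)$ is literally the identity since $\pi_k \circ j_k = \mathrm{id}_{G_{1,l}}$) gives the desired fibrewise homotopy equivalence between $\Ad^k(E)$ and $\Ad^1(E)$. Here I use that $H_t$ is continuous in $t$ because $h_t$ is (for $t>0$ it is conjugation by $j_k(t^{-1}\mathrm{id})$, and it extends continuously to $t=0$), and that each $H_t$ is a bundle map over $\mathrm{id}_W$ because the $h_t$ are $G_{1,l}$-equivariant group homomorphisms.

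For the statement about gauge groups, I would pass to sections: $\mathrm{Gau}(\Fr^k(E)) = \Gamma(W,\Ad^k(E))$ and $\mathrm{Gau}(\Fr^1(E)) = \Gamma(W,\Ad^1(E))$, both with the Whitney $C^\infty$ topology. A fibrewise smooth map of fibre bundles over the (closed, say, compact --- or in general via the Whitney topology) base $W$ induces a continuous map on spaces of smooth sections with the Whitney $C^\infty$ topology, by the standard fact that composition with a fixed smooth fibrewise map is continuous in this topology (this is recalled in Appendix~\ref{sec:DiffTop}). The fibrewise homotopy $H_t$ is itself smooth and depends continuously --- indeed smoothly for $t>0$ and continuously at $t=0$ --- on $t$, so $\sigma \mapsto H_t \circ \sigma$ is a homotopy in the Whitney $C^\infty$ topology between the identity of $\Gamma(W,\Ad^k(E))$ and the map factoring through $\Gamma(W,\Ad^1(E))$; combined with the identity $\pi_k \circ j_k = \mathrm{id}$ on the other side, this exhibits the two section spaces as homotopy equivalent.

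The main obstacle I anticipate is not the algebra --- the reduction to Corollary~\ref{cor:equivarianthomotopy} is formal --- but the analytic bookkeeping needed to justify that postcomposition with the fibrewise maps, and with the $t$-dependent family $H_t$, is genuinely continuous for the Whitney $C^\infty$ topology, including the delicate behaviour as $t \to 0$ where $h_t$ degenerates (it is only a \emph{continuous}, not smooth, family there). One clean way around this is to perform the homotopy at the level of the structure-group homomorphisms before associating bundles, so that the only analytic input is the single statement ``a smooth family of smooth fibrewise maps induces a continuous family of continuous maps on Whitney-$C^\infty$ section spaces,'' which I would cite from Appendix~\ref{sec:DiffTop}; the reparametrisation $t \mapsto$ something smoothing out the $t=0$ endpoint can be absorbed into the homotopy without changing its endpoints.
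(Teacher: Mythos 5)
Your proposal is correct and follows essentially the same route as the paper: identify $\Ad^k(E)$ with the associated bundle $(\Fr^1(E)\times G_{k,l})/G_{1,l}$ for the conjugation action and then apply the $G_{1,l}$-equivariant homotopy of Corollary \ref{cor:equivarianthomotopy}; the paper's proof is just terser, leaving the passage to section spaces implicit. One small remark: the worry about non-smoothness at $t=0$ is unfounded, since in the coordinates of Lemma \ref{lem:Gkldecomp} the homotopy is $h_t(v_1,\ldots,v_{k-1},A)=(tv_1,\ldots,t^{k-1}v_{k-1},A)$, which is polynomial (hence smooth) in $t$ on all of $[0,1]$.
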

\begin{proof}
There is a canonical isomorphism $\Fr^{k}(E) \cong (\Fr^1(E) \times G_{k,l})/G_{1,l}$, with the action given by $(p,g) \ast h = (ph, j_{k}(h)^{-1}g)$. As a result, $\Ad^k(E)$ can be identified with the associated bundle of groups $(\Fr^1(E) \times G_{k,l})/G_{1,l}$, where $G_{1,l}$ acts by conjugation on the second factor. The result now follows from Corollary \ref{cor:equivarianthomotopy}. 
\end{proof}

\subsubsection{The holonomy of a foliation} \label{holonomydefsection}
Let $E \to W$ be a vector bundle and let $F$ be a $k$-th order foliation. As we saw in Theorem \ref{splittingprop}, this is equivalent to a Lie algebroid splitting $\sigma : TW \to \at^{k}(E)$. Using the identification $\at^{k}(E) \cong \at(\Fr^{k}(E)) $ of Lemma \ref{kthorderAtiyahisAtiyah}, we now observe that a $k$-th order foliation can be viewed as a flat $G_{k,l}$-connection $\nabla$ on the $k$-th frame bundle $\Fr^{k}(E)$. The parallel transport, or holonomy, of this connection is given by integrating the morphisms $\sigma$ using Lie's second theorem \cite{mackenzie2000integration, moerdijk2002integrability}. This is a Lie groupoid homomorphism 
\[
\mathrm{hol}^{\sigma}: \Pi(W) \to \At^{k}(E),
\]
where $\Pi(W)$ is the fundamental groupoid of $W$. For every path $\gamma: I \to W$, the parallel transport $\mathrm{hol}^{\sigma}(\gamma)$ is the $k$-jet of a pointed diffeomorphism from $(E_{\gamma(0)}, 0)$ to $(E_{\gamma(1)}, 0)$. Restricting to a basepoint $x \in W$, and choosing the $k$-jet of a frame $g \in \Fr^{k}(E|_{x})$, we obtain the monodromy representation of $F$
\[
\mathrm{hol}^{\sigma, g}_{x} : \pi_1(W,x) \to G_{k,l}. 
\]

Note that the holonomy of a $k$-th order foliation can be used to prove the Frobenius theorem (Corollary \ref{Frobenius}) directly. In fact, we may obtain the following more general result:

\begin{proposition}\label{prop:simplyconnected}
Let $F$ be a $k$-th order foliation, for $k \geq 1$, around a simply connected submanifold $W$. Then there exists a germ of a regular foliation $\ff$ such that $F = j_{W}^{k}(\ff)$. 
\end{proposition}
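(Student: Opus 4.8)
The plan is to use the holonomy perspective developed in Section~\ref{holonomydefsection}. A $k$-th order foliation $F$ around $W$ is, by Theorem~\ref{splittingprop} (after fixing a tubular neighbourhood embedding $\pi : E \to W$), the same as a flat $G_{k,l}$-connection $\nabla$ on the $k$-th frame bundle $\Fr^k(E)$. Since $W$ is simply connected, the monodromy representation $\mathrm{hol}^{\sigma,g}_x : \pi_1(W,x) \to G_{k,l}$ is trivial, so the flat principal bundle $(\Fr^k(E),\nabla)$ is isomorphic to the trivial flat bundle $W \times G_{k,l}$. Concretely, parallel transport gives a global trivialisation in which $\nabla$ becomes the trivial connection, i.e. the splitting $\sigma$ becomes, in a suitable gauge, the ``flat'' splitting $TW \to \at^k(E)$ all of whose parallel sections are genuinely constant.

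Next I would unwind what this trivialisation says about the distribution $\xi_\sigma \subset TE$ constructed after Corollary~\ref{MCeq}. The gauge transformation putting $\nabla$ in trivial form is a $k$-jet of a fibre-preserving diffeomorphism of $E$ near $W$ (a bisection of the Atiyah groupoid $\At^k(E)$, equivalently a section of the gauge bundle $\Ad^k(E)$), and applying its action we may assume that $\sigma$ is the pullback of the trivial foliation on a model. More precisely, after this change of coordinates the $k$-th order foliation $F$ becomes $j^k_W$ of the horizontal foliation of a local product, so $F = j^k_W(\ff)$ for $\ff$ a genuine regular foliation. Here the key input is that the gauge group $\mathrm{Gau}(\Fr^k(E))$ acts by $k$-jets of diffeomorphisms fixing $W$ pointwise (Lemma~\ref{lem:factoringViakJets}), so transporting the standard foliation back by such a gauge transformation produces an honest germ of a regular foliation whose $k$-jet is $F$.

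Alternatively, and perhaps more cleanly, one can argue inductively on $k$ using the extension picture of Section~\ref{sec:extending} and Corollary~\ref{cor:explicitextensionclass}. The case $k=1$ is Corollary~\ref{existenceofflatconnection}: a $1$-st order foliation is a flat connection on $\nu_W$, and on a simply connected $W$ this flat connection is trivial, so $F = j^1_W(\ff)$ for the product foliation associated to a global frame of flat sections. For the inductive step, given that $u_k(F) = j^k_W(\ff_0)$ for some germ of regular foliation $\ff_0$, one uses the Maurer--Cartan description of Corollary~\ref{MCeq}: the obstruction to realising $F$ as the $k$-jet of a regular foliation extending $\ff_0$ lies in a cohomology group $H^1(W; \Sym^{k+1}(\nu_W^*)\otimes\nu_W)$ with respect to the induced flat connection, and this flat connection is again trivial because $W$ is simply connected; hence the relevant class, which is the difference between $F$ and $j^k_W(\ff_0)$, can be killed after a gauge change, producing a regular foliation with the correct $k$-jet.

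The main obstacle will be the bookkeeping in the inductive/gauge argument: one must be careful that the gauge transformations used at each stage are $k$-jets of actual diffeomorphisms (so that the resulting object is a germ of a genuine foliation, not merely a formal one), and that the successive modifications do not disturb the lower-order jets already put in normal form. This is exactly the content packaged by Theorem~\ref{splittingprop} together with the homotopy-theoretic control of the gauge group in Lemma~\ref{lem:gaugehomotopy} and Corollary~\ref{cor:equivarianthomotopy}; in the simply connected case the cohomological obstructions all vanish at the level of vector spaces, so the argument reduces to a sequence of explicit gauge changes, but verifying that these assemble into a single germ of a regular foliation is the step requiring the most care.
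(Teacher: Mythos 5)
Your first argument is essentially the paper's own proof: simple connectivity trivialises the flat $G_{k,l}$-bundle, and the paper realises the trivialising holonomy $\mathrm{hol}^{\sigma}:\mathrm{Pair}(W)\to G_{k,l}$ by honest diffeomorphism germs (viewing elements of $G_{k,l}$ as polynomial maps $(\mathbb{R}^l,0)\to(\mathbb{R}^l,0)$) and then pushes forward the product foliation $W\times\{v\}$. The one point you flag at the end as "requiring the most care" is exactly where the paper is explicit: the trivialising gauge transformation must be represented by a genuine germ of a diffeomorphism rather than merely its $k$-jet, and the polynomial representatives of $G_{k,l}$-elements supply this canonically, so no further bookkeeping (and no inductive extension-class argument) is needed.
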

\begin{proof}
Since we are working semi-locally around $W$, we may assume that the ambient manifold is a vector bundle $E \to W$. By Corollary \ref{existenceofflatconnection}, $E$ admits a flat connection. Therefore, since $W$ is simply connected, we may assume that $E = W \times \mathbb{R}^l$. This induces the isomorphism $\At^{k}(E) \cong \mathrm{Pair}(W) \times G_{k,l}$. Furthermore, $\Pi(W) = \mathrm{Pair}(W) = W \times W$. As a result, the holonomy reduces to a homomorphism $\mathrm{hol}^{\sigma} : \mathrm{Pair}(W) \to G_{k,l}$. By viewing elements of $G_{k,l}$ as polynomial maps $\mathbb{R}^l \to \mathbb{R}^l$, we get a map $\phi: \mathrm{Pair}(W) \to \mathrm{Diff}_{0}(\mathbb{R}^l)$ to germs of diffeomorphisms of $\mathbb{R}^l$. Note that this map is not necessarily a homomorphism. 

Now fix a point $x \in W$ and consider the map 
\[
\varphi : W \times \mathbb{R}^l \to W \times \mathbb{R}^l, \qquad (y,v) \mapsto (y, \phi(y,x)(v)),
\]
which is the germ of a diffeomorphism around $W \times \{ 0 \}$. Let $\ff_0$ be the foliation on $W \times \mathbb{R}^l$ whose leaves are given by $W \times \{v \}$, for $v \in \mathbb{R}^l$, and let $\ff = \varphi_{*}(\ff_{0})$ be its pushforward under $\varphi$. Then $j_{W}^k(\ff) = F$.
\end{proof}

To deduce Corollary \ref{Frobenius}, restrict a $k$-th order foliation $F$ to a contractible open ball. The above Proposition \ref{prop:simplyconnected} shows that $F$ can be represented by an actual foliation on this ball. The Corollary now follows immediately from the usual Frobenius theorem for foliations. 

\begin{remark}
Let $\Fr^{k}(E)$ be the $k$-th frame bundle of $E$ and let $\Fr^{\infty}(E)$ be the bundle of formal pointed diffeomorphisms from $(\mathbb{R}^l, 0)$ to the fibres $(E_{x}, 0)$. This is a principal $G_{\infty,l}$-bundle, where $G_{\infty,l}$ is the group of formal pointed diffeomorphisms of $(\mathbb{R}^l, 0)$. Let $H = \ker(G_{\infty,l} \to G_{k,l})$. Recall that a $k$-th order foliation is equivalent to a flat connection $\nabla$ on $\Fr^{k}(E)$. Consider a lift $\hat{\nabla}$ of $\nabla$ to a possibly non-flat connection on $\Fr^{\infty}(E)$. Choose a point $p \in \Fr^{k}(E)$ and consider the smallest subset $L \subset \Fr^{k}(E)$ which is preserved by the connection. This determines a subgroup $K \subset G_{k,l}$ for which $L$ is a reduction of structure. The lift $\hat{\nabla}$ therefore has a reduction of structure to a subgroup $\hat{H} \subset G_{\infty,l}$ containing $H$ and such that $\hat{H}/H = K$. In \cite{fischer2024classification}, Fischer and Laurent-Gengoux obtain a classification of formal $k$-th order foliations in terms of such $\hat{H}$-bundles with connections. It is possible to lift $\nabla$ to a non-flat $G_{r,l}$-connection for any $r \geq k$ to get a `Yang-Mills connection' in the sense of \cite{fischer2024classification}. When $r = k+1$ the curvature of this connection is precisely the extension class. 
\end{remark}

The description of the holonomy of a $k$-th order foliation ends the first half of our paper. For the reader's convenience we  collect the equivalent descriptions of a $k$-th order foliation into a single theorem:
\begin{theorem}\label{th:equivalence}
Let $M$ be an $n$-dimensional manifold and let $W \subset M$ be an embedded submanifold of codimension $l$. There is a one-to-one correspondence between:
\begin{enumerate}
\item Singular foliations $\A$ of $b^{k+1}$-type for $(W,M)$.
\item $k$-th order foliations $F \subset \Der(\mathcal{N}^k_W)$ around $W$.
\item Subsheaves of invariant functions $\C \subset \Ncal^k_W$.
\end{enumerate}
After the choice of a tubular neighbourhood embedding $E \to W$, these are further in correspondence with:
\begin{itemize}
\item[(4)] Lie algebroid splittings $\sigma : TW \rightarrow \at^k(E)$.
\item[(5)] Flat principal $G_{k,l}$-connections on $\Fr^k(E)$.
\item[(6)] Tuples $(\nabla, \eta_{1}, ..., \eta_{k-1})$, where $\nabla$ is a flat linear connection on $E$ and $\eta_i \in \Omega^1_W(\Sym^{i+1}(E^*)\otimes E)$, such that the following Maurer-Cartan equation 
\begin{equation*}
d^{\nabla} \eta_{r} + \frac{1}{2} \sum_{i + j = r} [\eta_{i}, \eta_{j}] = 0,
\end{equation*}
is satisfied for all $ r = 1, ..., k-1$.
\end{itemize}
\end{theorem}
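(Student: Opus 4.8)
The plan is to assemble the equivalence as a cycle of bijections, most of which have already been established in the body of the paper. The statement $(1) \Leftrightarrow (2)$ is exactly Proposition \ref{Singfoliskorderfol}(c), and $(2) \Leftrightarrow (3)$ is the Proposition in Section \ref{sec:invfuns} (annihilator and invariant-functions constructions $F \mapsto \Ccal_F$, $\Ccal \mapsto F_{\Ccal}$). For the second batch of equivalences, which depend on a chosen tubular neighbourhood embedding $E \to W$, the implication $(2) \Leftrightarrow (4)$ is Theorem \ref{splittingprop}, and $(4) \Leftrightarrow (5)$ is the identification $\at^k(E) \cong \at(\Fr^k(E))$ of Lemma \ref{kthorderAtiyahisAtiyah} together with the standard dictionary between Lie algebroid splittings $TW \to \at(P)$ and flat principal connections on $P$ (recalled, e.g., in Appendix \ref{sec:principal}). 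Finally $(4) \Leftrightarrow (6)$ is Corollary \ref{MCeq}: the canonical decomposition \eqref{Atiyahdecomposition} of $\at^k(E)$ lets us write any splitting uniquely as $\sigma = \nabla + \sum_{i=1}^{k-1}\eta_i$ with $\nabla : TW \to \at^1(E)$ and $\eta_i \in \Omega^1_W(\Sym^{i+1}(E^*)\otimes E)$, and the condition that $\sigma$ be bracket-closed unwinds component-by-component (using the explicit bracket of Lemma \ref{Explicitbracket}) into the stated Maurer--Cartan system together with the flatness of $\nabla$.

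So the proof is essentially bookkeeping: cite each equivalence, and check that the various bijections are mutually compatible where their domains overlap. The one genuinely new thing to verify is the $(4) \Leftrightarrow (6)$ unwinding, and in particular that the flatness of the linear connection $\nabla$ is automatic. Here I would argue as follows: writing $\sigma(X) = \nabla_X + \sum_i \eta_i(X)$ and expanding $[\sigma(X),\sigma(Y)] - \sigma([X,Y]) = 0$ using \eqref{Atiyahdecomposition}, the component landing in $\at^1(E)$ is precisely the curvature $R^\nabla(X,Y)$ — because all the $\eta_i$ take values in the ideal $\bigoplus_{i\ge 2}\Sym^i(E^*)\otimes E$, and by Lemma \ref{Atiyahalgebroiddecompsition} brackets of elements of that ideal stay in the ideal, so they contribute nothing to the $\at^1(E)$-component. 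Hence bracket-closedness forces $R^\nabla = 0$; the remaining components, graded by symmetric-power degree $r+1$ for $r = 1,\dots,k-1$, then give exactly $d^\nabla\eta_r + \tfrac12\sum_{i+j=r}[\eta_i,\eta_j] = 0$, where the bracket $[\eta_i,\eta_j] \in \Omega^2_W(\Sym^{i+j}(E^*)\otimes E)$ is the one induced by Lemma \ref{Explicitbracket} and the cross-term $d^\nabla\eta_r$ comes from the action of $\at^1(E)$ on the ideal summands described in Lemma \ref{Atiyahalgebroiddecompsition}.

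I expect no serious obstacle here — the content has all been developed — but the point requiring the most care is making the word ``correspondence'' precise: I would state that all the bijections above fit into a single commuting diagram, so that, e.g., the singular foliation of $b^{k+1}$-type attached to a tuple $(\nabla,\eta_1,\dots,\eta_{k-1})$ via $(6)\to(4)\to(2)\to(1)$ is the one given by \eqref{bkfoldecomp}, namely $\Acal(\sigma) = \xi_\sigma \oplus \Ical_W^{k+1}\pi^*(E)$ with $\xi_\sigma$ spanned by the vector fields $\sigma(X) = \nabla_X + \sum_{i=1}^{k-1}\eta_i(X)$. The only subtlety worth flagging explicitly is that items $(4)$--$(6)$ depend on the tubular neighbourhood, whereas $(1)$--$(3)$ do not, and that by the last sentence of Theorem \ref{splittingprop} the dependence is only on the $k$-jet of the embedding; this is already noted in the relevant statements, so the theorem's proof can simply invoke it.

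\begin{proof}
The equivalence $(1) \Leftrightarrow (2)$ is Proposition \ref{Singfoliskorderfol}(c), via the mutually inverse assignments $F \mapsto \Tan^k(W,F) = P_k(F)$ and $\A \mapsto I_k(\A)$. The equivalence $(2) \Leftrightarrow (3)$ is the Proposition in Section \ref{sec:invfuns}, via $F \mapsto \Ccal_F$ and $\Ccal \mapsto F_\Ccal$.

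Now fix a tubular neighbourhood embedding, so that $W$ is the zero section of $\pi : E \to W$. The equivalence $(2) \Leftrightarrow (4)$ is Theorem \ref{splittingprop}: a $k$-th order foliation $F$ corresponds to the Lie subalgebroid $F \cap \at^k(E) \subset \at^k(E)$, which projects isomorphically onto $TW$ and hence is a splitting $\sigma$, and conversely a splitting $\sigma$ generates $F$ over $\mathcal{N}^k_W$. By Lemma \ref{kthorderAtiyahisAtiyah} we have $\at^k(E) \cong \at(\Fr^k(E))$, and under this isomorphism Lie algebroid splittings of the anchor sequence of $\at(\Fr^k(E))$ correspond to flat principal $G_{k,l}$-connections on $\Fr^k(E)$; this gives $(4) \Leftrightarrow (5)$.

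Finally, $(4) \Leftrightarrow (6)$ is Corollary \ref{MCeq}. By the canonical decomposition \eqref{Atiyahdecomposition}, any splitting $\sigma : TW \to \at^k(E)$ may be written uniquely as $\sigma = \nabla + \sum_{i=1}^{k-1}\eta_i$ with $\nabla : TW \to \at^1(E)$ a linear connection on $E$ and $\eta_i \in \Omega^1_W(\Sym^{i+1}(E^*)\otimes E)$. By Lemma \ref{Atiyahalgebroiddecompsition}, the summand $\bigoplus_{i\ge 2}\Sym^i(E^*)\otimes E$ is an ideal, so the $\at^1(E)$-component of $[\sigma(X),\sigma(Y)] - \sigma([X,Y])$ is exactly the curvature $R^\nabla(X,Y)$; requiring $\sigma$ to be bracket-closed therefore forces $\nabla$ to be flat. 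Once $\nabla$ is flat, $\Omega^\bullet_W(\ad^k(E))$ is a dgla, and the components of $[\sigma(X),\sigma(Y)] - \sigma([X,Y])$ of symmetric-power degree $r+1$, for $r = 1,\dots,k-1$, yield precisely the Maurer--Cartan equations $d^\nabla\eta_r + \tfrac12\sum_{i+j=r}[\eta_i,\eta_j] = 0$, where the bracket is the one of Lemma \ref{Explicitbracket}. Thus $\sigma$ is a Lie algebroid splitting if and only if the tuple $(\nabla,\eta_1,\dots,\eta_{k-1})$ solves this system.

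All of the above bijections are compatible: chasing a tuple $(\nabla,\eta_1,\dots,\eta_{k-1})$ through $(6) \to (4) \to (2) \to (1)$ produces the singular foliation $\A(\sigma) = \xi_\sigma \oplus \Ical_W^{k+1}\pi^*(E)$ of \eqref{bkfoldecomp}, where $\xi_\sigma$ is spanned by the vector fields $\sigma(X) = \nabla_X + \sum_{i=1}^{k-1}\eta_i(X)$ for $X \in \X(W)$. We note that items $(4)$--$(6)$ depend on the choice of tubular neighbourhood, but only through its $k$-jet by the last sentence of Theorem \ref{splittingprop}, whereas $(1)$--$(3)$ are intrinsic.
\end{proof}
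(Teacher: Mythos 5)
Your proposal is correct and matches the paper's approach: the theorem is stated there as a summary whose proof is exactly the assembly of Proposition \ref{Singfoliskorderfol}, the proposition of Section \ref{sec:invfuns}, Theorem \ref{splittingprop}, Lemma \ref{kthorderAtiyahisAtiyah}, and Corollary \ref{MCeq}, which is what you cite. Your added unwinding of $(4)\Leftrightarrow(6)$ — in particular that the $\at^1(E)$-component of $[\sigma(X),\sigma(Y)]-\sigma([X,Y])$ is the curvature $R^\nabla$ because the higher summands form an ideal — is a correct and slightly more explicit account of what Corollary \ref{MCeq} asserts.
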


\section{Examples}\label{sec:Examples1}
In this section we will give some explicit examples of $b^{k+1}$-algebroids. In particular, we will compute the extension class and show that it is non-trivial in certain situations.

\subsection{Surfaces} \label{ex:genusgsurfaceexampleextension}

Let $S$ be a genus $g$ Riemann surface. Its de Rham cohomology ring is given by 
\[ H^{\bullet}(S) \cong \dfrac{\RR[\alpha_{i}, \beta_{i}, \omega ; \ i = 1, ..., g]}{(\alpha_{i} \alpha_{j} = \beta_{i} \beta_{j} = 0, \alpha_{i} \beta_{j} = \delta_{ij} \omega)}, \]
where the degrees are $|\alpha_{i}| = |\beta_{i}| = 1$ and $|\omega| = 2$. Choose representatives for the degree $1$ classes. Abusing notation, we denote these closed $1$-forms by $\alpha_{i}$ and $\beta_{i}$.

Consider the trivial line bundle $L = S \times \RR$ equipped with the trivial flat connection and write $t$ for the coordinate on $\RR$. According to Corollary \ref{MCeq}, two closed $1$-forms 
\[ \eta_{1} = \sum_{i = 1}^{g} (x_{i} \alpha_{i} + y_{i} \beta_{i}), \qquad \eta_{2} = \sum_{i = 1}^{g} (w_{i} \alpha_{i} + z_{i} \beta_{i}), \]
where $x_{i}, y_{i}, w_{i}, z_{i} \in \mathbb{R}$ are constants, specify a $b^{4}$-algebroid $A$ on the total space of $L$. Its sheaf of sections is generated by the vector fields $t^{4} \partial_{t}$ and $X + \eta_{1}(X) t^2 \partial_{t} + \eta_{2}(X) t^3 \partial_{t}$, for vector fields $X$ on $S$.

We have thus constructed a space of $b^4$-algebroids parametrised by $\RR^{4g}$ with coordinates $(x_{i}, y_{i}, w_{i}, z_{i})$. Each such algebroid can be extended to a $b^{5}$-algebroid if and only if the extension class $e$ from Definition \ref{def:extension} vanishes. By Corollary \ref{cor:explicitextensionclass} this class is given by
\[ e(x_{i}, y_{i}, w_{i}, z_{i}) = \eta_{1} \wedge \eta_{2} = \sum_{i,j} (x_{i} \alpha_{i} + y_{i} \beta_{i})  (w_{j} \alpha_{j} + z_{j} \beta_{j}) = \sum_{i = 1}^{g} (x_{i} z_{i} - y_{i} w_{i}) \omega. \]
Hence, solutions to the quadric $ \sum_{i = 1}^{g} (x_{i} z_{i} - y_{i} w_{i}) = 0$ correspond to $b^4$-algebroids that admit an extension to a $b^5$-algebroid.

\subsection{The Heisenberg group}\label{ex:Heisenberg1}

Let $H(\RR)$ be the Heisenberg group, the subgroup of real-valued $3 \times 3$ upper-triangular matrices with $1$'s on the diagonal. Let $H(\ZZ)$ be the subgroup with integer entries. Let $X = H(\RR)/H(\ZZ)$, which is a compact $3$-dimensional manifold. Writing an upper triangular matrix as 
\[ \begin{pmatrix} 1 & x & z \\ 0 & 1 & y \\ 0 & 0 & 1 \end{pmatrix} \]
we get local coordinates $(x,y,z)$ on $X$. The $1$-forms $a = dx$, $b = dy$ and $c=dz-ydx$ are well-defined on $X$ and satisfy $a \wedge b = dc$. They generate a cdga $(\Omega, d)$ which is a differential subalgebra of the de Rham complex of $X$. By a Gysin sequence argument, using the fact that $X$ is an $S^1$ bundle over $S^1 \times S^1$, we can see that $(\Omega, d)$ is in fact quasi-isomorphic to the de Rham complex. 

Let $L = X \times \RR$ be the trivial line bundle and equip it with the trivial flat connection. Let $t$ be the coordinate on $\mathbb{R}$. By Corollary \ref{cor:explicitextensionclass}, a $b^{5}$-algebroid $A$ on $L$ can be specified by the data of three $1$-forms which satisfy the Maurer-Cartan equation. Such a solution is provided by 
\[ \eta_{1} = a, \qquad \eta_{2} = b, \qquad \eta_{3} = -c. \]
The extension class is given by $-2 a \wedge c$, which is non-trivial in cohomology. Therefore $A$ cannot be extended to a $b^{6}$-algebroid. $A$ has a global basis given in coordinates by 
\[ X_{1} = \partial_{x} + y \partial_{z} + t^2 \partial_{t}, \ \ X_{2} = \partial_{y} + t^3 \partial_{t}, \ \ X_{3} = \partial_{z} - t^4 \partial_{t}, \ \ X_{4} = t^{5} \partial_{t}. \]

\subsection{A mapping torus}

Consider the matrix 
\[ M = \begin{pmatrix} 2 & 1 \\ 1 & 1 \end{pmatrix}. \]
Seen as a diffeomorphism of the $2$-torus, this is the famous Arnold cat map. Its eigenvalues are $r^{\pm 1} = 1 \pm \varphi^{\pm 1}$, where $\varphi = \frac{1 + \sqrt{5}}{2}$ is the golden ratio. We then consider the matrix $S = M^{-1} \oplus M^{-2} \in SL(\ZZ^4)$ as a diffeomorphism $\phi : \TT^4 \to \TT^4$ of the $4$-torus. Its action at the level of first cohomology $\phi_{*} : H^{1}(\TT^4) \to H^{1}(\TT^4) $ induces the decomposition into eigenspaces 
\[ H^{1}(\mathbb{T}^4) = E_{r} \oplus E_{r^{-1}} \oplus E_{r^2} \oplus E_{r^{-2}}, \]
where $r^{\pm 1}, r^{\pm 2}$ are the eigenvalues. We let $a_{1}, b_{1}, a_{2}, b_{2}$ be ``constant'' closed $1$-forms on the torus giving rise to a basis respecting the above eigenspace decomposition. 

Suppose $Q = (\TT^4 \times \RR)/\ZZ$ is now the mapping torus for $\phi$; we write $f: Q \to S^1$ for the corresponding fibration over the circle. We write $\alpha = f^*(d\theta)$, where $\theta$ is the coordinate on $S^1$. We now fix the non-zero constant $\lambda = \log(r)$ and consider the linear flat connection $\nabla = d - \lambda \alpha$ on the trivial line bundle over $Q$. The following forms 
\[ \eta_{1} = e^{-\lambda \theta} a_{1}, \qquad \eta_{2} = e^{-2 \lambda \theta} a_{2}, \]
are then closed for $\nabla^{-1}$ and $\nabla^{-2}$. Consequently, by Proposition \ref{MCeq}, we obtain a $b^4$-algebroid on $L = Q \times \mathbb{R}$. Using Corollary \ref{cor:explicitextensionclass}, the obstruction to extending to a $b^{5}$-algebroid is the class of
\[ e = \eta_{1} \wedge \eta_{2} = e^{-3 \lambda \theta} a_{1} \wedge a_{2}. \]

\subsection{Codimension $2$}
According to Proposition \ref{MCeq}, a $2$-nd order foliation $F$ around $W$ on the total space of a vector bundle $E$ is given by the data of a flat connection $\nabla$ on $E$ and a $d^{\nabla}$-closed $1$-form $\eta_{1} \in \Omega^1_{W}(\mathrm{Sym}^2(E^*) \otimes E)$. Furthermore, by Corollary \ref{cor:explicitextensionclass}, the extension class is given by $e(F) = \frac{1}{2}[\eta_1, \eta_1] \in H^2(W, \mathrm{Sym}^3(E^*) \otimes E)$. When the rank of $E$ is $1$, the bundle $\mathrm{Sym}^2(E^*) \otimes E$ has rank $1$ and consequently $e(F) = 0$. As a result, when $W$ has codimension $1$, any $b^3$-algebroid may be extended to a $b^4$-algebroid. This is no longer true when $W$ has codimension $2$ as we presently show. 

Let $E = W \times \mathbb{R}^2$, let $(x,y)$ be linear coordinates on $\mathbb{R}^2$, and equip $E$ with a trivial flat connection. The vector space $\mathrm{Sym}^2((\mathbb{R}^2)^*) \otimes \mathbb{R}^2$ may be identified with the $6$-dimensional space of vector fields on $\mathbb{R}^2$ whose coefficients are degree $2$ homogeneous polynomials. Hence, a $2$-nd order foliation is specified by the data of $6$ closed differential forms: 
\[
\eta_{1} = a x^2 \partial_{x} + b xy \partial_{x} + c y^2 \partial_{x} + \alpha x^2 \partial_y + \beta xy \partial_y + \gamma y^2 \partial_y, 
\]
where $a, b, c, \alpha, \beta, \gamma \in \Omega_{W}^{1, cl}$. For simplicity we set $a = b = c = 0$. Then the extension class is 
\[
e(F) = (\alpha \wedge \beta \ x^3 + 2 \alpha \wedge \gamma \ x^2 y + \beta \wedge \gamma \ xy^2 ) \partial_y. 
\]
Hence, all $3$ products $\alpha \wedge \beta, \alpha \wedge \gamma, \beta \wedge \gamma$ must vanish in cohomology in order for $F$ to admit an extension to a $3$-rd order foliation.


\section{The Riemann-Hilbert correspondence}\label{sec:Riemann--Hilbert}
The Riemann-Hilbert correspondence is the name given to various results relating flat connections on a manifold to representations of its fundamental group. For principal bundles this is classic and the statements we need are collected in Appendix \ref{sec:principal}. 

Our goal in this section is to give a detailed study of the groupoid of $k$-th order foliations and to establish a Riemann-Hilbert type correspondence. To do this we will rely on the fact, established in Section \ref{holonomydefsection}, that $k$-th order foliations on the total space of a rank $l$ vector bundle correspond to flat principal $G_{k,l}$-connections. The main issue to take care of is that in order to use this correspondence a tubular neighbourhood embedding must be chosen. Hence, we must take into account the dependence on this choice. 

In Section \ref{CatRHstatement} we will relate the category of $k$-th order foliations of codimension $l$ around a manifold $W$ to the category of $G_{k,l}$-representations of the fundamental group $\pi_{1}(W,x)$. This gives rise to a classification of these foliations up to isomorphism. In Section \ref{TopRHstatement} we will introduce a topology on the groupoid of $k$-th order foliations and upgrade our Riemann-Hilbert functor to a fibration of topological groupoids. Finally, in Section \ref{Isotopy} we will give a classification of $k$-th order foliations around $W$ in a given manifold $M$ up to isotopy.

\subsection{Riemann-Hilbert functor for $k$-th order foliations} \label{CatRHstatement}

Consider a connected manifold $W$. For notational convenience we write $j^k$ for $j^k_W$, the process of taking $k$-jets along $W$. We will assume throughout that $k \geq 1$. 

\begin{definition}
 
The category $J^{k}\FolCat(W,l)$ of \emph{$k$-th order foliations of codimension $l$} is defined as follows. Its class of objects $J^{k}\FolObj(W,l)$ consists of pairs $(M, F)$, where:
\begin{itemize}
\item $M \supset W$ is a germ of an ambient manifold such that $W$ has codimension $l$, 
\item $F$ is a $k$-th order foliation in $M$, defined along $W$.
\end{itemize}
Morphisms $j^kf : (M_1,F_1) \rightarrow (M_2,F_2)$ are $k$-jets of diffeomorphisms that fix $W$ pointwise and relate the $k$-th order foliations by pushforward. 
\end{definition}

\begin{remark} \label{rem:achoice}
Alternatively, we could have defined the morphisms in $J^{k}\FolCat(W,l)$ to be \emph{germs} of diffeomorphisms that fix $W$ pointwise. However, this does not change the set of isomorphism classes of $k$-th order foliations. Indeed, according to Lemma \ref{lem:factoringViakJets}, two diffeomorphisms with the same $k$-jet define the same action on $k$-th order foliations. Later, we will see that working with $k$-jets is more convenient when handling topologies.
\end{remark}

Now fix a basepoint $x \in W$ and let $\pi_{1}(W,x)$ be the fundamental group of $W$ based at $x$. 
\begin{definition}
The \emph{category of representations} $\Rep(\pi_{1}(W, x), G_{k,l})$ is the action groupoid 
\[ G_{k,l} \ltimes \Hom(\pi_{1}(W, x), G_{k,l}). \]
Objects are thus homomorphisms $\rho : \pi_{1}(W,x) \to G_{k,l}$. Morphisms are elements of $G_{k,l}$, which act on homomorphisms by conjugation.
\end{definition}

In this section we establish the following result.
\begin{theorem} \label{RHthm}
There is a full and essentially surjective Riemann-Hilbert functor 
\[ RH: J^{k}\FolCat(W,l) \to \Rep(\pi_{1}(W, x), G_{k,l}). \]
In particular, $RH$ induces a bijection between isomorphism classes of objects. 
\end{theorem}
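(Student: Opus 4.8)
The plan is to build the functor $RH$ by combining the holonomy construction of Section~\ref{holonomydefsection} with the classical Riemann-Hilbert correspondence for flat principal $G_{k,l}$-connections recalled in Appendix~\ref{sec:principal}. First I would fix, once and for all, a base point $x\in W$; then, for each object $(M,F)$, I would choose a tubular neighbourhood embedding $E\to W$ for $W$ inside (the germ) $M$, which by Theorem~\ref{splittingprop} turns $F$ into a Lie algebroid splitting $\sigma\colon TW\to\at^k(E)$, equivalently (Lemma~\ref{kthorderAtiyahisAtiyah}) into a flat principal $G_{k,l}$-connection $\nabla^\sigma$ on the frame bundle $\Fr^k(E)$. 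Picking moreover a $k$-jet of frame $g\in\Fr^k(E)|_x$, the monodromy representation $\hol^{\sigma,g}_x\colon\pi_1(W,x)\to G_{k,l}$ of Section~\ref{holonomydefsection} is the object $RH(M,F)$. The usual subtlety is that this depends a priori on the choices of tubular neighbourhood and frame; the key point (to be proved as a lemma) is that changing these data only changes the representation by an overall conjugation by an element of $G_{k,l}$, hence does not change its isomorphism class in $\Rep(\pi_1(W,x),G_{k,l})$, and that the resulting ambiguity can be packaged functorially. Concretely, I would make the choices canonical enough (e.g.\ fix one tubular neighbourhood and frame per object, or phrase everything via a contractible space of choices) so that a morphism $j^kf\colon(M_1,F_1)\to(M_2,F_2)$, which intertwines the foliations, induces a $G_{k,l}$-equivariant isomorphism $\Fr^k(E_1)\to\Fr^k(E_2)$ of frame bundles carrying $\nabla^{\sigma_1}$ to $\nabla^{\sigma_2}$, whence—reading off the effect on monodromy with respect to the chosen frames—an element of $G_{k,l}$ conjugating $\hol^{\sigma_1,g_1}_x$ to $\hol^{\sigma_2,g_2}_x$. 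Functoriality (composition and identities) is then a bookkeeping check.

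Once $RH$ is defined, \textbf{essential surjectivity} is the statement that every homomorphism $\rho\colon\pi_1(W,x)\to G_{k,l}$ arises as the monodromy of some $k$-th order foliation. This I would get from the classical direction of Riemann-Hilbert (Appendix~\ref{sec:principal}): take the flat principal $G_{k,l}$-bundle $P_\rho=\widetilde W\times_\rho G_{k,l}$ over $W$ associated to $\rho$; its underlying bundle is isomorphic to $\Fr^k(E)$ for $E$ the rank-$l$ vector bundle $\widetilde W\times_{\pi_k\circ\rho}\RR^l$ (using that $G_{k,l}\to G_{1,l}=\GL(\RR^l)$ classifies the relevant reduction, cf.\ Corollary~\ref{cor:equivarianthomotopy} for the homotopy-theoretic content), and the flat connection on $P_\rho$ corresponds under Lemma~\ref{kthorderAtiyahisAtiyah} and Theorem~\ref{splittingprop} to a splitting $\sigma$, i.e.\ to a $k$-th order foliation on the total space of $E$ with monodromy (conjugate to) $\rho$. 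Taking $(M,F)=(E,F(\sigma))$ exhibits a preimage.

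For \textbf{fullness}: given objects $(M_1,F_1),(M_2,F_2)$ with a morphism $g\in G_{k,l}$ in $\Rep$ between their monodromies, I must produce a $k$-jet of diffeomorphism $j^kf$ fixing $W$ pointwise and pushing $F_1$ to $F_2$ that realizes $g$. Using the chosen tubular neighbourhoods I may assume $M_i=E_i$. The element $g$ conjugating $\hol^{\sigma_1,g_1}_x$ to $\hol^{\sigma_2,g_2}_x$ means precisely that the flat bundles $(\Fr^k(E_1),\nabla^{\sigma_1})$ and $(\Fr^k(E_2),\nabla^{\sigma_2})$ are isomorphic as flat principal $G_{k,l}$-bundles (this is the hard direction of the classical Riemann-Hilbert correspondence, again from Appendix~\ref{sec:principal}); such an isomorphism of frame bundles, being $G_{k,l}$-equivariant and covering $\mathrm{id}_W$, is exactly the data of a bundle isomorphism $E_1\to E_2$ together with compatible $k$-jet data, and unwinding the definition of $\Fr^k$ it is the $k$-jet along $W$ of a fibre-preserving diffeomorphism $f\colon E_1\to E_2$ fixing $W$ pointwise; that $f$ intertwines the flat connections translates (via Theorem~\ref{splittingprop}, whose bijection only depends on the $k$-jet of the tubular embedding) into $f_*F_1=F_2$, so $j^kf$ is the required morphism, and by construction $RH(j^kf)=g$ up to the conjugation ambiguity already absorbed into the choice of frames. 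Finally, the last sentence of the theorem is formal: a full, essentially surjective functor induces a surjection on isomorphism classes of objects, and it is injective because $(M_1,F_1)\cong(M_2,F_2)$ as soon as their monodromies are conjugate—again by the fullness argument run with $\rho_1=\rho_2$.

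I expect the main obstacle to be \textbf{handling the dependence on the tubular neighbourhood embedding} cleanly enough that $RH$ is genuinely a functor rather than merely a map on isomorphism classes: one must check that the monodromy is well-defined up to conjugation (using that the space of $k$-jets of tubular embeddings, for fixed germ of $M$, is convex/contractible and that $\At^k(E)$ is determined by the $k$-jet of the embedding), and that morphisms in $J^k\FolCat(W,l)$ are sent to morphisms compatibly. Everything else—essential surjectivity and fullness—reduces to the standard Riemann-Hilbert correspondence for principal bundles applied to the structure group $G_{k,l}$, packaged by the dictionary of Theorem~\ref{splittingprop} and Lemma~\ref{kthorderAtiyahisAtiyah}.
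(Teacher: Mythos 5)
Your overall strategy coincides with the paper's: pass through a tubular neighbourhood to turn a $k$-th order foliation into a flat $G_{k,l}$-connection on $\Fr^k(\nu_W)$ via Theorem \ref{splittingprop} and Lemma \ref{kthorderAtiyahisAtiyah}, take monodromy, and reduce essential surjectivity and fullness to the classical Riemann--Hilbert correspondence of Appendix \ref{sec:principal}. Your fullness argument (realize a conjugating element by a fibre-preserving diffeomorphism of vector bundles, whose image under the functor is its $k$-jet) is exactly the paper's Step 3, resting on Lemma \ref{holonomycomputation}; your essential surjectivity via the associated bundle $\widetilde W\times_{\pi_k\circ\rho}\RR^l$ is a legitimate shortcut past the paper's Lemma \ref{representability}.

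The genuine gap is the one you yourself flag and then defer: you never actually construct the morphism of flat bundles associated to a \emph{change of tubular neighbourhood embedding}, nor show that the resulting assignment is well defined and functorial. Saying the ambiguity "can be packaged functorially" because the space of choices is contractible is not a proof: contractibility of $\TNE(W,M)$ gives you, for each pair $\Phi_1,\Phi_2$, \emph{some} path of embeddings, and the induced comparison isomorphism is the parallel transport of the pulled-back connection on $\Fr^k(\nu_W)\times[0,1]$ — but one must then prove this is independent of the chosen path (the paper's Lemma \ref{lem:independencePaths}, which requires gluing two paths into a loop of embeddings over $S^1$ and filling it over $D^2$ using the strengthened Cerf-type Lemma \ref{lem:betterCerf}), that it depends only on the $k$-jets $j^k\Phi_i$ (needed for the functor to factor through $J^k\Fcal$), and that it is compatible with the morphisms that do respect tubular neighbourhoods (the interchange identity of Proposition \ref{Fwelldef}, without which composition is not respected). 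These verifications are the technical core of the paper's proof and cannot be dismissed as bookkeeping; without them you only obtain a well-defined map on isomorphism classes, not the functor asserted in the theorem.
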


\begin{remark}
The reader may want to check \cite{CC00}, where a version of Theorem \ref{RHthm} for germs of foliations is established. 
\end{remark}

\begin{remark}
Since the categories involved in Theorem \ref{RHthm} are groupoids, the claim that $RH$ induces a bijection between the sets of isomorphism classes of objects is an immediate consequence of the functor being full and essentially surjective. 
\end{remark}

Consider $(M,F) \in J^{k}\FolObj(W,l)$ and let $\Tan^{k}(W,F)$ be the (germ of) singular foliation of $b^{k+1}$-type associated to it. Write $RH(M,F) \in \Hom(\pi_{1}(W,x), G_{k,l})$ for the associated representation. The Riemann-Hilbert functor induces a surjective homomorphism between automorphism groups 
\[ RH: \Aut(M,F) \to \Aut(RH(M, F)). \]
The following theorem characterizes the kernel of this homomorphism as the subgroup of `inner automorphisms' of $(M,F)$. 
\begin{theorem} \label{innercharacterization}
Let $(M, F)$ be a $k$-th order foliation and let $\Tan^{k}(W, F)$ be the associated singular foliation of $b^{k+1}$-type. A $k$-jet of diffeomorphism $j^k\phi \in \Aut(M,F)$ is in the kernel of $RH$ if and only if it can be represented by the time-$1$ flow of a time-dependent vector field $V_{t} \in \Tan^{k}(W, F)$ that vanishes along $W$.
\end{theorem}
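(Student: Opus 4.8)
\textit{Proof plan.}
The plan is to prove both implications inside a tubular neighbourhood, using Theorem \ref{splittingprop} and Section \ref{holonomydefsection} to replace $F$ by an honest regular foliation. First I would fix a tubular neighbourhood embedding $E\to W$ and a Lie algebroid splitting $\sigma$ representing $F$, giving a regular foliation $\xi_{\sigma}\subset TE$ defined near $W$ with $\xi_\sigma|_W=TW$, with $W$ as a leaf, and with $j^k_W\xi_\sigma=F$; under $RH$ this corresponds to a flat $G_{k,l}$-connection on $\Fr^k(E)$ with monodromy $\rho=RH(M,F)$. Any $j^k\phi\in\Aut(M,F)$ carries $\xi_\sigma$ to $\phi_*\xi_\sigma$, which agrees with $\xi_\sigma$ to order $k$ along $W$ (both have $k$-jet $F$). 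Unwinding the construction of $RH$, the element $RH(j^k\phi)\in\Aut(\rho)=Z_{G_{k,l}}(\im\rho)$ is the $k$-jet at $x$ of the germ of diffeomorphism of a fixed transversal $T_x$ obtained by applying $\phi|_{T_x}$ and then sliding back to $T_x$ along the leaves of $\xi_\sigma$. Running the same recipe at an arbitrary $w\in W$ gives a ``transverse $k$-jet'' $\tau_\phi(w)$, and because $\phi$ intertwines the $\xi_\sigma$- and $\phi_*\xi_\sigma$-holonomies along loops in $W$ up to order $k$, the section $\tau_\phi$ is flat for the connection on $\Ad^k(E)$ induced by $F$. Since $W$ is connected, $\tau_\phi$ is determined by $\tau_\phi(x)=RH(j^k\phi)$; hence $RH(j^k\phi)=1$ if and only if $\phi$ preserves every leaf of $\xi_\sigma$ to order $k$ along $W$.

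For the ``if'' direction, given $V_t\in\Tan^{k}(W,F)$ with $V_t|_W=0$, write $V_t=X_t+Z_t$ with $X_t\in\mathcal{I}_W\Gamma(\xi_\sigma)$ and $Z_t\in\mathcal{I}_W^{k+1}\Gamma(TM)$. The time-$1$ flow $\phi$ fixes $W$ pointwise and preserves $\Tan^{k}(W,F)$, hence $F$, so $j^k\phi\in\Aut(M,F)$. Sections of $\xi_\sigma$ preserve $\mathcal{I}_W$ (their values along $W$ lie in $TW$, which the conormal kills), so the flow of $X_t$ preserves $\mathcal{I}_W^{k+1}TM$; comparing the flow of $V_t$ with that of $X_t$ shows they agree to order $k$ along $W$, so $\phi$ preserves every leaf of $\xi_\sigma$ to order $k$, and by the previous paragraph $RH(j^k\phi)=1$.

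For the ``only if'' direction, suppose $RH(j^k\phi)=1$, so $\phi$ preserves every leaf of $\xi_\sigma$ to order $k$ along $W$ and $\phi|_W=\mathrm{id}$. The key structural point is that the Lie algebra $\mathfrak{k}$ of $k$-jets along $W$ of vector fields in $\mathcal{I}_W\Gamma(\xi_\sigma)$ (which is exactly the space of $k$-jets of $V\in\Tan^{k}(W,F)$ with $V|_W=0$) is nilpotent: the filtration $\mathfrak{k}_p=\mathcal{I}_W^{p}\Gamma(\xi_\sigma)\ (\mathrm{mod}\ \mathcal{I}_W^{k+1})$ satisfies $[\mathfrak{k}_p,\mathfrak{k}_q]\subseteq\mathfrak{k}_{p+q}$ (again because $\Gamma(\xi_\sigma)$ preserves $\mathcal{I}_W$) and $\mathfrak{k}_{k+1}=0$ in $k$-jets. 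I would then peel $j^k\phi$ off order by order: if $\phi\equiv\mathrm{id}\ (\mathrm{mod}\ \mathcal{I}_W^{p})$, its leading term $V_p$ lies in $\mathfrak{k}_p/\mathfrak{k}_{p+1}$ (it is leaf-tangent to order $k$ and vanishes on $W$ since $\phi$ does), and $\exp(-V_p)\circ\phi$ agrees with the identity to order $p+1$ while still lying in $\Aut(M,F)$ and still preserving leaves to order $k$. After at most $k$ steps this writes $j^k\phi$ as a product of time-$1$ flows $\exp(V_j)$ with $V_j\in\mathfrak{k}_j\subset\Tan^{k}(W,F)$, $V_j|_W=0$; since $\mathfrak{k}$ is nilpotent, the Baker--Campbell--Hausdorff formula rewrites this product as a single $\exp(V)$ with $V\in\mathfrak{k}$, which is the required (time-independent, hence a fortiori time-dependent) generator.

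The main obstacle is the first paragraph: making precise how $RH$ acts on automorphism groups and proving flatness of $\tau_\phi$, given that $\phi$ is only a diffeomorphism of the germ $(M,W)$ and need not respect the tubular neighbourhood or the bundle structure of $E$. This forces one to phrase everything in terms of germs of submanifolds and diffeomorphisms ``agreeing to order $k$ along $W$'', relying on the fact that only $k$-jets enter and that the fixed foliation $\xi_\sigma$ can serve as a reference. Once that bookkeeping is done, the two implications are, respectively, a direct flow computation and the nilpotent peeling argument above. A secondary point is matching the abstract $RH$ on $\Aut$-groups with the explicit monodromy description of Section \ref{holonomydefsection}; if $RH$ is instead set up via a developing map one would invoke the corresponding standard functoriality.
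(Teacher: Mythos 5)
Your overall architecture differs substantially from the paper's. The paper works with the suspension $F\times[0,1]$ on $M\times[0,1]$: for the ``if'' direction it observes that the flow $\phi_t$ of $V_t$ assembles into an inner automorphism $\Phi'$ of $F\times[0,1]$, so the pulled-back connection is unchanged and the $[0,1]$-parallel transport computing $\F(\phi)$ is trivial; for the ``only if'' direction it takes a path of tubular neighbourhood embeddings from $id$ to $\phi$, corrects it by a lift $\psi_t$ of the gauge transformation given by partial parallel transport, and reads off the time-dependent $V_t$ from $\Phi_*(\partial_t)$ for the resulting automorphism $\Phi$ of $F\times[0,1]$. Your route instead reduces everything to the pointwise statement ``$\phi$ preserves the local leaves to order $k$'' and then handles the ``only if'' direction by nilpotent peeling plus Baker--Campbell--Hausdorff, producing a time-independent generator. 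That algebraic half is plausible and in some ways more explicit than the paper's, but it rests entirely on your first paragraph, which contains the genuine gap.

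Two concrete problems there. First, $\xi_\sigma$ is not a regular foliation: the distribution associated to a splitting $\sigma: TW \to \at^k(E)$ is only involutive up to order $k$ (this is the whole point of the extension problem; compare Proposition \ref{prop:simplyconnected}, which needs $W$ simply connected to produce an honest foliation). So ``the leaves of $\xi_\sigma$'' and ``sliding back along the leaves'' are undefined as stated; you must pass to the local Frobenius normal form of Corollary \ref{Frobenius} and verify that the resulting transverse $k$-jet $\tau_\phi(w)$ is independent of the choice of local integrable representative. Second, and more seriously, the identification $RH(j^k\phi)=\tau_\phi(x)$ together with the flatness of $\tau_\phi$ is asserted, not proven. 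In the paper, $\F$ is defined on morphisms through the factorization into $\C_{tn}$ and $\C_{id}$ and, on $\C_{id}$, through parallel transport over a path of tubular neighbourhood embeddings (Definition \ref{def:definitionOfF2}); matching this with ``apply $\phi$ to a transversal and slide back'' is exactly the content of Lemmas \ref{holonomycomputation} and \ref{Flinearization} plus an additional argument for non--fibre-preserving $\phi$, and this is where the real work lies. You flag this yourself as ``the main obstacle,'' but without it neither implication is established, since both directions of your plan are channelled through the equivalence between $RH(j^k\phi)=1$ and $\tau_\phi\equiv 1$. If you supply that bridge (by showing $\tau_\phi$ is a flat section of $\Ad^k(E)$ whose value at $x$ is $\F(\phi)$), the remaining two paragraphs do go through, modulo routine checks that flows of vector fields differing by an element of $\mathcal{I}_W^{k+1}\Gamma(TM)$ have the same $k$-jet along $W$, and that BCH applies to the infinite-dimensional but nilpotent algebra $\mathfrak{k}$.
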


\subsubsection{Preliminaries}
In producing the representation $RH(M,F)$ associated to a $k$-th order foliation $(M,F)$, we will need to choose some extra data, such as a tubular neighbourhood embedding and a frame. Therefore, in order to establish Theorem \ref{RHthm}, we first replace the categories under study with equivalent categories whose objects account for the required extra data.

\begin{definition}\label{definitionofJkC}
Let $J^k\mathcal{C}$ be the category whose objects are triples $(M, F, j^k\Phi)$ with $(M,F)\in J^k\FolObj(W,l)$ and $j^k\Phi: \nu_{W} \to M$ a $k$-jet of tubular neighbourhood embedding. Morphisms
\begin{equation*}
j^kf : (M_1,F_1,j^k\Phi_1) \rightarrow (M_2,F_2,j^k\Phi_2) 
\end{equation*}
are $k$-jets of ambient diffeomorphisms $f$ that fix $W$ pointwise and relate the $k$-th order foliations by pushforward.
\end{definition}
Note that morphisms are not required to preserve tubular neighbourhood embeddings. Hence, since any submanifold admits a tubular neighbourhood, we have:
\begin{lemma}
The forgetful functor $J^k\mathcal{C} \to J^{k}\FolCat(W,l)$ is an equivalence of categories.
\end{lemma}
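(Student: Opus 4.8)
The plan is to verify the two standard conditions characterizing an equivalence of categories: that the forgetful functor $U \colon J^k\mathcal{C} \to J^{k}\FolCat(W,l)$ is \emph{fully faithful} and \emph{essentially surjective}. Since no non-trivial structure is being forgotten at the level of morphisms, both should follow almost immediately from the definitions.

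For essential surjectivity I would in fact prove the stronger statement that $U$ is surjective on objects. Given $(M,F) \in J^k\FolObj(W,l)$, the tubular neighbourhood theorem (in the germ sense, see Appendix \ref{sec:DiffTop}) produces a tubular neighbourhood embedding $\Phi \colon \nu_W \to M$; its $k$-jet $j^k\Phi$ gives a lift $(M,F,j^k\Phi) \in J^k\mathcal{C}$ with $U(M,F,j^k\Phi) = (M,F)$ on the nose.

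For full faithfulness the key observation is that morphisms in $J^k\mathcal{C}$ are \emph{not} required to intertwine the chosen tubular neighbourhood embeddings: by Definition \ref{definitionofJkC}, a morphism $(M_1,F_1,j^k\Phi_1) \to (M_2,F_2,j^k\Phi_2)$ is exactly a $k$-jet of an ambient diffeomorphism fixing $W$ pointwise and relating $F_1$ and $F_2$ by pushforward — which is precisely the same datum as a morphism $(M_1,F_1)\to(M_2,F_2)$ in $J^{k}\FolCat(W,l)$. Hence $U$ acts as the identity on hom-sets, so it is bijective on each, and composition is evidently preserved.

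Being fully faithful and essentially surjective, $U$ is an equivalence. I do not expect any genuine obstacle: the only point requiring a modicum of care is that $M$ is merely a \emph{germ} of an ambient manifold, so one invokes the germ version of the tubular neighbourhood theorem rather than its global form, which is routine.
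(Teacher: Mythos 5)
Your proposal is correct and follows exactly the paper's reasoning: the paper's proof is precisely the two observations you make — morphisms in $J^k\mathcal{C}$ do not constrain the tubular neighbourhood data, so the functor is fully faithful by inspection, and essential surjectivity follows from the existence of tubular neighbourhoods. Nothing further is needed.
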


We will need a further auxiliary category. 
\begin{definition} \label{definitionofC}
Let $\mathcal{C}$ be the category whose objects are triples $(M, F, \Phi)$ with $(M,F)\in J^k\FolObj(W,l)$ and where $\Phi: \nu_{W} \to M$ is the germ of a tubular neighbourhood embedding. Morphisms are germs of ambient diffeomorphisms that fix $W$ pointwise and relate the $k$-th order foliations by pushforward. 
\end{definition}
The reason for introducing $\mathcal{C}$ is that our arguments will rely on the use of actual diffeomorphisms and tubular neighbourhood embeddings, even though ultimately only their $k$-jets are relevant. According to Lemma \ref{lem:polynomialRepresentatives}, every jet of diffeomorphism can be realised by a germ. Hence, we deduce:
\begin{lemma}
The functor $J^k : \C \rightarrow J^k\C$ obtained by taking $k$-jets is full and surjective.
\end{lemma}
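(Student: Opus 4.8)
The plan is to deduce both the surjectivity on objects and the fullness on morphism sets from Lemma \ref{lem:polynomialRepresentatives}, which realizes any $k$-jet of a diffeomorphism by an honest germ, together with Lemma \ref{lem:factoringViakJets}, which says that the pushforward action on $k$-th order foliations depends only on the $k$-jet of the diffeomorphism. Neither category requires its morphisms to respect the chosen tubular neighbourhood embeddings (cf. the remark after Definition \ref{definitionofJkC}), so compatibility with the data $\Phi_i$ never needs to be checked.

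For surjectivity, fix an object $(M, F, j^k\Phi)$ of $J^k\C$. Since $M$ is a germ of a manifold containing $W$, it admits a germ of tubular neighbourhood embedding $\Phi_0 \colon \nu_W \to M$. The composite $k$-jet $(j^k\Phi_0)^{-1} \circ j^k\Phi$ is then a $k$-jet of a diffeomorphism of $\nu_W$ fixing $W$ pointwise, so Lemma \ref{lem:polynomialRepresentatives} produces a germ of diffeomorphism $\psi$ of $\nu_W$ realizing it. Setting $\Phi := \Phi_0 \circ \psi$ yields a germ of tubular neighbourhood embedding with $j^k\Phi$ the prescribed jet, so that $(M, F, \Phi)$ is an object of $\C$ with $J^k(M, F, \Phi) = (M, F, j^k\Phi)$.

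For fullness, fix objects $(M_1, F_1, \Phi_1)$ and $(M_2, F_2, \Phi_2)$ of $\C$ and a morphism $j^k f \colon (M_1, F_1, j^k\Phi_1) \to (M_2, F_2, j^k\Phi_2)$ in $J^k\C$; by definition $j^k f$ is a $k$-jet of an ambient diffeomorphism fixing $W$ pointwise and relating $F_1$ to $F_2$ by pushforward. Using Lemma \ref{lem:polynomialRepresentatives}, choose a germ $g$ of an ambient diffeomorphism, fixing $W$ pointwise, with $J^k g = j^k f$. It remains only to verify that $g$ is a morphism in $\C$, i.e. that $g$ relates $F_1$ to $F_2$ by pushforward; but by Lemma \ref{lem:factoringViakJets} the pushforward of $F_1$ along $g$ depends only on $J^k g = j^k f$, which by hypothesis carries $F_1$ to $F_2$. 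Hence $g \colon (M_1, F_1, \Phi_1) \to (M_2, F_2, \Phi_2)$ is a morphism in $\C$ with $J^k g = j^k f$, completing the proof.

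The only point requiring any care is the application of Lemma \ref{lem:polynomialRepresentatives}: one needs the realizing germ to retain the relevant normalisations — fixing $W$ pointwise (in the fullness step) and yielding a tubular neighbourhood embedding after the composition $\Phi_0 \circ \psi$ (in the surjectivity step). Since the jets fed into the lemma already satisfy these normalisations, there is no genuine obstacle here: the substantive realization statement is the appendix lemma, and the present lemma is bookkeeping on top of it.
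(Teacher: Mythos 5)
Your proof is correct and follows the same route the paper takes: the paper disposes of this lemma with the single observation that, by Lemma \ref{lem:polynomialRepresentatives}, every $k$-jet of diffeomorphism can be realised by a germ, leaving the bookkeeping (realising the jet of the tubular neighbourhood embedding for surjectivity on objects, and invoking Lemma \ref{lem:factoringViakJets} to see that the realising germ still intertwines the foliations for fullness) implicit. You have simply spelled out those implicit steps, including the correct caveat that the realisations must preserve the normalisations along $W$.
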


As in Section \ref{holonomymap}, we let $\FlatCat(W,G_{k,l})$ denote the category of flat principal $G_{k,l}$-bundles over $W$. The classical Riemann-Hilbert correspondence (Theorem \ref{MainRHtheorem}) is an equivalence of categories between $\FlatCat(W,G_{k,l})$ and $\Rep(\pi_{1}(W, x), G_{k,l})$. Therefore, in order to prove Theorem \ref{RHthm} it suffices to prove the following result. 

\begin{proposition}  \label{FunctorF}
There is a full and essentially surjective functor $\F: \mathcal{C} \to \FlatCat(W,G_{k,l})$ which factors as follows 
\[
\F: \mathcal{C} \xrightarrow{J^k} J^k\C \xrightarrow{J^k\F} \FlatCat(W,G_{k,l}).
\]
Therefore we obtain a full and essentially surjective functor $J^k\F: J^k\mathcal{C} \to \FlatCat(W,G_{k,l})$.
\end{proposition}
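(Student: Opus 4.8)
The plan is to construct the functor $\F$ directly using the holonomy construction of Section \ref{holonomydefsection}, and then to verify the two required properties (fullness, essential surjectivity) and the claimed factorization through $J^k\C$.

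\textbf{Construction of the functor.} Given an object $(M, F, \Phi) \in \C$, the germ of tubular neighbourhood embedding $\Phi$ identifies a germ of $M$ with the total space of $\nu_W \to W$. Under this identification, Theorem \ref{splittingprop} turns $F$ into a Lie algebroid splitting $\sigma: TW \to \at^k(\nu_W)$, and via Lemma \ref{kthorderAtiyahisAtiyah} this is a flat principal $G_{k,l}$-connection $\nabla^\sigma$ on $\Fr^k(\nu_W)$. So on objects I set $\F(M,F,\Phi) = (\Fr^k(\nu_W), \nabla^\sigma)$. On morphisms: a germ of ambient diffeomorphism $f$ fixing $W$ pointwise and relating $F_1$ to $F_2$ by pushforward induces, after conjugating by the two tubular neighbourhood embeddings, a germ of bundle automorphism of $\nu_W$ (a diffeomorphism of $\nu_W$ fixing the zero section pointwise), whose $k$-jet along $W$ gives an isomorphism of frame bundles $\Fr^k(\nu_W) \to \Fr^k(\nu_W)$; since $f$ intertwines the $k$-th order foliations, this isomorphism intertwines the flat connections $\nabla^{\sigma_1}$ and $\nabla^{\sigma_2}$. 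Functoriality is immediate from functoriality of $j^k_W$ and of $\Fr^k(-)$.

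\textbf{The factorization.} By the second identity in Equation \eqref{pushforwardidentities}, the action of $df$ on $k$-th order foliations depends only on $j^k_W f$; similarly, by the last sentence of Theorem \ref{splittingprop}, the splitting $\sigma$ depends only on the $k$-jet of $\Phi$. Hence both the object map and the morphism map of $\F$ factor through the $k$-jet data, i.e. $\F = J^k\F \circ J^k$ for a well-defined functor $J^k\F: J^k\C \to \FlatCat(W, G_{k,l})$. Since $J^k: \C \to J^k\C$ is full and surjective (the lemma immediately preceding the statement), establishing fullness and essential surjectivity of $\F$ will transfer to $J^k\F$.

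\textbf{Essential surjectivity and fullness.} Essential surjectivity is the easy direction: given a flat $G_{k,l}$-bundle $P \to W$, it is isomorphic to $\Fr^k(E)$ for $E$ the associated rank-$l$ vector bundle (via the standard representation $G_{k,l} \to \GL(\RR^l) = G_{1,l}$, using that $\Fr^k(E)$ only depends on $E$ up to the $G_{1,l}$-reduction), and a flat connection on it is a splitting $\sigma$ of $\at^k(E)$ by Lemma \ref{kthorderAtiyahisAtiyah}, hence a $k$-th order foliation on the total space of $E$ (viewed as a germ $M \supset W$ with its linear tubular neighbourhood embedding) by Theorem \ref{splittingprop}. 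For fullness, I need to show that every isomorphism $\psi: (\Fr^k(\nu_W), \nabla^{\sigma_1}) \to (\Fr^k(\nu_W), \nabla^{\sigma_2})$ of flat principal bundles arises from a germ of ambient diffeomorphism $f$ fixing $W$ pointwise and relating $F_1$ to $F_2$. Here I expect the main obstacle: an abstract isomorphism of principal $G_{k,l}$-bundles need not a priori come from a fibrewise diffeomorphism of $\nu_W$. The key is that $\Fr^k(\nu_W)$ is not just any $G_{k,l}$-bundle but the $k$-th \emph{frame} bundle, so an isomorphism of such bundles covering $\id_W$ is the same datum as a $k$-jet along $W$ of a fibrewise diffeomorphism of $\nu_W$ fixing the zero section pointwise --- concretely, a section of the groupoid $\At^k(\nu_W)$ over the identity, which by Lemma \ref{lem:polynomialRepresentatives} (polynomial representatives) lifts to an honest germ of diffeomorphism $f$ of $\nu_W$. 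Composing with the two tubular neighbourhood embeddings gives a germ of ambient diffeomorphism; that it relates $F_1$ to $F_2$ follows because, after translating through Theorem \ref{splittingprop} and Lemma \ref{kthorderAtiyahisAtiyah}, "$\psi$ intertwines the flat connections" is exactly "$f$ pushes $\sigma_1$ forward to $\sigma_2$", i.e. $f_* F_1 = F_2$. Finally, Proposition \ref{FunctorF} combined with the classical Riemann-Hilbert equivalence $\FlatCat(W,G_{k,l}) \simeq \Rep(\pi_1(W,x), G_{k,l})$ of Theorem \ref{MainRHtheorem} yields Theorem \ref{RHthm}.
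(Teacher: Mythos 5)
Your construction of $\F$ on objects, and your Steps for essential surjectivity (via realizing every principal $G_{k,l}$-bundle as a $\Fr^{k}(E)$) and fullness (realizing a flat bundle isomorphism as the $k$-jet of a fibrewise map and lifting it to a germ of diffeomorphism), all match the paper's argument. But there is a genuine gap in your definition of $\F$ on morphisms, and it is precisely the point where the paper has to work hardest. Morphisms in $\C$ are \emph{not} required to intertwine the tubular neighbourhood embeddings, so the conjugate $\Phi_{2}^{-1}\circ f\circ \Phi_{1}$ is a germ of diffeomorphism of $\nu_{W}$ fixing the zero section pointwise, but it is in general \emph{not} a fibrewise map. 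Its $k$-jet therefore does not define an isomorphism of the principal bundle $\Fr^{k}(\nu_{W})$: an element of $\At^{k}(\nu_{W})$ (or a gauge transformation) is by definition a $k$-jet of a \emph{pointed diffeomorphism between individual fibres} $(E_{x},0)\to(E_{y},0)$, and a non-fibre-preserving diffeomorphism does not restrict to such maps. So the sentence ``induces \ldots a germ of bundle automorphism of $\nu_{W}$'' is false as stated, and ``functoriality is immediate'' inherits the problem.

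The paper's resolution is to factor every morphism of $\C$ uniquely as a composition of a morphism in $\C_{tn}$ (compatible with the tubular neighbourhood embeddings, where your formula $\F(f)=j^{k}(\nu(f))$ is correct) and a morphism in $\C_{id}$ (underlying diffeomorphism equal to the identity, but with two different tubular neighbourhood embeddings). On $\C_{id}$ the functor is defined by choosing a path of tubular neighbourhood embeddings joining $\Phi_{1}$ to $\Phi_{2}$, pulling back $F\times[0,1]$ to get a flat connection on $\Fr^{k}(\nu_{W})\times[0,1]$, and taking parallel transport along $[0,1]$. One must then prove that this is independent of the chosen path (using contractibility of the space of tubular neighbourhood embeddings and an extension over the disc, Lemma \ref{lem:independencePaths}), that it depends only on the $k$-jets of the embeddings, and that the two partial definitions are compatible under refactoring (Proposition \ref{Fwelldef}) -- this last point is what makes $\F$ a functor. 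None of this appears in your proposal, and without it the functor is simply not defined on a general morphism. Note also that your fullness argument tacitly uses $\F(f)=j^{k}(f)$ for the germ $f$ realizing a given bundle isomorphism; in the paper this is Lemma \ref{holonomycomputation}, whose proof again goes through the parallel-transport definition on $\C_{id}$.
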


\begin{remark} \label{explicitRHconstruction}
As we have explained, providing an explicit construction of the functor $RH$ from Theorem \ref{RHthm} requires spelling out various categorical equivalences. One of them is the equivalence $\FlatCat(W,G_{k,l}) \simeq \Rep(\pi_1(W,x),G_{k,l})$ from Theorem \ref{MainRHtheorem}, which involves the choice of a basepoint $x$ and a framing $\tilde{\phi}: (\RR^l, 0) \to (\nu_{W}|_{x}, 0)$. If we wanted to spell this out, we could replace $J^k\mathcal{C}$ with an equivalent category $J^k\mathcal{C}'$ whose objects also include the data of the framing. \end{remark}

\subsubsection{Step 1: Constructing the functor} \label{tediousconstruction}

We now define the functor $\F: \mathcal{C} \to \FlatCat(W, G_{k,l})$ of Proposition \ref{FunctorF} and show that it admits the desired factorization. Consider an object $(M, F, \Phi) \in \mathcal{C}$. The map $\Phi : \nu_{W} \to M$ is a germ of tubular neighbourhood embedding and we use it to pullback the foliation $F$ to the normal bundle. By Theorem \ref{splittingprop}, this allows us to view $F$ as a flat connection $\sigma$ on $\Fr^{k}(\nu_{W})$ with holonomy $\hol^{\sigma} : \Pi(W) \to \At^{k}(\nu_{W})$. Hence we make the following prescription. 
\begin{definition}
At the level of objects, the functor $\F$ is given by:
\[ \F(M, F, \Phi) := (\Fr^{k}(\nu_{W}), \sigma). \hfill \qedhere \]
\end{definition}
As already noted in Theorem \ref{splittingprop}, the pull-back $\Phi^*F$ only depends on $j^k\Phi$ (by Lemma \ref{lem:factoringViakJets}) and from this it follows that $\F(M, F, \Phi)$ only depends on $j^k\Phi$. Hence at the level of objects $\F$ does factor through $J^k\mathcal{C}$.

\begin{remark}
Continuing Remark \ref{explicitRHconstruction}, assume that we have an object $(M, F, j^k \Phi, g) \in J^k\mathcal{C}'$, where $(M, F, j^{k} \Phi) \in J^k \mathcal{C}$ and $g = j^{k}(\tilde{\phi}) \in \Fr^{k}(\nu_{W})$ is the $k$-jet of an embedding $\tilde{\phi}: (\RR^l, 0) \to (\nu_{W}|_{x}, 0)$. We can now apply the classical Riemann-Hilbert functor (Theorem \ref{MainRHtheorem}) to the flat connection $ \F(M, F, \Phi) = (\Fr^{k}(\nu_{W}), \sigma)$ in order to produce a $G_{k,l}$-representation. We set
\[ RH(M, F, j^k\Phi, g) := \hol_{x}^{\sigma, g} : \pi_{1}(W,x) \to G_{k,l}, \]
as defined in Section \ref{holonomydefsection}.
\end{remark}

In order to define the functor $\F$ on the level of morphisms, we make use of the fact that $\mathcal{C}$ is generated by two wide sub-categories. One of these consists of those morphisms which are compatible with the tubular neighbourhood embeddings, the other consists of morphisms which purely involve changing the tubular neighbourhoods. 
\begin{definition}
The category $\mathcal{C}$ is equipped with two wide subcategories $\mathcal{C}_{tn}, \mathcal{C}_{id} \subset \mathcal{C}$. 
\begin{itemize}
\item The subcategory $\mathcal{C}_{tn} \subset \mathcal{C}$ consists of morphisms
\[ f: (M_{1}, F_{1}, \Phi_{1}) \to (M_{2}, F_{2}, \Phi_{2}) \]
such that $f \circ \Phi_{1} = \Phi_{2} \circ \nu(f)$, where $\nu(f) : \nu_{W, M_{1}} \to \nu_{W, M_{2}}$ is the induced morphism between normal bundles.
\item The subcategory $\mathcal{C}_{id} \subset \mathcal{C}$ consists of those morphisms whose underlying diffeomorphism germ is the identity. \qedhere
\end{itemize}
\end{definition}
\begin{remark}
Given a morphism $f \in \mathcal{C}_{tn}$, the tubular neighbourhood embeddings provide a linearization 
\[
 f = \Phi_2 \circ \nu(f) \circ \Phi_{1}^{-1}. 
\]
Therefore, $f$ is completely determined by its $1$-jet. 
\end{remark}

Given any morphism in $\mathcal{C}$, we can either pushforward the tubular neighbourhood from the domain, or pullback the tubular neighbourhood from the codomain. This allows us to establish the following factorization result.  
\begin{lemma} \label{factorization}
Any morphism $f$ in $\mathcal{C}$ has unique factorizations $f = g_{1} \circ h_{1} = h_{2} \circ g_{2}$, where $g_{1}, g_{2} \in \mathcal{C}_{tn}$ and $h_{1}, h_{2} \in \mathcal{C}_{id}$. 
\end{lemma}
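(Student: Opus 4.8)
The plan is to construct the two factorizations explicitly by transporting tubular neighbourhood data along $f$, and then to verify uniqueness by a direct computation. Let me set up notation: suppose $f : (M_1, F_1, \Phi_1) \to (M_2, F_2, \Phi_2)$ is a morphism in $\mathcal{C}$, so $f$ is a germ of an ambient diffeomorphism fixing $W$ pointwise with $f_*F_1 = F_2$. For the factorization $f = g_1 \circ h_1$, the idea is to push the tubular neighbourhood embedding $\Phi_1$ forward to the domain-side of $f$: since $f$ fixes $W$ pointwise, the germ $\Phi_1$ of a tubular neighbourhood for $W \subset M_1$ is also (after composing with $f$) related to a tubular neighbourhood for $W \subset M_2$, but to keep the underlying diffeomorphism equal to the identity in $h_1$ we instead define a new tubular neighbourhood embedding $\Phi_1' := f \circ \Phi_1 \circ \nu(f)^{-1} : \nu_{W,M_2} \to M_2$ — wait, the normal bundles need care. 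Concretely: set $h_1 : (M_1, F_1, \Phi_1) \to (M_1, F_1, \tilde\Phi_1)$ to be the identity germ, where $\tilde\Phi_1$ is chosen so that $f : (M_1, F_1, \tilde\Phi_1) \to (M_2, F_2, \Phi_2)$ lies in $\mathcal{C}_{tn}$; the constraint $f \circ \tilde\Phi_1 = \Phi_2 \circ \nu(f)$ forces $\tilde\Phi_1 := f^{-1} \circ \Phi_2 \circ \nu(f)$, which is a well-defined germ of tubular neighbourhood embedding for $W \subset M_1$ since $f^{-1}$ and $\Phi_2$ and $\nu(f)$ all are. Then $g_1 := f$ viewed as a morphism in $\mathcal{C}_{tn}$, and $f = g_1 \circ h_1$ by construction. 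The factorization $f = h_2 \circ g_2$ is symmetric: set $g_2 : (M_1, F_1, \Phi_1) \to (M_2, F_2, \Phi_2')$ in $\mathcal{C}_{tn}$, forcing $\Phi_2' := f \circ \Phi_1 \circ \nu(f)^{-1}$, and let $h_2$ be the identity germ $(M_2, F_2, \Phi_2') \to (M_2, F_2, \Phi_2)$.

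The key steps, in order, are: (1) check that $\tilde\Phi_1$ (resp.\ $\Phi_2'$) is indeed a germ of tubular neighbourhood embedding — this is automatic because it is a composite of tubular neighbourhood embeddings and diffeomorphisms fixing $W$, and the normal-bundle map $\nu(f)$ is a vector bundle isomorphism covering $\id_W$; (2) check $h_1 \in \mathcal{C}_{id}$, i.e.\ its underlying germ is the identity — true by fiat since we only changed the tubular neighbourhood datum and left $(M_1, F_1)$ fixed; (3) check $g_1 \in \mathcal{C}_{tn}$ — immediate from the defining equation $f \circ \tilde\Phi_1 = \Phi_2 \circ \nu(f)$; (4) verify the composite recovers $f$ — since composition of morphisms in $\mathcal{C}$ only remembers the underlying diffeomorphism germ and its effect, and $h_1$ contributes the identity, $g_1 \circ h_1 = f$. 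Then (5), uniqueness: if $f = g_1 \circ h_1 = g_1' \circ h_1'$ are two such factorizations, then comparing underlying germs shows $g_1$ and $g_1'$ have the same underlying germ (namely $f$, since $h_1, h_1'$ are identities), and the constraint that $g_1 \in \mathcal{C}_{tn}$ then pins down the intermediate tubular neighbourhood datum uniquely via $\tilde\Phi_1 = f^{-1} \circ \Phi_2 \circ \nu(f)$; hence $h_1 = h_1'$ and $g_1 = g_1'$. The same argument applies to the other factorization.

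I expect the only genuine subtlety — and the main obstacle worth spelling out — to be bookkeeping around the normal bundles $\nu_{W,M_1}$ and $\nu_{W,M_2}$ and the induced isomorphism $\nu(f)$: one must be careful that $\nu(f)$ is exactly the derivative of $f$ on normal directions along $W$, so that $f \circ \Phi_1$ and $\Phi_2 \circ \nu(f)$ agree to first order along $W$ (which is what makes $\tilde\Phi_1$ a legitimate tubular neighbourhood embedding with the correct linearization, and in particular makes $g_1$ determined by its $1$-jet as noted in the preceding remark). Beyond that, everything is formal manipulation in the category $\mathcal{C}$, using only that morphisms are germs of diffeomorphisms fixing $W$ pointwise and that a tubular neighbourhood embedding can be pre- or post-composed with such a germ to yield another one.
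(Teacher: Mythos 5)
Your construction is correct and coincides with the paper's intended argument: the paper states the lemma without a separate proof, but the displayed factorizations are exactly your $f^{*}(\Phi_{2}) = f^{-1}\circ\Phi_{2}\circ\nu(f)$ and $f_{*}(\Phi_{1}) = f\circ\Phi_{1}\circ\nu(f)^{-1}$, which reappear verbatim in the proof of Proposition \ref{Fwelldef}. Your uniqueness argument (the underlying germ must be $f$, and the $\mathcal{C}_{tn}$ condition then pins down the intermediate tubular neighbourhood embedding) is the right one and needs no changes.
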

We will first define $\F$ on the two subcategories $\mathcal{C}_{tn}$ and $\mathcal{C}_{id}$. In Definition \ref{functorFdef} below, we will use Lemma \ref{factorization} to extend the definition of $\F$ to all of $\mathcal{C}$.

Given a morphism $f: (M_{1}, F_{1}, \Phi_{1}) \to (M_{2}, F_{2}, \Phi_{2})$ in $\mathcal{C}_{tn}$, the vector bundle morphism $\nu(f) : \nu_{W, M_{1}} \to \nu_{W, M_{2}}$ preserves the induced $k$-th order foliations. Taking the $k$-jet, we thus get an induced map $j^{k}(\nu(f)) : \Fr^{k}( \nu_{W, M_{1}}) \to \Fr^{k}( \nu_{W, M_{2}})$ which sends the flat connection $\sigma_{1}$ to $\sigma_{2}$. Therefore, we give the following definition. 
\begin{definition} \label{def:ctn}
The functor $\F : \mathcal{C}_{tn} \to \FlatCat(W,G_{k,l})$ is defined by sending a morphism $f: (M_{1}, F_{1}, \Phi_{1}) \to (M_{2}, F_{2}, \Phi_{2})$ in $\mathcal{C}_{tn}$ to
\begin{equation*}
\F(f) := j^k(\nu(f)). \hfill \qedhere
\end{equation*}
\end{definition}
This definition is readily seen to be functorial in $f$.

The definition of $\F : \mathcal{C}_{id} \to \FlatCat(W,G_{k,l})$ is more involved. Consider $(M,F) \in J^k\FolObj(W,l)$, and define $\mathcal{C}_{id,(M,F)}$ to be the full subcategory of $\mathcal{C}_{id}$ consisting of objects which have underlying $k$-th order foliation $(M,F)$. A morphism in $\mathcal{C}_{id,(M,F)}$ is therefore a pair of germs of tubular neighbourhood embeddings. 

Fix a manifold representing the germ $M$, which we still denote by $M$. This allows us to consider the space of tubular neighbourhood embeddings $\TNE(W,M)$, which is contractible (see Subsection \ref{ssec:TNE}). Write $F \times [0,1]$ for the $k$-th order foliation on $M \times [0,1]$ obtained as the pullback of $F$ under the projection $M \times [0,1] \rightarrow M$. 
\begin{definition} \label{def:definitionOfF2}
Fix $(M,F)$ and consider a pair of tubular neighbourhood embeddings $\Phi_1$ and $\Phi_2$ representing a morphism in $\mathcal{C}_{id,(M,F)}$. Connect them by a path $\Phi'$, which is viewed as a tubular neighbourhood embedding $\Phi' : \nu_{W} \times [0,1] \to M \times [0,1]$ that restricts to $\Phi_1$ and $\Phi_2$ at the two endpoints. 

Pullback $F \times [0,1]$ to $\nu_{W} \times [0,1]$ using $\Phi'$. This defines a flat connection $\sigma'$ on $\Fr^{k}(\nu_{W}) \times [0,1]$ that restricts to the flat connections $\F(M,F,\Phi_{1})$ and $\F(M,F,\Phi_{2})$ at the two endpoints. Then, 
\[ \F(\Phi_1,\Phi_2): \F(M,F,\Phi_{1}) \rightarrow \F(M,F,\Phi_{2})  \]
is defined to be the parallel transport
\[ \hol^{\sigma'}_{[0,1]}: \Fr^{k}(\nu_{W}) \rightarrow \Fr^{k}(\nu_{W}) \]
of $\sigma'$ along $[0,1]$. It is a morphism of flat principal bundles.
\end{definition}

Note that since $\F(\Phi_1,\Phi_2)$ is defined as the parallel transport of a connection, it is automatically functorial. 
However, since the above definition of $\F(\Phi_1,\Phi_2)$ makes use of several auxiliary choices, we must show in the following two lemmas that it is well-defined. 
\begin{lemma}\label{lem:stabilityofF}
Let $\Phi_{t} \in \TNE(W,M)$ be a family of tubular neighbourhood embeddings, for $t \in [0,1]$, with the property that $\tilde{\Phi} : \nu_{W} \times [0,1] \to M \times [0,1]$, defined by $\tilde{\Phi}(v,t) = (\Phi_{t}(v), t)$, is a tubular neighbourhood embedding. Let $(M,F) \in J^{k}\FolObj(W,l)$ be a $k$-th order foliation and let $\sigma_{t}$ be the corresponding family of flat connections on $\Fr^{k}(\nu_{W})$ induced by $\Phi_t$. Let $\gamma : [0,1] \to W$ be a path such that $\Phi_t|_{\gamma(0)}$ and $\Phi_{t}|_{\gamma(1)}$ are independent of $t$. Then the holonomy $\hol^{\sigma_t}_{\gamma}$ along $\gamma$ is independent of $t$. 
\end{lemma}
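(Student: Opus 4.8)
The plan is to interpret the whole family $\{\sigma_t\}$ as a single flat connection on the product and to show that the holonomy along $\gamma$, viewed as a path in $W\times[0,1]$, depends only on the endpoints. Concretely, the family $\sigma_t$ of flat connections on $\Fr^k(\nu_W)$ assembles into a connection $\hat\sigma$ on $\Fr^k(\nu_W)\times[0,1]$; this is nothing but the connection obtained by pulling back $F\times[0,1]$ along $\tilde\Phi$ via Theorem \ref{splittingprop}, since $\tilde\Phi$ is by hypothesis a tubular neighbourhood embedding of $W\times[0,1]$ into $M\times[0,1]$. Because $F\times[0,1]$ is literally the pullback of the $k$-th order foliation $F$ under the projection $M\times[0,1]\to M$, the corresponding splitting $\hat\sigma:T(W\times[0,1])\to\at^k(\nu_W\times[0,1])$ is flat and, moreover, has no component in the $[0,1]$-direction: $\hat\sigma(\partial_t)$ is the trivial (zero) section. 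So the key structural point is that $\hat\sigma$ is \emph{the pullback connection} $\mathrm{pr}_W^*\!\big(\text{something varying with }t\big)$ — more precisely it is flat and its $\partial_t$-component vanishes. I would spell this out by writing $\hat\sigma = \sigma_t + 0\cdot dt$ in a local trivialization and checking directly from the definition of $\at^k(\nu_W\times[0,1])$ that this is exactly the splitting produced by $\tilde\Phi^*(F\times[0,1])$.

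Given this, consider the path $\hat\gamma_t:[0,1]\to W\times[0,1]$, $\hat\gamma_t(s)=(\gamma(s),t)$. Parallel transport of $\hat\sigma$ along $\hat\gamma_t$ computes $\hol^{\sigma_t}_\gamma$, because $\hat\sigma$ restricted to $W\times\{t\}$ is exactly $\sigma_t$. The two paths $\hat\gamma_0$ and $\hat\gamma_1$ are homotopic rel endpoints inside $W\times[0,1]$ (via the obvious homotopy $(s,u)\mapsto(\gamma(s),u)$), so by flatness of $\hat\sigma$ their parallel transports agree — \emph{provided} the parallel transport at the two fixed endpoints $\gamma(0)$ and $\gamma(1)$ does not depend on $t$. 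This is where the hypothesis "$\Phi_t|_{\gamma(0)}$ and $\Phi_t|_{\gamma(1)}$ are independent of $t$" enters: it guarantees that the endpoints of $\hat\gamma_t$, together with the identification of the fibres $\Fr^k(\nu_W)|_{\gamma(0)}$ and $\Fr^k(\nu_W)|_{\gamma(1)}$ with the model group $G_{k,l}$, are genuinely $t$-independent, so the homotopy $(s,u)\mapsto(\gamma(s),u)$ is constant on $\{0,1\}\times[0,1]$ and the homotopy-invariance of parallel transport for the flat connection $\hat\sigma$ applies on the nose. Concretely, I would invoke the standard fact (recalled in Appendix \ref{sec:principal}) that parallel transport of a flat principal connection along a path depends only on the homotopy class of the path rel endpoints, applied to $\hat\sigma$ on $\Fr^k(\nu_W)\times[0,1]$.

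Assembling: $\hol^{\sigma_0}_\gamma = \hol^{\hat\sigma}_{\hat\gamma_0} = \hol^{\hat\sigma}_{\hat\gamma_1} = \hol^{\sigma_1}_\gamma$, and since the intermediate steps are genuinely continuous in $t$ one gets $t$-independence for all $t\in[0,1]$, not merely at the two endpoints. The main obstacle is not any single computation but the bookkeeping in the first paragraph: one must carefully check that the family $\{\sigma_t\}$ really does glue to the flat connection $\tilde\Phi^*(F\times[0,1])$ with vanishing $dt$-component, i.e. that "varying the tubular neighbourhood in a $1$-parameter family and then taking $\at^k$" commutes with "take $\at^k$ on the product and then restrict". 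This is the content that makes the homotopy argument legitimate; once it is in place, everything else is the standard homotopy-invariance of flat parallel transport. One should also take a moment to note that $\tilde\Phi$ being a tubular neighbourhood embedding of $W\times[0,1]$ is exactly the hypothesis given, so Theorem \ref{splittingprop} applies verbatim in the product setting.
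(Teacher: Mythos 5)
Your overall strategy --- assemble the family $\{\sigma_t\}$ into a single flat connection $\hat\sigma$ on $\Fr^k(\nu_W)\times[0,1]$ obtained by pulling back $F\times[0,1]$ along $\tilde\Phi$, and then exploit flatness --- is exactly the paper's. But your first paragraph contains a claim that is false and that, if it were true, would make the rest of your argument (and the lemma's endpoint hypothesis) superfluous. You assert that $\hat\sigma(\partial_t)$ is the trivial lift, i.e.\ that $\hat\sigma=\sigma_t+0\cdot dt$. This is not the case: $\tilde\Phi$ is not a product map, and the horizontal lift of $\partial_t$ is $\partial_t$ plus a vertical correction whose $k$-jet compensates for the fact that $\tfrac{\partial\Phi_t}{\partial t}$ need not be tangent to the distribution representing $F$ to order $k$. (Sanity check: if the $dt$-component vanished, flatness of $\hat\sigma$ would force $\partial_t\sigma_t=0$, i.e.\ the connections $\sigma_t$ would all coincide, and the lemma would be vacuous without any hypothesis on the endpoints of $\gamma$. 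For $k\geq 2$ the $\sigma_t$ genuinely vary.) The $dt$-component vanishes only over the points where $\Phi_t$ is $t$-independent --- and that is precisely the role of the hypothesis on $\gamma(0)$ and $\gamma(1)$.

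The second problem is the homotopy step: $\hat\gamma_0(s)=(\gamma(s),0)$ and $\hat\gamma_1(s)=(\gamma(s),1)$ do not share endpoints, so they are not homotopic rel endpoints, and the homotopy $(s,u)\mapsto(\gamma(s),u)$ is not constant on $\{0,1\}\times[0,1]$ --- it traces out the vertical paths $p_{\gamma(i)}(u)=(\gamma(i),u)$. What the paper does instead is apply homotopy invariance to the concatenations $p_{\gamma(1)}\ast\hat\gamma_0$ and $\hat\gamma_1\ast p_{\gamma(0)}$, which do share endpoints, obtaining $\hol_{p_{\gamma(1)}}\circ\hol_{\hat\gamma_0}=\hol_{\hat\gamma_1}\circ\hol_{p_{\gamma(0)}}$; it then uses the endpoint hypothesis to show that the restriction of $\hat\sigma$ to $\{\gamma(0)\}\times[0,1]$ and $\{\gamma(1)\}\times[0,1]$ is the trivial connection, so the vertical holonomies are the identity. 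Your proposal gestures at this (``provided the parallel transport at the two fixed endpoints does not depend on $t$'') but never actually establishes the triviality of the vertical holonomy, and cannot do so from your setup because your stated reason --- the global vanishing of the $dt$-component --- is wrong. Replace the first paragraph's structural claim by the local statement that $\hat\sigma$ restricted to the vertical lines over $\gamma(0)$ and $\gamma(1)$ is trivial, and replace the rel-endpoints homotopy by the concatenation identity, and the argument closes.
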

\begin{proof}
Without loss of generality, we will prove that $\hol^{\sigma_0}_{\gamma} = \hol^{\sigma_1}_{\gamma}$. Consider the following paths in $W \times [0,1]$. For $s \in [0,1]$, define $\gamma_{s}(t) = (\gamma(t), s)$, and for $w \in W$, define $p_{w}(t) = (w, t)$. Then the concatenated paths $p_{\gamma(1)} \ast \gamma_0$ and $\gamma_1 \ast p_{\gamma(0)}$ are homotopic rel endpoints. Let $\sigma'$ be the connection on $\Fr^{k}(\nu_{W}) \times [0,1]$ induced by pulling back $F \times [0,1]$ along $\tilde{\Phi}$. The parallel transport satisfies 
\[
\hol^{\sigma'}_{p_{\gamma(1)} } \circ \hol^{\sigma'}_{\gamma_0 } = \hol^{\sigma'}_{ \gamma_1} \circ  \hol^{\sigma'}_{ p_{\gamma(0)}}.  
\]
Now observe that the restriction of $\sigma'$ to $W \times \{ s \}$ is the connection $\sigma_{s}$ and hence $\hol^{\sigma'}_{\gamma_s} = \hol^{\sigma_s}_{\gamma} $. Furthermore, because $\Phi_t|_{\gamma(0)}$ is independent of $t$, the restriction of $\sigma'$ to the submanifold $\{ \gamma(0) \} \times [0,1]$ is the trivial connection on $\Fr^{k}(\nu_{W}|_{\gamma(0)}) \times [0,1]$, and hence $\hol^{\sigma'}_{ p_{\gamma(0)}} = id$. The analogous statement holds at $\gamma(1)$. This concludes the argument. 
\end{proof}

\begin{lemma} \label{lem:independencePaths}
The morphism $\F(\Phi_1,\Phi_2)$ does not depend on the choice of path $\Phi'$ connecting $\Phi_{1}$ and $\Phi_{2}$. Moreover, it only depends on the $k$-jets $j^k\Phi_1$ and $j^k\Phi_2$.
\end{lemma}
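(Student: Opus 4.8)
The plan is to prove the two assertions in turn, deducing the dependence on $k$-jets from the path-independence together with the functoriality of $\F$ on $\mathcal{C}_{id}$ recorded after Definition \ref{def:definitionOfF2}.

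For the independence of the path, I would run the argument of Lemma \ref{lem:stabilityofF} over a square instead of an interval. Since $\TNE(W,M)$ is contractible (Subsection \ref{ssec:TNE}), any two paths $\Phi'$ and $\Phi''$ from $\Phi_1$ to $\Phi_2$ are homotopic rel endpoints; choose a homotopy $H : [0,1]^2 \to \TNE(W,M)$ with $H(\cdot,0) = \Phi'$, $H(\cdot,1) = \Phi''$ and $H(0,\cdot) \equiv \Phi_1$, $H(1,\cdot) \equiv \Phi_2$. It assembles into a tubular neighbourhood embedding $\tilde H : \nu_W \times [0,1]^2 \to M \times [0,1]^2$, $\tilde H(v,s,t) = (H(s,t)(v),s,t)$, and pulling back $F \times [0,1]^2$ along $\tilde H$ produces a flat connection $\hat\sigma$ on $\Fr^k(\nu_W) \times [0,1]^2$ over $W \times [0,1]^2$, restricting on the slice $W \times \{(s,t)\}$ to the connection underlying $\F(M,F,H(s,t))$. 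Fix $w \in W$ and restrict $\hat\sigma$ to the flat $G_{k,l}$-bundle over $\{w\}\times[0,1]^2$; since the square is simply connected and $\hat\sigma$ is flat, parallel transport depends only on the homotopy class rel endpoints of the path traversed. Parallel transport along the bottom edge is $\F(\Phi_1,\Phi_2)$ computed from $\Phi'$, and along the top edge it is $\F(\Phi_1,\Phi_2)$ computed from $\Phi''$; along the left and right edges $H(s,t)$ is constant in $t$ (equal to $\Phi_1$, resp. $\Phi_2$), so $\tilde H^*(F \times [0,1]^2)$ contains $\partial_t$ there and parallel transport in the $t$-direction is the identity. Comparing around the boundary of the square shows the two computations of $\F(\Phi_1,\Phi_2)$ agree.

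For the dependence only on $j^k\Phi_1$ and $j^k\Phi_2$: by the functoriality of $\F$ on $\mathcal{C}_{id}$ we have $\F(\tilde\Phi_1,\Phi_2) = \F(\Phi_1,\Phi_2) \circ \F(\tilde\Phi_1,\Phi_1)$ (and the analogous identity for changing $\Phi_2$), while $\F(M,F,\tilde\Phi_1) = \F(M,F,\Phi_1)$ since $\Phi^*F$, hence the connection $\sigma$, depends only on $j^k\Phi$ (Theorem \ref{splittingprop}, Lemma \ref{lem:factoringViakJets}). So it suffices to show $\F(\tilde\Phi_1,\Phi_1) = \mathrm{id}$ whenever $j^k\tilde\Phi_1 = j^k\Phi_1$. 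By the first part I may choose the path in Definition \ref{def:definitionOfF2} freely, and I would use that the space of tubular neighbourhood embeddings with a fixed $k$-jet is path-connected — in fact contractible, via the rescaling $\Phi_s(v) := s^{-1}\Phi(sv)$ familiar from the proof of Corollary \ref{cor:equivarianthomotopy}, which preserves the $k$-jet for all $s$ and deforms any such $\Phi$ to the fixed representative. Choosing a path $\Phi_t$ from $\tilde\Phi_1$ to $\Phi_1$ with $j^k\Phi_t$ constant and feeding it into Definition \ref{def:definitionOfF2} gives a flat connection $\sigma'$ on $\Fr^k(\nu_W) \times [0,1]$ whose restriction to each slice is the constant connection and whose $dt$-component vanishes; hence parallel transport along $[0,1]$ is the identity and $\F(\tilde\Phi_1,\Phi_1) = \mathrm{id}$.

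The heart of the matter — and the step I expect to require genuine care — is the final claim: that the induced connection $\sigma'$ has vanishing $dt$-component when the family $\Phi_t$ has constant $k$-jet. Concretely, the horizontal lift of $\partial_t$ is $\partial_t$ plus a vertical correction on $\nu_W$ whose coefficients vanish to order $k+1$ along $W$ — this being the order of vanishing of $\partial_t\Phi_t$ — and since $\at^k(\nu_W) \cong \at(\Fr^k(\nu_W))$ only records $k$-jets in the normal direction, this correction projects to zero. Everything else is a formal consequence of flatness, the contractibility of the relevant spaces of tubular neighbourhood embeddings, and the functoriality of parallel transport; moreover the $t$-constant special case of this computation is exactly what trivialises the left and right edges in the first part, so both halves of the argument rest on a single non-formal ingredient.
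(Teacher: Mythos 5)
Your proof is correct and follows essentially the same strategy as the paper: for the $k$-jet statement both arguments reduce to showing $\F(\tilde\Phi_1,\Phi_1)=\mathrm{id}$ when the $k$-jets agree, connect the two embeddings through tubular neighbourhood embeddings with a fixed $k$-jet (Lemma \ref{lem:dilating}), and observe that the induced connection coincides with the one from the constant family because it only sees the $k$-jet of the embedding (Theorem \ref{splittingprop}). The only organizational difference is in the path-independence step: the paper first proves reparametrization invariance, glues the two paths into a TNE over $S^1$, and extends it over $D^2$ to kill the holonomy, whereas you run a homotopy rel endpoints over a square and invoke flatness over a simply connected base directly --- the same underlying inputs (contractibility of $\TNE(W,M)$ via Lemma \ref{lem:betterCerf} plus triviality of flat holonomy on simply connected domains), packaged slightly more economically since it bypasses the separate reparametrization and concatenation steps.
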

\begin{proof}
Let $\Phi'(v,t) = (\Phi'_t(v), t)$ be a path of tubular neighbourhood embeddings connecting $\Phi_1$ and $\Phi_2$. First, we claim that $\F(\Phi_1,\Phi_2)$ is independent of reparametrizations of $\Phi'$. To this end, let $f : [0,1] \to [0,1]$ be a smooth map preserving the endpoints and consider $\Phi''(v,t) = (\Phi'_{f(t)}(v), t)$, a new path of tubular neighbourhood embeddings. Let $\sigma'$ and $\sigma''$ be the connections on $\Fr^{k}(\nu_{W}) \times [0,1]$ obtained using $\Phi'$ and $\Phi''$, respectively. We will apply Lemma \ref{lem:stabilityofF} to show that $ \hol^{\sigma'}_{[0,1]} =  \hol^{\sigma''}_{[0,1]}$. Indeed, let $h_{s}(t) = (1-s)t + sf(t)$ be the linear interpolation, and let 
\[
\Psi_s : \nu_{W} \times [0,1] \to M \times [0,1], \qquad (v, t) \mapsto (\Phi'_{h_s(t)}(v), t),
\]
viewed as a path of tubular neighbourhoods connecting $\Phi'$ to $\Phi''$. If we restrict $\Psi_{s}$ to $W \times \{ 0 \}$ we get $\Phi_1$, and if we restrict it to $W \times \{ 1 \}$ we get $\Phi_2$. In both cases, the restriction of $\Psi_s$ is independent of $s$. Therefore, since the endpoints of the paths used to compute $\F(\Phi_1,\Phi_2)$ lie on $W \times \{0 \}$ and $W \times \{ 1\}$, we are in the setting of Lemma \ref{lem:stabilityofF}. This establishes the claim. 

Using reparametrizations, we can assume that our paths of tubular neighourhood embeddings $\Phi'_t(v)$ are constant in $t$ for $t$ close to $0$ and $1$. This allows us to smoothly concatenate them. Therefore, given two paths $\Phi'$ and $\Phi''$ connecting $\Phi_{1}$ and $\Phi_2$, it is possible to glue them together to obtain a tubular neighbourhood embedding $\Psi_1 : \nu_{W} \times S^1 \to M \times S^1$. We can now show that $\F(\Phi_1, \Phi_2)$ is path independent by showing that the holonomy of the induced connection on $\Fr^{k}(\nu_{W}) \times S^1$ around the loops $\{ x \} \times S^1$ is trivial. This will in turn follow by showing that $\Psi_1$ can be extended to a TNE $\tilde{\Psi} : \nu_{W} \times D^2 \to M \times D^2$, where $D^2$ is the unit disc. 

We do this in two steps. First, consider the TNE $\Psi_2 = \Phi_1 \times id_{S^1}: \nu_{W} \times S^1 \to M \times S^1$. Applying Lemma \ref{lem:betterCerf}, there is a path of TNE $\Psi'$ connecting $\Psi_1$ to $\Psi_2$. Second, the TNE $\Psi_2$ can be extended to the TNE $\Psi''  =  \Phi_1 \times id_{D^2} : \nu_{W} \times D^2 \to M \times D^2$. Gluing together $\Psi'$ and $\Psi''$ yields the desired extension $\tilde{\Psi}$. As a result, we conclude that $\F(\Phi_1,\Phi_2)$ is path independent. 

To show that $\F(\Phi_1,\Phi_2)$ only depends on the $k$-jets, it suffices to show that it is the identity if $j^k\Phi_{1} = j^k\Phi_{2}$. To show this, let $\Phi'$ be a path connecting $\Phi_1$ and $\Phi_2$ through tubular neighbourhood embeddings with a fixed $k$-jet (Lemma  \ref{lem:dilating}). By Theorem \ref{splittingprop}, the induced connection $\sigma'$ on $\Fr^{k}(\nu_{W}) \times [0,1]$, and hence its parallel transport $\hol^{\sigma'}_{[0,1]}$ along $[0,1]$, only depends on $j^k\Phi'$. However, $j^k\Phi'$ can be represented by the constant tubular neighbourhood embedding $\Phi_{1} \times [0,1]$, which has trivial parallel transport, as desired.
\end{proof}

Now that we have defined $\F$ on the two subcategories $\mathcal{C}_{tn}$ and $\mathcal{C}_{id}$, we can extend its definition to $\C$ using Lemma \ref{factorization}.

\begin{definition} \label{functorFdef}
The functor $\F: \mathcal{C} \to \FlatCat(W,G_{k,l})$ is defined by sending an object $(M, F, \Phi) \in \mathcal{C}$ to $(\Fr^{k}(\nu_{W}), \sigma)$ and a morphism $f$ to $\F(h) \circ \F(g)$, where $f = h \circ g$ is the unique factorization with $h \in \mathcal{C}_{id}$ and $g \in \mathcal{C}_{tn}$, and where $\F(g)$, $\F(h)$ are respectively given in Definitions \ref{def:ctn} and \ref{def:definitionOfF2}. 
\end{definition}

This concludes the construction of $\F$. In order to prove functoriality of $\F$, we use the following proposition. 

\begin{proposition} \label{Fwelldef}
Given $g_{1}, g_{2} \in \mathcal{C}_{tn}$ and $h_{1}, h_{2} \in \mathcal{C}_{id}$ with $g_{1} \circ h_{1} = h_{2} \circ g_{2}$, we have 
\[ \F(g_{1}) \circ \F(h_{1}) = \F(h_{2}) \circ \F(g_{2}). \]
\end{proposition}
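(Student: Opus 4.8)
The plan is to reduce the compatibility statement to the path-independence already established in Lemma \ref{lem:independencePaths}, by carefully tracking what each of the four morphisms does to the tubular neighbourhood embeddings and to the induced flat connections on $\Fr^k(\nu_W)$. Write $f = g_1 \circ h_1 = h_2 \circ g_2$ and label the objects so that $f : (M_1, F_1, \Phi_1) \to (M_2, F_2, \Phi_2)$, with $h_1 : (M_1, F_1, \Phi_1) \to (M_1, F_1, \Psi_1)$ for a new tubular neighbourhood embedding $\Psi_1$ of $W$ in $M_1$, and $g_1 : (M_1, F_1, \Psi_1) \to (M_2, F_2, \Phi_2)$ with $g_1 \circ \Psi_1 = \Phi_2 \circ \nu(g_1)$; similarly $g_2 : (M_1, F_1, \Phi_1) \to (M_2, F_2, \Psi_2)$ with $g_2 \circ \Phi_1 = \Psi_2 \circ \nu(g_2)$ and $h_2 : (M_2, F_2, \Psi_2) \to (M_2, F_2, \Phi_2)$. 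Since the underlying diffeomorphism of $h_1$ and $h_2$ is the identity, we have $\nu(g_1) = \nu(f) = \nu(g_2)$ as maps $\nu_{W,M_1} \to \nu_{W,M_2}$; in particular $j^k(\nu(g_1)) = j^k(\nu(g_2))$, so $\F(g_1)$ and $\F(g_2)$ are the same morphism of flat bundles $\Fr^k(\nu_W) \to \Fr^k(\nu_W)$ after identifying domains and codomains appropriately. Call this common morphism $A := j^k(\nu(f))$.

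The next step is to observe that the claim is equivalent to a commuting square in $\FlatCat(W, G_{k,l})$ relating the four flat connections $\sigma_{\Phi_1}, \sigma_{\Psi_1}, \sigma_{\Phi_2}, \sigma_{\Psi_2}$ obtained by pulling back $F_i$ along the respective embeddings. By Definition \ref{def:ctn}, $\F(g_1) = A$ carries $\sigma_{\Psi_1}$ to $\sigma_{\Phi_2}$ and $\F(g_2) = A$ carries $\sigma_{\Phi_1}$ to $\sigma_{\Psi_2}$; by Definition \ref{def:definitionOfF2}, $\F(h_1) = \F(\Phi_1, \Psi_1)$ is the parallel transport connecting $\sigma_{\Phi_1}$ to $\sigma_{\Psi_1}$ within $M_1$, and $\F(h_2) = \F(\Psi_2, \Phi_2)$ is the parallel transport connecting $\sigma_{\Psi_2}$ to $\sigma_{\Phi_2}$ within $M_2$. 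So we must show
\[
A \circ \F(\Phi_1, \Psi_1) = \F(\Psi_2, \Phi_2) \circ A.
\]
The key point is that $A = j^k(\nu(f))$ intertwines the pushforward-of-tubular-neighbourhood operation: $\nu(f)_* \Psi_1 = \Phi_2 \circ \nu(f) \circ (\nu(f))^{-1}\circ\cdots$, or more precisely, since $\nu(f) : \nu_{W,M_1} \to \nu_{W,M_2}$ is a vector bundle isomorphism covering $\mathrm{id}_W$ carrying the $k$-th order foliation pulled back via $\Psi_1$ to the one pulled back via $\Phi_2$ (resp. via $\Phi_1$ to that via $\Psi_2$), conjugation by $A$ sends the family of connections $\sigma_{\Phi'_t}$ on $\Fr^k(\nu_{W,M_1})\times[0,1]$ used to compute $\F(\Phi_1,\Psi_1)$ to a corresponding family on $\Fr^k(\nu_{W,M_2})\times[0,1]$; applying $f$ to the path of embeddings $\Phi'_t$ in $M_1$ produces a path of embeddings in $M_2$ from $\Psi_2 = f_*\Phi_1$ to $\Phi_2 = f_*\Psi_1$, and by naturality of parallel transport under the bundle map $A$, the holonomy of the transported family equals $A$ conjugated with the original holonomy. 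Combining this with Lemma \ref{lem:independencePaths}, which guarantees that $\F(\Psi_2, \Phi_2)$ computed via this $f$-pushed path agrees with the canonical one, yields the displayed identity.

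The main obstacle I expect is the bookkeeping in this last step: one must verify precisely that pushing forward a path of tubular neighbourhood embeddings by the fixed diffeomorphism $f$ commutes — on the level of $\Fr^k$ and the induced flat connections — with the construction of $\sigma'$ in Definition \ref{def:definitionOfF2}, and that the resulting parallel transport conjugates correctly by $A = j^k(\nu(f))$. This is essentially a naturality statement for the holonomy functor with respect to morphisms of flat principal bundles, combined with the observation that $f$ restricted near $W$ is determined by its $k$-jet (Lemma \ref{lem:factoringViakJets}) so that all constructions descend; but making the identifications of base points and frames consistent throughout requires care. Once this naturality is in place, functoriality of $\F$ on all of $\mathcal{C}$ follows formally from Lemma \ref{factorization} together with the already-established functoriality of $\F$ on $\mathcal{C}_{tn}$ and $\mathcal{C}_{id}$ separately.
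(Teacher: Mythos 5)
Your proposal is correct and follows essentially the same route as the paper: factor out the common underlying diffeomorphism $f$, observe $\F(g_1)=\F(g_2)=j^k(\nu(f))$, push the path of tubular neighbourhood embeddings computing $\F(h_1)$ forward by $f\times\mathrm{id}$ to obtain a path computing $\F(h_2)$ (legitimate by the path-independence of Lemma \ref{lem:independencePaths}), and conclude because morphisms of flat bundles intertwine parallel transport. The paper's proof is exactly this naturality argument, just stated with less hedging.
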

\begin{proof} 
First, underlying such a collection of morphisms, there is a pair of $k$-th order foliations with tubular neighbourhood embeddings $(M_{i}, F_{i}, \Phi_{i})$, $i = 1, 2$, and a single underlying morphism $f: (M_{1}, F_{1}) \to (M_{2}, F_{2})$ between the manifolds relating the foliations. Let $f_{*}(\Phi_{1}) = f \circ \Phi_{1} \circ \nu(f^{-1}) : \nu_{W, M_{2}} \to M_{2}$ and let $f^{*}(\Phi_{2}) = f^{-1} \circ \Phi_{2} \circ \nu(f) : \nu_{W, M_{1}} \to M_{1}$, both tubular neighbourhood embeddings. Then $g_{1}$ is the map $f$ which relates $f^{*}(\Phi_{2})$ to $\Phi_{2}$ and $g_{2}$ is the map $f$ which relates $\Phi_{1}$ to $f_{*}(\Phi_{1})$. Similarly, $h_{1}$ is the identity map going from $\Phi_{1}$ to $f^{*}(\Phi_{2})$ and $h_{2}$ is the identity map going from $f_{*}(\Phi_{1})$ to $\Phi_{2}$. 

Now let $\Phi' : \nu_{W,M_{1}} \times [0,1] \to M_{1} \times [0,1]$ be a path of tubular neighbourhood embeddings connecting $\Phi_{1}$ to $f^{*}(\Phi_{2})$. This induces a flat connection $\sigma'$ on $Fr^{k}(\nu_{W,M_{1}}) \times [0,1]$ such that the holonomy in the $[0,1]$-direction defines $\F(h_{1})$. Next, note that 
\[
f \times id : (M_{1} \times [0,1], F_{1} \times [0,1]) \to  (M_{2} \times [0,1], F_{2} \times [0,1])
\]
is a morphism of $k$-jets of foliations and $(f \times id)_{*}(\Phi')$ is a path of tubular neighbourhood embeddings for $W \times [0,1] \subset M_{2} \times [0,1]$ connecting $f_{*}(\Phi_{1})$ to $\Phi_{2}$. Hence, we get an induced flat connection $\sigma''$ on $\Fr^{k}(\nu_{W,M_{2}}) \times [0,1]$ whose holonomy in the $[0,1]$-direction defines $\F(h_{2})$. Furthermore, we have the induced map 
\[
j^{k}(\nu(f)) \times id : \Fr^{k}( \nu_{W, M_{1}}) \times [0,1] \to \Fr^{k}( \nu_{W, M_{2}}) \times [0,1]
\]
which sends the flat connection $\sigma'$ to $\sigma''$ and which restricts to $\F(g_2)$ along $\Fr^{k}( \nu_{W, M_{1}})  \times \{ 0 \} $ and to $\F(g_{1})$ along $\Fr^{k}( \nu_{W, M_{1}}) \times \{ 1 \}$. Since morphisms of flat connections intertwine holonomy, this shows that $\F(g_{1}) \circ \F(h_{1}) = \F(h_{2}) \circ \F(g_{2})$. 
\end{proof}
\begin{corollary}
The map $\F: \mathcal{C} \to \FlatCat(W,G_{k,l})$ of Definition \ref{functorFdef} is a functor.
\end{corollary}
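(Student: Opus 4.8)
The plan is to obtain functoriality of $\F$ by combining three facts already established: that $\F$ restricts to a functor on each of the wide subcategories $\mathcal{C}_{tn}$ and $\mathcal{C}_{id}$; that every morphism of $\mathcal{C}$ factors in the two ways described in Lemma \ref{factorization}; and the compatibility relation of Proposition \ref{Fwelldef}.

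First I would check preservation of identities. The identity morphism of an object $(M,F,\Phi)$ lies in both $\mathcal{C}_{tn}$ and $\mathcal{C}_{id}$, so its unique factorization $h\circ g$ with $h\in\mathcal{C}_{id}$ and $g\in\mathcal{C}_{tn}$ is simply $\mathrm{id}\circ\mathrm{id}$; since $\F$ preserves identities on $\mathcal{C}_{id}$ and on $\mathcal{C}_{tn}$, it follows that $\F(\mathrm{id})=\mathrm{id}$.

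For preservation of composition, take composable morphisms $f_1: A\to B$ and $f_2: B\to C$ in $\mathcal{C}$ and write their factorizations $f_i=h_i\circ g_i$ with $h_i\in\mathcal{C}_{id}$ and $g_i\in\mathcal{C}_{tn}$, so that $\F(f_i)=\F(h_i)\circ\F(g_i)$ by Definition \ref{functorFdef}. Then
\[
f_2\circ f_1 = h_2\circ(g_2\circ h_1)\circ g_1 ,
\]
and I would apply Lemma \ref{factorization} once more, now to the middle morphism $g_2\circ h_1$, writing it as $h'\circ g'$ with $h'\in\mathcal{C}_{id}$ and $g'\in\mathcal{C}_{tn}$. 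Since each of the two wide subcategories is closed under composition, $(h_2\circ h')\circ(g'\circ g_1)$ is a factorization of $f_2\circ f_1$ of the shape used in Definition \ref{functorFdef}, hence \emph{the} factorization, by uniqueness. Using the functoriality of $\F$ on $\mathcal{C}_{id}$ and on $\mathcal{C}_{tn}$ separately, this gives
\[
\F(f_2\circ f_1) = \F(h_2)\circ\F(h')\circ\F(g')\circ\F(g_1) ,
\]
whereas $\F(f_2)\circ\F(f_1)=\F(h_2)\circ\F(g_2)\circ\F(h_1)\circ\F(g_1)$. Comparing the two, the identity to be checked reduces to
\[
\F(g_2)\circ\F(h_1) = \F(h')\circ\F(g') .
\]
But $g_2\circ h_1 = h'\circ g'$ is precisely an equality between two elements of $\mathcal{C}_{tn}$ and two elements of $\mathcal{C}_{id}$ of the type treated in Proposition \ref{Fwelldef}, and that proposition yields exactly the displayed identity.

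I do not anticipate a genuine obstacle: the substantive content has already been isolated in Proposition \ref{Fwelldef}, whose proof lifts a morphism together with a path of tubular neighbourhood embeddings to $M\times[0,1]$ and uses that morphisms of flat bundles intertwine holonomy. The only point requiring care is the bookkeeping with Lemma \ref{factorization}: one must apply it in the correct form to the middle term $g_2\circ h_1$ so that $h_2\circ h'$ and $g'\circ g_1$ recombine into a single factorization of the shape demanded by Definition \ref{functorFdef}.
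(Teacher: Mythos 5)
Your proposal is correct and follows essentially the same route as the paper's proof: handle identities via the trivial factorization, refactor the middle pair $g_2\circ h_1$ (the paper's $g_1\circ h_2$ in its opposite composition-order convention) using Lemma \ref{factorization}, and invoke Proposition \ref{Fwelldef} to swap the two middle factors. Your explicit appeal to uniqueness of the factorization to identify $(h_2\circ h')\circ(g'\circ g_1)$ as the one used in Definition \ref{functorFdef} is a small point of care that the paper leaves implicit, but the argument is the same.
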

\begin{proof}
First, note that the maps $\F$ defined on $\C_{id}$ and on $\C_{tn}$ were shown to be functors. Since the identity $id \in \C$ factors as a composition of identities, it is also clear that $\F(id) = id$. Hence, it remains to show that for a composable pair $f_{1}, f_{2} \in \C$, we have $\F(f_1 \circ f_2) = \F(f_1) \circ \F(f_2)$. 

Consider the factorizations $f_{1} = h_1 \circ g_1$ and $f_2 = h_{2} \circ g_{2}$. Then the composite $f_{1} \circ f_{2} = h_1 \circ g_1 \circ  h_{2} \circ g_{2}$ is not properly factored. Indeed, we must refactor the middle two terms as $g_1 \circ  h_{2} = h_{1}' \circ g_{2}'$, where $h_{1}' \in \C_{id}$ and $g_{2}' \in \C_{tn}$. This is graphically depicted in the following commutative diagram
\begin{center}
\begin{tikzcd}
& & \bullet \ar[ddl,"h_1'"] & & &\\
& & & & &\\
& \bullet \ar[ddl,"h_1"] & & \bullet \ar[ddl,"h_2"] \ar[uul,"g_2'"] &  &\\
& & & & &\\
\bullet & & \bullet \ar[ll,"f_1"] \ar[uul,"g_1"] & & \ar[ll,"f_2"] \ar[uul,"g_2"] \bullet
\end{tikzcd}
\end{center}
We then have
\begin{align*}
\F(f_1 \circ f_2) &= \F(h_1 \circ g_1 \circ h_2 \circ g_2) = \F(h_1 \circ h_1' \circ g_2' \circ g_2) = \F(h_1\circ h_1') \circ \F(g_2' \circ g_2)\\ 
&= \F(h_1) \circ \F(h_1') \circ \F(g_2') \circ \F(g_2) = \F(h_1) \circ \F(g_1) \circ \F(h_2) \circ \F(g_2) = \F(f_1)\circ \F(f_2),
\end{align*}
where in the penultimate equality we apply Proposition \ref{Fwelldef} to obtain $\F(h_1')\circ \F(g_2') = \F(g_1) \circ \F(h_2)$. 
\end{proof}

Finally, we now verify that $\F$ factors through $J^k\C$.
\begin{proposition}
Given morphisms $f_0,f_1 \in \C$ we have:
\begin{equation*}
J^k(f_0) = J^k(f_1) \implies \F(f_0) = \F(f_1).
\end{equation*}
\end{proposition}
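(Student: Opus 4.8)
The plan is to exploit the explicit factorization of $\F$ through the two subcategories $\C_{tn}$ and $\C_{id}$, on which $\F$ has already been described, together with the already-established fact (Lemma \ref{lem:independencePaths}) that $\F$ restricted to $\C_{id}$ depends only on the $k$-jets of the tubular neighbourhood embeddings involved. Recall from Definition \ref{functorFdef} that a morphism $f : (M_1, F_1, \Phi_1) \to (M_2, F_2, \Phi_2)$ in $\C$ is sent to $\F(f) = \F(h) \circ \F(g)$, where in the unique factorization $f = h \circ g$ (Lemma \ref{factorization}) the morphism $g : (M_1, F_1, \Phi_1) \to (M_2, F_2, f_*\Phi_1)$ lies in $\C_{tn}$, with $f_*\Phi_1 = f \circ \Phi_1 \circ \nu(f^{-1})$, and $h = \mathrm{id} : (M_2, F_2, f_*\Phi_1) \to (M_2, F_2, \Phi_2)$ lies in $\C_{id}$. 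Thus $\F(g) = j^k(\nu(f))$ by Definition \ref{def:ctn}, and $\F(h) = \F(f_*\Phi_1, \Phi_2)$ by Definition \ref{def:definitionOfF2}.

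Next I would unpack the hypothesis $J^k(f_0) = J^k(f_1)$. Since $J^k\C$ has objects $(M, F, j^k\Phi)$, equality of these two morphisms forces $f_0$ and $f_1$ to share source $(M_1, F_1, j^k\Phi_1)$ and target $(M_2, F_2, j^k\Phi_2)$, so that $j^k\Phi_1$ and $j^k\Phi_2$ agree for both, and moreover $j^k_W f_0 = j^k_W f_1$; in particular their $1$-jets along $W$ agree, so $\nu(f_0) = \nu(f_1)$. From the latter we immediately get $\F(g_0) = j^k(\nu(f_0)) = j^k(\nu(f_1)) = \F(g_1)$.

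It remains to match the $\C_{id}$ parts. By Lemma \ref{lem:independencePaths}, the morphism $\F(f_{i*}\Phi_1, \Phi_2)$ depends only on $j^k(f_{i*}\Phi_1)$ and $j^k\Phi_2$, and the second is common to $i = 0,1$. For the first, I would invoke the chain rule for jets along $W$: the germs $\nu(f_i^{-1})$, $\Phi_1$, and $f_i$ all carry (a neighbourhood of) $W$ compatibly, with $\Phi_1$ sending the zero section onto $W$ and $f_i$ fixing $W$, so the $k$-jet along $W$ of the composite $f_{i*}\Phi_1 = f_i \circ \Phi_1 \circ \nu(f_i^{-1})$ is determined by $j^k_W f_i$, $j^k\Phi_1$ and $j^1_W f_i$ (the last entering through $\nu(f_i^{-1})$, which is itself determined by $j^1_W f_i$). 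Since all of these coincide for $f_0$ and $f_1$, we get $j^k(f_{0*}\Phi_1) = j^k(f_{1*}\Phi_1)$, hence $\F(h_0) = \F(h_1)$. Combining the two steps, $\F(f_0) = \F(h_0) \circ \F(g_0) = \F(h_1) \circ \F(g_1) = \F(f_1)$, which is the desired conclusion.

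The main obstacle is making precise the claim that the $k$-jet along $W$ of a composite such as $f_i \circ \Phi_1 \circ \nu(f_i^{-1})$ depends only on the $k$-jets of the factors. This is the same phenomenon that makes $k$-jets of diffeomorphisms fixing $W$ into a group (used implicitly before Lemma \ref{lem:factoringViakJets}), but it requires a little care here because $\Phi_1$ is not a self-map of $M$: it maps the normal bundle into $M$, carrying the zero section to $W$, so one should argue in coordinates adapted to $W$ and observe that truncating the Taylor expansions transverse to $W$ is compatible with composition. Granting this standard fact, everything else reduces to bookkeeping together with the cited Lemmas \ref{lem:independencePaths} and \ref{factorization} and Definitions \ref{def:ctn} and \ref{def:definitionOfF2}.
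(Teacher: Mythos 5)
Your argument is correct, but it follows a noticeably different route from the paper's. You unwind Definition \ref{functorFdef} directly for each of $f_0$ and $f_1$: the $\C_{tn}$ factors agree because $\nu(f_0)=\nu(f_1)$ is determined by the common $1$-jet, and the $\C_{id}$ factors agree by Lemma \ref{lem:independencePaths} once one knows that $j^k\bigl(f_i\circ\Phi_1^{(i)}\circ\nu(f_i)^{-1}\bigr)$ is determined by $j^k_Wf_i$ and $j^k\Phi_1^{(i)}$ — the general chain rule for $k$-jets along $W$, which you rightly flag as the point needing care (and which does hold, since all the maps involved carry $W$ to $W$ and restrict to the identity there). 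The paper instead first proves the special case of an automorphism $f$ with $j^kf=j^k(id_M)$, where only the trivial instance of the chain rule is needed ($j^k(f\circ\Phi)=j^k(\Phi)$ when $j^kf=j^k(id)$), and then reduces the general statement to this case by forming the automorphism $f_0^{-1}h_2^{-1}f_1h_1$ with the $\C_{id}$ morphisms $h_i:(M_i,F_i,\Phi_i)\to(M_i,F_i,\Phi_i')$; this reduction leans on the functoriality of $\F$ established just before (via Proposition \ref{Fwelldef}), which your version does not need. One small point to make explicit in your write-up: the hypothesis $J^k(f_0)=J^k(f_1)$ only forces the tubular neighbourhood embeddings in the sources and targets of $f_0$ and $f_1$ to have equal $k$-jets, not to coincide as germs, so you should carry separate germs $\Phi_j^{(0)},\Phi_j^{(1)}$ through the computation; your argument survives this unchanged since both Lemma \ref{lem:independencePaths} and the object assignment of $\F$ depend only on $k$-jets, but as written you silently identify them.
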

\begin{proof}
First, let $f : (M, F, \Phi) \to (M, F, \Phi)$ be a morphism in $\C$ such that $j^kf = j^k(id_M)$. It factors as $f = h \circ g$, where $g \in \C_{tn}$ has underlying diffeomorphism given by $f$ and $h : (M, F, f \circ \Phi \circ \nu(f)^{-1}) \to (M, F, \Phi)$ is in $\C_{id}$. Then $\F(g) = j^k(\nu(f)) = id$ and $\F(h) = \F(f \circ \Phi \circ \nu(f)^{-1}, \Phi) = id$, where the last identity follows from Lemma \ref{lem:independencePaths} and the fact that $j^k(f \circ \Phi \circ \nu(f)^{-1}) = j^k(\Phi)$. Hence $\F(f) = id$. 

Now consider the morphisms $f_{0}, f_{1} \in \C$ with $J^k(f_0) = J^k(f_1)$. Therefore, we have
\[
f_{0} : (M_{1}, F_{1}, \Phi_{1}) \to (M_{2}, F_{2}, \Phi_{2}), \qquad f_{1} : (M_{1}, F_{1}, \Phi'_{1}) \to (M_{2}, F_{2}, \Phi'_{2}),
\]
such that $j^k(\Phi_{i}) = j^{k}(\Phi'_{i})$, for $i = 1, 2$. Let $h_{i} : (M_{i}, F_{i}, \Phi_{i}) \to (M_{i}, F_{i}, \Phi'_{i}) \in \mathcal{C}_{id}$. By Lemma \ref{lem:independencePaths}, $\mathcal{F}(h_{i}) = id$ for $i = 1, 2$. Finally, consider the automorphism $f = f_0^{-1} h_{2}^{-1} f_{1} h_{1}$. Then $j^k(f) = j^k(id_{M_{1}})$, and so by the first paragraph we have $\F(f) = id$. Hence $\F(f_0) = \F(f_1)$. 
\end{proof}

\subsubsection{Intermezzo: Some properties of $\F$}

One drawback of our definition is that the functor $\F$ is not always straightforward to compute on morphisms. However, $\F$ does behave as expected (i.e. it amounts to taking $k$-jets) when we restrict it to germs of diffeomorphisms that respect the fibered structure of vector bundles.
\begin{lemma} \label{holonomycomputation}
Let $(E_{i}, F_{i}, id) \in \mathcal{C}$, for $i = 1, 2$, where $E_{i}$ are vector bundles and $id$ is the identity tubular neighbourhood embedding. Let $f: (E_{1}, F_{1}, id) \to (E_{2}, F_{2}, id)$ be a morphism which commutes with the projection maps $\pi: E_{i} \to W$. Then $\F(f) = j^{k}(f)$, the $k$-jet of $f$ along $W$. 
\end{lemma}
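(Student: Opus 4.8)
The plan is to unravel the construction of $\F$ on the morphism $f$ via the factorization of Lemma \ref{factorization} and Definition \ref{functorFdef}, and then to use the hypothesis that $f$ commutes with $\pi$ to collapse all the holonomy integrals into honest $k$-jets.

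First I would produce the canonical factorization $f = h \circ g$ with $g \in \mathcal{C}_{tn}$ and $h \in \mathcal{C}_{id}$, following the recipe in the proof of Proposition \ref{Fwelldef}. Since the ambient manifolds here are the vector bundles $E_i$ and the tubular neighbourhood embeddings are the identities $\mathrm{id} : \nu_W = E_i \to E_i$, the morphism $g : (E_1, F_1, \mathrm{id}) \to (E_2, F_2, \Psi)$ has underlying diffeomorphism $f$, where $\Psi := f \circ \nu(f)^{-1} : E_2 \to E_2$ is the pushed-forward tubular neighbourhood embedding, while $h : (E_2, F_2, \Psi) \to (E_2, F_2, \mathrm{id})$ has underlying diffeomorphism $\mathrm{id}_{E_2}$. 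The key structural observation is that, because $f$ commutes with $\pi$, its linearization $\nu(f) : E_1 \to E_2$ is a vector bundle isomorphism covering $\mathrm{id}_W$, so that $\Psi$ is a \emph{fibre-preserving} germ of diffeomorphism of $E_2$ fixing $W$ pointwise and with $\nu(\Psi) = \mathrm{id}$. By Definition \ref{def:ctn} we immediately have $\F(g) = j^k(\nu(g)) = j^k(\nu(f))$, so the lemma reduces to the identity $\F(h) = \F(\Psi, \mathrm{id}) = j^k\Psi$; granting this, functoriality of $j^k$ yields $\F(f) = \F(h) \circ \F(g) = j^k\Psi \circ j^k\nu(f) = j^k(\Psi \circ \nu(f)) = j^kf$.

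To establish $\F(\Psi, \mathrm{id}) = j^k\Psi$, I would first observe that both are morphisms in $\FlatCat(W, G_{k,l})$ covering $\mathrm{id}_W$ with the same source and target, the flat bundles $(\Fr^k(E_2), \sigma)$ associated to the $k$-th order foliations $\Psi^*F_2$ and $F_2$ respectively: for $\F(\Psi,\mathrm{id})$ this is built into Definition \ref{def:definitionOfF2}, while $j^k\Psi$ is a morphism of flat bundles because the bundle automorphism it induces on $\Fr^k(E_2)$ acts on $k$-th order foliations — equivalently, on flat $G_{k,l}$-connections, via Theorem \ref{splittingprop} and Lemma \ref{kthorderAtiyahisAtiyah} — by pushforward, and $\Psi_*(\Psi^*F_2) = F_2$. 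Since $W$ is connected, a morphism of flat principal bundles covering $\mathrm{id}_W$ is determined by its value at a single point of $\Fr^k(E_2)$, so it suffices to compare the two maps over the basepoint $x$. By Lemma \ref{lem:independencePaths} I am free to compute $\F(\Psi, \mathrm{id})$ using the dilation path $\Phi'_t(v) = (1-t)^{-1}\Psi\big((1-t)v\big)$ for $t<1$, with $\Phi'_1 := \nu(\Psi) = \mathrm{id}$, which connects $\Psi$ to $\mathrm{id}$ through fibre-preserving tubular neighbourhood embeddings of $E_2$ fixing $W$. Writing $\Phi'(v,t) = (\Phi'_t(v), t)$, the $k$-th order foliation $(\Phi')^*(F_2 \times [0,1])$ on $E_2 \times [0,1]$ is the pushforward of the product foliation $F_2 \times [0,1]$ along the fibre-preserving map $(w, t) \mapsto ((\Phi'_t)^{-1}(w), t)$. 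The product foliation has trivial holonomy along the path $t \mapsto (x,t)$ with respect to the canonical fibres $E_{2,x} \times \{t\}$; transporting this trivial holonomy through the pushforward, and using that $(\Phi'_t)^{-1}$ restricts on $E_{2,x}$ to $\mathrm{id}$ at $t = 1$ and to $(\Psi|_{E_{2,x}})^{-1}$ at $t = 0$, one finds that the holonomy of $(\Phi')^*(F_2 \times [0,1])$ along $t \mapsto (x,t)$ equals $j^k(\Psi|_{E_{2,x}})$. Translating back via Lemma \ref{kthorderAtiyahisAtiyah}, this means $\F(\Psi, \mathrm{id})$ acts over $x$ by post-composition with $j^k(\Psi|_{E_{2,x}})$, which is exactly how the bundle automorphism $j^k\Psi$ acts over $x$; hence $\F(\Psi, \mathrm{id}) = j^k\Psi$.

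The step I expect to be the main obstacle is this last holonomy computation. The subtle point is that the holonomy of a $k$-th order foliation on a vector bundle must be read off with respect to the canonical fibrewise transversals, and it is precisely the fibre-preservation of the path $\Phi'_t$ — available only because $f$ commutes with $\pi$ — that prevents a spurious ``change of transversal'' correction from entering and forces the pushforward of the trivial product holonomy to come out as a literal $k$-jet. Everything else is routine bookkeeping with the definitions of $\F$ on $\mathcal{C}_{tn}$ and $\mathcal{C}_{id}$.
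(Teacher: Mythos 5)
Your proof is correct and follows essentially the same route as the paper's: the same factorization $f = h\circ g$ through $\mathcal{C}_{tn}$ and $\mathcal{C}_{id}$, the same reduction to $\F(\Psi,\mathrm{id}) = j^k\Psi$, and the same key mechanism — a fibre-preserving path of tubular neighbourhood embeddings whose $k$-jet gives a gauge transformation identifying the pulled-back connection with the product connection, so the parallel transport is just conjugation by the endpoint jets. The only cosmetic difference is your choice of the dilation path in place of the paper's linear interpolation $\Phi'(v,t) = ((1-t)\Phi(v)+tv,\,t)$; both are fibre-preserving and the computation is identical.
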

\begin{proof}
The map $f$ factors as $f = h \circ g$, where $g \in C_{tn}$ and $h : (E_2, F_{2}, \Phi) \to (E_2, F_{2}, id)$ is in $\C_{id}$, and where $\Phi = f \circ \nu(f)^{-1} : E_2 \to E_2$ is a TNE. Hence $\F(f) = \F(\Phi, id) \circ j^{k}(\nu(f))$. 

To compute $\F(\Phi, id)$ we choose the following path of tubular neighbourhood embeddings 
\[
\Phi'(v, t) = ((1-t)\Phi(v) + tv, t) : E_{2} \times [0,1] \to E_2 \times [0,1]
\]
and pullback the foliation $F \times [0,1]$ to get a connection $\sigma'$ on $\Fr^{k}(E_{2}) \times [0,1]$. Let $\sigma_0$ be the connection on $\Fr^{k}(E_{2}) \times [0,1]$ induced by $F \times [0,1]$ and note that $\hol^{\sigma_0}_{[0,1]} = id$. Then, since $\Phi'$ is fibre preserving, $j^{k}(\Phi')$ defines a gauge transformation of $\Fr^{k}(E_{2}) \times [0,1]$ which sends $\sigma'$ to $\sigma_{0}$. Therefore 
\[
\F(\Phi, id) = \hol_{[0,1]}^{\sigma'} = j^{k}(\Phi'_{1})^{-1} \circ \hol^{\sigma_0}_{[0,1]} \circ j^{k}(\Phi'_{0}) = j^k(\Phi).
\]
We conclude 
\[
\F(f) = \F(\Phi, id) \circ j^{k}(\nu(f)) = j^{k}(\Phi) \circ j^{k}(\nu(f)) = j^{k}(f).
\] 
\end{proof}

Lastly, we show that $\F$ is compatible with taking the normal derivative. To make sense of this, observe that a morphism of principal bundles  $g: \Fr^{k}(\nu_{W, M_{1}}) \to \Fr^{k}(\nu_{W, M_{2}})$ corresponds to the $k$-jet of a fibre preserving map between the underlying vector bundles. By taking the normal derivative, this induces a bundle morphism $\nu_{W, M_{1}} \to \nu_{W, M_{2}}$ that we denote $\nu(g)$.

\begin{lemma} \label{Flinearization}
Let $f: (M_{1}, F_{1}, \Phi_{1}) \to (M_{2}, F_{2}, \Phi_{2})$ be a morphism in $\mathcal{C}$ and let $\F(f) : (\Fr^{k}(\nu_{W, M_{1}}), \sigma_{1}) \to (\Fr^{k}(\nu_{W, M_{2}}), \sigma_{2})$ be the corresponding morphism of flat principal bundles. Then $\nu(\F(f)) = \nu(f)$. 
\end{lemma}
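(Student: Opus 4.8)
The plan is to exploit the factorization of $\mathcal{C}$ into the two wide subcategories $\mathcal{C}_{tn}$ and $\mathcal{C}_{id}$ (Lemma \ref{factorization}). Observe that both $f\mapsto\nu(f)$ on $\mathcal{C}$ and $f\mapsto\nu(\F(f))$ are the morphism-parts of functors valued in vector bundles over $W$: the first obviously, and the second because $\F$ is a functor and $\nu$ is itself functorial --- it is reduction of structure group along $\pi_k:G_{k,l}\to G_{1,l}$, which on frame bundles recovers exactly the normal-derivative operation described in the paragraph preceding the lemma. On objects the two functors agree once $\nu_W$ is identified with $\nu_{W,M}$ via the normal derivative of $\Phi$. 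Hence it suffices to prove $\nu(\F(f))=\nu(f)$ separately for $f\in\mathcal{C}_{tn}$ and $f\in\mathcal{C}_{id}$, and then reassemble via $f=h\circ g$.

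For $g\in\mathcal{C}_{tn}$ this is immediate: by Definition \ref{def:ctn} one has $\F(g)=j^k(\nu(g))$, and since $\nu(g):\nu_{W,M_1}\to\nu_{W,M_2}$ is already a linear (hence fibre-preserving) bundle map, the normal derivative of the fibre-preserving map with $k$-jet $j^k(\nu(g))$ is $\nu(g)$ itself, so $\nu(\F(g))=\nu(g)$. The substantive case is $h\in\mathcal{C}_{id}$, where the underlying diffeomorphism is the identity, so $\nu(h)=\mathrm{id}$ and we must show $\nu(\F(h))=\mathrm{id}$. Here $\F(h)=\F(\Phi_1,\Phi_2)=\hol^{\sigma'}_{[0,1]}$ is the parallel transport of the flat connection $\sigma'$ on $\Fr^k(\nu_W)\times[0,1]$ obtained by pulling back $F\times[0,1]$ along a path $\Phi'$ of tubular neighbourhood embeddings (Definition \ref{def:definitionOfF2}). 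Since reduction of structure group along $\pi_k$ intertwines parallel transport, $\nu(\F(h))=\hol^{\bar\sigma'}_{[0,1]}$, where $\bar\sigma'$ is the flat $G_{1,l}$-connection on $\Fr^1(\nu_W)\times[0,1]$ obtained by reducing $\sigma'$ modulo first order; by naturality of the forgetful map to first-order foliations this is the flat connection attached to $\Phi'^*\big(u_1(F)\times[0,1]\big)$. By Corollary \ref{existenceofflatconnection}, $u_1(F)$ corresponds intrinsically to a flat connection $\nabla$ on $\nu_{W,M}$, so $\bar\sigma'$ is the pullback of $\nabla$ (pulled back along $W\times[0,1]\to W$, hence $dt$-free and $t$-independent) along the normal derivative of $\Phi'$. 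Finally, $\Phi'$ has the fibered form $(v,t)\mapsto(\Phi_t(v),t)$, so its normal derivative along $W\times[0,1]$ is $\iota\times\mathrm{id}_{[0,1]}$, where $\iota:\nu_W\to\nu_{W,M}$ is the canonical identification determined by any tubular neighbourhood embedding (independent of $t$; see Subsection \ref{ssec:TNE}). The pullback of a $dt$-free, $t$-independent connection along a $t$-independent bundle map is again $dt$-free and $t$-independent, and such a connection on $\Fr^1(\nu_W)\times[0,1]$ has trivial $[0,1]$-parallel transport. Thus $\nu(\F(h))=\mathrm{id}$, and for $f=h\circ g$ we get $\nu(\F(f))=\nu(\F(h))\circ\nu(\F(g))=\nu(g)=\nu(h)\circ\nu(g)=\nu(f)$.

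The main obstacle is the $\mathcal{C}_{id}$ case: one must pin down the first-order reduction of $\sigma'$ precisely enough to see it carries trivial holonomy, which requires combining (i) compatibility of reduction of structure group with parallel transport, (ii) naturality of the first-order truncation together with the intrinsicality of the flat connection in Corollary \ref{existenceofflatconnection}, and (iii) the first-order rigidity of tubular neighbourhood embeddings. I would likely also record the conceptually cleanest packaging of (i)--(ii): a commuting square $\nu\circ\F_k=\F_1\circ u_1$, where $\F_1$ denotes the $k=1$ instance of the functor; since $\F_1(\Phi_1,\Phi_2)$ depends only on $j^1\Phi_1,j^1\Phi_2$ by Lemma \ref{lem:independencePaths}, and these $1$-jets are both the canonical identification, it follows that $\F_1(\Phi_1,\Phi_2)=\F_1(\Phi_1,\Phi_1)=\mathrm{id}$, giving $\nu(\F(h))=\mathrm{id}$ directly. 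Everything else (Definition \ref{def:ctn}, the factorization, and functoriality of $\F$ and of $\nu$) is bookkeeping already in place.
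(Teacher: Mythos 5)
Your proposal is correct and follows essentially the same route as the paper: factor $f$ through $\mathcal{C}_{tn}$ and $\mathcal{C}_{id}$, dispose of the $\mathcal{C}_{tn}$ part via Definition \ref{def:ctn}, and for the $\mathcal{C}_{id}$ part observe that since every $\Phi_t$ is a tubular neighbourhood embedding ($\nu(\Phi_t)=\mathrm{id}$), the first-order reduction of $\sigma'$ is the $t$-independent pullback connection $\pi^*(\nabla_{F_2})$, whose $[0,1]$-parallel transport is trivial. Your closing repackaging via $\nu\circ\F_k=\F_1\circ u_1$ and Lemma \ref{lem:independencePaths} is a tidy restatement of the same computation rather than a genuinely different argument.
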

\begin{proof}
By definition $\F(f) = \F(\Phi') \circ j^{k}(\nu(f))$, for $\Phi'$ a path of TNEs, and so it suffices to show that $\nu(\F(\Phi')) = id$. Let $\sigma'$ be the flat connection on $\Fr^{k}(\nu_{W,M_2}) \times [0,1]$ obtained by pulling back $F_2 \times [0,1]$ by $\Phi'$. Then $\F(\Phi') = \hol_{[0,1]}^{\sigma'}$ is the parallel transport of $\sigma'$ along $[0,1]$. To compute $\nu(\hol_{[0,1]}^{\sigma'})$ we take linear approximations. The $k$-th order foliation $F_2 \times [0,1]$ induces the flat connection $\pi^*(\nabla_{F_2})$ on $\nu_{W, M_2} \times [0,1] = \pi^{*}(\nu_{W, M_2})$, where $\pi : W \times [0,1] \to W$ is the projection map, and $\nabla_{F_2}$ is the flat connection on $\nu_{W,M_2}$ induced by $F_{2}$. Since $\nu(\Phi') = id$ by definition, the linear approximation $\nu(\sigma')$ is also given by $\pi^*(\nabla_{F_2})$. As a result, the linear approximation $\nu(\hol_{[0,1]}^{\sigma'})$, which is the parallel transport of the linear approximation $\nu(\sigma') = \pi^*(\nabla_{F_2})$ along $[0,1]$, is the identity map. 
\end{proof}

\subsubsection{Step 2: Essential surjectivity}

Our next goal is to show that the functor $\F$ is essentially surjective. For this we will need the following auxiliary result. 

\begin{lemma} \label{representability}
Every principal $G_{k,l}$-bundle over $W$ is isomorphic to $\Fr^{k}(E)$ for some vector bundle $\pi: E \to W$. 
\end{lemma}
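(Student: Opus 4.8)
The plan is to use the structure of $G_{k,l}$ established in Lemma \ref{lem:Gkldecomp}, namely the Levy decomposition $G_{k,l} \cong K_{k,l} \rtimes \mathrm{GL}(\mathbb{R}^l)$ with $K_{k,l}$ unipotent, together with the fact that $\Fr^1(E) = \Fr(E)$ is the ordinary frame bundle of $E$. First I would observe that the projection $\pi_k : G_{k,l} \to G_{1,l} = \mathrm{GL}(\mathbb{R}^l)$ has kernel $K_{k,l}$, which is contractible (indeed diffeomorphic to a vector space via the exponential map, by the discussion preceding Lemma \ref{lem:Gkldecomp}). Hence the induced map of classifying spaces $BG_{k,l} \to B\mathrm{GL}(\mathbb{R}^l)$ is a homotopy equivalence, so every principal $G_{k,l}$-bundle $P \to W$ is, up to isomorphism, pulled back from a principal $\mathrm{GL}(\mathbb{R}^l)$-bundle; equivalently, the structure group of $P$ reduces to $\mathrm{GL}(\mathbb{R}^l) = G_{1,l}$ along the splitting $j_k : G_{1,l} \to G_{k,l}$.

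Concretely, I would argue as follows. Given $P \to W$, form the associated bundle $P/K_{k,l} \to W$, which is a principal $\mathrm{GL}(\mathbb{R}^l)$-bundle since $K_{k,l}$ is normal. As $K_{k,l}$ is contractible (it is unipotent, hence diffeomorphic to its Lie algebra), the bundle $P \to P/K_{k,l}$ has contractible fibres, so it admits a global section $s$; this is a standard obstruction-theory argument, or one can note that a contractible-fibre fibre bundle over a (paracompact, finite-dimensional) base always has a section. Such a section $s$ is exactly a reduction of the structure group of $P$ to $G_{1,l}$ via $j_k$: letting $Q = P/K_{k,l}$, we get $P \cong Q \times_{G_{1,l}} G_{k,l}$, where $G_{1,l}$ acts on $G_{k,l}$ on the left through $j_k$. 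Now $Q$ is a principal $\mathrm{GL}(\mathbb{R}^l)$-bundle over $W$, so $Q = \Fr(E)$ for the rank-$l$ vector bundle $E := Q \times_{\mathrm{GL}(\mathbb{R}^l)} \mathbb{R}^l$ associated to the defining representation. It remains to identify $\Fr(E) \times_{G_{1,l}} G_{k,l}$ with $\Fr^k(E)$. This is the canonical isomorphism $\Fr^{k}(E) \cong (\Fr^1(E) \times G_{k,l})/G_{1,l}$ already recorded in the proof of Lemma \ref{lem:gaugehomotopy}: a $k$-jet of a pointed diffeomorphism $(\mathbb{R}^l,0)\to(E_x,0)$ is determined by its linear part (an element of $\Fr^1(E)_x$) together with a residual element of $G_{k,l}$, modulo the diagonal $G_{1,l}$-action. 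Chaining these identifications gives $P \cong \Fr^k(E)$.

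The main obstacle is the step asserting the existence of the section $s$ of $P \to P/K_{k,l}$, i.e. that a principal $K_{k,l}$-bundle (or more precisely a bundle with contractible structure group) over $W$ is trivializable. In the smooth setting this follows because $W$ is a manifold, hence paracompact and of finite covering dimension, so obstruction theory applies and all obstructions live in $H^{i+1}(W; \pi_i(K_{k,l})) = 0$; alternatively, since $K_{k,l}$ is unipotent one can build the trivialization explicitly by iterating over the central series filtration of $K_{k,l}$, at each stage extending a section across an affine-bundle step (an affine bundle always admits a global section via a partition of unity). Either way this is routine, and once it is in hand the rest of the proof is bookkeeping with associated-bundle constructions. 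I would also remark that this lemma says the assignment $E \mapsto \Fr^k(E)$ induces a bijection between isomorphism classes of rank-$l$ vector bundles over $W$ and isomorphism classes of principal $G_{k,l}$-bundles over $W$, which is what will be needed for essential surjectivity of $\F$.
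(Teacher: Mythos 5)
Your proof is correct and reaches the conclusion by a genuinely different route from the paper. You exploit the Levy decomposition $G_{k,l} \cong K_{k,l} \rtimes \mathrm{GL}(\mathbb{R}^l)$ and the contractibility of the unipotent kernel $K_{k,l}$ to reduce the structure group of an arbitrary principal $G_{k,l}$-bundle to $G_{1,l}$ along $j_k$, and then convert back using the associated-bundle identity $\Fr^{k}(E) \cong (\Fr^1(E) \times G_{k,l})/G_{1,l}$. The paper instead proves the statement only for the universal bundle: it constructs an explicit model of $EG_{k,l}$ as the space of $k$-jets at $0$ of embedding germs $(\mathbb{R}^l,0) \to (\mathbb{R}^{\infty},0)$ whose image lies in an $l$-plane, shows it is contractible by linear interpolation onto the $1$-jet, identifies $BG_{k,l}$ with $\Gr(\mathbb{R}^{\infty}, l)$, and observes that $EG_{k,l} = \Fr^{k}(\gamma)$ for the tautological bundle $\gamma$; the general case then follows because $\Fr^{k}$ commutes with pullback. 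Both arguments rest on the same homotopy equivalence $G_{k,l} \simeq \mathrm{GL}(\mathbb{R}^l)$ (the paper's retraction of $EG_{k,l}$ onto $E\mathrm{GL}_{l}$ is precisely your contraction of $K_{k,l}$), but the paper's version yields a concrete geometric model of $BG_{k,l}$ as a bonus, while yours makes explicit that $E \mapsto \Fr^{k}(E)$ is a bijection on isomorphism classes. One small correction to your concrete construction: the object whose section produces a reduction of structure group to $G_{1,l}$ is the associated bundle $P/G_{1,l} \to W$, whose fibre $G_{k,l}/G_{1,l} \cong K_{k,l}$ is contractible --- not the principal $K_{k,l}$-bundle $P \to P/K_{k,l}$, an arbitrary section of which need not be $\mathrm{GL}(\mathbb{R}^l)$-equivariant and so need not define a subbundle. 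Your classifying-space argument in the first paragraph already sidesteps this and is complete as stated.
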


Note that it suffices to prove Lemma \ref{representability} for the universal $G_{k,l}$-bundle $EG_{k,l}$, since all others are obtained from it by pullback. For this reason, we introduce a model of $EG_{k,l}$ which is particularly well-suited to our arguments. 

\begin{lemma} \label{classifyingspacemodel}
Consider the space 
\begin{align*}
\big\{ j_{0}^{k} f \ | \    \begin{array}{lr}
\qquad f : (\mathbb{R}^l, 0) \to (\mathbb{R}^{\infty}, 0) \text{ is an embedding germ whose}   \\
\text{ image is contained in an $l$-dimensional linear subspace}
\end{array}    \big\},
\end{align*}
where $j_{0}^{k} f$ denotes the $k$-jet at $0$. It is contractible and has a free $G_{k,l}$-action given by reparametrisation in the source. In particular, it is a model for $EG_{k,l}$. Moreover, its quotient by $G_{k,l}$ is the grassmannian of $l$-planes $\Gr(\mathbb{R}^{\infty}, l)$, which is then a model for $BG_{k,l}$. 
\end{lemma}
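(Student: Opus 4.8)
\textbf{Proof plan for Lemma \ref{classifyingspacemodel}.}

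The plan is to verify the three assertions in order: freeness of the action, contractibility of the space, and the identification of the quotient with $\Gr(\RR^\infty, l)$. Write $X$ for the displayed space of $k$-jets $j_0^k f$ where $f : (\RR^l, 0) \to (\RR^\infty, 0)$ is an embedding germ with image in some $l$-dimensional linear subspace. The action of $g \in G_{k,l}$ is $j_0^k f \mapsto j_0^k(f \circ g)$, which is well-defined since precomposition by a jet of diffeomorphism fixing $0$ only depends on and only affects $k$-jets, and it preserves the ``image in a linear subspace'' condition since $g$ is a germ of diffeomorphism of $(\RR^l,0)$. Freeness: if $j_0^k(f \circ g) = j_0^k f$, then differentiating once gives $df_0 \circ dg_0 = df_0$, and since $df_0$ is injective, $dg_0 = \mathrm{id}$; proceeding inductively through the jet order using that $f$ is an immersion (so the Taylor coefficients of $f$ at each order, viewed through the injective $df_0$, detect the Taylor coefficients of $g$), one gets $j_0^k g = \mathrm{id}$, i.e. $g = e$ in $G_{k,l}$.

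For contractibility, I would exhibit $X$ as the total space of a fibration over a contractible base with contractible fibres, or more directly contract it explicitly. The cleanest route: first, the map $X \to \Gr(\RR^\infty, l)$ sending $j_0^k f$ to the linear span of the image of $f$ is surjective, and its fibre over a plane $P \cong \RR^l$ is the space of $k$-jets at $0$ of embedding germs $(\RR^l, 0) \to (P, 0)$, which is an open subset of a finite-dimensional vector space (the space of all $k$-jets of maps $(\RR^l,0)\to(P,0)$, i.e. $\bigoplus_{i=1}^k \mathrm{Sym}^i((\RR^l)^*)\otimes P$), cut out by the open condition that the linear term be invertible; this fibre deformation retracts onto the subspace where the linear term is a fixed isomorphism and the higher terms vanish, hence is contractible. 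Since $\Gr(\RR^\infty, l) = BG_{k,l}$'s candidate is \emph{not} contractible, I should instead contract $X$ by a global homotopy. The standard trick: $\RR^\infty = \RR^\infty \oplus \RR^\infty$ via a chosen isomorphism, and one slides any jet $j_0^k f$ first to $j_0^k(f \oplus 0)$ landing in the first summand (a linear homotopy in the target, staying among embeddings since the linear term stays injective), then uses the Eilenberg swindle / contractibility of the infinite Stiefel-type space to push it to a fixed standard jet. I expect to appeal to the contractibility of the space of linear injections $\RR^l \hookrightarrow \RR^\infty$ and combine it with the fibre-contraction above. This global-homotopy step is the main obstacle: one must make the ``shift to infinity'' explicit and check at every stage that the linear part remains injective (so that we stay inside $X$) and that the homotopy is continuous in the Whitney/jet topology; the bookkeeping of staying within embedding germs, rather than arbitrary jets, is the delicate point.

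Finally, for the quotient: the map $q : X \to \Gr(\RR^\infty, l)$, $j_0^k f \mapsto \mathrm{span}(\mathrm{im} f)$, is $G_{k,l}$-invariant because reparametrising the source does not change the image, hence descends to $\bar q : X/G_{k,l} \to \Gr(\RR^\infty, l)$. Surjectivity is clear (pick any linear isomorphism onto the plane as a representative jet). Injectivity: if $f_1, f_2$ have the same linear span $P$, then $df_{1,0}$ and $df_{2,0}$ are both isomorphisms $\RR^l \to P$, so $g_1 := (df_{1,0})^{-1}\circ(\text{linear part adjustments})$ — more precisely, the $k$-jet $g := j_0^k(f_1^{-1}\circ f_2)$, where $f_1^{-1}$ is a germ of left inverse (which exists since $f_1$ is an embedding germ with image in $P$, using a germ of projection $\RR^\infty \to P$), is a well-defined element of $G_{k,l}$ with $j_0^k(f_1 \circ g) = j_0^k f_2$, so the two jets lie in the same orbit. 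Hence $\bar q$ is a continuous bijection; combined with freeness and contractibility already established, $X \to \Gr(\RR^\infty, l)$ is a model for $EG_{k,l} \to BG_{k,l}$, and in particular $\Gr(\RR^\infty, l) \simeq BG_{k,l}$. (A remark that $\bar q$ is moreover a homeomorphism, not merely a bijection, can be added by noting $q$ admits local sections coming from local frames on the tautological bundle over the Grassmannian.)
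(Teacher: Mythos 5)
Your proposal is correct and follows the same overall route as the paper: freeness and the identification of the quotient with $\Gr(\mathbb{R}^\infty,l)$ are handled exactly as you describe (the paper simply asserts freeness and observes that the orbit of $j_0^k f$ consists of all $k$-jets with image $\im(d_0f)$; your inductive freeness argument and left-inverse argument for injectivity of the quotient map are the details behind those assertions). The one place where you overcomplicate matters is the contractibility step, which you flag as the main obstacle. The paper resolves it in one line: the linear interpolation
\[
h(j_0^{k}f, s) = s\, d_{0}f + (1 - s)\, j_{0}^{k}f, \qquad 0 \le s \le 1,
\]
is a deformation retraction of the whole space onto the subspace $E\mathrm{GL}_l$ of linear embeddings. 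Note that the linear part of $h(j_0^kf,s)$ is $d_0f$ for every $s$, so the jet remains an embedding jet throughout, and all Taylor coefficients of $f$ take values in the $l$-plane $\im(d_0f)$, so the interpolant still has image in that linear subspace; at $s=1$ only the linear part survives. This is exactly the fibrewise retraction you describe, performed globally (do not fix the linear term to a reference isomorphism, just kill the higher-order terms). After that, one only needs the standard fact that the space of linear injections $\mathbb{R}^l \hookrightarrow \mathbb{R}^\infty$ is contractible; the Eilenberg-swindle/shift-to-infinity argument you sketch lives entirely inside that standard fact and need not be redone at the level of $k$-jets. With that simplification, none of the delicate bookkeeping you worry about arises.
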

\begin{proof}
Let us write $E$ and $B$ for the models of $EG_{k,l}$ and $BG_{k,l}$ under consideration, respectively. Observe that an element $j_{0}^{k}f \in E$ defines a linear subspace $A \subset \mathbb{R}^{\infty}$. Concretely, $A$ is the image of the differential $d_{0}f$. It is clear that the $G_{k,l}$-action is free and that the orbit of $j_{0}^{k}f$ consists of all $k$-jets with image $A$. Hence, the quotient $E/G_{k,l}$ is identified with $B$, the grassmannian of $l$-planes. 

It remains to prove contractibility. We first define a deformation retraction $h$ from $E$ onto the subspace $E\mathrm{GL}_{l}$ of linear embeddings. It is given by the linear interpolation 
\[
h(j_0^{k}f, s) = s d_{0}f + (1 - s) j_{0}^{k}f,
\]
for $0 \leq s \leq 1$. This finishes the proof because $E\mathrm{GL}_{l}$ is contractible. 
\end{proof}

\begin{corollary}
Let $\pi: \gamma \to \Gr(\mathbb{R}^{\infty}, l)$ be the tautological rank $l$ vector bundle. Then $EG_{k,l} = \Fr^{k}(\gamma)$. 
\end{corollary}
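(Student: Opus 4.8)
The plan is to unwind the definitions and observe that the explicit model $E$ for $EG_{k,l}$ produced in Lemma \ref{classifyingspacemodel} is literally the frame bundle $\Fr^{k}(\gamma)$. First I would recall the two descriptions. On the one hand, $E$ consists of $k$-jets $j_{0}^{k}f$ where $f : (\mathbb{R}^l, 0) \to (\mathbb{R}^{\infty}, 0)$ is an embedding germ whose image lies in an $l$-dimensional linear subspace, and the bundle map $E \to E/G_{k,l} = \Gr(\mathbb{R}^{\infty}, l)$ sends $j_{0}^{k}f$ to the $l$-plane $A := \mathrm{image}(d_{0}f)$. On the other hand, the fibre of the tautological bundle $\gamma$ over a point $A \in \Gr(\mathbb{R}^{\infty}, l)$ is by construction the subspace $A \subset \mathbb{R}^{\infty}$ itself, so the fibre of $\Fr^{k}(\gamma)$ over $A$ is exactly the set of $k$-jets at $0$ of diffeomorphism germs $(\mathbb{R}^l, 0) \to (A, 0)$.

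Next I would build the identification fibrewise. Given $j_{0}^{k}f \in E$ with $A = \mathrm{image}(d_{0}f)$: since $\mathrm{image}(f)$ is contained in some $l$-plane $A'$ and $A = \mathrm{image}(d_{0}f) \subseteq A'$ with $\dim A = \dim A' = l$, we get $A = A'$, so $f$ is a germ into $(A, 0)$; moreover $d_{0}f : \mathbb{R}^l \to A$ is injective between spaces of equal dimension, hence an isomorphism, so $f$ is in fact a diffeomorphism germ $(\mathbb{R}^l, 0) \to (A, 0)$ and $j_{0}^{k}f \in \Fr^{k}(\gamma)_{A}$. Conversely, a $k$-jet of diffeomorphism germ $(\mathbb{R}^l, 0) \to (A, 0)$ composes with the inclusion $A \hookrightarrow \mathbb{R}^{\infty}$ to give an element of $E$ projecting to $A$; because $A$ is a linear subspace, the $k$-jet of a germ valued in $A$ is the same datum whether one regards the target as $A$ or as the ambient $\mathbb{R}^{\infty}$, so these two assignments are mutually inverse. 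Both $G_{k,l}$-actions are by reparametrisation in the source $(\mathbb{R}^l, 0)$, so the bijection is $G_{k,l}$-equivariant, and it covers the identity on $\Gr(\mathbb{R}^{\infty}, l)$; hence it is an isomorphism of principal $G_{k,l}$-bundles $E \cong \Fr^{k}(\gamma)$. Since $E$ is our chosen model of $EG_{k,l}$, this gives $EG_{k,l} = \Fr^{k}(\gamma)$, and in particular $\Fr^{k}(\gamma)$ is contractible, so it is the universal $G_{k,l}$-bundle and $BG_{k,l} = \Gr(\mathbb{R}^{\infty}, l)$.

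There is no genuine obstacle here — the statement is a repackaging of Lemma \ref{classifyingspacemodel}. The only point that requires a moment's care is the compatibility of the two ways of attaching an $l$-plane to $j_{0}^{k}f$, namely that $\mathrm{image}(f)$ and $\mathrm{image}(d_{0}f)$ determine the same plane, together with the elementary observation about jets into a linear subspace just mentioned. Finally, combined with the fact that $\Fr^{k}$ commutes with pullback of vector bundles, this Corollary completes the proof of Lemma \ref{representability}: any principal $G_{k,l}$-bundle over $W$ is classified by a map $\phi : W \to \Gr(\mathbb{R}^{\infty}, l)$ and is therefore isomorphic to $\phi^{*}\Fr^{k}(\gamma) \cong \Fr^{k}(\phi^{*}\gamma)$.
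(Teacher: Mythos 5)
Your proof is correct and matches the paper's (implicit) intent: the paper states this corollary without proof, treating it as an immediate unwinding of Lemma \ref{classifyingspacemodel}, and your fibrewise identification of the model $E$ with $\Fr^{k}(\gamma)$ — including the check that $\mathrm{image}(d_{0}f)$ coincides with the linear plane containing $\mathrm{image}(f)$ and that jets into a linear subspace agree with jets into the ambient space — is exactly the verification being elided. Nothing further is needed.
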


To see that the functor $\F$ is essentially surjective, let $(P, \nabla)$ be a principal $G_{k,l}$-bundle equipped with a flat connection. By Lemma \ref{representability}, there is a vector bundle $E \to W$ and an isomorphism $\Fr^{k}(E) \cong P$. Let $\sigma$ be the induced flat connection on $\Fr^{k}(E)$. By Lemma \ref{kthorderAtiyahisAtiyah} and Theorem \ref{splittingprop}, the connection $\sigma$ corresponds to a $k$-th order foliation $F$ along $W$. Then $\F(E, F, id) \cong (P, \nabla)$, where $id$ is the identity tubular neighbourhood embedding. 

\subsubsection{Step 3: Surjectivity on morphisms}

Our final goal is to show that the functor $\F$ is full. Choose two objects of $\mathcal{C}$, which we may assume to be of the form $(E_{i}, F_{i}, id)$, for $i = 1, 2$, where $E_{i}$ are vector bundles and the tubular neighbourhood embeddings are taken to be the identity. Denote the associated flat connections on the $k$-th order frame bundles by $\sigma_{1}$ and $\sigma_{2}$ respectively. Hence, we want to show that the following map is surjective 
\[
\F: \Hom_{\mathcal{C}}((E_{1}, F_{1}, id), (E_{2}, F_{2}, id)) \to \Hom_{ \FlatCat(W, G_{k,l})}((\Fr^{k}(E_{1}), \sigma_{1}), (\Fr^{k}(E_{2}), \sigma_{2})). 
\]
Let $T: \Fr^{k}(E_{1}) \to \Fr^{k}(E_{2})$ be a morphism sending $\sigma_{1}$ to $\sigma_{2}$. It can be viewed as the $k$-jet of a fibrewise map from $E_{1}$ to $E_{2}$ fixing $W$ pointwise, and it can be represented by a germ of diffeomorphism $f$ (Lemma \ref{lem:polynomialRepresentatives}). By construction this map sends $F_{1}$ to $F_{2}$ and so is a morphism in $\mathcal{C}$. Furthermore, by Lemma \ref{holonomycomputation}, $\F(f) = T$. This completes the proof of Proposition \ref{FunctorF} and therefore of Theorem \ref{RHthm}. \hfill\qedsymbol

\subsubsection{Proof of Theorem \ref{innercharacterization}}

In this section we characterize the kernel of $RH$ and thus establish Theorem \ref{innercharacterization}.

According to the classical Riemann-Hilbert correspondence (Theorem \ref{MainRHtheorem}), it is enough to characterise the kernel of $J^k\F$ instead, which in turn is determined by the kernel of $\F$. Let $(M, F)$ be a $k$-th order foliation and let $\Tan^{k}(W,F)$ be the associated singular foliation of $b^{k+1}$-type. We must show that an automorphism $\phi \in \Aut(M,F)$ is in the kernel of $\F$ if and only if it is the time-1 flow of a time-dependent vector field $V_t \in \Tan^{k}(W,F)$ which vanishes on $W$. Without loss of generality we may assume that $M = E$ is a vector bundle over $W$, which is equipped with the identity tubular neighbourhood embedding $\Phi = id$. 

Suppose first that $\phi = \phi_{1}$ is the time-$1$ flow of some $(V_t)_{t \in [0,1]} \in \Tan^{k}(W,F)$ vanishing along $W$. By \cite[Proposition 1.6]{AS09} the flow $\phi_{t}$ preserves $\Tan^{k}(W, F)$ and hence $F$. Since $V_{t}|_{W} = 0$, and assuming that $k \geq 1$, the linear approximation of $V_{t}$ vanishes, implying that $\nu(\phi_{t}) = id$. In other words, $\phi_{t}$ is a path of tubular neighbourhood embeddings connecting $id$ to $\phi$. These can be assembled into a tubular neighbourhood embedding $\Phi' : E \times [0,1] \to E \times [0,1]$ by setting $\Phi'(v,s) := (\phi_s(v),s)$. By construction, $\Phi'$ is itself the time-$1$ flow of $\tilde{V} = (sV_{ts},0) \in \Tan^{k}(W \times [0,1],F \times [0,1])$. $\Phi'$ is thus an inner automorphism of $F \times [0,1]$.

Now, $\F(\phi)$ can be computed as the parallel transport from $\Fr^k(E) \times \{ 1 \}$ to $\Fr^{k}(E) \times \{0\}$ of the connection $\sigma'$ obtained by pulling back $F \times [0,1]$ using $\Phi'$. However, this pullback is $F \times [0,1]$ itself, since $\Phi'$ is an inner automorphism. It follows that its parallel transport in the direction of $[0,1]$ is trivial. This implies that $\F(\phi) = id$, establishing one direction of the theorem. 

Now suppose that $\phi$ is in the kernel of the Riemann-Hilbert map, meaning that $\F(\phi) = id$. According to Lemma \ref{Flinearization}, $\nu(\phi) = id$, making $\phi : E \to E$ a tubular neighbourhood embedding. In order to compute $\F(\phi)$, we choose a family of tubular neighbourhood embeddings $\Phi' : E \times [0,1] \to E \times [0,1]$ interpolating between $id$ and $\phi$. Then we pullback the foliation $F \times [0,1]$ along $\Phi'$ to define a connection $\sigma'$ on $\Fr^k(E) \times [0,1]$ and define $\F(\phi)$ to be its parallel transport along $[1,0]$. In our case, this is the identity. Let $g_{t}$ denote the parallel transport along $[0,t]$ and let $G: \Fr^{k}(E) \times [0,1] \to \Fr^{k}(E) \times [0,1]$ be the gauge transformation $G(p,t) = (g_t(p),t)$. Applying it to $\sigma'$, we get $G^{*} \sigma'$, which is the connection induced by $F \times [0,1]$. We can lift $g_{t}$ to a family $\psi_{t}$ of fibrewise (local) diffeomorphisms of $E$ fixing $W$ pointwise. Since $g_{0} = g_{1} = id$, we can assume that $\psi_{0} = \psi_{1} = id$. Let $\Psi : E \times [0,1] \to E \times [0,1]$ be the map $\Psi(v,t) = (\psi_{t}(v),t)$, and let $\Phi = \Phi' \circ \Psi$. We can write $\Phi(v,t) = (\phi_{t}(v),t)$, where $\phi_{t} : E \to E$ is a family of diffeomorphisms interpolating between $id$ and $\phi$ and fixing $W$ pointwise. By construction, $\Phi$ is an automorphism of $F \times [0,1]$. Therefore $\Phi_{*}(\partial_{t}) = (V_{t}, \partial_{t})$, where $V_{t} \in \Tan^{k}(W,F)$ is a time-dependent vector field $k$-tangent to $F$. But then $\phi_{t}$ is the flow of $V_{t}$, since $\phi_0 = id$. Furthermore, $V_{t}$ vanishes along $W$ because $\phi_{t}$ fixes $W$ pointwise. This completes the other direction of the theorem.  \hfill\qedsymbol

\subsection{Topological Riemann-Hilbert functor} \label{TopRHstatement}

We will now state and prove a topological version of Theorem \ref{RHthm}. This will make use of the notions of topological groupoids and fibrations, which are recalled in Appendix \ref{sec:topgroupoids}. 

In this section we fix a closed and connected manifold $W$. We also fix a rank $l$ vector bundle $E \to W$, as well as a framing $\tilde{\phi} : \mathbb{R}^l \to E_{x}$ above the point $x \in W$ with induced framing $\phi : G_{k,l} \rightarrow \Fr^k(E_x)$.

\subsubsection{The character stack}

Since $\pi_{1}(W,x)$ is finitely presented, $\Hom(\pi_{1}(W,x), G_{k,l})$ is an algebraic variety. As a result, the action groupoid 
\[ \Rep(\pi_{1}(W, x), G_{k,l}) = G_{k,l} \ltimes \Hom(\pi_{1}(W,x), G_{k,l}) \]
is a groupoid in algebraic varieties, which presents the \emph{$G_{k,l}$-character stack} of $W$. In particular, it has the structure of a topological groupoid. 

By Corollary \ref{connectivityofHoms}, the space $\Hom(\pi_{1}(W,x), G_{k,l})$ decomposes into subspaces $\Hom_{(\Fr^k(E'),\phi')}(\pi_{1}(W, x), G_{k,l})$ which are indexed by isomorphism classes of framed principal $G_{k,l}$-bundles $(\Fr^k(E'),\phi')$. Each of these subspaces are themselves unions of connected components of $\Hom(\pi_{1}(W,x), G_{k,l})$. Furthermore, each subspace $\Hom_{(\Fr^k(E'),\phi')}(\pi_{1}(W, x), G_{k,l})$ determines a full topological subgroupoid of $\Rep(\pi_{1}(W, x), G_{k,l})$: 
\[
\Rep_{(\Fr^k(E'),\phi')}(\pi_{1}(W, x), G_{k,l}) = G_{k,l}(\phi') \ltimes \Hom_{(\Fr^k(E'),\phi')}(\pi_{1}(W, x), G_{k,l}),
\]
where $G_{k,l}(\phi')$, as in Lemma \ref{surjectivegauge}, is an open subgroup of $G_{k,l}$, which is also determined by the framed principal bundle $(\Fr^k(E'),\phi')$. In fact, since $G_{k,l}$ is homotopic to $\mathrm{GL}(\mathbb{R}^l)$ by Corollary \ref{cor:equivarianthomotopy}, it has only two connected components, and therefore, by Remark \ref{Gphiconj}, $G_{k,l}(\phi')$ only depends on the underlying vector bundle $E'$. 

In this section, we will be concerned with the subgroupoid $\Rep_{(\Fr^k(E),\phi)}(\pi_{1}(W, x), G_{k,l})$ associated to the fixed framed principal bundle $(\Fr^k(E), \phi)$. 

\subsubsection{The groupoid of flat connections} \label{gpdflatcondefsec}

Let $\FlatCat(W, \Fr^{k}(E))$ denote the full subgroupoid of $\FlatCat(W, G_{k,l})$ consisting of flat connections on $\Fr^{k}(E)$. We identify it with $\FlatCat(W, \phi, \Fr^{k}(E))$ (Definition \ref{def:framedflatbundle}) using the chosen framing. Therefore, it is the action groupoid 
\[ \FlatCat(W, \Fr^{k}(E)) = \Gau(\Fr^k(E)) \ltimes \FlatObj(\Fr^k(E)) \]
where $\Gau(\Fr^k(E))$ is the gauge group of $\Fr^{k}(E)$ and $\FlatObj(\Fr^k(E))$ is the space of flat connections on $\Fr^{k}(E)$. By Lemma \ref{continuousaction}, it is a topological groupoid, with both $\Gau(\Fr^k(E))$ and $\FlatObj(\Fr^k(E))$ equipped with the Whitney $C^{\infty}$-topology. By Theorem \ref{topologicalRHforPB}, the restricted Riemann-Hilbert map 
\[
RH_{(\Fr^{k}(E), \phi)} : \FlatCat(W, \Fr^{k}(E)) \to \Rep_{(\Fr^k(E),\phi)}(\pi_{1}(W, x), G_{k,l}) 
\]
is a Morita equivalence. 

\subsubsection{The groupoid of $k$-th order foliations}  \label{gpdkjetfoldefsec}

Let $J^k\Diff(E,W)$ be the group of $k$-jets of diffeomorphisms of $E$ along $W$ which fix $W$ pointwise and let $J^{k}\FolObj(W,E)$ be the set of $k$-th order foliations in $E$ along $W$. They can be combined (Lemma \ref{lem:factoringViakJets}) into the action groupoid
\[ J^{k}\FolCat(W,E) = J^k\Diff(E,W) \ltimes J^{k}\FolObj(W,E), \]
which is a full subcategory of $J^{k}\FolCat(W,l)$. By taking the identity tubular neighbourhood embedding $id: E \to E$, we may view $J^{k}\FolCat(W,E)$ as a full subcategory of $J^k\mathcal{C}$ (recall Definition \ref{definitionofJkC}).

We now equip $J^{k}\FolCat(W,E)$ with a suitable topology. To this end, consider the Grassmannian bundle $\Gr(TE,n-l) \rightarrow E$ of corank-$l$ subspaces and let 
\[
H \subset J^k_W\Gamma(\Gr(TE,n-l))
\] be the subspace of involutive $k$-jets of corank-$l$ distributions tangent to $W$ (Example \ref{ex:grass2}). The following result follows from Lemma \ref{realizabilitylemma} and Remark \ref{rem:grass}. 
\begin{lemma}\label{lem:homeoWithFlat}
$J^{k}\FolObj(W,E)$ is in canonical bijection with $H$. 
\end{lemma}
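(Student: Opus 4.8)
The plan is to read the bijection off directly from Remark~\ref{rem:grass} and Lemma~\ref{realizabilitylemma}, so that the proof reduces to matching conditions on the two sides. By definition, $J^{k}\FolObj(W,E)$ consists of the subbundles $F \subset j^k_W(TE)$ with $r_k(F) = TW$ that are closed under the Lie bracket, while $H$ consists of the $k$-jets along $W$ of germs of corank-$l$ distributions $\xi \subset TE$ (near the zero section) which are tangent to $W$ and involutive up to order $k$. Combining Lemma~\ref{realizabilitylemma} with Remark~\ref{rem:grass}, the assignment $\xi \mapsto j^k_W(\xi)$ induces a bijection between $J^k_W\Gamma(\Gr(TE,n-l))$ -- germs of corank-$l$ distributions, identified when they agree to order $k$ along $W$ -- and the set of rank-$(n-l)$ subbundles of $j^k_W(TE)$: indeed $j^k_W(\xi)$ depends only on the $k$-jet of $\xi$ along $W$ (equivalently, on $\xi + \mathcal{I}_W^{k+1}TE$, by Equation~\eqref{eq:tandescription}), and by Lemma~\ref{realizabilitylemma} every such subbundle is of the form $j^k_W(\xi)$ for some genuine distribution germ. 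It therefore suffices to check that, under this bijection, a distribution germ lies in $H$ if and only if the associated subbundle lies in $J^{k}\FolObj(W,E)$.

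For this, let $\xi$ be a germ of corank-$l$ distribution and put $F = j^k_W(\xi)$. First, $r_k(F) = \xi|_W$, and since $\xi$ has corank $l$ while $TW$ has corank $l$ in $TE|_W$, the tangency condition $\xi_w \subseteq T_wW$ for $w \in W$ holds if and only if $\xi|_W = TW$, i.e.\ if and only if $r_k(F) = TW$. Second, assuming tangency, the germs of sections of $\xi$ lie in $\mathcal{A}_W$, so $F \subseteq I_k(\mathcal{A}_W) = \Der(\mathcal{N}^k_W)$ by Proposition~\ref{derivationtheorem}, and it makes sense to ask whether $F$ is bracket-closed. I claim this holds exactly when $\xi$ is involutive up to order $k$, i.e.\ when $q_k[\xi,\xi] \subseteq j^k_W(\xi)$. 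Since $q_k$ restricts to a Lie algebra homomorphism $\mathcal{A}_W \to I_k(\mathcal{A}_W)$, for germs of sections $X, Y$ of $\xi$ one has $q_k[X,Y] = [q_kX, q_kY]$; if $F$ is bracket-closed this lies in $F = j^k_W(\xi)$, giving involutivity. Conversely, if $\xi$ is involutive up to order $k$, then every bracket of a pair of generators of the $\mathcal{N}^k_W$-module $F$ lies in $F$, and expanding the bracket of two $\mathcal{N}^k_W$-linear combinations via the Leibniz rule keeps us inside $F$ (the additional terms have the form $g\cdot a$ with $g \in \mathcal{N}^k_W$ and $a \in F$). Combining the two equivalences yields $\xi \in H \iff F \in J^{k}\FolObj(W,E)$, as desired; naturality of the correspondence is clear from its construction.

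The statement has no hard core: once Lemma~\ref{realizabilitylemma} and Remark~\ref{rem:grass} are available, this is definition-chasing. The only points that need a moment's thought are the rank/corank bookkeeping that upgrades the inclusion $\xi_w \subseteq T_wW$ to the equality $r_k(F) = TW$, and the equivalence between ``$\xi$ involutive up to order $k$'' and ``$j^k_W(\xi)$ closed under the Lie bracket as an $\mathcal{N}^k_W$-submodule'', which rests on the descent of the bracket to $\Der(\mathcal{N}^k_W)$ (Proposition~\ref{derivationtheorem}) together with the Leibniz rule.
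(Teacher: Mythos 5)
Your proof is correct and follows exactly the route the paper intends: the paper gives no written proof beyond citing Lemma \ref{realizabilitylemma} and Remark \ref{rem:grass}, and your argument is precisely the definition-chase those citations are meant to stand in for (realizability of subbundles by distribution germs, the rank bookkeeping turning tangency into $r_k(F)=TW$, and the equivalence of the two involutivity conditions via the Leibniz rule). No gaps.
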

This identification allows us to topologise $J^{k}\FolObj(W,E)$ using the $C^\infty$-topology. We also endow $J^k\Diff(E,W)$ with the $C^\infty$-topology. In Example \ref{ex:grass3} it is shown that  $J^k\Diff(E,W)$ acts continuously on $J^{k}\FolObj(W,E)$. 
\begin{corollary} \label{kjetfibrationproperty}
$J^{k}\FolCat(W,E)$ is a topological groupoid.
\end{corollary}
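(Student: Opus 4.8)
The plan is to verify the two defining properties of a topological groupoid for $J^{k}\FolCat(W,E)$: that the structure maps (source, target, unit, inverse, multiplication) are continuous, and that the space of arrows and the space of objects are topological spaces in a compatible way. Since $J^{k}\FolCat(W,E) = J^k\Diff(E,W) \ltimes J^{k}\FolObj(W,E)$ is defined as an action groupoid, essentially all the work reduces to showing that the pieces fit together: $J^k\Diff(E,W)$ is a topological group, $J^{k}\FolObj(W,E)$ is a topological space, and the action is continuous.

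First I would recall that $J^{k}\FolObj(W,E)$ is topologised via the canonical bijection of Lemma \ref{lem:homeoWithFlat} with the subspace $H \subset J^k_W\Gamma(\Gr(TE,n-l))$, equipped with the Whitney $C^\infty$-topology; this makes $J^{k}\FolObj(W,E)$ a (metrisable, in fact) topological space. Next, $J^k\Diff(E,W)$ carries the $C^\infty$-topology, and the standard fact that $k$-jets of diffeomorphisms form a topological group under this topology — composition and inversion of jets are polynomial (hence smooth, hence continuous) operations in the jet coordinates — shows $J^k\Diff(E,W)$ is a topological group. Then the key input is Example \ref{ex:grass3}, cited just before the corollary, which establishes that the action of $J^k\Diff(E,W)$ on $J^{k}\FolObj(W,E) \cong H$ by pushforward is continuous. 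Granting these three facts, the arrow space $J^k\Diff(E,W) \times J^{k}\FolObj(W,E)$ is a topological space; the source map is the second projection and the target map is the action map, both continuous; the unit map $F \mapsto (\mathrm{id}, F)$ is continuous since $\mathrm{id}$ is a fixed point of the topological group; inversion $(g,F) \mapsto (g^{-1}, g\cdot F)$ is continuous as a composite of the continuous inversion in $J^k\Diff(E,W)$ and the continuous action; and multiplication $((g,g\cdot F),(h,F)) \mapsto (gh,F)$ is continuous because multiplication in $J^k\Diff(E,W)$ is. This is exactly the statement that an action groupoid of a topological group acting continuously on a topological space is a topological groupoid.

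The main obstacle — and the only non-formal point — is the continuity of the action, i.e. that pushing a $k$-th order foliation forward by a $k$-jet of diffeomorphism depends continuously (in the Whitney $C^\infty$-topology) on both arguments. This is precisely what Example \ref{ex:grass3} is asserted to handle, so in the write-up I would simply cite it; the honest content there is that pushforward of distributions is, in jet coordinates, given by rational (indeed polynomial after clearing denominators, thanks to the invertibility built into $J^k\Diff(E,W)$) expressions in the jet data of the diffeomorphism and the distribution, and such operations are continuous for the $C^\infty$-topology. Everything else is bookkeeping with the general principle recorded in Appendix \ref{sec:topgroupoids} that an action groupoid for a continuous action of a topological group is a topological groupoid. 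Thus the proof is essentially a one-line invocation: combine Lemma \ref{lem:homeoWithFlat} (to topologise the objects), the fact that $J^k\Diff(E,W)$ with the $C^\infty$-topology is a topological group, and Example \ref{ex:grass3} (continuity of the action), and apply the standard action-groupoid construction.
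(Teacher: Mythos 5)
Your proposal is correct and follows essentially the same route as the paper: the corollary is stated there as an immediate consequence of topologising $J^{k}\FolObj(W,E)$ via Lemma \ref{lem:homeoWithFlat}, endowing $J^k\Diff(E,W)$ with the $C^\infty$-topology, invoking Example \ref{ex:grass3} for continuity of the action, and applying the general fact from Appendix \ref{sec:topgroupoids} that the action groupoid of a continuous action of a topological group is a topological groupoid (the source map being a projection, hence a surjective topological submersion). Your write-up just makes the bookkeeping explicit.
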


Recall now that we have a bijection between $J^{k}\FolObj(W,E)$ and $\FlatObj(\Fr^k(E))$ (Theorem \ref{splittingprop}). Furthermore, recall that the latter, as a space of connections, is endowed with the $C^\infty$-topology.
\begin{lemma} \label{lem:canident}
The bijection $J^{k}\FolObj(W,E) \rightarrow \FlatObj(\Fr^k(E))$ is a homeomorphism.
\end{lemma}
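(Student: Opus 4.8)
The plan is to show that the bijection from Theorem~\ref{splittingprop} is continuous in both directions by unwinding both topologies down to the level of jets of sections of a common jet bundle. Recall that a $k$-th order foliation in $E$ along $W$ is, by Lemma~\ref{lem:homeoWithFlat}, the same thing as a point of $H \subset J^k_W\Gamma(\Gr(TE,n-l))$, i.e.\ a holonomic section over $W$ of the restricted jet bundle $J^k\Gr(TE,n-l)|_W$ satisfying the two pointwise-closed conditions ($r_k = TW$ and involutivity up to order $k$); the $C^\infty$-topology here is the subspace topology from the Whitney $C^\infty$-topology on sections of $J^k\Gr(TE,n-l)|_W$. On the other side, $\FlatObj(\Fr^k(E))$ carries the $C^\infty$-topology on the affine space of principal connection $1$-forms on $\Fr^k(E)$ cut out by the flatness equation; equivalently, via the Atiyah sequence \eqref{anchorsequence} and Lemma~\ref{kthorderAtiyahisAtiyah}, it is the $C^\infty$-topology on the space of Lie algebroid splittings $\sigma : TW \to \at^k(E)$, which is an affine subspace of $\Omega^1_W(\at^k(E))$ cut out by the (closed) Maurer--Cartan condition of Corollary~\ref{MCeq}.

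The key step is to produce, locally on $W$, an explicit smooth formula relating the two descriptions. Fix an open $U \subset W$ over which $E$ is trivial, $E|_U \cong U \times \rr^l$; this also trivializes $\Fr^k(E)|_U \cong U \times G_{k,l}$ and $\at^k(E)|_U \cong TU \oplus \mathfrak{g}_{k,l}$, using the decomposition of Lemma~\ref{Atiyahalgebroiddecompsition}. On the foliation side, the splitting $\sigma$ produces the distribution $\xi_\sigma \subset TE$ spanned by $\sigma(X) = \nabla_X + \sum_{i=1}^{k-1}\eta_i(X)$ as in Corollary~\ref{MCeq}; its $k$-jet along $W$ is a holonomic section of $J^k\Gr(TE,n-l)|_W$ whose components (the ``Christoffel symbols'' of $\nabla$ and the coefficients of the $\eta_i$) are precisely polynomial, hence smooth, functions of the data $(\nabla,\eta_1,\dots,\eta_{k-1})$, and vice versa. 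Concretely I would: (i) describe the point of $H$ corresponding to $\sigma$ as the $k$-jet of the Grassmannian-valued map $x \mapsto \xi_\sigma|_x$, writing out its expression in the jet-bundle coordinates induced by the trivialization; (ii) observe that this expression is given by universal polynomials in the coefficients of $\sigma$ and finitely many of their derivatives along $W$; (iii) conclude that the map $\sigma \mapsto j^k_W\xi_\sigma$ is continuous for the Whitney $C^\infty$-topologies, since in the Whitney topology a map given by differential-polynomial expressions in the coefficients is continuous. Running the same computation in reverse — extracting $(\nabla,\eta_i)$ from the jet of the distribution by solving the (triangular, hence explicitly invertible) system of equations grading by order of vanishing along $W$ — shows the inverse is continuous too.

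The main obstacle I anticipate is purely bookkeeping: one must set up jet-bundle coordinates on $J^k\Gr(TE,n-l)|_W$ carefully and verify that the passage $\sigma \leftrightarrow j^k_W\xi_\sigma$ really is expressed by differential operators of bounded order (so that it is continuous for the Whitney, not merely the compact-open, $C^\infty$-topology), and that the identifications are independent of the chosen local trivialization of $E$, so that the locally defined homeomorphisms glue to a global one. Once the local polynomiality is established, continuity in both directions is formal, and since the map is already known to be a bijection by Theorem~\ref{splittingprop}, this completes the proof that it is a homeomorphism. I would phrase the writeup as: reduce to a local trivialization; identify both sides with explicit (infinite, but Whitney-topologized) sequences of coefficient functions; note the transition between them is a differential-polynomial isomorphism, hence bicontinuous; and glue.
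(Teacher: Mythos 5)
Your proposal is correct and follows essentially the same route as the paper: reduce to local trivializations, identify both sides with tuples of fibrewise polynomial coefficients of projectable vector fields, and observe that the transition is polynomial (in fact pointwise over $W$, so no derivatives along $W$ enter and the Whitney-continuity worry is moot), hence bicontinuous. The paper's only organizational difference is that it interposes the bundle $B'$ of right inverses of $d\pi$ and uses the fibered homeomorphism $B' \to B$ together with functoriality of $J^k_W\Gamma(-)$ to handle the Grassmannian-to-splitting passage globally, which sidesteps the gluing/trivialization-independence check you flag.
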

\begin{proof}
As recalled above and explained in Example \ref{ex:grass2}, $J^{k}\FolObj(W,E)$ is a subspace of $J^k_W\Gamma(B)$, where $B \rightarrow E$ is the bundle of corank-$l$ subspaces of $TE$ that are transverse to the projection $\pi: E \rightarrow M$. Consider then the bundle $B' \rightarrow E$ of monomorphisms $\pi^*TM \rightarrow TE$ that are right inverses of $d\pi$. The fibered map $B' \rightarrow B$ that assigns to each element of $B'$ its image is a homeomorphism. By functoriality we then have a homeomorphism $J^k_W\Gamma(B') \rightarrow J^k_W\Gamma(B)$ that allows us to see $J^{k}\FolObj(W,E)$ as a subspace of $J^k_W\Gamma(B')$. 

Consider then the set $S$ of vector bundle splittings $TW \rightarrow \at^k(E)$. Equivalently, this is the set of connections on $\Fr^k(E)$ and is topologised as such, using the $C^\infty$-topology. $\FlatObj(\Fr^k(E))$ is the subspace of flat connections, which corresponds to the Lie algebroid splittings. Recall that sections of $\at^k(E)$ are $k$-jets of projectable vector fields. It thus follows that $S$ is in canonical bijection with $J^k_W\Gamma(B')$. Showing that this bijection is a homeomorphism will conclude the proof of the lemma. This is immediate when spelled out in local coordinates, where both spaces are seen to correspond to tuples of fibrewise polynomial projectable vector fields. 
\end{proof}

\subsubsection{Statement of the theorem}
Restricting the functor $J^k\F: J^k\mathcal{C} \to \FlatCat(W,G_{k,l})$ of Proposition \ref{FunctorF} to $J^{k}\FolCat(W,E)$ yields a functor 
\[ J^k\F: J^{k}\FolCat(W,E) \to \FlatCat(W, \Fr^{k}(E)). \]
Composing this with the functor $RH_{(\Fr^{k}(E), \phi)}$ from subsection \ref{gpdflatcondefsec} gives $RH_{(E, \phi)} = RH_{(\Fr^{k}(E), \phi)} \circ J^k\F$, the \emph{Riemann-Hilbert functor for $k$-th order foliations in $E$}. We establish the following topological version of Theorems \ref{RHthm} and \ref{innercharacterization}.
\begin{theorem} \label{topologicalRHfoliation}
The Riemann-Hilbert functor 
\[ RH_{(E, \phi)} : J^{k}\FolCat(W,E) \to \Rep_{(\Fr^k(E),\phi)}(\pi_{1}(W, x), G_{k,l}) \]
is a fibration of topological groupoids. In particular:
\begin{itemize}
\item The orbit spaces of $\Rep_{(\Fr^k(E),g)}(\pi_{1}(W, x), G_{k,l})$ and $J^{k}\FolCat(W,E)$ are homeomorphic.
\item The kernel of $RH_{(E, \phi)}$ is a bundle of topological groups $\mathcal{K}$ over $J^{k}\FolObj(W,E)$. 
\end{itemize}
Furthermore, the fibre of $\mathcal{K}$ above a given $k$-th order foliation $F$ is the group of $k$-jets of diffeomorphisms that can be represented by the time-$1$ flows of vector fields that are both $k$-tangent to $F$ and that vanish along $W$. 
\end{theorem}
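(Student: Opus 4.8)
The plan is to assemble the theorem from three sources: the classical topological Riemann--Hilbert correspondence for principal bundles (Theorem \ref{topologicalRHforPB}), recalled in Appendix \ref{sec:principal}, which says that $RH_{(\Fr^{k}(E),\phi)} : \FlatCat(W,\Fr^k(E)) \to \Rep_{(\Fr^k(E),\phi)}(\pi_1(W,x),G_{k,l})$ is a Morita equivalence; the functor $J^k\F$ of Proposition \ref{FunctorF}, restricted to $J^k\FolCat(W,E)$; and the topological identifications of Subsection \ref{gpdkjetfoldefsec}. First I would observe that, by Lemma \ref{lem:canident}, the object map $J^k\FolObj(W,E) \to \FlatObj(\Fr^k(E))$ of $J^k\F$ is a homeomorphism; this is the crux of the topological statement, since it reduces everything on the object level to the already-established $C^\infty$-theory for connections. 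Then the strategy is: (i) show $J^k\F : J^k\FolCat(W,E) \to \FlatCat(W,\Fr^k(E))$ is itself a fibration of topological groupoids; (ii) compose with the Morita equivalence $RH_{(\Fr^k(E),\phi)}$, using that fibrations compose and that a Morita equivalence is in particular a fibration, to conclude $RH_{(E,\phi)}$ is a fibration; (iii) read off the stated consequences about orbit spaces and kernels from the general theory of fibrations of topological groupoids in Appendix \ref{sec:topgroupoids}; (iv) identify the fibre of the kernel using Theorem \ref{innercharacterization}.

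For step (i), recall that $J^k\FolCat(W,E) = J^k\Diff(E,W) \ltimes J^k\FolObj(W,E)$ and $\FlatCat(W,\Fr^k(E)) = \Gau(\Fr^k(E)) \ltimes \FlatObj(\Fr^k(E))$ are both action groupoids. The object map is the homeomorphism above. On arrows, $J^k\F$ sends a $k$-jet of diffeomorphism to a gauge transformation; I would argue continuity of this arrow map and then verify the fibration condition, which for a functor between action groupoids amounts to a lifting/submersion property of the induced map on morphism spaces together with surjectivity up to the equivalence relation. The surjectivity on morphisms is exactly the fullness proved in Step 3 of the proof of Proposition \ref{FunctorF} (Lemma \ref{holonomycomputation}), and essential surjectivity on objects was proved in Step 2; what remains for the \emph{topological} upgrade is local continuity of a section of the arrow map, which can be extracted from the explicit polynomial representatives of Lemma \ref{lem:polynomialRepresentatives} together with Lemma \ref{holonomycomputation}, where $\F$ is shown to just take $k$-jets on fibre-preserving maps.

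For step (iv), by Theorem \ref{innercharacterization} applied to the object $(E,F,id)$, a $k$-jet of diffeomorphism $j^k\phi$ lies in the kernel of $RH$ precisely when it is represented by the time-$1$ flow of a time-dependent vector field $V_t \in \Tan^k(W,F)$ vanishing along $W$; since $RH_{(\Fr^k(E),\phi)}$ is a Morita equivalence (hence injective on isotropy up to conjugation by gauge transformations, and in fact an iso on automorphism groups over a fixed object), the kernel of $RH_{(E,\phi)}$ over $F$ coincides with the kernel of $J^k\F$ over $F$, which is exactly this group. That the kernel assembles into a bundle of topological groups over $J^k\FolObj(W,E)$ is then part of the general structure theory of fibrations of topological groupoids recalled in Appendix \ref{sec:topgroupoids}. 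The main obstacle I anticipate is step (i): while the set-level statements are already in hand from the proof of Theorem \ref{RHthm}, verifying that $J^k\F$ is continuous on arrows and satisfies the fibration lifting condition in the Whitney $C^\infty$-topology requires care, because $\F$ was defined via parallel transport along paths of tubular neighbourhood embeddings and is not transparently continuous; the way around this is to note that on the subgroupoid $J^k\FolCat(W,E)$ — where the tubular neighbourhood embedding is fixed to be the identity — Lemma \ref{holonomycomputation} makes $\F$ computable as a plain $k$-jet on fibre-preserving representatives, which is manifestly continuous, and to use Lemma \ref{lem:polynomialRepresentatives} to choose such representatives continuously in families.
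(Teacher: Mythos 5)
Your overall architecture matches the paper's: reduce to showing that $J^k\F$ is a continuous surjective topological submersion (using that its object map is a homeomorphism by Lemma \ref{lem:canident} and that $RH_{(\Fr^{k}(E), \phi)}$ is a Morita equivalence), then read off the orbit-space and kernel statements from Lemma \ref{fibrationlemma} and Theorem \ref{innercharacterization}. However, your step (i) has a genuine gap precisely at the point you flag as the main obstacle. Lemma \ref{holonomycomputation} only computes $\F$ on morphisms that commute with the projection $\pi: E \to W$; a general morphism of $J^{k}\FolCat(W,E)$ is a $k$-jet of a diffeomorphism fixing $W$ pointwise but \emph{not} fibre-preserving, and for such $f$ the value $J^k\F(f)$ is not a plain $k$-jet: it is $j^k(\nu(f))$ composed with the parallel transport of a connection obtained by pulling back $F \times [0,1]$ along a path of tubular neighbourhood embeddings from $id$ to $f^{-1}\circ\nu(f)$. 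Continuity of this assignment is the content of the paper's Lemma \ref{continuityofJkF} and requires Lemma \ref{lem:betterCerf} (continuous, parametric choice of the connecting path of TNEs) together with continuity of pullback and of parallel transport — not just Lemma \ref{lem:polynomialRepresentatives} and Lemma \ref{holonomycomputation}.

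More seriously, your proposal contains no argument for the topological submersion (local section) property of $J^k\F$ on morphisms. The polynomial-representative section $\rho$ of Lemma \ref{lem:polynomialRepresentatives} combined with Lemma \ref{holonomycomputation} only produces continuous lifts of families of \emph{gauge transformations}; it says nothing about how to continuously deform an element of the kernel $\mathcal{K} = \ker(J^k\F)$ as the underlying foliation varies, and every morphism of $J^{k}\FolCat(W,E)$ factors as (lift of a gauge transformation) $\ast$ (kernel element). The paper's proof reduces the submersion property to showing that the source map $s: \mathcal{K} \to J^{k}\FolObj(W,E)$ is a topological submersion, and then constructs an explicit continuous local section: it represents a kernel element as the time-$1$ flow of $V_t = U_t + K_t$ decomposed according to $\Tan^{k}(W,\sigma_0) = \xi_{\sigma_0} \oplus \mathcal{I}^{k+1}\pi^*(E)$, and for a nearby foliation $\sigma$ replaces $U_t$ by its projection $p_\sigma(U_t)$ onto $\xi_\sigma$ before reflowing. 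This vector-field deformation is the key idea missing from your plan; without it (or a substitute) the fibration claim is not established.
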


\subsubsection{Proof of Theorem \ref{topologicalRHfoliation}}
We will establish Theorem \ref{topologicalRHfoliation} in several steps. Note first that it suffices to prove that $RH_{(E, \phi)}$ is a fibration. The remaining parts of the theorem then follow from this result along with Lemma \ref{fibrationlemma} and Theorem \ref{innercharacterization}. 

By Theorem \ref{topologicalRHforPB}, $RH_{(\Fr^{k}(E), \phi)}$ is a Morita equivalence. Therefore, because the base map of $J^{k}\F$ is a homeomorphism by Lemma \ref{lem:canident}, it suffices to show that 
\[ J^k\F: J^{k}\FolCat(W,E) \to \FlatCat(W, \Fr^{k}(E))\]
is a continuous surjective topological submersion. We start by showing continuity. 
\begin{lemma} \label{continuityofJkF}
The functor $J^k\F$ is continuous. 
\end{lemma}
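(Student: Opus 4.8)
The plan is to check that $J^k\F$ is continuous on objects and on arrows. On objects nothing is needed: with the fixed identity tubular neighbourhood embedding, $J^k\F$ sends a $k$-th order foliation to its associated flat connection on $\Fr^k(E)$ (Definition \ref{functorFdef}), which is precisely the map of Lemma \ref{lem:canident} and hence a homeomorphism $J^k\FolObj(W,E) \to \FlatObj(\Fr^k(E))$. So the whole content is continuity on arrows. Writing an arrow as a pair $(j^kf, F_1)$ (the groupoids in question being action groupoids), I must show that $\F(f)\in\Gau(\Fr^k(E))$ depends continuously on $(j^kf,F_1)$; the $\FlatObj$-component of $J^k\F$ on arrows is the object map, already handled.

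Next I would unwind $\F$ on an arrow using the factorization from Definition \ref{functorFdef}. For $f : (E,F_1,\mathrm{id}) \to (E,F_2,\mathrm{id})$, with $F_2 = f_* F_1$, one has $f = h\circ g$ where $g \in \C_{tn}$ relates $\mathrm{id}$ to $\Phi_f := f\circ\nu(f)^{-1}$ and $h\in\C_{id}$ is the identity relating $\Phi_f$ to $\mathrm{id}$; by Definitions \ref{def:ctn} and \ref{def:definitionOfF2},
\[
\F(f) = \F(\Phi_f,\mathrm{id})\circ j^k(\nu(f)).
\]
The map $j^kf\mapsto j^k(\nu(f))$, and likewise $j^kf\mapsto j^k\Phi_f$, are continuous for the Whitney $C^\infty$-topology, since in local coordinates forming linearisations, inverses and composites of jets of diffeomorphisms fixing $W$ are polynomial in the jet coefficients (Appendix \ref{sec:DiffTop}); the target $F_2 = f_*F_1$ depends continuously on $(j^kf,F_1)$ because the action of $J^k\Diff(E,W)$ on $J^k\FolObj(W,E)$ is continuous (Example \ref{ex:grass3}); and composition of arrows in $\FlatCat(W,\Fr^k(E))$ is continuous. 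Therefore the problem reduces to continuity of the single map $(j^k\Phi,F) \mapsto \F(\Phi,\mathrm{id}) \in \Gau(\Fr^k(E))$, where $\Phi$ ranges over $k$-jets of tubular neighbourhood embeddings with $\nu(\Phi)=\mathrm{id}$ and $F$ over $k$-th order foliations.

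To handle this I would make the auxiliary path in Definition \ref{def:definitionOfF2} canonical. Represent $j^k\Phi$ by its fibrewise-polynomial representative $v\mapsto v + q_2(v) + \dots + q_k(v)$ (Lemma \ref{lem:polynomialRepresentatives}), whose coefficients are continuous — in fact polynomial — in the jet, and connect it to $\mathrm{id}$ by the dilation path $\Phi_t(v) = t^{-1}\Phi(tv) = v + t q_2(v) + \dots + t^{k-1} q_k(v)$, which extends smoothly over $t=0$ to $\mathrm{id}$, is fibrewise polynomial with coefficients continuous in $(t,j^k\Phi)$, and consists of tubular neighbourhood embeddings because $\nu(\Phi)=\mathrm{id}$ (the dilation trick of Lemma \ref{lem:dilating}). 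Pulling back $F\times[0,1]$ along this family yields a connection $\sigma'$ on $\Fr^k(E)\times[0,1]$, and $\F(\Phi,\mathrm{id}) = \hol^{\sigma'}_{[0,1]}$ is the time-$1$ flow of the $\sigma'$-horizontal lift of $\partial_t$. Continuity of $(j^k\Phi,F)\mapsto\F(\Phi,\mathrm{id})$ then follows from two smooth-dependence facts: pulling back a $k$-th order foliation along a $C^\infty$-continuously varying family of embeddings is $C^\infty$-continuous (functoriality of jets of sections, Appendix \ref{ssec:jetsSections}, combined with the homeomorphism of Lemma \ref{lem:canident}), and the solution of the parallel-transport ODE depends $C^\infty$-continuously on the connection. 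Composing with the continuous maps above gives the claim.

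The main obstacle is this final step: proving, in the Whitney $C^\infty$-topology on the spaces of foliations and connections over the closed manifold $W$, that both the pullback map and the parallel-transport map $\sigma'\mapsto\hol^{\sigma'}_{[0,1]}$ are continuous. These are essentially standard smooth-dependence-on-parameters statements for ODEs, but require care to set up with the right function-space topologies and to keep track of the degree-$k$ truncation throughout; I expect the bulk of the work to be there, the reductions via the factorization of $\F$ being routine.
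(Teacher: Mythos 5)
Your proof is correct and follows essentially the same route as the paper: both reduce, via the factorization of Proposition \ref{Fwelldef}, to the continuity of the $\C_{id}$-component, realized as parallel transport of the connection obtained by pulling back $F\times[0,1]$ along a continuously chosen path of tubular neighbourhood embeddings, and both then appeal to continuity of polynomial representatives (Lemma \ref{lem:polynomialRepresentatives}), of pullback, and of the parallel-transport ODE. The only substantive deviations are that you use the factorization $f = h\circ g$ with the pushed-forward tubular neighbourhood $f\circ\nu(f)^{-1}$ where the paper uses $f^{-1}\circ\nu(f)$ (immaterial, by Proposition \ref{Fwelldef}), and that you make the auxiliary path canonical via the explicit dilation $\Phi_t(v)=t^{-1}\Phi(tv)$ of Lemma \ref{lem:dilating}, whereas the paper invokes the continuous contraction of $\TNE$ from Lemma \ref{lem:betterCerf}; your choice is slightly more elementary and makes the continuity of the path assignment manifest (the coefficients are polynomial in the jet), at no cost since Lemma \ref{lem:independencePaths} guarantees path-independence.
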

\begin{proof}
Because it will be convenient, we slightly modify the description of $J^k \F$ using Proposition \ref{Fwelldef}. Let $j^kf \in J^k\FolCat(W,E)$ be a morphism with source $F$. Represent it by a local diffeomorphism $f$ by taking a polynomial representative and note that $f^{-1}\circ \nu(f)$ is a tubular neighbourhood embedding (using the definition from Appendix \ref{ssec:TNE}). Let $\Phi'$ be a path of tubular neighbourhood embeddings from $id$ to $f^{-1}\circ \nu(f)$. Use $\Phi'$ to pullback $F \times [0,1]$ to another $k$-th order foliation with underlying flat connection $\sigma'$ on $\Fr^k(E) \times [0,1]$. Then $J^k\F(f) = j^k(\nu(f)) \circ \hol_{[0,1]}^{\sigma'}$, where $\hol_{[0,1]}^{\sigma'}$ is the holonomy of $\sigma'$ in the $[0,1]$ direction. We must therefore prove continuity of $\hol_{[0,1]}^{\sigma'}$. 

According to Lemma \ref{lem:polynomialRepresentatives}, taking polynomial representatives $j^kf \mapsto f$ is a continuous map $J^k\Diff(E,M) \rightarrow C^\infty(E,E)$. The map sending $f \mapsto f^{-1} \circ \nu(f)$ is continuous as well, and takes values in tubular neighbourhood embeddings. According to Lemma \ref{lem:betterCerf}, the latter form a contractible space and, moreover, the assignment $f^{-1} \circ \nu(f) \mapsto \Phi'$ is continuous. Lastly, taking the pullback and performing parallel transport are continuous for the Whitney topologies.
\end{proof}

By Theorem \ref{RHthm}, the functor $J^{k}\F$ is full, and by Lemma \ref{lem:canident} it is a homeomorphism at the level of objects. As a result, $J^{k}\F$ is surjective at the level of morphisms. It therefore only remains to show that $J^{k}\F$ is a topological submersion. The following lemma shows that checking this property can be simplified.

\begin{lemma}
Let $\Kcal = \ker(J^k\F)$ be the kernel of $J^k\F$ and assume that the source map $s: \Kcal \to J^{k}\FolObj(W,E)$ is a topological submersion. Then $J^k\F$ is a topological submersion. 
\end{lemma}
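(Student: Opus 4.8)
The statement asserts that if the source map of the kernel $\Kcal = \ker(J^k\F)$ is a topological submersion, then $J^k\F$ itself is a topological submersion. The strategy is the standard one for action groupoids: the functor $J^k\F$ is a homeomorphism at the level of objects (Lemma \ref{lem:canident}) and is surjective on morphisms, and its kernel is a group bundle $\Kcal$ over the object space; one then builds local sections of $J^k\F$ on morphisms by combining a local section of $RH_{(\Fr^k(E),\phi)}$-type data with the local section of the kernel's source map. Concretely, I would fix a morphism $\psi : F_0 \to F_1$ in $\FlatCat(W,\Fr^k(E))$ and a preimage $j^k f_0 : F_0 \to F_1$ in $J^k\FolCat(W,E)$ (which exists by fullness), and produce a continuous local section of $J^k\F$ near $\psi$ sending $\psi$ to $j^k f_0$.

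\textbf{Key steps.} First I would exploit the group-bundle structure: since $\FlatCat(W,\Fr^k(E)) = \Gau(\Fr^k(E)) \ltimes \FlatObj(\Fr^k(E))$ and $J^k\FolCat(W,E) = J^k\Diff(E,W) \ltimes J^k\FolObj(W,E)$ are both action groupoids, and $J^k\F$ is the identity (up to the homeomorphism of Lemma \ref{lem:canident}) on objects, a morphism in either groupoid is determined by its source together with a group element. Near a given morphism, it therefore suffices to (i) choose a continuous local section of the projection $\Gau(\Fr^k(E)) \to \Gau(\Fr^k(E))/(\text{image of the gauge part of }J^k\Diff)$, equivalently a continuous local right inverse to the map on ``gauge groups'' $J^k\Diff(E,W) \to \Gau(\Fr^k(E))$ induced by $J^k\F$, and then (ii) correct by an element of the kernel using the hypothesised local section $\tau$ of $s : \Kcal \to J^k\FolObj(W,E)$. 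Step (i) would be handled by the explicit continuous construction in the proof of Lemma \ref{continuityofJkF}: taking polynomial representatives, forming $f^{-1}\circ\nu(f)$, choosing a path of tubular neighbourhood embeddings continuously (Lemma \ref{lem:betterCerf}), and performing parallel transport — running this construction ``in reverse'' on the target side produces, for each flat-connection morphism $\psi$ near $\psi_0$, a preimage depending continuously on $\psi$, though possibly only up to a kernel element. Then step (ii): the difference between this continuous preimage and $j^k f_0 \cdot(\text{naive lift})$ is a continuous family of elements of $\Kcal$ with varying source, and precomposing with $\tau\circ(\text{source})$ cancels it, yielding the desired continuous local section through $j^k f_0$.

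\textbf{Main obstacle.} The delicate point is step (i): producing, near an arbitrary morphism $\psi_0$, a \emph{continuous} family of preimages under $J^k\F$ of nearby morphisms $\psi$ — not merely the existence of preimages (which is fullness, Theorem \ref{RHthm}) nor continuity of $J^k\F$ itself (Lemma \ref{continuityofJkF}), but a continuous one-sided inverse. Since $J^k\F$ is \emph{not} given by a transparent formula on morphisms (as the authors warn in the Intermezzo), this requires unwinding the factorization $\C = \C_{id}\cdot\C_{tn}$ and the parallel-transport definition of $\F|_{\C_{id}}$, and checking that every choice made there — polynomial representative, connecting path of TNEs, parallel transport — can be made to depend continuously on the input morphism of $\FlatCat(W,\Fr^k(E))$. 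This is essentially the content already assembled in Lemma \ref{continuityofJkF} together with Lemma \ref{lem:independencePaths}, so I expect the proof to reduce, after the group-bundle reduction above, to citing those lemmas plus the hypothesis on $s$; the remaining work is bookkeeping to verify that the ``correction by $\tau$'' genuinely lands in $J^k\FolCat(W,E)$ and is continuous in the Whitney topology.
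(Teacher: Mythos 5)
Your two-step skeleton --- a right inverse of $J^k\F$ on the gauge part, corrected by a local section of the kernel's source map --- is exactly the structure of the paper's proof, and your step (ii) matches the paper's formula $L(g,\sigma)=\rho(g,\sigma)\ast l(\sigma)$. The problem is step (i), which you rightly flag as the delicate point but then propose to handle by ``running the parallel-transport construction in reverse on the target side.'' That is not a well-defined operation: $J^k\F$ on morphisms is defined via the factorization through $\C_{tn}$ and $\C_{id}$ and a holonomy along a path of tubular neighbourhood embeddings, and there is no evident way to invert that process continuously in the target gauge transformation; so as stated, step (i) remains open.

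The paper sidesteps the difficulty entirely. A gauge transformation $g\in\Gau(\Fr^k(E))$ is by definition the $k$-jet of a fibre-preserving diffeomorphism of $E$ fixing $W$, so Lemma \ref{lem:polynomialRepresentatives} realizes it by a fibrewise polynomial map $\phi_g$ depending continuously on $g$, and Lemma \ref{holonomycomputation} says that for fibre-preserving maps $\F$ is literally the $k$-jet map, whence $J^k\F(\phi_g)=g$. This yields a \emph{global} continuous section $\rho:\FlatCat(W,\Fr^{k}(E))\to J^{k}\FolCat(W,E)$ with $J^k\F\circ\rho=\mathrm{id}$ --- no locality, no unwinding of the $\C_{id}$/$\C_{tn}$ factorization, and no appeal to Lemma \ref{lem:independencePaths}. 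You gesture at polynomial representatives, but only as an ingredient of the forward computation of $J^k\F$ in Lemma \ref{continuityofJkF}, not as the definition of the section itself applied directly to the gauge transformation. Until ``run the construction in reverse'' is replaced by this (or some other concrete) continuous right inverse, the proof does not close.
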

\begin{proof}
We will use the identification between $J^{k}\FolObj(W,E)$ and $\FlatObj(\Fr^k(E))$ throughout. We must show that every morphism $(\phi_0, \sigma_0) \in J^{k}\FolCat(W,E)$ is in the image of a local section defined around $(g_{0}, \sigma_{0}) := J^k\F(\phi_{0}, \sigma_{0})$.

First note that any gauge transformation $g \in \Gau(\Fr^k(E))$ can be viewed as the $k$-jet of a fibrewise polynomial map $\phi_{g}$ of $E$ (Lemma \ref{lem:polynomialRepresentatives}). This defines a continuous map
\[ \rho: \FlatCat(W, \Fr^{k}(E)) \to J^{k}\FolCat(W,E). \]
By Lemma \ref{holonomycomputation}, $J^k\F \circ \rho = id$. We can then set $(k_0, \sigma_0) := \big( \rho(g_0, \sigma_0) \big)^{-1} \ast (\phi_0, \sigma_0) \in \mathcal{K}$, leading to the factorization $(\phi_0, \sigma_0) = \rho(g_0, \sigma_0) \ast (k_0, \sigma_0)$.

Since $s : \mathcal{K} \to J^{k}\FolObj(W,E)$ is a topological submersion, there is an open set $U \subset J^k\FolObj(W,E)$ containing $\sigma_{0}$, and a continuous map $l : U \to \mathcal{K}$ such that $l(\sigma_{0}) = (k_{0}, \sigma_{0})$ and $s \circ l = id$. Since $\mathcal{K}$ is a bundle of groups we have $t = s$ and hence it also follows that $t \circ l = id$. Observe that $\Gau(\Fr^k(E)) \times U \subset \FlatCat(W, \Fr^{k}(E)) $ is an open neighbourhood of $(g_{0}, \sigma_{0})$. With this in mind we define the map 
\[
L : \Gau(\Fr^k(E)) \times U \to  J^{k}\FolCat(W,E), \qquad (g, \sigma) \mapsto \rho(g, \sigma) \ast l(\sigma). 
\]
Then $J^k\F \circ L = id$ and $L(g_{0}, \sigma_{0}) = \rho(g_0, \sigma_0) \ast (k_0, \sigma_0) = (j^k\phi_0, \sigma_0) $. This shows that $J^k\F$ is indeed a topological submersion.
\end{proof}

Finally, we finish the proof of Theorem \ref{topologicalRHfoliation}.
\begin{lemma}
The map $s: \mathcal{K} \to J^k\FolObj(W,E)$ is a topological submersion. 
\end{lemma}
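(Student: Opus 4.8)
The goal is to show that the source map $s : \mathcal{K} \to J^{k}\FolObj(W,E)$ admits continuous local sections, where $\mathcal{K} = \ker(J^k\F)$. By Theorem \ref{innercharacterization} (and its incorporation into Theorem \ref{topologicalRHfoliation}), the fibre of $\mathcal{K}$ over a $k$-th order foliation $F$ consists of the $k$-jets of diffeomorphisms that arise as time-$1$ flows of time-dependent vector fields $V_t \in \Tan^{k}(W,F)$ vanishing along $W$. The plan is to produce, near any base point $F_0 \in J^k\FolObj(W,E)$ with chosen element $(\phi_0, F_0) \in \mathcal{K}$, a continuous family of such flows that varies with $F$. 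I would first unwind the proof of Theorem \ref{innercharacterization}: given $\phi_0$ lying over $F_0$, the argument there constructs an interpolating path of tubular neighbourhood embeddings $\Phi'_t$ from $\mathrm{id}$ to $\phi_0$, and the fact that $\F(\phi_0) = \mathrm{id}$ is used to correct $\Phi'$ by a gauge transformation so that the pullback of $F_0 \times [0,1]$ becomes $F_0 \times [0,1]$ itself; the resulting $t$-dependent vector field $V_t \in \Tan^{k}(W,F_0)$ has $\phi_0$ as its time-$1$ flow. The key point is that every step in this construction depends continuously on the input data.

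The main steps, in order: (1) Fix $(\phi_0, F_0)$ and build the interpolating path $\Phi'_{0,t}$ for $\phi_0$; this uses the contractibility of $\TNE(W,E)$ and continuity of the contraction (Lemma \ref{lem:betterCerf}, Lemma \ref{lem:dilating}), together with the polynomial-representative map of Lemma \ref{lem:polynomialRepresentatives}. (2) Observe that $\Phi'_{0,t}$ makes sense as a path of tubular neighbourhood embeddings for \emph{any} foliation $F$ near $F_0$ — the embeddings themselves do not reference the foliation — so we may pull back $F \times [0,1]$ along $\Phi'_{0}$ to obtain, for each $F$, a flat connection $\sigma'_F$ on $\Fr^k(E)\times[0,1]$, varying continuously in $F$ by continuity of pullback for the Whitney topology. (3) The holonomy $\hol^{\sigma'_F}_{[0,1]}$ is the obstruction to $\Phi'_{0}$ being $F$-inner; define a corrected path $\tilde\Phi_{F,t} = \Phi'_{0,t}\circ \Psi_{F,t}$, where $\Psi_{F,t}$ is the fibrewise lift of the partial parallel-transport gauge transformation $g^{\sigma'_F}_{[0,t]}$, exactly as in the proof of Theorem \ref{innercharacterization}; by construction $\tilde\Phi_F$ pulls $F\times[0,1]$ back to $F\times[0,1]$, so $\Phi_{F,*}(\partial_t) = (V^F_t, \partial_t)$ with $V^F_t \in \Tan^k(W,F)$ vanishing along $W$. (4) Set $l(F) := $ the $k$-jet of the time-$1$ flow $\phi_{F,1}$ of $V^F_t$, a continuous map $U \to \mathcal{K}$ with $l(F_0) = (\phi_0, F_0)$ and $s\circ l = \mathrm{id}$.

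The main obstacle is step (3): ensuring that the gauge-correction construction from the proof of Theorem \ref{innercharacterization} can be performed \emph{continuously and simultaneously} for all $F$ near $F_0$. In particular, one must check that the lift of the parallel-transport gauge transformation $g^{\sigma'_F}_{[0,t]}$ to a fibrewise diffeomorphism $\psi_{F,t}$ of $E$ can be chosen continuously in $F$ — this follows from $\Gamma(\Ad^k(E))$ being realizable by fibrewise polynomial maps (Lemma \ref{lem:polynomialRepresentatives}) and from continuity of parallel transport for the $C^\infty$-topology (as already invoked in Lemma \ref{continuityofJkF}) — and that the endpoint conditions $\psi_{F,0}=\psi_{F,1}=\mathrm{id}$ hold, which is automatic since $g^{\sigma'_F}_{[0,0]} = \mathrm{id}$ and $g^{\sigma'_F}_{[0,1]}$ is trivialized by the argument showing $\F(\phi_0)=\mathrm{id}$, continuously deformed. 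Once these continuity statements are in place, solving the ODE $\dot\phi_{F,t} = V^F_t\circ\phi_{F,t}$ with $\phi_{F,0}=\mathrm{id}$ and taking $k$-jets yields the desired section, since smooth dependence of flows on parameters combined with the $C^\infty$-topology on jets gives continuity of $F \mapsto j^k_W\phi_{F,1}$.
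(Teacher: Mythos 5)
Your strategy---re-running the construction from the proof of Theorem \ref{innercharacterization} parametrically in $F$, with the interpolating path of tubular neighbourhood embeddings held fixed and only the gauge correction varying---is genuinely different from the paper's proof, and it can be made to work, but one step of your justification is wrong. You assert that the endpoint condition $\psi_{F,1}=\mathrm{id}$ holds for all $F$ near $F_0$ because $g^{\sigma'_F}_{[0,1]}$ is ``trivialized by the argument showing $\F(\phi_0)=\mathrm{id}$, continuously deformed.'' This is false: the identity $g^{\sigma'_F}_{[0,1]}=\mathrm{id}$ uses that $(\phi_0,F)$ lies in the kernel, which holds only at $F=F_0$. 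For nearby $F$ the representative $\Phi'_{0,1}$ of $\phi_0$ does not even preserve $F$, and the parallel transport $g^{\sigma'_F}_{[0,1]}$ intertwines the two distinct flat connections $\sigma'_F|_{t=0}=F$ and $\sigma'_F|_{t=1}$, so it cannot be the identity. Fortunately the condition $\psi_{F,1}=\mathrm{id}$ is not actually needed away from $F_0$: your corrected path $\tilde\Phi_{F,t}$ still starts at the identity, fixes $W$ pointwise, and preserves $F\times[0,1]$, so its generator $V^F_t$ lies in $\Tan^k(W,F)$ and vanishes along $W$, and Theorem \ref{innercharacterization} then places $(j^k\tilde\Phi_{F,1},F)$ in $\mathcal{K}$ regardless of the value of $\psi_{F,1}$. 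You only need $\psi_{F_0,1}=\mathrm{id}$ in order to recover $\lambda(F_0)=(\phi_0,F_0)$, and that is precisely where the kernel hypothesis enters. With this repair (and with the lift $\psi_{F,t}$ taken via the canonical polynomial representatives of Lemma \ref{lem:polynomialRepresentatives}, so that it is continuous in $F$ and equals $\mathrm{id}$ when the gauge transformation does) your argument goes through.

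The paper's proof is considerably more economical and avoids the gauge-theoretic machinery entirely. It takes the single time-dependent vector field $V_t\in\Tan^k(W,\sigma_0)$ supplied by Theorem \ref{innercharacterization} at the base point, decomposes it as $V_t=U_t+K_t$ using $\Tan^k(W,\sigma_0)=\xi_{\sigma_0}\oplus\mathcal{I}^{k+1}\pi^*(E)$, and for any other foliation $\sigma$ sets $V^\sigma_t=p_\sigma(U_t)+K_t$, where $p_\sigma:TE\to\xi_\sigma$ is the projection along $\pi^*(E)$ determined by $\sigma$. This vector field lies in $\Tan^k(W,\sigma)$, vanishes along $W$, and varies continuously with $\sigma$, so its time-$1$ flow gives the section directly. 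That construction produces a globally defined section of $s$ rather than a local one and makes continuity immediate; your parametric gauge-correction route buys nothing extra here and carries more bookkeeping.
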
 
\begin{proof}
Let $(\phi_0, \sigma_0) \in \mathcal{K}$. We will construct a continuous map $\lambda : J^k\FolObj(W,E) \to \mathcal{K}$ such that $s \circ \lambda = id$ and $\lambda(\sigma_0) = (\phi_0, \sigma_0)$. First, let $V_{t} \in \Tan^{k}(W, \sigma_0)$ be a time-dependent vector field on $E$ which vanishes along $W$ and whose time-$1$ flow has $\phi_0$ as its $k$-jet. Such a vector field exists by Theorem \ref{innercharacterization}. In order to define the map $\lambda$, we will deform $V_{t}$ along variations of $\sigma_{0}$. 

Recall that the $k$-th order foliation $\sigma_0$ can be viewed as a splitting of $\at^{k}(E)$ (Theorem \ref{splittingprop}), and that this determines a (possibly non-integrable) distribution $\xi_{\sigma_0} \subset TE$ that is fibrewise polynomial. Then $\Tan^{k}(W, \sigma_0) = \xi_{\sigma_0} \oplus \mathcal{I}^{k+1} \pi^*(E)$. Using this decomposition, we write $V_{t} = U_{t} + K_{t}$ uniquely, where $U_{t} \in \xi_{\sigma_{0}}$ and $K_{t} \in \mathcal{I}^{k+1} \pi^*(E)$. Note that both $U_{t}$ and $K_{t}$ vanish along $W$. 

Given another $k$-th order foliation $\sigma$, there is a corresponding distribution $\xi_{\sigma} \subset TW$. This induces the direct sum decomposition $TE = \xi_{\sigma} \oplus \pi^*(E)$, and hence a projection map $p_{\sigma} : TE \to \xi_{\sigma}$. Define 
\[ V_{t}^\sigma = p_{\sigma}(U_{t}) + K_{t} \in \Tan^{k}(W, \sigma), \]
which vanishes along $W$. Let $\phi^{\sigma}$ be the time-$1$ flow of $V_{t}^\sigma$. Then by Theorem \ref{innercharacterization}, $(\phi^{\sigma}, \sigma) \in \mathcal{K}$. Hence we define $\lambda(\sigma) = (\phi^{\sigma}, \sigma)$, which is evidently a section of the source map. Furthermore, since $V_{t}^{\sigma_{0}} = V_{t}$, it is clear that $\lambda(\sigma_{0}) = (\phi_{0}, \sigma_{0})$. Finally, $\lambda$ is a continuous map because $V_{t}^\sigma$ varies continuously with $\sigma$ and because the flow depends continuously on the vector field. This completes the proof of Theorem \ref{topologicalRHfoliation}.
\end{proof}

\subsection{Isotopy classification} \label{Isotopy}
Let $M$ be a manifold and let $W$ be a closed and connected submanifold of codimension $l$. In this section we will classify $k$-th order foliations in $M$ along $W$ up to isotopies in $M$. When the codimension $l = 1$, this will also lead to a classification of hypersurface algebroids. 

Before we begin, we make a few comments about different possible classification results. In Sections \ref{CatRHstatement} and \ref{TopRHstatement}  we studied the local classification problem for $k$-th order foliations along $W$. Local means that we focused on the classification up to $k$-jets of diffeomorphisms defined in a neighbourhood of $W$. In Remark \ref{rem:achoice} we observed that the set of isomorphism classes of $k$-th order foliations does not change if we consider them up to \emph{germs} of diffeomorphisms instead, but that working with $k$-jets has several advantages when considering topologies. We will further elaborate on the issues with germ topologies in Remark \ref{germproblem}. 

Given an ambient manifold $M$ containing $W$, one may wish to classify $k$-th order foliations up to diffeomorphisms of $M$. However, this is extremely difficult and does not admit a general answer. This is because locally defined diffeomorphisms do not always extend to the entirety of $M$ and determining when they do is a difficult problem about the homotopy type of the diffeomorphism group. On the other hand, the related problem of classifying foliations up to \emph{isotopy} does admit a general answer. This is because of the isotopy extension theorem, which states that isotopies defined in a local neighbourhood of $W$ can always be extended to all of $M$. 

\subsubsection{Setup and main result}
Consider the following groups: $\Diff(M,W)$, the group of diffeomorphisms of $M$ which fix $W$ pointwise, $\Diff_0(M, W)$, the connected component of the identity (i.e. the group of isotopies), and $J^k\Diff(M,W)$ and $J^k\Diff_0(M, W)$, the respective groups of $k$-jets along $W$. All of these groups act on  $J^k\FolObj(W,M)$, the set of $k$-th order foliations in $M$ defined along $W$.

\begin{definition}
The groupoid of $k$-th order foliations in $M$ along $W$ taken up to isotopy is the action groupoid
\[ \IsoCat^{k}(W,M) := \Diff_0(M, W) \ltimes J^k\FolObj(W,M). \qedhere \] 
\end{definition}

Our goal is to construct a Riemann-Hilbert functor from $\IsoCat^{k}(W,M)$ to a version of the character variety which accounts for isotopy (see Appendix \ref{isotopyRH} for details). We will construct this functor in several steps. 

To this end, it will be useful to consider the following action groupoid 
\[
J^{k}_{0}\FolCat(W,M) := J^k\Diff_0(M,W) \ltimes J^k\FolObj(W,M),
\]
which, upon choosing a tubular neighbourhood embedding $\Phi: \nu_{W} \to M$, is isomorphic to an open subgroupoid of $J^{k}\FolCat(W,\nu_{W})$. The operation of taking the $k$-jet of a diffeomorphism defines a functor 
\[ \mathcal{G} : \IsoCat^{k}(W,M)  \to J^{k}_{0}\FolCat(W,M), \]
which is a fibration of topological groupoids by Lemma \ref{lem:takingJets}.

\begin{remark} \label{germproblem}
We return to Remark \ref{rem:achoice} to discuss our choice of working with $k$-jets of diffeomorphisms rather than germs. The key issue is that the germ analogue of $\G$ is not a fibration because it does not admit enough continuous sections (similarly, the germ analogue of Lemma \ref{lem:takingJets} is not true). This is because the natural topology in germ space (namely, the pullback topology from $J^\infty$) does not guarantee continuity when taking representatives.

It is instructive to keep in mind the following example: let $g: \mathbb{R} \to \mathbb{R}$ be a smooth function which is `flat' at the origin, in the sense that $j_0^{\infty} g = 0$, and which is positive away from $0$. Define $f_{t} : \mathbb{R} \to \mathbb{R}$ to be the path of germs of functions around $0$ with $f_{t} = 0$ for $t$ rational and $f_{t} = g$ otherwise. This path of germs is continuous but cannot be lifted to a continuous family of representatives. This is a well-known phenomenon (see for instance the discussion in \cite[p.35]{Gr86}), which is addressed by introducing a suitable quasi-topology on the set of germs. Our jet approach avoids the use of quasi-topologies. 
\end{remark}

Next, consider the following subgroupoid of flat connections  
\[ \FlatCat_0(W, \Fr^{k}(\nu_{W})) = \Gau^0(\Fr^k(\nu_{W})) \ltimes \FlatObj(\Fr^k(\nu_{W})), \]
where $\Gau^0(\Fr^k(\nu_{W}))$ is the identity component of the gauge group of $\Fr^k(\nu_{W})$. Choose a tubular neighbourhood $\Phi: \nu_{W} \to M$ and let $\Phi^{*} : J^{k}_{0}\FolCat(W,M) \to J^{k}\FolCat(W,\nu_{W})$ be the corresponding functor. Now consider the composition 
\[
J^{k}\F \circ \Phi^* \circ \mathcal{G} : \IsoCat^{k}(W,M) \to \FlatCat_0(W, \Fr^{k}(\nu_{W})),
\]
where $J^{k}\F$ is the functor defined in Proposition \ref{FunctorF}. In the proof of Theorem \ref{topologicalRHfoliation}, the functor $J^{k}\F$ was shown to be a fibration. Therefore, the composite $J^{k}\F \circ \Phi^* \circ \mathcal{G}$ is also a fibration. 

Finally, choose a basepoint $x \in W$ and a framing $\phi : G_{k,l} \to \Fr^k(E|_x)$, and recall the action groupoid 
\[
\IsotopyCat(\Fr^k(\nu_{W}),\phi) = G_{k,l}^0 \ltimes \IsotopyObj(\Fr^k(\nu_{W}),\phi)
\]
from Appendix \ref{isotopyRH}, where $G_{k,l}^0$ is the connected component of the identity and $\IsotopyObj(\Fr^k(\nu_{W}),\phi)$ is a covering space of the representation variety $\Hom_{(\Fr^k(\nu_{W}),\phi)}(\pi_{1}(W, x), G_{k,l})$. In Theorem \ref{prop:RHisotopy} we show that there is a Riemann-Hilbert functor 
\[ RH^{0}_{(\Fr^{k}(\nu_{W}), \phi)} : \FlatCat_{0}(W, \Fr^{k}(\nu_{W})) \to \IsotopyCat(\Fr^k(\nu_{W}),\phi) \] 
which is a fibration and a Morita equivalence. Composing all of the functors, we define 
\[
RH^{0}_{(M,\phi)} = RH^{0}_{(\Fr^{k}(\nu_{W}), \phi)} \circ J^{k}\F \circ \Phi^* \circ \mathcal{G} : \IsoCat^{k}(W,M)  \to \IsotopyCat(\Fr^k(\nu_{W}),\phi), 
\]
the \emph{Riemann-Hilbert functor for $k$-th order foliations in $M$ along $W$ taken up to isotopy}. The proof of the following theorem is now immediate. 

\begin{theorem} \label{RHuptoisotopy}
The Riemann-Hilbert functor for $k$-th order foliations in $M$ along $W$ taken up to isotopy 
\[
RH^{0}_{(M,\phi)} : \IsoCat^{k}(W,M)  \to \IsotopyCat(\Fr^k(\nu_{W}),\phi) 
\]
is a fibration of topological groupoids. In particular:
\begin{itemize}
\item $RH^{0}_{(M,\phi)}$ induces a homeomorphism between the space of isotopy classes of $k$-th order foliations in $M$ along $W$ and the orbit space of $\IsotopyCat(\Fr^k(\nu_{W}),\phi)$.
\item The kernel of $RH^{0}_{(M,\phi)}$ is a bundle of topological groups $\mathcal{R}$ over $J^k\FolObj(W,M)$. 
\end{itemize}
Furthermore, given a $k$-th order foliation $F$ and an isotopy $\psi \in \Diff_0(M, W)$, the pair $(\psi, F) \in \mathcal{R}$ if and only if the germ of $\psi$ along $W$ coincides with the time $1$-flow of a time-dependent vector field $V_{t} \in \Tan^{k}(W, F)$ that vanishes along $W$.
\end{theorem}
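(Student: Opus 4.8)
The plan is to assemble the statement from functors that have already been shown to be fibrations, so the proof is essentially a matter of bookkeeping. First I would note that $RH^0_{(M,\phi)}$ is defined as the composite $RH^{0}_{(\Fr^{k}(\nu_{W}), \phi)} \circ J^{k}\F \circ \Phi^* \circ \mathcal{G}$, and that each factor has already been established to be a fibration of topological groupoids: $\mathcal{G}$ by Lemma \ref{lem:takingJets}, $\Phi^*$ is an isomorphism onto an open subgroupoid, $J^k\F$ was shown to be a fibration in the proof of Theorem \ref{topologicalRHfoliation}, and $RH^0_{(\Fr^k(\nu_W),\phi)}$ is a fibration and a Morita equivalence by Theorem \ref{prop:RHisotopy}. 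Since fibrations of topological groupoids are closed under composition, $RH^0_{(M,\phi)}$ is a fibration. The statement that it induces a homeomorphism on orbit spaces then follows because fibrations induce homeomorphisms of orbit spaces (this is the content of Lemma \ref{fibrationlemma}), combined with the fact that the orbit space of $\IsoCat^{k}(W,M)$ is by definition the set of isotopy classes of $k$-th order foliations in $M$ along $W$.

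Next I would address the kernel. By Lemma \ref{fibrationlemma}, the kernel of a fibration of topological groupoids is a bundle of topological groups over the object space, which here is $J^k\FolObj(W,M)$; call this bundle $\mathcal{R}$. To identify the fibre over a $k$-th order foliation $F$, I would chase through the composite: $(\psi, F)$ lies in $\ker(RH^0_{(M,\phi)})$ if and only if $\mathcal{G}(\psi,F) = (j^k\psi, F)$ lies in $\ker\big(RH^0_{(\Fr^k(\nu_W),\phi)} \circ J^k\F \circ \Phi^*\big)$. Since $RH^0_{(\Fr^k(\nu_W),\phi)}$ and $\Phi^*$ are Morita equivalences (in particular faithful), this happens precisely when $j^k\psi$ lies in $\ker(J^k\F)$, and by the last part of Theorem \ref{topologicalRHfoliation} — equivalently Theorem \ref{innercharacterization} — this is exactly the condition that $j^k\psi$ be represented by the time-$1$ flow of a time-dependent vector field $V_t \in \Tan^k(W,F)$ vanishing along $W$. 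Since the action of $\psi$ on $k$-th order foliations depends only on $j^k\psi$ (Lemma \ref{lem:factoringViakJets}), this is equivalent to the germ of $\psi$ along $W$ coinciding with such a flow, which is precisely the asserted characterization.

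The one point requiring genuine care — and the main obstacle — is that $\mathcal{G}$ is a fibration with a \emph{kernel} of its own (the $k$-jet map $\Diff_0(M,W) \to J^k\Diff_0(M,W)$ is far from injective), so I must check that the kernel of the composite $RH^0_{(M,\phi)}$ is exactly $\mathcal{G}^{-1}$ of the kernel of the downstream functor, and that this is correctly described as a bundle over $J^k\FolObj(W,M)$ rather than over some space of germs. This is handled by observing that $\ker(RH^0_{(M,\phi)}) = \mathcal{G}^{-1}(\ker(RH^0_{(\Fr^k(\nu_W),\phi)}\circ J^k\F \circ \Phi^*))$ as sets of morphisms, and then using that a composition of fibrations has kernel fitting in the appropriate extension; the base space $J^k\FolObj(W,M)$ is correct because all four functors in the composite are homeomorphisms on objects (by Lemma \ref{lem:canident} for $J^k\F$, and trivially for the others). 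With these compatibilities in place, the proof is a direct concatenation of the cited results and requires no new computation.

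\begin{proof}
The functor $RH^{0}_{(M,\phi)}$ is the composite
\[
RH^{0}_{(M,\phi)} = RH^{0}_{(\Fr^{k}(\nu_{W}), \phi)} \circ J^{k}\F \circ \Phi^* \circ \mathcal{G}.
\]
Each factor is a fibration of topological groupoids: $\mathcal{G}$ by Lemma \ref{lem:takingJets}; $\Phi^*$ is an isomorphism onto an open subgroupoid of $J^{k}\FolCat(W,\nu_{W})$; $J^{k}\F$ restricted to $J^k\FolCat(W,\nu_W)$ was shown to be a fibration in the proof of Theorem \ref{topologicalRHfoliation}; and $RH^{0}_{(\Fr^{k}(\nu_{W}), \phi)}$ is a fibration (and a Morita equivalence) by Theorem \ref{prop:RHisotopy}. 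Since fibrations of topological groupoids compose, $RH^{0}_{(M,\phi)}$ is a fibration. By Lemma \ref{fibrationlemma}, a fibration induces a homeomorphism on orbit spaces; since the orbit space of $\IsoCat^{k}(W,M)$ is by definition the set of isotopy classes of $k$-th order foliations in $M$ along $W$, we obtain the asserted homeomorphism onto the orbit space of $\IsotopyCat(\Fr^k(\nu_{W}),\phi)$.

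By Lemma \ref{fibrationlemma}, the kernel $\mathcal{R}$ of $RH^{0}_{(M,\phi)}$ is a bundle of topological groups over the object space of $\IsoCat^{k}(W,M)$, which is $J^k\FolObj(W,M)$; note that all four functors in the composite are homeomorphisms on objects, using Lemma \ref{lem:canident} for $J^{k}\F$ and the obvious identifications for the other three. It remains to identify the fibre of $\mathcal{R}$ over a $k$-th order foliation $F$. A pair $(\psi, F)$ with $\psi \in \Diff_0(M, W)$ lies in $\mathcal{R}$ if and only if $\mathcal{G}(\psi,F) = (j^k\psi, F)$ lies in the kernel of $RH^{0}_{(\Fr^{k}(\nu_{W}), \phi)} \circ J^{k}\F \circ \Phi^*$. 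Because $\Phi^*$ and $RH^{0}_{(\Fr^{k}(\nu_{W}), \phi)}$ are Morita equivalences, and in particular faithful, this holds precisely when $j^k\psi \in \ker(J^{k}\F)$. By the final assertion of Theorem \ref{topologicalRHfoliation} (equivalently, by Theorem \ref{innercharacterization}), this is the case if and only if $j^k\psi$ can be represented by the time-$1$ flow of a time-dependent vector field $V_{t} \in \Tan^{k}(W, F)$ that vanishes along $W$. Finally, since the action of a diffeomorphism on $k$-th order foliations depends only on its $k$-jet (Lemma \ref{lem:factoringViakJets}), this condition on $j^k\psi$ is equivalent to the germ of $\psi$ along $W$ coinciding with the time-$1$ flow of such a vector field $V_{t}$. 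This proves the last statement of the theorem.
\end{proof}
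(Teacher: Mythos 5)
Your proof is correct and follows exactly the paper's route: the paper constructs $RH^{0}_{(M,\phi)}$ as the same four-fold composite of fibrations and then declares the theorem "immediate," so your write-up simply makes those immediate steps explicit (composition of fibrations, Lemma \ref{fibrationlemma} for the orbit spaces and the kernel bundle, and the kernel chase via Theorem \ref{innercharacterization}). The only blemishes are inessential: the last factor $RH^{0}_{(\Fr^{k}(\nu_{W}),\phi)}$ is not a homeomorphism on objects (it is a quotient by $\Gau_{*}^{0}$, though the kernel still lives over $J^k\FolObj(W,M)$ by Lemma \ref{fibrationlemma} applied to the source groupoid), and the passage from "$j^k\psi$ is represented by such a flow" to "the germ of $\psi$ is such a flow" needs a short scaling argument in the spirit of Lemma \ref{lem:dilating} rather than Lemma \ref{lem:factoringViakJets}.
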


\subsubsection{Classification of hypersurface algebroids}
We end this section by specializing to the case of a codimension $1$ hypersurface $W \subset M$ where the classification dramatically simplifies. Indeed, the argument of Section \ref{sec:linebundles} shows that in this case
\[
\IsotopyCat(\Fr^k(\nu_{W}),\phi) = G_{k,1}^0 \ltimes \Hom_{(\Fr^k(\nu_{W}),\phi)}(\pi_{1}(W, x), G_{k,1}). 
\]
Furthermore, Proposition \ref{Singfoliskorderfol} implies that there is a bijection between $J^k\FolObj(W,M)$ and the set of hypersurface algebroids of $b^{k+1}$-type for $(W,M)$ (Definitions \ref{hypersurfacealgebroiddef} and \ref{hypersfalgbktype}). Since this bijection is equivariant with respect to the action of $\Diff_0(M, W)$, we may also identify $\IsoCat^{k}(W,M)$ with the groupoid of hypersurface algebroids of $b^{k+1}$-type for $(W,M)$ taken up to isotopy. Therefore, as a special case of Theorem \ref{RHuptoisotopy}, we obtain the following result which we will revisit in Section \ref{sec:character}. 
\begin{corollary}\label{cor:hypersurfclass}
Let $M$ be a manifold and let $W$ be a closed and connected hypersurface of codimension $1$. Then there is a homeomorphism between the space of isotopy classes of hypersurface algebroids of $b^{k+1}$-type for $(W,M)$ and the quotient space
\[
 \Hom_{(\Fr^k(\nu_{W}),\phi)}(\pi_{1}(W, x), G_{k,1})/G_{k,1}^0. 
\]
\end{corollary}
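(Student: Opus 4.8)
The plan is to derive Corollary \ref{cor:hypersurfclass} as a direct specialization of Theorem \ref{RHuptoisotopy} to the codimension-one case, the main content being to identify all of the abstract objects appearing in that theorem with the concrete ones in the statement. First I would invoke Proposition \ref{Singfoliskorderfol}, part (b) and (c), which for a codimension-one hypersurface $W$ gives a bijection between $k$-th order foliations around $W$ and hypersurface algebroids of $b^{k+1}$-type for $(W,M)$ (Definitions \ref{hypersurfacealgebroiddef} and \ref{hypersfalgbktype}). This bijection is given by $F \mapsto \Tan^k(W,F)$, and since pushforward of foliations under a diffeomorphism corresponds to pushforward of the associated singular foliations, it is equivariant for the action of $\Diff_0(M,W)$. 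Hence the groupoid $\IsoCat^k(W,M) = \Diff_0(M,W) \ltimes J^k\FolObj(W,M)$ is isomorphic, as a topological groupoid, to the groupoid of hypersurface algebroids of $b^{k+1}$-type for $(W,M)$ taken up to isotopy, so that orbit spaces of the two agree.

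Next I would unpack the target groupoid $\IsotopyCat(\Fr^k(\nu_W),\phi)$. In general this is $G_{k,l}^0 \ltimes \IsotopyObj(\Fr^k(\nu_W),\phi)$, where $\IsotopyObj$ is a covering space of the representation variety $\Hom_{(\Fr^k(\nu_W),\phi)}(\pi_1(W,x),G_{k,l})$, introduced in Appendix \ref{isotopyRH}. The key simplification in codimension one is that $l = 1$, so the normal bundle $\nu_W$ is a line bundle and, as noted in Section \ref{structuregroup}, $G_{k,1}$ is solvable with $K_{k,1}$ contractible; moreover $G_{k,1}$ is homotopy equivalent to $\GL(\RR) \cong \RR^*$ by Corollary \ref{cor:equivarianthomotopy}, so $\pi_1(G_{k,1}) = 0$ and $G_{k,1}$ has two components. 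The paper signals that ``the argument of Section \ref{sec:linebundles}'' shows that in this case the covering $\IsotopyObj(\Fr^k(\nu_W),\phi) \to \Hom_{(\Fr^k(\nu_W),\phi)}(\pi_1(W,x),G_{k,1})$ is trivial, i.e. $\IsotopyObj(\Fr^k(\nu_W),\phi) = \Hom_{(\Fr^k(\nu_W),\phi)}(\pi_1(W,x),G_{k,1})$, so that
\[
\IsotopyCat(\Fr^k(\nu_W),\phi) = G_{k,1}^0 \ltimes \Hom_{(\Fr^k(\nu_W),\phi)}(\pi_1(W,x),G_{k,1}).
\]
I would then recall that the orbit space of an action groupoid $G \ltimes X$ is simply the quotient $X/G$, so the orbit space of the right-hand side is $\Hom_{(\Fr^k(\nu_W),\phi)}(\pi_1(W,x),G_{k,1})/G_{k,1}^0$.

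Finally I would assemble the pieces. By Theorem \ref{RHuptoisotopy}, the functor $RH^0_{(M,\phi)}$ is a fibration of topological groupoids, hence by Lemma \ref{fibrationlemma} (on fibrations inducing homeomorphisms of orbit spaces) it induces a homeomorphism between the orbit space of $\IsoCat^k(W,M)$ and the orbit space of $\IsotopyCat(\Fr^k(\nu_W),\phi)$. The orbit space of $\IsoCat^k(W,M)$ is, by construction, the space of isotopy classes of $k$-th order foliations in $M$ along $W$, which via the equivariant bijection of the first paragraph is the space of isotopy classes of hypersurface algebroids of $b^{k+1}$-type for $(W,M)$; the orbit space of the target is $\Hom_{(\Fr^k(\nu_W),\phi)}(\pi_1(W,x),G_{k,1})/G_{k,1}^0$ by the previous paragraph. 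Composing these homeomorphisms gives the claim.

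The main obstacle I expect is justifying the triviality of the covering space $\IsotopyObj(\Fr^k(\nu_W),\phi) \to \Hom_{(\Fr^k(\nu_W),\phi)}(\pi_1(W,x),G_{k,1})$ in the line-bundle case, which depends on material in the as-yet-unquoted Section \ref{sec:linebundles} and Appendix \ref{isotopyRH}; morally it should follow from $\pi_1(G_{k,1}) = 0$ (so that the relevant covering, built from $\pi_1$ of the structure group, has trivial deck group), but the precise construction of $\IsotopyObj$ must be invoked carefully. The remaining steps — the equivariance of the $\Tan^k$ bijection and the passage from fibration to orbit-space homeomorphism — are routine given the results already established.
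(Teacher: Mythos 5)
Your proposal is correct and follows essentially the same route as the paper: identify $\IsoCat^{k}(W,M)$ with the groupoid of hypersurface algebroids via the equivariant bijection of Proposition \ref{Singfoliskorderfol}, use the Section \ref{sec:linebundles} computation that $D(\Fr^k(\nu_W))$ is trivial (via Lemmas \ref{lem:gaugehomotopy} and \ref{lem:DPabelian}, ultimately because $G_{k,1}^0\simeq\mathbb{R}_{>0}$ is contractible, which is the precise form of the ``$\pi_1$ vanishes'' heuristic you flag) so that $\IsotopyObj(\Fr^k(\nu_W),\phi)$ equals the representation variety, and then specialize Theorem \ref{RHuptoisotopy}. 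No gaps.
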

\begin{remark} \label{rem:bktangentbundleScott}
Recall that the $b^{k+1}$-tangent bundles defined by Scott \cite{MR3523250} required the choice of the $k$-jet of a defining function for $W$. Such a choice trivializes the normal bundle $\nu_{W}$ and implies that the holonomy representation of the associated $k$-th order foliation is trivial. Therefore, we conclude that all $b^{k+1}$-tangent bundles are isotopic and get sent to the trivial representation under the Riemann-Hilbert functor. 
\end{remark}

\section{Extensions: Part 2}\label{sec:extending2}

In this section we revisit the extension problem for $k$-th order foliations through the lens of the Riemann-Hilbert correspondence. We will show that the extension class of a $k$-th order foliation, given in Definition \ref{def:extension}, can also be viewed as a group cohomology class controlling the obstruction to a certain lifting problem. As we will see, the key to this reformulation lies in a combination of a Morita equivalence and the Van Est map from \cite{MR1103911}. 

\subsection{Recap on group extensions}
We consider group extensions of the form
\begin{equation*}
1 \rightarrow A \rightarrow B \rightarrow C \rightarrow 1,
\end{equation*}
with $A$ abelian. Given $\sigma : C \rightarrow B$ a set-theoretic splitting, we may define a $C$-module structure on $A$ as follows
\begin{equation*}
c\cdot a := \sigma(c)\cdot a\cdot \sigma(c)^{-1}. 
\end{equation*}
This structure does not depend on the choice of splitting as $A$ is abelian. 

Let $C^{\bullet}(C,A)$ denote the complex of group cochains valued in $A$. Recall that the groups in the complex are given by $C^{k}(C, A) = \mathrm{Hom}(C^{\times k}, A)$. Given a $k$-cochain $\alpha \in C^{k}(C,A)$, the differential is defined by 
\[
\delta(\alpha)(c_{1}, ..., c_{k+1}) = c_{1} \cdot \alpha(c_{2}, ..., c_{k+1}) + \sum_{i = 1}^{k} (-1)^{k} \alpha(c_{1}, ..., c_{i} c_{i+1}, ..., c_{k+1}) + (-1)^{k+1} \alpha(c_{1}, ..., c_{k}). 
\]

Given a splitting $\sigma : C \rightarrow B$, we define a $2$-cochain 
\[
\alpha_{\sigma}: C \times C \to A, \qquad (c_{1}, c_{2}) \mapsto \sigma(c_{1}) \sigma(c_{2}) \sigma(c_1 c_2)^{-1}.
\]
Then $\delta(\alpha_{\sigma}) = 0$ and there is an isomorphism between $B$ and $A \rtimes_{\sigma} C$, where the later is the set $A \times C$ equipped with the following product 
\[
(a_1, c_1) (a_2, c_2) = (a_1 (c_1 \cdot a_2) \alpha_{\sigma}(c_1,c_2), c_1 c_2). 
\]
The cohomology class $[\alpha_{\sigma}] \in H^2(C,A)$ is independent of the choice of splitting $\sigma$. It obstructs the existence of splittings which are also group homomorphisms. For these reasons, the cohomology class $E(B) := [\alpha_{\sigma}]$ is referred to as the \textbf{extension class} of the short exact sequence. 

The extension class is functorial in $C$. Indeed, if $\rho : D \to C$ is a homomorphism, there is an induced chain map $\rho^* : C^{\bullet}(C, A) \to C^{\bullet}(D, A)$ which allows us to pullback extension classes. We can also pullback the short exact sequence to get 
\[
1 \to A \to \rho^*(B) \to D \to 1. 
\]
Then $E(\rho^*(B)) = \rho^*(E(B)) \in H^{2}(D, A)$. This allows us to establish the following. 

\begin{lemma}\label{lemm:functorialextension}
Consider a short exact sequence of groups $1 \rightarrow A \rightarrow B \rightarrow C \rightarrow 1$ with extension class $E(B) \in H^2(C, A)$, and let $\rho : D \to C$ be a group homomorphism. Then the cohomology class $\rho^{*}E(B) \in H^{2}(D, A)$ vanishes if and only if the map $\rho$ lifts to a homomorphism $\rho: D \to B$. 
\end{lemma}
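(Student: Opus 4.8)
The statement is essentially a formal exercise in group cohomology, and I would prove it directly from the description of the extension class given just above in the excerpt. The key input is the identification $B \cong A \rtimes_\sigma C$ for any set-theoretic splitting $\sigma$, together with functoriality of the extension class: for $\rho : D \to C$ we have $E(\rho^*B) = \rho^* E(B)$, where $\rho^*B = \{(b,d) \in B \times D : \text{image of } b \text{ in } C \text{ equals } \rho(d)\}$ sits in the pulled-back short exact sequence $1 \to A \to \rho^*B \to D \to 1$.

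First I would observe that a homomorphic lift $\tilde\rho : D \to B$ of $\rho$ (i.e.\ a homomorphism making the triangle with $B \to C$ commute) is the same datum as a homomorphic section $D \to \rho^*B$ of the projection $\rho^*B \to D$: given $\tilde\rho$, the map $d \mapsto (\tilde\rho(d), d)$ is such a section, and conversely projecting a section of $\rho^*B$ onto its $B$-component recovers $\tilde\rho$. Both directions are immediate from the definition of the fibre product and the fact that all maps in sight are homomorphisms.

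Second, I would invoke the standard fact — already implicit in the discussion preceding the lemma — that a short exact sequence of groups $1 \to A \to \widetilde B \to D \to 1$ with $A$ abelian admits a splitting by a group homomorphism if and only if its extension class in $H^2(D,A)$ vanishes. One direction is the construction above: a set-theoretic splitting $\sigma$ gives the cocycle $\alpha_\sigma$, and $\widetilde B \cong A \rtimes_\sigma D$; if $[\alpha_\sigma] = 0$, write $\alpha_\sigma = \delta\beta$ for a $1$-cochain $\beta : D \to A$ and check that $d \mapsto (-\beta(d)\cdot(\text{correction}), d)$, i.e.\ the modified splitting $\sigma'(d) = \beta(d)^{-1}\sigma(d)$ read inside $A\rtimes_\sigma D$, is a homomorphism — this is a one-line verification using the cocycle identity $\delta\alpha_\sigma = 0$ and $\delta\beta = \alpha_\sigma$. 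Conversely, a homomorphic splitting is in particular a set-theoretic splitting whose associated cocycle $\alpha_{\sigma}$ is identically the identity, so $E(\widetilde B) = 0$.

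Combining these with functoriality finishes the argument: apply the second step to $\widetilde B = \rho^*B$, whose extension class is $\rho^* E(B)$ by the functoriality statement recalled before the lemma; then $\rho^* E(B) = 0$ in $H^2(D,A)$ iff $\rho^*B \to D$ admits a homomorphic splitting iff (by the first step) $\rho$ lifts to a homomorphism $D \to B$. I do not anticipate a genuine obstacle here; the only point requiring a little care is bookkeeping the sign/inverse conventions in the cocycle $\alpha_\sigma$ and the semidirect product $A \rtimes_\sigma C$ so that the modified section really is multiplicative, but this is the routine cocycle computation alluded to above and carries over verbatim from the classical theory of group extensions.
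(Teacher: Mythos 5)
Your proposal is correct and follows essentially the same route as the paper: the paper derives this lemma directly from the two facts it states immediately beforehand, namely that $E(\rho^*(B)) = \rho^*(E(B))$ for the pulled-back sequence $1 \to A \to \rho^*(B) \to D \to 1$, and that the extension class obstructs homomorphic splittings. Your identification of lifts of $\rho$ with homomorphic sections of $\rho^*B \to D$, together with the standard cocycle correction $\sigma'(d) = \beta(d)^{-1}\sigma(d)$, is exactly the intended argument.
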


\subsection{The extension class revisited}
We again consider the group $G_{k,l} = J^k\Diff(\rr^l,0)$. Recall from Section \ref{structuregroup} that we have the natural homomorphism $p_k : G_{k,l} \rightarrow G_{k-1,l}$ which forgets the $k$-th order part. Let $\pi_k : G_{k,l} \rightarrow G_{1,l} = \mathrm{GL}(\rr^l)$ denote the map obtained by projecting all the way to the $1$-jet. 

Each $p_{k+1}$ induces a short exact sequence of groups
\begin{equation}\label{eq:groupseq}
0 \rightarrow A_{k+1,l} \rightarrow G_{k+1,l} \rightarrow G_{k,l} \rightarrow 0,
\end{equation}
with abelian kernel $A_{k+1, l}$. The following lemma determines the $G_{k,l}$-module structure on $A_{k+1, l}$.  
\begin{lemma} \label{modulestructure}
The group $A_{k+1, l}$ is isomorphic to the additive group underlying the vector space 
\[
\Sym^{k+1}((\rr^{l})^*) \otimes \rr^l.
\]
 Therefore, $A_{k+1, l}$ is naturally a $G_{1,l} = \mathrm{GL}(\rr^l)$-representation. The $G_{k,l}$-module structure on $A_{k+1,l}$ is the one induced by $\pi_{k}$. 
\end{lemma}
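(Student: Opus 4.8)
The plan is to compute the kernel $A_{k+1,l} = \ker(p_{k+1} : G_{k+1,l} \to G_{k,l})$ directly from the description of $G_{k,l}$ in terms of jets of diffeomorphisms, and then read off the module structure. An element of $G_{k+1,l}$ is the $(k+1)$-jet at $0$ of a diffeomorphism germ $f : (\rr^l,0) \to (\rr^l,0)$, which we may expand as a polynomial map $f(z) = A z + (\text{higher order terms}) + P_{k+1}(z) + O(z^{k+2})$, where $A \in \GL(\rr^l)$ and $P_{k+1}$ is the degree-$(k+1)$ homogeneous part. The map $p_{k+1}$ forgets $P_{k+1}$, so $f$ lies in the kernel precisely when $f(z) = z + P_{k+1}(z) + O(z^{k+2})$; that is, the kernel consists of jets of the form $\mathrm{id} + P_{k+1}$ with $P_{k+1}$ an arbitrary $\rr^l$-valued homogeneous polynomial of degree $k+1$, i.e. $P_{k+1} \in \Sym^{k+1}((\rr^l)^*) \otimes \rr^l$. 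First I would record this bijection and then check it is a group isomorphism onto the additive group: composing $(\mathrm{id}+P)(\mathrm{id}+Q)(z) = z + P(z) + Q(z) + (\text{cross terms})$, and the cross terms — being compositions of a degree-$(k+1)$ map with a degree-$(k+1)$ correction — have order $\geq k+2$, hence vanish in $G_{k+1,l}$. So the product in $A_{k+1,l}$ is exactly $P + Q$, giving the claimed abelian vector-space structure. This is consistent with Lemma \ref{lem:Gkldecomp} and with the decomposition of $\at^k(E)$ in Lemma \ref{Atiyahalgebroiddecompsition}: the degree-$(k+1)$ piece there is $\Sym^{k+1}(E^*)\otimes E$.

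Next I would identify the $G_{k,l}$-module structure. By definition the action of $g \in G_{k,l}$ on $a \in A_{k+1,l}$ is conjugation by any lift $\tilde g \in G_{k+1,l}$ of $g$: write $\tilde g = f$, a diffeomorphism jet with linear part $A = \pi_k(g)$, and $a = \mathrm{id} + P$. Then $\tilde g \, a \, \tilde g^{-1}$ is the $(k+1)$-jet of $f \circ (\mathrm{id}+P) \circ f^{-1}$. Expanding, $f \circ (\mathrm{id}+P)\circ f^{-1}(z) = z + df_{f^{-1}(z)}\big(P(f^{-1}(z))\big) + O(z^{k+2})$; since $P$ is already homogeneous of degree $k+1$, only the linear parts of $f$ and $f^{-1}$ contribute modulo order $k+2$, so this equals $z + A\, P(A^{-1} z) + O(z^{k+2})$. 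Therefore $g \cdot P = A \cdot P$, which is precisely the standard $\GL(\rr^l)$-action on $\Sym^{k+1}((\rr^l)^*)\otimes\rr^l$ (dual action on the $(k+1)$ copies of $(\rr^l)^*$, defining action on $\rr^l$), pulled back along $\pi_k : G_{k,l}\to \GL(\rr^l)$. This establishes the lemma.

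The only mildly delicate point — and the one I'd state most carefully — is the bookkeeping of which terms survive in $G_{k+1,l}$, i.e. the repeated assertion that compositions introduce only corrections of order $\geq k+2$. This is where one must be precise: a substitution $P \circ h$ with $h(z) = z + O(z^2)$ agrees with $P(z)$ modulo order $k+2$ because $P$ is homogeneous of degree $k+1$; and left-composition $f \circ (\cdot)$ with $f(w) = Aw + O(w^2)$ sends a correction term of order $k+1$ to $A(\text{that term}) + O(z^{k+2})$. One can organize all of this cleanly by working in the truncated algebra $N(k+1) = \rr[z_1,\dots,z_l]/I^{k+2}$ and using the identification $G_{k+1,l} \cong \Aut(N(k+1))$ from Section \ref{structuregroup}: an automorphism fixing $I/I^{k+1}$-images is determined by its values $z_i \mapsto z_i + (\text{element of } I^{k+1}/I^{k+2})$, and $I^{k+1}/I^{k+2} \cong \Sym^{k+1}((\rr^l)^*)$, so the kernel is $\Hom(\rr^l, I^{k+1}/I^{k+2}) \cong \Sym^{k+1}((\rr^l)^*)\otimes\rr^l$; the composition law is then manifestly additive because $I^{k+1}\cdot I \subseteq I^{k+2} = 0$. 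I'd present the argument in this algebraic language to keep the "higher order terms vanish" claims rigorous rather than heuristic. The $\GL(\rr^l)$-module structure then comes from how a general automorphism conjugates these kernel elements, reducing as above to the linear part because the grading is concentrated in the top degree.
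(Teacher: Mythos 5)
Your proposal is correct and follows essentially the same route as the paper: identify the kernel with degree-$(k+1)$ homogeneous perturbations of the identity, check that composition is addition because cross terms have order $\geq k+2$, and observe that conjugation only sees the linear part of the conjugating jet since $P$ is homogeneous of top degree. The paper organizes the conjugation step slightly differently (writing $\phi = \varphi + \gamma$ and cancelling $\gamma$ from both sides of $\phi\circ\psi = C_\phi(\psi)\circ\phi$ rather than expanding $f\circ(\mathrm{id}+P)\circ f^{-1}$ directly), but the content is identical.
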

\begin{proof}
An element $\psi \in A_{k+1,l}$ is a $(k+1)$-jet of diffeomorphism whose $k$-jet is the identity. Consequently it has the form: 
\[
\psi(x) = (x_{i} + \sum_{|J| = k+1} \alpha_{i, J} x^{J} )_{i},
\]
where $J = (j_{1}, ..., j_{l})$ is a multi-index of weight $k +1$ and $x^{J} = x_1^{j_{1}} x_2^{j_{2}} ... x_l^{j_{l}}$. This provides the set-theoretic isomorphism to $\Sym^{k+1}(\rr^{l,*}) \otimes \rr^l$. We now check this is indeed a group isomorphism. Given another element 
\[
\phi(x) = (x_{i} + \sum_{|J| = k+1} \beta_{i, J} x^{J} )_{i},
\]
the composition is given by 
\begin{align*}
\psi \circ \phi(x) &= ( \phi(x)_{i} +  \sum_{|J| = k+1} \alpha_{i, J} (\phi(x))^{J})_{i} \\
&= (x_{i} + \sum_{|J| = k+1} \beta_{i, J} x^{J} +  \sum_{|J| = k+1} \alpha_{i, J} (x + \mathcal{O}(x^{k+1}))^{J})_{i} \\
&= (x_{i} + \sum_{|J| = k+1}( \alpha_{i,J} + \beta_{i, J}) x^{J})_{i}.
\end{align*}

Let $\psi \in A_{k+1, l}$, let $\phi \in G_{k+1, l}$, and let $\varphi = \pi_{k+1}(\phi) \in G_{1,l} \subset G_{k+1,l}$. Note that 
\[
\phi(x) = \varphi(x) + \gamma(x) =\varphi(x) + \mathcal{O}(x^2). 
\]
We also write $\psi(x) = (x_{i} + \sum_{|J| = k+1} \alpha_{i, J} x^{J} )_{i}$. We claim that $\psi \circ \phi = \psi \circ \varphi + \gamma$ and that $\phi \circ \psi = \varphi \circ \psi + \gamma$. 
First, 
\begin{align*}
\psi (\phi(x)) &=  (\phi_{i} + \sum_{|J| = k+1} \alpha_{i, J} \phi^{J} )_{i} \\
&= (\varphi_{i} + \gamma_{i} + \sum_{|J| = k+1 } \alpha_{i, J} (\varphi + \mathcal{O}(x^2))^J)_{i} \\
&= (\varphi_{i} + \sum_{|J| = k+1} \alpha_{i, J} \varphi^{J} )_{i} + \gamma \\
&= \psi (\varphi(x)) + \gamma(x). 
\end{align*}
Here, we've used the fact that $ (\varphi + \mathcal{O}(x^2))^J$ = $\varphi^J$, since terms of degree greater than $k+1$ are killed and $J$ has weight $k+1$. In a similar way, $\psi(x)_{i} \psi(x)_{j} = x_{i} x_{j}$. Hence $\gamma(\psi(x)) = \gamma(x)$ and so we see 
\[
\phi(\psi(x)) = \varphi(\psi(x)) + \gamma(\psi(x)) = \varphi(\psi(x)) + \gamma(x). 
\]
Let $C_{\phi}(\psi) = \phi \circ \psi \circ \phi^{-1}$ denote the conjugation action. We claim that $C_{\phi}(\psi) = C_{\varphi}(\psi)$. To see this, note first that we have $\phi \circ \psi = C_{\phi}(\psi) \circ \phi$, which we can expand as 
\[
\varphi \circ \psi + \gamma = C_{\phi}(\psi) \circ \varphi + \gamma. 
\]
Canceling out $\gamma$, we get $C_{\phi}(\psi) \circ \varphi = \varphi \circ \psi$, or $C_{\phi}(\psi) = \varphi \circ \psi \circ \varphi^{-1} = C_{\varphi}(\psi)$.
\end{proof}

As a result of Lemma \ref{modulestructure}, the extension class of $G_{k+1, l}$ can be viewed as 
\[
E_{k+1, l} := E(G_{k+1,l}) \in H^2(G_{k,l}, \Sym^{k+1}((\rr^{l})^*) \otimes \rr^l). 
\]
Given a representation $\rho : \pi_{1}(W, x) \to G_{k,l}$, we can try to lift it to a representation 
\[
\tilde{\rho} : \pi_{1}(W,x) \to G_{k+1,l},
\]
such that $p_{k+1} \circ \tilde{\rho} = \rho$. By Lemma \ref{lemm:functorialextension}, this is obstructed by the extension class $\rho^*(E_{k+1,l})$.  Since representations correspond to $k$-th order foliations by Theorem \ref{RHthm}, we obtain the following alternate characterization of the extension problem. 
\begin{theorem}\label{th:secondextensionclass}
Let $W \subset M$ be a codimension $l$ submanifold equipped with a basepoint $x \in W$. Let $F$ be a $k$-th order foliation around $W$ and let $\rho: \pi_{1}(W,x) \to G_{k,l}$ be the corresponding representation obtained by Theorem \ref{RHthm}. Then $F$ can be extended to a $(k+1)$-st order foliation if and only if the coholomogy class 
\begin{equation*}
\rho^*E_{k+1,l} \in H^2(\pi_1(W,x),\Sym^{k+1}((\rr^{l})^*) \otimes \rr^l)
\end{equation*}
vanishes.
\end{theorem}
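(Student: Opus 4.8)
The plan is to assemble this from pieces already in hand. The core observation is the dictionary: by Theorem~\ref{RHthm}, the Riemann--Hilbert functor sends the $k$-th order foliation $F$ to the representation $\rho : \pi_1(W,x) \to G_{k,l}$, and an extension of $F$ to a $(k+1)$-st order foliation $\tilde F$ corresponds, again by Theorem~\ref{RHthm} applied at level $k+1$, to a representation $\tilde\rho : \pi_1(W,x) \to G_{k+1,l}$. The compatibility of the two levels of Riemann--Hilbert correspondence is exactly the statement that $u_{k+1}(\tilde F) = F$ translates into $p_{k+1} \circ \tilde\rho = \rho$: this is because the forgetful map $u_{k+1}$ on foliations corresponds, under the identification of $k$-th order foliations with flat $G_{k,l}$-connections (Theorem~\ref{splittingprop}, Lemma~\ref{kthorderAtiyahisAtiyah}), to the group homomorphism $p_{k+1} : G_{k+1,l} \to G_{k,l}$ on holonomies, which is the first thing I would spell out explicitly.

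First, then, I would establish that lifting $F$ to order $k+1$ is equivalent to lifting the holonomy representation $\rho$ along $p_{k+1}$. For the direction ``$\tilde F$ gives $\tilde\rho$'': apply Theorem~\ref{RHthm} at level $k+1$ to $\tilde F$ to get $\tilde\rho : \pi_1(W,x) \to G_{k+1,l}$, and observe that the holonomy of the pushed-down connection is $p_{k+1}\circ \tilde\rho$ since holonomy is functorial and the projection $\At^{k+1}(E) \to \At^{k}(E)$ is induced by $p_{k+1}$ on structure groups. For the converse: if $\rho$ lifts to $\tilde\rho$ with $p_{k+1}\circ\tilde\rho = \rho$, then by essential surjectivity in Theorem~\ref{RHthm} at level $k+1$ there is a $(k+1)$-st order foliation $\tilde F'$ with holonomy conjugate to $\tilde\rho$; its image under $u_{k+1}$ has holonomy conjugate to $\rho$, hence by fullness of $RH$ (Theorem~\ref{RHthm}) is isomorphic to $F$ via a diffeomorphism, and pushing $\tilde F'$ forward by that diffeomorphism yields the desired extension $\tilde F$ of $F$ itself. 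One must be a little careful that the conjugating element in $G_{k,l}$ lifts to $G_{k+1,l}$ to act on $\tilde\rho$; this is handled by the splitting $s_k : G_{k,l} \to G_{k+1,l}$ of Corollary~\ref{canonicalsplitting}, or simply by absorbing the conjugation into the choice of framing.

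Second, I would invoke Lemma~\ref{lemm:functorialextension} with $A = A_{k+1,l} = \Sym^{k+1}((\rr^l)^*)\otimes\rr^l$ (using Lemma~\ref{modulestructure} for the module structure), $B = G_{k+1,l}$, $C = G_{k,l}$, and $D = \pi_1(W,x)$: the homomorphism $\rho : \pi_1(W,x) \to G_{k,l}$ lifts along $p_{k+1}$ to a homomorphism into $G_{k+1,l}$ if and only if $\rho^* E_{k+1,l} = \rho^*E(G_{k+1,l}) \in H^2(\pi_1(W,x), \Sym^{k+1}((\rr^l)^*)\otimes\rr^l)$ vanishes. Combining this equivalence with the one from the previous paragraph immediately gives the theorem. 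I would also remark, as a consistency check, that this is compatible with Corollary~\ref{cor:firstextensionclass}: the de Rham extension class $e(F) \in H^2(W, \Sym^{k+1}(\nu_W^*)\otimes\nu_W)$ and the group cohomology class $\rho^*E_{k+1,l}$ are identified under the comparison between de Rham and group cohomology (this is the content of Proposition~\ref{ComparisonmapVE}, via the Van Est map), so the two solutions of the extension problem agree.

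The main obstacle is purely bookkeeping: verifying that the two levels of the Riemann--Hilbert correspondence are genuinely compatible with the forgetful maps $u_{k+1}$ and $p_{k+1}$ on both objects and morphisms, so that ``$F$ extends'' and ``$\rho$ lifts'' are literally equivalent and not merely equivalent up to some unquantified conjugation. Concretely, the subtlety is that Theorem~\ref{RHthm} only classifies up to isomorphism, whereas the extension problem concerns $F$ on the nose; bridging this requires the fullness part of Theorem~\ref{RHthm} to transport an abstract extension of the holonomy back to an honest extension of $F$, together with the observation (Lemma~\ref{lem:factoringViakJets}) that pushforward by a diffeomorphism is well-defined at the level of $k$-jets. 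Once that compatibility is pinned down, the rest is a direct citation of Lemma~\ref{lemm:functorialextension}.
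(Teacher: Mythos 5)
Your proposal is correct and follows essentially the same route as the paper: the paper's own argument (given in the paragraph preceding the theorem) is precisely to combine Lemma \ref{lemm:functorialextension} applied to the sequence \eqref{eq:groupseq} with the Riemann--Hilbert correspondence of Theorem \ref{RHthm} at levels $k$ and $k+1$. You simply spell out more explicitly the compatibility bookkeeping between $u_{k+1}$ and $p_{k+1}$ that the paper leaves implicit, which is a welcome clarification rather than a deviation.
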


Note that there is an explicit expression for the extension class $E_{k+1, l}$ induced by the canonical splitting $s_{k}$ of $p_{k+1} : G_{k+1, l} \to G_{k,l}$ given in Corollary \ref{canonicalsplitting}. This mirrors the canonical expression for the extension class $e(F)$ obtained in Corollary \ref{cor:explicitextensionclass}. 

\begin{example}[$E_{3,l}$]
Using the splitting $s_2$ along with the description $G_{2,l} \cong \big( \mathrm{Sym}^{2}( (\mathbb{R}^l)^* ) \otimes \mathbb{R}^l \big) \rtimes \mathrm{GL}(\mathbb{R}^l)$ provided in Corollary \ref{cor:g2lgroup}, we give an explicit cocycle representing $E_{3,l}$. It is the map 
\[
\alpha_{s_2} : G_{2,l} \times G_{2,l} \to \mathrm{Sym}^{3}( (\mathbb{R}^l)^* ) \otimes \mathbb{R}^l, \qquad ( (k_1, A_1), (k_2, A_2)) \mapsto \frac{1}{2}[k_1, A_{1}(k_2)],
\]
where we use the bracket on $\mathrm{Lie}(K_{3,l})$ which is described in Lemma \ref{Explicitbracket}. 
\end{example}
\begin{example}[$E_{4,1}$] \label{Ex:explicitext41}
Using the splitting $s_3$ along with the description $G_{3,1} \cong \mathbb{R}^2 \rtimes \mathbb{R}^*$ provided in Corollary \ref{cor:g31group}, we give an explicit cocycle representing $E_{4,1}$. It is the map 
\[
\alpha_{s_{3}}: G_{3,1} \times G_{3,1} \to \mathbb{R}, \qquad ((a_{1}, a_{2}, a_{0}), (b_{1}, b_{2}, b_{0})) \mapsto \frac{1}{2}\big(a_1 a_0^{-1}b_2 - a_0^{-1}b_{1}a_{2} \big). \hfill \qedhere
\]
\end{example}

\subsection{Groupoid extensions} \label{ssec:groupoidext}
Let $E \to W$ be a rank $l$ vector bundle, let $x \in W$ be a basepoint and let $g \in \Fr^k(E_{x})$ be a linear framing. Given a $k$-th order foliation $F$ around $W$, there is a corresponding representation $\rho: \pi_{1}(W, x) \to G_{k,l}$ by Theorem \ref{RHthm}. 

Recall that the Riemann-Hilbert correspondence map is obtained via the holonomy of a foliation defined in Section \ref{holonomydefsection}. Indeed, the $k$-th order foliation $F$ defines a splitting $\sigma: TW \to \at^{k}(E)$, which is integrated to a Lie groupoid homomorphism $\hol^{\sigma} : \Pi(W) \to \At^{k}(E)$. Then $\rho = \hol_{x}^{\sigma, g}$, the restriction of the holonomy to $\pi_{1}(W,x)$, using the framing $g$ to identify $\At^k(E)|_{x} \cong G_{k,l}$.

We have produced two cohomology classes controlling the extension problem for $F$: 
\begin{enumerate}
\item the extension class $e(F) \in H^{2}(W, \mathrm{Sym}^{k+1}(E^*) \otimes E)$ given in Definition \ref{def:extension}, and 
\item the extension class $\rho^*E_{k+1,l} \in H^2(\pi_1(W,x),\Sym^{k+1}((\rr^{l})^*) \otimes \rr^l)$ defined in Theorem \ref{th:secondextensionclass}.
\end{enumerate}
In this section, we will compare these two classes by relating each to an intermediate groupoid cohomology class which is constructed using $\hol^{\sigma}$. 

First, using the description of $\A(\sigma)|_W$ in Proposition \ref{restrictedextendedatiyah} we immediately obtain:
\begin{lemma}
The source simply connected integration of the Lie algebroid $\A(\sigma)|_W$ is given by 
\[ \Gcal := \At^{k+1}(E) \times_{\hol^{\sigma}} \Pi(W). \]
\end{lemma}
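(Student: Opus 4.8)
The plan is to recognize this as a direct application of the fiber-product description of the integration of a pullback Lie algebroid, combined with the already-established structure of $\A(\sigma)|_W$.

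\textbf{Setup.} First I would recall from Corollary \ref{restrictedextendedatiyah} that $\A(\sigma)|_W \cong TW \oplus_{\at^k(E)} \at^{k+1}(E)$, the fiber product of the splitting $\sigma : TW \to \at^k(E)$ and the projection $\at^{k+1}(E) \to \at^k(E)$. In other words, $\A(\sigma)|_W$ is the pullback Lie algebroid $\sigma^{!}\at^{k+1}(E)$ along the Lie algebroid morphism $\sigma : TW \to \at^k(E)$. Here $TW$ is an integrable algebroid (its source-simply-connected integration is $\Pi(W)$), and $\at^{k+1}(E)$ is integrable with source-simply-connected integration $\At^{k+1}(E)$ (this is the Atiyah groupoid of $\Fr^{k+1}(E)$, which is source-simply-connected when $W$ is—and in any case we can work with the appropriate cover; more carefully, $\At^{k+1}(E)$ is source-connected and the relevant fiber product will inherit source-simple-connectedness from $\Pi(W)$).

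\textbf{Key step: integrating a pullback algebroid.} The main input is the general principle (see \cite{moerdijk2002integrability}, or Brown--Higgins for the topological/groupoid version) that if $\mathcal{A} \to N$ is an integrable Lie algebroid with source-simply-connected integration $\mathcal{G}_{\mathcal{A}}$, and $\psi : B \to \mathcal{A}$ is a Lie algebroid morphism from an integrable algebroid $B \to P$ whose source-simply-connected integration is $\mathcal{G}_B$, integrating $\psi$ to a groupoid morphism $\Psi : \mathcal{G}_B \to \mathcal{G}_{\mathcal{A}}$ (Lie's second theorem, \cite{mackenzie2000integration}), then the pullback algebroid $\psi^{!}\mathcal{A}$ is integrable and its source-simply-connected integration is the fiber product $\mathcal{G}_{\mathcal{A}} \times_{\Psi} \mathcal{G}_B$ (with the obvious groupoid structure over $P$). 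Applying this with $B = TW$, $\mathcal{G}_B = \Pi(W)$, $\mathcal{A} = \at^{k+1}(E)$, $\mathcal{G}_{\mathcal{A}} = \At^{k+1}(E)$, and $\psi = \sigma$ composed with the inclusion $\at^k(E) \hookleftarrow$—wait, more precisely $\psi$ is the composite $TW \xrightarrow{\sigma} \at^k(E)$ together with the fact that the fiber product with $\at^{k+1}(E) \to \at^k(E)$ is the pullback—one sees that $\A(\sigma)|_W = \sigma^{!}\at^{k+1}(E)$ is integrated by $\At^{k+1}(E) \times_{\hol^{\sigma}} \Pi(W)$, where $\hol^{\sigma} : \Pi(W) \to \At^{k+1}(E)$...

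\textbf{Careful point.} Here I must be slightly careful: the holonomy $\hol^\sigma$ was defined in Section \ref{holonomydefsection} as a homomorphism $\Pi(W) \to \At^k(E)$ integrating $\sigma : TW \to \at^k(E)$, not landing in $\At^{k+1}(E)$. So the fiber product in the statement is $\At^{k+1}(E) \times_{\hol^\sigma} \Pi(W)$ formed over $\At^k(E)$, using the projection $\At^{k+1}(E) \to \At^k(E)$ on one side and $\hol^\sigma : \Pi(W) \to \At^k(E)$ on the other. This matches the algebroid-level fiber product in Corollary \ref{restrictedextendedatiyah} under differentiation, so the integrability principle gives exactly this groupoid. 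I would write this out: define $\Gcal := \{(\phi, \gamma) \in \At^{k+1}(E) \times \Pi(W) : p_{k+1}(\phi) = \hol^\sigma(\gamma)\}$, check it is a Lie groupoid over $W$ with structure maps inherited from the two factors, verify its Lie algebroid is $TW \oplus_{\at^k(E)} \at^{k+1}(E) \cong \A(\sigma)|_W$ by a direct computation at the level of the identity bisection, and finally observe source-simple-connectedness: the source fiber of $\Gcal$ over $x$ fibers over the source fiber of $\Pi(W)$ (which is simply connected, being the universal cover $\widetilde{W}$) with fibers the source fibers of $A_{k+1,l} = \ker(p_{k+1})$, which are the simply connected vector groups $\Sym^{k+1}((\rr^l)^*)\otimes \rr^l$; hence $\Gcal$ has simply connected source fibers by the long exact sequence of the fibration.

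\textbf{Expected main obstacle.} The only genuine subtlety is bookkeeping around which $\Pi(W)$ and which Atiyah groupoid are source-simply-connected, and making sure the fiber product is formed over the correct base ($\At^k(E)$) rather than naively over $W$. The integrability-of-pullbacks lemma itself is standard, so once the fiber product is correctly set up, identifying its algebroid with $\A(\sigma)|_W$ is immediate from Corollary \ref{restrictedextendedatiyah}, and the source-simply-connected claim follows from the fibration $A_{k+1,l} \to \Gcal \to \At^k(E)\times_{\hol^\sigma}\Pi(W)$ together with the fact that $\At^k(E)\times_{\hol^\sigma}\Pi(W)$ is itself the source-simply-connected integration of $TW \cong \sigma^! \at^k(E)$ (by the same principle applied one level down), i.e. it is just $\Pi(W)$ up to canonical isomorphism since $\sigma$ is a splitting. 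So in the end the proof is short: cite the pullback-integration principle, identify the pieces, and note the source fibers are extensions of the simply connected $\widetilde{W}$ by a vector group.

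\begin{proof}
By Corollary \ref{restrictedextendedatiyah}, the Lie algebroid $\A(\sigma)|_W$ is the fibre product $TW \oplus_{\at^k(E)} \at^{k+1}(E)$, where $TW \to \at^k(E)$ is the morphism $\sigma$ and $\at^{k+1}(E) \to \at^k(E)$ is the truncation map. Equivalently, $\A(\sigma)|_W$ is the pullback Lie algebroid $\sigma^{!}\at^{k+1}(E)$.

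The algebroid $\at^{k+1}(E)$ is transitive and integrable, with integration the Atiyah groupoid $\At^{k+1}(E)$ of $\Fr^{k+1}(E)$, and $TW$ is integrable with source-simply-connected integration $\Pi(W)$. By Lie's second theorem \cite{mackenzie2000integration, moerdijk2002integrability}, the morphism $\sigma$ integrates to the holonomy homomorphism $\hol^{\sigma} : \Pi(W) \to \At^{k}(E)$ of Section \ref{holonomydefsection}. By the standard description of integrations of pullback algebroids, $\sigma^{!}\at^{k+1}(E)$ is integrable and is integrated by the fibre product groupoid
\[
\Gcal := \At^{k+1}(E) \times_{\hol^{\sigma}} \Pi(W) = \{(\phi,\gamma) : p_{k+1}(\phi) = \hol^{\sigma}(\gamma)\},
\]
formed over $\At^{k}(E)$ using the truncation $p_{k+1} : \At^{k+1}(E) \to \At^{k}(E)$ and $\hol^{\sigma}$, with groupoid structure inherited componentwise. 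Differentiating at the identity bisection recovers precisely the fibre product $TW \oplus_{\at^k(E)} \at^{k+1}(E) \cong \A(\sigma)|_W$, so $\Lie(\Gcal) \cong \A(\sigma)|_W$.

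It remains to check that $\Gcal$ is source-simply-connected. The source map $\Gcal \to W$ factors through the source map of $\Pi(W)$, and the induced map on source fibres over $x \in W$ is a fibre bundle with fibre $\ker(p_{k+1})$, the source fibre of the kernel bundle $\Ad^{k+1}(E) \to \At^{k+1}(E) \to \At^{k}(E)$. By Lemma \ref{modulestructure}, this kernel is the vector group $\Sym^{k+1}((\rr^l)^*) \otimes \rr^l$, which is contractible. Since the source fibre of $\Pi(W)$ over $x$ is the universal cover of $W$, hence simply connected, the long exact sequence of homotopy groups for this fibration shows that the source fibres of $\Gcal$ are simply connected. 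Therefore $\Gcal$ is the source-simply-connected integration of $\A(\sigma)|_W$.
\end{proof}
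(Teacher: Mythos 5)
Your proof is correct and takes essentially the same route as the paper: the paper simply asserts that the lemma follows immediately from the fibre-product description $\A(\sigma)|_W \cong TW \oplus_{\at^k(E)} \at^{k+1}(E)$ of Corollary \ref{restrictedextendedatiyah}, and your argument is precisely the natural filling-in of that assertion (fibre product of integrations over $\At^k(E)$, plus the source-simple-connectedness check via the contractible vector-group fibres over $\widetilde{W}$). The only quibble is a slight misnaming of the kernel of $p_{k+1}:\At^{k+1}(E)\to\At^k(E)$ as (the source fibre of) $\Ad^{k+1}(E)$ rather than the sub-bundle of groups with fibre $A_{k+1,l}$, which does not affect the argument.
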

Therefore, we obtain a short exact sequence of groupoids 
\begin{equation}\label{eq:groupoidseq}
1 \to \Sym^{k+1}(E^*)\otimes E \to \Gcal \to \Pi(W) \to 1. 
\end{equation}
which integrates the short exact Sequence \ref{restrictedanchorsequence} used to define $e(F)$. To obtain a splitting of Sequence \ref{eq:groupoidseq}, we have the following lemma. 
\begin{lemma}
The morphism of groupoids $\At^{k+1}(E) \to \At^{k}(E)$ has a canonical splitting $S_k$. Given a linear framing of $E_{x}$, the splitting $S_k$ restricts to the splitting $s_k$ of Corollary \ref{canonicalsplitting}.  
\end{lemma}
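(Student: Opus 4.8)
The plan is to build $S_k$ directly on the gauge groupoid and then recognise it, over a linear frame, as the splitting $s_k$ of Corollary \ref{canonicalsplitting}. Recall (from the discussion after Lemma \ref{kthorderAtiyahisAtiyah}) that an arrow of $\At^m(E)$ from $x$ to $y$ is the $m$-jet at $0$ of a pointed diffeomorphism $(E_x,0)\to(E_y,0)$, and that the morphism $p_{k+1}\colon\At^{k+1}(E)\to\At^k(E)$ is $(k+1)$-jet truncation. Given an arrow $\phi\in\At^k(E)(x,y)$, I would first separate its linear part $\ell_\phi:=d_0\phi\colon E_x\to E_y$ and form the self-arrow $u_\phi:=\phi\circ\ell_\phi^{-1}\in\At^k(E)(y,y)$, which has trivial $1$-jet. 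The subgroup of $\At^k(E)(y,y)$ consisting of such self-arrows is its unipotent radical, and because $E_y$ is a vector space it has a canonical logarithm $\log u_\phi$ valued in the Lie algebra of $k$-jets at $0$ of vector fields on $E_y$ vanishing to second order; this Lie algebra decomposes canonically into homogeneous components $\bigoplus_{i=2}^k\Sym^i(E_y^*)\otimes E_y$. I would then set
\[
S_k(\phi):=\exp\!\big(\iota(\log u_\phi)\big)\circ\ell_\phi\ \in\ \At^{k+1}(E)(x,y),
\]
where $\iota$ is the inclusion of $\bigoplus_{i=2}^k\Sym^i(E_y^*)\otimes E_y$ into $\bigoplus_{i=2}^{k+1}\Sym^i(E_y^*)\otimes E_y$ by zero in the top degree, and $\exp$ is the exponential of the unipotent radical of $\At^{k+1}(E)(y,y)$.

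Two points then require verification. That $S_k$ is a section of $p_{k+1}$ follows because $\iota$ is a section of the corresponding truncation on Lie algebras, and $\exp$ and $\log$ intertwine this truncation with the truncation of the unipotent radicals, so $p_{k+1}\big(\exp(\iota(\log u_\phi))\big)=u_\phi$; since a linear map equals its own jet in all orders, $p_{k+1}(S_k(\phi))=u_\phi\circ\ell_\phi=\phi$. That $S_k$ is canonical is formal: the linear part $d_0\phi$, the unipotent isotropy groups, their $\exp$ and $\log$, the homogeneous decomposition of polynomial vector field jets, and $\iota$ are all built from nothing but the linear structure on the fibres of $E$, so $S_k$ is defined for every vector bundle and is natural under vector bundle isomorphisms. (As one should expect, $S_k$ is not a groupoid homomorphism; indeed $p_{k+1}$ admits no groupoid splitting in general, which is precisely what makes the extension classes of the following sections interesting.)

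For the comparison with $s_k$, fix $x$ and a linear framing, i.e.\ an actual linear isomorphism $g\colon\rr^l\to E_x$; conjugation by the frames $j^m_0 g$ gives group isomorphisms $c_g\colon\At^m(E)(x,x)\xrightarrow{\ \sim\ }G_{m,l}$, $m=k,k+1$, compatible with the truncations. Because $g$ is linear, $c_g$ carries the decomposition $\phi=u_\phi\circ\ell_\phi$ to the Levi decomposition $G_{k,l}\cong K_{k,l}\rtimes\GL(\rr^l)$ of Lemma \ref{lem:Gkldecomp}, identifies the unipotent radical of $\At^k(E)(x,x)$ with $K_{k,l}$, and intertwines the canonical $\log$ and homogeneous grading with the identification $K_{k,l}\cong\bigoplus_{i=1}^{k-1}\Sym^{i+1}((\rr^l)^*)\otimes\rr^l$ underlying $s_k$. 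Hence $c_g\circ S_k\circ c_g^{-1}$ is exactly the map that writes an element of $G_{k,l}$ in its Levi form, replaces the unipotent factor by its image under the zero-extension $K_{k,l}\hookrightarrow K_{k+1,l}$, and keeps the linear factor unchanged; by construction this is $s_k$. The only step that genuinely needs care is this last compatibility --- that the intrinsic $\exp/\log$ of the unipotent isotropy groups agrees with the ``direct sum decomposition of $K_{k,l}$'' built into $s_k$ --- and it is exactly here that linearity of the framing is used, so that conjugation by it preserves the grading. Everything else is unwinding definitions.
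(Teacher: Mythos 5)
Your proposal is correct. The paper's own proof is much terser: it exhibits the canonical isomorphism $\At^{k}(E) \cong (\Fr^1(E) \times G_{k,l} \times \Fr^1(E))/(G_{1,l} \times G_{1,l})$ and lets $S_k$ be induced by $s_k$, the point being (left implicit there) that $s_k$ is equivariant for left and right multiplication by $G_{1,l}$ because it is built from the Levi decomposition and the $\GL(\rr^l)$-equivariant zero-extension of the graded unipotent radical. Your argument is the fiberwise unpacking of exactly that equivariance: the decomposition $\phi = u_\phi \circ \ell_\phi$ of an arrow into unipotent and linear parts is the intrinsic counterpart of the Levi decomposition, and your $\exp/\log$ construction with the degree-by-degree inclusion $\iota$ is the intrinsic counterpart of the direct-sum description of $K_{k,l}$ underlying $s_k$. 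What your route buys is that the comparison with $s_k$ over a linear frame, which the paper asserts without comment, becomes a transparent bookkeeping check; what the paper's route buys is brevity, since the associated-bundle presentation packages naturality once and for all. Two small points worth keeping in your write-up: the verification that jet truncation intertwines $\exp$ on the unipotent radicals does require observing that truncation is a Lie algebra homomorphism, which holds because the bracket of Lemma \ref{Explicitbracket} sends degrees $(i,j)$ to degree $i+j-1$, so the top-degree component never feeds back into lower degrees; and your parenthetical that $S_k$ is only a set-theoretic splitting, not a groupoid homomorphism, is consistent with the paper's use of $S_k$ to manufacture the $2$-cocycle $\alpha_\sigma$.
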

\begin{proof}
This follows from the canonical isomorphism  
\[
\At^{k}(E) \cong (\Fr^1(E) \times G_{k,l} \times \Fr^1(E) )/(G_{1,l} \times G_{1,l}),
\]
where the group action used to define the quotient is given by 
\[
(\phi_1, g, \phi_2) \cdot (h_1, h_2) = (\phi_1 \cdot h_1, h_{1}^{-1} g h_{2}, \phi_2 \cdot h_2). \hfill \qedhere
\]  
\end{proof}
Define the following splitting of Sequence \ref{eq:groupoidseq} 
\[
S_{\sigma}: \Pi(W) \to \mathcal{G}, \qquad \gamma \mapsto (S_{k} \circ \hol^{\sigma}(\gamma), \gamma). 
\]
This induces an action of $\Pi(W)$ on $\Sym^{k+1}(E^*)\otimes E$ and a groupoid cocycle 
\[
\alpha_{\sigma} : \Pi(W) \times_{W} \Pi(W) \to \Sym^{k+1}(E^*)\otimes E, \qquad (\gamma_1, \gamma_2) \mapsto S_{\sigma}(\gamma_{1}) S_{\sigma}(\gamma_2) S_{\sigma}(\gamma_1 \gamma_2)^{-1}. 
\]
Let $[\alpha_{\sigma}] \in H^{2}(\Pi(W), \Sym^{k+1}(E^*)\otimes E)$. The $\Pi(W)$-action on $\Sym^{k+1}(E^*)\otimes E$ coincides with the representation induced by the linear flat connection on $E$ underlying $F$. 

\begin{enumerate}
\item The Van Est map from \cite{MR1103911} defines a linear map 
\[
\mathrm{VE}: H^2(\Pi(W);\Sym^{k+1}(E^*)\otimes E) \rightarrow H^2(W;\Sym^{k+1}(E^*)\otimes E).
\]
Because the Sequence \ref{eq:groupoidseq} is an integration of Sequence \ref{restrictedanchorsequence}, by the proof of Theorem 5 in \cite{cra03}, we have that \[ \mathrm{VE}([\alpha_{\sigma}] ) = e(F). \]
\item Now consider the inclusion $\iota_x : \pi_{1}(W, x) \to \Pi(W)$. Because $\Pi(W)$ is transitive, this is a Morita equivalence. Therefore, the pullback $\iota_{x}^*$ defines an isomorphism of cohomology groups: 
\[
\iota_{x} : H^{2}(\Pi(W), \Sym^{k+1}(E^*)\otimes E) \to H^{2}(\pi_{1}(W, x), \Sym^{k+1}((\rr^{l})^*) \otimes \rr^l). 
\]
It is clear that $[\alpha_{\sigma}] $ gets sent to $\rho^*E_{k+1,l} $ under this map since the splitting $\alpha_{\sigma}$ restricts to the splitting $\rho^*(s_{k})$. 
\end{enumerate}

Combining these two observations, we arrive at the following comparison result. 

\begin{proposition} \label{ComparisonmapVE}
Let $E \to W$ be a vector bundle and let $F$ be a $k$-th order foliation around $W$. Given a basepoint $x \in W$ and a linear framing $g \in \Fr^k(E)$, let $\rho : \pi_{1}(W, x) \to G_{k,l}$ be the associated representation given by Theorem \ref{RHthm}. Then there is a canonically induced homomorphism of cohomology groups 
\[
\mathrm{VE} \circ  \iota_{x}^{-1} : H^2(\pi_1(W,x),\Sym^{k+1}((\rr^{l})^*) \otimes \rr^l) \to H^{2}(W, \mathrm{Sym}^{k+1}(E^*) \otimes E)
\]
induced by the Van Est map and the Morita equivalence between $\Pi(W)$ and $\pi_{1}(W,x)$. Under this map 
\[
\mathrm{VE} \circ  \iota_{x}^{-1} (\rho^*E_{k+1,l} ) = e(F). 
\]
\end{proposition}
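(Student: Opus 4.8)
The plan is to break Proposition \ref{ComparisonmapVE} into its two constituent assertions, which are already identified in the two numbered observations preceding the statement, and then simply to check that they fit together. First I would recall the groupoid short exact sequence \eqref{eq:groupoidseq}, which by the lemma preceding it is an integration of the Lie algebroid sequence \eqref{restrictedanchorsequence} defining $e(F)$. Restricting everything to the fibre over the basepoint $x$ via the inclusion $\iota_x : \pi_1(W,x) \to \Pi(W)$ — which is a Morita equivalence because $\Pi(W)$ is transitive — recovers the group extension \eqref{eq:groupseq} pulled back along $\rho = \hol_x^{\sigma,g}$. Thus there is a single intermediate groupoid-cohomology class $[\alpha_\sigma] \in H^2(\Pi(W);\Sym^{k+1}(E^*)\otimes E)$, built from the canonical splitting $S_\sigma$, that maps to $e(F)$ on one side and to $\rho^*E_{k+1,l}$ on the other.

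The first step is to show $\mathrm{VE}([\alpha_\sigma]) = e(F)$. Here I would invoke the fact that $\Gcal = \At^{k+1}(E)\times_{\hol^\sigma}\Pi(W)$ is the source-simply-connected integration of $\A(\sigma)|_W$ (the lemma above), that \eqref{eq:groupoidseq} integrates \eqref{restrictedanchorsequence}, and then quote the computation of the Van Est map on the extension class of an integrable Lie algebroid extension from the proof of Theorem 5 in \cite{cra03}: the Van Est map sends the extension class of the groupoid extension to the extension class of the integrating algebroid extension. Since $e(F)$ is by Definition \ref{def:extension} precisely the extension class of \eqref{restrictedanchorsequence}, and $[\alpha_\sigma]$ is (by construction via the splitting $S_\sigma$ lifting $\hol^\sigma$ through the canonical $S_k$) the extension class of \eqref{eq:groupoidseq}, this gives the identity. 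The only care needed is to confirm that the $\Pi(W)$-module structure on $\Sym^{k+1}(E^*)\otimes E$ used to form $[\alpha_\sigma]$ is exactly the one induced by the flat connection $\nabla_F$ on $\nu_W$, so that the target cohomology groups on both sides genuinely agree; this is noted just before the two observations and follows from Lemma \ref{modulestructure} together with the fact that $\hol^\sigma$ covers parallel transport for $\nabla_F$.

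The second step is to show that $\iota_x^*[\alpha_\sigma] = \rho^*E_{k+1,l}$. Since $\iota_x$ is a Morita equivalence, $\iota_x^*$ is an isomorphism on $H^2$, and one only needs to chase the cocycles: restricting $S_\sigma$ along $\iota_x$ and using a linear framing $g$ to identify $\At^{k+1}(E)|_x \cong G_{k+1,l}$, $\At^k(E)|_x \cong G_{k,l}$, the canonical groupoid splitting $S_k$ restricts to the canonical group splitting $s_k$ of Corollary \ref{canonicalsplitting} (this is exactly the content of the lemma stating $S_k|_x = s_k$). Hence $S_\sigma$ restricts to $\rho^*(s_k)$, and therefore the groupoid cocycle $\alpha_\sigma$ restricts to $\rho^*$ of the group cocycle $\alpha_{s_k}$ representing $E_{k+1,l}$; passing to cohomology gives $\iota_x^*[\alpha_\sigma] = \rho^*E_{k+1,l}$. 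Combining the two steps yields $\mathrm{VE}\circ\iota_x^{-1}(\rho^*E_{k+1,l}) = \mathrm{VE}([\alpha_\sigma]) = e(F)$, which is the claim; the well-definedness of the composite $\mathrm{VE}\circ\iota_x^{-1}$ as a homomorphism is automatic once both $\mathrm{VE}$ and $\iota_x^*$ are known to be homomorphisms, the latter an isomorphism.

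I expect the main obstacle to be the first step: carefully matching the normalisation conventions in \cite{cra03} for the Van Est map and for extension classes of algebroid/groupoid extensions with the conventions used here (signs, the direction of the splitting, and whether one works with the source-simply-connected integration or a general one). In particular one must ensure that the de Rham representative of $e(F)$ from Definition \ref{def:extension}, namely $\Omega(X,Y)=[\sigma(X),\sigma(Y)]-\sigma([X,Y])$, is literally the image under Van Est of the groupoid cocycle $\alpha_\sigma$ built from $S_\sigma$; this is a bookkeeping matter rather than a conceptual one, but it is where the proof could go wrong if the conventions are not pinned down.
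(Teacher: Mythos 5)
Your proposal follows the paper's argument essentially verbatim: the same intermediate groupoid cocycle $[\alpha_{\sigma}]$ built from the splitting $S_{\sigma}$, the same appeal to the proof of Theorem 5 in \cite{cra03} to get $\mathrm{VE}([\alpha_{\sigma}]) = e(F)$, and the same restriction argument along the Morita equivalence $\iota_x$ showing $\iota_x^*[\alpha_{\sigma}] = \rho^*E_{k+1,l}$. This is correct and matches the paper's proof.
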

An immediate corollary of the fact that $e(F)$ is in the image of the Van-Est map is the following. 
\begin{corollary} \label{cor:asphericalperiods}
The extension class $e(F) \in H^2(W;\Sym^{k+1}(\nu_W^*) \otimes \nu_W)$ is aspherical, that is:
\begin{equation*}
\int_{S^2} e(F) = 0,
\end{equation*}
for any map from $S^2$ into $W$.
\end{corollary}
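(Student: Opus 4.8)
The plan is to read the conclusion off Proposition \ref{ComparisonmapVE}, which exhibits $e(F)$ as the image of a \emph{group} cohomology class: $e(F) = (\mathrm{VE}\circ\iota_x^{-1})(\rho^*E_{k+1,l})$ with $\rho^*E_{k+1,l}\in H^2(\pi_1(W,x),\Sym^{k+1}((\rr^l)^*)\otimes\rr^l)$. Writing $\underline V = \Sym^{k+1}(\nu_W^*)\otimes\nu_W$ for the flat bundle over $W$ carrying the connection induced by $F$, asphericity should follow from the general principle that a cohomology class pulled back from $B\pi_1(W)$ has vanishing periods over $2$-spheres.

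Concretely, the first step is to identify, under the de Rham description of cohomology with local coefficients, the homomorphism $\mathrm{VE}\circ\iota_x^{-1}\colon H^2(\pi_1(W,x),V)\to H^2(W,\underline V)$ with the pullback $c^*$ along a classifying map $c\colon W\to K(\pi_1(W,x),1)$ for the universal cover of $W$. This is the standard comparison between group cohomology and sheaf/de Rham cohomology with local coefficients; packaged as the Morita equivalence $\Pi(W)\simeq\pi_1(W,x)$ followed by the Van Est map, it is exactly the identification already invoked (via \cite{cra03}) in the paragraph preceding the corollary. Granting this, $e(F)=c^*\bar e$ for some $\bar e\in H^2(K(\pi_1(W,x),1),\underline V)$. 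Then, for any smooth $g\colon S^2\to W$, the composite $c\circ g\colon S^2\to K(\pi_1(W,x),1)$ is null-homotopic, since $\pi_1(S^2)=0$ lets $c\circ g$ lift to the weakly contractible universal cover; hence $g^*e(F)=(c\circ g)^*\bar e=0$ in $H^2(S^2,g^*\underline V)$, and since $g^*\underline V$ is (canonically) trivial over the simply connected $S^2$, integrating gives $\int_{S^2}e(F):=\int_{S^2}g^*e(F)=0$.

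The only point needing care is the identification $\mathrm{VE}\circ\iota_x^{-1}=c^*$; everything else is formal. I expect it to be routine from the literature, but a self-contained alternative avoiding classifying spaces is available: apply the induced groupoid morphism $\Pi(g)\colon\Pi(S^2)=\mathrm{Pair}(S^2)\to\Pi(W)$ to the cocycle $\alpha_\sigma$ representing $[\alpha_\sigma]$, observe that the differentiable cohomology $H^2$ of the pair groupoid of a connected manifold vanishes (the pair groupoid is Morita trivial), so $\Pi(g)^*[\alpha_\sigma]=0$, and use naturality of the Van Est map over $g$ to get $g^*e(F)=\mathrm{VE}(\Pi(g)^*[\alpha_\sigma])=0$ in $H^2_{dR}(S^2,g^*\underline V)$; then $g^*e(F)=d^\nabla\beta$ and its $S^2$-period vanishes by Stokes after trivialising $g^*\underline V$. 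Either route delivers the claim.
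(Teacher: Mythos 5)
Your proposal is correct and follows the paper's own route: the paper deduces the corollary as an immediate consequence of Proposition \ref{ComparisonmapVE}, i.e.\ of $e(F)$ lying in the image of the Van Est map applied to a class pulled back from the group cohomology of $\pi_1(W,x)$, which is precisely the principle you spell out (in two equivalent packagings, via the classifying map and via Morita triviality of $\mathrm{Pair}(S^2)$). The paper's accompanying remark also records a second, independent argument you do not use --- pull back along $S^2\to W$, apply Proposition \ref{prop:simplyconnected} to realize the pulled-back jet by an honest foliation, and invoke functoriality of the extension class --- but that is offered only as an alternative.
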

Note that this also follows from Proposition \ref{prop:simplyconnected}. Indeed, as $S^2$ is simply connected and the extension class is functorial we find that the extension class vanishes when pulled back to any $S^2$.
\section{The $k$-jet character stack} \label{sec:character}
In this final section, we synthesize our results and study the character varieties classifying $k$-th order foliations. Given a closed and connected manifold $W$ with a chosen basepoint $x \in W$, we define 
\[
\mathcal{M}_{k,l}(W) := G_{k,l} \ltimes \Hom(\pi_1(W, x), G_{k,l})
\]
to be the $k$-jet character groupoid of $W$. The choice of a rank $l$ vector bundle $E \to W$ with framing $\phi : G_{k,l} \to \Fr^k(E|_{x})$ determines a full subgroupoid 
\[
\mathcal{M}_{k}(W,E) := G_{k,l}(\phi) \ltimes \Hom_{(\Fr^k(E),\phi)}(\pi_1(W,x),G_{k,l}). 
\]
Theorem \ref{topologicalRHfoliation} states that there is a fibration from the groupoid of $k$-th order foliations in $E$ along $W$ to $\mathcal{M}_{k}(W,E)$. As a result, the quotient space 
\[
M_{k}(W,E) := \Hom_{(\Fr^k(E),\phi)}(\pi_1(W,x),G_{k,l})/G_{k,l}(\phi)
\]
is homeomorphic to the space of isomorphism classes of $k$-th order foliations. 

Recall from Remark \ref{Gphiconj} that, since $G_{k,l}$ has two connected components, the open subgroup $G_{k,l}(\phi)$ is completely determined by the vector bundle $E$. We can determine this subgroup in certain cases. 

\begin{lemma}
Let $E \rightarrow W$ be a rank $l$ vector bundle. The subgroup $G_{k,l}(\phi)$ is either the connected component of the identity $G_{k,l}^{0}$ or the entire group $G_{k,l}$. If the rank $l$ of $E$ is odd, then $G_{k,l}(\phi) = G_{k,l}$. If the rank $l = 2$, then $G_{k,2}(\phi) = G_{k,2}$ if and only if $E$ admits a line subbundle. 
\end{lemma}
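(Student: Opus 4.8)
The plan is to first dispose of the dichotomy, then reduce the criterion ``$G_{k,l}(\phi) = G_{k,l}$'' to a statement about automorphisms of $E$, and finally treat the two arithmetic cases separately.

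For the dichotomy: by Lemma \ref{surjectivegauge} the subgroup $G_{k,l}(\phi)$ is the image of the evaluation homomorphism $\Gau(\Fr^{k}(E)) \to G_{k,l}$ cut out by the framing $\phi$, so it is a subgroup; it is open (any near-identity element of $G_{k,l}$ is realised by a gauge transformation supported near $x$, built from a bump function), hence clopen, hence a union of connected components; and $G_{k,l}$ has exactly two components by Corollary \ref{cor:equivarianthomotopy}. As it contains the identity, $G_{k,l}(\phi)$ is either $G_{k,l}^{0}$ or all of $G_{k,l}$, and it is all of $G_{k,l}$ exactly when some gauge transformation of $\Fr^{k}(E)$ evaluates into the non-identity component. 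I would then reduce this to $k = 1$: the projection $\Fr^{k}(E) \to \Fr^{1}(E)$ together with the $G_{1,l}$-equivariant splitting $j_{k}\colon G_{1,l}\to G_{k,l}$ (which descends to a bundle-of-groups section $\GL(E)=\Ad^{1}(E) \to \Ad^{k}(E)$) and the fact that $\pi_{k}\colon G_{k,l}\to G_{1,l}$ has connected unipotent kernel, hence is a $\pi_{0}$-isomorphism (Lemma \ref{lem:Gkldecomp}), show that $G_{k,l}(\phi) = G_{k,l}$ if and only if the bundle $\GL(E)$ of vector-bundle automorphisms of $E$ admits a section $\alpha$ with negative fibrewise determinant. (Alternatively one can invoke Remark \ref{Gphiconj} and Lemma \ref{lem:gaugehomotopy} for the reduction.) Since $\det$ is conjugation-invariant it is a well-defined morphism $\GL(E) \to W\times\RR^{*}$, so $\det\alpha\colon W \to \RR^{*}$ has constant sign on the connected manifold $W$; thus it is equivalent to ask that $\det\alpha < 0$ at $x$ or everywhere.

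The odd-rank case is then immediate, taking $\alpha = -\Id_{E}$, which has $\det\alpha \equiv (-1)^{l} = -1$. For rank $l = 2$ I would prove both implications. If $L \subset E$ is a line subbundle, pick a fibre metric and set $L' = L^{\perp}$, so $E = L \oplus L'$; then $\alpha = \Id_{L} \oplus (-\Id_{L'})$ has $\det\alpha \equiv -1$, so $G_{k,2}(\phi) = G_{k,2}$. Conversely, given $\alpha$ with $\det\alpha < 0$, at each $y \in W$ the characteristic polynomial $\lambda^{2} - (\operatorname{tr}\alpha_{y})\lambda + \det\alpha_{y}$ has negative constant term, hence strictly positive discriminant and two real simple roots with $\lambda_{+}(y) > 0 > \lambda_{-}(y)$; so $\lambda_{+} = \tfrac{1}{2}\big(\operatorname{tr}\alpha + \sqrt{(\operatorname{tr}\alpha)^{2} - 4\det\alpha}\,\big)$ is a smooth function on $W$ and $\alpha - \lambda_{+}\Id_{E}$ is a bundle endomorphism of $E$ of locally constant rank $l - 1 = 1$, whence $L := \ker(\alpha - \lambda_{+}\Id_{E})$ is a line subbundle of $E$.

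The main obstacle I expect is the converse in the rank-$2$ case: one must make sure the $\lambda_{+}$-eigenlines actually assemble into a subbundle, which rests on the observation that $\det\alpha < 0$ forces the two eigenvalues to be real and distinct at every point, so that $\lambda_{+}$ is smooth and $\alpha - \lambda_{+}\Id_{E}$ has constant rank, putting us in the setting of the standard kernel-of-a-constant-rank-morphism criterion. A lesser technical point is the reduction to $k = 1$, where I must check that the connected component of a gauge transformation of $\Fr^{k}(E)$ is detected by the determinant of its image in $\GL(E)$; this is exactly what the equivariant splitting $j_{k}$ and the unipotence of $\ker(G_{k,l} \to G_{1,l})$ provide.
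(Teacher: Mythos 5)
Your proposal is correct and follows essentially the same route as the paper: the paper reduces to $k=1$ via the fibrewise homotopy equivalence of gauge groups (Lemma \ref{lem:gaugehomotopy}) and then cites Lemma \ref{Gphidetermination} and Corollaries \ref{oddn} and \ref{evenn} from the appendix, whose proofs are exactly your determinant criterion, the $-\Id_E$ trick for odd rank, and the eigenline argument for rank $2$. The only difference is that you reprove those appendix statements inline and spell out the $\pi_0$-isomorphism coming from the connected unipotent kernel of $\pi_k$, which the paper leaves implicit.
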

\begin{proof}
By Lemma \ref{lem:gaugehomotopy}, there is homotopy between $\mathrm{Gau}(\Fr^k(E))$ and $\mathrm{Gau}(\Fr^1(E))$ which respects the restriction to a fibre. As a result, $G_{k,l}(\phi)$ and $G_{1,l}(\phi)$ are homotopic. The later group was determined in Corollaries \ref{oddn} and \ref{evenn}. 
\end{proof}

Let $M$ be a manifold containing $W$, let $\Phi : \nu_{W} \to M$ be a tubular neighbourhood embedding, and let $\phi : G_{k,l} \to \Fr^k(\nu_{W}|_{x})$ be a framing. This determines the groupoid 
\[
\mathcal{M}_{k}^{0}(W,M) := G_{k,l}^0 \ltimes \IsotopyObj(\Fr^k(\nu_{W}),\phi).
\]
Theorem \ref{RHuptoisotopy} states that there is a fibration from the groupoid of $k$-th order foliations in $M$ along $W$, taken up to isotopy, to $\mathcal{M}_{k}^{0}(W,M)$. As a result, the quotient space 
\[
M_{k}^{0}(W,M) := \IsotopyObj(\Fr^k(\nu_{W}),\phi)/G_{k,l}^{0}
\]
is homeomorphic to the space of isotopy classes of $k$-th order foliations in $M$ along $W$. By Corollary \ref{cor:constructingH}, the space $\IsotopyObj(\Fr^k(\nu_{W}),\phi)$ is a principal $D(\Fr^k(\nu_{W}))$-bundle over $\Hom_{(\Fr^k(E),\phi)}(\pi_1(W,x),G_{k,l})$, where 
\[ D(\Fr^k(\nu_{W})) = \ker(\pi_0(\Gau(\Fr(\nu_{W}))) \rightarrow \pi_0(\mathrm{GL}(\mathbb{R}^l))), \]
and where we have used Lemma \ref{lem:gaugehomotopy} to identify $ D(\Fr^k(\nu_{W})) \cong  D(\Fr(\nu_{W}))$. This group can also be determined in certain cases by the results of Sections \ref{sec:linebundles} and \ref{sec:2bundles}. 
\begin{lemma} \label{lem:Dprincbundle}
When $W \subset M$ has codimension $1$, $D(\Fr^k(\nu_{W}))$ is trivial. When $W \subset M$ has codimension $2$, 
\[
D(\Fr^k(\nu_{W})) \cong H^{1}(W, \mathbb{Z}_{\nu}),
\]
where $\mathbb{Z}_{\nu}$ is the local coefficient system associated to the orientation bundle of $\nu_{W}$. 
\end{lemma}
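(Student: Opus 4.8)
The claim computes the group $D(\Fr^k(\nu_{W})) = \ker\big(\pi_0(\Gau(\Fr(\nu_{W}))) \to \pi_0(\mathrm{GL}(\mathbb{R}^l))\big)$, which by Lemma \ref{lem:gaugehomotopy} only depends on the underlying vector bundle $\nu_W$, not on $k$. So first I would reduce to the case $k=1$, where $\Gau(\Fr^1(\nu_{W})) = \Gamma(W,\Ad^1(\nu_W))$ is the group of sections of the bundle of groups with fibre $\mathrm{GL}(\mathbb{R}^l)$, i.e.\ gauge transformations of $\nu_W$ in the ordinary sense. The point is to compute $\pi_0$ of this section space (with the Whitney $C^\infty$ topology, which for sections of a fibre bundle over a compact base agrees with the compact-open topology up to homotopy).

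Next I would unwind the codimension $1$ case. Here $l = 1$ and $\nu_W$ is a real line bundle, so $\Gau(\Fr(\nu_W)) = \Gamma(W,\mathrm{GL}_1(\nu_W))$, which is the space of nowhere-vanishing sections of $\nu_W^*\otimes\nu_W \cong \underline{\mathbb{R}}$ — that is, nowhere-vanishing real functions on $W$, which has exactly two components distinguished by overall sign. The map to $\pi_0(\mathrm{GL}(\mathbb{R}^1)) = \pi_0(\mathbb{R}^*) = \mathbb{Z}/2$ given by evaluation at $x$ is an isomorphism on $\pi_0$ (the global sign equals the sign at any point, $W$ being connected), so the kernel $D$ is trivial. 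This should be a short paragraph citing the relevant results of Section \ref{sec:linebundles}.

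For the codimension $2$ case I would invoke the classification of the gauge group of a rank-$2$ real bundle carried out in Section \ref{sec:2bundles}. The structure group $\mathrm{GL}(\mathbb{R}^2)$ deformation retracts onto $\mathrm{O}(2)$, so one may work with the $\mathrm{O}(2)$-frame bundle; the adjoint bundle is then a bundle with fibre $\mathrm{O}(2)$, and the relevant $\pi_0$ is governed by the determinant/orientation data. Concretely, the group of components of $\Gamma(W,\Ad(\Fr(\nu_W)))$ sits in an exact sequence whose image in $\pi_0(\mathrm{O}(2)) = \mathbb{Z}/2$ records the ``global determinant sign'' and whose kernel — sections landing in the identity component $\mathrm{SO}(2)$ up to homotopy, modulo those connected to the constant — is $[W, \mathrm{SO}(2)] = [W,S^1] = H^1(W;\mathbb{Z})$ when $\nu_W$ is orientable, and twisted by the orientation character of $\nu_W$ otherwise. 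That twisted group is exactly $H^1(W;\mathbb{Z}_\nu)$: sections of the $\mathrm{SO}(2) \cong S^1$-subbundle of $\Ad(\Fr(\nu_W))$, whose monodromy is the $\mathbb{Z}/2$-action flipping $S^1$ precisely along loops where $\nu_W$ is non-orientable, have homotopy classes classified by the cohomology of $W$ with coefficients in the local system $\mathbb{Z}_\nu$. The map to $\pi_0(\mathrm{GL}(\mathbb{R}^2))$ is (up to homotopy) evaluation at $x$ composed with $\det$, whose kernel on $\pi_0$ is precisely this $H^1(W;\mathbb{Z}_\nu)$.

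The main obstacle I anticipate is the bookkeeping in the codimension $2$ step: carefully identifying $\pi_0$ of the section space of the $\mathrm{O}(2)$-adjoint bundle with a twisted cohomology group, and checking that the twisting local system is exactly the orientation system $\mathbb{Z}_\nu$ of $\nu_W$ (rather than, say, of $W$ or of some tensor combination), and that the constant sections land in the correct component so that the quotient by $\pi_0$ of the structure group leaves $H^1(W;\mathbb{Z}_\nu)$ as the kernel. This is essentially a matter of assembling the obstruction-theoretic/covering-space description of $[W, S^1_{\text{twisted}}]$ and matching conventions with Sections \ref{sec:linebundles} and \ref{sec:2bundles}, which the excerpt tells us have already been established; so the proof should mostly be a citation with a brief explanation of why $D$ equals the stated kernel, rather than a from-scratch computation.
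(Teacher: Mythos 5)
Your proposal is correct and follows essentially the same route as the paper: reduce to $k=1$ via Lemma \ref{lem:gaugehomotopy}, observe that for a line bundle $\Gau(\Fr(\nu_W)) = C^\infty(W,\mathbb{R}^*)$ so that $D$ is trivial (Section \ref{sec:linebundles}), and in rank $2$ retract $\mathrm{GL}(2,\mathbb{R})$ onto $O(2)$ and identify $D$ with $\pi_0$ of the sections of the twisted circle bundle $\Ad(P)^0 \cong \mathbb{S}^1_{\nu}$, which is $H^1(W,\mathbb{Z}_\nu)$ (Lemma \ref{lem:final}). The only cosmetic difference is that the paper computes $\pi_0(\Gamma(\mathbb{S}^1_P))$ via the fibrewise exponential sequence $0 \to \mathbb{Z}_P \to \mathbb{R}_P \to \mathbb{S}^1_P \to 1$ and fineness of $\mathbb{R}_P$, whereas you invoke the equivalent obstruction-theoretic description of homotopy classes of sections of a twisted $S^1$-bundle.
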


\subsubsection{Morphisms between $k$-jet character varieties}
Several properties of the $k$-jet character varieties follow readily from the properties of the structure groups $G_{k,l}$ which were determined in Section \ref{structuregroup}. The homomorphism $\pi_{k} : G_{k,l} \to G_{1,l} = \mathrm{GL}(\mathbb{R}^l)$ determines a groupoid homomorphism 
\[
\pi_{k,*} : \mathcal{M}_{k,l}(W) \to \mathcal{M}_{1,l}(W)
\]
such that $\mathcal{M}_{k}(W, E) = \pi_{k,*}^{-1}(\mathcal{M}_{1}(W, E))$. The splitting $j_{k}: G_{1,l} \to G_{k,l}$ of $\pi_{k}$ induces a groupoid homomorphism 
\[
j_{k, *} : \mathcal{M}_{1,l}(W) \to  \mathcal{M}_{k,l}(W) 
\]
satisfying $\pi_{k,*} \circ j_{k, *} = id_{ \mathcal{M}_{1,l}(W) }$. By Corollary \ref{cor:equivarianthomotopy}, there is a homotopy $h_{t}: G_{k,l} \to G_{k,l}$ between $id_{G_{k,l}}$ and $j_{k} \circ \pi_{k}$ through group homomorphisms. This induces a homotopy 
\[
H: [0,1] \times \mathcal{M}_{k,l}(W) \to \mathcal{M}_{k,l}(W) 
\] 
through groupoid homomorphisms between  $id_{ \mathcal{M}_{k,l}(W) }$ and $j_{k, *} \circ \pi_{k,*} $. Since $\pi_{k,*}, j_{k, *}$ and $H_t$ are groupoid homomorphisms, they induce continuous maps between the orbit spaces that we denote with the same notation. Therefore, we obtain the following. 

\begin{lemma} \label{lem:orbitcl}
The groupoids $\mathcal{M}_{1,l}(W)$ and $\mathcal{M}_{k,l}(W)$ are homotopy equivalent through groupoid homomorphisms. As a result, the topological spaces $M_{1,l}(W)$ and $M_{k,l}(W)$ are homotopy equivalent. Furthermore, given each point $x \in M_{1,l}(W)$, the point $j_{k,*}(x)$ lies in the closure of every point in $\pi_{k,*}^{-1}(x)$.
\end{lemma}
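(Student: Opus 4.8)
\textbf{Proof plan for Lemma \ref{lem:orbitcl}.}

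The plan is to exploit Corollary \ref{cor:equivarianthomotopy}, which gives a $G_{1,l}$-equivariant family of group homomorphisms $h_t : G_{k,l} \to G_{k,l}$ with $h_1 = \mathrm{id}$ and $h_0 = j_k \circ \pi_k$. First I would observe that each $h_t$, being a group homomorphism, induces a functor on the action groupoid $\mathcal{M}_{k,l}(W) = G_{k,l} \ltimes \Hom(\pi_1(W,x), G_{k,l})$: on objects it acts by post-composition $\rho \mapsto h_t \circ \rho$, and on arrows it acts as $g \mapsto h_t(g)$. One checks this is a well-defined groupoid homomorphism because $h_t$ intertwines conjugation ($h_t(g\rho g^{-1}) = h_t(g) (h_t\circ\rho) h_t(g)^{-1}$), and that it depends continuously on $t$ since $h_t$ is given by the explicit formula $g \mapsto j_k(t^{-1}\mathrm{id})^{-1} g\, j_k(t^{-1}\mathrm{id})$ for $t>0$ and extends smoothly to $t=0$. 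Assembling these gives the homotopy $H : [0,1] \times \mathcal{M}_{k,l}(W) \to \mathcal{M}_{k,l}(W)$ of groupoid homomorphisms between $\mathrm{id}$ and $j_{k,*} \circ \pi_{k,*}$; together with $\pi_{k,*} \circ j_{k,*} = \mathrm{id}_{\mathcal{M}_{1,l}(W)}$ (immediate from $\pi_k \circ j_k = \mathrm{id}$), this establishes that $\mathcal{M}_{1,l}(W)$ and $\mathcal{M}_{k,l}(W)$ are homotopy equivalent through groupoid homomorphisms.

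Next I would pass to orbit spaces. A homomorphism of topological groupoids descends to a continuous map between orbit spaces (the orbit space is a functor on topological groupoids, since a homomorphism sends an orbit into an orbit), and a homotopy of homomorphisms descends to an ordinary homotopy of the induced maps on orbit spaces. Applying this to $H$, $\pi_{k,*}$, $j_{k,*}$ yields that the induced maps on orbit spaces satisfy $\pi_{k,*} \circ j_{k,*} \simeq \mathrm{id}_{M_{1,l}(W)}$ (in fact equality) and $j_{k,*} \circ \pi_{k,*} \simeq \mathrm{id}_{M_{k,l}(W)}$, so $M_{1,l}(W) \simeq M_{k,l}(W)$.

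For the closure statement, fix $x \in M_{1,l}(W)$ represented by $\rho_1 : \pi_1(W,x) \to G_{1,l}$, and let $y \in \pi_{k,*}^{-1}(x)$ be represented by some $\rho : \pi_1(W,x) \to G_{k,l}$ with $\pi_k \circ \rho$ conjugate to $\rho_1$. I would use the explicit form $h_t(g) = j_k(t^{-1}\mathrm{id})^{-1} g\, j_k(t^{-1}\mathrm{id})$: for $t > 0$ the representation $h_t \circ \rho$ is conjugate to $\rho$ (conjugation by the element $j_k(t^{-1}\mathrm{id}) \in G_{k,l}$), hence lies in the same $G_{k,l}$-orbit as $\rho$ and so represents $y$ in $M_{k,l}(W)$; as $t \to 0$, $h_t \circ \rho \to h_0 \circ \rho = j_k \circ \pi_k \circ \rho$, which is conjugate to $j_k \circ \rho_1$ and therefore represents $j_{k,*}(x)$. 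Thus $j_{k,*}(x)$ is a limit in $M_{k,l}(W)$ of points all equal to $y$, so $j_{k,*}(x)$ lies in the closure of $\{y\}$; since $y \in \pi_{k,*}^{-1}(x)$ was arbitrary, this proves the claim.

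The main obstacle I anticipate is bookkeeping at the level of topological groupoids: one must verify carefully that each $h_t$ really defines a morphism of \emph{topological} groupoids (continuity of the action on $\Hom(\pi_1(W,x),G_{k,l})$, which uses that $\pi_1(W,x)$ is finitely presented so that this is an algebraic variety and post-composition with the polynomial map $h_t$ is continuous), and that joint continuity in $(t, \rho)$ and in $(t,g)$ holds for $H$ — in particular the extension across $t=0$. Once the groupoid-level statements are in place, descending to orbit spaces and reading off the closure relation from the explicit limit $h_t \to h_0$ is routine.
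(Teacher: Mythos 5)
Your proposal is correct and follows essentially the same route as the paper: the homotopy equivalence is obtained by pushing forward the equivariant homotopy $h_t$ of Corollary \ref{cor:equivarianthomotopy} to the action groupoids and then to orbit spaces, and the closure statement is deduced from the observation that $[h_t\circ\rho]$ is the constant path at $y$ for $t>0$ (since $h_t$ is inner) while converging to $j_{k,*}(x)$ at $t=0$. The only difference is cosmetic: you flag the continuity bookkeeping explicitly, which the paper treats as immediate.
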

\begin{proof}
It remains only to show that $j_{k,*}(x)$ lies in the closure of every point of $\pi_{k,*}^{-1}(x)$. Let $y \in \pi_{k,*}^{-1}(x)$ and let $\varphi \in \Hom(\pi_1(W, x), G_{k,l})$ be a representative. Then the path of equivalence classes $[h_{t} \circ \varphi]$ goes between $[h_{0} \circ \varphi] = j_{k,*}(x)$ and $[h_1 \circ \varphi] = y$. But for $t > 0$, the map $h_t$ is conjugation by $j_{k}(t \ id)$. Hence $[h_{t} \circ \varphi] = [C_{j_{k}(t \ id)} \circ \varphi] = [\varphi] = y$. In other words, $[h_{t} \circ \varphi]$ is the constant path at $y$ for $t \neq 0$.
\end{proof}
\begin{remark}
Lemma \ref{lem:orbitcl} implies that $M_{k,l}(W)$ is non-Hausdorff whenever the pre-images $\pi_{k,*}^{-1}(x)$ contain more than one point. To remedy this, it is often useful to consider the subspace $M_{k,l}(W)^{\times} = M_{k,l}(W) \setminus j_{k,*}(M_{1,l}(W))$.
\end{remark}

Finally, consider the sequence of morphisms
\begin{equation*}
\cdots  \overset{p_{k+2}}{\rightarrow}  G_{k+1,l}  \overset{p_{k+1}}{\rightarrow}  G_{k,l}  \overset{p_{k}}{\rightarrow}  G_{k-1,l}  \overset{p_{k-1}}{\rightarrow} \cdots,
\end{equation*}
and the induced maps between the character groupoids
\begin{equation*}
\cdots \overset{p_{k+2,*}}{\rightarrow} \mathcal{M}_{k+1,l}(W) \overset{p_{k+1,*}}{\rightarrow}\mathcal{M}_{k,l}(W) \overset{p_{k,*}}{\rightarrow} \mathcal{M}_{k-1,l}(W) \overset{p_{k-1,*}}{\rightarrow} \cdots.
\end{equation*}
These maps correspond to sending a $(k+1)$-st order foliation to the underlying $k$-th order foliation. As a result, the image of each map consists of those $k$-th order foliations which can be extended to a $(k+1)$-st order foliation. Consequently, we have the following result. 
\begin{corollary}\label{cor:extimage}
The image $p_{k+1,*}(\mathcal{M}_{k+1}(W,E))$ coincides with those elements in $\mathcal{M}_{k}(W,E)$ with zero-extension class.
\end{corollary}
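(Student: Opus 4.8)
The plan is to deduce Corollary~\ref{cor:extimage} directly from the dictionary between foliations and representations, together with the reformulation of the extension problem via the extension class $E_{k+1,l}$. First I would fix a $k$-th order foliation $F$ in $E$ along $W$ and let $\rho : \pi_1(W,x) \to G_{k,l}$ be the associated representation furnished by Theorem~\ref{RHthm}; by construction $\rho$ lands in $\Hom_{(\Fr^k(E),\phi)}(\pi_1(W,x),G_{k,l})$ because the underlying framed principal $G_{k,l}$-bundle is $(\Fr^k(E),\phi)$. The point to establish is an equivalence of three conditions: (i) the class of $\rho$ lies in $p_{k+1,*}(\mathcal{M}_{k+1}(W,E))$; (ii) $F$ extends to a $(k+1)$-st order foliation $\tilde F$ in $E$ along $W$; (iii) $\rho^* E_{k+1,l} = 0$ in $H^2(\pi_1(W,x),\Sym^{k+1}((\rr^l)^*)\otimes\rr^l)$. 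The equivalence (ii)$\Leftrightarrow$(iii) is precisely Theorem~\ref{th:secondextensionclass}, so the real work is (i)$\Leftrightarrow$(ii).

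For (ii)$\Rightarrow$(i): if $\tilde F$ is a $(k+1)$-st order foliation in $E$ extending $F$, then Theorem~\ref{RHthm} (applied at level $k+1$) assigns to it a representation $\tilde\rho : \pi_1(W,x) \to G_{k+1,l}$, and since the forgetful map $u_{k+1}$ on foliations corresponds under Riemann--Hilbert to the group homomorphism $p_{k+1}$ (both are ``forget the top-order jet''; this compatibility is implicit in the holonomy construction of Section~\ref{holonomydefsection} and the tower \eqref{tower}), we get $p_{k+1}\circ\tilde\rho = \rho$ up to conjugation, hence $[\rho]=p_{k+1,*}[\tilde\rho]$ lies in the image. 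One must also check $\tilde\rho$ has the right framed-bundle type, i.e.\ lies in $\Hom_{(\Fr^{k+1}(E),\phi)}$, which holds because the underlying $(k{+}1)$-st order frame bundle of $F$ on $E$ is literally $\Fr^{k+1}(E)$. For (i)$\Rightarrow$(ii): given $[\rho]=p_{k+1,*}[\tilde\rho]$ with $[\tilde\rho]\in\mathcal{M}_{k+1}(W,E)$, use essential surjectivity of the level-$(k+1)$ Riemann--Hilbert functor (Theorem~\ref{RHthm}, or the topological refinement Theorem~\ref{topologicalRHfoliation}) to realise $\tilde\rho$ by a $(k+1)$-st order foliation $\tilde F$ on $E$; then $u_{k+1}(\tilde F)$ corresponds to $p_{k+1}\circ\tilde\rho\sim\rho$, so $u_{k+1}(\tilde F)$ is isomorphic to $F$ by a $k$-jet of diffeomorphism of $E$ fixing $W$. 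Pushing $\tilde F$ forward by a representative of this isomorphism gives an honest extension of $F$ itself.

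The main obstacle is the bookkeeping around \emph{framed-bundle types} and the \emph{choice of tubular neighbourhood}: the whole construction of the Riemann--Hilbert functor in Section~\ref{sec:Riemann--Hilbert} requires fixing a tubular neighbourhood, and the identification $\mathcal{M}_{k}(W,E)=\pi_{k,*}^{-1}(\mathcal{M}_1(W,E))$ (and its level-$(k+1)$ analogue) must be compatible with $p_{k+1}$. Concretely, one needs that $p_{k+1,*}$ really does carry $\mathcal{M}_{k+1}(W,E)$ into $\mathcal{M}_k(W,E)$ (not just into $\mathcal{M}_{k,l}(W)$): this follows because $p_{k+1}$ sits over the identity of $G_{1,l}=\mathrm{GL}(\rr^l)$, so it does not change the underlying rank-$l$ vector bundle, hence preserves the subgroup $G_{k,l}(\phi)$ and the framed-bundle component. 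Once this is in place, the corollary is a formal consequence. I would therefore structure the proof as: (1) recall that $u_{k+1}$ on $k$-th order foliations corresponds to $p_{k+1,*}$ under Riemann--Hilbert, invoking the naturality of holonomy in the tower \eqref{tower}; (2) note $p_{k+1,*}$ preserves the framed-bundle type, so $p_{k+1,*}(\mathcal{M}_{k+1}(W,E))\subseteq\mathcal{M}_k(W,E)$; (3) combine with Theorem~\ref{th:secondextensionclass} to identify the image with the zero locus of the extension class; a sentence or two suffices for each.

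\begin{proof}
Let $[\rho] \in \mathcal{M}_k(W,E)$ be the class of a representation $\rho : \pi_1(W,x) \to G_{k,l}$, which by Theorem~\ref{RHthm} corresponds to a $k$-th order foliation $F$ in $E$ along $W$. By Theorem~\ref{th:secondextensionclass}, $F$ extends to a $(k+1)$-st order foliation if and only if $\rho^* E_{k+1,l} = 0$. It therefore suffices to show that $[\rho] \in p_{k+1,*}(\mathcal{M}_{k+1}(W,E))$ if and only if $F$ extends to order $k+1$.

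Suppose first that $F$ admits an extension $\tilde F$, a $(k+1)$-st order foliation in $E$ along $W$ with $u_{k+1}(\tilde F) = F$. By Theorem~\ref{RHthm} applied at level $k+1$, $\tilde F$ corresponds to a representation $\tilde\rho : \pi_1(W,x) \to G_{k+1,l}$; moreover the underlying framed principal $G_{k+1,l}$-bundle of $\tilde F$ on $E$ is $(\Fr^{k+1}(E),\phi)$, so $[\tilde\rho] \in \mathcal{M}_{k+1}(W,E)$. The forgetful map $u_{k+1}$ on foliations is intertwined, under the holonomy construction of Section~\ref{holonomydefsection} and the tower \eqref{tower}, with the homomorphism $p_{k+1} : G_{k+1,l} \to G_{k,l}$; hence $p_{k+1} \circ \tilde\rho$ is conjugate to $\rho$, i.e.\ $p_{k+1,*}[\tilde\rho] = [\rho]$.

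Conversely, suppose $[\rho] = p_{k+1,*}[\tilde\rho]$ for some $[\tilde\rho] \in \mathcal{M}_{k+1}(W,E)$. Since $p_{k+1}$ covers the identity of $G_{1,l} = \mathrm{GL}(\rr^l)$, it does not alter the underlying rank-$l$ vector bundle, so indeed $p_{k+1,*}$ maps $\mathcal{M}_{k+1}(W,E)$ into $\mathcal{M}_k(W,E)$, consistently with the statement. By essential surjectivity of the Riemann--Hilbert functor at level $k+1$ (Theorem~\ref{RHthm}), $\tilde\rho$ is realised by a $(k+1)$-st order foliation $\tilde F$ in $E$ along $W$. Then $u_{k+1}(\tilde F)$ corresponds to $p_{k+1} \circ \tilde\rho$, which is conjugate to $\rho$; hence there is a $k$-jet of diffeomorphism of $E$ fixing $W$ pointwise carrying $u_{k+1}(\tilde F)$ to $F$. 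Pushing $\tilde F$ forward along a representative of this diffeomorphism (using Lemma~\ref{lem:polynomialRepresentatives}) produces a $(k+1)$-st order foliation extending $F$. Thus $F$ extends to order $k+1$, and by Theorem~\ref{th:secondextensionclass} this is equivalent to the vanishing of its extension class. Combining the two implications, $p_{k+1,*}(\mathcal{M}_{k+1}(W,E))$ consists precisely of those elements of $\mathcal{M}_k(W,E)$ with zero extension class.
\end{proof}
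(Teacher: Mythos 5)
Your proposal is correct and follows essentially the same route as the paper: the paper derives the corollary immediately from the observation that $p_{k+1,*}$ corresponds under Riemann--Hilbert to forgetting the top-order part of a foliation (so its image consists of the extendable $k$-th order foliations) combined with Theorem \ref{th:secondextensionclass}. You have simply spelled out the bookkeeping (framed-bundle types, essential surjectivity at level $k+1$, and realising the conjugation by an actual diffeomorphism germ) that the paper leaves implicit.
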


\subsubsection{Extension class}
We now show that the extension class studied in Sections \ref{sec:extending} and \ref{sec:extending2} arises from a section of a vector bundle over the $k$-jet character stack. To avoid technicalities, we fix a flat connection $(E, \nabla)$ and restrict our attention to $k$-th order foliations extending $(E, \nabla)$. Equivalently, we fix a representation  
\[
\rho : \pi_{1}(W, x) \to G_{1, l}.
\]
Let $G_1(\rho) \subseteq G_{1, l}$ denote the stabilizer of $\rho$. Define $G_{k}(\rho) = \pi_{k}^{-1}(G_{1}(\rho)) \subseteq G_{k,l}$ and $\mathrm{Hom}_{\rho}(\pi_{1}(W,x), G_{k,l}) = \pi_{k,*}^{-1}(\rho) \subseteq \Hom(\pi_1(W, x), G_{k,l}) $. Then 
\[
\mathcal{M}_{k}(\rho) := \pi_{k, *}^{-1}(G_{1}(\rho) \ltimes \{ \rho \}) = G_{k}(\rho) \ltimes \mathrm{Hom}_{\rho}(\pi_{1}(W,x), G_{k,l})
\]
is the full subcategory of the $k$-jet character groupoid consisting of representations with fixed $1$-jet given by $\rho$. 

Let $\varphi \in \mathrm{Hom}_{\rho}(\pi_{1}(W,x), G_{k,l})$. By Theorem \ref{th:secondextensionclass}, the problem of lifting $\varphi$ to a homomorphism $\tilde{\varphi}: \pi_{1}(W,x) \to G_{k+1,l}$ is controlled by the extension class 
\[
\varphi^*(E_{k+1,l}) \in H_{\rho}^2(\pi_1(W,x),A_{k+1,l}),
\]
where $A_{k+1,l} = \Sym^{k+1}((\rr^{l})^*) \otimes \rr^l$. By Lemma \ref{eq:groupseq}, the $\pi_{1}(W,x)$-module structure on $A_{k+1,l} $ is the one induced from its structure as a $\mathrm{GL}(\mathbb{R}^l)$-representation by pulling back via the map $\pi_{k} \circ \varphi = \rho$. Hence, this module structure is constant over the space $\mathrm{Hom}_{\rho}(\pi_{1}(W,x), G_{k,l})$, and we may consider the trivial vector bundle 
\[
H^2_{\rho,k} := H_{\rho}^2(\pi_1(W,x),A_{k+1,l}) \times \mathrm{Hom}_{\rho}(\pi_{1}(W,x), G_{k,l}) \to \mathrm{Hom}_{\rho}(\pi_{1}(W,x), G_{k,l}). 
\]
It is for this reason that it is convenient to fix the representation $\rho$. Since $A_{k+1,l}$ is a representation of $G_{k,l}$ via the map $\pi_{k}$, the cohomology group $H_{\rho}^2(\pi_1(W,x),A_{k+1,l})$ is naturally a $G_{k}(\rho)$-representation. As a result, $H^2_{\rho,k}$ can be given the structure of a $G_{k}(\rho)$-equivariant vector bundle over $\mathrm{Hom}_{\rho}(\pi_{1}(W,x), G_{k,l})$. In other words, it is a representation of the groupoid $\mathcal{M}_{k}(\rho)$. 

By using the canonical splitting $s_{k}$ from Corollary \ref{canonicalsplitting}, we can write down an explicit representative for the extension class $\varphi^*(E_{k+1,l}) $: 
\[
e_{k}(\varphi) : \pi_{1}(W, x) \times \pi_{1}(W,x) \to A_{k+1, l}, \qquad (\gamma_1, \gamma_2) \mapsto s_{k}(\varphi(\gamma_1))s_{k}(\varphi(\gamma_2))s_{k}(\varphi(\gamma_1 \gamma_2))^{-1}.
\]
This allows us to view the extension class as a continuous section $e_{k} : \mathrm{Hom}_{\rho}(\pi_{1}(W,x), G_{k,l}) \to H^2_{\rho,k}$. 

\begin{proposition} \label{prop:extsection}
The section $e_{k}$ is invariant under the $G_{k}(\rho)$-action, in the sense that  
\[
e_{k}(C_{g} \circ \varphi) = \pi_k(g) \ast e_{k}(\varphi),
\]
for $g \in G_{k}(\rho)$ and $\varphi \in \mathrm{Hom}_{\rho}(\pi_{1}(W,x), G_{k,l})$. As a result, $e_{k}$ descends to a section of the vector bundle $H^2_{\rho, k}/G_{k}(\rho)$ over the character stack determined by $\mathcal{M}_{k}(\rho)$. 
\end{proposition}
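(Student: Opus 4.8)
The plan is to unwind the definitions and check the stated transformation law by a direct cocycle computation, then observe that it immediately yields the descent statement. First I would recall the setup: $\varphi \in \mathrm{Hom}_\rho(\pi_1(W,x), G_{k,l})$, $g \in G_k(\rho)$, and $C_g\circ\varphi$ denotes the conjugated representation $\gamma \mapsto g\,\varphi(\gamma)\,g^{-1}$. The explicit cocycle representative is $e_k(\varphi)(\gamma_1,\gamma_2) = s_k(\varphi(\gamma_1))\,s_k(\varphi(\gamma_2))\,s_k(\varphi(\gamma_1\gamma_2))^{-1}$, valued in $A_{k+1,l} = \mathrm{Sym}^{k+1}((\mathbb{R}^l)^*)\otimes\mathbb{R}^l$, which sits inside $G_{k+1,l}$ as the kernel $A_{k+1,l} = \ker(p_{k+1})$. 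So the goal is to compare $e_k(C_g\circ\varphi)(\gamma_1,\gamma_2)$ with $\pi_k(g)\ast e_k(\varphi)(\gamma_1,\gamma_2)$, where $\ast$ is the $\mathrm{GL}(\mathbb{R}^l)$-action on $A_{k+1,l}$ (equivalently the $G_{k}(\rho)$-action via $\pi_k$, by Lemma \ref{modulestructure}), at the level of cochains.

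The main computational input is the behaviour of the canonical splitting $s_k : G_{k,l}\to G_{k+1,l}$ under conjugation. Pick any lift $\hat g \in G_{k+1,l}$ of $g$ (e.g. $\hat g = s_k(g)$). Then for each $h \in G_{k,l}$ the element $s_k(\hat g h \hat g^{-1})$ and the element $\hat g\, s_k(h)\, \hat g^{-1}$ both lift $g h g^{-1} \in G_{k,l}$, so they differ by an element of $A_{k+1,l}$; call it $\beta_g(h)$. Thus $s_k(C_g(h)) = \hat g\, s_k(h)\,\hat g^{-1}\cdot\beta_g(h)^{-1}$ (or with signs arranged appropriately). Substituting into the formula for $e_k(C_g\circ\varphi)(\gamma_1,\gamma_2)$, the terms $\hat g$ and $\hat g^{-1}$ telescope away across the three factors, and the remaining $\beta_g$-terms combine into a coboundary: indeed the map $h\mapsto \beta_g(h)$ is (up to sign) a $1$-cochain on $G_{k,l}$ with values in $A_{k+1,l}$, and its contribution to $e_k(C_g\circ\varphi)$ is exactly $\delta(\varphi^*\beta_g)$. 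What is left over, after pushing the $\hat g$-conjugation through and using that $\hat g$ acts on $A_{k+1,l}\subset G_{k+1,l}$ by conjugation, is precisely $\mathrm{Ad}_{\hat g}$ applied to $e_k(\varphi)(\gamma_1,\gamma_2)$; and by Lemma \ref{modulestructure} the conjugation action of $G_{k+1,l}$ on $A_{k+1,l}$ factors through $\pi_{k+1}$, hence equals $\pi_k(g)\ast(-)$. Therefore $e_k(C_g\circ\varphi) = \pi_k(g)\ast e_k(\varphi) + \delta(\varphi^*\beta_g)$ as cochains, which at the level of cohomology classes in $H^2_{\rho}(\pi_1(W,x),A_{k+1,l})$ reads $[e_k(C_g\circ\varphi)] = \pi_k(g)\ast [e_k(\varphi)]$. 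Since $e_k$ is defined as the section of the trivial bundle $H^2_{\rho,k}$ whose value is the cohomology class, and the $G_k(\rho)$-action on $H^2_{\rho,k}$ is exactly $(g,\varphi,c)\mapsto (C_g\circ\varphi,\ \pi_k(g)\ast c)$, this is the desired equivariance $e_k(C_g\circ\varphi) = \pi_k(g)\ast e_k(\varphi)$.

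Finally, the descent statement is formal once equivariance is established: an equivariant section of a $G_k(\rho)$-equivariant vector bundle $H^2_{\rho,k} \to \mathrm{Hom}_\rho(\pi_1(W,x),G_{k,l})$ descends to a section of the quotient bundle $H^2_{\rho,k}/G_k(\rho)$ over the quotient stack (equivalently, over the orbit space, with the caveat about non-Hausdorffness noted in Remark \ref{rem:achoice}-style discussions); here "bundle over the stack" is just shorthand for a representation of the action groupoid $\mathcal{M}_k(\rho)$, and an equivariant section is by definition a section of that representation. I would also remark that continuity of $e_k$ — needed so that the descended section is continuous — follows because $s_k$ is a (continuous, indeed polynomial) set-theoretic splitting and group multiplication and inversion in $G_{k+1,l}$ are continuous, so $\varphi\mapsto e_k(\varphi)$ is continuous on $\mathrm{Hom}_\rho(\pi_1(W,x),G_{k,l})$ with its variety topology.

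The step I expect to be the main obstacle is the bookkeeping in the conjugation identity $s_k(C_g(h)) = \hat g\, s_k(h)\,\hat g^{-1}\cdot\beta_g(h)^{\pm 1}$ and the verification that the leftover $\beta_g$-terms assemble into an honest coboundary $\delta(\varphi^*\beta_g)$ rather than merely a cocycle — this requires being careful that $\beta_g$ is genuinely a $1$-cochain (i.e.\ that the ambiguity is controlled fibrewise and coherently) and that the non-abelian telescoping of the $\hat g$'s is legitimate, which uses crucially that $A_{k+1,l}$ is central enough within the relevant computation (it is abelian, and $G_{k+1,l}$-conjugation on it factors through $\pi_{k+1}$, as in Lemma \ref{modulestructure}). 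Once that identity is pinned down, the rest is routine. A cleaner alternative, which I would mention as a remark, is to avoid cochain-level manipulation entirely: the extension class $\varphi^*(E_{k+1,l})$ is functorial in $\varphi$ (Lemma \ref{lemm:functorialextension}), and conjugation by $g$ fits into a commuting square relating $\varphi$ and $C_g\circ\varphi$ together with the inner automorphism $C_{\hat g}$ of $G_{k+1,l}$; naturality of the extension class under these morphisms, combined with the fact that an inner automorphism acts on $H^*(G_{k,l}, A_{k+1,l})$ through the induced action on coefficients (here $\pi_k(g)\ast$) and trivially on the group argument, gives the equivariance directly. I would present the cochain computation as the primary proof since it also delivers the continuous section statement, and relegate the functorial argument to a remark.
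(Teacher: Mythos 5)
Your proposal is correct and is essentially the paper's own argument: both use the canonical splitting $s_k$, observe that conjugating the cocycle by a lift $\tilde g = s_k(g)$ acts on $A_{k+1,l}$ through $\pi_k(g)$ (Lemma \ref{modulestructure}), and absorb the discrepancy between $s_k\circ C_g$ and $C_{\tilde g}\circ s_k$ into a coboundary of an $A_{k+1,l}$-valued $1$-cochain (your $\beta_g$ is the paper's $u^{\tilde g}$). The paper merely packages the telescoping by introducing the second splitting $s_k^{\tilde g}=C_{\tilde g^{-1}}\circ s_k\circ C_g$ and citing that any two set-theoretic splittings yield cohomologous cocycles, which is the same standard fact you flag as the main step.
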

\begin{proof}
Let $s_{k} : G_{k,l} \to G_{k+1, l}$ be the splitting from Corollary \ref{canonicalsplitting}. It gives rise to the cocycle
\[
\alpha_{s_{k}} : G_{k,l} \times G_{k,l} \to A_{k+1,l}, \qquad (g_1, g_2) \mapsto s_{k}(g_1)s_{k}(g_2) s_{k}(g_1 g_2)^{-1}
\]
representing the extension class $E_{k+1, l}$. The section $e_k$ is given by $e_k(\varphi) = \alpha_{s_{k}} \circ \varphi$. Now let $g \in G_{k}(\rho)$, let $\tilde{g} = s_{k}(g) \in G_{k+1}(\rho)$ be a lift, and define 
\[
s_{k}^{\tilde{g}} := C_{\tilde{g}^{-1}} \circ s_{k} \circ C_{g} : G_{k,l} \to G_{k+1, l}, \qquad h \mapsto \tilde{g}^{-1}s_{k}(g h g^{-1}) \tilde{g}. 
\]
It is straightforward to check that $s_{k}^{\tilde{g}}$ also defines a section of $p_{k+1}$ and hence a cocycle $\alpha_{s^{\tilde{g}}_{k}}$ representing $E_{k+1,l}$.  A calculation shows that 
\[
\alpha_{s^{\tilde{g}}_{k}} = C_{\tilde{g}^{-1}} \circ \alpha_{s_{k}}  \circ C_{g} = \pi_{k}(g^{-1}) \ast \alpha_{s_{k}}  \circ C_{g},
\]
where the last equality follows from the fact that $\alpha_{s_{k}}$ is valued in $A_{k+1,l}$, where the conjugation action of $G_{k+1, l}$ factors through the projection to $G_{1,l}$. Hence, $\alpha_{s^{\tilde{g}}_{k}} \circ \varphi = \pi_{k}(g^{-1}) \ast e_k(C_{g} \circ \varphi)$. Since $\alpha_{s_{k}}$ and $\alpha_{s^{\tilde{g}}_{k}} $ both represent $E_{k+1,l}$, they are cohomologous. Indeed, $\alpha_{s^{\tilde{g}}_{k}} = \alpha_{s_{k}} + \delta(u^{\tilde{g}})$, where $u^{\tilde{g}} : G_{k,l} \to A_{k+1,l}$ is the map satisfying $s_{k}^{\tilde{g}}(g') = u^{\tilde{g}}(g')s_{k}(g')$. Therefore, $\pi_{k}(g^{-1}) \ast e_k(C_{g} \circ \varphi) = e(\varphi)$.
\end{proof}
\begin{remark}
By Corollary \ref{cor:extimage}, the image of the map $p_{k+1, *}|_{\mathcal{M}_{k+1}(\rho)} : \mathcal{M}_{k+1}(\rho) \to \mathcal{M}_{k}(\rho)$ coincides with the zero locus of the section $e_k$ from Proposition \ref{prop:extsection}. 
\end{remark}

\subsubsection{Extension spaces}
Now we will study the space of $(k+1)$-st order foliations extending a given $k$-th order foliation $F$ on the framed vector bundle $(E \to W,\phi)$. We continue to let $\rho : \pi_{1}(W,x) \to \mathrm{GL}(\mathbb{R}^l)$ denote the representation corresponding to the flat connection $\nabla$ on $E$ induced by $F$.

The $k$-th order foliation $F$ corresponds, via Theorem \ref{topologicalRHfoliation}, to a representation $\varphi \in \mathrm{Hom}_{\rho}(\pi_{1}(W,x), G_{k,l})$. We assume that it has vanishing extension class, meaning that it lies in the zero locus of the section $e_{k}$. Consider the full subgroupoid over this point, $G_{k}(\varphi) \ltimes \{ \varphi \} \subseteq \mathcal{M}_{k}(\rho)$, where $G_{k}(\varphi) \subseteq G_{k,l}$ is the stabilizer subgroup of $\varphi$. The preimage 
\[
\mathcal{M}_{k+1}(\varphi) := p_{k+1, *}^{-1}(G_{k}(\varphi) \ltimes \{ \varphi \}) = G_{k+1}(\varphi) \ltimes \Hom_{\varphi}(\pi_1(W, x), G_{k+1,l})
\]
is a full subgroupoid of $\mathcal{M}_{k+1}(\rho)$, where $G_{k+1}(\varphi) = p_{k+1}^{-1}(G_{k}(\varphi))$ is an abelian extension of $G_{k}(\varphi)$ by $A_{k+1,l}$, and $\Hom_{\varphi}(\pi_1(W, x), G_{k+1,l})$ is the set of all extensions of $\varphi$, which is non-empty because we assume $e_k(\varphi) = 0$. The base $\Hom_{\varphi}(\pi_1(W, x), G_{k+1,l})$ is a torsor for the group of $1$-cocycles 
\[
Z_{\rho}^{1}(\pi_{1}(W,x), A_{k+1, l}) = \{ a : \pi_{1}(W, x) \to A_{k+1,l} \ | \ a(\gamma_1 \gamma_2) = a(\gamma_1) + \rho(\gamma_1) \ast a(\gamma_2) \}. 
\]
Fix a reference extension $\psi_0$ in order to identify $\Hom_{\varphi}(\pi_1(W, x), G_{k+1,l}) \cong Z_{\rho}^{1}(\pi_{1}(W,x), A_{k+1, l})$. Then the subgroup $A_{k+1, l} \subseteq G_{k+1}(\varphi)$ acts via the differential of group cohomology as follows 
\[
C_{g}(a) = a - \delta(g),
\]
where $g \in A_{k+1, l}$ and $a \in Z_{\rho}^{1}(\pi_{1}(W,x), A_{k+1, l})$. Therefore, the stabilizer of a point is the cohomology group $H_{\rho}^0(\pi_{1}(W,x), A_{k+1,l})$ and the quotient $\Hom_{\varphi}(\pi_1(W, x), G_{k+1,l})/A_{k+1,l}$ is a torsor for the cohomology group $H_{\rho}^1(\pi_{1}(W,x), A_{k+1,l})$. There is a residual $G_{k}(\varphi)$ action on $\Hom_{\varphi}(\pi_1(W, x), G_{k+1,l})/A_{k+1,l}$. Identifying the later space with $H_{\rho}^1(\pi_{1}(W,x), A_{k+1,l})$, the action of $G_{k}(\varphi)$ is affine-linear. Therefore, the orbit space of $\mathcal{M}_{k+1}(\varphi)$ is given by 
\[
M_{k+1}(\varphi) = \Hom_{\varphi}(\pi_1(W, x), G_{k+1,l})/G_{k+1}(\varphi) \cong H_{\rho}^1(\pi_{1}(W,x), A_{k+1,l})/G_{k}(\varphi). 
\]
This space is homeomorphic to the space of isomorphism classes of $(k+1)$-st order foliations extending $F$. 

Using the van Est map and Morita equivalence, the cohomology group $H_{\rho}^1(\pi_{1}(W,x), A_{k+1,l})$ can be identified with the de Rham cohomology group 
\[
H_{\nabla}^1(W, \mathrm{Sym}^{k+1}(E^*)\otimes E),
\]
where the flat connection on $\mathrm{Sym}^{k+1}(E^*)\otimes E$ is the one induced by $\nabla$ on $E$. 

Now consider the problem of extending representations in $ \Hom_{\varphi}(\pi_1(W, x), G_{k+1,l})$. This is controlled by the extension class $e_{k+1} : \Hom_{\varphi}(\pi_1(W, x), G_{k+1,l}) \to H^2_{\rho,k+1}$. This is equivariant with respect to the $G_{k+1}(\varphi)$-action by Proposition \ref{prop:extsection}. But since $A_{k+1, l} \subseteq G_{k+1}(\varphi)$ lies in the kernel of $\pi_{k}$, we see that
\[
e_{k+1}(C_{a} \circ \psi) = e_{k+1}(\psi),
\]
for $a \in A_{k+1,l}$ and $\psi \in \Hom_{\varphi}(\pi_1(W, x), G_{k+1,l})$. Therefore, $e_{k+1}$ descends to a $G_{k}(\varphi)$-equivariant map 
\[
e_{k+1} : H_{\rho}^1(\pi_{1}(W,x), A_{k+1,l}) \to H_{\rho}^2(\pi_1(W,x),A_{k+2,l}).
\]
By Proposition \ref{ComparisonmapVE}, we may identify this with the extension class from Definition \ref{def:extension}. We summarize the observations of this section in the following theorem. 

\begin{theorem} \label{thm:extensionspacegroupoid}
Let $(E \to W, \phi)$ be a linearly framed vector bundle equipped with a $k$-th order foliation $F$ along $W$ with vanishing extension class. Let $\nabla$ be the  flat connection on $E$ underlying $F$. By Theorem \ref{splittingprop}, $F$ corresponds to a flat connection $\sigma$ on the $k$-th order frame bundle $\mathrm{Fr}^{k}(E)$. Let $G_{k}(\varphi)$ denote the subgroup of gauge transformations which preserve $\sigma$. Choosing a reference extension of $F$ determines an affine-linear action of $G_{k}(\varphi)$ on
\[
H_{\nabla}^1(W, \mathrm{Sym}^{k+1}(E^*)\otimes E),
\]
the de Rham cohomology group with coefficients in $\mathrm{Sym}^{k+1}(E^*)\otimes E$ equipped with the flat connection induced by $\nabla$. There is a fibration of topological groupoids from the groupoid of $(k+1)$-st order foliations in $E$ along $W$ extending $F$ to the action groupoid 
\[
G_{k}(\varphi) \ltimes H_{\nabla}^1(W, \mathrm{Sym}^{k+1}(E^*)\otimes E). 
\]
Therefore, the space of isomorphism classes of $(k+1)$-st order foliations extending $F$ is homeomorphic to the topological space 
\[
H_{\nabla}^1(W, \mathrm{Sym}^{k+1}(E^*)\otimes E)/G_{k}(\varphi). 
\]
Furthermore, the extension class for $(k+1)$-st order foliations defines a $G_{k}(\varphi)$-equivariant map 
\[
e_{k+1} : H_{\nabla}^1(W, \mathrm{Sym}^{k+1}(E^*)\otimes E) \to H_{\nabla}^2(W, \mathrm{Sym}^{k+2}(E^*)\otimes E) 
\]
corresponding to a section of a vector bundle over the quotient stack. Its zero locus consists of those $(k+1)$-st order foliations that can be extended to $(k+2)$-nd order foliations.
\end{theorem}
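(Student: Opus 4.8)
The plan is to assemble Theorem~\ref{thm:extensionspacegroupoid} from the material developed in the preceding subsubsections, essentially by specializing the $k$-jet character stack discussion to a point $\varphi$ lying in the zero locus of $e_k$ and then re-indexing all the group cohomology in terms of de Rham cohomology via the Van Est isomorphism. First I would recall the identification of $k$-th order foliations in $E$ along $W$ with flat $G_{k,l}$-connections on $\Fr^k(E)$ (Theorem~\ref{splittingprop} together with Lemma~\ref{kthorderAtiyahisAtiyah}), and hence, after picking the framing $\phi$ and basepoint $x$, with representations $\varphi \in \Hom_\rho(\pi_1(W,x),G_{k,l})$ via Theorem~\ref{topologicalRHfoliation}; the subgroup $G_k(\varphi)\subseteq G_{k,l}$ preserving $\sigma$ is exactly the gauge stabilizer of $\varphi$ under this dictionary, by the Morita equivalence $RH_{(\Fr^k(E),\phi)}$ of Section~\ref{gpdflatcondefsec}.

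Next I would run through the algebra already laid out just above the theorem statement: since $e_k(\varphi)=0$, the set $\Hom_\varphi(\pi_1(W,x),G_{k+1,l})$ of lifts is non-empty (Theorem~\ref{th:secondextensionclass} / Lemma~\ref{lemm:functorialextension}); it is a torsor for the $1$-cocycle group $Z^1_\rho(\pi_1(W,x),A_{k+1,l})$ because the fibers of $p_{k+1}$ are $A_{k+1,l}$-torsors and cocycle condition is exactly what compatibility with $p_{k+1}\circ\tilde\varphi=\varphi$ forces; the residual $A_{k+1,l}\subseteq G_{k+1}(\varphi)$ acts by $a\mapsto a-\delta(g)$, so the quotient $\Hom_\varphi/A_{k+1,l}$ is a torsor for $H^1_\rho(\pi_1(W,x),A_{k+1,l})$ with a residual affine-linear $G_k(\varphi)$-action (here I use that the $\pi_1$-module structure on $A_{k+1,l}$ factors through $\pi_k\circ\varphi=\rho$, Lemma~\ref{modulestructure}, so that the coefficient module is literally constant in $\varphi$ and the cohomology bundle $H^2_{\rho,k}$ is trivial as in the paragraph preceding Proposition~\ref{prop:extsection}). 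Combining, the orbit space of $\mathcal{M}_{k+1}(\varphi)=G_{k+1}(\varphi)\ltimes\Hom_\varphi(\pi_1(W,x),G_{k+1,l})$ is $H^1_\rho(\pi_1(W,x),A_{k+1,l})/G_k(\varphi)$, and the fibration statement follows by restricting the fibration $RH_{(E,\phi)}$ of Theorem~\ref{topologicalRHfoliation} to the full subgroupoid of $(k+1)$-st order foliations extending $F$ and composing with the identification just obtained. To transport everything to de Rham form I invoke Proposition~\ref{ComparisonmapVE}: the composite $\mathrm{VE}\circ\iota_x^{-1}$ is an isomorphism (Van Est for the transitive groupoid $\Pi(W)$, plus Morita equivalence $\iota_x$), carrying $H^\bullet_\rho(\pi_1(W,x),\Sym^{k+1}((\rr^l)^*)\otimes\rr^l)$ to $H^\bullet_\nabla(W,\Sym^{k+1}(E^*)\otimes E)$ and respecting the $G_k(\varphi)$-action, which yields the de Rham description of the extension space.

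For the last assertion — that $e_{k+1}$ descends to a $G_k(\varphi)$-equivariant map on $H^1_\nabla(W,\Sym^{k+1}(E^*)\otimes E)$ whose zero locus is the extendable locus — I would first note, as in the displayed computation preceding the theorem, that $e_{k+1}(C_a\circ\psi)=e_{k+1}(\psi)$ for $a\in A_{k+1,l}$, since $A_{k+1,l}\subseteq\ker\pi_k$ and $e_{k+1}$ is $G_{k+1}(\varphi)$-equivariant through $\pi_k$ by Proposition~\ref{prop:extsection}; hence $e_{k+1}$ passes to the quotient $\Hom_\varphi/A_{k+1,l}\cong H^1_\rho(\pi_1(W,x),A_{k+1,l})$ and is $G_k(\varphi)$-equivariant there. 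Transporting through $\mathrm{VE}\circ\iota_x^{-1}$ in both the source and the target (degree $1$ and degree $2$, coefficients $A_{k+1,l}$ and $A_{k+2,l}$ respectively) and using Proposition~\ref{ComparisonmapVE} to identify the resulting map with the geometric extension class $e(F)$ of Definition~\ref{def:extension} gives the stated $G_k(\varphi)$-equivariant map $e_{k+1}:H^1_\nabla(W,\Sym^{k+1}(E^*)\otimes E)\to H^2_\nabla(W,\Sym^{k+2}(E^*)\otimes E)$; its zero set is the extendable locus by Corollary~\ref{cor:firstextensionclass} (equivalently Corollary~\ref{cor:extimage}). Finally, the section-of-a-vector-bundle reformulation over the quotient stack is immediate from the equivariance, exactly as in Proposition~\ref{prop:extsection}.

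The main obstacle I anticipate is not any single hard computation but the bookkeeping of the several layers of identifications and making sure they are all simultaneously equivariant: one must check that the dictionary (foliations $\leftrightarrow$ flat connections $\leftrightarrow$ representations) carries the gauge stabilizer to $G_k(\varphi)$ \emph{compatibly} with how $G_k(\varphi)$ acts on lifts, on $Z^1_\rho$, on $H^1_\rho$, and on the de Rham groups, and that the Van Est map intertwines the group-cohomology $G_k(\varphi)$-action with the de Rham one. Proposition~\ref{ComparisonmapVE} does most of this work at the level of cohomology classes, but verifying that the \emph{torsor structures} and the affine-linear actions are preserved — rather than just the distinguished classes — requires care; this is where I would spend the bulk of the detailed argument, and where invoking naturality of Van Est (e.g. the proof of Theorem 5 in \cite{cra03}) and of the Morita pullback $\iota_x^*$ with respect to the relevant group actions is essential.
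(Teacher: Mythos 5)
Your proposal matches the paper's own treatment essentially step for step: the paper proves this theorem by exactly the chain you describe — identifying extensions of $\varphi$ as a torsor over $Z^1_\rho(\pi_1(W,x),A_{k+1,l})$, quotienting by the $A_{k+1,l}\subseteq G_{k+1}(\varphi)$ action via coboundaries to land on $H^1_\rho$, observing the residual affine-linear $G_k(\varphi)$-action, transporting to de Rham cohomology via the Van Est map and the Morita equivalence of Proposition \ref{ComparisonmapVE}, and descending $e_{k+1}$ using $A_{k+1,l}\subseteq\ker\pi_k$ together with Proposition \ref{prop:extsection} and Corollary \ref{cor:extimage}. The equivariance bookkeeping you flag as the main obstacle is indeed left implicit in the paper as well, so your write-up is, if anything, more explicit about what remains to be checked.
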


\subsubsection{The $2$-jet character variety}
Restricting to the case $k = 2$, there are several simplifications. We have the isomorphism 
\[
G_{2,l} \cong A_{2,l} \rtimes \mathrm{GL}(\mathbb{R}^l)
\]
from Corollary \ref{cor:g2lgroup}. The splitting $s_{1} = j_{2}: G_{1,l} \to G_{2,l}$ is a homomorphism, implying that the extension class $E_{2,l}$ vanishes. In other words, every $1$-st order foliation extends to a $2$-nd order foliation, and there is a canonical choice given by the linear extension corresponding to the map $j_2$. Consequently, the $2$-jet character groupoid $\mathcal{M}_{2,l}(W)$ can be given a simple description.  

Fixing a representation $\rho : \pi_{1}(W, x) \to G_{1, l}$ we have the isomorphism 
\[
G_{2}(\rho) \cong A_{2,l} \rtimes G_{1}(\rho).
\]
Using $j_{2} \circ \rho : \pi_{1}(W, x) \to G_{2,l}$ as a reference connection, we have 
\[
 \mathrm{Hom}_{\rho}(\pi_{1}(W,x), G_{2,l}) \cong Z_{\rho}^{1}(\pi_{1}(W,x), A_{2, l}). 
\]
Therefore, 
\[
\mathcal{M}_{2}(\rho) \cong (A_{2,l} \rtimes G_{1}(\rho)) \ltimes Z_{\rho}^{1}(\pi_{1}(W,x), A_{2, l}),
\]
with the action given by 
\[
(k, g) \ast a = g \ast a - \delta(k). 
\]
Note that the induced action of $G_{1}(\rho)$ on $H_{\rho}^{1}(\pi_{1}(W,x), A_{2, l})$ is linear. 
\subsection{The variety of hypersurface algebroids}
In this section we focus on the case of codimension $l = 1$, where the $k$-jet character groupoids classify hypersurface algebroids. Note first that $G_{1,1} = \mathbb{R}^{*} \cong \mathbb{Z}/2 \times \mathbb{R}$. Therefore, 
\[
\Hom(\pi_1(W, x), G_{1,1}) \cong H^{1}(W, \mathbb{Z}/2) \times H^{1}(W, \mathbb{R}). 
\]
Given a rank $1$ flat connection $(L, \nabla)$, its image in $H^{1}(W, \mathbb{Z}/2)$ is the first Stiefel-Whitney class $w_{1}(L)$ of $L$ and classifies the line bundle up to isomorphism. The image of $(L, \nabla)$ in $H^{1}(W, \mathbb{R})$ classifies the flat connection $\nabla$ up to gauge equivalence. The element $0 \in H^{1}(W, \mathbb{R})$ corresponds to the flat connection $\nabla_{0}$ on $L$ whose monodromy representation has its image contained in $\{ \pm 1 \}$. Since $G_{1,1}$ is abelian, its action on $\Hom(\pi_1(W, x), G_{1,1})$ is trivial. Therefore, 
\[
\mathcal{M}_{1,1}(W) = \mathbb{R}^* \times (H^{1}(W, \mathbb{Z}/2) \times H^{1}(W, \mathbb{R})),
\]
and 
\[
\mathcal{M}_{1}(W, L) = \mathbb{R}^* \times H^{1}(W, \mathbb{R}). 
\]
Letting $k > 1$, we have 
\[
\mathcal{M}_{k}(W, L) = G_{k,1} \ltimes \mathrm{Hom}_{L}(\pi_{1}(W,x), G_{k,1}),
\]
where $\mathrm{Hom}_{L}(\pi_{1}(W,x), G_{k,1}) = \pi_{k, *}^{-1}( \{ L \} \times H^{1}(W, \mathbb{R}))$, and 
\[
\mathcal{M}_{k}(W, L, \nabla) = G_{k,1} \ltimes \mathrm{Hom}_{(L, \nabla)}(\pi_{1}(W,x), G_{k,1}),
\]
where $\mathrm{Hom}_{(L,\nabla)}(\pi_{1}(W,x), G_{k,1}) = \pi_{k, *}^{-1}(L, \nabla) $. 

Now let $W \subset M$ be a codimension $1$ hypersurface and let $\nu_{W}$ be the normal bundle of $W$ in $M$. Then by Corollary \ref{cor:hypersurfclass} and Lemma \ref{lem:Dprincbundle}, the groupoid of hypersurface algebroids of $b^{k+1}$-type for $(W,M)$, taken up to isotopy, fibres over the following groupoid 
\[
\mathcal{M}^0_{k}(W,M) = G_{k,1}^{0} \ltimes \mathrm{Hom}_{\nu_W}(\pi_{1}(W,x), G_{k,1}). 
\]
Consequently, the quotient space $M^0_{k}(W,M) = \mathrm{Hom}_{\nu_W}(\pi_{1}(W,x), G_{k,1})/G_{k,1}^0$ classifies hypersurface algebroids up to isotopy. Note that, as noted in Remark \ref{rem:bktangentbundleScott}, the $b^{k+1}$-tangent bundles defined by Scott \cite{MR3523250} all correspond to the trivial representation in $\mathrm{Hom}_{W \times \mathbb{R}}(\pi_{1}(W,x), G_{k,1})$. 
\setcounter{subsection}{1}
\subsubsection{The $G_{2,1}$-character variety}
We explicitly describe the groupoids $\mathcal{M}_{2}(W, \nu_W, \nabla)$ and $\mathcal{M}^0_{2}(W,M, \nabla)$ classifying hypersurface algebroids of $b^3$-type with fixed flat connection $\nabla$ on the normal bundle $\nu_W$. The isomorphism of Corollary \ref{cor:g2lgroup} simplifies to $G_{2,1} \cong \mathbb{R} \rtimes \mathbb{R}^*$, with product $(a_1, a_0)(b_1, b_0) = (a_1 + a_0^{-1} b_1, a_0 b_0)$. Fix the homomorphism $\rho_0 : \pi_1(W,x) \to \mathbb{R}^*$ corresponding to the pair $(\nu_W, \nabla)$. Then 
\[
\mathcal{M}_{2}(W, \nu_W, \nabla) \cong (\mathbb{R} \rtimes \mathbb{R}^*) \ltimes Z^{1}_{\rho_0^{-1}}(\pi_{1}(W,x), \mathbb{R}), \qquad \mathcal{M}^0_{2}(W, M, \nabla) \cong (\mathbb{R} \rtimes \mathbb{R}_{>0}) \ltimes Z^{1}_{\rho_0^{-1}}(\pi_{1}(W,x), \mathbb{R}),
\]
where $Z^{1}_{\rho_0^{-1}}(\pi_{1}(W,x), \mathbb{R})$ is the group of $1$-cocyles for the representation $\rho_0^{-1}$, and the action of $\mathbb{R} \rtimes \mathbb{R}^*$ on $Z^{1}_{\rho_0^{-1}}(\pi_{1}(W,x), \mathbb{R})$ is given by 
\[
(a_1, a_0) \ast \rho_1 = a_0^{-1} \rho_1 - \delta(a_1). 
\]
The quotient spaces are then given by 
\[
M_{2}(W, \nu_W, \nabla) \cong H^{1}_{\rho_0^{-1}}(\pi_{1}(W,x), \mathbb{R})/\mathbb{R}^*, \qquad M^0_{2}(W, M, \nabla)  \cong H^{1}_{\rho_0^{-1}}(\pi_{1}(W,x), \mathbb{R})/\mathbb{R}_{>0}.
\]
These spaces respectively classify isomorphism classes of $b^{3}$-algebroids for $(W, \nu_{W})$, and isotopy classes of $b^{3}$-algebroids for $(W,M)$, with fixed flat connection $\nabla$ on the normal bundle $\nu_W$. The point $0$ corresponds to the linear extension of $\nabla$ and it is in the closure of every other point. Removing it, we obtain the homeomorphisms
\[
M_{2}(W, \nu_W, \nabla) \setminus \{ 0 \} \cong \mathbb{RP}^{b_{1}(\nabla^{-1})-1}, \qquad M^0_{2}(W, M, \nabla) \setminus \{ 0 \} \cong S^{b_{1}(\nabla^{-1})-1},
\]
where $b_{1}(\nabla^{-1})$ is the dimension of the cohomology group $H^{1}_{\rho_0^{-1}}(\pi_{1}(W,x), \mathbb{R}) \cong H^{1}_{\nabla}(W, \nu_W^{-1})$. 

\subsubsection{The $G_{3,1}$-character variety}
We explicitly describe the groupoids $\mathcal{M}_{3}(W, \nu_W, \nabla)$ and $\mathcal{M}^0_{3}(W,M, \nabla)$ classifying hypersurface algebroids of $b^4$-type with fixed flat connection $\nabla$ on the normal bundle $\nu_W$. 

Making use of the isomorphism $G_{3,1} \cong \mathbb{R}^2 \rtimes \mathbb{R}^*$ from Corollary \ref{cor:g31group}, a homomorphism $\rho : \pi_{1}(W,x) \to G_{3,1}$ decomposes as $\rho = (\rho_1, \rho_2, \rho_0)$, where 
\begin{itemize}
\item $\rho_0 : \pi_{1}(W, x) \to \mathbb{R}^*$ is a homomorphism representing the pair 
\[
(\nu_W, \nabla) \in H^{1}(W, \mathbb{Z}/2) \times H^{1}(W, \mathbb{R}),
\] 
\item $\rho_{1} : \pi_{1}(W,x) \to \mathbb{R}$ satisfies 
\[
\rho_{1}(\gamma \eta) = \rho_{1}(\gamma) + \rho_{0}^{-1}(\gamma) \rho_{1}(\eta),
\]
meaning that it is a $1$-cocycle $\rho_{1} \in Z^{1}_{\rho_0^{-1}}(\pi_{1}(W,x), \mathbb{R})$ for the representation $\rho_0^{-1}$,
\item $\rho_{2} : \pi_{1}(W, x) \to \mathbb{R}$ satisfies 
\[
\rho_{2}(\gamma \eta) = \rho_{2}(\gamma) + \rho_{0}^{-2}(\gamma) \rho_{2}(\eta),
\]
meaning that it is a $1$-cocycle $\rho_{2} \in Z^{1}_{\rho_0^{-2}}(\pi_{1}(W,x), \mathbb{R})$ for the representation $\rho_{0}^{-2}$. 
\end{itemize} 
Therefore, $\mathrm{Hom}_{(\nu_W, \nabla)}(\pi_{1}(W,x), G_{3,1}) \cong Z^{1}_{\rho_0^{-1}}(\pi_{1}(W,x), \mathbb{R}) \times  Z^{1}_{\rho_0^{-2}}(\pi_{1}(W,x), \mathbb{R}) $ and we obtain the following description of the groupoids 
\begin{align*}
\mathcal{M}_{3}(W, \nu_W, \nabla) &\cong G_{3,1} \ltimes \big( Z^{1}_{\rho_0^{-1}}(\pi_{1}(W,x), \mathbb{R}) \times  Z^{1}_{\rho_0^{-2}}(\pi_{1}(W,x), \mathbb{R}) \big) \\
\mathcal{M}^0_{3}(W, M, \nabla) &\cong G^{0}_{3,1} \ltimes \big( Z^{1}_{\rho_0^{-1}}(\pi_{1}(W,x), \mathbb{R}) \times  Z^{1}_{\rho_0^{-2}}(\pi_{1}(W,x), \mathbb{R}) \big).
\end{align*}
Furthermore, the subgroup $K_{3,1} \cong \mathbb{R}^2$ acts via the coboundary operators as follows 
\[
(a_1, a_{2}) \ast (\rho_1, \rho_2, \rho_0) = (\rho_1 - \delta(a_1), \rho_2 - \delta(a_2), \rho_0),
\]
so that 
\begin{align*}
\mathrm{Hom}_{(L, \nabla)}(\pi_{1}(W,x), G_{3,1})/K_{3,1} &\cong H^{1}_{\rho_0^{-1}}(\pi_{1}(W,x), \mathbb{R}) \times  H^{1}_{\rho_0^{-2}}(\pi_{1}(W,x), \mathbb{R}) \\
 &\cong H^1_{\nabla}(W, \nu_W^{-1}) \times H^{1}_{\nabla}(W, \nu_W^{-2}). 
\end{align*}
There is a residual $\mathbb{R}^*$-action on this space, which has weight $-1$ on $H^1_{\nabla}(W, \nu_W^{-1})$ and weight $-2$ on $H^1_{\nabla}(W, \nu_W^{-2})$.

\begin{proposition} \label{prop:g31character}
Let $W \subset M$ be a closed codimension $1$-hypersurface, let $\nu_{W}$ be the normal bundle of $W$ in $M$, and fix a flat connection $\nabla$ on $\nu_W$. Then the space of isotopy classes of $b^4$-algebroids for $(W,M)$ inducing a fixed flat connection $\nabla$ on $\nu_W$ is homeomorphic to the following quotient space 
\[
M^0_{3}(W,M, \nabla) = \big( H^1_{\nabla}(W, \nu_W^{-1}) \times H^{1}_{\nabla}(W, \nu_W^{-2}) \big)/\mathbb{R}_{>0}. 
\]
The point $0$ corresponds to the $b^4$-algebroid induced by the linear extension of $\nabla$. This point lies in the closure of every other point. Removing it, we obtain the following homeomorphism
\[
M^0_{3}(W,M, \nabla)  \setminus \{ 0 \} \cong S^{b_1(\nabla^{-1}) + b_1(\nabla^{-2}) -1}, 
\]
where $b_1(\nabla^{-1}) $ and $b_1(\nabla^{-2})$ are, respectively, the dimensions of the cohomology groups $ H^1_{\nabla}(W, \nu_W^{-1}) $ and $ H^1_{\nabla}(W, \nu_W^{-2}) $. 
\end{proposition}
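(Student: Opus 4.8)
The plan is to deduce the first homeomorphism directly from the description of $\mathcal{M}^0_{3}(W,M,\nabla)$ obtained in the discussion preceding the proposition, and then to reduce the remaining assertions to an elementary analysis of a weighted $\mathbb{R}_{>0}$-action on a finite-dimensional vector space.

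First I would recall that, since $W$ has codimension $1$, Corollary \ref{cor:hypersurfclass} together with Lemma \ref{lem:Dprincbundle} identifies the space of isotopy classes of $b^4$-algebroids for $(W,M)$ with $\mathrm{Hom}_{\nu_W}(\pi_1(W,x),G_{3,1})/G^0_{3,1}$, and that fixing the induced connection $\nabla$ on $\nu_W$ cuts this down to $M^0_3(W,M,\nabla) = \mathrm{Hom}_{(\nu_W,\nabla)}(\pi_1(W,x),G_{3,1})/G^0_{3,1}$. Writing $\rho_0$ for the homomorphism $\pi_1(W,x)\to\mathbb{R}^*$ representing $(\nu_W,\nabla)$ and using the isomorphism $G_{3,1}\cong\mathbb{R}^2\rtimes\mathbb{R}^*$ of Corollary \ref{cor:g31group}, we have already identified $\mathrm{Hom}_{(\nu_W,\nabla)}(\pi_1(W,x),G_{3,1})$ with $Z^1_{\rho_0^{-1}}(\pi_1(W,x),\mathbb{R})\times Z^1_{\rho_0^{-2}}(\pi_1(W,x),\mathbb{R})$, with the abelian subgroup $K_{3,1}\cong\mathbb{R}^2$ of $G^0_{3,1}$ acting through the two coboundary operators. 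Quotienting first by $K_{3,1}$ yields $H^1_{\nabla}(W,\nu_W^{-1})\times H^1_{\nabla}(W,\nu_W^{-2})$, a product of finite-dimensional vector spaces since $W$ is compact and the coefficient bundles carry flat connections, and the residual group $G^0_{3,1}/K_{3,1}\cong\mathbb{R}_{>0}$ acts linearly with weight $-1$ on the first factor and weight $-2$ on the second; this gives the first homeomorphism.

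It then remains to analyze the orbit space of the action $t\cdot(v,w)=(t^{-1}v,t^{-2}w)$ of $\mathbb{R}_{>0}$ on $V:=V_1\times V_2$, where $V_j:=H^1_{\nabla}(W,\nu_W^{-j})$ and $n_j:=b_1(\nabla^{-j})=\dim V_j$; substituting $s=t^{-1}$ turns it into the positive-weight action $s\cdot(v,w)=(sv,s^2w)$. Since $s\cdot(v,w)\to 0$ as $s\to 0^+$ for all $(v,w)$, the class of the origin lies in the closure of every orbit, so $[0]$ lies in the closure of every point of $M^0_3(W,M,\nabla)$; moreover the origin is the image of the representation $j_3\circ\rho_0$, i.e. the linear extension of $\nabla$, being the zero element of the torsor $Z^1_{\rho_0^{-1}}\times Z^1_{\rho_0^{-2}}$. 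For the complement, I would fix inner products on $V_1$ and $V_2$ and put $f(v,w)=|v|^4+|w|^2$, so that $f(s\cdot(v,w))=s^4f(v,w)$ and $f>0$ off the origin; then each orbit in $V\setminus\{0\}$ meets the level set $\Sigma:=f^{-1}(1)$ in exactly one point, and the restriction of the quotient map $\Sigma\to(V\setminus\{0\})/\mathbb{R}_{>0}$ is a continuous bijection with continuous inverse $[v,w]\mapsto(f(v,w)^{-1/4}v,\,f(v,w)^{-1/2}w)$, hence a homeomorphism. Finally, radial projection identifies $\Sigma$ with the unit sphere of $V$: the bounded open set $\{f<1\}$ contains and is star-shaped about the origin because $f(\lambda v,\lambda w)=\lambda^4|v|^4+\lambda^2|w|^2\le f(v,w)$ for $0\le\lambda\le1$, and along each ray issuing from the origin $f$ increases strictly from $0$ to $\infty$, meeting the value $1$ exactly once. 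This yields $M^0_3(W,M,\nabla)\setminus\{0\}\cong\Sigma\cong S^{n_1+n_2-1}=S^{b_1(\nabla^{-1})+b_1(\nabla^{-2})-1}$.

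The only non-formal point is the handling of the quotient topology: one must verify that $M^0_3(W,M,\nabla)\setminus\{0\}\cong S^{n_1+n_2-1}$ is a homeomorphism rather than a mere bijection, which is why I would exhibit the explicit cross-section $\Sigma$ and its continuous inverse above instead of appealing abstractly to properness of the $\mathbb{R}_{>0}$-action on $V\setminus\{0\}$. Everything else is bookkeeping with the structure of $G_{3,1}$ and of the character groupoid $\mathcal{M}^0_3(W,M,\nabla)$ already established in this section.
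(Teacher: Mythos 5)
Your proposal is correct and follows essentially the same route as the paper: the paper derives the proposition directly from the preceding explicit description of $\mathcal{M}^0_{3}(W,M,\nabla)$ via $G_{3,1}\cong\mathbb{R}^2\rtimes\mathbb{R}^*$, the quotient by $K_{3,1}$ through the coboundary operators, and the residual weight $(-1,-2)$ action of $\mathbb{R}_{>0}$, together with Corollary \ref{cor:hypersurfclass}, Lemma \ref{lem:Dprincbundle}, and the closure statement from Lemma \ref{lem:orbitcl}. Your explicit cross-section $\Sigma=f^{-1}(1)$ with $f(v,w)=|v|^4+|w|^2$ just fills in the elementary sphere identification that the paper leaves implicit, and it is a valid way to do so.
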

\begin{remark}
The moduli space $M^0_{3}(W,M, \nabla)$ is the fibre of the map 
\[
\pi_{3,*} : M_{3}^0(W, M) \to M_{1}^0(W, M) \cong H^{1}(W, \mathbb{R})
\]
above the cohomology class corresponding to the connection $\nabla$. We see from Proposition \ref{prop:g31character} that the dimension of this fibre depends strongly on the point $\nabla \in H^{1}(W, \mathbb{R})$. The subset of points of $H^{1}(W, \mathbb{R})$ for which the fibre $M^0_{3}(W,M, \nabla)$ consists of more than one point is similar to a resonance variety, as in \cite{falk1997arrangements}. 
\end{remark}

The space $M^0_{3}(W,M, \nabla)$ inherits a residual $\mathbb{Z}/2$-action. It acts by multiplying the factor $H^1_{\nabla}(W, \nu_W^{-1})$ by $-1$ and the factor $H^{1}_{\nabla}(W, \nu_W^{-2})$ by $1$. In terms of the isomorphism $M^0_{3}(W,M, \nabla)  \setminus \{ 0 \} \cong S^{b_1(\nabla^{-1}) + b_1(\nabla^{-2}) -1}$ of Proposition \ref{prop:g31character}, the action is by an involution which fixes a sphere of dimension $b_1(\nabla^{-2}) -1$ and acts by the antipodal map on the complementary sphere of dimension $b_1(\nabla^{-1}) -1$. The space of isomorphism classes of $b^4$-algebroids for $(W, \nu_W)$ inducing a fixed flat connection $\nabla$ on $\nu_W$ is therefore homeomorphic to the quotient space 
\[
M_{3}(W, \nu_W, \nabla) = M^0_{3}(W,M, \nabla)/(\mathbb{Z}/2).
\] 

\begin{example}
We revisit Example \ref{ex:genusgsurfaceexampleextension}. Let $W = S$ be a surface of genus $g$, let $M = \mathbb{R} \times W$ be the trivial bundle, and let $\nabla = d$ be the trivial connection. Then 
\[
M_{3}^{0}(W, W \times \mathbb{R}, d) = (H^{1}(S) \times H^{1}(S))/\mathbb{R}_{>0} \cong \mathbb{R}^{4g}/\mathbb{R}_{>0},
\]
so that $M_{3}^{0}(W, W \times \mathbb{R}, d) \setminus \{ 0 \} \cong S^{4g - 1}$. Hence, the parametrization given in Example \ref{ex:genusgsurfaceexampleextension} covers every isomorphism class. 

The extension class is an $\mathbb{R}_{>0}$-equivariant map 
\[
e_{3} : H^{1}(S) \times H^{1}(S) \to H^2(\pi_1(S), A_{4,1}) \cong H^2(S) \cong \mathbb{R},
\]
where the space $H^2(S) \cong \mathbb{R}$ is equipped with the weight $-3$-action of $\mathbb{R}_{>0}$. An expression for $e_{3}$ can be determined from the explicit expression for $E_{4,1}$ in Example \ref{Ex:explicitext41}. Given an element $\rho = (\rho_1, \rho_2) \in H^{1}(S) \times H^{1}(S)$, the cocycle $e_{3}(\rho) \in H^2(\pi_1(S), A_{4,1})$ is given by 
\[
e_{3}(\rho)(\gamma, \eta) = \frac{1}{2}(\rho_1(\gamma) \rho_2(\eta) - \rho_{1}(\eta) \rho_2(\gamma)),
\]
for $\gamma, \eta \in \pi_{1}(S)$. Comparing with Example \ref{ex:genusgsurfaceexampleextension}, we see that this matches the expression for the extension class given there. 
\end{example}

\newpage

\appendix
\newpage
\section{Topological groupoids}\label{sec:topgroupoids}

In this paper we make use of the topological analogue of a Lie groupoid. That is, we will not simply work with groupoids in topological spaces, but insist that the source and target maps are also \emph{topological submersions}. A topological submersion $f: N \to M$ is a continuous map with the property that for every $n \in N$, there is an open neighbourhood $U \subset M$ of $m = f(n)$ and a local section $s: U \to N$ of $f$ such that $s(m) = n$. Topological submersions are closed under composition and are stable under pullback. Furthermore, topological submersions are open maps. A \emph{topological groupoid} is then defined to be a groupoid $\mathcal{G} \rightrightarrows M$ in topological spaces such that the target and source maps are surjective topological submersions. 

We recall the main definitions and results regarding topological groupoids which are used in this paper. Let $\mathcal{G} \rightrightarrows M$ be a topological groupoid over $M$ with $t$ and $s$ the target and source maps, respectively. Let $f: N \to M$ be a surjective topological submersion. The \emph{pullback} $f^{!}(\mathcal{G}) \rightrightarrows N$ is naturally a topological groupoid over $N$ whose space of arrows is given by the fibre product 
\[
f^{!}(\mathcal{G}) = (N \times N) \times_{M \times M} \mathcal{G},
\]
with respect to the maps $f \times f : N \times N \to M \times M$ and $(t, s) : \mathcal{G} \to M \times M$. In other words, the arrows from $n_{1}$ to $n_{2}$ are given by arrows $g \in \mathcal{G}$ such that $s(g) = f(n_{1})$ and $t(g) = f(n_{2})$. The projection map $\pi: f^{!}(\mathcal{G}) \to \mathcal{G}$ is a homomorphism which is a surjective topological submersion between the spaces of arrows. 

\begin{lemma} \label{Moritatohomeo}
The orbit spaces of $f^{!}(\mathcal{G})$ and $\mathcal{G}$ are homeomorphic. 
\end{lemma}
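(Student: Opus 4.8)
The plan is to show that the orbit map $q_{\mathcal{G}} : M \to M/\mathcal{G}$ and the orbit map $q_{f^!(\mathcal{G})} : N \to N/f^!(\mathcal{G})$ are related through the submersion $f$ in a way that lets us build mutually inverse continuous bijections between the two orbit spaces. First I would observe that $f$ maps orbits of $f^!(\mathcal{G})$ into orbits of $\mathcal{G}$: indeed, by the description of the arrows of $f^!(\mathcal{G})$, two points $n_1, n_2 \in N$ are in the same $f^!(\mathcal{G})$-orbit precisely when there is an arrow $g \in \mathcal{G}$ with $s(g) = f(n_1)$ and $t(g) = f(n_2)$, which forces $f(n_1)$ and $f(n_2)$ to lie in the same $\mathcal{G}$-orbit. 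Hence $q_{\mathcal{G}} \circ f : N \to M/\mathcal{G}$ is constant on $f^!(\mathcal{G})$-orbits and descends to a continuous map $\bar{f} : N/f^!(\mathcal{G}) \to M/\mathcal{G}$ satisfying $\bar{f} \circ q_{f^!(\mathcal{G})} = q_{\mathcal{G}} \circ f$.

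Next I would check that $\bar{f}$ is a bijection. Surjectivity follows because $f$ is surjective: every $\mathcal{G}$-orbit contains a point of the form $f(n)$, whose image under $\bar{f} \circ q_{f^!(\mathcal{G})}$ is the given orbit. For injectivity, suppose $n_1, n_2 \in N$ have $f(n_1)$ and $f(n_2)$ in the same $\mathcal{G}$-orbit, so there is $g \in \mathcal{G}$ with $s(g) = f(n_1)$, $t(g) = f(n_2)$. Then the triple $(n_2, n_1, g) \in (N \times N) \times_{M \times M} \mathcal{G} = f^!(\mathcal{G})$ is an arrow from $n_1$ to $n_2$, so $n_1$ and $n_2$ lie in the same $f^!(\mathcal{G})$-orbit; this is exactly injectivity of $\bar{f}$.

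It remains to show $\bar{f}$ is a homeomorphism, i.e. that $\bar{f}$ is open (equivalently that $\bar{f}^{-1}$ is continuous). Here I would use that $f$ is a topological submersion, hence an open map, together with the fact that orbit maps of topological groupoids are open (since $s$ and $t$ are open, the saturation of an open set is open, so $q_{\mathcal{G}}$ and $q_{f^!(\mathcal{G})}$ are open). Given an open set $\bar{U} \subseteq N/f^!(\mathcal{G})$, its preimage $U = q_{f^!(\mathcal{G})}^{-1}(\bar{U}) \subseteq N$ is open and $f^!(\mathcal{G})$-saturated; then $f(U) \subseteq M$ is open, and one checks it is $\mathcal{G}$-saturated using surjectivity of $f$ and the arrow description of $f^!(\mathcal{G})$ (if $g \in \mathcal{G}$ has $s(g) = f(n)$ for some $n \in U$ and $t(g) = m$, pick $n' \in f^{-1}(m)$, which exists by surjectivity, and note $(n', n, g)$ is an arrow of $f^!(\mathcal{G})$, so $n' \in U$ by saturation and $m = f(n') \in f(U)$). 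Hence $q_{\mathcal{G}}(f(U))$ is open in $M/\mathcal{G}$, and one verifies $q_{\mathcal{G}}(f(U)) = \bar{f}(\bar{U})$, proving $\bar{f}$ open. The main obstacle — though a mild one — is the bookkeeping needed to confirm that $f(U)$ is $\mathcal{G}$-saturated and that the image $q_{\mathcal{G}}(f(U))$ coincides with $\bar f(\bar U)$; both reduce to carefully unwinding the definition of the pullback groupoid's arrows and using surjectivity of $f$, so no serious difficulty is expected.
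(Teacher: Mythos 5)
Your proof is correct and follows essentially the same route as the paper: both construct the induced continuous bijection on orbit spaces and establish openness from the openness of $f$ together with the identity $q_{\mathcal{G}}^{-1}(\bar f(\bar U)) = f(q_{f^!(\mathcal{G})}^{-1}(\bar U))$, which is exactly the saturation-plus-image bookkeeping you carry out. The only cosmetic difference is that you additionally invoke openness of the orbit map $q_{\mathcal{G}}$, which is unnecessary once $f(U)$ is known to be open and saturated, since then $\bar f(\bar U)$ is open directly from the definition of the quotient topology.
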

\begin{proof}
The groupoid $\mathcal{G}$ induces an equivalence relation $\sim$ on $M$ whereby $m_{1} \sim m_{2}$ if there is a groupoid element $g \in \mathcal{G}$ such that $s(g) = m_{2}$ and $t(g) = m_{1}$. Let $q: M \to X = M/\sim$ be the corresponding quotient space. Similarly, $f^{!}(\mathcal{G})$ induces an equivalence relation $\sim'$ on $N$ which has quotient space $q': N \to Y = N/\sim'$. It is straightforward to see that $f$ induces a continuous bijection $\tilde{f}: Y \to X$. To see that $\tilde{f}$ is an open map, and hence a homeomorphism, it suffices to note that $f$ is an open map and that $q^{-1}(\tilde{f}(U)) = f((q')^{-1}(U))$ for subsets $U \subseteq Y$. 
\end{proof}

Suppose $\mathcal{H} \rightrightarrows N$ and $\mathcal{G} \rightrightarrows M$ are topological groupoids. A morphism of groupoids $F: \mathcal{H} \to \mathcal{G}$ covering a surjective topological submersion $f: N \to M$ always factors as $F = \pi \circ F^{!}$, where $F^{!} : \mathcal{H} \to f^{!}(\mathcal{G})$. The following definition is due to Mackenzie \cite{MR896907} in the smooth category. 
\begin{definition}
Let $F: \mathcal{H} \to \mathcal{G}$ be a morphism of topological groupoids covering a map $f: N \to M$. Then $F$ is a \emph{fibration} if both $f$ and $F^{!} : \mathcal{H} \to f^{!}(\mathcal{G})$ are surjective topological submersions. 
\end{definition}
Among all fibrations we can focus on those $F: \mathcal{H} \to \mathcal{G}$ that are moreover \emph{Morita equivalences}. This means that the induced map $F^{!} : \mathcal{H} \to f^{!}(\mathcal{G})$ is an isomorphism. If that is the case, $\mathcal{H}$ and $\mathcal{G}$ have homeomorphic orbit spaces, according to Lemma \ref{Moritatohomeo}. In fact, this holds for arbitrary fibrations:
\begin{lemma} \label{fibrationlemma}
Let $F : \mathcal{H} \to \mathcal{G}$ be a fibration of topological groupoids. This induces a short exact sequence of topological groupoids over $N$
\[
1 \to \mathcal{K} \to \mathcal{H} \to f^{!}\mathcal{G} \to 1.
\]
Furthermore, the orbit spaces of $\mathcal{H}$ and $\mathcal{G}$ are homeomorphic. 
\end{lemma}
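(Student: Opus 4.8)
The plan is to build the kernel groupoid $\mathcal{K}$ by hand, verify it is a topological groupoid, and then reduce the homeomorphism statement to Lemma \ref{Moritatohomeo}. First I would recall the factorization $F = \pi \circ F^{!}$, where $F^{!} : \mathcal{H} \to f^{!}\mathcal{G}$ covers $\mathrm{id}_N$ and is, by the definition of a fibration, a surjective topological submersion. Since $F^{!}$ covers the identity, its kernel $\mathcal{K} := \ker(F^{!}) = (F^{!})^{-1}(1_N)$ (the preimage of the unit section of $f^!\mathcal G$) is a wide subgroupoid of $\mathcal{H}$ over $N$: it is closed under inversion and composition because $F^{!}$ is a functor, and it contains all the units of $\mathcal{H}$. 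This gives the short exact sequence $1 \to \mathcal{K} \to \mathcal{H} \to f^{!}\mathcal{G} \to 1$ at the level of abstract groupoids; it remains to check the topological content.

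Next I would verify that $\mathcal{K}$ is a topological groupoid, i.e. that its source and target maps are surjective topological submersions onto $N$. Surjectivity is immediate since $\mathcal{K}$ contains the units. For the submersion property: given $k \in \mathcal{K}$ with $s(k) = n$, apply the fact that $s_{\mathcal{H}} : \mathcal{H} \to N$ is a topological submersion to obtain a local section $\tau : U \to \mathcal{H}$ with $\tau(n) = k$ and $s_{\mathcal H}\circ\tau=\mathrm{id}_U$. Composing with the continuous functor $F^{!}$ and using that $F^{!}\circ\tau$ is a local section of $s_{f^!\mathcal G}$ through the point $F^{!}(k) = 1_n$, I can correct $\tau$ to land in $\mathcal{K}$: define $\tau'(m) := \tau(m)\cdot\big(\text{the continuous choice of a preimage of }(F^{!}\tau(m))^{-1}\text{ under }F^{!}\big)$, using once more that $F^{!}$ is a topological submersion to pick such a preimage continuously near $1_n$ inside the unit-containing fiber. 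Then $\tau'$ is a continuous local section of $s_{\mathcal K}$ through $k$. The same argument applies to the target map (or follows by applying inversion). Hence $\mathcal{K}$ is a topological groupoid and the three-term sequence is a short exact sequence of topological groupoids over $N$.

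For the last assertion, I would argue as follows. Because $F^{!} : \mathcal{H} \to f^{!}\mathcal{G}$ is a surjective topological submersion of topological groupoids that is fully faithful modulo $\mathcal K$—more precisely, it is a fibration that is surjective on arrows and whose kernel is contained in the isotropy—the orbit equivalence relation on $N$ induced by $\mathcal{H}$ coincides with the one induced by $f^{!}\mathcal{G}$: $n_1 \sim_{\mathcal H} n_2$ iff there is $h\in\mathcal H$ with $s(h)=n_1,\ t(h)=n_2$, and applying $F^{!}$ (resp.\ lifting along the submersion $F^{!}$) shows this holds iff there is such an arrow in $f^{!}\mathcal{G}$. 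Since $F^{!}$ is continuous and open on arrows (topological submersions are open), the two orbit spaces of $N$ are homeomorphic, exactly as in the proof of Lemma \ref{Moritatohomeo}. Finally, Lemma \ref{Moritatohomeo} itself identifies the orbit space of $f^{!}\mathcal{G}$ with that of $\mathcal{G}$. Composing the two homeomorphisms yields that the orbit spaces of $\mathcal{H}$ and $\mathcal{G}$ are homeomorphic.

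The main obstacle I anticipate is the correction step that turns a local section of $s_{\mathcal H}$ into a local section of $s_{\mathcal K}$: this requires choosing preimages under $F^{!}$ continuously in a neighborhood of a unit, which is where the hypothesis that $F^{!}$ (not merely $F$) is a topological submersion is essential. Everything else—closure of $\mathcal K$ under the groupoid operations, the short exact sequence, and the reduction of the orbit-space claim to Lemma \ref{Moritatohomeo}—is formal.
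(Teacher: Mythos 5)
Your proposal is correct and follows essentially the same route as the paper: identify $\mathcal{K}=\ker(F^{!})$ as a topological subgroupoid sitting over $N$, and reduce the orbit-space claim to Lemma \ref{Moritatohomeo} applied to $f^{!}\mathcal{G}$. The only (cosmetic) difference is that the paper gets the submersion property of $s\colon\mathcal{K}\to N$ for free by observing that on $\mathcal{K}$ the map $F^{!}$ coincides with the source map (i.e. $\mathcal{K}$ is the pullback of the topological submersion $F^{!}$ along the unit section), whereas you obtain the same conclusion by correcting a local section of $s_{\mathcal{H}}$; both work, and your direct comparison of orbit equivalence relations is an adequate substitute for the paper's identification $\mathcal{H}/\mathcal{K}\cong f^{!}\mathcal{G}$.
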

\begin{proof}
Note first that $\mathcal{G}$ and $f^{!}(\mathcal{G})$ have homeomorphic orbit spaces by Lemma \ref{Moritatohomeo}. Hence, it suffices to prove this lemma under the assumption that $M = N$ and that the base map $f$ is the identity. Let $\mathcal{K} = \mathrm{Ker}(F) \subseteq \mathcal{H}$, a topological subgroupoid which is a bundle of groups. Indeed, observe that the homomorphism $F$ restricts to the source map on $\mathcal{K}$, which is therefore a topological submersion. The groupoid $\mathcal{K}$ acts on $\mathcal{H}$ on the right by composition and $F$ induces a homeomorphism $\mathcal{H}/\mathcal{K} \cong \mathcal{G}$. It is then clear that $\mathcal{H}$ and $\mathcal{G}$ have the same orbits and hence their orbit spaces are naturally identified. 
\end{proof}
\begin{remark} In Lemma \ref{fibrationlemma}, the subgroupoid $\mathcal{K}$ is the kernel of $F^{!}: \mathcal{H} \to f^{!}\mathcal{G}$. We will abuse terminology and call it the kernel of $F$. 
\end{remark}

\begin{lemma} \label{continuoushomsub}
A continuous homomorphism $\phi: G \to Q$ of topological groups is a fibration if and only if it is surjective and there is a local section of $\phi$ in a neighbourhood of the identity in $Q$. In this case, $\phi : G \to Q$ is a principal $K$-bundle, where $K = \mathrm{Ker}(\phi)$. 
\end{lemma}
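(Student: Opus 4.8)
The plan is to prove Lemma~\ref{continuoushomsub} by reducing the general fibration criterion to the special case of topological groups, where the homogeneity of group actions lets us propagate a single local section everywhere. First I would recall that, according to the definition of a fibration $F: \mathcal{G} \to \mathcal{H}$ of topological groupoids over a base map $f$, two conditions must hold: the base map must be a surjective topological submersion, and the induced map $F^! : \mathcal{G} \to f^!\mathcal{H}$ must be a surjective topological submersion. For a homomorphism $\phi: G \to Q$ of topological groups, the base is a point, so the first condition is vacuous (a point-to-point map is trivially a surjective topological submersion), and $f^!Q$ is just $Q$ viewed as a one-object groupoid. Hence $\phi$ is a fibration of topological groupoids precisely when $\phi: G \to Q$ is a surjective topological submersion. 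So the content of the lemma is: a continuous surjective homomorphism $\phi: G \to Q$ is a topological submersion if and only if it admits a continuous local section near the identity of $Q$.

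The forward direction is immediate from the definition of topological submersion: if $\phi$ is a topological submersion, then by definition every point of $G$ — in particular the identity $e_G$ — lies in the image of a local section of $\phi$ defined on an open neighborhood of $\phi(e_G) = e_Q$. For the converse, suppose $s: V \to G$ is a continuous section of $\phi$ on an open neighborhood $V$ of $e_Q$ with $s(e_Q) = e_G$ (we may arrange $s(e_Q)=e_G$ by translating, replacing $s$ with $g \mapsto s(\phi(g_0))^{-1} s(g)$ appropriately, but in fact we only need \emph{some} section near $e_Q$). To see $\phi$ is a topological submersion, fix an arbitrary $g \in G$ and set $q = \phi(g)$; I would produce a local section through $g$ by left-translating: on the open neighborhood $qV$ of $q$, define $s_g(x) = g \cdot s(q^{-1}x)$. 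This is continuous (composition of continuous multiplication and the continuous map $x \mapsto q^{-1}x$), satisfies $\phi(s_g(x)) = \phi(g)\phi(s(q^{-1}x)) = q \cdot (q^{-1}x) = x$ since $s$ is a section, and $s_g(q) = g \cdot s(e_Q) = g$. This exhibits the required local section through an arbitrary point of $G$, so $\phi$ is a surjective topological submersion, hence a fibration.

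For the last assertion — that $\phi: G \to Q$ is then a principal $K$-bundle with $K = \ker(\phi)$ — I would argue as follows. The group $K$ acts on $G$ by right multiplication, and the orbits of this action are exactly the fibers of $\phi$ (since $\phi(g_1) = \phi(g_2)$ iff $g_1^{-1}g_2 \in K$); the action is free and $\phi$ is the orbit map, so $\phi$ is a quotient map realizing $Q \cong G/K$ topologically (it is an open continuous surjection by Lemma~\ref{fibrationlemma} and general topological-submersion facts, hence a quotient map). Local triviality comes from the section: over $qV \subset Q$, the map $qV \times K \to \phi^{-1}(qV)$, $(x,k) \mapsto s_g(x) \cdot k$ (with $g$ any choice in $\phi^{-1}(q)$, e.g.\ $g = s_g(q)$) is a continuous bijection whose inverse $h \mapsto (\phi(h),\, s_g(\phi(h))^{-1} h)$ is also continuous, giving a $K$-equivariant local trivialization. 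Covering $Q$ by the translates $qV$ yields the principal bundle structure.

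The main obstacle I anticipate is essentially bookkeeping rather than mathematics: making sure the definition of ``fibration of topological groupoids'' is correctly specialized to the group case (in particular that the base-map condition degenerates trivially and that $f^!\mathcal{H}$ for $\mathcal{H}$ a group over a point is just $\mathcal{H}$ itself), and checking that the local-triviality maps are genuinely homeomorphisms and not merely continuous bijections — which here is handled cleanly because the inverse is manifestly continuous. One should also be slightly careful that ``topological submersion'' only requires local sections, not global ones, so no connectedness or paracompactness hypothesis on $Q$ is needed; the homogeneity argument via left translation is exactly what makes a single germ of a section at $e_Q$ suffice.
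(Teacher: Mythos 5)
Your proof is correct. The paper actually states Lemma \ref{continuoushomsub} without proof, treating it as standard, so there is no argument to compare against; what you supply is exactly the expected one: over a point the groupoid fibration condition degenerates to ``surjective topological submersion,'' a single local section at $e_Q$ (normalized so that $s(e_Q)=e_G$) propagates to every point by left translation, and the explicit continuous inverse $h \mapsto (\phi(h),\, s_g(\phi(h))^{-1}h)$ gives the $K$-equivariant local trivializations. The only blemish is the circular phrasing ``$g = s_g(q)$'' when choosing a point in the fibre over $q$ (you just mean: pick any $g\in\phi^{-1}(q)$, which exists by surjectivity); this does not affect the argument.
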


The main examples of topological groupoids in this paper come from group actions. Let $G$ be a topological group acting continuously on a topological space $X$. Then the action groupoid $G \ltimes X$ is a topological groupoid over $X$. Indeed, the source map is simply the projection onto $X$ which is a surjective topological submersion. The following situation shows up in the study of the Riemann-Hilbert correspondence. 

\begin{proposition} \label{Moritaequivalencefromquotientprop}
Let $\phi : G \to Q$ be a fibration of topological groups with kernel $H = \mathrm{Ker}(\phi)$. Let $X$ be a topological space equipped with a continuous $G$-action such that the subgroup $H$ acts principally with quotient $Y = X/H$. Write $\pi: X \to Y$ for the quotient map. Then:
\begin{itemize}
    \item $Q$ acts continuously on $Y$.
    \item $F := \phi \times \pi: G \ltimes X \to Q \ltimes Y$ is a fibration and a Morita equivalence.
\end{itemize}
\end{proposition}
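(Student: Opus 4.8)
The plan is to verify the two bullet points directly, using the fibration structure of $\phi$ to produce the required local sections.

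\textbf{Step 1: $Q$ acts continuously on $Y$.} First I would define the action: given $q \in Q$ and $\pi(x) \in Y$, pick $g \in G$ with $\phi(g) = q$ and set $q \cdot \pi(x) := \pi(g\cdot x)$. Well-definedness in $x$ follows because if $\pi(x) = \pi(x')$ then $x' = h\cdot x$ for some $h \in H$, and $g\cdot x' = (ghg^{-1})\cdot(g\cdot x)$ with $ghg^{-1} \in H$ (as $H = \ker\phi$ is normal), so $\pi(g\cdot x') = \pi(g\cdot x)$; well-definedness in $g$ is similar, since replacing $g$ by $gh$ changes $g\cdot x$ by an element of the $H$-orbit. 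The axioms of a group action are then inherited from those of the $G$-action on $X$. For continuity, I would use that $\phi$ admits a local section $s$ near the identity (Lemma \ref{continuoushomsub}): then near any point $(q_0, \pi(x_0)) \in Q \times Y$ write $q = q_0\cdot s(q_0^{-1}q)$ for $q$ close to $q_0$, so the action map factors locally as a composition of the continuous map $Q \to G$, $q \mapsto g_0 \cdot s(q_0^{-1}q)$ (for a fixed choice $g_0$ with $\phi(g_0) = q_0$), the $G$-action $G \times X \to X$, and $\pi$. Since $\pi$ is an open quotient map, one checks the induced map $Q \times Y \to Y$ is continuous by precomposing with $\mathrm{id}_Q \times \pi$.

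\textbf{Step 2: $F = \phi \times \pi$ is a groupoid morphism and a fibration.} That $F$ is a functor is a formal check from the definition of the $Q$-action. To see it is a fibration in the sense of the excerpt, I must show that the base map $\pi : X \to Y$ is a surjective topological submersion, and that the induced map $F^! : G \ltimes X \to \pi^!(Q \ltimes Y)$ is a surjective topological submersion; since the latter turns out to be a homeomorphism, $F$ will even be a Morita equivalence. Surjectivity of $\pi$ and the topological submersion property of $\pi$ are exactly the hypothesis that $H$ acts principally on $X$ with quotient $Y$ (a principal bundle projection is a topological submersion). For the arrow-level map: the arrows of $\pi^!(Q\ltimes Y)$ from $x_1$ to $x_2$ are pairs $(x_1, x_2, q)$ with $q\cdot \pi(x_1) = \pi(x_2)$, while the arrows of $G\ltimes X$ from $x_1$ to $x_2$ are elements $g \in G$ with $g\cdot x_1 = x_2$, and $F^!$ sends $g$ to $(x_1, x_2, \phi(g))$.

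\textbf{Step 3: $F^!$ is an isomorphism.} This is the crux and the step I expect to require the most care. Given $(x_1, x_2, q)$ with $q\cdot\pi(x_1) = \pi(x_2)$, I need a unique $g \in G$ with $g\cdot x_1 = x_2$ and $\phi(g) = q$. Existence: choose any $\tilde g$ with $\phi(\tilde g) = q$; then $\pi(\tilde g\cdot x_1) = q\cdot\pi(x_1) = \pi(x_2)$, so $\tilde g\cdot x_1 = h\cdot x_2$ for a unique $h \in H$ (uniqueness by principality of the $H$-action), and $g := h^{-1}\tilde g$ works. Uniqueness of $g$: if $g, g'$ both work then $g^{-1}g' \in H$ fixes $x_1$, hence equals the identity by freeness of the $H$-action. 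So $F^!$ is a bijection on arrows covering the identity on objects; what remains is that $F^!$ and its inverse are continuous. Continuity of $F^!$ is clear since $\phi$ is continuous. For the inverse, I would again invoke the local section $s$ of $\phi$: locally over $Q$, write the inverse as $(x_1, x_2, q) \mapsto h(x_1,x_2,q)^{-1}\cdot s(q)$ where $h(x_1,x_2,q) \in H$ is the unique element with $s(q)\cdot x_1 = h\cdot x_2$; continuity of $h$ follows from continuity of the $G$-action together with the fact that the principal $H$-bundle $\pi : X \to Y$ locally trivializes, so that the ``division map'' $(a, b) \mapsto$ (the $h$ with $a = h\cdot b$), defined on pairs in the same fibre, is continuous. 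Assembling these over a cover of $Q$ by translates of the domain of $s$ gives continuity of $(F^!)^{-1}$ globally. Hence $F^!$ is a homeomorphism, so $F$ is both a fibration and a Morita equivalence, and by Lemma \ref{fibrationlemma} (or Lemma \ref{Moritatohomeo}) the orbit spaces of $G\ltimes X$ and $Q\ltimes Y$ are homeomorphic. The main obstacle throughout is bookkeeping the local-section argument carefully enough to get genuine continuity (not just set-theoretic bijectivity) of the inverse of $F^!$; everything else is formal.
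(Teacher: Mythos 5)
Your proposal is correct and follows essentially the same route as the paper: descend the action via local sections of $\phi$ from Lemma \ref{continuoushomsub}, use principality of the $H$-action for the base map and for the continuous division map, and combine the two to invert $F^{!}$ continuously (the paper phrases this last step as constructing a local section $\gamma$ with $F^{!}\circ\gamma = \mathrm{id}$, making $F^{!}$ an open continuous bijection, which is the same computation as your locally patched inverse).
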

\begin{proof}
Let us first comment on the case in which $Q$ is trivial. Then $G \ltimes X$ is $H \ltimes X$ and $Q \ltimes Y$ is the trivial groupoid $Y \rightrightarrows Y$. It follows that the pullback groupoid is $\pi^{!}Y = X \times_Y X$ and $F^{!}: H \ltimes X \rightarrow X \times_Y X$ is the map that sends $(h, x)$ to $(h \ast x, x)$. $F$ being a Morita equivalence is equivalent to $F^{!}$ being a homeomorphism, which is equivalent to $\pi$ being principal. It will be convenient for us to denote $\mathrm{div}: X \times_Y X \rightarrow H \ltimes X$ for the inverse of $F^{!}$.

Let us move on to the general case. Since $\pi$ is a principal $H$-bundle, it is also a surjective topological submersion. The action of $G$ on $X$ descends to a continuous action of $Q$ on $Y$ by setting $(q,[x]) \mapsto [gx]$, where $q \in Q$, $x \in X$, and $g \in G$ is a preimage of $Q$. According to Lemma \ref{continuoushomsub}, $F = \phi \times \pi : G \ltimes X \to Q \ltimes Y$ is a fibration that covers $\pi$. By construction, $F$ is a groupoid homomorphism, so it remains to show that it is a Morita equivalence.

We must show that the induced map $F^{!} : G \ltimes X \to \pi^{!}(Q \ltimes Y)$ is an isomorphism. Note first that $\pi^{!}(Q \ltimes Y) = X \times_{Y} (Q \times X)$ and $F^{!}(g, x) = (g \ast x, \phi(g), x)$. Since $\pi$ is a principal $H$-bundle, $F^{!}$ is a bijection. Hence, it suffices to show that $F^{!}$ is an open map, which we will do by showing that it is a topological submersion. 

Let $(g, x) \in G \times X$ and let $(z, q, x) = F^{!}(g, x)$ be its image. Using that $\phi$ is a fibration, we can find an open subset $U \subset Q$ containing $q$ and a local section $\lambda : U \to G$ such that $\lambda(q) = g$. It follows that $W = X \times_{Y}(U \times X) \subseteq  \pi^{!}(Q \ltimes Y)$ is open and contains $(z, q, x)$. The composition 
\[
W \to X \times_{Y} (G \times X) \to X \times_{Y} X \cong H \times X, \qquad (a, k, b) \mapsto \mathrm{div}(a, \lambda(k) \ast b)
\]
defines a continuous map $\eta: W \to H$ such that $\eta(a,k,b) \lambda(k) \ast b = a$. Let $\gamma: W \to G \times X$ be the map $\gamma(a,k,b) = (\eta(a,k,b)\lambda(k), b)$. Then $\gamma(z,q,x) = (g,x)$, meaning that $F^{!} \circ \gamma = id_{W}$, as desired.
\end{proof}


\section{Some Differential Topology} \label{sec:DiffTop}

\subsection{Jets of sections} \label{ssec:jetsSections}

Fix a manifold $M$, a submanifold $W$, and a fibre bundle $p: A \rightarrow M$. For a positive integer $k$, we can consider the bundle of $k$-jets $J^k(A) \rightarrow M$.
\begin{definition}
A section $F: W \rightarrow J^k(A)|_W$ is \emph{holonomic} if around each $x \in W$ there is a neighbourhood $U \subset M$ and a section $f: U \rightarrow A$ whose $k$-jet restricted to $W$ is $F|_{U \cap W}$.
\end{definition}

The set of holonomic sections $J^k_W\Gamma(A)$ is a subspace of $\Gamma(J^k(A)|_W)$ and, as such, can be endowed with the Whitney topologies. The jet map $\Gamma(A) \rightarrow J^k_W\Gamma(A)$ is continuous when both source and target are endowed with the weak $C^\infty$-topology.

\begin{example} \label{ex:diffeos}
In Section \ref{sec:Riemann--Hilbert} we make use of the following concrete example. Set $A := M \times M \to M$. Then, elements of $J^k_W\Gamma(A)$ are holonomic $k$-jets along $W$ of maps from $M$ to $M$. One can then restrict to the subspace of those maps that fix $W$ pointwise; this is a constraint on the underlying $0$-jet. If $k\geq1$ we can furthermore restrict to those jets whose differential along $W$ is a pointwise isomorphism; this is a constraint on the underlying $1$-jet. This defines a subspace $J^k\Diff(M,W) \subset J^k_W\Gamma(A)$. Lemma \ref{lem:polynomialRepresentatives} below shows that any element in $J^k\Diff(M,W)$ can be represented by a diffeomorphism defined in a neighbourhood of $W$ and fixing $W$ pointwise.
\end{example}

\subsubsection{Realising holonomic jets by germs}

The space $J^k_W\Gamma(A)$ admits a more down-to-earth description if we work semi-locally around $W$. To do so, we fix a tubular neighbourhood embedding $\Phi: \nu_{W} \to M$, allowing us to assume that $M$ is a vector bundle over $W$ with projection map $\pi: M \to W$. We then fix a section $s : M \to A$ and consider the vertical bundle $V = s^*(\ker(dp)) \to M$. We can then use a fibrewise metric on $A$ to produce an embedding $V \rightarrow A$, fibered over $M$, whose image is a neighbourhood of $s$. Moreover, using a connection we can identify $V$ with $\pi^*(V|_W)$. Putting these ingredients together produces an embedding $\nu_{W} \oplus V|_W \rightarrow A$, fibered over $W$, whose image is a neighbourhood of $s|_W$ in $A$.

At this point we can note that the existence of a section $s$ as above, defined over a neighbourhood of $W$, is not restrictive. Namely, as long as $J^k_W\Gamma(A)$ is non-empty we can take a section $F$ of $J^k_W\Gamma(A)$, extract its underlying $0$-jet $W \rightarrow A|_W$, and extend it to a section of $A$ defined on a neighbourhood of $W$. This shows that we can represent the $0$-jet of $F$; the aim now is to correct the higher derivatives.

Consider now the bundle $\Delta: = \oplus_{i = 0}^k \Sym^{i}(\nu_{W}^*) \otimes V|_W \to W$. Its sections correspond naturally to polynomial sections of $V$ over $\nu_{W}$; from this perspective, the chosen section $s$ is the zero section. We endow $\Gamma(\Delta)$ with the weak $C^\infty$-topology and deduce:
\begin{lemma} \label{lem:polynomialRepresentatives}
Taking $k$-jets along $W$ defines an embedding $\Gamma(\Delta) \rightarrow J^k_W\Gamma(A)$. Its image is a $C^0$-neighbourhood of $j^k_W s$.
\end{lemma}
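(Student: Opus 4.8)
The plan is to carry out the computation in the semi-local model set up just above the statement. Using the tubular neighbourhood $\Phi$ and the fibrewise embedding $V \hookrightarrow A$ together with the identification $V \cong \pi^*(V|_W)$, we may assume $M = \nu_{W}$ with projection $\pi$, that an open neighbourhood $U$ of $s(M)$ in $A$ is identified with the total space of $\pi^*(V|_W) \to \nu_{W}$, and that under this identification $s$ is the zero section. Sections of $A$ with image in $U$ then correspond to fibre-preserving maps $\nu_{W} \to V|_W$, and $\Gamma(\Delta)$ corresponds exactly to those of fibrewise degree $\le k$; the section $s$ corresponds to $0 \in \Gamma(\Delta)$. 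In this picture the map in the statement is $\eta \mapsto j^k_W(\mathrm{poly}(\eta))$, where $\mathrm{poly}(\eta)$ is the polynomial section associated to $\eta$, which has image in $U$ because $V \hookrightarrow A$ is a diffeomorphism onto $U$.

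The technical heart is the following elementary identity: for a germ $g$ along $W$ of a section of $\pi^*(V|_W) \to \nu_{W}$, writing $T_k g$ for its fibrewise Taylor truncation of degree $\le k$ along $W$, one has $j^k_x g = j^k_x(T_k g)$ for every $x \in W$. In coordinates $(y,z)$ adapted to $W = \{z = 0\}$, every $k$-jet coefficient of $g$ at $x$ has the form $(\partial_y^\alpha \partial_z^\beta g)(y,0)$ with $|\alpha| + |\beta| \le k$, and such a coefficient depends only on the normal Taylor coefficients of $g$ of order $\le k$, all of which are retained by $T_k$. I would record this bookkeeping carefully, since it is the only non-formal step: it is precisely what guarantees that the tangential derivatives of the normal coefficients, which are part of the $k$-jet datum, are reproduced by the polynomial representative.

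Two consequences follow. First, injectivity: if $\eta \in \Gamma(\Delta)$ then $\mathrm{poly}(\eta)$ equals its own truncation, so $\eta$ is recovered from $j^k_W(\mathrm{poly}(\eta))|_W$ by reading off the normal Taylor coefficients; hence $j^k_W$ is injective on $\Gamma(\Delta)$. Second, the image is exactly $\{F \in J^k_W\Gamma(A) : \text{the underlying } 0\text{-jet of } F \text{ has image in } U\}$: given such an $F$, cover $W$ by opens on which $F$ is represented by a genuine section $g_\alpha$ with image in $U$; the normal Taylor coefficients of the $g_\alpha$ up to order $k$ are read off from $F$ itself, so they agree on overlaps and glue to some $\eta \in \Gamma(\Delta)$, and the truncation identity applied chart by chart gives $j^k_W(\mathrm{poly}(\eta))|_W = F$. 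This set is open in the weak $C^0$-topology on $\Gamma(J^k(A)|_W)$ and contains $j^k_W s$, so the image is a $C^0$-neighbourhood of $j^k_W s$.

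It remains to see that $j^k_W|_{\Gamma(\Delta)}$ is a homeomorphism onto this image. Continuity follows since the map factors through the continuous inclusion $\Gamma(\Delta) \hookrightarrow \Gamma(A)$ as polynomial sections, followed by $j^k_W$, which is continuous by the remark preceding the statement. The inverse, defined on the open image, is the operation of extracting the order-$\le k$ normal Taylor coefficients, which in adapted coordinates is just the selection of certain jet components and is therefore continuous for the Whitney topologies. The only real subtlety I anticipate is notational—keeping track of the identification $V \cong \pi^*(V|_W)$ and of which jet components are tangential versus normal—while the substantive content is the one-line truncation identity above.
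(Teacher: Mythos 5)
Your argument is correct and is exactly the one the paper intends: the paper states this lemma without proof, but your identification of the image as the set of holonomic jets whose underlying $0$-jet takes values in the image of $V|_W$ in $A|_W$, together with the fibrewise Taylor-truncation identity $j^k_x g = j^k_x(T_k g)$ along $W$, matches the remark the paper places immediately after the statement. The only point worth tightening is the topology: openness of that image is automatic for the strong (Whitney) $C^0$-topology, whereas for the weak $C^0$-topology it requires $W$ compact, which is the standing assumption in the paper's applications but should be said explicitly if you invoke the weak topology.
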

Concretely, observe that the claimed $C^0$-neighbourhood of $j^k_W s$ consists of those $k$-jets whose $0$-jet takes values in the image of $V|_W$ in $A|_W$. An immediate corollary is that every holonomic section can be represented by an actual section defined on a neighbourhood of $W$.

\begin{example}
We continue with Example \ref{ex:diffeos}. Let $s: M \rightarrow A = M \times M$ be the graph of the identity. Its image is the diagonal. We claim that the image of $\Gamma(\Delta)$ contains all $j^k_Wf$ as $f$ ranges over all diffeomorphism germs fixing $W$ pointwise. Indeed, fix $V$ as above and consider its image $V'$ in $A$ (under some fibrewise tubular neighbourhood embedding); this is a neighbourhood of the diagonal in $M \times M$. Given any germ $f$ of diffeomorphism we can restrict its domain until the image of its graph is completely contained in $V'$, proving the claim.
\end{example}

\begin{example} \label{ex:grass2}
Our main example, used throughout the paper, appeared already in Remark \ref{rem:grass}. Given a rank $i$, we can consider $A := \Gr(TM,i)$, the Grassmanian bundle of $i$-dimensional subspaces of $TM$. Suppose $W \subset M$ has dimension exactly $i$. Fixing a tubular neighbourhood embedding $\Phi: \nu_{W} \to M$ allows us to consider the subbundle $B \subset A$ of those subspaces that are transverse to the fibres of $\pi: M \rightarrow W$. We think of $B$ as the subbundle of splittings/connections of $\pi$. Functoriality provides an embedding $J^k_W\Gamma(B) \rightarrow J^k_W\Gamma(A)$. By construction, its image is a $C^0$-open. Different choices of $\Phi$ yield different choices of $B$ and thus different opens.

In the paper we are interested in elements of $J^k_W\Gamma(A)$ that are tangent to $W$ and are additionally involutive up to order $k$. We denote this subspace by $J^{k}\FolObj(W,M)$, which is in bijection with the set of $k$th order foliations along $W$. Observe that $J^{k}\FolObj(W,M)$ lies in the image of $J^k_W\Gamma(B)$, regardless of our choice of $\Phi$. Involutivity was explained in Section \ref{sec:jets}: it means that the sheaf of $k$-jets of vector field tangent to the $k$th order distribution should be involutive.
\end{example}

\begin{example} \label{ex:grass3}
The previous examples come together as follows. We claim that there is a continuous action of $J^k\Diff(M,W)$ on $J^k_W\Gamma(\Gr(TM,i))$. To show this, we consider $C \subset J^k(M,M) \rightarrow M$, the subbundle of $k$-jets of diffeomorphisms. Its source and target maps turn it into a groupoid over $M$. We would want to claim that $C$ acts on $J^k\Gr(TM,i)$, the bundle of $j$-kets of distribution, but this is not the case, due to the loss of a derivative (as in Subsection \ref{ssec:LieRiehart}). Instead, since the pushforward is a differential operator of order $1$ in the diffeomorphism and of order zero in the distribution, it defines a smooth map
\[ C \times_M J^k\Gr(TM,i) \rightarrow J^{k-1}\Gr(TM,i) \]
for each $k$.

Now we restrict our attention to $W$. We consider $C|_W$ and consider the subbundle $C'$ of jets that fix $W$ pointwise. This is now a bundle of Lie groups over $W$. Moreover, we consider the subbundle $D \subset J^k\Gr(TM,i)|_W \rightarrow W$ of jets of distributions that are tangent to $W$. Due to the tangency condition, we now get an action of $C'$ on $D$, without a loss of derivative. This can be checked in local coordinates and is analogous to the phenomenon observed in Subsection \ref{ssec:LieRiehart}.

The claim concludes by taking holonomic sections. The ingredient we are using here is that, by definition of the Whitney topology, continuity at the level of spaces of holonomic sections follows from continuity of the action at the level of jet bundles. Indeed, $J^k\Diff(M,W)$ is the space of holonomic sections of $C'$. Similarly, $J^k_W\Gamma(\Gr(TM,i))$ is the space of holonomic sections of $D$. Moreover, the action descends to an action on the subspace $J^{k}\FolObj(W,M) \subset J^k_W\Gamma(\Gr(TM,i))$ of involutive jets, since the pushfoward preserves involutivity.
\end{example}

\subsection{Tubular neighborhood embeddings} \label{ssec:TNE}
Let $M$ be a manifold, $W \subset M$ a submanifold, and $\nu_{W}$ the normal bundle. A tubular neighbourhood embedding is a map $\Phi : \nu_{W} \to M$ which restricts to the identity on $W$ and such that $\nu(\Phi) = id_{\nu_{W}}$. 
\begin{remark}
Any such $\Phi$ is automatically a local embedding close to $W$, but we do not require that it is an embedding globally. From a homotopical viewpoint this makes no difference, but it simplifies some arguments.
\end{remark}

We write $\TNE(W,M)$ for the space of tubular neighbourhood embeddings, which we endow with the weak $C^\infty$ topology. A folklore result often attributed to Cerf \cite{Cer61} states that this space is weakly contractible. An inspection of the proof of \cite[Proposition 31]{Godin} establishes the following stronger statement:
\begin{lemma} \label{lem:betterCerf}
Fix $\phi_0 \in \TNE(W,M)$. Then, there is a deformation retraction $F :  \TNE(W,M) \times [0,1] \to \TNE(W,M)$ to $\phi_0$ such that:
\begin{itemize}
    \item For each $\phi \in \TNE(W,M)$, the path $t \mapsto F(\phi,t)$ defines an element of $\TNE(W \times [0,1],M\times [0,1])$.
    \item The map $\TNE(W,M) \rightarrow \TNE(W \times [0,1], M\times [0,1])$ defined in this manner is continuous.
\end{itemize}
\end{lemma}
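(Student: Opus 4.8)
\textbf{Proof plan for Lemma \ref{lem:betterCerf}.}

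The statement is essentially a quantitative refinement of the Cerf--Godin contractibility result, so the plan is to track the proof of \cite[Proposition 31]{Godin} and extract the extra smoothness/continuity. First I would recall the structure of Godin's argument: one does \emph{not} prove contractibility of $\TNE(W,M)$ by a single clever formula but rather in two moves. Step one: identify $\TNE(W,M)$ with the space of sections of a suitable bundle, or more precisely exhibit it as a (weakly) convex-like space via the fact that a tubular neighbourhood embedding $\Phi$ is determined up to a contractible choice by its $1$-jet along $W$ (which is fixed, equal to the identity of $\nu_W$) together with a ``radial correction''. Step two: use that the affine combination of a tubular neighbourhood embedding $\Phi$ with a fixed reference $\phi_0$ --- taken in the right local model, e.g. after pulling back along $\phi_0$ so that $M$ looks like $\nu_W$ near $W$ and both maps are fibrewise-radially-scaled perturbations of the identity --- remains a tubular neighbourhood embedding, provided one first rescales $\Phi$ by a dilation $v \mapsto \Phi(tv)/t$ to make it $C^1$-close to its linearization. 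This is exactly the interpolation trick already used in the paper (cf. the proof of Lemma \ref{holonomycomputation} and Lemma \ref{lem:dilating}); the dilation brings $\Phi$ arbitrarily close to $\phi_0$ in the weak $C^\infty$ topology, after which a straight-line homotopy stays inside $\TNE(W,M)$ because the defining conditions ($\Phi|_W = \mathrm{id}$, $\nu(\Phi) = \mathrm{id}$, local embedding) are open and convex in this regime.

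With that homotopy $F(\Phi,t)$ in hand, the first bullet is a matter of unwinding definitions: I would check that for fixed $\Phi$ the assignment $t \mapsto F(\Phi,t)$, viewed as a map $\nu_W \times [0,1] \to M \times [0,1]$ by $(v,t) \mapsto (F(\Phi,t)(v),t)$, restricts to the identity on $W \times [0,1]$ and has trivial normal derivative --- both inherited fibrewise from the fact that each $F(\Phi,t)$ is itself a tubular neighbourhood embedding and that the homotopy is smooth in $t$ (the dilation $t \mapsto t^{-1}\Phi(t\,\cdot)$ extends smoothly to $t=0$ by the usual Hadamard-lemma argument, exactly as in Corollary \ref{cor:equivarianthomotopy}). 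So $F(\Phi,\cdot) \in \TNE(W\times[0,1], M\times[0,1])$.

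The second bullet --- continuity of $\Phi \mapsto F(\Phi,\cdot)$ as a map into $\TNE(W\times[0,1],M\times[0,1])$ with its weak $C^\infty$ topology --- is where the real care is needed, and I expect it to be the main obstacle. The point is that the construction of $F$ must be carried out by \emph{continuous} (not merely pointwise) operations: the dilation family, the affine interpolation, and the reparametrizations gluing the dilation phase to the interpolation phase must each depend continuously on $\Phi$ in the Whitney topology, and a derivative in the $[0,1]$ direction is introduced, so one must verify that no loss of regularity occurs. Concretely I would phrase everything in terms of the polynomial/section model of Lemma \ref{lem:polynomialRepresentatives} and Example \ref{ex:grass2}: a neighbourhood of $\phi_0$ in $\TNE(W,M)$ is a $C^0$-open subset of a space of sections of a vector bundle over $W$, on which dilation and affine interpolation are manifestly continuous linear-ish operations, and composing with the fixed smooth structure maps (the tubular-neighbourhood identification near $\phi_0$, Hadamard division) preserves continuity for the weak $C^\infty$ topology. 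The only subtlety is global-on-$W$ versus local: since $W$ is not assumed compact the weak topology is used, but all operations involved are local in $W$ and fibre-preserving, so continuity at the level of jet bundles suffices, exactly as in Example \ref{ex:grass3}. Assembling these observations yields the continuous map $\TNE(W,M) \to \TNE(W\times[0,1],M\times[0,1])$ and completes the proof.
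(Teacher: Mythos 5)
The paper gives no proof of this lemma: it simply asserts that ``an inspection of the proof of [Godin, Proposition 31]'' yields the statement, and the techniques it has in mind are exactly the fibrewise dilation $v \mapsto t^{-1}\Phi(tv)$ in the local model $M \cong \nu_W$, $\phi_0 = \mathrm{id}$ (as used in Lemma \ref{lem:dilating} and in the proof of Lemma \ref{lem:takingJets}) together with continuity of these operations in the Whitney topology via the section model of Lemma \ref{lem:polynomialRepresentatives}. Your plan reconstructs precisely this argument, correctly identifies the continuity of $\Phi \mapsto F(\Phi,\cdot)$ as the only point needing real care, and handles it the same way, so it is essentially the paper's (i.e.\ Godin's) approach and correct in outline.
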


We also need a variation of this lemma in which the $k$-jet remains fixed.
\begin{lemma} \label{lem:dilating}
Consider $f,g \in \TNE(W,M)$ with the same $k$-jet, $k \geq 1$. Then the two are homotopic through tubular neighbourhood embeddings with fixed $k$-jet.
\end{lemma}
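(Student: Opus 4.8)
The plan is to exhibit an explicit homotopy that rescales the ``difference'' between $f$ and $g$ towards the zero section, while keeping the $k$-jet fixed. First I would reduce to the semi-local situation: using a tubular neighbourhood embedding we may assume $M$ is the total space of $\nu_{W} \to W$, so that both $f$ and $g$ are maps $\nu_{W}\to\nu_{W}$ restricting to the identity on $W$ with $\nu(f)=\nu(g)=\id$. The key device is the fibrewise dilation $m_{s}:\nu_{W}\to\nu_{W}$, $m_{s}(v)=sv$ for $s\in(0,1]$, together with its inverse $m_{1/s}$. For a tubular neighbourhood embedding $h$, set $h_{s} := m_{1/s}\circ h\circ m_{s}$, which is again a tubular neighbourhood embedding (it restricts to the identity on $W$, and its normal derivative is $\nu(m_{1/s})\circ\nu(h)\circ\nu(m_{s}) = \id$). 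As $s\to 0$, $h_{s}$ converges smoothly to its own linear approximation along $W$, i.e.\ to the fibrewise-linear map determined by $\nu(h)=\id$; so $h_{0}=\id$, and $s\mapsto h_{s}$ is a smooth path in $\TNE(W,M)$ from $h$ to $\id$. Applying this simultaneously to $f$ and $g$ gives a path from $f$ to $g$ \emph{through} tubular neighbourhood embeddings, but a priori without control on the $k$-jet.

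To fix the $k$-jet, the idea is to compose the above with a correction. Since $j^{k}_{W}f = j^{k}_{W}g =: J$, the dilation argument shows that along $s\mapsto f_{s}$ the $k$-jet degenerates: the higher-order Taylor coefficient of weight $j$ of $f$ is multiplied by $s^{j-1}$, so only the linear part (which is $\id$) survives in the limit, and the same degeneration happens for $g$. Concretely, I would instead argue as follows. Both $f$ and $g$ have the same $k$-jet, so $g\circ f^{-1}$ is a tubular neighbourhood embedding whose $k$-jet along $W$ is that of the identity; in particular all Taylor coefficients of $g\circ f^{-1}$ up to order $k$ (beyond the zeroth, which vanishes, and the first, which is $\id$) vanish. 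Writing $\psi := g\circ f^{-1}$ and $\psi_{s} := m_{1/s}\circ\psi\circ m_{s}$, the path $s\mapsto\psi_{s}$ runs from $\psi$ (at $s=1$) to $\id$ (at $s=0$), stays within $\TNE(W,M)$, and — crucially — has \emph{constant} $k$-jet equal to $j^{k}_{W}(\id)$ for every $s$, because the rescaling only affects Taylor coefficients of order $>k$ (those of order $\le k$ are already zero, except the identity part which is scale-invariant). Then $s\mapsto \psi_{s}\circ f$ is a path from $g$ to $f$ through tubular neighbourhood embeddings all sharing the $k$-jet $J$. Precomposing/reparametrising so the endpoints are reached smoothly (constant near $s=0,1$) gives the required homotopy.

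The main technical point — and the step I expect to require the most care — is verifying that the Taylor-coefficient bookkeeping under $m_{1/s}\circ(-)\circ m_{s}$ is exactly as claimed: that a term of homogeneous weight $j$ in the fibre variables scales by $s^{j-1}$, so that weight-$1$ terms are fixed, weight-$\le k$ terms that were zero stay zero, and the whole family extends smoothly to $s=0$ with limit the linear map. This is a routine computation in fibre coordinates $(w; v)$ on $\nu_{W}$ (with $w$ on $W$ and $v$ in the fibre), writing $\psi(w,v) = (w, v + \sum_{|I|\ge k+1} c_{I}(w)v^{I} + \text{(orders between }2\text{ and }k\text{, here }=0))$ and substituting, but one must check smoothness of the extension at $s=0$, continuity of the assignment $\psi\mapsto(\psi_{s})_{s}$ in the weak $C^{\infty}$ topology, and that $\psi_{s}$ genuinely remains a (local) embedding near $W$ for all $s\in[0,1]$ — the last follows since $\nu(\psi_{s})=\id$ throughout, so each $\psi_{s}$ is an immersion along $W$ and hence a local embedding on a neighbourhood of $W$, which is all the definition of $\TNE(W,M)$ in Subsection~\ref{ssec:TNE} requires. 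Finally, transporting everything back through the fixed tubular neighbourhood embedding $\Phi$ of the reduction step yields the statement for the original $M$. $\hfill\qedsymbol$
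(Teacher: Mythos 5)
Your proposal is correct and follows essentially the same route as the paper: reduce to the composite $\psi = g\circ f^{-1}$ (the paper uses $f\circ g^{-1}$, which is the same idea), whose $k$-jet is that of the identity, and contract it to $\mathrm{id}$ via the fibrewise dilation $m_{1/s}\circ\psi\circ m_s$, noting that the weighted scaling of Taylor coefficients fixes the linear term and preserves the vanishing of the coefficients in degrees $2,\dots,k$. The paper's proof is exactly this, stated more tersely.
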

\begin{proof}
We may assume that $M=E$ is a vector bundle and that $g = id_E$. Consider then the scaling action on $E$ and define $f_t(e) := \frac{f(te)}{t}$. This is a homotopy of maps between $f_1 = f$ and $f_0 = id_E$ that is smooth in $t$.  In local coordinates one can verify that it acts by weighted scaling on Taylor coefficients, keeping the linear term fixed. Since the Taylor coefficients of $f$ are zero in degrees between $1$ and $k+1$, it follows that $f_t$ is a homotopy of maps with fixed $k$-jet, which in particular are tubular neighbourhood embeddings. The general case is obtained by applying the above construction to $f \circ g^{-1}$.
\end{proof}

\subsection{Diffeomorphisms and isotopies} \label{ssec:DiffeosAndIsotopies}

Consider a manifold $M$, a compact submanifold $W$, and a tubular neighbourhood embedding $\Phi: \nu_{W} \rightarrow M$. We are interested in $\Diff(M,W)$, the group of diffeomorphisms of $M$ which fix $W$ pointwise, and $\Diff_0(M,W)$, the connected component of the identity. We write $J^k\Diff(M,W)$ and $J^k\Diff_0(M,W)$ for the correponding groups of $k$-jets of diffeomorphisms along $W$. We can endow all of these with the Whitney $C^\infty$-topology (see Subsection \ref{ssec:jetsSections}). We let $k\geq 1$.

\begin{lemma} \label{lem:takingJets}
The jet map $J^k : \Diff_0(M,W) \rightarrow J^k\Diff_0(M,W)$ is a fibration.
\end{lemma}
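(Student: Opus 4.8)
The plan is to prove that the jet map $J^k : \Diff_0(M,W) \to J^k\Diff_0(M,W)$ is a fibration of topological groups, which by Lemma \ref{continuoushomsub} reduces to two things: surjectivity, and the existence of a continuous local section of $J^k$ defined on a neighbourhood of the identity in $J^k\Diff_0(M,W)$.

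First I would observe that surjectivity is already essentially contained in the excerpt: by Lemma \ref{lem:polynomialRepresentatives} (and the example immediately following it), every element of $J^k\Diff(M,W)$ can be represented by a germ of diffeomorphism fixing $W$ pointwise, and any such germ can be cut off and extended to an actual diffeomorphism of $M$ fixing $W$ pointwise (using a bump function supported near $W$, since $W$ is compact); one must then check that when the $k$-jet lies in the identity component $J^k\Diff_0(M,W)$, the representative can be chosen in $\Diff_0(M,W)$, which follows by connecting the $k$-jet to the identity by a path and running the same construction with parameters. The substantive content is therefore the local section near the identity.

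For the local section, the plan is as follows. By Lemma \ref{lem:polynomialRepresentatives}, applied to the diagonal section $s$ of $A = M \times M$ as in the example there, there is a $C^0$-open neighbourhood $\mathcal{U}$ of $j^k_W(\mathrm{id})$ in $J^k\Diff(M,W)$ together with a continuous map $\mathcal{U} \to \Gamma(\Delta)$ assigning to each $k$-jet its canonical polynomial (fibrewise) representative on $\nu_{W}$, and this representative is a germ of diffeomorphism provided we are close enough to the identity. Composing with a fixed tubular neighbourhood embedding $\Phi : \nu_{W} \to M$ transports this to a germ of diffeomorphism of $M$ near $W$ fixing $W$ pointwise, depending continuously on the $k$-jet. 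To pass from a germ to a globally defined diffeomorphism of $M$, fix a compactly supported bump function $\chi$ on $M$ equal to $1$ near $W$ and supported in the image of $\Phi$, and interpolate: the standard ``push along a vector field'' trick is cleanest here. Concretely, for a germ $f$ that is $C^1$-close to the identity near $W$, write $f$ as the time-$1$ flow of a time-dependent vector field $X_t^f$ defined near $W$ (this is possible and continuous in $f$ for $f$ close to the identity, and $X_t^f$ vanishes along $W$ since $f$ fixes $W$ pointwise), then set $\tilde{f} := $ time-$1$ flow of $\chi X_t^f$, which is a globally defined diffeomorphism of $M$, lies in $\Diff_0(M,W)$ since it is connected to the identity through the flow, has $j^k_W\tilde{f} = j^k_W f$ because $\chi \equiv 1$ near $W$, and depends continuously on $f$ (hence on the original $k$-jet) for the Whitney $C^\infty$-topology, since flows depend continuously on their generating vector fields. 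This defines the desired continuous section $\mathcal{U} \to \Diff_0(M,W)$ of $J^k$ with $j^k_W(\mathrm{id}) \mapsto \mathrm{id}$.

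The main obstacle I anticipate is the bookkeeping around continuity in the Whitney $C^\infty$-topology at each stage — in particular, verifying that the map ``germ $\mapsto$ generating time-dependent vector field'' is continuous near the identity (this uses that $W$ is compact, so that one can work uniformly on a fixed tubular neighbourhood, together with the inverse/implicit function theorem for the map sending a vector field to the time-$1$ flow, which is a local diffeomorphism near $0$ on the relevant function spaces), and that cutting off with $\chi$ does not destroy continuity. None of this is conceptually hard, but it is the part that requires care; the algebraic structure (that $J^k$ is a homomorphism, and that a section near the identity plus surjectivity gives a fibration of topological groups, indeed a principal bundle with structure group $\ker(J^k)$) is immediate from Lemma \ref{continuoushomsub}.
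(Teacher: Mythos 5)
Your overall architecture coincides with the paper's: both arguments reduce, via Lemma \ref{continuoushomsub}, to surjectivity plus a continuous local section near the identity; both get surjectivity by realising a jet by a germ of isotopy and extending; and both build the local section by taking polynomial representatives (Lemma \ref{lem:polynomialRepresentatives}), realising the representative as the time-$1$ flow of a time-dependent vector field vanishing along $W$, cutting that field off near $W$, and flowing to obtain an element of $\Diff_0(M,W)$ with the prescribed $k$-jet.

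The one step that would fail as you justify it is the continuity of the assignment $f \mapsto X_t^f$. You propose to derive it from ``the inverse/implicit function theorem for the map sending a vector field to the time-$1$ flow, which is a local diffeomorphism near $0$ on the relevant function spaces.'' This is not available: the spaces in question are Fr\'echet spaces of $C^\infty$ maps, where the inverse function theorem fails in general, and the time-$1$ flow map for time-dependent fields is far from injective, so it is not a local diffeomorphism in any sense (even in the autonomous case the exponential map of $\Diff$ is not locally surjective). What is actually needed is a \emph{continuous choice of isotopy} $f_t$ from $\mathrm{id}$ to $f$; once that is in hand, $X_t^f := (\partial_t f_t)\circ f_t^{-1}$ is continuous in $f$ for elementary reasons. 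The paper supplies exactly this missing ingredient: it first constructs a deformation retraction $R$ of a neighbourhood $U \subset J^k\Diff_0(\nu_{W},W)$ onto the identity (fibrewise dilations as in Lemma \ref{lem:dilating} retract onto $J^k\Aut(\nu_{W}) \cong \Aut(\nu_{W})$, which is locally contractible), composes with the polynomial-representative map to get a continuous family of paths $s(u,t)$ ending at $\mathrm{id}$, and only then differentiates in $t$ to produce the vector field $V(u)$. Substituting such an explicit continuous isotopy (dilation, or fibrewise linear interpolation of the polynomial representatives) repairs your argument. A secondary point to watch: your fixed cutoff $\chi$ must be supported where $X_t^f$ is actually defined, i.e.\ where the isotopy is an embedding, and this region depends on $f$; the paper handles this with a continuous radius function $\rho(u)$ and a family of bump functions $\chi_\rho$, though for a sufficiently small neighbourhood of the identity a fixed cutoff can be arranged.
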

\begin{proof}
According to Lemma \ref{continuoushomsub}, it is sufficient to show that $J^k$ is surjective and admits a local section in a neighbourhood of the identity. First, we establish surjectivity. Any $f \in J^k\Diff_0(M,W)$ can be represented by a germ of isotopy around $W$ that is then extended to a global element of $\Diff_0(M,W)$ via the isotopy extension theorem. 

Second, we establish the existence of a local section of $J^k$ in a neighbourhood of the identity $id_M \in \Diff_0(M,W)$.  For this, we may assume that $M = \nu_{W}$ as long as we realize every element of $J^k\Diff_0(\nu_{W},W)$ by an isotopy supported in the unit disc bundle.

As an intermediate step, we claim that there is a deformation retraction $R : U \times [0,1] \to U$ from an open neighbourhood $U \subset J^k\Diff_0(\nu_{W},W)$ to the identity. Write $\Aut(\nu_{W})$ for the space of vector bundle automorphisms. By using fibrewise dilations as in Lemma \ref{lem:dilating}, we can first show that there is a deformation retraction from $J^k\Diff(\nu_{W},W)$ to $J^k\Aut(\nu_{W})$. The latter space is readily seen to be homeomorphic to $\Aut(\nu_{W})$ itself, which indeed admits local deformation retractions around all points. 

Next, we apply Lemma \ref{lem:polynomialRepresentatives} to take polynomial representatives of all elements in $U$. This is continuous on $U$ and can be composed with $R$ to yield a continuous map:
\[ s: U \times [0,1] \rightarrow C^\infty(\nu_{W},\nu_{W}) \]
such that $s(u,1) = id_{\nu_{W}}$ for all $u \in U$. Reasoning as in \cite[Proposition 31]{Godin} tells us that there is a continuous function $\rho: U \rightarrow (0,1]$ such that $s(u,t)$ is an embedding for every $t$ when restricted to the disc bundle of radius $\rho(u)$. Consider then a family $\chi_\rho: [0,\infty) \rightarrow [0,1]$ of bump functions, continuous on $\rho \in (0,1]$, that is identically $1$ over $[0,\rho/2]$ and identically zero outside of $[0,\rho]$. This family can be used to produce a time-dependent vector field $V(u)$ that generates $t \mapsto s(u,t)$ over the disc bundle of radius $\rho(u)/2$ and is identically zero if the radius is larger than $\rho(u)$. Since the family $s$ is continuous, and differentiating in $t$ is continuous, the assignment $u \mapsto V(u)$ is continuous. Integrating $V(u)$ to its time-$1$ flow is continuous in the vector field. This produces the desired local section of $J^k$. 
\end{proof}

\section{Principal bundles and holonomy}\label{sec:principal}

In this appendix we review the results needed from the theory of principal bundles and flat connections. We discuss the Riemann-Hilbert correspondence in Section \ref{holonomymap} and its upgrade to a Morita equivalence of topological groupoids in Section \ref{TopRH}. We then discuss the classification of flat connections up to isotopy in Section \ref{isotopyRH}. 

\subsection{Holonomy and the Riemann-Hilbert correspondence} \label{holonomymap}

Let $G$ be a Lie group and let $\pi : P \to W$ be a principal $G$-bundle. We denote the Atiyah groupoid of $P$ by $\At(P) \rightrightarrows W$ and its Lie algebroid, the Atiyah algebroid, by $\at(P) \to W$. Furthermore, let $\Ad(P) \rightrightarrows W$ denote the bundle of isotropy groups of $P$, known as the gauge groupoid, and let $\ad(P) \to W$ be the adjoint bundle of $P$, which is the Lie algebroid of $\Ad(P)$. A flat connection on $P$ is by definition a Lie algebroid splitting $\nabla : TW \to \at(P)$ of the Atiyah sequence 
\[ 0 \to \ad(P) \to \at(P) \to TW \to 0. \]
This splitting can be integrated via Lie's second theorem \cite{mackenzie2000integration, moerdijk2002integrability} to a Lie groupoid homomorphism 
\[ \hol^{\nabla} : \Pi_{1}(W) \to \At(P), \]
which is called the \emph{holonomy morphism}. This morphism can be restricted to the fundamental group of $W$ at any point $x \in W$, thus giving a group homomorphism: 
\[ \hol_{x}^{\nabla} : \pi_{1}(W,x) \to \At(P)_{x} = \Ad(P)_{x} = \Aut_{G}(P_{x}), \]
into the group of $G$-equivariant automorphisms of the fibre $P_{x}$. This fibre $P_{x}$ is a $G$-torsor and so is non-canonically isomorphic to $G$. The set of framings $\phi: G \cong P_{x}$ is in bijection with $P_{x}$ itself.
\begin{definition}
The \emph{monodromy representation} of a framed flat connection $(P, \nabla, \phi)$ is defined to be the homomorphism 
\[ \hol_{x}^{\nabla, \phi} : \pi_{1}(W, x) \to G, \qquad \gamma \mapsto \phi^{-1} \circ \hol_{x}^{\nabla} (\gamma) \circ \phi. \hfill \qedhere \]
\end{definition}

The assignment of the monodromy representation to a framed flat connection is part of an equivalence of categories which is called the \emph{Riemann-Hilbert correspondence}. The categories involved in this correspondence are as follows:
\begin{definition}
Let $\FlatCat(W,G)$ denote the \emph{category of flat principal $G$-bundles} $(P, \nabla)$ whose morphisms are principal bundle isomorphisms that intertwine the flat connections. We write $\FlatObj(W,G)$ for the class of objects.
\end{definition}

\begin{definition}
Let $\FlatCat(W, x, G)$ be the \emph{category of framed flat $G$-bundles}. Its objects are flat principal bundles $(P, \nabla)$ equipped with the data of a framing $\phi : G \to P_{x}$ above $x \in W$. Morphisms are principal bundle isomorphisms that intertwine the flat connections, but are not required to preserve $\phi$.
\end{definition}
Since morphisms in $\FlatCat(W, x, G)$ are not required to preserve $\phi$, the forgetful functor $\FlatCat(W, x, G) \to \FlatCat(W,G)$ is an equivalence. 

\begin{definition}
Let $\FlatCat_{*}(W, x, G)$ denote the wide subcategory of $\FlatCat(W, x, G)$ consisting of morphisms which preserve the framings. 
\end{definition}

\begin{definition}
The \emph{category of $G$-representations of $\pi_{1}(W,x)$} is the action groupoid 
\[ \Rep(\pi_{1}(W, x), G) := G \ltimes \Hom(\pi_{1}(W, x), G). \]
Concretely, its objects $\Hom(\pi_{1}(W, x), G)$ are homomorphisms $\rho : \pi_{1}(W, x) \to G$. Elements of $G$ act on $\Hom(\pi_{1}(W, x), G)$ by conjugation.
\end{definition}

Our main object of study is the following functor from the category of framed flat connections to the category of $G$-representations. 
\begin{definition}
The \emph{Riemann-Hilbert functor}
\[ RH : \FlatCat(W, x, G) \to \Rep(\pi_{1}(W, x), G) \]
sends a framed flat bundle $(P, \nabla, \phi)$ to its monodromy representation $\hol_{x}^{\nabla, \phi}$. It sends a morphism of flat bundles $f: (P_{1}, \nabla_{1}, \phi_{1}) \to (P_{2}, \nabla_{2}, \phi_{2})$ to the morphism
\[ RH(f) = (\phi_{2}^{-1} \circ f_{x} \circ \phi_{1}, \hol_{x}^{\nabla_{1}, \phi_{1}}). \qedhere \]
\end{definition}

\begin{theorem}[Riemann-Hilbert correspondence] \label{MainRHtheorem}
The Riemann-Hilbert functor $RH$ determines an equivalence of categories 
\[  \FlatCat(W,G) \cong \Rep(\pi_{1}(W, x), G). \]
\end{theorem}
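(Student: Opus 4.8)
The plan is to prove this as the standard Riemann--Hilbert correspondence for flat principal bundles over a connected manifold, with the (minor) extra bookkeeping of framings factored out. First I would observe that the forgetful functor $\FlatCat(W,x,G) \to \FlatCat(W,G)$ is an equivalence (fullness is immediate since morphisms in the framed category are not required to preserve framings; faithfulness and essential surjectivity are clear), so it suffices to exhibit $RH$ itself as an equivalence onto $\Rep(\pi_1(W,x),G)$. I would verify the three defining properties of an equivalence: well-definedness/functoriality, essential surjectivity, and fully faithfulness.

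\textbf{Functoriality} is a routine check: $\hol_x^{\nabla,\phi}$ is a group homomorphism because $\hol^\nabla : \Pi_1(W) \to \At(P)$ is a groupoid homomorphism (by Lie's second theorem applied to the Lie algebroid splitting $\nabla$), and restriction to $\pi_1(W,x) = \Pi_1(W)(x,x)$ followed by conjugation by the framing $\phi$ preserves composition. Compatibility with morphisms of flat bundles follows because a connection-preserving bundle isomorphism $f$ intertwines the holonomy groupoid homomorphisms, i.e. $f_x \circ \hol_x^{\nabla_1} = \hol_x^{\nabla_2} \circ f_x$ on the relevant fibres. \textbf{Essential surjectivity}: given $\rho : \pi_1(W,x) \to G$, form the flat bundle $\widetilde{W} \times_\rho G \to W$, where $\widetilde{W}$ is the universal cover; the canonical flat connection (descended from the trivial connection on $\widetilde{W} \times G$) has monodromy $\rho$ with respect to the tautological framing, so $RH(\widetilde{W}\times_\rho G) \cong \rho$.

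\textbf{Fully faithfulness} is the real content. For this I would work with $\FlatCat_*(W,x,G)$, the subcategory of framing-preserving morphisms: the key point is that a framing-preserving isomorphism of flat bundles is uniquely determined by, and exists precisely when there is, a conjugacy relation between the monodromy representations. Concretely, a morphism $(P_1,\nabla_1,\phi_1) \to (P_2,\nabla_2,\phi_2)$ in $\FlatCat_*$ exists iff $\hol_x^{\nabla_1,\phi_1} = \hol_x^{\nabla_2,\phi_2}$, and then it is unique --- this is proved by noting that an isomorphism of flat bundles is a flat section of the (flat) isomorphism bundle $\mathrm{Iso}_G(P_1,P_2)$, and a flat section of a flat fibre bundle over a connected base is determined by its value at one point, while such a value extends to a global flat section iff it is monodromy-invariant. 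Passing from $\FlatCat_*$ to the full framed category $\FlatCat(W,x,G)$: a general morphism is a framing-preserving morphism post-composed with a change of framing at $x$, which on the representation side is post-composition with conjugation by an element of $G$; this is exactly the structure of the action groupoid $G \ltimes \Hom(\pi_1(W,x),G)$, giving fullness and faithfulness of $RH$. Combining with the equivalence $\FlatCat(W,x,G)\simeq\FlatCat(W,G)$ yields the claim.

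\textbf{Main obstacle.} The step requiring the most care is the fully faithful part --- specifically, the assertion that a flat section of the isomorphism bundle over the connected manifold $W$ exists and is unique given a monodromy-invariant value at the basepoint. This uses the path-lifting/parallel-transport formalism for flat connections together with the fact that holonomy depends only on the homotopy class of a path, so that monodromy-invariance is exactly the obstruction to well-definedness of the global section. Everything else is formal. I would present this cleanly by phrasing it via the equivalence of categories between flat $G$-bundles and $G$-local systems, but since the paper works concretely with $\At(P)$ and $\Pi_1(W)$, it is cleanest to deduce it directly from the groupoid homomorphism $\hol^\nabla$ and the transitivity of $\Pi_1(W)$.
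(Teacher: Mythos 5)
Your proposal is correct and follows essentially the same route as the paper: essential surjectivity via the associated bundle $\widetilde{W}\times_\rho G$ with its descended flat connection, and fully faithfulness via the observation that a morphism of flat bundles intertwines holonomy, hence is determined by its value at $x$ and exists precisely when that value intertwines the monodromy representations. The paper's Lemmas \ref{Moritaconstruction} and \ref{fulfaith} carry out exactly these two steps.
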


This theorem follows from a combination of Lie's second theorem and the Morita equivalence between $\Pi_{1}(W)$ and $\pi_{1}(W, x)$ given by the universal cover $\widetilde{W}$ of $W$. We will prove it in two steps, by showing that it is essentially surjective (Lemma \ref{Moritaconstruction}) and fully faithful (Lemma \ref{fulfaith}). These two steps will later be upgraded to include the natural topology on the spaces of flat connections and representations. 

\begin{lemma} \label{Moritaconstruction}
Let $\rho: \pi_{1}(W, x) \to G$ be a representation. Then 
\[ P(\rho) = (\widetilde{W} \times G )/\pi_{1}(W,x) \]
is a principal $G$-bundle equipped with a flat connection $\nabla$ and a canonical framing $\phi : G \to P(\rho)_{x}$. Furthermore, $RH(P(\rho), \nabla, \phi) = \rho$, implying that $RH$ is essentially surjective. 
\end{lemma}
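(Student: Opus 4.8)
The plan is to verify the three assertions of the lemma directly from the construction of $P(\rho)$ as an associated bundle, using the standard dictionary between representations of $\pi_1(W,x)$ and flat bundles given by the universal cover.

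First I would unwind the quotient. The group $\pi_1(W,x)$ acts on $\widetilde{W} \times G$ by deck transformations on the first factor and by left multiplication via $\rho$ on the second, i.e. $\gamma \cdot (\tilde w, g) = (\gamma \tilde w, \rho(\gamma) g)$. I would check that this action is free and properly discontinuous (since the deck action already is), so $P(\rho)$ is a manifold, and that the residual right $G$-action $[\tilde w, g] \cdot h = [\tilde w, gh]$ is well-defined, free, and transitive on fibres of the projection $[\tilde w, g] \mapsto [\tilde w] \in W$. This exhibits $P(\rho)$ as a principal $G$-bundle. For the flat connection: the product $\widetilde{W} \times G$ carries the trivial (product) flat connection, whose horizontal distribution $T\widetilde{W} \times \{0\}$ is invariant under the $\pi_1(W,x)$-action described above (the deck action is by diffeomorphisms of $\widetilde{W}$, and the $G$-part is by a constant left translation, hence sends horizontal vectors to horizontal vectors). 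Therefore it descends to a flat principal connection $\nabla$ on $P(\rho)$; equivalently, one passes the trivial Lie algebroid splitting $T\widetilde W \to \at(\widetilde W \times G)$ through the quotient to obtain the splitting $TW \to \at(P(\rho))$. The canonical framing is $\phi : G \to P(\rho)_x$, $g \mapsto [\tilde x, g]$, where $\tilde x \in \widetilde{W}$ is the chosen basepoint lying over $x$; this is a $G$-equivariant isomorphism onto the fibre.

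Next I would compute $\hol^\nabla_x$. Given a loop $\gamma$ at $x$, lift it to the path in $\widetilde{W}$ starting at $\tilde x$ and ending at $\gamma^{-1}\tilde x$ (with the convention matching the deck action above; the exact inverse/ordering convention is exactly where one must be careful and consistent). The horizontal lift in $\widetilde W \times G$ through $(\tilde x, g)$ is simply the constant-$g$ lift of this path, since the connection on the product is trivial. Projecting to $P(\rho)$ and using the relation $[\gamma^{-1}\tilde x, g] = [\tilde x, \rho(\gamma) g]$, the parallel transport around $\gamma$ sends $[\tilde x, g]$ to $[\tilde x, \rho(\gamma) g]$, i.e. under $\phi$ it is left multiplication by $\rho(\gamma)$. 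Hence $\hol^{\nabla,\phi}_x(\gamma) = \rho(\gamma)$, which is exactly $RH(P(\rho), \nabla, \phi) = \rho$. Essential surjectivity of $RH$ is then immediate, since every object of $\Rep(\pi_1(W,x),G)$ arises this way up to equality.

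I expect the only real subtlety to be bookkeeping: pinning down a consistent convention for the deck action versus the left $G$-action, and the corresponding left-versus-right convention for holonomy and for the $G$-equivariance of $\phi$, so that the final identity comes out as $\rho$ rather than $\rho^{-1}$ or a conjugate. None of the steps are deep — freeness and properness of the action, descent of the product connection, and the constant-section computation of parallel transport on a trivially-connected bundle are all routine — so the "hard part" is really just presenting these conventions cleanly. I would also remark that smoothness of $\nabla$ and of $\phi$ follows because the quotient map $\widetilde W \times G \to P(\rho)$ is a covering (in particular a local diffeomorphism), so all structures descend smoothly.
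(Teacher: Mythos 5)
Your proposal is correct and takes essentially the same route as the paper: both build $P(\rho)$ as the quotient of the trivially flat bundle $\widetilde{W}\times G$ by a $\rho$-twisted action of $\pi_1(W,x)$, descend the product connection, frame the fibre over $x$ via the basepoint lift, and identify the holonomy with $\rho$ by tracking how the endpoint of a lifted loop is re-identified with the basepoint fibre. The only difference is cosmetic — the paper uses the path-space model of $\widetilde{W}$ with a right concatenation action and the twist $\rho(\gamma)^{-1}$ on the left, which makes the parallel-transport computation a one-line endpoint identification, whereas you use an abstract universal cover with a left deck action; your explicit flagging of the inverse/ordering conventions is exactly the point where the two bookkeeping choices reconcile.
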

\begin{proof}
Let $\widetilde{W}$ be the space of homotopy classes of paths starting at $x$. This is a model for the universal cover of $W$. It has an action of $\Pi_{1}(W)$ on the left and of $\pi_{1}(W,x)$ on the right, both by concatenation of paths. It also has a distinguished basepoint $1_{x}$, given by the constant path at $x$. Consider the space $\widetilde{W} \times G$ as the trivial flat principal $G$-bundle over $\widetilde{W}$, with the identity framing at $1_{x}$. There is a natural action of $\pi_{1}(W,x)$ on this space by flat $G$-equivariant isomorphisms. It is given by $(w, g) \ast \gamma = (w \ast \gamma, \rho(\gamma)^{-1} g)$. The quotient $P(\rho)$ is thus a principal $G$-bundle over $W$ with a flat connection and a framing inherited from $\widetilde{W} \times G$. The holonomy along $\gamma$ sends $[1_{x}, g]$ to $[\gamma, g] = [1_{x}, \rho(\gamma)g]$. Hence  $RH(P(\rho), \nabla, \phi) = \rho$.  
\end{proof}

\begin{lemma} \label{fulfaith}
The Riemann-Hilbert functor $RH$ is fully faithful. 
\end{lemma}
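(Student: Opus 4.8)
The plan is to prove that $RH$ is fully faithful by analyzing the morphism sets on both sides directly. Fix two framed flat bundles $(P_1,\nabla_1,\phi_1)$ and $(P_2,\nabla_2,\phi_2)$, with monodromy representations $\rho_i = \hol^{\nabla_i,\phi_i}_x$. First I would recall that a morphism $f : (P_1,\nabla_1) \to (P_2,\nabla_2)$ of flat principal bundles is determined by its value on a single point of the universal cover together with the flatness constraint: parallel transport propagates $f$ along paths, so $f$ is completely determined by $f_x : (P_1)_x \to (P_2)_x$, an element of $\Hom_G((P_1)_x,(P_2)_x)$, subject to the compatibility condition that $f_x$ intertwines the holonomies, i.e.\ $f_x \circ \hol^{\nabla_1}_x(\gamma) = \hol^{\nabla_2}_x(\gamma)\circ f_x$ for all $\gamma \in \pi_1(W,x)$. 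Conjugating through the framings $\phi_i$, an element of $\Hom_{\FlatCat(W,x,G)}((P_1,\nabla_1,\phi_1),(P_2,\nabla_2,\phi_2))$ is the same as an element $g \in G$ (corresponding to $\phi_2^{-1}\circ f_x\circ\phi_1$, using that $(P_i)_x$ is a $G$-torsor and $\Hom_G((P_1)_x,(P_2)_x)$ is a $G$-torsor under post-composition) satisfying $g\,\rho_1(\gamma) = \rho_2(\gamma)\,g$ for all $\gamma$, which is exactly the condition that $g$ be a morphism $\rho_1 \to \rho_2$ in $\Rep(\pi_1(W,x),G)$. This gives a bijection on $\Hom$-sets, and by construction it is the map induced by $RH$.

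In more detail, the key steps are: (1) Show every flat-bundle morphism $f$ lifts, via the universal cover $\widetilde{W}$, to a $\pi_1(W,x)$-equivariant bundle map $\widetilde{W}\times (P_1)_x \to \widetilde{W}\times (P_2)_x$ covering the identity; since such a map over the simply connected $\widetilde{W}$ preserving flat connections is constant in the base direction, it is given by a fixed element of $\Hom_G((P_1)_x,(P_2)_x)$, recovering $f_x$. This uses the holonomy description from Lemma~\ref{Moritaconstruction} and Lie's second theorem. (2) Conversely, given $g\in G$ with $g\rho_1(\gamma) = \rho_2(\gamma)g$, the constant map $\widetilde{W}\times G \to \widetilde{W}\times G$, $(w,h)\mapsto (w, gh)$ descends to a flat-bundle isomorphism $P(\rho_1)\to P(\rho_2)$, and one checks it maps to $g$ under $RH$; using essential surjectivity (Lemma~\ref{Moritaconstruction}) this handles arbitrary $(P_i,\nabla_i)$ up to the already-established equivalence. (3) Check functoriality of these identifications, i.e.\ that composition of flat-bundle morphisms corresponds to multiplication in $G$ conjugated appropriately, so that the bijection is natural; this is the routine bookkeeping built into the definition of $RH$ on morphisms.

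I expect the main obstacle to be purely organizational rather than mathematical: carefully tracking the torsor identifications so that the two ``$G$-worths of data'' — the framing ambiguity in the source and target, and the morphism $g$ itself — do not get conflated, and verifying that the bijection on $\Hom$-sets is the one actually induced by the functor $RH$ as defined (rather than merely \emph{some} bijection). One should also be mild about base\-points: the faithfulness part requires knowing that a flat-connection morphism vanishing at $x$ vanishes everywhere, which is immediate from parallel transport and connectedness of $W$, but it is worth stating explicitly. Once these identifications are pinned down, fullness is Lemma~\ref{Moritaconstruction} combined with step (2), and faithfulness is step (1); together with essential surjectivity this completes the proof of Theorem~\ref{MainRHtheorem}. \qedhere
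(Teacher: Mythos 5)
Your proposal is correct and follows essentially the same route as the paper: faithfulness because parallel transport propagates $f_x$ along paths so a flat morphism is determined by its value at $x$, and fullness because the intertwining formula $f_{\gamma(1)} = \hol^{\nabla_2}(\gamma)\circ f_x\circ \hol^{\nabla_1}(\gamma)^{-1}$ is well-defined precisely when $f_x$ intertwines the monodromy representations. Your detour through the universal-cover models $P(\rho_i)$ in step (2) is a slightly more roundabout packaging of the same reconstruction, but not a genuinely different argument.
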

\begin{proof}
Let $(P_{i}, \nabla_{i}, \phi_{i})$, for $i = 1, 2$, be a pair of framed flat connections. A morphism between these objects is a map $f: P_{1} \to P_{2}$ of principal bundles which satisfies 
\begin{equation} \label{flatmorphismeq}
f_{\gamma(1)} \circ \hol^{\nabla_{1}}(\gamma) = \hol^{\nabla_{2}}(\gamma) \circ f_{\gamma(0)}
\end{equation}
 for every path $\gamma: [0,1] \to M$. Hence, $f$ is uniquely determined by $f_{x} : P_{1, x} \to P_{2, x}$, or equivalently by $RH(f)$. Furthermore, Equation \ref{flatmorphismeq} gives a formula for a map between $P_{1}$ and $P_{2}$, given $f_{x}$, which is well-defined precisely when $f_{x}$ intertwines the two monodromy representations. 
\end{proof}

\begin{corollary}[Framed Riemann-Hilbert] \label{framedRHcorr}
The Riemann-Hilbert map restricts to an equivalence of categories 
\[ RH_{*} : \FlatCat_{*}(W, x,G) \to \Hom(\pi_{1}(W, x), G), \]
where the right hand side is a set viewed as a category with only identity arrows. 
\end{corollary}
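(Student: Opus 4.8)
The plan is to obtain Corollary~\ref{framedRHcorr} as a formal consequence of Theorem~\ref{MainRHtheorem} together with the explicit descriptions of morphisms supplied by Lemmas~\ref{Moritaconstruction} and~\ref{fulfaith}. Since the target $\Hom(\pi_{1}(W,x),G)$ is a discrete category, proving that $RH_*$ is an equivalence amounts to two checks: that $RH_*$ is essentially surjective, and that for any two framed flat bundles $X_1=(P_1,\nabla_1,\phi_1)$ and $X_2=(P_2,\nabla_2,\phi_2)$ the set $\Hom_{\FlatCat_*(W,x,G)}(X_1,X_2)$ is a single point when the monodromy representations $\rho_i:=\hol_x^{\nabla_i,\phi_i}$ agree and is empty otherwise (this is "fully faithful onto a discrete category").

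Essential surjectivity is immediate: given $\rho\in\Hom(\pi_{1}(W,x),G)$, Lemma~\ref{Moritaconstruction} produces a framed flat bundle $(P(\rho),\nabla,\phi)$, which is by definition an object of $\FlatCat_*(W,x,G)$, with $RH(P(\rho),\nabla,\phi)=\rho$ on the nose. As the codomain is discrete, hitting $\rho$ exactly is all that is required.

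For the Hom-set computation I would unwind the definition of $RH$ on morphisms. A morphism $f\in\Hom_{\FlatCat_*(W,x,G)}(X_1,X_2)$ is a flat principal bundle isomorphism $f:P_1\to P_2$ subject to the framing constraint $f_x\circ\phi_1=\phi_2$, equivalently $f_x=\phi_2\circ\phi_1^{-1}$. By the argument in the proof of Lemma~\ref{fulfaith}, any flat morphism $f$ is uniquely determined by the single value $f_x$ via the holonomy-transport identity $f_{\gamma(1)}\circ\hol^{\nabla_1}(\gamma)=\hol^{\nabla_2}(\gamma)\circ f_{\gamma(0)}$, and conversely this identity defines a genuine (hence unique) flat morphism of principal bundles precisely when the chosen $f_x$ intertwines the two monodromy morphisms. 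Specializing to $f_x=\phi_2\circ\phi_1^{-1}$, the intertwining condition reads $\phi_2\circ\phi_1^{-1}\circ\hol_x^{\nabla_1}(\gamma)=\hol_x^{\nabla_2}(\gamma)\circ\phi_2\circ\phi_1^{-1}$ for all $\gamma\in\pi_1(W,x)$; conjugating through by $\phi_1$ and $\phi_2$ turns this into $\rho_1(\gamma)=\rho_2(\gamma)$. Thus there is at most one framing-preserving morphism $X_1\to X_2$, it exists if and only if $\rho_1=\rho_2$, and in that case $RH_*(f)=\mathrm{id}_{\rho_1}$. This is exactly the statement that $RH_*$ induces a bijection on Hom-sets with the discrete category, so it is fully faithful, and combined with essential surjectivity it is an equivalence.

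I do not expect a genuine obstacle: all the substance is already packaged in Lemmas~\ref{Moritaconstruction} and~\ref{fulfaith}, and the only real care is the bookkeeping translating "$f$ preserves the framing" into "$RH(f)$ is the identity element $e\in G$". The one point worth double-checking is that the morphism reconstructed from the transport formula with $f_x=\phi_2\circ\phi_1^{-1}$ is genuinely smooth and $G$-equivariant—but this is a special case of the construction already carried out in Lemma~\ref{fulfaith}, where the same transport formula is used to build an honest isomorphism of flat principal bundles, so nothing new is needed.
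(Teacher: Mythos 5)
Your proof is correct and follows essentially the same route as the paper: essential surjectivity comes from the construction in Lemma \ref{Moritaconstruction}, and the Hom-set analysis reduces to observing that a framing-preserving morphism is forced to have $f_x=\phi_2\circ\phi_1^{-1}$, which extends via the holonomy-transport identity of Lemma \ref{fulfaith} exactly when the two monodromy representations coincide. The paper's proof is just a terser version of the same bookkeeping.
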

\begin{proof}
If a morphism $f$ between flat principal bundles intertwines framings then $\phi_{2}^{-1} \circ f_{x} \circ \phi_{1} = 1$. This shows that $RH_{*}$ is well-defined. It follows automatically that $RH_{*}$ is faithful and essentially surjective. To see that it is full, let $(P_{i}, \nabla_{i}, \phi_{i})$, for $i = 1, 2$, be a pair of framed flat connections which both get sent to the same representation $\rho$. Then $f_{x} = \phi_{2} \circ \phi_{1}^{-1} : P_{1, x} \to P_{2, x}$ defines a map which extends, via Equation \ref{flatmorphismeq}, to a flat isomorphism of bundles. By construction, this map lies in $\FlatCat_{*}(W, x,G)$. 
\end{proof}

\subsection{Topological Riemann-Hilbert correspondence} \label{TopRH}

In this section, we endow the categories of flat connections and fundamental group representations with natural topologies, and show that the Riemann-Hilbert correspondence can be upgraded to a Morita equivalence of topological groupoids.
\begin{assumption}
In this section we assume that $W$ is closed and connected and that $G$ is an algebraic group. 
\end{assumption} 

\subsubsection{The space of flat bundles} 

We will now topologise the space of flat connections on a fixed principal $G$-bundle $\pi : P \to W$. Having fixed $P$, we can also make the auxiliary choice of a framing $\phi: G \to P_{x}$ above a point $x \in W$.
\begin{definition} \label{def:framedflatbundle}
We write $\FlatCat(W, \phi, P)$ for the full subcategory of $\FlatCat(W, x, G)$ consisting of flat connections on the framed bundle $(P,\phi)$.
\end{definition}
In other words, we fix $(P, \phi)$ and let the flat connection $\nabla$ vary. The set of objects is thus $\FlatObj(P)$, the set of all flat connections on $P$. Let $\Gau(P)$ denote the \emph{gauge group} of $P$, which is the group of sections of $\Ad(P)$. There is an action of $\Gau(P)$ on $\FlatObj(P)$ and the category of flat connections on $(P, \phi)$ turns out to be the corresponding action groupoid 
\[ \FlatCat(W, \phi, P) = \Gau(P) \ltimes \FlatObj(P). \]

Observe that $\Gau(P)$, being the space of sections of $\Ad(P)$, can be equipped with the Whitney $C^{\infty}$-topology. With respect to this topology, $\Gau(P)$ is a topological group. Similarly, we can endow $\FlatObj(P)$ with the $C^{\infty}$-topology seeing it as a subspace of $\Conn(P)$, the space of principal connections of $P$. Recall that this is an affine space modelled on the space of equivariant forms $\Omega^1(P,\mathfrak{g})^G$.
\begin{lemma} \label{continuousaction}
$\Gau(P)$ acts continuously on $\FlatObj(P)$, and hence $\FlatCat(W, \phi, P)$ is a topological groupoid. 
\end{lemma}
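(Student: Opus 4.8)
The statement to be proved is Lemma \ref{continuousaction}: that the gauge group $\Gau(P)$ acts continuously on the space $\FlatObj(P)$ of flat connections, so that the action groupoid $\FlatCat(W,\phi,P) = \Gau(P) \ltimes \FlatObj(P)$ is a topological groupoid. The plan is to recall the formula for the gauge action on connections and then to verify that it is continuous for the Whitney $C^\infty$-topologies, after which the topological groupoid property is automatic from the general remarks on action groupoids in Appendix \ref{sec:topgroupoids}.

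First I would recall the explicit description of the action. A connection on $P$ is given by an equivariant $\mathfrak{g}$-valued $1$-form $A \in \Omega^1(P,\mathfrak{g})^G$, and $\Conn(P)$ is an affine space modelled on $\Omega^1(W,\ad(P)) = \Omega^1(P,\mathfrak{g})^G_{\mathrm{hor}}$. A gauge transformation $g \in \Gau(P) = \Gamma(W,\Ad(P))$, viewed as an equivariant map $g : P \to G$, acts on a connection $1$-form by the classical formula
\[
g \cdot A = \Ad_{g} \circ A - g^*\theta_{G},
\]
where $\theta_{G}$ is the Maurer-Cartan form of $G$; equivalently, writing things in terms of $\ad(P)$-valued forms, $g\cdot A = A + \xi(g,A)$ for a map $\xi$ that is algebraic (polynomial) in $g$ and its first derivative $dg$, and affine in $A$. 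The key point is that this action preserves flatness: the curvature transforms by the adjoint action, $F_{g\cdot A} = \Ad_g F_A$, so $\FlatObj(P) \subset \Conn(P)$ is invariant. I would state this invariance explicitly since it is what makes the action restrict to $\FlatObj(P)$.

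Next I would establish continuity. The map in question is
\[
\Gau(P) \times \FlatObj(P) \to \FlatObj(P), \qquad (g,A) \mapsto g\cdot A,
\]
and it suffices to show continuity of the analogous map into $\Conn(P)$, since $\FlatObj(P)$ carries the subspace topology. Because the formula for $g\cdot A$ is a fibrewise smooth (indeed algebraic, as $G$ is an algebraic group) expression in the $1$-jet of $g$ and the $0$-jet of $A$, continuity for the weak $C^\infty$-topology follows from the standard fact that composition with a fixed bundle map between jet bundles, followed by taking (holonomic) sections, is continuous for the Whitney $C^\infty$-topologies — this is exactly the mechanism invoked in Example \ref{ex:grass3} of Appendix \ref{sec:DiffTop}. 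Concretely: the pointwise operation $(j^1 g, A) \mapsto j^0(g\cdot A)$ is a smooth map of the relevant jet/fibre bundles over $W$, and precomposing a continuous section with such a bundle map is continuous; moreover multiplication in $\Gau(P)$ and the action are then seen to be jointly continuous in the same way. I would also note that $\Gau(P)$ with the $C^\infty$-topology is a topological group by the same reasoning (group multiplication and inversion are pointwise-algebraic in the $0$-jets, hence continuous).

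Finally, with continuity of the action in hand, the conclusion that $\FlatCat(W,\phi,P) = \Gau(P)\ltimes \FlatObj(P)$ is a topological groupoid is immediate from the discussion of action groupoids in Appendix \ref{sec:topgroupoids}: the source map is the projection $\Gau(P)\times\FlatObj(P) \to \FlatObj(P)$, which is a surjective topological submersion (it admits the obvious continuous section $A \mapsto (1,A)$ through the identity gauge transformation, and more sections by translating), the target map is the continuous action just verified, and multiplication, unit, and inversion are continuous because they are built from continuous group operations and the action. The main obstacle here is purely bookkeeping: making precise the passage from "pointwise smooth operation on jet bundles" to "continuous for the Whitney $C^\infty$-topology," but this is a standard feature of the weak topology and is already used elsewhere in the paper, so I would cite Example \ref{ex:grass3} and the surrounding discussion rather than re-derive it. No genuinely new difficulty arises.
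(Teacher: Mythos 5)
Your proposal is correct and follows essentially the same route as the paper: both arguments reduce to the local formula $(g,A)\mapsto \Ad_g(A)+g^*\mu$, observe that it is first order in $g$ and zero order in $A$ (hence a differential operator / smooth jet-bundle map), and conclude continuity for the Whitney $C^\infty$-topology, with the topological-groupoid property then following from the generalities on action groupoids. Your explicit remark that curvature transforms by the adjoint action, so that $\FlatObj(P)$ is preserved, is a small point the paper leaves implicit but is a welcome addition.
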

\begin{proof}
The action $\Gau(P) \times \Conn(P) \rightarrow \Conn(P)$ is a differential operator and therefore continuous with respect to the $C^\infty$-topology. That it is a differential operator can be checked locally. Over a chart $U$ we can identify $\Gau(P)$ with $C^\infty(U,G)$ and $\Conn(P)$ with $\Omega^1(U,\mathfrak{g})$. Then, the gauge action has the familiar expression $(g,A) \mapsto \ad_g(A) + g^*\mu$, where $\mu$ is the Maurer-Cartan form, showing that it is of first order in the gauge transform and of zero order in the connection.
\end{proof}

\subsubsection{The based gauge group}

Consider now $\Gau_{*}(P)$, the subgroup of \emph{based} gauge transformations. These are the sections of $\Ad(P)$ which are the identity at $x$. Equivalently, $\Gau_{*}(P)$ is the kernel of the ``restriction homomorphism'' $r: \Gau(P) \to G$ that sends a gauge transformation $f$ to the element $\phi^{-1} \circ f_{x} \circ \phi \in G$.
\begin{definition}
We write $\FlatCat_{*}(W, \phi, P)$ for the action groupoid $\Gau_{*}(P) \ltimes \FlatObj(P)$.
\end{definition}
Observe that $\FlatCat_{*}(W, \phi, P)$ is the intersection of $\FlatCat(W, \phi, P)$ with the wide subcategory $\FlatCat_{*}(W, x, G) \subset \FlatCat(W, x, G)$.

\begin{lemma} \label{surjectivegauge}
The map $r: \Gau(P) \to G$ is a topological submersion. Let $G(\phi) \subseteq G$ be the image of $r$, which is a union of connected components. Then there is an exact sequence of topological groups 
\[ 1 \to \Gau_{*}(P) \to \Gau(P) \to G(\phi) \to 1. \]
\end{lemma}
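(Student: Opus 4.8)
The statement asserts three things: that $r$ is a topological submersion, that its image $G(\phi)$ is a union of connected components of $G$, and that the resulting sequence of topological groups is exact. Exactness is immediate from the definition, since $\Gau_*(P) = \ker(r)$ by construction and $G(\phi)$ is defined to be the image; what needs work is the submersion property, and the statement about connected components will follow once we understand local sections of $r$. So the plan is: (i) produce, for each $g$ in the image, a continuous local section of $r$ defined on a neighbourhood of $g$ in $G$; (ii) deduce from the existence of such sections that the image is open, hence a union of connected components since $r$ is also continuous and $\Gau(P)$ a topological group (the image of a topological group homomorphism with open image is a union of cosets of the identity component, hence clopen); (iii) assemble the exact sequence.

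\textbf{Constructing the local section.} The key point is purely local near the fibre $P_x$. Fix $g \in G(\phi)$ and a based-at-$x$ chart: choose an open set $U \subset W$ containing $x$ over which $P$ trivializes, $P|_U \cong U \times G$, with the trivialization compatible with the framing $\phi$ at $x$. A gauge transformation over $U$ is then a map $U \to G$ (acting by left translation, say, under the identification $\Ad(P)|_U \cong U\times G$ with conjugation action), and its value at $x$ is exactly $r$ of it. Pick a bump function $\chi \in C^\infty(W)$ with $\chi \equiv 1$ near $x$ and $\supp\chi \subset U$. Given $h$ in a small neighbourhood of $g$ in $G$, I would define the gauge transformation $s(h)$ to be, in the chart, the map $y \mapsto \exp(\chi(y)\,\log(h g^{-1}))\cdot g_0(y)$ where $g_0$ is a fixed gauge transformation with $r(g_0)=g$ (which exists since $g \in \mathrm{image}(r)$) and $\log$ is a local inverse of $\exp$ near the identity of $G$ — valid for $h$ close enough to $g$. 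Extended by $g_0$ outside $U$ (using that $\chi$ is supported in $U$, the two pieces glue smoothly), this is a genuine global gauge transformation, it depends continuously on $h$ in the Whitney $C^\infty$-topology because $h \mapsto \log(hg^{-1})$ is smooth and multiplication by the fixed section $g_0$ is continuous, and $r(s(h)) = \exp(\log(hg^{-1}))\cdot r(g_0) = hg^{-1}\cdot g = h$. This gives the required local section, so $r$ is a topological submersion.

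\textbf{Wrapping up.} Openness of $G(\phi)$ is then automatic: each point of the image has a neighbourhood (the domain of a local section) contained in the image. Since $r$ is a continuous homomorphism of topological groups, $G(\phi)$ is a subgroup, and an open subgroup is closed (its complement is a union of cosets, each open), so $G(\phi)$ is a union of connected components of $G$. Finally, $\Gau_*(P) = \ker(r)$ is a closed normal subgroup with the subspace topology, the inclusion $\Gau_*(P) \hookrightarrow \Gau(P)$ is a topological embedding, and $r: \Gau(P) \to G(\phi)$ is a continuous open surjection inducing a homeomorphism $\Gau(P)/\Gau_*(P) \cong G(\phi)$; hence $1 \to \Gau_*(P) \to \Gau(P) \to G(\phi) \to 1$ is an exact sequence of topological groups. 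The main obstacle is step (i): one must be careful that the cut-off construction really yields a \emph{global} smooth gauge transformation and that the assignment is continuous for the Whitney topology rather than just pointwise — but this is the same bump-function-and-flow technique already used in the proof of Lemma \ref{lem:takingJets}, so no genuinely new difficulty arises.
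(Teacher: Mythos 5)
Your proposal is correct and follows essentially the same route as the paper: both construct a continuous local section of $r$ via a bump function supported near $x$ applied to the logarithm of a nearby group element, exponentiated in a trivializing chart and extended trivially outside, with exactness and the clopen-image statement following formally. The only cosmetic difference is that the paper first reduces to a section near the identity using the homomorphism property and then translates, whereas you build the section directly at an arbitrary point $g$ of the image by multiplying against a fixed preimage $g_0$; these are equivalent.
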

\begin{proof}
Because $r$ is a group homomorphism, it suffices to prove the topological submersion property at the identity gauge transformation (cf. Lemma \ref{continuoushomsub}). Choose a local trivialization $P|_{U} \cong U \times G$, where $U$ is a neighbourhood of $x \in W$. Then $\Gau(P|_{U}) \cong C^{\infty}(U,G)$. Let $V \subset \mathfrak{g} = \Lie(G)$ be a convex neighbourhood of $0$ which is diffeomorphic, via the exponential map, to a neighbourhood of the identity in $G$. Choose a bump function $\psi \in C^{\infty}(M)$ which has compact support contained in $U$. Then define the map 
\[ s': V \to C^{\infty}(U, G), \qquad v \mapsto \exp(\psi v). \]
Note that $s'(v)$ is the identity outside of a compact subset of $U$, and so can be extended as the identity map to all of $P$. In this way we obtain a continuous section $s: V \to  \Gau(P)$ of $r$.
\end{proof}
\begin{remark} \label{Gphiconj} The group $G(\phi)$ depends on the choice of framing $\phi$, but different framings produce conjugate subgroups of $G$. In particular, if $G(\phi)$ is a normal subgroup, then it only depends on the principal bundle $P$ and not on the particular choice of framing $\phi$. This is the case, for example, if $G$ has only $2$-connected components. 
\end{remark}
\subsubsection{The space of fundamental group representations}

We now turn our attention to $\Rep(\pi_{1}(W, x), G)$. Because of the assumption that $W$ is closed, its fundamental group $\pi_{1}(W, x)$ is finitely presented. Therefore, the set of representations $\Hom(\pi_{1}(W,x), G)$ naturally has the structure of an algebraic variety and $\Rep(\pi_{1}(W, x), G) = G \ltimes \Hom(\pi_{1}(W,x), G)$ is a groupoid in the category of algebraic varieties. In particular, it is a topological groupoid. 

\subsubsection{The Riemann-Hilbert Morita equivalence}

Let $RH_{(P,\phi)}$ denote the restriction of the Riemann-Hilbert functor to $\FlatCat(W, \phi, P)$.
\begin{lemma} \label{continuityofRH}
The restricted Riemann-Hilbert functor 
\[ RH_{(P,\phi)} : \FlatCat(W, \phi, P) \to \Rep(\pi_{1}(W, x), G) \]
is continuous. 
\end{lemma}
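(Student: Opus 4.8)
The statement to prove is that the restricted Riemann-Hilbert functor
\[
RH_{(P,\phi)} : \FlatCat(W,\phi,P) \to \Rep(\pi_1(W,x),G)
\]
is continuous. Since both sides are action groupoids, it suffices to show continuity of the map on objects and on morphisms separately; the morphism space is $\Gau(P) \times \FlatObj(P)$ and the object space is $\FlatObj(P)$, so the content is entirely in showing that the holonomy-based construction $\nabla \mapsto \hol^{\nabla,\phi}_x$ depends continuously on $\nabla \in \FlatObj(P)$ (the gauge factor enters only through the ``restriction homomorphism'' $r$ of Lemma \ref{surjectivegauge}, which is visibly continuous, and through conjugation in $G$, which is algebraic). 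The plan is therefore: (1) reduce to continuity of the monodromy map on objects; (2) express the monodromy along a fixed loop $\gamma$ as the solution of a parallel-transport ODE whose coefficients depend continuously (indeed smoothly) on the connection form; (3) invoke continuous dependence of ODE solutions on parameters to conclude that $\nabla \mapsto \hol^{\nabla}_x(\gamma)$ is continuous into $G$; (4) run this over a finite generating set of $\pi_1(W,x)$ (available because $W$ is closed, hence $\pi_1$ finitely generated) to get continuity into $\Hom(\pi_1(W,x),G) \subset G^{\times r}$ with the subspace topology, which is the topology on the algebraic variety of representations.

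In more detail, for step (2)--(3): fix loops $\gamma_1,\dots,\gamma_r$ based at $x$ generating $\pi_1(W,x)$. For each $i$, cover $\gamma_i$ by finitely many trivializing charts of $P$; over each chart a connection is recorded by a $\mathfrak g$-valued $1$-form, and parallel transport along the corresponding segment of $\gamma_i$ is the time-$1$ flow of the associated time-dependent linear ODE on $G$. The right-hand side of this ODE is a differential operator of order zero in the connection, hence continuous (in fact smooth) from $\Conn(P)$ with the $C^\infty$-topology — equivalently, from $C^\infty(U,\mathfrak g)$ — to the space of time-dependent vector fields on $G$; composing the finitely many chart-wise transports gives $\hol^{\nabla}_x(\gamma_i) \in G$ as a continuous function of $\nabla$. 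This is the standard fact that holonomy is continuous in the connection for the $C^\infty$-topology, and it restricts to $\FlatObj(P) \subset \Conn(P)$. Finally, conjugating by the fixed framing $\phi$ and assembling the $r$ components, $\nabla \mapsto (\phi^{-1}\hol^\nabla_x(\gamma_i)\phi)_i \in G^{\times r}$ is continuous; its image lands in the subvariety cut out by the relations of $\pi_1(W,x)$, which carries exactly the subspace topology by definition of $\Hom(\pi_1(W,x),G)$ as an algebraic variety. On morphisms, $RH_{(P,\phi)}(f,\nabla) = \big(\phi^{-1} f_x \phi,\ \hol^{\nabla,\phi}_x\big)$: the first coordinate is $r(f)$, continuous by Lemma \ref{surjectivegauge}, and the second is the object-level map just handled, composed with the continuous projection $\Gau(P)\times\FlatObj(P)\to\FlatObj(P)$.

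The only mild obstacle is bookkeeping: making precise that ``parallel transport is continuous for the $C^\infty$-topology on $\Conn(P)$'' requires choosing the finite chart cover of each $\gamma_i$, writing the transport as an iterated ODE flow, and citing continuous dependence of flows on the generating vector field — but this is exactly the argument already used for $\F$ in the proof of Theorem \ref{topologicalRHfoliation} (see Lemma \ref{continuityofJkF}), where ``taking the pullback and performing parallel transport are continuous for the Whitney topologies'' is invoked in the same way. I would phrase the proof compactly by reducing to that statement: choose generators $\gamma_1,\dots,\gamma_r$ of $\pi_1(W,x)$, note that $\nabla \mapsto \hol^\nabla_x(\gamma_i)$ is continuous by continuity of parallel transport in the $C^\infty$-topology, conjugate by $\phi$, and observe that the resulting map to $G^{\times r}$ has image the representation variety with its subspace topology; on morphisms, combine this with Lemma \ref{surjectivegauge}.
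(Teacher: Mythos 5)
Your proposal is correct and follows essentially the same route as the paper: reduce to continuity of $\nabla \mapsto \hol_x^{\nabla,\phi}(\gamma_i)$ over a finite generating set, realize the holonomy along each loop as the time-$1$ solution of an ODE whose coefficients depend continuously on the connection (the paper pulls back to $\gamma_i^*P$ over $[0,1]$ and applies Gronwall's lemma in each $C^r$, where you instead compose chart-wise transports, a purely cosmetic difference), and handle morphisms via the continuity of the restriction homomorphism $r$ from Lemma \ref{surjectivegauge}. No gaps.
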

\begin{proof}
We first prove continuity at the level of objects. Fix a finite collection of loops $\gamma_i: [0,1] \rightarrow W$ so that the classes $[\gamma_i]$ generate $\pi_1(W,x)$. Then, the continuity of $RH_{(P,\phi)}$ amounts to the continuity of the assignments $\nabla \mapsto \hol_{x}^{\nabla, \phi}([\gamma_i])$. Each of these assignments factors into two maps. The first is the pullback map $\Conn(P) \rightarrow \Conn(\gamma_i^*P)$ given by $\nabla \mapsto \gamma_i^*\nabla$, which is readily seen to be continuous because it is a differential operator.

The second map $F_i: \Conn(\gamma_i^*P) \rightarrow G$ computes the holonomy of $\gamma_i^*\nabla$ with respect to $\phi$. We can trivialise the bundle to yield an identification $\Conn(\gamma_i^*P) \cong C^\infty([0,1],\mathfrak{g})$ as spaces. The latter is the space of time-dependent left-invariant vector fields in $G$ and the holonomy map $F_i$ amounts to taking the time-$1$ flow. I.e. the continuity of $F_i$ is the claim that solutions of an ODE depend continuously on the coefficients of the ODE. For the $C^0$ topology this is immediate from Gronwall's lemma. Similarly, for the $C^r$ topology we can differentiate the ODE $r$ times and apply Gronwall. Since this is true for all $r$, the claim follows.

The continuity at the level of morphisms follows from the continuity of the restriction homomorphism $r: \Gau(P) \to G$.
\end{proof}

The image of $RH_{(P,\phi)}$ is a full subcategory of $\Rep(\pi_{1}(W, x), G)$. The following lemma and corollary describe its base space $\Hom_{(P,\phi)}(\pi_{1}(W,x), G)$.
\begin{lemma} \label{splittingofRHlemma}
Fix a point $u_{0} \in \Hom_{(P,\phi)}(\pi_{1}(W, x), G)$ and suppose $U$ is a sufficiently small neighbourhood of $u_0$. Then $U \subseteq \Hom_{(P,\phi)}(\pi_{1}(W, x), G)$ and there is a continuous section $s: U \to \FlatObj(P)$ of $RH_{(P,\phi)}$. 
\end{lemma}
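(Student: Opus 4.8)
The claim is a local-triviality statement for the base map of $RH_{(P,\phi)}$: near any representation $u_0$ in the image we want a continuous section into the space of flat connections. My plan is to reduce this to the explicit construction $P(\rho) = (\widetilde{W}\times G)/\pi_1(W,x)$ from Lemma~\ref{Moritaconstruction} and to make that construction depend continuously on $\rho$ while keeping the underlying bundle fixed at $P$. First I would observe that, since $u_0 \in \Hom_{(P,\phi)}(\pi_1(W,x),G)$, we have an isomorphism of framed flat bundles $P(u_0)\cong (P,\phi)$ intertwining connections; fix such an isomorphism $\Theta_0$ once and for all. The key technical input is that a representation $\rho$ close to $u_0$ can be turned into a flat connection \emph{on the fixed bundle $P$} by transporting the tautological flat connection on $P(\rho)$ across a family of bundle isomorphisms $\Theta_\rho : P(\rho)\to P$ depending continuously on $\rho$, with $\Theta_{u_0}=\Theta_0$.

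The second step is to build this family $\Theta_\rho$. Here I would use a good cover of $W$, a CW or triangulation structure, or simply finitely many loops $\gamma_1,\dots,\gamma_m$ generating $\pi_1(W,x)$, together with the fact (Lemma~\ref{continuoushomsub} and Lemma~\ref{surjectivegauge}) that the relevant spaces of transition data are locally retractible near the identity. Concretely: $P(\rho)$ has canonical transition functions $g_{ij}^\rho$ on the universal cover expressed through $\rho$; as $\rho\to u_0$ these converge in $C^\infty$ to $g_{ij}^{u_0}$, which are identified with transition functions of $P$ via $\Theta_0$. For $\rho$ in a small neighbourhood $U$ one can then produce a continuous assignment $\rho\mapsto \Theta_\rho$ of vector-bundle (principal-bundle) isomorphisms $P(\rho)\to P$ by patching local sections of the gauge group, using a partition of unity and the local sections of $r:\Gau(P)\to G$ constructed in Lemma~\ref{surjectivegauge}. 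Pushing forward the flat connection of $P(\rho)$ along $\Theta_\rho$ then defines $s(\rho)\in\FlatObj(P)$, and continuity follows from continuity of the gauge action (Lemma~\ref{continuousaction}) together with the continuous dependence of $\Theta_\rho$ on $\rho$.

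Third, I would check that $s$ is genuinely a section of $RH_{(P,\phi)}$, i.e. $RH_{(P,\phi)}(s(\rho))=\rho$ for $\rho\in U$, and that automatically $U\subseteq\Hom_{(P,\phi)}(\pi_1(W,x),G)$. The first is immediate because holonomy is a conjugation-invariant (in fact isomorphism-invariant) invariant of flat bundles and by Lemma~\ref{Moritaconstruction} the holonomy of the tautological connection on $P(\rho)$ with its canonical framing is exactly $\rho$; transporting by $\Theta_\rho$ along with the framing $\phi$ changes nothing up to the identification we set up, provided we arrange $\Theta_\rho$ to respect the framings at $x$ (which we can, since the framings are part of the data of $P(\rho)$ and of $(P,\phi)$). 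The inclusion $U\subseteq\Hom_{(P,\phi)}$ is then a consequence: every $\rho\in U$ is the holonomy of a flat connection on $P$, so its framed flat bundle $P(\rho)$ is isomorphic to $(P,\phi)$, which is precisely the definition of lying in $\Hom_{(P,\phi)}(\pi_1(W,x),G)$.

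\textbf{Main obstacle.} The routine part is the holonomy bookkeeping; the genuinely delicate point is the \emph{continuity and compatibility} of the family of bundle isomorphisms $\Theta_\rho : P(\rho)\to P$, i.e. trivialising ``the isomorphism $P(\rho)\cong P$'' continuously in $\rho$ in the Whitney $C^\infty$-topology while keeping the base bundle fixed and the framings matched. This is exactly the kind of step where one must be careful to work with finitely presented $\pi_1$, choose representatives continuously (invoking Lemma~\ref{continuoushomsub}, Lemma~\ref{surjectivegauge}, and the local-section machinery used already in Lemma~\ref{lem:polynomialRepresentatives}), and not accidentally introduce a discontinuity when passing from transition functions to global gauge transformations. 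An alternative, perhaps cleaner, route avoiding the bundle identification entirely would be: pull back a fixed flat connection $\nabla_0$ on $P$ representing $u_0$, write a nearby $\rho$ as $\rho = \exp(\xi)\cdot u_0$ for a small cocycle-like deformation $\xi$, and solve directly for a connection $\nabla_0 + a$ with prescribed holonomy by an implicit-function-type argument using that the holonomy map is a submersion onto its image near $\nabla_0$ (which is essentially the content of Lemma~\ref{continuityofRH} and the local structure of $\FlatObj(P)$); I expect the paper takes one of these two approaches, and either way the crux is producing the local section continuously rather than merely pointwise.
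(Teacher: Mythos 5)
Your overall strategy --- realize each $\rho$ near $u_0$ by the tautological flat bundle $P(\rho)$ of Lemma~\ref{Moritaconstruction}, transport its connection to the fixed bundle $P$ along a continuously varying family of isomorphisms $\Theta_\rho$, and correct the framing at the end --- is exactly the paper's. The gap is in how you produce the family $\Theta_\rho$, which you yourself identify as the crux. You propose to compare transition data $g_{ij}^{\rho}$ with $g_{ij}^{u_0}$ and ``patch local sections of the gauge group using a partition of unity.'' For nonabelian $G$ this does not work as stated: the pointwise ``difference'' of two cocycles is not a cocycle, and an isomorphism $P(\rho)\to P$ is a cochain $(k_i)$ satisfying the nonabelian intertwining relation $k_i\, g_{ij}^{\rho} = g_{ij}^{u_0}\, k_j$, which cannot be produced by convex combination in the Lie algebra; cutting off with bump functions destroys the relation on overlaps. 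The existence of such a $(k_i)$, continuously in $\rho$, is a homotopy-theoretic statement, not a partition-of-unity one.

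The paper resolves this with two ingredients your sketch omits. First, it uses that $\Hom(\pi_1(W,x),G)$ is a real algebraic variety, hence triangulable, so that $u_0$ admits a \emph{compact contractible} neighbourhood $U$ carrying a simplicial structure; ``sufficiently small'' cannot mean merely metrically small, since the variety may be singular at $u_0$, and your proposal never addresses what $U$ is. Second, with $U$ contractible, the single isomorphism $f_0 : P(U)|_{u_0}\to P$ (which exists precisely because $u_0\in\Hom_{(P,\phi)}$) extends to a continuous section of $\mathrm{Iso}(P(U),U\times P)$ over all of $U\times W$ by homotopy extension; this continuous section is then approximated, parametrically over the simplicial complex $U$, by a $U$-continuous family of \emph{smooth} isomorphisms, and only afterwards are the connections pushed forward and the framings corrected by lifting $(\tilde f\circ\psi)^{-1}\phi : U\to G(\phi)$ through the principal bundle $r:\Gau(P)\to G(\phi)$ of Lemma~\ref{surjectivegauge} --- the one step where your sketch and the paper agree in detail. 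Your alternative implicit-function route faces the same foundational issue: neither $\FlatObj(P)$ nor the representation variety need be a manifold near $u_0$, so no off-the-shelf implicit function theorem applies.
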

\begin{proof}
Since $\Hom(\pi_{1}(W, x), G)$ is an algebraic variety, it is triangulable \cite{Whitney57}. It follows that $u_0$ admits a contractible compact neighbourhood $U$ that is moreover a simplicial complex. For instance, we can perform barycentric subdivision once and let $U$ be the star of the simplex containing $u_0$. We now implement a parametric version of Lemma \ref{Moritaconstruction}, with $U$ as parameter space.

Consider the following principal $G$-bundle over $U \times W$
\[ P(U) = (U \times \widetilde{W} \times G)/\pi_{1}(W, x), \]
where the action of $\gamma \in \pi_{1}(W,x)$ is given by $(u, w, g) \ast \gamma = (u, w \ast \gamma, u(\gamma)^{-1}g)$. We may view $P(U)$ as a continuous $U$-family of smooth principal $G$-bundles over $W$. Furthermore, for each $u \in U$, the restriction $P(U)|_{u} \to W$ is equipped with a smooth flat connection $D_{u}$ that varies continuously in $U$. There is also a family of framings $\psi: U \to P(U)$ given by $\psi(u) = [u, 1_{x}, 1]$ and the monodromy representation of $(P(U)|_{u}, D_{u}, \psi(u))$ is given by $u$. 

By Corollary \ref{framedRHcorr} and our assumption on $u_{0}$, there is an isomorphism $f_{0} : (P(U)|_{u_{0}}, D_{u_{0}}, \psi(u_{0})) \cong (P, \nabla, \phi)$, for some connection $\nabla$. This isomorphism is compatible with framings. Since $U$ is contractible, $f_{0}$ can be extended to a \emph{continuous} isomorphism of principal bundles $f: P(U) \to U \times P$. We can view $f$ as a continuous section of the fibre bundle of isomorphisms $\mathrm{Iso}(P(U), U \times P)$. Using smoothing (parametrically in $U$, using that it is a simplicial complex) we may approximate $f$ in the Whitney topology by an isomorphism $\tilde{f}$ which is a $U$-continuous family of smooth isomorphisms $P(U)|_{u} \to P$. Pushing-forward $D_{u}$ now yields a $U$-continuous family of smooth flat connections on $P$. 

The isomorphism $\tilde{f}$ may fail to push-forward the framing $\psi(u)$ to $\phi$, although we can assume that this holds at $u_{0}$. However, we may correct this discrepancy by lifting $(\tilde{f} \circ \psi)^{-1} \phi : U \to G(\phi)$ to a family of gauge transformations. For this, we use the fact, implied by Lemma \ref{surjectivegauge}, that $r$ is a principal bundle. The resulting family of connections $\nabla_{u}$ provides the desired splitting of $RH_{(P,\phi)}$. 
\end{proof}

\begin{corollary} \label{connectivityofHoms}
$\Hom_{(P,\phi)}(\pi_{1}(W, x), G)$ is a union of connected components of $\Hom(\pi_{1}(W, x), G)$. 
\end{corollary}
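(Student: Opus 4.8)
\textbf{Proof plan for Corollary \ref{connectivityofHoms}.}

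The statement to prove is that $\Hom_{(P,\phi)}(\pi_{1}(W, x), G)$ is a union of connected components of $\Hom(\pi_{1}(W, x), G)$. Equivalently, I must show that this subset is both open and closed. The plan is to deduce openness from Lemma \ref{splittingofRHlemma}, and then to argue closedness by a complementary argument using the fact that the relevant subsets partition the whole representation space.

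First, openness is essentially immediate from the preceding lemma: Lemma \ref{splittingofRHlemma} asserts that every point $u_{0} \in \Hom_{(P,\phi)}(\pi_{1}(W, x), G)$ admits a neighbourhood $U$ in $\Hom(\pi_{1}(W, x), G)$ that is entirely contained in $\Hom_{(P,\phi)}(\pi_{1}(W, x), G)$. Hence $\Hom_{(P,\phi)}(\pi_{1}(W, x), G)$ is open in $\Hom(\pi_{1}(W, x), G)$.

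Second, for closedness, the key observation is that as $(P',\phi')$ ranges over isomorphism classes of framed principal $G$-bundles over $W$, the subsets $\Hom_{(P',\phi')}(\pi_{1}(W, x), G)$ cover $\Hom(\pi_{1}(W, x), G)$: indeed, by the Riemann-Hilbert correspondence (Theorem \ref{MainRHtheorem}, or more precisely Corollary \ref{framedRHcorr}), every representation $\rho$ arises as the monodromy of some framed flat bundle $(P(\rho),\nabla,\phi)$, constructed explicitly in Lemma \ref{Moritaconstruction}, and the underlying framed bundle $(P(\rho),\phi)$ determines which subset $\rho$ lies in. Moreover these subsets are pairwise disjoint, since the isomorphism class of the framed bundle underlying a flat connection is an invariant of the monodromy representation (again by Corollary \ref{framedRHcorr}, a framed isomorphism of flat bundles is equivalent to equality of monodromy representations). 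Since $W$ is closed, $\pi_1(W,x)$ is finitely generated and $G$ is algebraic, so $\Hom(\pi_1(W,x),G)$ has finitely many connected components; in particular the partition into the subsets $\Hom_{(P',\phi')}(\pi_{1}(W, x), G)$ is \emph{locally finite} (each is a union of connected components once we know each is open, which is the content of the previous paragraph applied to each framed bundle in turn). The complement of $\Hom_{(P,\phi)}(\pi_{1}(W, x), G)$ is therefore the union of the other subsets $\Hom_{(P',\phi')}(\pi_{1}(W, x), G)$, each of which is open by the openness argument above; hence the complement is open and $\Hom_{(P,\phi)}(\pi_{1}(W, x), G)$ is closed.

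Combining openness and closedness, $\Hom_{(P,\phi)}(\pi_{1}(W, x), G)$ is a clopen subset of $\Hom(\pi_{1}(W, x), G)$, and therefore a union of connected components. The main subtlety — really the only one — is making sure the partition argument is clean: one must verify that the subsets $\Hom_{(P',\phi')}$ are genuinely disjoint and exhaustive, which is exactly where Corollary \ref{framedRHcorr} (framed Riemann-Hilbert) is used, and that openness of \emph{each} member of the partition follows from Lemma \ref{splittingofRHlemma} applied with $(P',\phi')$ in place of $(P,\phi)$. No hard analysis is involved; everything reduces to already-established facts.
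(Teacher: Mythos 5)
Your argument is correct and is essentially the paper's own proof: the authors likewise observe that $\Hom(\pi_{1}(W,x),G)$ is the disjoint union of the subsets $\Hom_{(P',\phi')}(\pi_{1}(W,x),G)$ over isomorphism classes of framed bundles (via Corollary \ref{framedRHcorr}) and that each is open by Lemma \ref{splittingofRHlemma}, hence clopen. The only superfluous step in your write-up is the appeal to finiteness/local finiteness of the partition --- an arbitrary union of open sets is open, so the complement is open without it.
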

\begin{proof}
According to Corollary \ref{framedRHcorr}, the space of homomorphisms $\Hom(\pi_{1}(W, x), G)$ is the disjoint union of all $\Hom_{(P,\phi)}(\pi_{1}(W, x), G)$, taken over isomorphism classes of framed principal bundles $(P, \phi)$. Therefore, it suffices to show that each $\Hom_{(P,\phi)}(\pi_{1}(W, x), G)$ is open. This is the case according to Lemma \ref{splittingofRHlemma}. 
\end{proof}

\begin{lemma} \label{principalbundlestatement}
$RH_{(P,\phi)} : \FlatObj(P) \to \Hom_{(P,\phi)}(\pi_{1}(W, x), G)$ is a principal $\Gau_{*}(P)$-bundle. In particular, it is a surjective topological submersion. 
\end{lemma}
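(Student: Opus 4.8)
The plan is to use the framed Riemann--Hilbert correspondence to describe the $\Gau_{*}(P)$-action on $\FlatObj(P)$ completely, and then to combine this with the local sections constructed in Lemma \ref{splittingofRHlemma}. Recall that $\FlatCat_{*}(W,\phi,P) = \Gau_{*}(P)\ltimes\FlatObj(P)$, and that by Lemma \ref{continuousaction} this action is continuous. First I would show that the orbits of $\Gau_{*}(P)$ on $\FlatObj(P)$ are exactly the fibres of $RH_{(P,\phi)}$ and that the action is free. The orbits lie inside the fibres because a based gauge transformation acts as the identity on $P_{x}$, hence does not change the monodromy based at $x$. Conversely, if two flat connections on $P$ have the same monodromy representation, then fullness of the framed Riemann--Hilbert functor (Corollary \ref{framedRHcorr}, whose proof rests on Lemma \ref{fulfaith}) produces a framing-preserving isomorphism between them; such an isomorphism is exactly an element of $\Gau_{*}(P)$ carrying one connection to the other. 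Freeness is the observation that a flat bundle automorphism $g$ which is the identity on $P_{x}$ is determined by its restriction to $P_{x}$ via the reconstruction formula \eqref{flatmorphismeq} (namely $g_{w} = \hol^{\nabla}(\gamma_{w})\,g_{x}\,\hol^{\nabla}(\gamma_{w})^{-1}$ for a path $\gamma_{w}$ from $x$ to $w$), so $g$ must be the identity. Since $\Hom_{(P,\phi)}(\pi_{1}(W,x),G)$ is by definition the image of the object map of $RH_{(P,\phi)}$, this already exhibits $RH_{(P,\phi)}\colon\FlatObj(P)\to\Hom_{(P,\phi)}(\pi_{1}(W,x),G)$ as a continuous (Lemma \ref{continuityofRH}) surjection whose fibres are free $\Gau_{*}(P)$-orbits.

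Next I would build local trivialisations. Given a point $u_{0}\in\Hom_{(P,\phi)}(\pi_{1}(W,x),G)$, Lemma \ref{splittingofRHlemma} provides an open neighbourhood $U\ni u_{0}$ and a continuous section $s\colon U\to\FlatObj(P)$ of $RH_{(P,\phi)}$. By the previous paragraph the map $\Theta\colon\Gau_{*}(P)\times U\to RH_{(P,\phi)}^{-1}(U)$, $(g,u)\mapsto g\cdot s(u)$, is a $\Gau_{*}(P)$-equivariant bijection, and it is continuous because both the gauge action and $s$ are continuous. To conclude that it is a local trivialisation one must check that its inverse, $\nabla\mapsto(\tau(\nabla),\,RH_{(P,\phi)}(\nabla))$, is continuous, where $\tau(\nabla)$ is the unique based gauge transformation with $\tau(\nabla)\cdot s(RH_{(P,\phi)}(\nabla)) = \nabla$.

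The continuity of $\tau$ is the step I expect to be the main obstacle, as it is the only genuinely analytic point and cannot be deduced formally from freeness plus the existence of continuous local sections. My approach would be to write $\tau(\nabla)$ explicitly: by the same reconstruction used in the freeness argument, its value at a point $w\in W$ is $\hol^{\nabla}(\gamma_{w})\circ\hol^{\,s(RH_{(P,\phi)}(\nabla))}(\gamma_{w})^{-1}$ for any path $\gamma_{w}$ from $x$ to $w$, and this is well defined precisely because $\nabla$ and $s(RH_{(P,\phi)}(\nabla))$ have equal monodromy at $x$. One then invokes the continuous dependence of the parallel transport of a flat connection on the connection in the Whitney $C^{\infty}$-topology --- the same Gr\"onwall-type argument about solutions of linear ODEs already used in the proof of Lemma \ref{continuityofRH} --- together with compactness of $W$ (so that the pointwise formulas assemble into a $C^{\infty}$-continuous family of sections of $\Ad(P)$), the continuity of $s$, and the continuity of $RH_{(P,\phi)}$. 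This shows $\Theta$ is a homeomorphism. Hence $RH_{(P,\phi)}$ is a locally trivial fibre bundle with free structure group $\Gau_{*}(P)$, i.e. a principal $\Gau_{*}(P)$-bundle; and the local sections $s$ exhibit it, in particular, as a surjective topological submersion.
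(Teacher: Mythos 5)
Your proposal is correct and follows essentially the same route as the paper: freeness and surjectivity onto $\Hom_{(P,\phi)}(\pi_{1}(W,x),G)$ via the framed Riemann--Hilbert correspondence (Corollary \ref{framedRHcorr}), local trivialisations built from the sections of Lemma \ref{splittingofRHlemma}, and continuity of the inverse reduced to the explicit ``division'' formula $\tau(\nabla)_{w}=\hol^{\nabla}(\gamma_{w})\circ\hol^{s(RH(\nabla))}(\gamma_{w})^{-1}$, whose continuity comes from the continuous dependence of holonomy on the connection already established in Lemma \ref{continuityofRH}. This is exactly the paper's argument, including your correct identification of the continuity of the division map as the one nontrivial analytic step.
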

\begin{proof}
By Lemma \ref{continuousaction}, $\Gau_{*}(P)$ acts continuously on $\FlatObj(P)$. By the framed Riemann-Hilbert correspondence (Corollary \ref{framedRHcorr}) the action is free and set-theoretically the quotient can be identified with $\Hom_{(P,\phi)}(\pi_{1}(W, x), G)$. By Lemma \ref{continuityofRH}, we therefore have a continuous bijection $\FlatObj(P)/\Gau_{*}(P) \to \Hom_{(P,\phi)}(\pi_{1}(W, x), G)$. 

According to Lemma \ref{splittingofRHlemma}, $RH_{(P,\phi)}$ admits continuous local sections. Concretely, given a small open $U \subset \Hom_{(P,\phi)}(\pi_{1}(W, x), G)$ we have a section $s: U \to \FlatObj(P)$. We can use $s$ to produce a continuous $\Gau_{*}(P)$-equivariant bijection 
\[ \varphi_{s} : \Gau_{*}(P) \times U \to \FlatObj(P)|_{U}, \qquad (f, u) \mapsto f \ast s(u). \]
In order to show that $\varphi_{s}$ is a homeomorphism and thus complete the proof, it suffices to prove the continuity of the map 
\[ \mathrm{div} : \FlatObj(P) \times_{\Hom_{(P,\phi)}(\pi_{1}(W, x), G)} \FlatObj(P) \to \Gau_{*}(P),\]
which associates to a pair of flat connections $(\nabla_{1}, \nabla_{2})$ the unique based gauge transformation taking $\nabla_{2}$ to $\nabla_{1}$. But this gauge transformation is given by 
\[  f_{y} = \hol^{\nabla_{1}}(\gamma) \circ (\hol^{\nabla_{2}}(\gamma))^{-1},  \]
where $\gamma$ is any path from $x$ to $y$. Hence, this is continuous because holonomy varies continuously with the connection (as shown in the proof of Lemma \ref{continuityofRH}).
\end{proof}

\begin{corollary}
The image of $RH_{(P,\phi)}$ is the topological groupoid 
\[ \Rep_{(P,\phi)}(\pi_{1}(W, x), G) := G(\phi) \ltimes \Hom_{(P,\phi)}(\pi_{1}(W, x), G). \] 
\end{corollary}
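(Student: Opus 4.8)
The claim is that the (full subcategory given by the) image of $RH_{(P,\phi)}$ inside $\Rep(\pi_1(W,x),G)$ is exactly the action groupoid $\Rep_{(P,\phi)}(\pi_1(W,x),G) = G(\phi) \ltimes \Hom_{(P,\phi)}(\pi_1(W,x),G)$, as a topological groupoid. The plan is to identify the object set and the morphism set of the image separately, and then check that the topological groupoid structure matches. The main inputs are Lemma \ref{principalbundlestatement}, which gives that $RH_{(P,\phi)}$ is a principal $\Gau_*(P)$-bundle on objects (hence in particular surjective onto $\Hom_{(P,\phi)}(\pi_1(W,x),G)$), and Lemma \ref{surjectivegauge}, which identifies $G(\phi)$ as the image of the restriction homomorphism $r:\Gau(P)\to G$ and places it in the exact sequence $1\to\Gau_*(P)\to\Gau(P)\to G(\phi)\to 1$.

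First I would handle the object set. By definition $\FlatCat(W,\phi,P)$ has object set $\FlatObj(P)$, and $RH_{(P,\phi)}$ sends a flat connection to its monodromy representation; by the framed Riemann–Hilbert correspondence (Corollary \ref{framedRHcorr}) the resulting representations are precisely those that arise from the framed bundle $(P,\phi)$, which is the definition of $\Hom_{(P,\phi)}(\pi_1(W,x),G)$. So the object set of the image is $\Hom_{(P,\phi)}(\pi_1(W,x),G)$, matching the action groupoid $\Rep_{(P,\phi)}$. Next I would handle morphisms. A morphism in $\FlatCat(W,\phi,P)$ is a gauge transformation $f\in\Gau(P)$, and under $RH$ it is sent to the pair $(r(f),\hol^{\nabla,\phi})\in G\ltimes\Hom_{(P,\phi)}$, where $r(f) = \phi^{-1}\circ f_x\circ\phi$. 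The set of group elements appearing as first coordinates is exactly the image of $r$, which is $G(\phi)$ by Lemma \ref{surjectivegauge}. Moreover, since $RH_{(P,\phi)}$ is full (this is the fully faithful part of Theorem \ref{MainRHtheorem} / Lemma \ref{fulfaith}), \emph{every} element $g\in G(\phi)$ conjugating one representation in $\Hom_{(P,\phi)}$ to another is realized by some gauge transformation; hence the morphism set of the image is all of $G(\phi)\ltimes\Hom_{(P,\phi)}$, i.e. $\Rep_{(P,\phi)}$. Compatibility with composition and units is automatic since $RH$ is a functor and $r$ a homomorphism.

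Finally I would verify the topological groupoid structure. The image $\Rep_{(P,\phi)}(\pi_1(W,x),G)$ is given the subspace topology from $\Rep(\pi_1(W,x),G)$; since $\Hom_{(P,\phi)}(\pi_1(W,x),G)$ is a union of connected components of $\Hom(\pi_1(W,x),G)$ (Corollary \ref{connectivityofHoms}), it is open and closed, and similarly $G(\phi)$ is a union of connected components of $G$ (Lemma \ref{surjectivegauge}). Therefore the action groupoid $G(\phi)\ltimes\Hom_{(P,\phi)}(\pi_1(W,x),G)$ is an open-and-closed subgroupoid of $G\ltimes\Hom(\pi_1(W,x),G)$; in particular its source and target maps are topological submersions, so it is indeed a topological groupoid, and the identification of the image with it is an isomorphism of topological groupoids. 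The only mild subtlety — the step I expect to require the most care — is confirming that the topology $RH_{(P,\phi)}$ induces on its image as a quotient of $\FlatCat(W,\phi,P)$ agrees with the subspace topology from $\Rep(\pi_1(W,x),G)$; this follows because $RH_{(P,\phi)}$ is a continuous (Lemma \ref{continuityofRH}) surjective topological submersion on both objects (Lemma \ref{principalbundlestatement}) and morphisms (using the section from Lemma \ref{splittingofRHlemma} together with the submersion $r$), so it is an open map onto its image, forcing the two topologies to coincide. With that, the statement follows.
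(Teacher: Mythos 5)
Your proposal is correct and follows the same route the paper intends: the corollary is stated without proof precisely because it is the immediate combination of Corollary \ref{framedRHcorr} and Lemma \ref{principalbundlestatement} (surjectivity onto $\Hom_{(P,\phi)}(\pi_1(W,x),G)$ on objects), Lemma \ref{surjectivegauge} (the first coordinates of morphisms sweep out exactly $G(\phi)=\mathrm{im}(r)$), and Corollary \ref{connectivityofHoms} plus the fact that $G(\phi)$ is a union of components of $G$ (so the image is an open-and-closed, hence topological, subgroupoid). Your appeal to fullness for the morphism count is slightly more than necessary --- since the morphism space of an action groupoid is the full product, surjectivity of $r$ onto $G(\phi)$ together with surjectivity on objects already yields all of $G(\phi)\ltimes\Hom_{(P,\phi)}(\pi_1(W,x),G)$ --- but this is a stylistic point, not a gap.
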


We can now state the main result of this section. 
\begin{theorem}[Topological Riemann-Hilbert] \label{topologicalRHforPB}
The restricted Riemann-Hilbert map 
\[ RH_{(P,\phi)} : \FlatCat(W,\phi,P) \to \Rep_{(P,\phi)}(\pi_{1}(W, x), G) \]
is a Morita equivalence. In particular, the space of isomorphism classes of flat connections on $P$ is homeomorphic to the quotient space $\Hom_{(P,\phi)}(\pi_{1}(W, x), G)/G(\phi)$. 
\end{theorem}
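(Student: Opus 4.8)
The plan is to assemble the statement from the pieces already laid out in this subsection, using the abstract groupoid machinery of Appendix \ref{sec:topgroupoids}. Recall that $\FlatCat(W,\phi,P) = \Gau(P) \ltimes \FlatObj(P)$ is an action groupoid, and that $\Rep_{(P,\phi)}(\pi_{1}(W,x),G) = G(\phi) \ltimes \Hom_{(P,\phi)}(\pi_{1}(W,x),G)$ is an action groupoid as well. The key observation is that these two action groupoids fit precisely into the situation of Proposition \ref{Moritaequivalencefromquotientprop}: I would take the fibration of topological groups to be the restriction homomorphism $r : \Gau(P) \to G(\phi)$, which is indeed a topological submersion onto the open subgroup $G(\phi) \subseteq G$ and has kernel $\Gau_{*}(P)$ by Lemma \ref{surjectivegauge}. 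The ambient space with $\Gau(P)$-action is $X = \FlatObj(P)$, and the statement that $\Gau_{*}(P) = \ker(r)$ acts principally on $\FlatObj(P)$ with quotient $\Hom_{(P,\phi)}(\pi_{1}(W,x),G)$ is exactly Lemma \ref{principalbundlestatement}.

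First I would verify the hypotheses of Proposition \ref{Moritaequivalencefromquotientprop} one by one: (i) $r$ is a fibration of topological groups with abelian-free, in fact arbitrary, kernel $\Gau_{*}(P)$ --- this is Lemma \ref{surjectivegauge}; (ii) $\Gau(P)$ acts continuously on $\FlatObj(P)$ --- this is Lemma \ref{continuousaction}; (iii) the restricted action of $\Gau_{*}(P)$ is principal with quotient map the restricted Riemann--Hilbert map $RH_{(P,\phi)} : \FlatObj(P) \to \Hom_{(P,\phi)}(\pi_{1}(W,x),G)$ --- this is Lemma \ref{principalbundlestatement}. Granting these, Proposition \ref{Moritaequivalencefromquotientprop} produces a continuous $G(\phi)$-action on the quotient $\Hom_{(P,\phi)}(\pi_{1}(W,x),G)$ and shows that the induced functor
\[
\Gau(P) \ltimes \FlatObj(P) \;\longrightarrow\; G(\phi) \ltimes \Hom_{(P,\phi)}(\pi_{1}(W,x),G)
\]
is a fibration and a Morita equivalence. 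The remaining task is to identify this abstractly-produced functor with $RH_{(P,\phi)}$: on objects it is $RH_{(P,\phi)}$ by construction in Lemma \ref{principalbundlestatement}, and on morphisms the base component is $r$, which is exactly the morphism component of $RH_{(P,\phi)}$ as recorded after Definition \ref{def:framedflatbundle} and in Lemma \ref{continuityofRH}. One also checks the $G(\phi)$-action on $\Hom_{(P,\phi)}(\pi_1(W,x),G)$ produced by the proposition agrees with conjugation, which is immediate from tracing through the definition of the descended action (lift an element of $G(\phi)$ to a gauge transformation, act, and restrict at $x$). Finally, "Morita equivalence" gives a homeomorphism of orbit spaces by Lemma \ref{Moritatohomeo} (or Lemma \ref{fibrationlemma}), and the orbit space of $\FlatCat(W,\phi,P)$ is by definition the set of isomorphism classes of flat connections on $P$ with the quotient topology, while the orbit space of $\Rep_{(P,\phi)}(\pi_1(W,x),G)$ is $\Hom_{(P,\phi)}(\pi_{1}(W,x),G)/G(\phi)$; this yields the last sentence of the theorem.

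I do not expect a genuine obstacle here, since all the analytic content --- continuity of the gauge action, continuity of holonomy, the existence of local sections via smoothing and the principal bundle structure of $r$ --- has already been isolated in Lemmas \ref{continuousaction}, \ref{surjectivegauge}, \ref{continuityofRH}, \ref{splittingofRHlemma} and \ref{principalbundlestatement}. The only point requiring a little care is bookkeeping: making sure that the functor manufactured by the purely formal Proposition \ref{Moritaequivalencefromquotientprop} literally coincides with $RH_{(P,\phi)}$ rather than merely being naturally isomorphic to it, and that the $G(\phi)$-action matched up is the conjugation action. This is a matter of chasing definitions through Lemma \ref{Moritaconstruction} and the construction in Lemma \ref{principalbundlestatement}, and presents no real difficulty. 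Thus the proof is essentially: "check the hypotheses of Proposition \ref{Moritaequivalencefromquotientprop}, apply it, and identify the output."
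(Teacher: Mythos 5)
Your proposal is correct and coincides with the paper's own proof, which likewise deduces the theorem by verifying the hypotheses of Proposition \ref{Moritaequivalencefromquotientprop} via Lemma \ref{surjectivegauge} and Lemma \ref{principalbundlestatement}. The extra bookkeeping you flag (identifying the induced functor with $RH_{(P,\phi)}$ and the descended action with conjugation) is left implicit in the paper but is exactly the right thing to check.
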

\begin{proof}
The theorem follows from Proposition \ref{Moritaequivalencefromquotientprop}, using Lemma \ref{surjectivegauge} and Lemma \ref{principalbundlestatement} to check the conditions.
\end{proof}

\subsubsection{Dependence on the framing}

Up to isomorphism, the category $\FlatCat(W,\phi,P)$ depends solely on the isomorphism class of $P$. However, how it sits in $\FlatCat(W,x,G)$ does depend on the framing $\phi$. Moreover, the Riemann-Hilbert functor $RH_{(P, \phi)}$ and its image $\Rep_{(P,\phi)}(\pi_{1}(W, x), G)$ depend on the choice of $\phi$ as well. We make a few observations about this dependence. 

Let $\mathrm{Frame}(P,x)$ denote the set of framings $\phi : G \to P_{x}$ above $x$. There is a left action of $\Gau(P)$ on $\mathrm{Frame}(P,x)$ given by $f \ast \phi = f_{x} \circ \phi$. We say that two framings are isomorphic if they are related by a gauge transformation. Note also that $\mathrm{Frame}(P,x)$ is a right $G$-torsor. 

\begin{lemma} \label{isoclassesframing}
The set of isomorphism classes of framings $\mathrm{Frame}(P,x)/\Gau(P)$ is in bijection with $G/G(\phi)$. 
\end{lemma}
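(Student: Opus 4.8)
The strategy is to exhibit an explicit, well-defined bijection between $\mathrm{Frame}(P,x)/\Gau(P)$ and $G/G(\phi)$, using the chosen framing $\phi$ as a reference point. Fix $\phi \in \mathrm{Frame}(P,x)$. Since $\mathrm{Frame}(P,x)$ is a right $G$-torsor, every framing $\psi$ can be written uniquely as $\psi = \phi \circ g$ for some $g \in G$, giving a bijection $\mathrm{Frame}(P,x) \cong G$, $\psi \mapsto g_\psi := \phi^{-1}\circ\psi$. Under this identification I would track what the left $\Gau(P)$-action becomes: if $f \in \Gau(P)$ then $f \ast \psi = f_x \circ \psi = f_x \circ \phi \circ g_\psi$, so $g_{f\ast\psi} = (\phi^{-1}\circ f_x \circ \phi)\cdot g_\psi = r(f)\cdot g_\psi$, where $r : \Gau(P) \to G$ is the restriction homomorphism from Lemma \ref{surjectivegauge}. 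Hence under the identification $\mathrm{Frame}(P,x)\cong G$, the $\Gau(P)$-orbits are exactly the orbits of left-multiplication by the image $G(\phi) = r(\Gau(P))$, i.e. the right cosets $G(\phi)\backslash G$.

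The second step is purely bookkeeping about cosets: I want $G/G(\phi)$ on the nose as stated, so I would pass from right cosets to left cosets via the inversion map $g \mapsto g^{-1}$, which induces a bijection $G(\phi)\backslash G \to G/G(\phi)$. Composing, I get a bijection $\mathrm{Frame}(P,x)/\Gau(P) \to G/G(\phi)$, $[\psi] \mapsto g_\psi^{-1}\, G(\phi) = (\phi^{-1}\circ\psi)^{-1}G(\phi)$. It remains to check this is well-defined (independent of the representative $\psi$ of its gauge-orbit), which is exactly the orbit computation above, and that it is a bijection, which follows since it was built as a composite of bijections. One should also remark that the construction depends on $\phi$ only up to the ambiguity already present in $G(\phi)$ (different framings give conjugate subgroups by Remark \ref{Gphiconj}), so the statement is consistent.

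There is essentially no hard obstacle here; the only subtlety worth spelling out carefully is the direction of the actions — the left $\Gau(P)$-action on framings versus the right $G$-torsor structure — since getting these mixed up would produce $G(\phi)\backslash G$ instead of $G/G(\phi)$ or introduce spurious inverses. I would therefore write the orbit computation $g_{f\ast\psi} = r(f)\cdot g_\psi$ out explicitly and note that this is why the final answer involves $G/G(\phi)$ rather than a double coset or some twisted quotient. The proof is short and I would keep it to a few lines.

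\begin{proof}
Fix the reference framing $\phi$. Since $\mathrm{Frame}(P,x)$ is a right $G$-torsor, the map $\psi \mapsto g_\psi := \phi^{-1}\circ\psi$ is a bijection $\mathrm{Frame}(P,x) \to G$. For $f \in \Gau(P)$ we have $f\ast\psi = f_x\circ\psi$, so
\[
g_{f\ast\psi} = \phi^{-1}\circ f_x \circ \psi = (\phi^{-1}\circ f_x \circ \phi)\cdot g_\psi = r(f)\cdot g_\psi,
\]
where $r : \Gau(P) \to G$ is the restriction homomorphism of Lemma \ref{surjectivegauge}, whose image is $G(\phi)$. Hence under the identification $\mathrm{Frame}(P,x)\cong G$, the orbits of $\Gau(P)$ correspond precisely to the orbits of the left-multiplication action of $G(\phi)$ on $G$, that is, to the right cosets $G(\phi)\backslash G$. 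Composing with the bijection $G(\phi)\backslash G \to G/G(\phi)$ induced by $g \mapsto g^{-1}$ yields a bijection
\[
\mathrm{Frame}(P,x)/\Gau(P) \;\xrightarrow{\;\sim\;}\; G/G(\phi), \qquad [\psi] \longmapsto (\phi^{-1}\circ\psi)^{-1}\,G(\phi).
\]
This is well-defined by the orbit computation above and bijective as a composite of bijections.
\end{proof}
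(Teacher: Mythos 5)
Your proof is correct and follows essentially the same route as the paper: identify $\mathrm{Frame}(P,x)$ with $G$ via the reference framing $\phi$ and compute that the transferred gauge action is left multiplication by $r(f)$, whose image is $G(\phi)$. The only difference is that you explicitly insert the inversion $g \mapsto g^{-1}$ to pass from $G(\phi)\backslash G$ to $G/G(\phi)$, a bookkeeping step the paper leaves implicit.
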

\begin{proof}
Choosing a framing $\phi$, we obtain a bijection $G \to \mathrm{Frame}(P,x)$ sending $g \in G$ to $\phi \circ L_{g}$, where $L_{g}$ is left-multiplication by $g$. Using this bijection to transfer the action of gauge transformations to an action on $G$, we see that it is given by 
\[ f \ast g = r(f)g, \]
where $r$ is the homomorphism from Lemma \ref{surjectivegauge} which has image $G(\phi)$. Hence, the quotient is $G/G(\phi)$. 
\end{proof}

Every element $g \in G$ determines an automorphism $C_{g}$ of the category of representations $\Rep(\pi_{1}(W, x), G) = G \ltimes \Hom(\pi_{1}(W,x), G)$. Namely, the conjugation $C_{g}(h, \rho) = (ghg^{-1}, g \rho g^{-1})$. 

\begin{lemma}\label{lem:dependenceframing}
Let $\phi \in \mathrm{Frame}(P,x)$ and $g \in G$. Then $RH_{(P, \phi \circ g)} = C_{g^{-1}} \circ RH_{(P, \phi)}$. In particular, $C_{g^{-1}}$ determines an isomorphism between $\Rep_{(P,\phi \circ g)}(\pi_{1}(W, x), G)$ and $\Rep_{(P,\phi)}(\pi_{1}(W, x), G)$. 
\end{lemma}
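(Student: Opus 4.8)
The statement to prove is Lemma \ref{lem:dependenceframing}: for $\phi \in \mathrm{Frame}(P,x)$ and $g \in G$, we have $RH_{(P, \phi \circ g)} = C_{g^{-1}} \circ RH_{(P, \phi)}$, and consequently $C_{g^{-1}}$ restricts to an isomorphism $\Rep_{(P,\phi \circ g)}(\pi_{1}(W, x), G) \cong \Rep_{(P,\phi)}(\pi_{1}(W, x), G)$. The plan is to unwind the definition of the monodromy representation and track how replacing the framing $\phi$ by $\phi \circ g$ (where I write $g$ for the left-translation map $L_g : G \to G$, $h \mapsto gh$, so that $\phi \circ g : G \to P_x$ is again a framing) transforms everything by conjugation. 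This is essentially a bookkeeping computation, so I will keep it short.

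First I would recall from Section \ref{holonomymap} that for a framed flat bundle $(P,\nabla,\psi)$ the monodromy is $\hol_x^{\nabla,\psi}(\gamma) = \psi^{-1} \circ \hol_x^{\nabla}(\gamma) \circ \psi$, where $\hol_x^{\nabla}(\gamma) \in \Aut_G(P_x)$ is intrinsic to $(P,\nabla)$ and does not see the framing. Now substitute $\psi = \phi \circ g$. Since $\phi \circ g$ as a map $G \to P_x$ satisfies $(\phi \circ g)^{-1} = g^{-1} \circ \phi^{-1}$ (with $g^{-1}$ denoting $L_{g^{-1}}$), I get
\[
\hol_x^{\nabla, \phi \circ g}(\gamma) = g^{-1} \circ \phi^{-1} \circ \hol_x^{\nabla}(\gamma) \circ \phi \circ g = g^{-1} \cdot \hol_x^{\nabla,\phi}(\gamma) \cdot g,
\]
where in the last step I interpret $\phi^{-1}\circ \hol_x^\nabla(\gamma)\circ\phi$ as an element of $G$ and the outer $g^{-1}(-)g$ as conjugation in $G$ by $g^{-1}$. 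This is exactly $C_{g^{-1}}$ applied to $\hol_x^{\nabla,\phi}(\gamma)$ at the level of objects, establishing the object part of the claimed identity $RH_{(P,\phi\circ g)} = C_{g^{-1}}\circ RH_{(P,\phi)}$. For the morphism part, recall that $RH$ sends a flat isomorphism $f : (P_1,\nabla_1,\psi_1) \to (P_2,\nabla_2,\psi_2)$ to $(\psi_2^{-1}\circ f_x \circ \psi_1, \hol_x^{\nabla_1,\psi_1})$; replacing $\psi_i$ by $\psi_i \circ g$ conjugates the first component $\psi_2^{-1}\circ f_x\circ\psi_1 \in G$ by $g^{-1}$ and conjugates the base representation as above, so again one gets precisely $C_{g^{-1}}$ applied to the output. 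Here the morphisms in $\FlatCat(W,\phi\circ g, P)$ and in $\FlatCat(W,\phi,P)$ are literally the same (gauge transformations of $P$), so the two functors have the same domain and the identity of functors makes sense on the nose.

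The second assertion then follows formally: by definition (Corollary preceding Theorem \ref{topologicalRHforPB}) the image $\Rep_{(P,\phi)}(\pi_{1}(W, x), G) = G(\phi) \ltimes \Hom_{(P,\phi)}(\pi_{1}(W, x), G)$ is the essential image of $RH_{(P,\phi)}$, and since $RH_{(P,\phi\circ g)} = C_{g^{-1}}\circ RH_{(P,\phi)}$ and $C_{g^{-1}}$ is an automorphism of the ambient groupoid $\Rep(\pi_1(W,x),G)$, it carries the image of $RH_{(P,\phi)}$ isomorphically onto the image of $RH_{(P,\phi\circ g)}$. Concretely this records that $\Hom_{(P,\phi\circ g)} = g^{-1}\Hom_{(P,\phi)}g$ and $G(\phi\circ g) = g^{-1}G(\phi)g$, consistent with Remark \ref{Gphiconj}. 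There is no real obstacle here; the only point requiring a little care is being consistent about whether $g$ denotes an element of $G$ or the associated left-translation, and making sure the convention for $C_g$ matches the one used elsewhere (I will adopt $C_g(h,\rho) = (ghg^{-1}, g\rho g^{-1})$ as stated just before the lemma), so that the exponent $g^{-1}$ comes out correctly rather than $g$.
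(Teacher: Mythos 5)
Your proposal is correct: the paper states Lemma \ref{lem:dependenceframing} without proof, and your computation is precisely the routine verification the authors leave implicit — unwinding the definition of $\hol_x^{\nabla,\psi}$ with $\psi = \phi\circ L_g$ to see that both the object and morphism components of $RH$ get conjugated by $g^{-1}$, with the second assertion following formally since $C_{g^{-1}}$ is an automorphism of the ambient groupoid $\Rep(\pi_1(W,x),G)$. Your care with the convention that $\phi\circ g$ means $\phi\circ L_g$ (matching Lemma \ref{isoclassesframing}) and with the sign of the exponent is exactly the only point where one could slip.
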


Now suppose that $G = \mathrm{GL}(n, \mathbb{R})$. It has two connected components and hence, for a given principal $\mathrm{GL}(n, \mathbb{R})$-bundle, we must decide whether $G(\phi) = \mathrm{GL}(n, \mathbb{R})$ or $G(\phi) = \mathrm{GL}^+(n, \mathbb{R})$, the connected component of the identity. Recall that by Remark \ref{Gphiconj}, $G(\phi)$ only depends on principal bundle and not the particular choice of framing. 

\begin{lemma} \label{Gphidetermination}
Let $E \to W$ be a real rank $n$ vector bundle over $W$ and let $P$ be the associated principal $\mathrm{GL}(n, \mathbb{R})$-bundle of frames. Then $G(\phi) = \mathrm{GL}(n, \mathbb{R})$ if and only if there is an automorphism $f: E \to E$ which is orientation reversing on each fibre. 
\end{lemma}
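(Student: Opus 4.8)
The claim is a statement about the gauge group of the frame bundle $P$ of $E$ and its image $G(\phi) \subseteq \mathrm{GL}(n,\mathbb{R})$ under the restriction homomorphism $r: \Gau(P) \to \mathrm{GL}(n,\mathbb{R})$ from Lemma \ref{surjectivegauge}. Since $\mathrm{GL}(n,\mathbb{R})$ has exactly two connected components, and since $G(\phi)$ is a union of connected components containing the identity, we have $G(\phi) = \mathrm{GL}(n,\mathbb{R})$ if and only if $G(\phi)$ contains some element of negative determinant, and otherwise $G(\phi) = \mathrm{GL}^+(n,\mathbb{R})$. The first thing I would do is translate both sides of the ``if and only if'' into the same language. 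On the one hand, bundle automorphisms $f: E \to E$ correspond bijectively to sections of $\Ad(P)$, i.e. to gauge transformations, via the identification $\Gau(P) = \Gamma(W,\Ad(P))$; under this correspondence $r(f) = \phi^{-1} \circ f_x \circ \phi \in \mathrm{GL}(n,\mathbb{R})$ is, up to the fixed identification $\phi$, just the restriction $f_x \in \mathrm{GL}(E_x)$ of $f$ to the fibre over $x$. On the other hand, the function $w \mapsto \det(f_w) \in \mathbb{R}^*$ associated to an automorphism $f$ is continuous and nowhere zero, hence has constant sign on the connected manifold $W$ (recall $W$ is assumed connected throughout this appendix); so $f$ is orientation reversing on one fibre if and only if it is orientation reversing on every fibre, if and only if $\det(f_x) < 0$, if and only if $r(f) \notin \mathrm{GL}^+(n,\mathbb{R})$.

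With this dictionary in place the proof is essentially immediate in both directions. For the forward direction: if $G(\phi) = \mathrm{GL}(n,\mathbb{R})$ then there is $f \in \Gau(P)$ with $\det(r(f)) < 0$, hence $\det(f_x) < 0$, hence the corresponding automorphism $f: E \to E$ is orientation reversing on the fibre over $x$, and by the constancy-of-sign remark it is orientation reversing on every fibre. For the reverse direction: if $f: E \to E$ is an automorphism orientation reversing on each fibre, then in particular $\det(f_x) < 0$, so $r(f) = \phi^{-1} \circ f_x \circ \phi$ has negative determinant; thus $G(\phi)$ meets the non-identity component of $\mathrm{GL}(n,\mathbb{R})$, and being a union of components it must equal all of $\mathrm{GL}(n,\mathbb{R})$. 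One small point to check en route is that an automorphism of $E$ covering the identity on $W$ really does give a well-defined element of $\Gau(P)$ with $r$-image computed as stated; this is just the standard equivalence between vector bundle automorphisms and sections of the adjoint/gauge bundle of the associated frame bundle, together with the definition of $r$ in Lemma \ref{surjectivegauge}.

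I do not expect a genuine obstacle here: the entire content is the observation that the determinant of a fibrewise automorphism is a locally constant $\mathbb{R}^*$-valued function on a connected base, combined with Remark \ref{Gphiconj} (so that the answer does not depend on the framing $\phi$) and the fact that $G(\phi)$ is a union of connected components of $G$. The only mild subtlety — and the one place I would be careful in writing it up — is bookkeeping the three equivalent ways of phrasing ``orientation reversing'' (on the fibre over $x$ / on every fibre / negative determinant of $r(f)$) and making sure the connectedness of $W$ is invoked exactly once, at the step where local constancy of the sign of $\det(f_w)$ is upgraded to global constancy. Everything else is a direct unwinding of the definitions from Lemma \ref{surjectivegauge} and Remark \ref{Gphiconj}.
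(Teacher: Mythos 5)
Your proof is correct and is essentially the paper's argument: both reduce the statement to whether the image $G(\phi)$ of the restriction homomorphism $r$ from Lemma \ref{surjectivegauge} meets the negative-determinant component of $\mathrm{GL}(n,\mathbb{R})$, using the correspondence between gauge transformations of $P$ and fibrewise automorphisms of $E$ together with connectedness of $W$ to make ``orientation reversing on each fibre'' equivalent to $\det(f_x)<0$. The only cosmetic difference is that the paper routes the same observation through Lemma \ref{isoclassesframing}, counting gauge-orbits of framings $G/G(\phi)$ and identifying the two-orbit case with the two orientations of $E_x$, whereas you work directly with the image of $r$; the content is identical.
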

\begin{proof}
By Lemma \ref{isoclassesframing}, the set of isomorphism classes of framings consists of a single point if $G(\phi) = \mathrm{GL}(n, \mathbb{R})$ and consists of two points if $G(\phi) = \mathrm{GL}^+(n, \mathbb{R})$. In the later case, these two points correspond to the two orientations of $E_{x}$. This happens if there are no automorphisms which reverse orientation. 
\end{proof}

When the rank $n$ of a vector bundle $E$ is odd, the automorphism $-id : E \to E$ is orientation reversing on each fibre. Therefore we obtain the following. 
\begin{corollary} \label{oddn}
Let $n$ be odd. Then $G(\phi) = \mathrm{GL}(n, \mathbb{R})$ for every principal $\mathrm{GL}(n, \mathbb{R})$-bundle. 
\end{corollary}

Consider now a vector bundle $E$ of rank $2$ and suppose that there exists an orientation reversing automorphism $f : E \to E$. In other words, the determinant $\det(f) : W \to \mathbb{R}^{*}$ is negative. This implies that $f$ has distinct eigenvalues $\lambda_{+}, \lambda_{-} : W \to \mathbb{R}$ such that $\lambda_{-} < 0 < \lambda_{+}$. Hence, $E$ admits a decomposition into the eigenline subbundles $E = E_{+} \oplus E_{-}$. Conversely, if $E$ admits a line subbundle $L < E$, then using a metric we may decompose $E = L \oplus L^{\perp}$, and the automorphism $f = (-id) \oplus id$ is orientation reversing in the fibres. Therefore we obtain the following. 
\begin{corollary}\label{evenn}
Let $E$ be a rank $2$-vector bundle over $W$ and let $P$ be the associated bundle of frames. Then $G(\phi)  = \mathrm{GL}(2, \mathbb{R})$ if and only if $E$ admits a line subbundle. 
\end{corollary}

\subsection{Isotopy classification} \label{isotopyRH}

Our last goal in this appendix is to produce a Riemann-Hilbert statement classifying flat connections up to isotopy. To this end, we consider $\Gau^{0}(P)$, the identity component of the gauge group, consisting of those gauge transformations which are isotopic to the identity. This determines the wide subgroupoid 
\[ \FlatCat_{0}(W, \phi, P) := \Gau^{0}(P) \ltimes \FlatObj(P) \subset \FlatCat(W, \phi, P) \]
of flat connections modulo isotopy, which is what we want to study. We pursue the same strategy as before, which amounts to understanding how $\Gau_{*}(P)$ interacts with $\Gau^0(P)$, in order to use an analogue of Lemma \ref{principalbundlestatement}.

Write $G^0$ for the connected component of the identity and recall the homomorphism $r: \Gau(P) \to G$ of Lemma \ref{surjectivegauge} which has image $G(\phi)$. We hence have a sequence of subgroups $G^0 \subset G(\phi) \subset G$. It then holds that:
\[ \Gamma(\Ad(P)^0) = \ker(\Gau(P) \rightarrow \pi_0(G)), \]
where $\Ad(P)^0 \subset \Ad(P)$ is the subbundle consisting of the fibrewise connected components of the identities. Observe that $\Gau^0(P)$ sits in $\Gamma(\Ad(P)^0)$ as its identity component.
\begin{definition}
We denote
\[ D(P) := \dfrac{\Gamma(\Ad(P)^0)}{\Gau^0(P)} = \pi_0(\Gamma(\Ad(P)^0)) = \ker(\pi_0(\Gau(P)) \rightarrow \pi_0(G)). \qedhere \]
\end{definition}
Applying the long exact sequence of homotopy groups to the short exact sequence from Lemma \ref{surjectivegauge} we deduce:
\[  D(P) = \ker(\pi_0(\Gau(P)) \rightarrow \pi_0(G)) \cong \im(\pi_0(\Gau_*(P)) \rightarrow \pi_0(\Gau(P))) \cong \pi_0(\Gau_*(P))/\pi_1(G^0). \]

Introduce the following notation 
\[ \Gau_{*}^0(P) := \Gau^{0}(P) \cap \Gau_{*}(P), \]
for the group of based gauge transformations which are connected to the identity. Note that they need not be connected to the identity via based transformations. Hence, $\Gau_{*}^0(P)$ need not be the identity component of $\Gau_{*}(P)$. 
\begin{lemma}
$\Gau_{*}^0(P) \subset \Gau_{*}(P)$ is a disjoint union of connected components. Moreover,
\[  \pi_0(\Gau_{*}^0(P)) = \im(\pi_1(G^0)). \]
\end{lemma}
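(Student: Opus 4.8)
The statement asserts two things about $\Gau_{*}^{0}(P) = \Gau^{0}(P) \cap \Gau_{*}(P)$: first, that it is a union of connected components of $\Gau_{*}(P)$, and second, that $\pi_{0}(\Gau_{*}^{0}(P)) = \im(\pi_{1}(G^{0}))$, where the map in question is the connecting homomorphism $\pi_{1}(G^{0}) \to \pi_{0}(\Gau_{*}(P))$ coming from the fibration $\Gau_{*}(P) \to \Gau(P) \to G(\phi)$ of Lemma \ref{surjectivegauge}. The plan is to extract everything from the long exact sequence of homotopy groups associated to this fibration, together with the analogous sequence obtained by restricting to identity components.

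First I would record the relevant portion of the long exact sequence of the fibration $\Gau_{*}(P) \to \Gau(P) \xrightarrow{r} G(\phi)$ of topological groups from Lemma \ref{surjectivegauge}, based at the identity:
\[
\pi_{1}(\Gau(P)) \to \pi_{1}(G(\phi)) \xrightarrow{\partial} \pi_{0}(\Gau_{*}(P)) \to \pi_{0}(\Gau(P)) \to \pi_{0}(G(\phi)).
\]
Since $G^{0}$ is the identity component of both $G(\phi)$ and $G$, we have $\pi_{1}(G(\phi)) = \pi_{1}(G^{0})$. Now, an element of $\Gau_{*}(P)$ lies in $\Gau_{*}^{0}(P)$ precisely when its image in $\pi_{0}(\Gau(P))$ (equivalently, in $\Gau(P)/\Gau^{0}(P)$) is trivial; that is, $\Gau_{*}^{0}(P)$ is the preimage of the identity component under the inclusion-induced map $\Gau_{*}(P) \to \Gau(P)$, so it is the union of those components of $\Gau_{*}(P)$ that map to the basepoint component of $\Gau(P)$. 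This immediately gives the first assertion: $\Gau_{*}^{0}(P)$ is a disjoint union of connected components of $\Gau_{*}(P)$, namely the fibre over the trivial element of the map $\pi_{0}(\Gau_{*}(P)) \to \pi_{0}(\Gau(P))$.

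For the second assertion, exactness of the sequence at $\pi_{0}(\Gau_{*}(P))$ says that the kernel of $\pi_{0}(\Gau_{*}(P)) \to \pi_{0}(\Gau(P))$ equals the image of $\partial \colon \pi_{1}(G^{0}) \to \pi_{0}(\Gau_{*}(P))$. By the previous paragraph this kernel is exactly $\pi_{0}(\Gau_{*}^{0}(P))$, so $\pi_{0}(\Gau_{*}^{0}(P)) = \im(\partial) = \im(\pi_{1}(G^{0}))$, which is the claimed identity. The only point requiring a little care is that the long exact sequence of homotopy groups, usually stated for fibrations of spaces, applies here: this is standard once one knows $r \colon \Gau(P) \to G(\phi)$ is a fibration (indeed a principal $\Gau_{*}(P)$-bundle, as shown in Lemma \ref{surjectivegauge} via the local section construction), and for topological \emph{groups} the sequence is moreover a sequence of groups in low degrees, so ``image'' and ``kernel'' are subgroups as expected. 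I do not anticipate a serious obstacle; the main thing to get right is the bookkeeping identifying $\Gau_{*}^{0}(P)$ with the kernel of $\pi_{0}(\Gau_{*}(P)) \to \pi_{0}(\Gau(P))$ rather than with the identity component of $\Gau_{*}(P)$ — these genuinely differ, and conflating them is the one trap to avoid.
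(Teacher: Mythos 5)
Your proof is correct and follows essentially the same route as the paper: both arguments rest on the homotopy long exact sequence of the principal bundle $r\colon \Gau(P)\to G(\phi)$ from Lemma \ref{surjectivegauge}, identifying $\Gau_{*}^{0}(P)$ with the fibre of $\pi_{0}(\Gau_{*}(P))\to\pi_{0}(\Gau(P))$ over the trivial class and then invoking exactness at $\pi_{0}(\Gau_{*}(P))$. The paper merely unwinds the connecting homomorphism concretely (lift a nullhomotopy of $f$ and evaluate at $x$ to get a loop in $G^{0}$), whereas you cite exactness directly; the content is the same, and you correctly flag the one real pitfall, namely that $\Gau_{*}^{0}(P)$ is not the identity component of $\Gau_{*}(P)$.
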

\begin{proof}
Being connected to the identity only depends on the path-component of $\Gau_{*}(P)$, so the first claim follows. For the second claim we take an element $f$ of $\Gau_{*}^0(P)$, we connect it to the identity using some path $f_s$, which we evaluate at $x \in W$, yielding a loop $\gamma$. By definition, the connecting homomorphism of the homotopy long exact sequence precisely takes $[\gamma] \in \pi_1(G^0)$ to the path-component of $f$.
\end{proof}

Observe that the inclusion $\Gau_{*}(P) \subset \Gamma(\Ad(P)^0)$ is surjective at the level of $\pi_0$, since $r$ is a principal bundle. Moreover, according to the lemma, $\Gau_{*}^0(P) = \ker(\Gau_{*}(P) \rightarrow D(P))$. The first isomorphism theorem then implies:
\begin{corollary}
$D(P) \cong \Gau_{*}(P)/\Gau_{*}^0(P).$ 
\end{corollary}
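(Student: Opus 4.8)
The plan is to exhibit $D(P)$ as a quotient group of $\Gau_{*}(P)$ and conclude by the first isomorphism theorem. The relevant homomorphism is the composite
\[
q : \Gau_{*}(P) \hookrightarrow \Gamma(\Ad(P)^0) \twoheadrightarrow \pi_0\big(\Gamma(\Ad(P)^0)\big) = D(P),
\]
where the first map is the inclusion — legitimate because a based gauge transformation is the identity at $x$, hence, $W$ being connected, lies fibrewise in the identity component everywhere — and the second is passage to connected components, which is a group homomorphism since $\Gamma(\Ad(P)^0)$ is a topological group. Everything reduces to checking that $q$ is surjective with $\ker q = \Gau_{*}^{0}(P)$, both of which are recorded in the discussion preceding the statement; the proof amounts to spelling these out.

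For surjectivity, I would restrict the principal $\Gau_{*}(P)$-bundle $r : \Gau(P) \to G(\phi)$ from Lemma \ref{surjectivegauge} (a genuine fibre bundle by Lemma \ref{continuoushomsub}) over the open subgroup $G^0 \subseteq G(\phi)$. Since $\Gamma(\Ad(P)^0) = \ker(\Gau(P) \to \pi_0(G)) = r^{-1}(G^0)$ by connectedness of $W$, this gives a principal $\Gau_{*}(P)$-bundle $r : \Gamma(\Ad(P)^0) \to G^0$. The tail of its homotopy long exact sequence,
\[
\pi_1(G^0) \longrightarrow \pi_0(\Gau_{*}(P)) \longrightarrow \pi_0\big(\Gamma(\Ad(P)^0)\big) \longrightarrow \pi_0(G^0) = \ast,
\]
shows that $\pi_0(\Gau_{*}(P)) \to D(P)$, which is $q$ on components, is onto; equivalently, every connected component of $\Gamma(\Ad(P)^0)$ meets $\Gau_{*}(P)$.

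For the kernel, $f \in \ker q$ means $f \in \Gau_{*}(P)$ and $f$ lies in the identity component of $\Gamma(\Ad(P)^0)$. As $\Gamma(\Ad(P)^0)$ is a union of connected components of $\Gau(P)$ containing the identity, its identity component is $\Gau^{0}(P)$; hence $\ker q = \Gau_{*}(P) \cap \Gau^{0}(P) = \Gau_{*}^{0}(P)$. The first isomorphism theorem then gives $D(P) \cong \Gau_{*}(P)/\Gau_{*}^{0}(P)$, and since $\Gau_{*}^{0}(P)$ is open in $\Gau_{*}(P)$ the quotient is discrete, so this is an isomorphism of topological groups as well. There is no serious obstacle here: the content is entirely the identifications already made in the preceding lemmas, and the only things to watch are the connectedness argument identifying $\Gamma(\Ad(P)^0)$ with $r^{-1}(G^0)$, and the fact that $\Gau_{*}^{0}(P)$ is merely a union of components of $\Gau_{*}(P)$ rather than its identity component, so that $\Gau_{*}(P)/\Gau_{*}^{0}(P)$ is a bona fide quotient group and not just $\pi_0(\Gau_{*}(P))$.
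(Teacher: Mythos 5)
Your proof is correct and follows essentially the same route as the paper: surjectivity of $\Gau_{*}(P)\to D(P)$ on components via the homotopy long exact sequence of the principal bundle $r$, identification of the kernel with $\Gau_{*}^{0}(P)=\Gau_{*}(P)\cap\Gau^{0}(P)$, and the first isomorphism theorem. The only differences are cosmetic (you make the homomorphism $q$ and the restriction of $r$ over $G^{0}$ explicit, and add the observation that the quotient is a topological isomorphism), so nothing further is needed.
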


Since $\Gau_{*}^0(P)$ is a subgroup of $\Gau_{*}(P)$, its action on $\FlatObj(P)$ is principal (Lemma \ref{principalbundlestatement}). We can then take the quotient. According to Proposition \ref{Moritaequivalencefromquotientprop}, this quotient will inherit a principal action of $D(P)$. 
\begin{corollary} \label{cor:constructingH}
The space
\[ \IsotopyObj(P,\phi) := (D(P) \times \FlatObj(P))/\Gau_{*}(P) \cong \FlatObj(P)/\Gau_{*}^0(P) \]
is a principal $D(P)$-bundle over $\Hom_{(P,\phi)}(\pi_{1}(W, x), G)$.
\end{corollary}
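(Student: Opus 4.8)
The plan is to combine the pieces that have already been assembled in this subsection. The key structural facts we need are: (i) by Lemma \ref{surjectivegauge}, $r : \Gau(P) \to G$ is a topological submersion and hence, being a group homomorphism, a principal $\Gau_*(P)$-bundle (Lemma \ref{continuoushomsub}); (ii) by Lemma \ref{principalbundlestatement}, the action of $\Gau_*(P)$ on $\FlatObj(P)$ is principal with quotient $\Hom_{(P,\phi)}(\pi_1(W,x),G)$; and (iii) the group-level computation just carried out, namely that $\Gau_*^0(P) = \ker(\Gau_*(P) \to D(P))$, so that $D(P) \cong \Gau_*(P)/\Gau_*^0(P)$. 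The first thing I would check is that the inclusion $\Gau_*^0(P) \hookrightarrow \Gau_*(P)$ is itself a fibration of topological groups: this follows from Lemma \ref{continuoushomsub} since $\Gau_*^0(P)$ is a union of connected components of $\Gau_*(P)$ (the preceding Lemma shows $\pi_0(\Gau_*^0(P)) = \im(\pi_1(G^0))$ sits inside $\pi_0(\Gau_*(P))$), so the quotient map $\Gau_*(P) \to D(P)$ is surjective with a local section near the identity (the identity component maps to a point). Thus we have a fibration of topological groups $\Gau_*^0(P) \to \Gau_*(P) \to D(P)$ with discrete quotient $D(P)$.

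Next I would invoke Proposition \ref{Moritaequivalencefromquotientprop} with the substitution $G \rightsquigarrow \Gau_*(P)$, $Q \rightsquigarrow D(P)$, $H \rightsquigarrow \Gau_*^0(P)$, and $X \rightsquigarrow \FlatObj(P)$. The hypotheses of that proposition are exactly: $\phi \rightsquigarrow (\Gau_*(P) \to D(P))$ is a fibration of topological groups (checked in the previous paragraph), and $H = \Gau_*^0(P)$ acts principally on $X = \FlatObj(P)$. The latter holds because $\Gau_*^0(P)$ is a subgroup of $\Gau_*(P)$, which acts principally on $\FlatObj(P)$ by Lemma \ref{principalbundlestatement}; any subgroup of a group acting principally (freely with local sections / the division map continuous) still acts principally, as the division map $\FlatObj(P) \times_{Y} \FlatObj(P) \to \Gau_*(P)$ of that lemma automatically lands in $\Gau_*^0(P)$ when the two connections lie in the same $\Gau_*^0(P)$-orbit, and local triviality is inherited. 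Proposition \ref{Moritaequivalencefromquotientprop} then yields that $D(P)$ acts continuously (and principally, since it acts principally on the quotient by a Morita-equivalent groupoid argument, or directly because $\Gau_*(P) \ltimes \FlatObj(P) \to D(P) \ltimes \big(\FlatObj(P)/\Gau_*^0(P)\big)$ is a Morita equivalence) on
\[
\IsotopyObj(P,\phi) = (D(P) \times \FlatObj(P))/\Gau_*(P) \cong \FlatObj(P)/\Gau_*^0(P).
\]

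Finally, to identify the base of this principal $D(P)$-bundle, I would note that quotienting $\FlatObj(P)$ by the full group $\Gau_*(P)$ can be done in two stages: first by $\Gau_*^0(P)$, giving $\IsotopyObj(P,\phi)$, and then by the residual $D(P) = \Gau_*(P)/\Gau_*^0(P)$-action, giving $\FlatObj(P)/\Gau_*(P) \cong \Hom_{(P,\phi)}(\pi_1(W,x),G)$ by Lemma \ref{principalbundlestatement}. Hence the orbit map $\IsotopyObj(P,\phi) \to \Hom_{(P,\phi)}(\pi_1(W,x),G)$ is exactly the quotient by a principal $D(P)$-action, which is what the corollary asserts. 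The main obstacle I anticipate is purely bookkeeping: making sure that the ``principal action'' hypotheses in Proposition \ref{Moritaequivalencefromquotientprop} are verified with the correct topologies (Whitney $C^\infty$ on the gauge groups and connection spaces), and that the homeomorphism $(D(P)\times\FlatObj(P))/\Gau_*(P) \cong \FlatObj(P)/\Gau_*^0(P)$ is the tautological one coming from the balanced-product description, rather than requiring any extra argument. Everything else is a direct application of results already established in this appendix.
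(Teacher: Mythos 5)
Your proposal is correct and follows essentially the same route as the paper: the paper also observes that $\Gau_{*}^0(P)$, being a subgroup of $\Gau_{*}(P)$, acts principally on $\FlatObj(P)$ by Lemma \ref{principalbundlestatement}, and then applies Proposition \ref{Moritaequivalencefromquotientprop} with $G \rightsquigarrow \Gau_{*}(P)$, $H \rightsquigarrow \Gau_{*}^0(P)$, $Q \rightsquigarrow D(P)$, $X \rightsquigarrow \FlatObj(P)$ to transfer a principal $D(P)$-action to the quotient. Your write-up simply makes explicit the bookkeeping the paper leaves implicit (that $\Gau_{*}(P) \to D(P)$ is a fibration with open kernel, that the division map restricts to the subgroup, and the two-stage identification of the base with $\Hom_{(P,\phi)}(\pi_{1}(W,x),G)$).
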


Consequently, we obtain the following version of the Riemann-Hilbert correspondence for the subgroupoid $\FlatCat_{0}(W,\phi,P)$ classifying flat connections up to isotopy.
\begin{proposition} \label{prop:RHisotopy}
There is a continuous action of $G^0$ on $\IsotopyObj(P,\phi)$, yielding an action groupoid
\[ \IsotopyCat(P,\phi) := G^0 \ltimes \IsotopyObj(P,\phi). \]
Moreover, there is a Riemann-Hilbert functor
\[ RH^{0}_{(P,\phi)}: \FlatCat_{0}(W,\phi,P) \rightarrow \IsotopyCat(P,\phi) \]
that is a fibration and a Morita equivalence.
\end{proposition}
\begin{proof}
This follows from Proposition \ref{Moritaequivalencefromquotientprop}, using Lemma \ref{surjectivegauge} and Lemma \ref{principalbundlestatement} to check the conditions. 
\end{proof}

\begin{remark}
As a topological space, $\IsotopyObj(P,\phi)$ does not depend on the choice of $\phi$. However, the actions of $D(P)$ and $G^0$ on it, given to us by Proposition \ref{Moritaequivalencefromquotientprop}, do rely on $\phi$.
\end{remark}

In the case that the group $G$ is abelian, we obtain the following.  
\begin{lemma} \label{lem:DPabelian}
Suppose that $G$ is abelian. Then:
\[ \Gau(P) \cong C^\infty(W,G), \qquad \Gamma(\Ad(P)^0) \cong C^\infty(W,G^0), \qquad D(P) \cong \pi_0(C^\infty(W,G^0)). \]
\end{lemma}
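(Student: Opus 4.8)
\textbf{Proof plan for Lemma \ref{lem:DPabelian}.}

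When $G$ is abelian the adjoint action of $G$ on itself is trivial, so the adjoint bundle $\Ad(P)$ is the trivial bundle $W \times G$; this gives the first isomorphism $\Gau(P) = \Gamma(W, \Ad(P)) \cong C^\infty(W, G)$. The plan is to deduce each of the three claims by transporting the general-purpose identifications established above (Lemma \ref{surjectivegauge}, Corollary \ref{cor:constructingH} and the surrounding discussion) through this trivialization, keeping track of how the various subgroups translate.

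First I would record that under $\Gau(P) \cong C^\infty(W,G)$, the subbundle $\Ad(P)^0$ of fibrewise identity components becomes $W \times G^0$, because the fibrewise connected component of the identity of the trivial bundle with fibre $G$ is exactly the one containing $G^0$; hence $\Gamma(\Ad(P)^0) \cong C^\infty(W, G^0)$. This is the second isomorphism. Here I would note that a section of $W\times G$ lands in $W\times G^0$ fibrewise if and only if it takes values in $G^0$, which for a \emph{connected} $W$ is automatic for any section hitting $G^0$ at one point, but in general (possibly disconnected $W$, though we have assumed $W$ connected in this appendix) one just says: continuous maps into $G$ whose image lies in the open-closed subgroup $G^0$.

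For the third claim, I would invoke the definition $D(P) = \Gamma(\Ad(P)^0)/\Gau^0(P) = \pi_0(\Gamma(\Ad(P)^0))$ recorded just before this lemma. Applying the second isomorphism already proved, $\Gamma(\Ad(P)^0) \cong C^\infty(W, G^0)$ as topological groups (both with the Whitney $C^\infty$-topology), and $\pi_0$ of a topological group depends only on its homeomorphism type, so $D(P) \cong \pi_0(C^\infty(W, G^0))$. Equivalently one can argue via the identification $D(P) \cong \operatorname{im}(\pi_0(\Gau_*(P)) \to \pi_0(\Gau(P)))$ together with the fact that, in the abelian case, $r : \Gau(P) \cong C^\infty(W,G) \to G$ is evaluation at $x$, which is a surjective fibration splitting off $C^\infty(W,G)$ as $\Gau_*(P) \times G(\phi)$ up to the relevant connectivity bookkeeping; but the direct $\pi_0$ argument is cleaner and I would present that one.

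The only genuine subtlety — and the step I expect to require the most care — is confirming that all the isomorphisms in question are isomorphisms of \emph{topological} groups for the Whitney $C^\infty$-topology, so that passing to $\pi_0$ is legitimate; this is where one must be slightly careful that the trivialization $\Gau(P)\cong C^\infty(W,G)$ is a homeomorphism (it is, since it is induced by a diffeomorphism of fibre bundles $\Ad(P) \cong W\times G$) and that restricting to the open-and-closed sub-bundle $\Ad(P)^0$ of fibrewise identity components is compatible with this. Everything else is a direct substitution into the already-established formulas, so the proof is short.
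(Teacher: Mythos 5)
Your proof is correct and is exactly the argument the paper intends: the lemma is stated there without proof, as an immediate consequence of the fact that for abelian $G$ the conjugation action is trivial, so $\Ad(P) = (P\times G)/G \cong W\times G$ and $\Ad(P)^0 \cong W\times G^0$, after which $D(P) \cong \pi_0(C^\infty(W,G^0))$ follows from the paper's definition $D(P) = \pi_0(\Gamma(\Ad(P)^0))$. Your care about the Whitney topology being preserved under the bundle trivialization is sensible but routine, and nothing further is needed.
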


\subsubsection{The case of line bundles}\label{sec:linebundles}

Now suppose that the structure group $G$ is $\mathrm{GL}(1, \mathbb{R}) = \mathbb{R}^{\times}$. In this case, $P \to W$ is the frame bundle of a line bundle $L$. By Corollary \ref{oddn}, we have $G(\phi) = G$. We also have $G^{0} = \mathbb{R}_{>0}$, $\Gau(P) = C^{\infty}(W, \mathbb{R}^*)$ and $\Gau^{0}(P) = C^{\infty}(W, \mathbb{R}_{>0})$. It then follows that $D(P)$ is the trivial group and hence that $\IsotopyObj(P,\phi) = \Hom_{(P,\phi)}(\pi_{1}(W, x), G)$. Note furthermore that 
\[ \Hom(\pi_{1}(W, x), G) \cong H^{1}(W, \mathbb{R}^*) \cong H^{1}(W, \mathbb{Z}/2) \times H^{1}(W, \mathbb{R}), \]
where the last isomorphism uses the isomorphism $\mathbb{R}^* \cong \mathbb{Z}/2 \times \mathbb{R}$. The framing $\phi$ is unimportant and the isomorphism type of $P$ is encoded by the first Whitney class $w_{1}(L) \in H^{1}(W, \mathbb{Z}/2)$. Hence, we obtain the following. 

\begin{corollary} \label{rank1connectedRH}
Let $P$ be a principal $\mathrm{GL}(1, \mathbb{R})$-bundle over $W$. Then, there is a Morita equivalence 
\[ \FlatCat_{0}(W, \phi, P) \cong \mathbb{R}_{>0} \ltimes H^{1}(W, \mathbb{R}), \]
where the $\mathbb{R}_{>0}$-action is trivial.  
\end{corollary}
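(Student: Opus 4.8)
The plan is to obtain Corollary \ref{rank1connectedRH} as a direct application of Proposition \ref{prop:RHisotopy} once we have identified all the ingredients appearing there in the special case $G = \mathrm{GL}(1,\mathbb{R}) = \mathbb{R}^\times$. First I would record the elementary group-theoretic facts: $\mathbb{R}^\times$ is abelian, $G^0 = \mathbb{R}_{>0}$, and $\mathbb{R}^\times \cong \mathbb{Z}/2 \times \mathbb{R}$ via the sign and logarithm. By Corollary \ref{oddn} (applied with $n=1$), we have $G(\phi) = \mathrm{GL}(1,\mathbb{R})$ for every principal bundle $P$, so in particular the framing plays no essential role and $\Rep_{(P,\phi)}(\pi_1(W,x),G) = G \ltimes \Hom_{(P,\phi)}(\pi_1(W,x),G)$.

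Next I would compute $D(P)$. Since $G$ is abelian, Lemma \ref{lem:DPabelian} gives $\Gau(P) \cong C^\infty(W,\mathbb{R}^\times)$, $\Gamma(\Ad(P)^0) \cong C^\infty(W,\mathbb{R}_{>0})$, and $D(P) \cong \pi_0(C^\infty(W,\mathbb{R}_{>0}))$. But $\mathbb{R}_{>0}$ is contractible (indeed convex), so $C^\infty(W,\mathbb{R}_{>0})$ is connected, hence $D(P)$ is the trivial group. By Corollary \ref{cor:constructingH}, the $D(P)$-bundle $\IsotopyObj(P,\phi)$ over $\Hom_{(P,\phi)}(\pi_1(W,x),G)$ is therefore just $\Hom_{(P,\phi)}(\pi_1(W,x),G)$ itself, and $\IsotopyCat(P,\phi) = G^0 \ltimes \Hom_{(P,\phi)}(\pi_1(W,x),G) = \mathbb{R}_{>0} \ltimes \Hom_{(P,\phi)}(\pi_1(W,x),G)$.

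It then remains to identify the base space $\Hom_{(P,\phi)}(\pi_1(W,x),G)$ and the $\mathbb{R}_{>0}$-action. Since $G$ is abelian, the conjugation action of $G$ on $\Hom(\pi_1(W,x),G)$ is trivial, so a fortiori the $\mathbb{R}_{>0} = G^0$-action on any subspace of homomorphisms is trivial. For the base space, I would use that $\Hom(\pi_1(W,x),\mathbb{R}^\times) \cong H^1(W,\mathbb{R}^\times) \cong H^1(W,\mathbb{Z}/2) \times H^1(W,\mathbb{R})$, and that by Corollary \ref{connectivityofHoms} the subspace $\Hom_{(P,\phi)}(\pi_1(W,x),G)$ is the union of connected components corresponding to the isomorphism class of $P$, which for a line bundle is detected by $w_1(L) \in H^1(W,\mathbb{Z}/2)$. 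Fixing that class leaves precisely the factor $H^1(W,\mathbb{R})$. Plugging everything into Proposition \ref{prop:RHisotopy} yields the Morita equivalence $\FlatCat_0(W,\phi,P) \cong \mathbb{R}_{>0} \ltimes H^1(W,\mathbb{R})$ with trivial action. I do not anticipate a serious obstacle here; the only point requiring mild care is keeping track of which component of $\Hom(\pi_1(W,x),\mathbb{R}^\times)$ corresponds to a given line bundle, which is handled cleanly by Corollary \ref{connectivityofHoms} together with the classification of line bundles by $w_1$.
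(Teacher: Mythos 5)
Your proposal is correct and follows essentially the same route as the paper: apply Proposition \ref{prop:RHisotopy} after observing that $G(\phi) = \mathrm{GL}(1,\mathbb{R})$ (Corollary \ref{oddn}), that $D(P)$ is trivial because $\Gamma(\Ad(P)^0) \cong C^\infty(W,\mathbb{R}_{>0})$ is connected, that conjugation by the abelian group $\mathbb{R}^\times$ is trivial, and that fixing $w_1(L)$ singles out the factor $H^1(W,\mathbb{R})$ inside $\Hom(\pi_1(W,x),\mathbb{R}^\times) \cong H^1(W,\mathbb{Z}/2)\times H^1(W,\mathbb{R})$. The only cosmetic difference is that you route the triviality of $D(P)$ through Lemma \ref{lem:DPabelian}, whereas the paper computes $\Gau^0(P) = C^\infty(W,\mathbb{R}_{>0})$ directly; these are the same observation.
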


\subsubsection{Vector bundles of rank $2$} \label{sec:2bundles}

Now suppose that the structure group is $G = \mathrm{GL}(2, \mathbb{R})$. In this case, $P \to W$ is the frame bundle of a rank $2$ vector bundle $E$. By choosing a metric on $E$, we can reduce the structure group to $O(2) \cong S^1 \rtimes \mathbb{Z}/2$. If $E$ was orientable we would apply Lemma \ref{lem:DPabelian} and deduce that
\[ D(P) \cong \pi_0(\C^\infty(W,S^1)) \cong H^{1}(W, \mathbb{Z}). \]
A similar result holds if we drop the orientability assumption, as we now explain. We write 
\[ L_P := P/\mathrm{GL}^+(2, \mathbb{R}) \]
for the \emph{orientation bundle}, which is a principal $\mathbb{Z}/2$-bundle. To it we can associate the local coefficient system:
\[ \mathbb{Z}_{P} := (L_{P} \times \mathbb{Z})/(\mathbb{Z}/2). \]
\begin{lemma}\label{lem:final}
Let $E$ be a rank $2$ vector bundle with frame bundle $P \to W$. Then 
\[ D(P) \cong H^{1}(W, \mathbb{Z}_{P}). \]
\end{lemma}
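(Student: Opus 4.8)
The goal is to identify $D(P) = \pi_0(\Gamma(\Ad(P)^0))$ with $H^1(W,\mathbb{Z}_P)$. Since $\Ad(P)^0$ is a bundle of groups with fibre $\mathrm{GL}^+(2,\mathbb{R})$, and $\mathrm{GL}^+(2,\mathbb{R})$ deformation retracts onto $SO(2) = S^1$, the first step is to replace $\Ad(P)^0$ by the associated bundle with fibre $S^1$: using a metric on $E$ we reduce $P$ to an $O(2)$-bundle $P_{O}$, and then $\Ad(P)^0$ is fibrewise homotopy equivalent to the bundle $S_P := (P_O \times SO(2))/O(2)$, where $O(2)$ acts on $SO(2)$ by conjugation. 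By Lemma \ref{lem:gaugehomotopy}-style reasoning (a fibrewise homotopy equivalence induces a homotopy equivalence on section spaces with the Whitney topology), this gives $D(P) \cong \pi_0(\Gamma(S_P))$.

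The second step is to analyze the conjugation action of $O(2)$ on $SO(2) \cong S^1$. The connected subgroup $SO(2)$ acts trivially (it is abelian), while the non-identity component acts by inversion $z \mapsto z^{-1}$, i.e. by $-1$ on the Lie algebra $\mathbb{R} = \mathrm{Lie}(SO(2))$. Hence $S_P$ is the bundle of groups associated to the orientation double cover $L_P = P/\mathrm{GL}^+(2,\mathbb{R}) = P_O/SO(2)$ via the $\mathbb{Z}/2$-action on $S^1$ by inversion. The third step is to compute $\pi_0$ of the section space. Since $S^1 = K(\mathbb{Z},1)$, sections of a bundle with fibre $S^1$ over $W$ are classified up to homotopy by a twisted cohomology group: concretely, $\pi_0(\Gamma(S_P)) \cong H^1(W,\mathbb{Z}_P)$, where $\mathbb{Z}_P = (L_P \times \mathbb{Z})/(\mathbb{Z}/2)$ is the local system obtained from $\pi_1(S^1) = \mathbb{Z}$ with the $\mathbb{Z}/2$-action induced by inversion (which is $-1$ on $\pi_1$). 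This is the standard fact that homotopy classes of sections of a fibration with aspherical fibre $K(\pi,1)$ and simply connected-ish base are governed by the appropriate (twisted) first cohomology; here it can be made precise via obstruction theory over a CW structure on $W$, the only obstruction living in $H^1(W;\pi_1(\text{fibre}))$ with the local system twisted by the monodromy, which is exactly $\mathbb{Z}_P$.

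I would organize this as: (1) reduce to $O(2)$ and apply the fibrewise homotopy equivalence $\mathrm{GL}^+(2,\mathbb{R}) \simeq S^1$ to get $D(P) \cong \pi_0(\Gamma(S_P))$; (2) identify $S_P$ with the $S^1$-bundle of groups twisted by $L_P$ via inversion, matching the local system $\mathbb{Z}_P$; (3) invoke obstruction theory (or the classifying-space description $\Gamma(S_P) \simeq \mathrm{Map}_{/W}(W, \text{twisted } K(\mathbb{Z},1)\text{-bundle})$) to conclude $\pi_0(\Gamma(S_P)) \cong H^1(W;\mathbb{Z}_P)$.

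The main obstacle is step (3): making the passage from ``homotopy classes of sections of an $S^1$-bundle of groups'' to ``twisted $H^1$'' rigorous and clean, since one must track the local coefficient system correctly (the inversion on $S^1$ induces $-1$ on $\pi_1$, so the twisting is by the orientation character, giving $\mathbb{Z}_P$ rather than $\mathbb{Z}$) and one should verify there are no higher obstructions — which holds because all higher homotopy groups of $S^1$ vanish, so the obstruction-theoretic computation terminates after the first stage. A secondary technical point is ensuring that the fibrewise homotopy equivalence in step (1) genuinely induces a bijection on $\pi_0$ of Whitney-topologized section spaces; this is the same kind of argument already used in Lemma \ref{lem:gaugehomotopy}, so I would simply cite that pattern. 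Once these are in place the identification is immediate, and in the orientable case it specializes to $D(P) \cong H^1(W,\mathbb{Z})$, consistent with the remark preceding the lemma.
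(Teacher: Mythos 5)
Your proposal is correct, and its first two steps (reduce to $O(2)$ via a fibrewise homotopy equivalence, then identify $\Ad(P)^0$ with the $S^1$-bundle of groups twisted by the orientation cover $L_P$ acting by inversion) coincide with the paper's; in fact you are slightly more explicit than the paper in identifying the $\mathbb{Z}/2$-action on $S^1$ as inversion coming from conjugation by the non-identity component of $O(2)$. Where you diverge is the final computation of $\pi_0(\Gamma(\mathbb{S}^1_P))$. The paper avoids obstruction theory entirely: it builds the fibrewise exponential sequence $0 \to \mathbb{Z}_P \to \mathbb{R}_P \to \mathbb{S}^1_P \to 1$, takes the long exact sequence in sheaf cohomology, uses that $\mathbb{R}_P$ is a fine sheaf to kill $H^1(W,\mathbb{R}_P)$, and concludes $\pi_0(\Gamma(\mathbb{S}^1_P)) \cong \Gamma(\mathbb{S}^1_P)/\Gamma(\mathbb{R}_P) \cong H^1(W,\mathbb{Z}_P)$, the identification of path components with the quotient coming from the fact that $\mathbb{R}_P$ is the fibrewise universal cover. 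Your route instead invokes the $K(\mathbb{Z},1)$ property of $S^1$ and difference-class obstruction theory with local coefficients. Both are rigorous proofs of the same classical fact; the sheaf-theoretic argument is shorter and self-contained given the machinery already present in the paper (no need to set up twisted obstruction theory or to worry separately about higher obstructions, since the fineness of $\mathbb{R}_P$ does all the work), while your argument generalizes more readily to fibres that are arbitrary aspherical groups. The one point you flag as delicate --- that $\pi_0$ of the Whitney-topologized section space agrees with homotopy classes of continuous sections --- is a genuine but standard smoothing issue that the paper's proof also handles only implicitly, so it does not distinguish the two approaches.
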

\begin{proof}
Since $D(P)$ arises after taking connected components (i.e. applying $\pi_0$), we can replace $\mathrm{GL}(2, \mathbb{R})$ with its homotopy equivalent subgroup $O(2) \cong S^1 \rtimes \mathbb{Z}/2$. Hence, in what follows we assume that $G = O(2)$ and that $P$ is a principal $O(2)$-bundle. $\Ad(P)$ is thus a bundle of $O(2)$-torsors and $\Ad(P)^0$ is a bundle of $S^1$-torsors.

Just as we defined $\mathbb{Z}_{P}$, we can consider the associated bundles of $\mathbb{R}$ and $S^1$ torsors
\[ \mathbb{R}_{P} := (L_{P} \times \mathbb{R})/(\mathbb{Z}/2), \qquad \mathbb{S}_{P}^1 := (L_{P} \times S^1)/(\mathbb{Z}/2). \]
These sit in a fibrewise exact exponential sequence
\[ 0 \to \mathbb{Z}_{P} \to \mathbb{R}_{P} \to \mathbb{S}^1_{P} \to 1 \]
whose associated long exact sequence at the level of sections yields:
\[ ... \to \Gamma(\mathbb{R}_{P}) \to \Gamma(\mathbb{S}^1_{P}) \to H^{1}(W, \mathbb{Z}_{P}) \to H^{1}(W, \mathbb{R}_{P}) \to ... \]
Observe now that $\mathbb{R}_{P}$ is a fine sheaf so the right-most term is zero. It follows that the middle map is surjective. Since $\mathbb{R}_{P}$ is the fibrewise universal cover of $\mathbb{S}^1_{P}$, we then conclude 
\[ \pi_{0}(\Gamma(\mathbb{S}^1_{P})) \cong \Gamma(\mathbb{S}^1_{P})/\Gamma(\mathbb{R}_{P}) \cong H^{1}(W, \mathbb{Z}_{P}). \]

Lastly, we observe, by taking the quotient in stages, that:
\[ \Ad(P)^0  \cong ((P \times S^1)/S^1)/(\mathbb{Z}/2) \cong (L_{P} \times S^1)/(\mathbb{Z}/2) = \mathbb{S}^1_{P}, \]
where the last isomorphism stems from the fact that $S^1$ is abelian and so $(P \times S^1)/S^1 \cong P/S^1 \times S^1$. This concludes the proof.
\end{proof}

\bibliographystyle{alpha}
\bibliography{references} 
\end{document}